\documentclass[10pt]{article}

\usepackage[papersize={21cm,29.7cm}, top = 3cm, bottom = 3cm]{geometry}
\usepackage{titlesec}
\titleformat*{\section}{\large\bfseries}
\titleformat*{\subsection}{\bfseries}
\titleformat*{\subsubsection}{\bfseries}
\titleformat*{\paragraph}{\bfseries}
\titleformat*{\subparagraph}{\bfseries}
\usepackage{amsthm}
\usepackage{amssymb}
\usepackage{amsmath}
\numberwithin{equation}{subsection}
\usepackage[backend = biber]{biblatex}
\usepackage{bbm}
\usepackage[english]{babel}
\usepackage[latin1]{inputenc}
\usepackage[T1]{fontenc}
\usepackage[dvipsnames]{xcolor}
\usepackage{hyperref}
\hypersetup{
    colorlinks = true,
    linkcolor = Blue,
    citecolor = Blue,
    urlcolor = Blue
    }
\usepackage{ifpdf}
\ifpdf
  \usepackage[pdftex]{graphicx}
  \usepackage{epstopdf}
\else
  \usepackage[dvips]{graphicx}
\fi
\usepackage{graphicx}
\usepackage{cleveref}
\usepackage[all]{xy}
\usepackage{csquotes}
\usepackage{dynkin-diagrams}
\usepackage{etoolbox}
\usepackage{mathrsfs}
\usepackage{braket}

\makeatother

\theoremstyle{plain}
\newtheorem{theorem}{Theorem}[section]

\newtheorem{lemma}[theorem]{Lemma}
\newtheorem{proposition}[theorem]{Proposition}
\newtheorem{corollary}[theorem]{Corollary}
\newtheorem{question}[theorem]{Question}

\newtheorem{conjecture}[theorem]{Conjecture}

\theoremstyle{definition}
\newtheorem{definition}[theorem]{Definition}
\newtheorem{setting}[theorem]{Setting}

\newtheorem{convention}[theorem]{Convention}
\newtheorem{notation}[theorem]{Notation}
\newtheorem{remark}[theorem]{Remark}

\theoremstyle{remark}
\newtheorem{example}[theorem]{Example}

\newcommand{\lie}{\mathrm{Lie}}
\newcommand{\I}{\mathscr{I}}
\newcommand{\F}{\mathscr{F}}
\newcommand{\alg}{\mathbb{R}_\mathrm{alg}}
\newcommand{\anexp}{\mathbb{R}_\mathrm{an,exp}}

\newcommand{\N}{\mathbb N}
\newcommand{\Z}{\mathbb Z}
\newcommand{\uZ}{\underline{\mathbb{Z}}}
\newcommand{\Q}{\mathbb Q}
\newcommand{\uQ}{\underline{\mathbb{Q}}}
\newcommand{\C}{\mathbb C}
\newcommand{\R}{\mathbb R}

\newcommand{\V}{\mathbb V}
\newcommand{\W}{\mathbb W}

\newcommand{\s}{\mathbb S}
\newcommand{\sm}{\mathrm{sm}}

\newcommand{\mult}{\mathbb G}
\newcommand{\G}{\mathbf G}
\newcommand{\pp}{\mathbf P}
\newcommand{\uu}{\mathbf U}

\newcommand{\mon}{\mathbf H}
\newcommand{\M}{\mathbf M}
\newcommand{\LL}{\mathbf L}
\newcommand{\NN}{\mathbf N}

\newcommand{\fix}{\mathbf{Fix}}
\newcommand{\stab}{\mathbf{Stab}}

\newcommand{\HL}{\mathrm{HL}(S, \V^\otimes)}
\newcommand{\HLM}{\mathrm{HL}(S, \V^\otimes, \M)}

\newcommand{\bul}{\bullet}

\newcommand{\im}{\mathrm{im} \hspace{0.05cm}}
\newcommand{\id}{\mathrm{id}}

\newcommand{\an}{\mathrm{an}}
\newcommand{\gr}{\mathrm{Gr}}
\newcommand{\hod}{\mathrm{Hdg}}

\newcommand{\Hom}{\mathrm{Hom}}

\newcommand{\quo}{\backslash}

\newcommand{\gl}{\mathbf{GL}}

\newcommand{\res}{\mathrm{Res}}

\newcommand{\para}{\mathfrak{p}}
\newcommand{\lieu}{\mathfrak{u}}

\newcommand{\mt}{\mathfrak{m}}
\newcommand{\ort}{\mathfrak{h}}

\newcommand{\Ad}{\mathrm{Ad}}
\newcommand{\ad}{\mathrm{ad}}

\newcommand{\period}{\mathscr{D}}

\newcommand{\typ}{\mathrm{typ}}
\newcommand{\atyp}{\mathrm{atyp}}
\newcommand{\pos}{\mathrm{pos}}
\newcommand{\der}{\mathrm{der}}
\newcommand{\trans}{\mathrm{trans}}

\newcommand{\End}{\mathrm{End}}
\newcommand{\Gr}{\mathrm{Gr}}

\newcommand{\ee}{e_{-2}}
\newcommand{\zz}{\underline{z}}
\newcommand{\xxi}{\underline{\xi}}
\newcommand{\Np}{\mathcal{N}(\pp)}

\newcommand{\Del}{\mathrm{Del}}
\newcommand{\GL}{\mathrm{GL}}
\newcommand{\ppol}{\pp_{W_\bul, (q_k)}}
\newcommand{\gpol}{\G_{W_\bul, (q_k)}}
\newcommand{\ex}{\mathrm{ex}}
\newcommand{\gen}{\mathrm{gen}}

\newcommand{\zetaa}{\underline{\zeta}}
\newcommand{\lambdaa}{\underline{\lambda}}

\newcommand{\xx}{\underline{x}}
\newcommand{\Zcal}{\mathscr{Z}}
\newcommand{\deef}{\mathrm{def}}

\newcommand{\intt}{\mathrm{int}}

\newcommand{\ortl}{\mathfrak{l}}

\title{On the distribution of mixed Hodge loci}
\author{Nazim Khelifa}
\date{\today}

\begin{document}
\maketitle
\begin{abstract}
Let $\V$ be an admissible and graded-polarized integral variation of mixed Hodge structures over a smooth and irreducible complex algebraic variety $S$. We show that if the typical Hodge locus $\HL_\typ$ of $\V$ is non-empty, the full Hodge locus $\HL$ is dense in $S$ for the Zariski topology. In an other direction, we show that if the associated graded variation $\gr(\V)$ for the weight filtration has large monodromy and level at least $3$ in the sense of Baldi-Klingler-Ullmo, the typical Hodge locus of $\V$ is empty, and the full Hodge locus of $\V$ is a strict Zariski-closed subset of $S$, at least if one restricts to its factorwise positive dimensional part, improving a classical result of Brosnan-Pearlstein-Schnell in this situation.

These results follow from a detailed study of the transverse part $\HL_\trans$ of the Hodge locus of $S$ for $\V$, a subset which contains $\HL_\typ$ and whose Zariski-density in $S$ is equivalent, under the Zilber-Pink conjecture for $\V$, to the Zariski-density of $\HL_\typ$. We show that non-emptiness of $\HL_\trans$ is equivalent to its Zariski-density in $S$, we completely classify variations whose transverse Hodge locus $\HL_\trans$ is Zariski-dense, and we prove an independent criterion ensuring that $\HL_\trans$ is empty.
\end{abstract}
\begin{center}
\tableofcontents
\end{center}
\section{Introduction}
Let $\V$ be an admissible and graded-polarized integral variation of mixed Hodge structures over a smooth and irreducible quasi-projective complex algebraic variety $S$. Our main object of study will be the Hodge locus $\HL$ of $S$ for $\V$. It is the set of points $s \in S^\an$ such that some rational tensor $\zeta_s \in T^{a,b}(\V_{s,\Q}) = \bigoplus_{a,b \geqslant 0} (\V_{s,\Q})^{\otimes a} \otimes (\V_{s,\Q}^\vee)^{\otimes b}$ satisfies:
\begin{itemize}
\item $\zeta_s$ is a Hodge class for the natural $\Q$-mixed Hodge structure on $T^{a,b}(\V_{s,\Q})$;
\item there exists a continuous path $\tau : [0,1] \rightarrow S^\an$ such that $\tau(0) = s$ and the parallel transport of $\zeta_s$ to $T^{a,b}(\V_{\tau(1),\Q})$ along $\tau$ is not a Hodge class for the natural mixed Hodge structure on $T^{a,b}(\V_{\tau(1),\Q})$.
\end{itemize}
A strong motivation for the study of the Hodge locus is Beilinson's Hodge conjecture for higher cycles (\cite{beil}) which, for a variation arising as the cohomology of a family of smooth quasi-projective varieties, predicts that $\HL$ is the locus where some self-product of the corresponding fiber carries a higher cycle that does not spread over the generic fiber (see for instance \cite{as2, as1} and references therein).

\textit{A priori} the Hodge locus is merely a countable union of strict complex-analytic subvarieties of $S$. A fundamental fact due to Brosnan-Pearlstein-Schnell in this generality (building on and generalizing a classical result of Cattani-Deligne-Kaplan \cite{cdk} and a series of work \cite{bpan,bpduk,bpcomp} of the first two authors) is:
\begin{theorem}[\cite{bps}]
The Hodge locus $\HL$ is a countable union of strict algebraic subvarieties of $S$.
\end{theorem}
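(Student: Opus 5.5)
The plan is to reduce the statement to an algebraicity result for the components of the locus where a single tensor stays Hodge, and then to take a countable union. Fix the integral structure $\V_\Z$. Since the weight filtration and graded polarizations are defined over $\Q$, a rational tensor $\zeta_s$ is a Hodge class exactly when it lies in $W_0 T^{a,b}(\V_{s,\Q}) \cap F^0$ and is of weight $0$. Up to a rational scalar we may take $\zeta_s$ integral. Let $\pi : \tilde S \to S^\an$ be the universal cover and $\zeta$ a flat integral section of $T^{a,b}(\V)$ over $\tilde S$. The locus $\tilde Z_\zeta \subset \tilde S$ where $\zeta$ is Hodge is a closed analytic subset, because being in $F^0$ is a holomorphic condition on the Hodge bundle. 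Its image in $S^\an$ is a union of analytic components. The Hodge locus $\HL$ is the union of the strict images $\pi(\tilde Z_\zeta)$, over the countably many pairs $(a,b)$ and integral classes $\zeta$ for which the locus is strict; strictness is exactly the condition that $\zeta$ is not Hodge everywhere. Countability is then immediate. The real content is that each irreducible component $Z$ of $\pi(\tilde Z_\zeta)$ is an algebraic subvariety of $S$.

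Step one: reduce to the zero locus of an admissible normal function. Replacing $\V$ by the sub-variation generated inside $T^{a,b}(\V)$, we get an admissible graded-polarized VMHS $\mathbb{H}$. Working on the locus where $\zeta$ projects to a Hodge class of $\gr^W_0$, the class $\zeta$ defines an extension of $\Z(0)$ by $W_{-1}\mathbb{H}$, that is, an admissible normal function $\nu$. The component $Z$ is a component of the zero locus of $\nu$, intersected with the locus where the pure class in $\gr^W_0$ is Hodge. The pure part is algebraic by Cattani--Deligne--Kaplan, which uses the nilpotent orbit and $SL_2$-orbit theorems to control Hodge classes near the boundary of a compactification. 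This reduces the problem to showing that zero loci of admissible normal functions, and their analogues with varying $\gr^W_0$, are algebraic.

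Step two, the main obstacle: algebraicity of the zero locus of an admissible normal function over a non-compact base. Choose a smooth compactification $\bar S$ with normal crossing boundary, via Hironaka. It then suffices to show that the closure of each component of the zero locus in $\bar S^\an$ is analytic, after which Chow/GAGA gives algebraicity. Following Brosnan--Pearlstein--Schnell, I would construct the N\'eron model, or rather the extended space of the family of intermediate Jacobians over $\bar S$. This is a complex analytic space, possibly non-Hausdorff, in which admissible normal functions extend holomorphically. Admissibility, in the sense of the relative weight filtration existing and the limit MHS being controlled, is exactly what is needed here. The zero section then extends, and the zero locus closure is the preimage of a closed analytic subset. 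In the presence of a general tensor rather than a normal function, I would instead use the Kato--Nakayama--Usui type partial compactification, or equivalently argue via a several-variable $SL_2$-orbit theorem for admissible VMHS (Kato--Nakayama--Usui, Pearlstein). The aim would be to show that near each boundary point the local Hodge locus is the restriction of an analytic set defined by finitely many integral classes, using finiteness of integral classes of bounded Hodge norm.

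Step three: conclude. Each component closure is analytic in the compact $\bar S^\an$, hence algebraic by Chow, so $Z$ is algebraic in $S$. It is strict because $\zeta$ fails to be Hodge along some path, so $Z \neq S$. Taking the union over the countably many $(a,b,\zeta)$ and components, which are countably many by local finiteness of components of analytic sets, gives the statement. The hardest point is the boundary analysis in step two: the norm estimates, the extension of the zero locus across the boundary, and the need to handle the non-Hausdorff N\'eron model.
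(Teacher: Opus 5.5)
Your outline is, in substance, the original argument of \cite{bps}: Cattani--Deligne--Kaplan for the image of the class in $\mathrm{gr}^W_0$, then a reduction to zero loci of admissible normal functions, then algebraicity of those zero loci through an analysis at the boundary of a normal crossing compactification.

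The paper gives no proof of its own here. It cites \cite{bps} and, in Section~4, points to the independent o-minimal proof of \cite{bbkt}, which is the route its machinery is set up for:
\begin{itemize}
\item After a finite \'etale cover so that $\Phi$ is defined, Proposition~\ref{mtbound} shows that $s\in\HL$ exactly when a lift of $\Phi(s)$ lies in $D_\M$ for some strict mixed Hodge subclass $(\M,D_\M)$. Take $\M=\pp(\V_s)$; conversely, points of $\Gamma\cdot D_\M$ have Mumford--Tate group inside a conjugate of $\M$.
\item Hence $\HL$ is the union of the preimages $\Phi^{-1}(\phi_{i,\Gamma}(\period_{(\M,D_\M),\Gamma_\M}))$ over countably many strict subclasses.
\item Every irreducible component of such a preimage is algebraic by Theorem~\ref{algspec}, that is, by the $\anexp$-definability of $\Phi$ (Theorem~\ref{defpermap}) combined with the definable Chow theorem (Theorem~\ref{petstar}).
\item Each component is strict because it misses the Hodge generic points of Lemma~\ref{exgen}.
\end{itemize}
This uses no normal functions, no N\'eron models and no separate treatment of $\mathrm{gr}^W_0$. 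All the asymptotic Hodge theory is absorbed into the definability of $\Phi$, and the components of $\HL$ come out directly as the intersection loci the paper works with. Your route avoids o-minimality, but it puts the entire difficulty into the boundary analysis of step two, which you only sketch.

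The N\'eron-model mechanism you describe in step two does not work as stated.
\begin{itemize}
\item Over a base of dimension at least $2$, an admissible normal function can have non-torsion singularities at boundary points. Such a function does not extend to a holomorphic section of a N\'eron model: in Schnell's construction, the holomorphic horizontal sections are exactly the admissible normal functions without singularities.
\item In a non-Hausdorff model, the locus where an extended section agrees with the extended zero section need not be closed. So that locus would not compute the closure of the zero locus anyway.
\end{itemize}
What is actually proved is that the closure of the zero locus is analytic in $\overline{S}$. In Schnell's approach this is the closure of the graph of $\nu$ in his N\'eron model, obtained via Saito's mixed Hodge modules; Brosnan--Pearlstein obtain it via the several-variable $SL_2$-orbit theorem.

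Step one also needs a detail you skip. Before speaking of a normal function on a Cattani--Deligne--Kaplan component, you must pass to a resolution of that component. You must also pass to a finite cover on which the $\mathrm{gr}^W_0$ class is monodromy invariant, so that it spans a sub-variation isomorphic to $\Z(0)$.
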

The present paper belongs to a line of works aiming to go one step further in the study of the distribution of Hodge locus in $S$, building on the intersection theoretic description of $\HL$ proposed in \cite{klingler_atyp}.
\subsection*{The Zilber-Pink program and first results}
In \cite{klingler_atyp}, Klingler described $\HL$ as a countable union of intersection loci, using the period map associated to $\V$ (see. Sect. \ref{sect4} for an overview). Building on this, he exhibited a decomposition of the Hodge locus
\[
\HL = \HL_\typ \cup \HL_\atyp
\]
into two parts, the typical and atypical part whose distribution is predicted by the:
\begin{conjecture}[Zilber-Pink conjecture]\label{zpintr}
Let $\V$ be an admissible and graded-polarized integral variation of mixed Hodge structures over a smooth and irreducible quasi-projective complex algebraic variety $S$. Then
\begin{itemize}
\item[(a)] the atypical Hodge locus $\HL_\atyp$ is a strict Zariski-closed subset of $S$;
\item[(b)] if the typical Hodge locus $\HL_\typ$ is non-empty it is dense in $S$ for the Zariski topology.
\end{itemize}
\end{conjecture}
\begin{remark}
We will actually work with a refined definition of $\HL_\typ$ and $\HL_\atyp$ due to \cite[Def. 2.4, Conj. 2.5]{bku} for pure variations and \cite[Def. 4.8, Conj. 4.10]{bu} in general. Part $(a)$ of the conjecture was first stated in \cite[Conj. 1.10]{klingler_atyp} and Part $(b)$ is an analog of \cite[Conj. 2.7]{bku} in the setting of variations of mixed Hodge structures. As we shall show later, it (trivially) cannot hold for variations of mixed Hodge structures with density for the Zariski topology replaced by density for the euclidean topology as in the case of pure variations. 
\end{remark}
This work is dedicated to studying part $(b)$ of the Zilber-Pink conjecture, partially generalizing results about pure variations in \cite{bku, TT, es, ku, ketay} to mixed variations. Our first main result is an analog of the all-or-nothing theorem \cite[Thm. 3.9]{bku} for pure variations:
\begin{theorem}\label{aon}
Assume that $\HL_\typ$ is non-empty. Then the full Hodge locus $\HL$ is Zariski-dense in $S$.
\end{theorem}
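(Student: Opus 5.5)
We argue via the period map, reducing to a density statement for the transverse part and then propagating non-emptiness along $S$. Let $\G$ be the generic Mumford--Tate group of $\V$ and $\period$ the associated mixed Mumford--Tate domain. Let $\Phi : S^\an \to \Gamma\backslash\period$ be the period map and $\tilde\Phi$ a lift to the universal cover. A typical point $s \in \HL_\typ$ lies in a special subvariety $Z$ that is a component of $\Phi^{-1}(\Gamma\backslash\period_\M)$ for a strict Mumford--Tate subgroup $\M \subset \G$. Typicality means that $\tilde\Phi(\tilde S)$ meets $\period_\M$ with the expected codimension, $\operatorname{codim}_S Z = \operatorname{codim}_{\period}\period_\M$, and that $\M$ is the generic Mumford--Tate group of $Z$.

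\textbf{Step 1: propagation by a density argument.} This mimics the proof of \cite[Thm. 3.9]{bku}. The intersection at a point of $\tilde Z$ is transverse in a neighbourhood. Therefore for every $g \in \G(\R)$ sufficiently close to $1$, the translate $g\cdot\period_\M$ still meets $\tilde\Phi(\tilde U)$ in a nonempty analytic set of the same codimension, where $U$ is an open ball around $s$. When $g \in \G(\Q)$, the translate $g\cdot\period_\M = \period_{g\M g^{-1}}$ is again a special subdomain, so its preimage lies in $\HL$.

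\textbf{Step 2: the mixed obstruction.} In the mixed case $\G(\Q)$ need not be dense in a neighbourhood of $1$ in the real group acting transitively on $\period$. One must use $\G(\Q)$ together with the unipotent radical $\mathbf{W}$, and $\period$ is only a homogeneous space for a group $\G(\R)\mathbf{W}(\C)$ that is not fully real. So I would work with $\G(\Q)$, which is dense in $\G(\R)$ by weak approximation since $\G$ is connected. Then I would show that the $\G(\R)$-translates of $\period_\M$ near $\period_\M$ already sweep out an open subset of the $\tilde\Phi$-image near $\tilde\Phi(\tilde s)$.

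Orbits under the real group cover only a real-analytic family of such translates. Transversality in the complex directions is therefore what gives an open set of points in $\tilde U$ lying on some $g\cdot\period_\M$ with $g$ real. Concretely, one considers the map
\[
\G(\R)\times \period_\M \to \period.
\]
The needed claim is that the image of a neighbourhood of $(1,\tilde\Phi(\tilde s))$ contains a neighbourhood of $\tilde\Phi(\tilde s)$ inside $\tilde\Phi(\tilde U)$, at least on a Zariski-dense set.

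\textbf{Step 3: conclusion.} With Step 2 in hand, the points of $U$ on rational translates form a dense subset of $U$ in the Euclidean topology. Hence $\HL \cap U$ is Euclidean-dense in $U$. Since $S$ is irreducible and $U$ is open, a set that is Euclidean-dense in a nonempty open subset cannot lie in a strict Zariski-closed subset. Thus $\HL$ is Zariski-dense in $S$.

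\textbf{Main obstacle.} I expect the main difficulty to be Step 2. In the mixed setting the real orbit may fail to be open in $\period$: the imaginary part of the unipotent directions is not reached by $\G(\R)$, consistent with the remark above that Euclidean density fails. In that case I would give up Euclidean density and argue differently. Let $Y$ be the Zariski closure of the constructed special points, and let $Y'$ be the real-analytic subset of $U$ swept out by the real translates. I would show that the Zariski closure of $Y'$ is all of $S$, using that $\tilde\Phi(\tilde S)$ is not contained in any proper $\G(\C)$-invariant algebraic family, by the definition of $\G$ as the generic Mumford--Tate group. This would again give Zariski density of $\HL$.
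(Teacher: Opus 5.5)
Your outline matches the paper's up to the point where the mixed case actually matters. The paper proves Theorem \ref{allornothing} (a non-empty $\HLM_\trans$ is Zariski-dense), which suffices since $\HL_\typ\subset\HL_\trans$. It does so by moving $D_\M$ by elements $g\in\pp(\Q)^+$ near $1$, using Theorem \ref{stabtrans} and density of $\pp(\Q)^+$ in $\pp(\R)^+$. However, your Step~2 claim is false in general, as you suspect. In Example \ref{ex1}, $D_\M$ is a real point of $D_\pp\cong\C$, and its $\pp(\R)^+$-sweep is locally a piece of $\R$. In Proposition \ref{contrexc}, the whole locus of a fixed type lies on the circle $|x_1|=3^{a/(2a-b)}$. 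Your fallback does not repair this. That $\tilde\Phi$ is not contained in a ``proper $\G(\C)$-invariant algebraic family'' is a Hodge-genericity statement and says nothing about where the real family of intersection points lies. You also never explain why the real swept set $Y'$ lies in the Zariski closure $Y$ of the rational points.

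The missing idea is to complexify the real directions at a single point. Let $S_\M$ be the Zariski closure of the locus and $\I_\M$ the lift of $\Phi(S_\M^\an)$, which is complex-analytic. Theorem \ref{stabtrans} gives a smooth $\sigma$ on a neighbourhood $\Omega$ of $1$ in $\pp(\R)^+$ with $\sigma(1)=z$, $\sigma(g)\in(g\cdot D_\M)\cap\I$, and transversality at $\sigma(g)$. For rational $g$, Lemma \ref{equivdifftrans} puts $\sigma(g)$ in $\I_\M$. Hence $\sigma(\Omega)\subset\I_\M$ by density and closedness; this is your missing inclusion $Y'\subset Y$. Write $\sigma(g)=g\cdot\sigma_\M(g)$ with $\sigma_\M(g)\in D_\M$ and differentiate at $1$. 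This gives $(T_zD_\pp)_\R\subset T_z\I_\M+T_zD_\M$, where $(T_zD_\pp)_\R$ is the image of $\lie(\pp_\R)$ in $\para_\C/F^0_z\para$. The right-hand side is a \emph{complex} subspace and $\lie(\pp_\R)$ spans $\para_\C$ over $\C$, so it is all of $T_zD_\pp$. Transversality gives $T_z\I_\M\cap T_zD_\M=T_z\Zcal=T_z\I\cap T_zD_\M$, so a dimension count yields $T_z\I_\M=T_z\I$. From this the paper gets $\Phi(S_\M^\an)=\Phi(S^\an)$, and this is what replaces Euclidean density.

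You would still need to pass from $\Phi(S_\M^\an)=\Phi(S^\an)$ to $S_\M=S$. The paper does this with the factorization $\Phi=i^\an\circ f^\an$, $f$ dominant (Theorem \ref{facto}). Since special subvarieties are full components of $\Phi$-preimages, $S_\M$ contains whole fibres of $f$ over a dense set.

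Finally, your Step~1 assumes differential transversality. Typicality is only a dimension count, and the intersection may be tangent along the whole component. The paper first replaces $D_\M$ by a suitable translate (Proposition \ref{typtotrans}) to obtain it.
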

In particular, this yields:
\begin{corollary}\label{zpimp}
Conjecture \ref{zpintr}$(a)$ implies Conjecture \ref{zpintr}$(b)$.
\end{corollary}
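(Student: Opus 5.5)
The corollary follows directly from Theorem \ref{aon} together with the fact that $\HL$ is the union of its typical and atypical parts. The plan is to assume Conjecture \ref{zpintr}$(a)$ holds for $\V$ and that $\HL_\typ$ is non-empty, and then to show that $\HL_\typ$ is Zariski-dense in $S$, which is Conjecture \ref{zpintr}$(b)$.

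First, Theorem \ref{aon} applies because $\HL_\typ \neq \emptyset$, so the full Hodge locus $\HL$ is Zariski-dense in $S$. Next, we use the decomposition
\[
\HL = \HL_\typ \cup \HL_\atyp
\]
(in the refined sense of \cite{bku, bu}). Taking Zariski closures, and using that the closure of a finite union is the union of the closures, we get
\[
S = \overline{\HL}^{\mathrm{Zar}} = \overline{\HL_\typ}^{\mathrm{Zar}} \cup \overline{\HL_\atyp}^{\mathrm{Zar}}.
\]
By Conjecture \ref{zpintr}$(a)$, the second term is a strict Zariski-closed subset of $S$. The variety $S$ is irreducible, so it cannot be the union of two proper closed subsets. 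Hence $\overline{\HL_\typ}^{\mathrm{Zar}} = S$, which is exactly part $(b)$.

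The only point needing care is that the decomposition $\HL = \HL_\typ \cup \HL_\atyp$ must hold set-theoretically with the refined definitions of \cite[Def. 4.8]{bu}. This is built into those definitions, which sort each special subvariety into typical or atypical according to a codimension condition, so it is the step I would verify but expect to be a bookkeeping check rather than a real obstacle. I would also check that the conjectures are applied to the same variation $\V$ over the same base $S$, so that irreducibility of $S$ is available. The substantive content is entirely in Theorem \ref{aon}.
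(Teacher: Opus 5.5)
Your proposal is correct and follows the same route as the paper. The paper states the corollary as an immediate consequence of Theorem \ref{aon}, combined with the tautological decomposition $\HL = \HL_\typ \cup \HL_\atyp$ (every special subvariety is either typical or atypical by definition) and the irreducibility of $S$; the proof of Corollary \ref{aontypos} uses the same argument explicitly.
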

Building on Baldi-Urbanik's Geometric Zilber-Pink theorem (\cite[Thm. 7.1]{bu}), we deduce from Theorem \ref{aon} the statement of Conjecture \ref{zpintr}$(ii)$ for the typical Hodge locus of factorwise positive dimension (as defined below in Definition \ref{posperdim}):
\begin{corollary}\label{aontypos}
The typical Hodge locus of factorwise positive dimension $\HL_{\typ, \mathrm{pos}}$ is non-empty if and only if it is Zariski-dense in $S$.
\end{corollary}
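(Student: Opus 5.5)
We must show: if $\HL_{\typ,\pos}$ is non-empty then it is Zariski-dense. The converse is trivial since $S$ is non-empty.

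The plan is to combine Theorem \ref{aon}, or rather the mechanism behind it, with Baldi-Urbanik's Geometric Zilber-Pink theorem \cite[Thm. 7.1]{bu}. The latter says that the atypical Hodge locus of factorwise positive dimension $\HL_{\atyp,\pos}$ is a finite union of Hodge-theoretic components. Each component is either a strict Zariski-closed subset of $S$, or it is dense in $S$ but contained in the fibers of a "factor" period map, i.e. it comes from a non-trivial splitting of the generic Mumford-Tate group.

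First I reduce to the case where no such factorwise splitting occurs. Replace $S$ by a finite étale cover and decompose the generic Mumford-Tate group (or its adjoint/graded quotient) into factors. The positive-dimensional parts are measured factorwise, so the problem reduces to the variation attached to the factors along which the typical intersection is positive-dimensional. After this reduction, \cite[Thm. 7.1]{bu} gives that the Zariski closure $Z$ of $\HL_{\atyp,\pos}$ is a strict closed subset of $S$.

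Second, I run the argument of Theorem \ref{aon} in the factorwise positive-dimensional setting. Pick a typical component $Y \subset \HL_{\typ,\pos}$ through a point $s$. Typicality means the period image of $S$ meets the relevant Mumford-Tate sub-domain with the expected codimension, transversally along $Y$. The all-or-nothing argument produces, by perturbing the special subdomain by elements of the monodromy group $\Gamma$ (dense in the real/complex points of the generic group by Deligne-André), new typical intersections $\gamma \cdot D_{\mathbf H}$ with the period image. These translates accumulate near every point of a Zariski-open subset of $S$. Transversality is an open condition, so these new intersections remain typical. Since the ambient intersections of the translates are still positive-dimensional in each factor, they lie in $\HL_{\typ,\pos}$. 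Their union is then Zariski-dense, giving the claim.

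The main obstacle is checking that the translated intersections are genuinely typical and not atypical, and remain factorwise positive-dimensional, uniformly in the mixed setting, where the unipotent radical makes the Mumford-Tate domain non-homogeneous under a reductive group. Here I would invoke Theorem \ref{aon}: it already yields Zariski density of $\HL$ from non-emptiness of $\HL_\typ$. The components produced in its proof near $Y$ have the same dimension as $Y$, so they are positive-dimensional. They can be atypical only if they lie in $Z$. Since $Z$ is strict closed, the components not contained in $Z$ are still Zariski-dense, and they lie in $\HL_{\typ,\pos}$.
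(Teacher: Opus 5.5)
Your closing paragraph is the paper's argument. You apply the all-or-nothing theorem to the type $(\M,D_\M)=(\pp_Y,D_{\pp_Y})$ of a typical $Y$ of factorwise positive dimension, confine $\HL_{\atyp,\pos}$ to the finitely many strict special subvarieties $Z_1,\dots,Z_l$ of Corollary~\ref{zpcor}, and discard them using irreducibility of $S$. What you actually need is Theorem~\ref{aontrans} (density of $\HLM_\trans$ for this fixed type), not Theorem~\ref{aon}, since density of $\HL$ could be carried entirely by special points.

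The material before that paragraph is unnecessary and partly false.
\begin{itemize}
\item No reduction to ``factors'' is available: the generic Mumford--Tate group of a mixed variation need not be a product, and passing to a graded or quotient variation changes both the Hodge locus and typicality. None is needed either. The subvarieties coming from $\Sigma^{\mathrm{gen}}_{(S,\V)}$ lie in fibres of some $\Phi_{/\NN}$, which is exactly what factorwise positivity excludes, so Corollary~\ref{zpcor} applies directly.
\item The group $\Gamma$ is discrete in $\pp(\R)$ and $\pi_\Gamma(\gamma\cdot D_\M)=\pi_\Gamma(D_\M)$, so $\Gamma$-translates give no new special subvarieties. Theorem~\ref{andre} is a normality statement, not a density one. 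The paper instead moves $D_\M$ by elements $g\in\pp(\Q)^+$ close to $1$.
\item Openness of transversality gives transverse intersections, not typical ones. Your last paragraph handles the atypical ones correctly via $Z$.
\item The new intersections do not accumulate at every point of a Zariski-open set: Proposition~\ref{contrexc}$(i_2)$ exhibits a Zariski-dense but not Euclidean-dense $\HLM_\trans$. The density is only Zariski, and it comes from Lemma~\ref{realsub}. There the $\pp(\R)^+$-directions place the real form $(T_zD_\pp)_\R$ inside the complex subspace $T_z\I_\M+T_zD_\M$, which forces $T_z\I_\M=T_z\I$.
\end{itemize}

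The step that genuinely needs an argument, and which you only assert (``still positive-dimensional in each factor''), is factorwise positivity of the new components $Y_g$. Having the dimension of $Y$ gives only $\dim\Phi(Y_g^\an)>0$. Both of your closing inferences need more: that an atypical $Y_g$ lies in $Z_1\cup\dots\cup Z_l$, and that a typical one lies in $\HL_{\typ,\pos}$, each require $\dim\Phi_{/\NN}(Y_g^\an)>0$. The paper's proof is equally brief here. It deduces $\HLM_\trans\subset\HL_\pos$ from $\dim D_\M+\dim\Phi(S^\an)-\dim D_\pp>0$, but that inequality only computes $\dim\Phi(Y_g^\an)$ and says nothing about the quotient period maps.

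One way to close the gap is a local argument.
\begin{itemize}
\item Consider the family of intersections $g\cdot D_\M\cap\I$, $g\in\Omega$, from the proof of Lemma~\ref{realsub}. By transversality it is a submersion over $\Omega$ near $(1,z)$.
\item The rank of $q_\NN$ restricted to the fibres is lower semicontinuous. On the component $\Zcal$ through $z$ it is $\geqslant 1$ at points arbitrarily close to $z$, because $\Zcal$ is irreducible and $\dim\Phi_{/\NN}(Y^\an)>0$.
\item Shrink $\Omega$ once for each $\NN$ to be tested; finitely many suffice for Corollary~\ref{zpcor}, cf.\ Remark~\ref{finposdim}. Then every $g\in\Omega\cap\pp(\Q)^+$ gives a $Y_g$ with $\dim\Phi_{/\NN}(Y_g^\an)>0$.
\item The tangent-space argument of Theorem~\ref{allornothing} only uses such $g$, so these $Y_g$ are already Zariski-dense. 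Removing $Z_1\cup\dots\cup Z_l$ then concludes.
\end{itemize}
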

\subsection*{The transverse Hodge locus and the all-or-nothing property}
The above described all-or-nothing property follows from an analogous and more precise one for the transverse Hodge locus. To define the latter let us introduce some language for which we refer the non-expert reader to Sect. \ref{sect2} and \ref{sect3}. 

Let $(\pp, D_\pp)$ be the generic mixed Hodge datum\footnote{Strictly speaking there is no canonically defined generic mixed Hodge datum, as we will explain in the body of the manuscript. We will introduce an equivalence relation between mixed Hodge data and work with equivalence classes, called mixed Hodge classes. For simplicity, we will ignore this technicality in this introduction.} of $\V$ and $\Gamma \subset \pp(\Q)^+$ be an arithmetic lattice containing the image of the monodromy representation of $\V$. For simplicity, we will assume in this introduction that $\Gamma$ can be chosen neat. Let $\period = \Gamma \quo D_\pp$ be the associated Hodge variety and $\Phi : S^\an \rightarrow \period$ the period map of $\V$. For any strict mixed Hodge subdatum $(\M, D_\M) \subsetneq (\pp, D_\pp)$ we denote by $\period_{(\M, D_\M)}$ the corresponding Hodge subvariety which is a locally closed complex-analytic subset of $\period$.
\begin{definition}[Definition \ref{deftrans}]
Let $(\M, D_\M) \subsetneq (\pp, D_\pp)$ be a strict mixed Hodge subdatum. 
\begin{itemize}
\item A \textit{$(\M, D_\M)$-transverse special subvariety of $S$ for $\V$} is a complex-analytic irreducible component $Z$ of the preimage $\Phi^{-1}(\period_{(\M, D_\M)})$ such that:
\begin{itemize}
\item[(a)] $\Phi(Z^\an)$ is not contained in the singular locus of $\Phi(S^\an)$;
\item[(b)] $\dim \Phi(Z^\an) = \dim \Phi(S^\an) + \dim \period_{(\M, D_\M)} - \dim \period$.
\end{itemize}
\item The \textit{transverse Hodge locus of type $\M$ of $\V$} is the union $\HLM_\trans$ of $(p\M p^{-1}, p \cdot D_\M)$-transverse special subvarieties of $S$ for $\V$ as $p$ ranges over $\pp(\Q)^+$. 
\item \textit{The transverse Hodge locus of $\V$} is the union $\HL_\trans$ of all $(\LL,D_\LL)$-transverse special subvarieties of $S$ for $\V$ over the set of strict mixed Hodge subdata $(\LL, D_\LL) \subsetneq (\pp, D_\pp)$.
\end{itemize}
\end{definition}
The relevance of these definitions for the study of the typical Hodge locus lies in the following two facts (see Remark \ref{typtrans}):
\begin{itemize}
\item $\HL_\typ \subset \HL_\trans$
\item if Conjecture \ref{zpintr}$(a)$ holds, then $\HL_\trans$ is Zariski-dense in $S$ if and only if $\HL_\typ$ is.
\end{itemize}
Theorem \ref{aon} is then an immediate consequence of the following property of transverse Hodge loci which is our first main result:
\begin{theorem}[Theorem \ref{allornothing}]\label{aontrans}
Let $(\M, D_\M) \subsetneq (\pp, D_\pp)$ be a strict mixed Hodge subdatum. The transverse Hodge locus $\HLM_\trans$ of type $\M$ is non-empty if and only if it is Zariski-dense in $S$.
\end{theorem}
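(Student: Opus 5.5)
One direction is immediate, since a Zariski-dense subset of $S$ is non-empty. For the converse, the plan follows the strategy of the all-or-nothing theorem \cite[Thm. 3.9]{bku} for pure variations, adapted to the mixed setting. The extra work lies in the non-reductive group $\pp$ and in the fact that the orbit $\pp(\Q)^+\cdot x$ is not euclidean-dense in $D_\pp$ in general. This lack of density is why the conclusion is Zariski-density and not euclidean density.

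\textbf{Step 1: local deformation.} Fix an $(\M,D_\M)$-transverse special subvariety $Z$. Pick a point $s\in Z$ with $\Phi(s)$ a smooth point of $\Phi(S^\an)$, which exists by condition (a). Lift to a neighbourhood $U$ of $s$ with local period map $\tilde\Phi:U\to D_\pp$. Condition (b) says $\tilde\Phi(U)$ and $D_\M$ meet along a component of the expected dimension. I will use the smooth point and (b) to show the intersection is transverse at a generic point of $\tilde\Phi(Z\cap U)$, in the sense that the tangent spaces span $T D_\pp$ modulo the relevant directions. Transversality is an open condition. Hence, for $g\in\pp(\R)^+$ close to the identity, $\tilde\Phi(U)\cap g\cdot D_\M$ remains non-empty of the expected dimension and still satisfies (a). The set $\{g\cdot y : g\in W,\ y\in \tilde\Phi(Z\cap U)\}$, for $W$ a small neighbourhood of $1$ in $\pp(\R)^+$, then contains an open subset of $\tilde\Phi(U)$. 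Its preimage is therefore an open set $\Omega\subset U$.

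\textbf{Step 2: rationality.} Replace $g\in\pp(\R)^+$ by $p\in\pp(\Q)^+$, using density of $\pp(\Q)$ in $\pp(\R)$ (weak approximation; $\pp$ is connected). For $p$ close to $1$, each component of $\tilde\Phi^{-1}(p\cdot D_\M)$ through $\Omega$ is a $(p\M p^{-1},p\cdot D_\M)$-transverse special subvariety. I need to check that (b) holds as an equality and not merely as an upper bound. The upper bound follows from algebraicity of the components and the fact that $\dim$ is upper semicontinuous in families, and transversality gives the lower bound. I conclude that $\HLM_\trans\cap U$ is euclidean-dense in $\Omega$.

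\textbf{Step 3: from local density to Zariski-density.} Let $Y$ be the Zariski closure of $\HLM_\trans$. By \cite{bps}, each special subvariety is algebraic, so $Y$ is a countable union of algebraic subvarieties and its closure is algebraic. $Y$ contains the non-empty open set $\Omega$ of $S^\an$. Since $S$ is irreducible, any Zariski-closed subset containing a euclidean open set is all of $S$, so $Y=S$.

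I expect the main obstacle to be Step 1 in the mixed setting. The issue is that condition (b) is stated in terms of dimensions of images in $\period$, while the period map is only horizontal, so the naive tangent-space transversality inside $D_\pp$ may fail. I would first try to work inside the smallest $\pp(\R)^+$-invariant (more precisely $\pp(\R)^+\uu(\C)$-type) orbit containing $\tilde\Phi(U)$, namely $D_\pp$ itself by genericity. There I would use that the $\pp(\R)^+$-translates of $D_\M$ cover $D_\pp$ through a submersion $\pp(\R)^+\times D_\M\to D_\pp$. Dimension counting with (b) then gives the openness directly. If $\pp(\R)^+$ does not act transitively because of the unipotent radical, I would use the orbit of $\pp(\R)^+\uu(\C)$ and approximate with $\uu(\Q)$-conjugates, using that $\uu(\Q)$ is dense in $\uu(\R)$ and handling the complex directions via the $\pp(\R)$-part.
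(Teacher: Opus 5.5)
\textbf{The gap: rational translates of $D_\M$ do not sweep out an open set.} Step 1 claims that the intersections $\tilde\Phi(U)\cap g\cdot D_\M$, for $g$ in a neighbourhood $W$ of $1$ in $\pp(\R)^+$, cover an open subset of $\tilde\Phi(U)$. Step 2 then concludes that $\HLM_\trans$ is euclidean-dense in an open set $\Omega$. This step fails in the mixed setting.

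A point $w\in\tilde\Phi(U)$ lies in some $g\cdot D_\M$ with $g\in\pp(\R)^+$ only if $w\in\pp(\R)^+\cdot D_\M$. Since $\pp(\R)^+$ alone does not act transitively on $D_\pp$ (only $\pp(\R)^+\pp^u(\C)$ does), this set is in general a proper real-analytic subset of $D_\pp$. It equals $D_\pp$ when $(\M,D_\M)$ is sufficiently unipotent (Lemma \ref{sufuncrit}), but not otherwise.

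The example of Proposition \ref{contrexc} is a counterexample. Under $D_\pp\cong\C^2$, one has $\pp(\R)^+\cdot D_{a,b}=\{bz_1-az_2\in\R\}$, a real hypersurface. Correspondingly, $\HLM_\trans$ is contained in the real curve $\{|x_1|=3^{a/(2a-b)}\}$ of $C$, so it is not dense in any open subset of $C^\an$. You observe yourself that $\pp(\Q)^+$-orbits are not euclidean-dense, yet Step 2 concludes local euclidean density, which is exactly what fails.

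Your fallback of using $\uu(\Q)$-conjugates cannot reach the missing directions. Every $p\cdot D_\M$ with $p\in\pp(\Q)^+$ stays inside $\pp(\R)^+\cdot D_\M$, and $\uu(\Q)$ is dense in $\uu(\R)$, not in $\uu(\C)$.

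Separately, condition (b) plus smoothness does not give differential transversality at a generic point of the intersection. For example, $\{y=x^2\}\times\C$ and $\{y=0\}$ in $\C^3$ meet in the expected dimension but are nowhere transverse. The paper obtains transversality by passing to a rational conjugate via Proposition \ref{typtotrans}.

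\textbf{The missing idea: the Zariski closure is complex-analytic, so a real family of tangent directions suffices.}
\begin{itemize}
\item Let $S_\M$ be the Zariski closure of $\HLM_\trans$, and $\I_\M$ its trace in $\I$ near $z$.
\item The stability theorem gives a smooth family of transverse intersection points $\sigma(g)\in\I\cap g\cdot D_\M$ for $g$ near $1$ in $\pp(\R)^+$. For rational $g$ these lie in $\I_\M$; by closedness of $\I_\M$ and density of $\pp(\Q)^+$, they lie in $\I_\M$ for all real $g$.
\item Write $\sigma(g)=g\cdot\sigma_\M(g)$ with $\sigma_\M(g)\in D_\M$ and differentiate at $g=1$. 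This gives $(T_zD_\pp)_\R\subset T_z\I_\M+T_zD_\M$.
\item The right-hand side is a complex subspace, and $(T_zD_\pp)_\R$ spans $T_zD_\pp$ over $\C$. Hence $T_z\I_\M+T_zD_\M=T_zD_\pp$.
\item Let $\Zcal$ be the component of $\I\cap D_\M$ lifting $\Phi(Z^\an)$. Since $T_z\I\cap T_zD_\M=T_z\Zcal\subset T_z\I_\M$, a dimension count gives $T_z\I_\M=T_z\I$, i.e.\ $\Phi(S_\M^\an)=\Phi(S^\an)$.
\item This controls only the period image, so one more step is needed. Special subvarieties are components of full preimages under $\Phi=i^\an\circ f^\an$. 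Hence $S_\M$ contains whole fibres of $f$ over a Zariski-dense set of points of $f(S)$, and generic flatness then forces $S_\M=S$.
\end{itemize}
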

With this result in hand, a natural goal to pursue is that of understanding when each possibility of the above alternative occurs. The remaining part of this work is dedicated to this problem.
\subsection*{Likeliness and density of the Hodge locus}
Let $(\M, D_\M) \subsetneq (\pp, D_\pp)$ be a strict mixed Hodge subdatum and assume that $\HLM_\trans$ is non-empty. By definition of transverse special subvarieties, this forces $(\M,D_\M)$ to satisfy the dimension inequality
\[
\dim \Phi(S^\an) + \dim \period_{(\M,D_\M)} - \dim \period \geqslant 0.
\]
As we shall show, there are actually much more constraints in general: one such inequality for every factor. More precisely, for any normal subgroup $\NN$ of $\pp$ whose radical is unipotent, the datum $(\M,D_\M)$ must satisfy
\begin{equation}
\tag{$\ast_\NN$}
\dim \Phi_{/\NN}(S^\an) + \dim p_\NN(\period_{(\M, D_\M)}) - \dim \period/\NN \geqslant 0.
\label{vlikN}
\end{equation}
where $p_\NN : \period \rightarrow \period/\NN$ is the quotient-by-$\NN$ (holomorphic surjective) map and $\Phi_{/\NN} = p_\NN \circ \Phi$.
\begin{definition}
A strict mixed Hodge subdatum $(\M, D_\M) \subsetneq (\pp, D_\pp)$ is called $\V$-likely if it satisfies the inequality (\ref{vlikN}) for every normal subgroup $\NN$ of $\pp$ whose radical is unipotent.
\end{definition}
The distribution of the transverse Hodge locus will depend on the weights that occur in $\V$, hence we will need the following notation:
\begin{notation}
Any $h \in D_\pp$ defines a mixed $\Q$-Hodge structure on $\lie(\pp)$ with non-positive weights only. We will denote by $W_\bul \lie(\pp)$ the corresponding weight filtration. For integers $p,q \in \Z$ with $p+q \leqslant 0$, let $h_{\lie(\pp)}^{p,q}$ the Hodge numbers of the weight $p+q$ pure Hodge structure on $\gr^W_{p+q}(\lie(\pp))$ defined by $h$. The filtration and Hodge numbers do not depend on $h$ hence the omission of $h$ in the notation. 
\end{notation}
Our second main results states that if $\gr^W_{-2}(\lie(\pp)) = 0$, the $\V$-likeliness of a strict mixed Hodge subdatum $(\M, D_\M) \subsetneq (\pp, D_\pp)$ ensures that $\HLM_\trans$ is dense in $S$ \textit{for the euclidean topology}:
\begin{theorem}[Theorem \ref{mainprescribed}]\label{thm1}
Let $(\M, D_\M) \subsetneq (\pp, D_\pp)$ be a strict mixed Hodge subdatum. Assume that $\gr^W_{-2}(\lie(\pp)) = 0$. The following assertions are equivalent:
\begin{itemize}
\item[(i)] $\HLM_\trans$ is non-empty;
\item[(ii)]$\HLM_\trans$ is dense in $S^\an$ for the euclidean topology;
\item[(iii)]$(\M, D_\M)$ is $\V$-likely.
\end{itemize}
\end{theorem}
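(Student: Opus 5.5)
The implications (ii) $\Rightarrow$ (i) is trivial, and (i) $\Rightarrow$ (iii) is the necessity of the inequalities $(\ast_\NN)$. For (i) $\Rightarrow$ (iii) I take a $(p\M p^{-1}, p\cdot D_\M)$-transverse special subvariety $Z$ and a normal subgroup $\NN \subset \pp$ with unipotent radical. I then push everything to the quotient Hodge variety $\period/\NN$. The image $p_\NN(\Phi(Z^\an))$ lies in $p_\NN(\period_{(p\M p^{-1}, p\cdot D_\M)})$, which is a translate of $p_\NN(\period_{(\M,D_\M)})$ and so has the same dimension.

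The fibres of $p_\NN$ restricted to $\Phi(S^\an)$ are cut out by the fibres of $p_\NN$ restricted to $\period_{(\M,D_\M)}$. Hence $\dim\Phi(Z) - \dim p_\NN\Phi(Z)$ is at most the generic fibre dimension of $\Phi(S)\cap$ a fibre, intersected with $\period_{\M}$. Counting with the subadditivity $\operatorname{codim}(A\cap B)\le \operatorname{codim} A+\operatorname{codim} B$ inside the fibre $\period \to \period/\NN$, and combining with the exact transversality equality (b), I obtain $(\ast_\NN)$. Condition (a), non-singularity, is what makes the local codimension count legitimate.

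The substance is (iii) $\Rightarrow$ (ii). Fix a smooth point $s$ of $\Phi(S^\an)$ and a small euclidean neighbourhood $U$ of $s$. I want to produce $p\in\pp(\Q)^+$ such that $p\cdot\period_{\M}$ meets $\Phi(U)$ transversally, in the sense of condition (b). I work in the compact dual / a local lift $\tilde\Phi: \tilde U \to D_\pp$. The key construction is a real-analytic or real-algebraic map $\pp(\R)^+\times D_\M \to D_\pp$, $(g,x)\mapsto g\cdot x$. The locus $\mathcal{T}\subset\pp(\R)^+$ of elements $g$ for which $g D_\M$ meets $\tilde\Phi(\tilde U)$ transversally at a smooth point is open, as transversality is an open condition. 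If $\mathcal T$ is non-empty, density of $\pp(\Q)^+$ in $\pp(\R)^+$ gives a rational $p\in\mathcal T$, hence a transverse special subvariety in $U$. So everything reduces to showing $\mathcal T\neq\emptyset$, i.e. to exhibiting one real translate meeting $\tilde\Phi(\tilde U)$ with the expected dimension.

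This is a linear-algebra statement on tangent spaces. Note that $T_h D_\pp = \lie(\pp)_\C/F^0$, and $T_h(gD_\M)$ runs through $\Ad(g)$-conjugates of $\lie(\M)_\C/F^0$. I need some $g$ with $T\tilde\Phi + \Ad(g)T D_\M$ of the expected dimension.

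The main obstacle is the passage from the family of inequalities $(\ast_\NN)$ to the existence of such a $g$. I would attack it by induction along a composition series of normal subgroups with unipotent radical, using the decomposition of $\lie(\pp)$ into the reductive quotient's simple factors and the graded pieces of the weight filtration. A failure of generic transversality produces an $\Ad(\pp(\R))$-invariant subspace of $T D_\pp$ on which the sum is degenerate. Such a subspace corresponds to a normal subgroup $\NN$, and for this $\NN$ the inequality $(\ast_\NN)$ would be violated; this is the mixed analogue of the argument in \cite{ketay} for pure variations.

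Here the hypothesis $\gr^W_{-2}(\lie(\pp))=0$ enters. Without it, the weight $-2$ part contributes directions in $D_\pp$ that are real-translates only up to a complex torus fibre. In that part, real translates of $D_\M$ do not sweep out an open set of complex positions, since $\pp(\R)$-orbits have real codimension there. When $\gr^W_{-2}=0$, the unipotent radical acts with orbits compatible with the complex structure, so the $\pp(\R)$-translates move $T D_\M$ enough. Irreducible $\pp(\R)$-submodules then match normal subgroups, which is what makes the induction work.
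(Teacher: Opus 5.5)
Your endgame for (iii)$\Rightarrow$(ii) matches the paper's. You find one $g\in\pp(\R)^+$ with $g\cdot D_\M$ tangentially transverse to the local period image. You then use openness of transversality (Theorem~\ref{stabtrans}) and density of $\pp(\Q)^+$. The gap is the existence of that $g$, and the mechanism you propose cannot supply it.

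At a fixed point $h$, only the stabilizer of $h$ acts on $T_hD_\pp$, not $\pp(\R)$. In the examples of Section~\ref{sect6} this stabilizer acts on $T_hD_\pp\cong\C^n$ by real scalars, so every subspace is invariant. Pointwise linear algebra therefore cannot put $T_h(g\cdot D_\M)$ in general position with respect to $T_h\I$. Moreover, $\Ad(\pp(\R))$-stable subspaces of $\lie(\pp)_\R$ are real ideals, which need not be defined over $\Q$, while the inequalities $(\ast_\NN)$ only concern $\NN\in\Np$.

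The missing idea is the Ax--Schanuel Theorem~\ref{axschan}, used in Proposition~\ref{suf}. An excess-dimensional component $U$ of $\I\cap(g\cdot D_\M)$ through a Hodge-generic point is forced into a fibre $q_\NN^{-1}(t)$ for some $\NN\in\Np$. For $\NN$ minimal, a second application inside that fibre gives the expected fibrewise dimension, which contradicts $d_\NN\le d_{\pp^\der}$ (Corollary~\ref{reflik}). This count only works after shrinking $S$ to the open set $S_\gen$ of Proposition~\ref{indep}, whose existence rests on the geometric Zilber--Pink Theorem~\ref{geozp}. Even then, expected dimension is strictly weaker than tangential transversality (think of a parabola and its tangent line). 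One still needs the Sard argument of Proposition~\ref{typtotrans}, and your phrase ``one real translate meeting $\tilde\Phi(\tilde U)$ with the expected dimension'' conflates the two.

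You also misplace the role of $\gr^W_{-2}(\lie(\pp))=0$. By Pink (as used in Lemma~\ref{sufuncrit}), it is $\pp(\R)\exp(W_{-2}\lie(\pp))(\C)$, not $\pp(\R)$, that acts transitively on $D_\pp$. So all directions of weight $\le -2$, not only weight $-2$, are in general out of reach of real translations. Take extensions of $\Z(0)$ by a Hodge structure $H$ of type $\{(-1,-2),(-2,-1)\}$. Then $\gr^W_{-2}(\lie(\pp))=0$, but the fibres $H_\C/F^0H_\C=H_\C$ of $D_\pp$ over the pure part have twice the real dimension of the $\pp^u(\R)=H_\R$-orbits. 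So your claim that under the hypothesis the unipotent radical acts ``compatibly with the complex structure'' is false. A priori, no real translate of $D_\M$ need even pass through a given point of $\tilde\Phi(\tilde U)$.

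What the paper uses instead is that likeliness itself forces $W_{-3}\lie(\M)=W_{-3}\lie(\pp)$ (Proposition~\ref{likconstr}(i)). At a tangentially transverse point one has $T_h(g\cdot D_\M)+T_h\I=T_hD_\pp$, and $T_h\I\subset\bigoplus_q\Ad(g)\para^{-1,q}$ by Griffiths transversality. Hence $\mt^{p,q}=\para^{p,q}$ for $p<-1$, and Hodge symmetry of the Hodge numbers covers the rest of weight $\le -3$. Under $\gr^W_{-2}=0$ this makes $\M$ sufficiently unipotent. Lemma~\ref{sufuncrit} then gives $\pp(\R)^+\cdot D_\M=D_\pp$, which is what lets a real translate pass through any Hodge-generic lift.

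Your (i)$\Rightarrow$(iii) is fine, and more direct than the paper's route through Theorem~\ref{allornothing}. You do need to use that every fibre of $\Phi(S^\an)\to\Phi_{/\NN}(S^\an)$ has at least the generic dimension. The upper bound you state on $\dim\Phi(Z)-\dim p_\NN\Phi(Z)$ plays no role.
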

This in particular includes the analog statement in \cite{ku, es} for pure variations and density statements in the euclidean topology for the torsion locus of some normal functions as \cite[Thm. 1.5]{ketay}. In general however density for the euclidean topology may fail for the transverse Hodge locus of some prescribed $\V$-likely type as we demonstrate in an example:
\begin{proposition}[Proposition \ref{contrexc}]\label{pr1}
There exists a smooth and irreducible quasi-projective complex curve $C$ and an admissible integral graded-polarized variation of mixed Hodge structures $\V$ on $C$ with generic mixed Hodge datum $(\pp,D_\pp)$ such that:
\begin{itemize}
 \item[(i)] There exists a $\V$-likely strict mixed Hodge subdatum $(\M,D_M)$ of $(\pp, D_\pp)$ and for every such mixed Hodge subdatum:
\begin{itemize}
\item[($i_1)$] the transverse Hodge locus $\mathrm{HL}(C, \V^\otimes, \M)_\trans$ of type $\M$ is dense in $C(\C)$ for the Zariski topology;
\item[$(i_2)$] the transverse Hodge locus $\mathrm{HL}(C, \V^\otimes, \M)_\trans$ of type $\M$ is not dense in $C(\C)$ for the euclidean topology;
\end{itemize}
\item[(ii)] The full transverse Hodge locus $\mathrm{HL}(C, \V^\otimes)_\trans$ is dense in $C(\C)$ for the euclidean topology.
\end{itemize}
\end{proposition}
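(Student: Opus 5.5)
We build an explicit example in which $\gr^W_{-2}(\lie(\pp)) \neq 0$, so that Theorem \ref{thm1} does not apply. The natural candidate is a family of extensions of $\Z(0)$ by $\Z(1)$ over a curve. Such a variation is described by a nonconstant holomorphic map $C \rightarrow \C^\times$; we take $C = \mathbb{G}_m$ (or an open subset of it) with the tautological coordinate $t$, i.e.\ the Kummer variation. The generic Mumford--Tate group is then $\pp = \mult_a$, with $\gr^W_{-2}\lie(\pp)$ one-dimensional and of type $(-1,-1)$. The period domain $D_\pp \simeq \C$ and the Hodge variety $\period = \Z\backslash \C \simeq \C^\times$ are both $1$-dimensional, and $\Phi$ is the identity (or a finite map). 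The strict mixed Hodge subdata are the trivial unipotent group $\{1\}$ together with points $x \in D_\pp$ that are fixed by a $\Q$-conjugate of $\{1\}$. Conjugation by $\pp(\Q)$ translates $x$ by rationals, so the $\pp(\Q)^+$-orbit of a given subdatum has image in $\period$ equal to $x+\Q$ modulo $\Z$, a countable set. The key point to check is which points define genuine subdata. A point of $D_\pp$ with Mumford--Tate group trivial must correspond to a split extension after $\Q$-translation, i.e.\ to a torsion point of $\C^\times$. Hence, up to $\pp(\Q)^+$-conjugacy, there is a \emph{unique} strict subdatum $(\M,D_\M)$, with $\M$ trivial.

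Next we verify that $(\M,D_\M)$ is $\V$-likely. The normal subgroups of $\pp$ with unipotent radical are $\{1\}$ and $\pp$ itself, and the inequality $(\ast_\NN)$ reads $1+0-1\geqslant 0$ in the first case and $0\geqslant 0$ in the second. The transverse Hodge locus of type $\M$ is then the preimage of the torsion points $\mu_\infty\subset\C^\times$, and every such point satisfies conditions (a) and (b) trivially. This set is infinite, hence Zariski dense in the curve $C$, giving $(i_1)$. It is contained in the unit circle, hence not dense in the euclidean topology, giving $(i_2)$. In fact this is consistent with Theorem \ref{aontrans}.

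For (ii) the same variation does not work, because the full transverse locus is still $\mu_\infty$. We therefore enlarge the example. We take $\V=\V_1\oplus\V_2$, where $\V_1$ is the Kummer variation of $t$ and $\V_2$ is a pure weight-one piece, for instance an elliptic family over $C$ (an open subset of the modular curve, or of $\mathbb{G}_m$ via a nonconstant $j$-map). Alternatively we can add extensions by $\Z(1)$ with twisted coefficients so that $\gr^W_{-2}$ persists. The generic group $\pp$ now contains $\mult_a \times \gl_2$-type factors, and its strict subdata come in two kinds: those that are trivial on the unipotent part, which are still confined to the circle-type locus, and those that are CM on the pure part. The $\V$-likeliness computation via $(\ast_\NN)$ has to be redone with the normal subgroups $\mult_a$, the Levi factor, and so on. The CM points are euclidean-dense by the pure case (Theorem \ref{thm1} applied to $\gr(\V)$, or \cite{ku,es}), which yields (ii). What must still be checked is that every $\V$-likely type has a locus contained in the preimage of a real-analytic circle-type set; equivalently, that the types surviving likeliness all force the Kummer part to be torsion, while the CM types fail $(\ast_\NN)$ for $\NN$ the unipotent radical.

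I expect this balancing act to be the main obstacle. In the case $\dim\period/\NN=\dim\Phi_{/\NN}$, the CM-type subdata might satisfy every $(\ast_\NN)$, which would break $(i_2)$. If that happens, the fallback is a genuinely mixed construction: take a $1$-dimensional $\period$ with an extra non-likely factor, for example by replacing the elliptic part with a locally constant weight-one structure with CM. In that construction the only CM contributions come from full-dimensional but non-strict data, and the euclidean density in (ii) would then come from non-likely transverse components, which are allowed in the full locus.
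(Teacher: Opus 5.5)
Your proposal has a genuine gap: it never produces a single pair $(C,\V)$ that satisfies (i) and (ii) at the same time, which is what the statement asks for.

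\textbf{What goes wrong.} The Kummer variation over $\mult_m$ gives (i) but not (ii); it is exactly the example of Proposition \ref{contrexc1}. There are also minor errors in your analysis of it. Its generic group is $\mult_a\rtimes\mult_m$, not $\mult_a$. The strict subclasses are the torsion ones $(g\mult_m g^{-1},\{g\})$ with $g\in\Q$, because the bigraduation of any point acts non-trivially on $\Z(1)$ and so cannot factor through the trivial group. Finally, $\Np=\{\{1\},\mult_a\}$. None of this changes the numerics of $(\ast_\NN)$.

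Your enlargement $\V_1\oplus\V_2$, with $\V_2$ a non-isotrivial elliptic family, breaks $(i_2)$. In that setting:
\begin{itemize}
\item $\pp=\mult_a\rtimes\gl_2$, with $\gl_2$ acting on $\mult_a$ through $\det$;
\item $D_\pp\simeq\C\times\mathfrak{H}$;
\item $\Phi=(q,j)$ has one-dimensional image in a surface.
\end{itemize}
Take a CM torus $\mathbf{T}\subset\gl_2$ with fixed point $\tau$, and the class $(\mult_a\rtimes\mathbf{T},\C\times\{\tau\})$. Write each $(\ast_\NN)$ in the order $\dim q_\NN(D_\M)+\dim\Phi_{/\NN}(C)-\dim D_\pp/\NN$. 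You get $1+1-2=0$ for $\NN=\{1\}$, $0+1-1=0$ for $\NN=\mult_a$, $1+1-1\geqslant 0$ for $\NN=\mathbf{SL}_2$, and $0\geqslant 0$ for $\NN=\pp^\der$. In particular, the CM types do not fail for the unipotent radical as you hoped. So this class is $\V$-likely. Yet its transverse locus contains the $j$-preimage of a Hecke orbit of CM points, which is euclidean dense in $C$.

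This is structural, not an accident of the example. A likely type containing $\exp(W_{-2}\lie(\pp))$ always has euclidean-dense transverse locus (Theorem \ref{mainprescribed}). Adding a pure factor with moving CM points produces exactly such types.

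The fallback fails too. With a locally constant pure factor you are back to a one-dimensional $\period$, where $\mathrm{HL}(C,\V^\otimes)_\trans$ is the preimage of $\mu_\infty$. Density also cannot come from ``non-likely transverse components''. A type with non-empty transverse locus is automatically $\V$-likely (stated in the introduction, proved in Corollary \ref{necplus}). So $\mathrm{HL}(C,\V^\otimes)_\trans$ is exactly the union of the transverse loci of the likely types.

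\textbf{The missing idea.} You need infinitely many pairwise non-conjugate likely types, none of them sufficiently unipotent. Each must have an infinite but euclidean-thin transverse locus, and their union must be dense. The paper achieves this by making the unipotent radical two-dimensional. Take the extension of $\Z(0)$ by $\Z(1)^2$ over $C=\{x_1^2=3x_2\}\subset(\C^\times)^2$, with period map the inclusion, so that $\pp=\mult_a^2\rtimes\mult_m$. The strict subclasses are of two kinds:
\begin{itemize}
\item torsion points, which are not likely since $1+0-2<0$;
\item torsion translates of the subtori $Y_{a,b}=\{(x^a,x^b)\}$ with $\gcd(a,b)=1$. These are likely exactly when $(a,b)\neq(1,2)$: since $x_1^2x_2^{-1}$ is constant on $C$, $(\ast_\NN)$ fails for $\NN=\uu_{1,2}$ only in that case.
\end{itemize}
Now apply $(x_1,x_2)\mapsto(\log|x_1|,\log|x_2|)$. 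Every torsion translate of $Y_{a,b}$ maps into the line $\R(a,b)$, and $C$ maps onto the line $2u-v=\log 3$. Hence $C\cap\zeta Y_{a,b}$ lies on the circle $|x_1|=3^{a/(2a-b)}$, and the same picture shows that $C$ is Hodge generic.

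Each type therefore has an infinite locus (the points $c_{p,q}(a,b)$) confined to one circle. By dimension reasons this locus equals its transverse part, which gives $(i_1)$ and $(i_2)$. The radii $3^{a/(2a-b)}$ are dense in $\R_{>0}$ and the arguments $2\pi p/q$ are dense, so the union over types is euclidean dense, which gives (ii).
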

This result leaves open the question of knowing wether the existence of a $\V$-likely strict mixed Hodge subdatum of $(\pp, D_\pp)$ implies in general that the full transverse Hodge locus $\HL_\trans$ is dense in $S^\an$ for the euclidean topology. There is one obvious counterexample (see Proposition \ref{contrexc1} for more details):
\begin{example}[Proposition \ref{contrexc1}]\label{ex1}
Let $S = \mathbb{A}^1 - \{0\}$. There is a natural extension $\V$ of the constant variation $\underline{\Z}_S(0)$ by the constant variation $\underline{\Z}_S(1)$ in the category of integral variations of mixed Hodge structures which is admissible. We refer to Sect. \ref{sect6} for a general description of mixed Hodge-Tate variations, and in particular of their Hodge loci. An easy computation shows that $\HL_\trans = \HL = \mu_{\infty}$ where $\mu_{\infty} \subset S^\an = \C^\times$ is the set of roots of unity. In particular it is dense in $S(\C)$ for the Zariski topology but not for the euclidean topology.
\end{example}
In general, we can prove:
\begin{theorem}[Theorem \ref{mainfull}]\label{thm2}
The following assertions are equivalent:
\begin{itemize}
\item[(i)] $\HL_\trans$ is non-empty;
\item[(ii)]$\HL_\trans$ is dense in $S$ for the Zariski topology;
\item[(iii)]There exists a $\V$-likely strict mixed Hodge subdatum of $(\pp, D_\pp)$.
\end{itemize}
\end{theorem}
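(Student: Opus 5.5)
The plan is to prove the three implications (ii) $\Rightarrow$ (i) $\Rightarrow$ (iii) $\Rightarrow$ (ii). The implication (ii) $\Rightarrow$ (i) is trivial, since $S$ is non-empty.

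For (i) $\Rightarrow$ (iii), I take a point of $\HL_\trans$. It lies on some $(\M,D_\M)$-transverse special subvariety $Z$ for a strict subdatum $(\M,D_\M)$. I then show that $(\M,D_\M)$ satisfies $(\ast_\NN)$ for every normal $\NN$ with unipotent radical. The idea is to push $Z$ through $p_\NN$. By condition (a) of transversality, $\Phi(Z^\an)$ meets the smooth locus of $\Phi(S^\an)$, so near a generic point the intersection $\Phi(S^\an)\cap\period_{(\M,D_\M)}$ has the expected codimension there. Since $p_\NN$ is a holomorphic submersion $\period\to\period/\NN$ whose fibres are $\NN$-orbits, I want to compare tangent spaces. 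At a generic point, the transversality
\[
T\Phi(S^\an)+T\period_{(\M,D_\M)}=T\period
\]
maps surjectively onto
\[
Tp_\NN\Phi(S^\an)+Tp_\NN\period_{(\M,D_\M)}=T(\period/\NN).
\]
From this surjectivity the inequality $(\ast_\NN)$ follows by counting dimensions. The equality in (b), together with smoothness, should give exactly this tangent-space transversality; that is a standard argument.

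For (iii) $\Rightarrow$ (ii), I fix a $\V$-likely strict subdatum $(\M,D_\M)$. By Theorem \ref{aontrans}, it suffices to show that $\HL(S,\V^\otimes,\M)_\trans$, or some other transverse locus of a related type, is non-empty. I reduce to the pure or weight-controlled case as follows.
\begin{itemize}
\item Let $\NN_0$ be the normal subgroup generated by the part of $\lie(\pp)$ of weight $\leq -2$, i.e.\ the centre-like unipotent part coming from $W_{-2}$. Then $\period/\NN_0$ satisfies $\gr^W_{-2}=0$.
\item Theorem \ref{thm1} applied to the quotient variation (the one whose period map is $\Phi_{/\NN_0}$) gives that the image datum $p_{\NN_0}(\M,D_\M)$ has a euclidean-dense transverse locus. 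Its likeliness is inherited, since every normal subgroup of the quotient pulls back to one containing $\NN_0$.
\item I then lift to $S$. Over a quotient-transverse special subvariety $Z'$, the remaining fibre direction is a family of tori or abelian-type fibres of $\NN_0$, namely $\C^\times$-type extensions as in Example \ref{ex1}. In these I need to find points where the $\NN_0$-coordinate lies in a $\pp(\Q)^+$-translate of the $\M$-locus. Here $(\ast_{\NN})$ for $\NN=\pp$, or the full inequality, guarantees that the dimension count is non-negative. Intersection with translates by $\Gamma$-conjugates ranging over $\pp(\Q)^+$ then produces non-empty intersections: they are rational points or torsion translates, dense in the Zariski but not the euclidean topology, as in $\mu_\infty$.
\end{itemize}
One might have to change $\M$ to a conjugate or to a larger likely datum, which explains why the theorem is stated for the full $\HL_\trans$ rather than for a prescribed type (compare Proposition \ref{pr1}).

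The main obstacle is this last lifting step in the presence of $\gr^W_{-2}\neq 0$. Euclidean density fails there, so a pure openness/monodromy argument of the kind used for Theorem \ref{thm1} does not apply. My first attempt is to use algebraicity of the fibres of $\NN_0$ (tori) together with the density of torsion/rational translates. This should produce one transverse point, after which Theorem \ref{aontrans} upgrades non-emptiness to Zariski density.
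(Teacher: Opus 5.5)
\textbf{Main gap: (iii) $\Rightarrow$ (ii) is not proved.} Your reduction through $\NN_0=\exp(W_{-2}\lie(\pp))$ does not reach the hard case.

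The image of $(\M,D_\M)$ in $(\pp/\NN_0,D_\pp/\NN_0)$ is strict only when $\M\NN_0\neq\pp$. In that case, however, $\M\NN_0$ already gives a strict, sufficiently unipotent mixed Hodge subclass containing $(\M,D_\M)$. It is therefore still $\V$-likely, and Theorem \ref{mainprescribed} applies on $S$ directly, with no lifting needed. This is essentially the paper's Lemma \ref{minusone}.

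When $\M\NN_0=\pp$, the image datum is not strict, so Theorem \ref{thm1} gives nothing. Everything then rests on your ``lifting'' step, which is only a heuristic, and as formulated it fails for two reasons:
\begin{itemize}
\item The fibres of $\period\to\period/\NN_0$ are not tori in general. They carry the $(-2,0)$ and weight $\leqslant -3$ parts of $\lie(\pp)$.
\item A non-negative expected dimension $\dim\Phi(S^\an)+\dim\period_{(\M,D_\M)}-\dim\period\geqslant 0$ does not by itself produce even one intersection with the countably many rational translates. For example, the Hodge-generic curve $\{x_2=c\}\subset(\C^\times)^2$, with $c$ not a root of unity, meets no torsion translate of $\{x_2=1\}$, although the expected dimension is $0$.
\end{itemize}

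What is missing is the following argument.
\begin{itemize}
\item By Proposition \ref{likconstr}, in this case $\mt^{p,q}=\para^{p,q}$ for all $(p,q)\neq(-1,-1)$.
\item Let $\G$ be a common Levi subgroup of $\M$ and $\pp$, and set $\NN=\M^u\rtimes\G^\der$. Then $\NN\in\Np$ and $D_\M$ lies in a single fibre of $q_\NN$.
\item So $(\ast_\NN)$ for \emph{this} $\NN$ forces $\Phi_{/\NN}$ to be dominant onto $\period/\NN$, which is a Hodge variety of type $\{(-1,-1),(0,0)\}$, i.e.\ an algebraic torus.
\item Its torsion points (Proposition \ref{torsionex}) are Zariski dense, by the real-form argument of Lemma \ref{realsub}. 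Hence one of them lies in the smooth, generic-fibre part of the image.
\item A component of the fibre over that torsion point is a transverse special subvariety, and Theorem \ref{allornothing} concludes.
\end{itemize}
Your instinct about torsion points is right. It only works, though, after isolating the $(-1,-1)$ torus quotient and extracting dominance from the correct $(\ast_\NN)$.

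\textbf{Smaller gap: (i) $\Rightarrow$ (iii).} Equality in (b) together with smoothness does \emph{not} give tangent-space transversality at any point. For instance, $\{z=0\}$ and $\{z=x^2\}$ in $\C^3$ meet in a line of the expected dimension but are tangent all along it; the paper flags this right after Lemma \ref{equivdifftrans}.

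This is repaired by Proposition \ref{typtotrans}: some translate $p\cdot D_\M$ with $p\in\pp(\R)^+\pp^u(\C)$ meets $\I$ with differential transversality at its intersection points. Moreover $\dim q_\NN(p\cdot D_\M)=\dim q_\NN(D_\M)$, and the rank of $q_\NN|_\I$ at any point is at most $\dim q_\NN(\I)=\dim\Phi_{/\NN}(S^\an)$. Your surjectivity count then yields $(\ast_\NN)$ for every $\NN$.

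With that fix, this part is a valid and more direct alternative to the paper's route, which goes through Theorem \ref{allornothing}, the open set $S_\gen$ of Proposition \ref{indep}, and Proposition \ref{nec}. Your version also avoids the geometric Zilber--Pink input used to build $S_\gen$.
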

As we do not know any example other than Example \ref{ex1}  where the full transverse Hodge locus fails to be dense in $S^\an$ for the euclidean topology, the following question remains unsolved: 
\begin{question}\label{conjdensan}
Assume that $h_{\lie(\pp)}^{-1,-1} \neq 1$ and that there exists a $\V$-likely strict mixed Hodge subdatum of $(\pp, D_\pp)$. Is $\HL_\trans$ dense in $S^\an$ for the euclidean topology?
\end{question}
To finish this section, we highlight the following result obtained along the way and which might be of independent interest:
\begin{proposition}\label{triv-3}
Assume that $W_{-1} \lie(\pp) = W_{-3} \lie(\pp)$. Then the transverse Hodge locus $\HL_\trans$ of $S$ for $\V$ and the transverse Hodge locus $\mathrm{HL}(S, (\gr(\V)^\otimes))_\trans$ coincide.
\end{proposition}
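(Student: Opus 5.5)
The plan is to compare the two intersection problems on the nose: the one defining $\HL_\trans$ inside $\period = \Gamma\quo D_\pp$, and the one defining $\mathrm{HL}(S,(\gr(\V)^\otimes))_\trans$ inside the pure Hodge variety $\period/\uu = \Gamma_{G}\quo D_{\G}$. Here $\uu = R_u(\pp)$ is the unipotent radical and $\G = \pp/\uu$ is the generic Mumford--Tate group of $\gr(\V)$. The hypothesis $W_{-1}\lie(\pp) = W_{-3}\lie(\pp)$ says that $\lie(\uu) = W_{-1}\lie(\pp)$ has no weight $-1$ or $-2$ part. By Griffiths transversality, the differential of $\Phi$ takes values in $F^{-1}\lie(\pp)$ modulo $F^0$. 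The key point is that all $(p,q)$-types of $\lie(\uu)$ have $p+q\leqslant -3$, so $F^{-1}$ meets them only in types with $q\leqslant -2$. The first step is to show that these types contribute nothing horizontal. More precisely, the horizontal tangent bundle of $D_\pp$ maps isomorphically to that of $D_\G$: for $(p,q)$ with $p\geqslant -1$ and $p+q\leqslant -3$, the component lies in $F^{-1}$ but not in $F^0$ only when $p=-1$, hence $q\leqslant -2$. I expect to need the finer statement that admissibility and transversality force $\Phi$ to have differential with image in $\gr_F^{-1}$ of $W_{-1}/W_{-2}$ plus the weight-zero part, which is zero here. The conclusion is that $\Phi$ is locally the composite of $\Phi_{/\uu}$ with a locally constant unipotent lift. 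Equivalently, the extension data is rigid: normal functions and higher extensions of weight-difference $\geqslant 3$ that vary horizontally are locally constant.

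The second step is to deduce from this rigidity that $\Phi(S^\an)$ is contained in a single fibre-translate of a section $\period/\uu \to \period$ over $\Phi_{/\uu}(S^\an)$, up to the monodromy action. It then follows that $\dim\Phi(S^\an) = \dim\Phi_{/\uu}(S^\an)$, and that the generic mixed Hodge datum $(\pp,D_\pp)$ has $\uu$ "visible" only through monodromy. This is the main obstacle. The monodromy is Zariski dense in $\pp$ while the period image lives in a horizontal leaf, so I would argue via the structure of admissible variations: the unipotent part of the monodromy is then trivial or central. Concretely, I would use Andr\'e-type normality of $\pp$ together with the fact that $\uu$ acts on the leaf. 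I expect to show that $\uu$ is trivial unless it is already accounted for in the datum.

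The third step compares special subvarieties. A strict subdatum $(\M,D_\M)\subsetneq(\pp,D_\pp)$ maps to $(\M/\M\cap\uu, D_{\M/\M\cap\uu})\subseteq(\G,D_\G)$, and conversely every subdatum of $(\G,D_\G)$ lifts to its preimage in $\pp$. For each such pair I would compare the codimensions $\dim\period - \dim\period_{(\M,D_\M)}$ and $\dim\period/\uu - \dim p_\uu(\period_{(\M,D_\M)})$. When $\M \supseteq \uu$ the fibres are the same and the codimensions agree. When $\M\not\supseteq\uu$, the extra codimension in the $\uu$-direction is positive, while $\Phi(S^\an)$ gains no dimension in that direction by step two. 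The transversality equality (b) therefore fails, so such data contribute no transverse components.

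Combining the steps: the transverse components for $\V$ are exactly the preimage-type components, which coincide with the transverse components for $\gr(\V)$. Condition (a) on the singular locus also transfers, because $p_\uu$ restricted to $\Phi(S^\an)$ is a local isomorphism onto $\Phi_{/\uu}(S^\an)$.
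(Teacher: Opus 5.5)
Your proposal has a genuine gap: Steps 1 and 2 rest on a false claim, and Step 3 would not go through even if they were repaired.

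\textbf{Steps 1 and 2.} The types $\para^{-1,q}$ with $q\leqslant -2$ lie in $F^{-1}\para$ but not in $F^{0}\para$, and they sit inside $W_{-3}\para=\lie(\pp^u)$. So they are genuinely horizontal directions along the fibres of $D_\pp\to D_\pp/\pp^u$; you observe this yourself and then assert the opposite. Nothing forces $d\tilde\Phi$ to avoid them.

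Take a non-torsion admissible extension of $\Z(0)$ by $R^1f_\ast\Z(2)$ over a non-isotrivial family of elliptic curves, as coming for instance from $K_2$-classes. Its weights are $0$ and $-3$, so your hypothesis holds. The $\lie(\pp^u)$-component of $d\tilde\Phi$ is the infinitesimal invariant of this higher normal function, which is generically non-zero. Its unipotent monodromy is non-trivial and not central.

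So the following claims are false:
\begin{itemize}
\item $\Phi$ is a constant unipotent translate of a Levi section over $\Phi_{/\pp^u}$;
\item horizontal extensions with weight gap $\geqslant 3$ are locally constant;
\item $\pp^u$ is trivial unless already accounted for.
\end{itemize}
What survives is only a relative rigidity: $\Phi$ is constant along the fibres of $\Phi_{/\pp^u}$. This holds because on each graded piece of weight $\leqslant -3$ one has $F^{-1}\cap\overline{F^{-1}}=0$, which kills the monodromy along such a fibre after a finite cover, and the theorem of the fixed part then gives constancy. You do not prove this. Moreover it only gives finiteness of $\Phi(S^\an)\to\Phi_{/\pp^u}(S^\an)$, not the local isomorphism you invoke for condition (a).

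\textbf{Step 3.} This step fails even granting that rigidity. Suppose $\Phi(S^\an)$ is locally a graph over $\Phi_{/\pp^u}(S^\an)$ and $\M\not\supseteq\pp^u$. Then $\period_\M$ imposes $c>0$ further conditions in the fibre directions. A graph meets it in codimension exactly $c$ more than its base meets $p_{\pp^u}(\period_\M)$. That is precisely the expected dimension, so no dimension count shows that equality (b) fails.

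What actually excludes $\M\not\supseteq\pp^u$ is Griffiths transversality combined with Hodge symmetry, as in Proposition \ref{likconstr}(i). Take a point $h$ where $g\cdot D_\M$ and $\I$ meet transversally in the differential sense; Proposition \ref{typtotrans} provides one after a rational translate.
\begin{itemize}
\item There $T_h(g\cdot D_\M)+T_h\I=T_hD_\pp$ with $T_h\I\subset\bigoplus_q\para^{-1,q}$, which forces $\mt^{p,q}=\para^{p,q}$ for $p<-1$.
\item Since $h^{p,q}_\mt=h^{q,p}_\mt$ and $h^{p,q}_\para=h^{q,p}_\para$, equality also holds for $q<-1$.
\item Every type with $p+q\leqslant -3$ has $p<-1$ or $q<-1$.
\end{itemize}
Hence $W_{-3}\lie(\M)=W_{-3}\lie(\pp)=\lie(\pp^u)$, i.e.\ $\pp^u\subset\M$.

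This is how the statement follows in the paper, via Corollary \ref{necplus} and Proposition \ref{likconstr}(i). Every $(\M,D_\M)$ with $\HLM_\trans\neq\emptyset$ contains $\pp^u$. So $D_\M$ is a union of fibres of $q_{\pp^u}$, and $\Phi^{-1}(\period_\M)=\Phi_{/\pp^u}^{-1}(p_{\pp^u}(\period_\M))$. The codimensions in $\period$ and in $\period/\pp^u$ agree, so these data are exactly the pull-backs of subclasses of the generic class $(\pp/\pp^u,D_\pp/\pp^u)$ of $\gr(\V)$. A correctly stated relative rigidity is then useful only for the remaining comparison of $\dim\Phi(Z^\an)$ with $\dim\Phi_{/\pp^u}(Z^\an)$.
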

\subsection*{Higher level and algebraicity of the Hodge locus}
In \cite{bku}, the authors showed that, when the variation is pure, Griffiths' transversality can force non-trivially the transverse Hodge locus to be empty and could deduce from their geometric Zilber-Pink theorem a finiteness statement for the full Hodge locus in many situations. More precisely, they introduce the following notion:
\begin{definition}[{\cite[Def. 4.9, Def. 4.13]{bku}}]
Let $\ort$ be a \textit{$\Q$-Hodge-Lie algebra} i.e a semi-simple $\Q$-Lie algebra endowed with a $\Q$-Hodge structure of weight $0$ polarized by the Killing form and such that the Lie bracket is a morphism of Hodge structures $[\cdot,\cdot] : \bigwedge^2 \ort \rightarrow \ort$. The \textit{level of $\ort$} is the largest non-negative integer $l \geqslant 0$ such that for each $\Q$-simple factor $\ort'$ of $\ort$, there exists a $\R$-simple factor $\ort'_1$ of the semi-simple $\R$-Lie algebra $\ort'_\R = \ort' \otimes_\Q \R$ whose induced Hodge structure has Hodge decomposition:
\[
(\ort'_1)_\C = \bigoplus_{-w \leqslant k \leqslant w} (\ort'_1)^{k, -k}
\]
with $(\ort'_1)^{-w, w} \neq 0$ and $w \geqslant l$.
\end{definition}
They show:
\begin{theorem}[{\cite[Thm. 3.3]{bku}}]
Assume that $\V$ is an integral and polarized variation of Hodge structures on $S$ and denote by $\mon$ its algebraic monodromy group. If the $\Q$-Hodge-Lie algebra $\lie(\mon)$ has level at least $3$ the transverse Hodge locus $\HL_\trans$ is empty.
\end{theorem}
Remarkably, this is proven independently of the criteria for non-emptiness such as Theorem \ref{thm1} for pure variations that can be found in \cite{es,ku}, and the confrontation of the two can yield non-trivial results (see for example \cite{kbounds}). Motivated by these results, the last part of this work is dedicated to investigating the extent to which the level of the (pure) graded $\gr(\V)$ of $\V$ for the weight filtration can constrain the transverse Hodge locus of the mixed variation $\V$. For instance, we show (see Theorems \ref{alternativempty} and \ref{emptymain} for a more refined statement):
\begin{theorem}[Corollary \ref{critnum}, Corollary \ref{critalg}]\label{thm3}
Let $\mon_0$ (resp. $\pp_0$) be the algebraic monodromy group (resp. the generic Mumford-Tate group) of $\gr(\V)$. Assume that the $\Q$-Hodge-Lie algebra $\lie(\mon_0)$ has level at least $3$, that $\pp_0^\der = \mon_0$ and that one of the following assumptions hold:
\begin{itemize}
\item[(1)] $h^{-1,-1}_{\lie(\pp)} = 0$
\item[(2)] $S$ has a smooth projective compactification $\overline{S}$ such that $\overline{S}-S$ is either empty, of codimension at least $2$ or a smooth irreducible divisor in $\overline{S}$;
\end{itemize}
Then $\HL_\trans$ is empty.
\end{theorem}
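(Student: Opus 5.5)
The plan is to reduce to the pure case, where \cite[Thm. 3.3]{bku} is available. We then show that the mixed (unipotent) directions cannot supply the extra dimension that a transverse intersection would need. Suppose $Z$ is an $(\M,D_\M)$-transverse special subvariety of $S$ for $\V$. Consider the quotient by the unipotent radical $\NN=\mathbf{W}:=R_u(\pp)$, so that $p_\NN:\period\to\period/\NN=\period_0$ is the pure Hodge variety attached to $\gr(\V)$. Set $\Phi_0=p_\NN\circ\Phi$. The codimension of $\Phi(Z)$ in $\Phi(S)$ equals $\operatorname{codim}_\period \period_{(\M,D_\M)}$. This codimension splits into the codimension of $p_\NN(\period_{(\M,D_\M)})$ in $\period_0$ and the codimension of the fibre part $\M\cap\mathbf{W}$ in $\mathbf{W}$. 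The inequality $(\ast_\NN)$ together with semicontinuity of fibre dimensions should force $\Phi_0(Z)$ to satisfy the transverse dimension condition for $\Phi_0$ along the pure subdatum $\M/(\M\cap \mathbf W)$. The exception is the case where this pure subdatum is all of $\pp_0$.

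The proof then splits into two cases. In \emph{Case A} the image of $\M$ in $\pp_0$ is strict. Then $\Phi_0(Z)$ is contained in a transverse-type special subvariety for $\gr(\V)$. The level $\geqslant 3$ hypothesis on $\lie(\mon_0)$, combined with $\pp_0^\der=\mon_0$ (so that the relevant Hodge variety is governed by the monodromy and the level argument applies), gives the following: the horizontal tangent bundle $T^{-1,1}$ of $\period_0$ generates too little. Concretely, $\dim \Phi_0(S)+\dim\period_{0,\M}-\dim\period_0<\dim$ of any component, as in the proof of \cite[Thm. 3.3]{bku}. This rules out Case A. I would transport the argument of \cite{bku} directly. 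The Griffiths-transversal image lies in the horizontal distribution, and since the level is at least $3$, $\lie(\mon_0)^{-1,1}$ does not generate enough to make $\lie(\mathbf{M}_0)+\mathfrak{g}^{-1,1}$-type intersections transverse.

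In \emph{Case B} the image of $\M$ in $\pp_0$ is all of $\pp_0$. Then $\M\cap\mathbf W\subsetneq\mathbf W$, and $Z$ is transverse purely in the unipotent fibre directions. Here the relevant fibres are generalized intermediate Jacobian-like spaces built from $\gr^W_{-1}$ and $\gr^W_{-2}$ of $\lie(\pp)$. Under level $\geqslant 3$, Griffiths transversality restricts the fibre derivative of the normal-function-type part to $F^{-1}$-pieces. Using the $\pp_0$-equivariance of $\lie(\mathbf W)$ and irreducibility coming from $\mon_0=\pp_0^\der$, a transverse intersection in weight $-1$ directions would force an $\mon_0$-stable subspace incompatible with level $\geqslant3$. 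This leaves the weight $-2$ directions, namely the $h^{-1,-1}$ part. Under (1) it vanishes, and we are done. Under (2), I would use the classical rigidity of extensions of Tate type: sections of $\gr^W_{-2}$-type (i.e.\ $\C^\times$-valued functions determined by monodromy) are constant when the boundary is empty, of codimension $\ge 2$, or a single smooth irreducible divisor. In that last case the only logarithmic periods come from one boundary component, and residue considerations show that the period function cannot vary transversally. So the weight $-2$ contribution to $\dim\Phi(S)$ vanishes and $(\ast_{\mathbf W})$ fails.

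The main obstacle is Case B under hypothesis (2). I would control the weight $-2$ part of $\Phi$ by an algebraicity argument. By admissibility and \cite{bps}, the $\gr^W_{-2}$-component of the period map after fixing the weight $-1$ data is an algebraic map to a torus. Its logarithmic derivative is a closed logarithmic $1$-form with poles along $\overline{S}-S$. Under (2) the residues must vanish or be proportional, and the relevant function then factors through something of dimension too small for transversality.
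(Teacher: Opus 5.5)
Your outline has the right architecture: the pure part via the level argument of \cite{bku}, then weight $-1$, then the Tate $(-1,-1)$ part, killed by (1) or by the absence of non-constant units under (2). However, the steps that actually use the hypotheses are not established.

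\textbf{Tangent-level transversality is the missing foundation.} After replacing $(\M,D_\M)$ by a rational conjugate (Proposition~\ref{typtotrans}), one has $T_h(g\cdot D_\M)+T_h\I=T_hD_\pp$ at a lift $h$ of a point of $\Phi(Z^\an)$. Griffiths transversality gives $T_h\I\subset\bigoplus_q\Ad(g)(\para^{-1,q})$, and together with symmetry of Hodge numbers this yields $\mt^{p,q}=\para^{p,q}$ for all $(p,q)\notin\{(0,0),(1,-1),(-1,1),(0,-1),(-1,0),(-1,-1)\}$ (Proposition~\ref{likconstr}). Dimension counts with $(\ast_\NN)$ and semicontinuity do not give this (cf.\ the remark after Lemma~\ref{equivdifftrans}). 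Yet your Case A reduction needs it, and your Case B silently uses it to discard weights $\leqslant -3$ and the non-Tate types of weight $-2$.

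\textbf{The weight $-1$ step is where $\pp_0^\der=\mon_0$ enters, and not through irreducibility, which is not available.} The argument runs as follows.
\begin{itemize}
\item Take a $\pp_0$-stable complement $\lieu$ of $\gr^W_{-1}\mt$ in $\gr^W_{-1}\para$. It has types $(-1,0),(0,-1)$ only, so the pieces $\ort_0^{p,-p}$ with $|p|\geqslant 2$ act on it by zero for type reasons.
\item The kernel of the action of $\ort_0=\lie(\mon_0)$ on $\lieu$ is a Hodge--Lie ideal containing all these pieces. Hence it is $\ort_0$ by \cite[Prop. 7.5]{bku}; this is where the level enters.
\item Since $\pp_0^\der=\mon_0$, the grading element $\mathtt{T}_\V\in\para_0^\der=\ort_0$ acts by zero on $\lieu$. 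By Lemma~\ref{gradnontriv} it acts by $\frac{-1-2p}{2}\neq 0$ on type $(p,-1-p)$, so $\lieu=0$.
\end{itemize}
The obstruction is odd weight, not level directly.

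\textbf{The weight $-2$ step under (2) does not work as written.} In Case B the map $\M\to\pp_0$ is onto, so $p_{\mathbf W}(\period_{(\M,D_\M)})=\period/\mathbf W$ and $(\ast_{\mathbf W})$ reduces to $\dim\Phi_0(S^\an)\geqslant 0$; it can never fail. Nor is ``the $\gr^W_{-2}$-component of $\Phi$ after fixing the weight $-1$ data'' a torus-valued map on $S$ in general: the unipotent radical is non-abelian and $\gr^W_{-2}\para$ has non-Tate types. The algebraicity you need is Theorem~\ref{facto}, not \cite{bps}.

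What is missing is the torus quotient adapted to $\M$.
\begin{itemize}
\item The argument above, run in weight $-2$, shows that $\para_0^\der$ acts trivially on a complement of $\gr^W_{-2}\mt$ in $\gr^W_{-2}\para$. This complement is of type $(-1,-1)$ and non-zero since $\M\neq\pp$.
\item Let $\G$ be a common Levi subgroup of $\M$ and $\pp$. Using also $W_{-3}\mt=W_{-3}\para$, the group $\NN=\M^u\rtimes\G^\der$ is normal in $\pp$ and lies in $\Np$. Moreover $\period/\NN\cong\mult_m^n$ with $n\geqslant 1$, and $q_\NN(D_\M)$ is a point.
\item Then $(\ast_\NN)$, which is necessary by Corollary~\ref{necplus}, forces $\Phi_{/\NN}$ to be dominant. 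Theorem~\ref{facto} makes it a dominant regular morphism $S\to\mult_m^n$, and a coordinate gives a non-constant unit $f$ on $S$.
\item Under (2) this is impossible. $\mathrm{div}(f)$ on the projective $\overline S$ is supported on $\overline S-S$, hence is $0$ or $mE$. In the latter case $f^{\pm1}$ would be regular on $\overline S$, forcing $m=0$.
\end{itemize}
Your residue discussion (``vanish or be proportional'') misses this point. Under (1), the same Hodge-type bookkeeping gives $\mt=\para$ directly, as you say.
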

Let us comment on the assumptions (see Sect. \ref{sect10} for a more thorough discussion). As illustrated in \cite{bku} through examples, the transverse Hodge locus of $\gr(\V)$ (hence of $\V$) might be non-empty in levels $1$ and $2$, hence the assumption on the level. The other assumptions are meant to exclude the possibility of adding direct factors to $\V$ whose monodromy is unipotent, as this doesn't influence the level of $\gr(\V)$ while adding factors that are decorrelated to $\gr(\V)$. The assumption $\pp_0^\der = \mon_0$ is often satisfied (for example when $\pp_0^\der$ is $\Q$-simple and $\V$ does not have unipotent monodromy) and the content of the above result is that this assumption is enough to exclude the aforementionned pathological cases, except for some very specific unipotent factors in weight $-2$, studied in length in Sect. \ref{sect6} and which are excluded by assumptions $(1)$ or $(2)$. We do not know if assumptions $(1)$ and $(2)$ are really needed. Note however that they are of very different natures and make it reasonable to hope that such a result might be applicable in many concrete situations.

The above emptiness criterion yields a finiteness statement for special subvarieties of the following type:
\begin{definition}\label{posperdim}
An irreducible algebraic subvariety $Z \subset S$ has \textit{factorwise positive dimension} if for every normal subgroup $\NN$ of $\pp$ whose radical is unipotent,
\[
\dim \Phi_{/\NN}(Z^\an) > 0.
\]
 The union of special subvarieties of $S$ for $\V$ which have factorwise positive dimension will be denoted $\HL_{\mathrm{pos}}$.
\end{definition} 
Then, using Baldi-Urbanik's Geometric Zilber-Pink theorem (\cite{bu}) we prove the following complement to \cite[Thm. 3.2]{ketay}:
\begin{corollary}[Theorem \ref{alglvl3}]
Under the assumptions of Theorem \ref{thm3}, the Hodge locus of factorwise positive dimension $\HL_{\mathrm{pos}}$ is a Zariski-closed subset of $S$.
\end{corollary}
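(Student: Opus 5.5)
The plan is to combine the emptiness criterion of Theorem \ref{thm3} with Baldi--Urbanik's Geometric Zilber--Pink theorem \cite[Thm. 7.1]{bu}. Let $Z \subset S$ be a special subvariety of factorwise positive dimension which is maximal among such. Either $Z$ is atypical or it is typical. In the typical case $Z$ is a component of $\HL_\typ$, and since $\HL_\typ \subset \HL_\trans$ (Remark \ref{typtrans}), Theorem \ref{thm3} (applied through Corollaries \ref{critnum} and \ref{critalg}) shows that this case does not occur. Hence every maximal special subvariety of factorwise positive dimension is atypical. This has to be justified for $\HL_{\typ,\pos}$ and not just for $\HL_\typ$; both sit inside $\HL_\trans$, so the argument is the same.

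Next I apply the Geometric Zilber--Pink theorem of \cite{bu}. It says that the atypical special subvarieties of factorwise positive dimension fall into two groups. The first group consists of those contained in a fixed strict Zariski-closed subset $E \subsetneq S$, and these are finite in number as maximal atypical objects. The second group consists of those arising from finitely many triples $(\M_i, D_{\M_i}, \NN_i)$, meaning strict subdata together with normal subgroups with unipotent radical, such that $\Phi$ fibres over a product decomposition and $Z$ is a fibre component of $\Phi_{/\NN_i}$ (or of the complementary factor map). So $\HL_\pos$ is contained in $E$ union finitely many such families. The family pieces are where factorwise positivity is used: a $Z$ in such a family is contracted by one of the factor period maps $\Phi_{/\NN}$, and then $\dim \Phi_{/\NN}(Z^\an) = 0$, contradicting the definition of factorwise positive dimension. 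I expect this step to be the main obstacle. One has to match the form of the conclusion of \cite[Thm. 7.1]{bu} with the normal subgroups $\NN$ of $\pp$ whose radical is unipotent, as they appear in Definition \ref{posperdim}. In particular, the fibres produced by \cite{bu} must be fibres of some $\Phi_{/\NN}$ with $\NN$ of this type, possibly after replacing $\NN$ by the product of $\NN$ with the unipotent radical. Once this is checked, all the family pieces drop out.

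The conclusion then follows by induction on dimension. There remain finitely many maximal atypical special subvarieties $Z_1, \dots, Z_r$ of factorwise positive dimension. Every element of $\HL_\pos$ is contained in some $Z_j$ by maximality, and each $Z_j$ is itself special and of factorwise positive dimension. Therefore $\HL_\pos = Z_1 \cup \dots \cup Z_r$, which is Zariski-closed. If some $Z_j$ were not closed, I would take its Zariski closure: this is again special by \cite{bps}, hence is one of the $Z_i$ by maximality. The hypotheses of Theorem \ref{thm3} are used only at the level of $S$, and they are not needed again on the $Z_j$, because finiteness already comes from the Geometric Zilber--Pink theorem on $S$.
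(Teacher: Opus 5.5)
Your skeleton is the paper's. Theorem \ref{emptymain} gives $\HL_{\typ}=\emptyset$, so every maximal special subvariety of factorwise positive dimension is atypical. A Baldi--Urbanik finiteness input (Corollary \ref{zpcor}) then puts all of them inside finitely many special subvarieties $Z_1,\dots,Z_l$. Since positivity of $\dim\Phi_{/\NN}$ passes to larger subvarieties, maximality forces each maximal one to equal some $Z_k$. Your remarks about Zariski closures and induction on dimension are not needed: special subvarieties are already closed, and the list is finite from the start. The gap is the step you defer as the main obstacle. Your proposed fix for it, namely showing that every ``family piece'' is contracted by some $\Phi_{/\NN}$ with $\NN\in\Np$ after enlarging $\NN$ by a unipotent radical, aims at the wrong thing.

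Theorem \ref{geozp} is not a statement about atypical special subvarieties or product decompositions. It concerns maximal monodromically atypical weakly special subvarieties $Y$, and it returns a finite set $\Sigma_{(S,\V)}$ of pairs $((\M,D_{\M}),\NN)$, with $\NN\triangleleft\M$ having unipotent radical, such that $\mon_Y=\NN$.

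So you first need that an atypical special $Z$ is monodromically atypical weakly special. The paper proves this in Corollary \ref{zpcor}, using Lemma \ref{monorb} and the inequality $\dim D_{\pp_Z}/\mon_Z\geqslant\dim D_{\pp}/\mon$. This inequality holds because the image of $D_{\pp_Z}/\mon_Z$ in $D_{\pp}/\mon$ is a Mumford--Tate subdomain containing $h_S$, hence is everything.

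Then take a maximal such $Y\supset Z$ and split according to whether $\M=\pp$.
\begin{itemize}
\item If $\M=\pp$, then $\NN=\mon_Y$ already lies in $\Np$ and $\Phi_{/\NN}(Y^{\an})$ is a point. This contradicts positivity for $Z\subset Y$, and no enlargement of $\NN$ is needed.
\item If $\M\subsetneq\pp$, factorwise positivity is not used, and in general it cannot be. Here $\NN$ is only normal in $\M$, and the contraction it induces is not of the form $\Phi_{/\NN'}$ with $\NN'\in\Np$. Instead, $Y$, hence $Z$, lies in a component of $\Phi^{-1}(\pi_\Gamma(D_{\M}))$, and there are finitely many such components because $\Sigma_{(S,\V)}$ is finite.
\end{itemize}
So families sitting over a strict $\M$ do not drop out as your plan requires. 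They are absorbed into the finite list $Z_1,\dots,Z_l$.
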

\begin{remark}
Having factorwise positive dimension consists in principle in infinitely many conditions. As we shall explain in Remark \ref{finposdim}, Baldi-Urbanik's theorem implies that all but (non-explicit) finitely many (depending only on $\V$) of these conditions are superfluous for atypical special subvarieties.
\end{remark}
\subsection*{Organization of the paper}
The cornerstone of the methods used in this paper is Klingler's realization in \cite{klingler_atyp} of special subvarieties as intersection loci, and we did not find any detailed account with proofs in the litterature of this realization, in particular no proof of the fact that mixed Mumford-Tate domains parametrize mixed Hodge structures with bounded Mumford-Tate group. In order to fill this gap in litterature and to be as self-contained as possible we included rather long preliminaries on Mumford-Tate groups of mixed Hodge structures and the associated Mumford-Tate domains, on variations of mixed Hodge structures and on their Hodge loci (Sect. \ref{sect2}-\ref{sect4}). Let us mention however that the reader who is only interested in mixed period domains can refer to the very good exposition in \cite[Sect. 2-4]{pearls}. We then proceed in Sect. \ref{sect5} with a quick review of weakly special subvarieties, and two central results in our proofs, namely the Ax-Schanuel and Geometric Zilber-Pink theorems for variations of mixed Hodge structures. These four sections can be safely skipped by experts.

In Sect. \ref{sect6}, we give an elementary description of families of extensions of $\Z(0)$ by $\Z(1)^n$ (for $n \geqslant 1$) and their Hodge loci. We then exhibit a concrete example realizing Proposition \ref{pr1}. In Sect. \ref{sect7} we study in depth the $\V$-likeliness condition and prove the core technical results on which our main results rely. In Sect. \ref{sect8} we prove the all-or-nothing principle of Theorem \ref{aontrans} and a more general form of the characterization of density of $\HL_\trans$ for the euclidean topology in Theorem \ref{thm1}. In Sect. \ref{sect9} we prove Theorem \ref{thm2}. Finally in Sect. \ref{sect10} we investigate emptiness of the transverse Hodge locus, algebraicity of the full Hodge locus, and prove generalizations of Theorem \ref{thm3}.
\subsection*{Conventions and notations}
\begin{itemize}
\item An algebraic variety is a (non-necessarily irreducible) reduced separated scheme of finite type over $\C$.
\item For a characteristic $0$ field $K$ and a $\Q$-vector space $V$ (resp. a $\Q$-algebraic group $\pp$), we denote by $V_K$ (resp. $\pp_K$) the object obtained by extension of scalars to $K$. If $V$ is a $K$-vector space, we denote by $V^\vee$ its $K$-dual.
\item If $k \in \Z$ and $A$ is an object of an abelian category, endowed with an ascending finite filtration $W_\bul$, we denote by $\gr^W_k(A)$ its $k$-th graded piece for $W_\bul$ and by $\gr^W(A)$ the coproduct of the non-trivial graded-pieces of $A$ for $W_\bul$. For mixed Hodge structures and variations of mixed Hodge structures, when the superscript indicating the filtration is omitted, it should be understood that we are considering the graded for the weight filtration.
\item $\Q$-algebraic groups will be denoted in bold letters ($\pp$, $\mon$, $\G$ etc.). For a $\Q$-algebraic group $\pp$, we denote by $\pp^u$ its unipotent radical, by $Z(\pp)$ its center, by $\pp^\ad$ its adjoint and by $\pp^\der$ its derived subgroup. We also denote by $\pp(\R)^+$ the connected component of the identity of the real Lie group $\pp(\R)$ and by $\pp(\Q)^+$ the intersection $\pp(\R)^+ \cap \pp(\Q)$. A ($\Q$-)representation of $\pp$ will mean a rational, algebraic and finite-dimensional representation of $\pp$. If $(V,q)$ is a finite-dimensional $\Q$-vector space endowed with a non-degenerate bilinear form, we denote by $\mathbf{GAut}(V,q)$ the $\Q$-algebraic group of linear automorphisms of $V$ preserving $q$ up to scaling.
\end{itemize}
\subsection*{Aknowledgements}
I thank my PhD advisor Emmanuel Ullmo for suggesting this project and for many enlightening discussions and suggestions which greatly helped to improve this work. I also thank Ziyang Gao for a useful discussion that led to Lemma \ref{subgroup}. I also thank Gregorio Baldi, David Urbanik and Ofer Gabber for pointing out a mistake in a previous version of the text and for helping resolve it. Finally, I would like to thank Matt Kerr for his careful reading of the text and for many valuable mathematical and bibliographical remarks.
\section{Mixed Hodge structures and their parameter spaces}\label{sect2}
\subsection{Mixed Hodge structures}
\subsubsection*{First definitions}
\begin{definition}
Let $n \in \Z$. A \textit{pure $\Q$-Hodge structure of weight $n$} is the datum of a finite dimensional $\Q$-vector space $V$ and of a decreasing finite filtration by $\C$-vector subspaces $F^\bul$ on $V_\C$, called the \textit{Hodge filtration}, such that $V_\C = F^p \oplus \overline{F^{n+1-p}}$ for all $p\in \Z$. A \textit{pure $\Z$-Hodge structure of weight $n$} is the datum of a torsion-free abelian group of finite rank $V$ and of a pure $\Q$-Hodge structure of weight $n$ on $V_\Q = V \otimes_\Z \Q$.
\end{definition}
Let $V$ be a finite dimensional $\Q$-vector space, let $\s = \res_{\C/\R}(\mult_{m,\C})$ be the Deligne torus which is a real algebraic torus split over $\C$ and $w : \mult_{m,\R} \rightarrow \s$ be the weight homomorphism defined on $\R$-points as the inclusion $\R^\times \subset \C^\times$. The datum of a pure $\Q$-Hodge structure of weight $n$ on $V$ is equivalent to that of
\begin{enumerate}
\item a bigraduation $V_\C = \bigoplus_{p+q = n} V^{p,q}$ called the \textit{Hodge decomposition} subject to the \textit{Hodge symmetry} condition $V^{p,q} = \overline{V^{q,p}}$ for the complex conjugation on $V_\C$ defined by the real form $V_\R$. The set $\{(p,q) \hspace{0.1cm} : \hspace{0.1cm} V^{p,q} \neq 0\} \subset \Z \times \Z$ is called the \textit{type} of the Hodge structure.
\item a morphism of real algebraic groups $x : \s \rightarrow \gl(V)_\R$, called the \textit{Hodge homomorphism} such that $x \circ w$ is given on real points by $\R^\times \ni r \mapsto r^n \cdot \mathrm{id}\in \mathrm{GL}(V_\R)$.
\end{enumerate}
\begin{example}
Let $r \in \Z$. There is a unique pure $\Z$ (resp. $\Q$)-Hodge structure of weight $-2r$ on $V = (2i\pi)^r\Z$ (resp $(2i\pi)^r\Q$) called the \textit{Tate Hodge structure} and it is denoted by $\Z(r)$ (resp. $\Q(r)$). It is of type $\{(-r,-r)\}$.
\end{example}
\begin{example}
Let $X$ be a smooth compact Kähler variety. For any $0 \leqslant n \leqslant 2\dim X$, the $n$-th cohomology group modulo torsion $H^n(X,\Z)$ carries a canonical $\Z$-Hodge structure of weight $k$.
\end{example}
Among compact Kähler varieties those which are actually smooth projective complex algebraic varieties are privileged, and characterized by the property that they admit a Kähler class lying in $H^2(X,\Z)$. By Hodge index theorem, this additional piece of linear algebra data is the notion of polarization which we recall now.
\begin{definition}\label{pol}
Let $V = (V,F^\bul)$ be a pure $\Q$ (resp. $\Z$)-Hodge structure of weight $n$, let $x : \s \rightarrow \mathrm{GL}(V_\R)$ be the associated Hodge homomorphism and $V_\C = \bigoplus_{p+q = n}V^{p,q}$ the corresponding Hodge decomposition. Let $C = x(i)$. A \textit{polarization of $V$} is a non-degenerate bilinear $(-1)^n$-symmetric form $\psi : V \times V \rightarrow \Q$ (resp. $\psi : V \times V \rightarrow \Z$) such that:
\begin{itemize}
\item for every $p+q = n$ and $r+s = n$ such that $(p,q) \neq (r,s)$, 
\begin{equation}\label{HR1}
\psi_\C(V^{p,q}, \overline{V^{r,s}}) = 0;
\end{equation}
\item denoting by $h = i^{n}\psi_\C(\cdot, \overline{\cdot})$ the associated hermitian form on $V_\C$, for every $p+q = n$ and $v \in V^{p,q}$,
\begin{equation}\label{HR2}
i^{p-q}h(v,v) > 0.
\end{equation}
\end{itemize}
In other words $h(\cdot, C(\cdot))$ is a hermitian form which makes the Hodge decomposition orthogonal and is positive definite on the $V^{p,q}$'s.
\end{definition}
\begin{example}
Let $X$ be a smooth complex projective algebraic variety. For any $0 \leqslant n \leqslant 2\dim X$, the $n$-th primitive cohomology group modulo torsion $H^n(X,\Z)_{\mathrm{prim}}$ carries a canonical pure \textit{polarized} $\Z$-Hodge structure of weight $n$
\end{example}
By the celebrated work of Deligne \cite{hodge2, hodge3}, the existence of a remarkable linear algebraic structure on the cohomology of smooth projective varieties extends to possibly open and singular varieties, at the cost of allowing more general structures, which roughly arise as successive extensions of pure Hodge structures of various weights.
\begin{definition}
\begin{itemize}\item[(a)]A \textit{mixed $\Q$-Hodge structure} is the datum of a finite dimensional $\Q$-vector space $V$, of an increasing finite filtration by $\Q$-vector subspaces $W_\bul$ on $V$, called the \textit{weight filtration}, and of a decreasing finite filtration by $\C$-vector subspaces $F^\bul$ on $V_\C$, called \textit{the Hodge filtration}, such that for each $n \in \Z$, the induced filtration $\gr^W_n(F^\bul)$ defines a pure $\Q$-Hodge structure of weight $n$ on $\gr^W_n(V)$.  \item[(b)] A \textit{graded-polarization of a mixed $\Q$-Hodge structure} $(V, W_\bul, F^\bul)$ is the datum for each $n \in \Z$ of a form $q_n$ on $\gr^W_n(V)$ which is a polarization of the pure $\Q$-Hodge structure of weight $n$ on $\gr^W_n(V)$ induced by $F^\bul$. \item[(c)] A \textit{mixed $\Z$-Hodge structure} is the datum of torsion-free abelian group of finite rank $V$ and of a mixed $\Q$-Hodge structure on $V_\Q = V \otimes_\Z \Q$. \item[(d)] A \textit{graded polarization of a mixed $\Z$-Hodge structure} is a graded-polarization of the corresponding mixed $\Q$-Hodge structure.\end{itemize}
\end{definition}
\begin{example}
Let $X$ be a smooth complex algebraic variety. For any $0 \leqslant n \leqslant 2\dim X$ the $n$-th cohomology space $H^n(X,\Q)$ of $X$ is endowed in \cite{hodge2} with a canonical mixed $\Q$-Hodge structure with weights between $n$ and $2n$.
\end{example}
\begin{example}
Let $X$ be a (possibly singular) complex proper algebraic variety. For any $0 \leqslant n \leqslant 2\dim X$ the $n$-th cohomology space $H^n(X,\Q)$ of $X$ is endowed in \cite{hodge3} with a canonical mixed $\Q$-Hodge structure with weights between $0$ and $n$.
\end{example}
Let $(V,W_\bul,F^\bul)$ be a mixed $\Q$-Hodge structure. For $p,q \in \Z$, the $(p,q)$-\textit{Hodge number} of $V$ is the integer
\[
h^{p,q} = \dim_\C F^p (\Gr^W_{p+q}V)_\C/F^{p+1}(\Gr^W_{p+q}V)_\C.
\]
\begin{lemma}\label{hsymnum}
For every $p,q \in \Z$, one has $h^{p,q} = h^{q,p}$.
\end{lemma}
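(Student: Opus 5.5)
The plan is to reduce the statement to the pure case via the definition of $h^{p,q}$. By definition, $h^{p,q}$ only depends on the pure $\Q$-Hodge structure of weight $n = p+q$ on $\gr^W_n(V)$ induced by $F^\bul$. So it suffices to prove that for a pure $\Q$-Hodge structure $(H, F^\bul)$ of weight $n$ one has $\dim F^p/F^{p+1} = \dim F^q/F^{q+1}$ whenever $p+q = n$.

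For this I would first use the equivalence with Hodge decompositions recalled above: I set $H^{p,q} := F^p \cap \overline{F^q}$ for $p+q=n$. From the defining condition $H_\C = F^p \oplus \overline{F^{n+1-p}}$ one gets $H_\C = \bigoplus_{p+q=n} H^{p,q}$ and $F^p = \bigoplus_{p' \geqslant p} H^{p',n-p'}$. This is the standard argument: intersect the decomposition for $p$ with $F^p \cap \overline{F^{n-p}}$ and use the decomposition for $p+1$. It gives $F^p/F^{p+1} \simeq H^{p,q}$, hence $h^{p,q} = \dim_\C H^{p,q}$.

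Then Hodge symmetry finishes the argument. Complex conjugation on $H_\C = H_\R \otimes \C$ is an $\R$-linear (antilinear) bijection sending $F^p \cap \overline{F^q}$ onto $\overline{F^p} \cap F^q = H^{q,p}$. An antilinear bijection between complex vector spaces preserves complex dimension, since it maps a $\C$-basis to a $\C$-basis. Hence $\dim H^{p,q} = \dim H^{q,p}$, and so $h^{p,q}=h^{q,p}$.

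The only step requiring any care is the identification $F^p/F^{p+1}\simeq H^{p,n-p}$, that is, checking $F^p = H^{p,n-p}\oplus F^{p+1}$. Given $x\in F^p$, I would decompose $x = a + b$ with $a\in F^{p+1}$ and $b \in \overline{F^{n-p}}$, using $H_\C = F^{p+1}\oplus\overline{F^{n-p}}$. Then $b = x-a \in F^p\cap\overline{F^{n-p}}$, and the sum is direct by the same decomposition. Everything else is formal.
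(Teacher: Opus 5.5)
Your proof is correct and follows the paper's route: pass to the pure Hodge structure on $\gr^W_{p+q}(V)$, identify $h^{p,q}$ with the complex dimension of the $(p,q)$-piece of its Hodge decomposition, and conclude by Hodge symmetry. The only difference is that you spell out what the paper leaves implicit. The paper calls the splitting $F^p = H^{p,n-p}\oplus F^{p+1}$ ``easily seen'', and it takes as ``by definition'' the fact that complex conjugation maps $H^{p,q}$ antilinearly onto $H^{q,p}$; you prove both.
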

\begin{proof}
Let $p,q \in \Z$. Then $F^\bul$ induces a pure $\Q$-Hodge structure of weight $p+q$ on $\Gr^W_{p+q}V$ and it is easily seen that $h^{p,q}$ is the dimension of the $(p,q)$-piece of the Hodge decomposition of $(\Gr^W_{p+q}V)_\C$, which satisfies Hodge symmetry by definition. The result follows.
\end{proof}
\subsubsection*{Bigraduations}
By analogy with the pure case, it is natural to try to encode the datum of a mixed Hodge structure in that of a bigraduation as follows:
\begin{definition}
Let $(V, W_\bul, F^\bul)$ be a mixed $\Q$-Hodge structure. A \textit{bigraduation of $(V,W_\bul, F^\bul)$} is a direct sum decomposition $V_\C = \bigoplus_{p,q \in \Z} V^{p,q}$ such that:
\begin{itemize}
\item for each $k \in \Z$, 
\begin{equation}\label{weight}(W_k V)_\C = \bigoplus_{p+q \leqslant k} V^{p,q}; \end{equation}
\item for each $k \in \Z$, 
\begin{equation}\label{hodge} F^r V_\C = \bigoplus_{p \geqslant r} V^{p,q}; \end{equation}
\end{itemize}
Conversely, a decomposition $V_\C = \bigoplus_{p,q \in \Z} V^{p,q}$ is said to \textit{define a mixed $\Q$-Hodge structure} if the filtrations defined by (\ref{hodge}) and (\ref{weight}) define a mixed $\Q$-Hodge structure.
\end{definition}
\begin{remark}
Let $V_\C = \bigoplus_{p,q \in \Z} V^{p,q}$ be a bigraduation of some mixed $\Q$-Hodge structure with Hodge numbers $h^{p,q}$. From (\ref{weight}) and (\ref{hodge}), there exists a $\C$-linear isomorphism $V^{p,q} \rightarrow F^p (\Gr^W_{p+q}V)_\C/F^{p+1}(\Gr^W_{p+q}V)_\C$. In particular $\dim V^{p,q} = h^{p,q}$.
\end{remark}
In general, there is no unique way to define a bigraduation of a mixed $\Q$-Hodge structure. However, one still has:
\begin{proposition}[{\cite[Prop. 1.2.8]{hodge2}, \cite[Prop. 2.2]{klingao}}]\label{delbigrad}
Let $(V, W_\bul, F^\bul)$ be a mixed $\Q$-Hodge structure. There exists a unique bigraduation $V_\C = \bigoplus_{p,q \in \Z} V^{p,q}$ of $(V, W_\bul, F^\bul)$ such that for each $p,q \in \Z$,
\begin{equation}\label{conj}
V^{p,q} = \overline{V^{q,p}} \mod \bigoplus_{\substack{r < p \\ s < q}} V^{r,s}.
\end{equation}
This bigraduation will be referred to as \textit{the canonical bigraduation} of $(V,W_\bul,F^\bul)$.
\end{proposition}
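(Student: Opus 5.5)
Existence will come from Deligne's splitting construction, which builds the bigraduation by descending through the weights, and uniqueness from the same structure by the same induction. For each pair $(p,q)$ I set
\[
I^{p,q} := (F^p \cap W_{p+q})_\C \cap \Big( \overline{F^q} \cap W_{p+q} + \sum_{j \geqslant 1} \overline{F^{q-j}} \cap W_{p+q-j-1} \Big),
\]
where $W_k$ stands for $(W_kV)_\C$ and conjugation is taken with respect to $V_\R$. I then verify three things in turn: that the $I^{p,q}$ form a bigraduation, that they satisfy (\ref{conj}), and that any bigraduation satisfying (\ref{conj}) coincides with $(I^{p,q})$.

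For the bigraduation property I work by induction on the length of the weight filtration. The first claim is that the projection $W_{p+q} \to \gr^W_{p+q}$ maps $I^{p,q}$ isomorphically onto the pure Hodge piece $H^{p,q}$ of $\gr^W_{p+q}(V)$. Surjectivity comes from the $j$-sum: given a class in $H^{p,q}$, I lift it to $F^p\cap W_{p+q}$ and correct it successively by elements of $\overline{F^{q-j}} \cap W_{p+q-j-1}$, using the opposedness $\gr^W_k = F^a \oplus \overline{F^{k+1-a}}$ at each lower weight $k$. Injectivity is the statement that an element of $I^{p,q}\cap W_{p+q-1}$ vanishes; I would prove it by induction downward on the weight of such an element, again using opposedness of the pure graded pieces. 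Once the isomorphism is established, a dimension count gives $(W_k)_\C = \bigoplus_{p+q\leqslant k} I^{p,q}$, and $F^r = \bigoplus_{p\geqslant r} I^{p,q}$ then follows from the containment $\bigoplus_{p \geqslant r} I^{p,q} \subseteq F^r$ together with equality of dimensions, using strictness of the morphisms for both filtrations.

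Property (\ref{conj}) comes directly from the shape of the definition. Conjugating $I^{q,p}$ produces a subspace of $F^q$-type elements, and the correction terms in the $j$-sum lie in $\bigoplus_{r<p,\, s<q} I^{r,s}$. Making this precise requires the bookkeeping identity
\[
\overline{F^{q-j}}\cap W_{p+q-j-1} \subseteq \bigoplus_{r<p,\ s<q} I^{r,s} + \big(\text{terms already in } I^{p,q}\big),
\]
which I obtain from the decomposition of $W_{p+q-j-1}$ established in the previous step.

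For uniqueness, suppose $(J^{p,q})$ is another bigraduation satisfying (\ref{conj}). I show $J^{p,q}\subseteq I^{p,q}$. On the $F$ side, $J^{p,q}\subseteq F^p\cap W_{p+q}$ holds by (\ref{hodge}) and (\ref{weight}). On the conjugate side, (\ref{conj}) gives $\overline{J^{p,q}} \subseteq J^{q,p} + \bigoplus_{r<q,\ s<p} J^{r,s}$; conjugating this and grouping the terms by their weight $p+q-j-1$ shows that $J^{p,q}$ lies in the bracket $\overline{F^q}\cap W_{p+q} + \sum_j \overline{F^{q-j}}\cap W_{p+q-j-1}$. Since both families have dimensions $h^{p,q}$, the containment is an equality.

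I expect the main obstacle to be the injectivity and index bookkeeping in the second paragraph: the $j$-sum has to be arranged so that the correction terms land exactly in the range $r<p$, $s<q$. If the direct verification becomes unwieldy, my fallback is the cited result \cite[Prop. 2.2]{klingao} and the original \cite[Prop. 1.2.8]{hodge2}, where this computation is carried out.
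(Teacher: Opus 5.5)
The paper does not prove this proposition; it only cites \cite{hodge2} and \cite{klingao}, and the argument there is the same Deligne construction you propose, so your route is the right one.

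\textbf{What works.} Your argument that the $I^{p,q}$ form a bigraduation is the standard one. For surjectivity you should keep two lifts, $a\in F^p\cap W_{p+q}$ and $b\in\overline{F^q}\cap W_{p+q}$, and split $a-b$ along $\gr^W_{p+q-1-j}=F^p\oplus\overline{F^{q-j}}$. You then correct $a$ inside $F^p$ and $b$ by elements of $\overline{F^{q-j}}\cap W_{p+q-j-1}$; correcting the $F^p$-lift by $\overline{F^{q-j}}$-elements alone would take it out of $F^p$. Your uniqueness argument is correct: each $\overline{J^{r,s}}$ with $r<q$, $s<p$ lies in $\overline{F^{r}}\cap W_{r+s}\subseteq\overline{F^{q-j}}\cap W_{p+q-j-1}$ for $j=p+q-1-(r+s)\geqslant 1$, precisely because $s\leqslant p-1$.

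\textbf{The gap.} It lies in verifying (\ref{conj}) for the $I^{p,q}$. Your claim that the correction terms lie in $\bigoplus_{r<p,s<q}I^{r,s}$ is false, and so is your displayed bookkeeping identity. Take $V$ pure of weight $p+q-2$ with $V^{p-2,q}\neq 0$. Then $I^{p,q}=0$ and $\bigoplus_{r<p,s<q}I^{r,s}=V^{p-1,q-1}$, while $\overline{F^{q-1}}\cap W_{p+q-2}\supseteq V^{p-2,q}$. The same problem appears on the conjugate side. The correction terms of $\overline{I^{q,p}}$ are $F^{p-j}\cap W_{p+q-j-1}$, and these contain pieces such as $I^{p,q-2}$ (for $j=1$). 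Such pieces lie neither in $\bigoplus_{r<p,s<q}I^{r,s}$ nor in $I^{p,q}$. So (\ref{conj}) does not follow from the shape of the definition alone; you must explain why these extra pieces are harmless.

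\textbf{How to fix it.} Put $n=p+q$ and $E^{p,q}:=\overline{F^q}\cap W_n+\sum_{j\geqslant 1}\overline{F^{q-j}}\cap W_{n-j-1}$, so that $I^{p,q}=F^p\cap W_n\cap E^{p,q}$. Put $L^{p,q}:=\bigoplus_{r<p,\,s<q}I^{r,s}$.

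First, use the bigraduation already established. Every $I^{a,b}$ occurring in $F^{p-j}\cap W_{n-j-1}$ has $b\leqslant q-1$. Hence $\overline{E^{q,p}}=F^p\cap W_n+\sum_{j\geqslant1}F^{p-j}\cap W_{n-j-1}\subseteq (F^p\cap W_n)+L^{p,q}$. So every $v\in\overline{I^{q,p}}=\overline{F^q}\cap W_n\cap\overline{E^{q,p}}$ can be written $v=f+l$ with $f\in F^p\cap W_n$ and $l\in L^{p,q}$.

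Second, and this is the missing ingredient, $L^{p,q}\subseteq E^{p,q}$. For $r<p$ and $s<q$ one has $I^{r,s}\subseteq E^{r,s}\subseteq E^{p,q}$. Indeed, each term $\overline{F^c}\cap W_k$ of $E^{r,s}$ has $k\leqslant n-2$ and $c\geqslant k-r\geqslant k+1-p$. It therefore sits in the weight-$k$ term $\overline{F^{k+1-p}}\cap W_k$ of $E^{p,q}$.

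Since also $v\in\overline{F^q}\cap W_n\subseteq E^{p,q}$, you get $f=v-l\in F^p\cap W_n\cap E^{p,q}=I^{p,q}$. Thus $\overline{I^{q,p}}\subseteq I^{p,q}+L^{p,q}$.

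Finally, $\overline{I^{q,p}}$ and $I^{p,q}$ both have dimension $h^{p,q}$ by Lemma \ref{hsymnum}. Both inject into $\gr^W_n$, so both meet $L^{p,q}\subseteq W_{n-2}$ trivially. Hence $\overline{I^{q,p}}+L^{p,q}=I^{p,q}+L^{p,q}$, which is exactly (\ref{conj}).
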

As the datum of a bigraduation of the $\C$-vector space $V_\C$  is equivalent to that of a morphism of $\C$-groups $\s_\C \cong \mult_{m,\C} \times \mult_{m,\C} \rightarrow \gl(V)_\C$, the canonical bigraduation associates such a morphism $x : \s_\C \rightarrow \gl(V)_\C$ to any mixed $\Q$-Hodge structure on $V$. Conversely, Pink proved the following which elucidates under which conditions such a morphism actually defines a polarizable mixed $\Q$-Hodge structure on $V$.
\begin{lemma}[{\cite[(1.4) and (1.5)]{pinkthese}}]\label{pinkcarac}
Let $V$ be a finite dimensional $\Q$-vector space. A morphism $x : \s_\C \rightarrow \gl(V)_\C$ defines a mixed $\Q$-Hodge structure if and only if there exists a connected $\Q$-algebraic group $\pp$ and a $\Q$-representation $\rho : \pp \rightarrow \gl(V)$ such that $x$ factors through $\rho_\C$ and the following hold:
\begin{itemize}
\item[(i)] the composite $\bar{x} : \s_\C \rightarrow \pp_\C \rightarrow (\pp/\pp^u)_\C$ is defined over $\R$;
\item[(ii)]the composite $\bar{x} \circ w : \mult_{m,\R} \rightarrow \s \rightarrow (\pp/\pp^u)_\R$ is defined over $\Q$ and factors through the center of $\pp/\pp^u$;
\item[(iii)] the weight filtration on $\para_\C$ defined by $\ad_{\pp_\C} \circ x$ through formula (\ref{weight}) satisfies $W_0(\para_\C) = \para_\C$ and $W_{-1}(\para_\C) = \lieu_\C$.
\end{itemize}
Under this assumption, the mixed $\Q$-Hodge structure on $V$ defined by $x$ is polarizable if and only if for a group $\pp$ as above the following holds:
\begin{itemize}
\item[(iv)] the involution $\ad (\bar{x}_\R(i))$ is a Cartan involution of $(\pp/\pp^u)_\R^\ad$
\end{itemize}
Under both assumptions, denoting by $W_\bul$ the weight filtration on $V$ defined by $x$, by $(q_k)$ some graded-polarization of the latter mixed $\Q$-Hodge structure and by $\ppol$ the $\Q$-subgroup of $\gl(V)$ whose $R$-points for any $\Q$-algebra $R$ are
{\small\[
\ppol(R) = \{ g \in \gl(V_R) \hspace{0.1cm} : \hspace{0.1cm} g(W_k) \subset W_k \hspace{0.1cm} \mathrm{and} \hspace{0.1cm} \gr_k^W(g) \in \mathbf{GAut}(\gr_k^W(V), q_k)(R) \hspace{0.1cm} \forall k \in \Z\},
\]}
the morphism $x$ satisfies properties $(i)-(iv)$ for $\pp = \ppol$, with $\rho$ being the natural inclusion in $\gl(V)$. Conversely, any pair $(\pp, \rho)$ for which $x$ satisfies properties $(i)-(iv)$ satisfies that $\rho(\pp)$ is contained in $\ppol$ for some graded-polarization $(q_k)$ of $V$.
\end{lemma}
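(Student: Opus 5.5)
We prove the two directions of the equivalence separately, working with the canonical bigraduation of Proposition \ref{delbigrad} throughout. For the forward direction, let $x$ define a mixed $\Q$-Hodge structure $(V,W_\bul,F^\bul)$, equip it with a graded-polarization $(q_k)$ (this is only needed for the last part), and take $\pp = \ppol$ and $\rho$ the inclusion. First, $x$ preserves each $W_k$, and its graded pieces $\gr^W_k(x)$ are the Hodge homomorphisms of the pure structures $\gr^W_k V$. Each such homomorphism preserves $q_k$ up to scaling by the Hodge–Riemann relations (\ref{HR1}). Hence $x$ factors through $\ppol$.

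Next we identify the pieces of $\ppol$. Its unipotent radical is the subgroup acting trivially on $\gr^W$, and $\ppol/\ppol^u \cong \prod_k \mathbf{GAut}(\gr^W_k V, q_k)$. Under this identification $\bar x$ becomes the product of the pure Hodge homomorphisms, which are defined over $\R$; this gives (i). Composing with $w$ gives the scalar $r \mapsto r^k$ on $\gr^W_k$, which is defined over $\Q$ and central; this gives (ii). For (iii), the adjoint action on $\lie(\ppol)$ preserves $W$ on $\End(V)$. Endomorphisms preserving $W$ have non-positive weights, and those acting trivially on $\gr^W$ have weights $\leqslant -1$. Together these give (iii), using that the bigraduation is compatible with the induced $W$ on $\End(V)$. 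Finally, (iv) is the classical fact that $\ad C$ is a Cartan involution for each $\mathbf{GAut}(\gr^W_kV,q_k)^\ad$, which follows from positivity (\ref{HR2}).

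For the converse, let $(\pp,\rho)$ satisfy (i)--(iii). Define $W_\bul$ and $F^\bul$ on $V_\C$ via (\ref{weight}) and (\ref{hodge}). The first step is to show that $W_\bul$ is defined over $\Q$. Choose a Levi decomposition $\pp = \pp^u \rtimes \LL$ over $\Q$. Up to $\pp^u(\C)$-conjugacy, which preserves the weight filtration by (iii) since $\lieu_\C = W_{-1}$, we may assume $x\circ w$ lands in the center of $\LL$ and is defined over $\Q$, using (ii). Its eigenspace grading then gives $\Q$-subspaces, and $W$ is their partial sum.

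Here is where I expect the main obstacle. $W$ should be independent of the choice of lift, because $\pp^u$ acts unipotently and decreases weight; but the reduction from "$\bar x\circ w$ defined over $\Q$" to "$W$ defined over $\Q$" needs care. I would first try to argue that $\rho(\pp^u)$ respects the filtration by $\bar x\circ w$-weights, which follows from (iii). Next, on each $\gr^W_k$ the action factors through $\pp/\pp^u$, and by (i) $\bar x$ is real, so the induced bigraduation is Hodge-symmetric and of weight $k$; hence $\gr^W_k(F)$ is a pure Hodge structure. For polarizability, if (iv) holds, we average a form over the compact group $\{g : \ad(\bar x(i))g = g\}$ inside $(\pp/\pp^u)(\R)$ to get polarizations. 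These forms are rational after approximation and positivity is an open condition; this is Deligne's standard argument.

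The last claim, that any such $\rho(\pp)$ lies in some $\ppol$, requires choosing the $q_k$ so that they are $\rho(\pp)$-invariant up to scaling. Invariance under $\pp^u$ is automatic on $\gr^W$. For the reductive part, we choose the rational polarization constructed above as invariant under $\rho(\pp)/\rho(\pp^u)$ up to the weight character. This works because (iv) makes $\pp/\pp^u$ modulo center of compact type twisted by $C$, and the space of invariant polarizations contains a rational point by density.
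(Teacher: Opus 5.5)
The genuine gap is in everything that uses (iv).

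\textbf{The averaging group is not compact.} The group $\{g\in(\pp/\pp^u)(\R) : \bar x(i)\,g\,\bar x(i)^{-1}=g\}$ you propose to average over contains the real points of the center of $\pp/\pp^u$. In particular it contains $\bar x(w(\R^\times))$ as soon as $V$ has a nonzero weight. Hypothesis (iv) only controls its image in $(\pp/\pp^u)^{\ad}(\R)$. The weight torus is harmless, since it acts by scalars on each $\gr^W_kV$, but nothing controls the rest of the center.

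\textbf{The rationality step fails for the same reason.} A rational approximation of a real polarization is still a morphism of Hodge structures only if you approximate inside a $\Q$-subspace of forms on which $\bar x(\s)$ automatically acts by the right character. In practice these are forms invariant, up to a rational character, under $\pp$. Nothing in (i)--(iv) guarantees that this subspace contains a nondegenerate form, let alone a positive one. This fails when the center acts on some $\gr^W_kV$ through a torus whose image in $\mathbf{PGL}(\gr^W_kV)$ has non-compact real points. Your last paragraph (``the space of invariant polarizations contains a rational point by density'') presupposes exactly this non-emptiness.

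\textbf{This is not a missing detail.} The implication (iv) $\Rightarrow$ polarizable, and the final assertion, are false as literally stated.
\begin{itemize}
\item Let $\pp=\mult_m$ act on $V=\Q^2$ by $t\mapsto\mathrm{diag}(t,t^{-1})$, and let $x$ be trivial. Then (i)--(iv) hold trivially and $V=\Q(0)^2$ is polarizable. But no positive definite $q_0$ satisfies $\rho(\pp)\subset\mathbf{GAut}(V,q_0)$, so the final assertion fails.
\item Let $K$ be a cubic field with one complex place, and $\pp=\ker\big(N_{K/\Q}:\res_{K/\Q}\mult_m\rightarrow\mult_m\big)$ acting on $V=K$. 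Let $x$ be the complexification of $z\mapsto(1,z/\bar z)\in(K\otimes\R)^\times=\R^\times\times\C^\times$. Again (i)--(iv) hold, since the adjoint group is trivial. $V$ is a Hodge structure of weight $0$ with Mumford--Tate group $\pp$. Over $\bar\Q$, $\pp$ is $\{t_1t_2t_3=1\}$ acting diagonally, so it fixes no nonzero bilinear form on $V$. Hence $V$ is not polarizable.
\end{itemize}

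\textbf{What would make your argument work.} Strengthen (iv) to: for every $k$, $\ad(\bar x(i))$ is a Cartan involution of the image of $(\pp/\pp^u)_\R$ in $\mathbf{PGL}(\gr^W_kV)_\R$. Deligne's argument then applies on each graded piece. Note that it averages over the compact real form obtained by twisting with $\ad(\bar x(i))$, not over the fixed points of $\ad(\bar x(i))$ in $(\pp/\pp^u)(\R)$; the latter do not yield invariant forms. One then approximates inside the $\Q$-space of invariant forms. The paper itself only cites Pink for these claims.

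\textbf{The rest is sound.} In the converse of the first equivalence, the $\pp^u(\C)$-conjugation into a $\Q$-Levi does preserve $W_\bul$. Indeed $d\rho$ intertwines the two $\s_\C$-actions, so it maps $\lieu_\C=W_{-1}\para_\C$ into $W_{-1}\End(V_\C)$. So the obstacle you flag is already handled by your own argument.

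In the forward direction, three small repairs are needed.
\begin{itemize}
\item Use the stabilizer $\pp_{W_\bul}$ instead of $\ppol$, since a mixed Hodge structure need not be graded-polarizable; the same check of (i)--(iii) works.
\item Pass to identity components, since $\ppol$ need not be connected.
\item You need $\gr^W(x)$ to be the Hodge cocharacter of $\gr^WV$. This holds for Deligne's splitting, but not for an arbitrary splitting of $(W_\bul,F^\bul)$, for which (i) can fail.
\end{itemize}
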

\begin{proof}
The first assertion is \cite[(1.5)]{pinkthese}. The second is \cite[(1.12)]{pinkthese} with the "$G_1$" in \textit{op. cit.} equal to $(\pp/\pp^u)^\ad$. The third assertion is a byproduct of the proof of \cite[(1.5)]{pinkthese} combined with the definition of graded-polarizations. The fourth assertion is \cite[(1.4)(b)]{pinkthese} combined with the definition of graded-polarizations.
\end{proof}
\begin{definition}
Let $V$ be a finite dimensional $\Q$-vector space and $\pp$ a connected $\Q$-algebraic subgroup of $\gl(V)$. A morphism $x : \s_\C \rightarrow \gl(V_\C)$ is said to \textit{satisfy Pink's axioms for $\pp$} if $x(\s_\C) \subset \pp_\C$ and the resulting morphism $\s_\C \rightarrow \pp_\C$ satisfies properties $(i)-(iv)$ of Lemma \ref{pinkcarac}.
\end{definition}
We will need the following slight refinement of Lemma \ref{pinkcarac}, which gives more informations on the groups for which a bigraduation of a graded-polarized mixed $\Q$-Hodge structure satisfies Pink's axioms. Its statement and proof were communicated to me by Z. Gao, but any mistake is of course mine.
\begin{lemma}[Gao]\label{subgroup}
Let $V$ be a mixed $\Q$-Hodge structure with weight filtration $W_\bul$ and graded-polarized by $(q_k)$. Let $x : \s_\C \rightarrow \gl(V)_\C$ be some bigraduation of $V$. Let $\pp$ be a $\Q$-subgroup of $\ppol$ such that $x(\s_\C) \subset \pp_\C$. Then $x$ satisfies Pink's axioms for $\pp$. Furthermore $\pp^u = (\ppol)^u \cap \pp$.
\end{lemma}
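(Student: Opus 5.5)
The plan is to prove the second assertion, $\pp^u = (\ppol)^u \cap \pp$, first, and then deduce Pink's axioms (i)--(iv) for $\pp$ from the fact, given by Lemma \ref{pinkcarac}, that they hold for $\ppol$.

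\emph{Step 1: the unipotent radical.} Let $\mathbf{U} := (\ppol)^u \cap \pp$. Since $(\ppol)^u$ is the kernel of $\ppol \to \prod_k \mathbf{GAut}(\gr^W_k V, q_k)$, the group $\mathbf{U}$ is a normal unipotent $\Q$-subgroup of $\pp$; hence $\mathbf{U}\subset \pp^u$. For the reverse inclusion, consider the image $\bar\pp$ of $\pp$ in $\G := \prod_k \mathbf{GAut}(\gr^W_k V, q_k)$. Then $\bar\pp \cong \pp/\mathbf U$, and the image of $\pp^u$ in $\bar\pp$ is a normal unipotent subgroup, so it suffices to show that $\bar\pp$ is reductive.

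The key input is that $\bar\pp$ contains the image $\bar x(\s_\C)$ of the bigraduation. The composite $\bar x : \s_\C \to \G_\C$ is the Hodge homomorphism of the graded pure Hodge structure, so it is defined over $\R$ and is the same for every bigraduation. Moreover $\ad \bar x(i)$ is a Cartan involution of $\G^\ad_\R$ by the polarization, i.e.\ the hermitian form $h(\cdot, \bar x(i)\cdot)$ built from the $q_k$ is positive definite on $\gr^W V_\R$. I then want to use the classical criterion (Deligne, Satake): a real algebraic subgroup $\mathbf{H}\subset \gl(E)$ that is stable under the transpose with respect to a positive definite form is reductive. Concretely, $\bar\pp_\R$ is stable under $g\mapsto C\,\overline{g}^{*-1}C^{-1}$, since this involution preserves the $\Q$-structure and $C=\bar x(i) \in \bar\pp(\R)$. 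One can also argue as Pink does: $\bar\pp$ is a $\Q$-group whose real points contain an element inducing a Cartan-type involution stabilizing $\bar\pp_\R$, and any such group is reductive because its unipotent radical would carry a positive definite invariant form, forcing triviality.

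I expect this step to be the main obstacle. One has to check carefully that $\bar\pp_\R$ really is stable under the Cartan involution $\theta = \ad \bar x(i)$ composed with the compact-form involution, and then that a $\theta$-stable algebraic subgroup of a reductive group with $\theta$ Cartan is reductive. The latter is standard: the Killing-type form $B_\theta(X,Y) = -\mathrm{tr}(X\theta Y)$ is positive definite, so the orthogonal complement of an ideal is a complementary ideal, and a unipotent ideal $\mathfrak n$ satisfies $\mathrm{tr}(X\theta X)=0$ for $X \in \mathfrak n \cap \theta\mathfrak n$. I would work at the Lie algebra level with $\mathrm{Lie}(\bar\pp_\R)$.

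\emph{Step 2: Pink's axioms.} With $\pp^u = \pp \cap (\ppol)^u$, the quotient $\pp/\pp^u = \bar\pp$ embeds in $\ppol/(\ppol)^u = \G$. Axioms (i), (ii) and (iv) for $\pp$ now follow from the corresponding axioms for $\ppol$:
\begin{itemize}
\item For (i), $\bar x$ is defined over $\R$ in $\G$ and lands in $\bar\pp_\C$, so it is defined over $\R$ in $\bar\pp$.
\item For (ii), $\bar x\circ w$ is the weight cocharacter, which is defined over $\Q$ and central in $\G$; it lies in $\bar\pp$, hence is central in $\bar\pp$. Its $\Q$-rationality inside $\bar\pp$ follows because $\bar\pp$ is a $\Q$-subgroup.
\item For (iv), the restriction of a Cartan involution of $\G^\ad_\R$ to a $\theta$-stable reductive subgroup is a Cartan involution, which is exactly Step 1's setup.
\end{itemize}
For (iii), the weight filtration on $\mathrm{Lie}(\pp)_\C$ is the restriction of that on $\mathrm{Lie}(\ppol)_\C$, since $\ad\circ x$ preserves $\mathrm{Lie}(\pp)_\C$. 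Therefore $W_0 = $ everything, and $W_{-1}\mathrm{Lie}(\pp)_\C = \mathrm{Lie}(\pp)_\C \cap \mathrm{Lie}((\ppol)^u)_\C = \mathrm{Lie}(\pp^u)_\C$ by Step 1.
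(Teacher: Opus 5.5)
Your proposal is correct and follows the paper's proof. You reduce $\pp^u=(\ppol)^u\cap\pp$ to the reductivity of the image of $\pp$ in $\gpol$, use that $\bar x(i)$ lies in that image while $\ad\bar x(i)$ is a Cartan involution of $(\gpol)_\R^\ad$, and then transfer Pink's axioms from $\ppol$, exactly as the paper does; the paper gets reductivity because the $\theta$-twisted real form is a closed subgroup of a compact group, whereas you use a positive trace form plus Engel's theorem. Two small corrections: the step you flag as the main obstacle is immediate, since $\theta=\ad\bar x(i)$ is conjugation by an element of $\bar\pp(\R)$ and so preserves $\bar\pp_\R$ and its unipotent radical; and $-\mathrm{tr}(X\theta X)$ is positive only away from the centre of $\mathrm{Lie}(\gpol)$, which is enough because nilpotent elements have no central component.
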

\begin{proof}
Let $\pp$ be a $\Q$-subgroup of $\ppol$ such that $x(\s_\C) \subset \pp_\C$. As the mixed $\Q$-Hodge structure on $V$ is polarizable, the third assertion of Lemma \ref{pinkcarac} implies that the morphism $x$ satisfies Pink's axioms for $\ppol$. We first prove that $\pp^u = (\ppol)^u \cap \pp$. The inclusion of the right-hand side in the left-hand side is immediate. For the other inclusion, we need to prove that $\G := \pp/((\ppol)^u \cap \pp)$ is reductive. Denote by $\gpol$ the quotient of $\ppol$ by its unipotent radical, so that the inclusion of $\pp$ in $\ppol$ descends to an inclusion of $\G$ in $\gpol$. Since $x$ satisfies Pink's axioms for $\ppol$, the composite morphism $\s_\C \rightarrow (\ppol)_\C \rightarrow (\gpol)_\C$ is defined over $\R$, and we denote it by $\bar{x} : \s \rightarrow (\gpol)_\R$. Again by Pink's axioms for $\ppol$, the involution $\ad(\bar{x}(i))$ is a Cartan involution of $(\gpol)_\R^\ad$. Because $x(\s_\C) \subset \pp_\C$, the morphism $\bar{x}$ factors through the inclusion of $\G$ in $\gpol$, hence the involution $\ad(\bar{x}(i))$ on $(\gpol)_\R^\ad$ restricts to an involution of $(\G/(Z(\gpol) \cap \G))_\R$, which is a Cartan involution because the real form it defines is a closed subgroup of the real form of $(\gpol)_\R^\ad$ defined by $\ad(\bar{x}(i))$, which is compact since $\ad(\bar{x}(i))$ is a Cartan involution of $(\gpol)_\R^\ad$. Therefore $(\G/(Z(\gpol) \cap \G)$ is reductive hence so is $\G$ as $Z(\gpol) \cap \G \subset Z(\G)$. Thus we have shown that $\pp^u = (\ppol)^u \cap \pp$. In particular, letting $\G_\pp = \pp/\pp^u$ the reductive quotient of $\pp$, this implies that the inclusion of $\pp$ in $\ppol$ descends to an inclusion of $\G_\pp$ in $\gpol$. It is easy and left to the reader to see that the latter fact combined to assertions $(i)-(iii)$ of Pink's axioms for $x$ and $\ppol$ implies that $x$ satisfies assertions $(i)-(iii)$ of Pink's axioms for $\pp$. For $(iv)$, once we know that $\pp^u = (\ppol)^u \cap \pp$, repeating the first part of the proof shows that $\ad(\bar{x}(i))$ is a Cartan involution of $(\G_\pp/(Z(\gpol) \cap \G))_\R$. As $Z(\gpol) \cap \G_\pp)_\R \subset Z(\G_\pp)_\R$, it follows that $\ad(\bar{x}(i))$ is a Cartan involution of $(\G_\pp)^\ad_\R$. The result is proven.

\end{proof}
\subsubsection*{Canonical bigraduations of tensor products and duals}
The end of this subsection is dedicated to describing the canonical bigraduations associated to the natural mixed $\Q$-Hodge structures on tensor products and duals of $\Q$-vector spaces endowed with mixed $\Q$-Hodge structures. This will be crucial for the characterization of Mumford-Tate groups given in Proposition \ref{mtmor}. 

Let $V$ be a $\Q$-mixed Hodge structure and $x : \s_\C \rightarrow \gl(V_\C)$ be the canonical bigraduation of $V$. Consider the morphism of $\Q$-algebraic groups $\phi : \gl(V) \rightarrow \gl(V^\vee)$ defined on $R$-points for $R$ a $\Q$-algebra by sending $g \in \gl(V_R)$ to the $R$-linear automorphism of $V_R^\vee$ mapping a $R$-linear form $f$ to $f \circ g^{-1}$. Set $x^\vee = \phi_\C \circ x$.
\begin{lemma}\label{dual}
The morphism $x^\vee$ defines a mixed $\Q$-Hodge structure on $V^\vee$. Furthermore, the canonical bigraduation of $V^\vee$ endowed with this mixed Hodge structure is $x^\vee$.
\end{lemma}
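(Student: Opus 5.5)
The plan is to compute the decomposition induced by $x^\vee$ explicitly in terms of the canonical bigraduation $V_\C = \bigoplus V^{p,q}$ of $V$. Then we check that it is a bigraduation of the dual mixed Hodge structure on $V^\vee$ and that it satisfies the conjugation condition (\ref{conj}). The uniqueness part of Proposition \ref{delbigrad} then identifies it with the canonical bigraduation of $V^\vee$.

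\textbf{Computing the decomposition.} Recall that $x$ corresponds to the grading in which $(z_1,z_2)\in \s_\C(\C) = \C^\times\times\C^\times$ acts on $V^{p,q}$ by $z_1^{-p}z_2^{-q}$ (or by $z_1^{p}z_2^{q}$, depending on the sign convention; only the relative signs matter below). Since $\phi(g)f = f\circ g^{-1}$, an element $f\in V_\C^\vee$ vanishing on $\bigoplus_{(r,s)\neq(-p,-q)} V^{r,s}$ is acted on by $x^\vee(z_1,z_2)$ through the inverse of the character on $V^{-p,-q}$. So the $(p,q)$-piece of $x^\vee$ is
\[
(V^\vee)^{p,q} = (V^{-p,-q})^\vee \subset V_\C^\vee,
\]
where we identify $(V^{-p,-q})^\vee$ with the forms killing all the other pieces.

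\textbf{Checking the filtrations and weights.} The dual mixed Hodge structure on $V^\vee$ is given by
\[
W_k V^\vee = (W_{-k-1}V)^\perp, \qquad F^r V^\vee_\C = (F^{1-r}V_\C)^\perp .
\]
From (\ref{weight}) and (\ref{hodge}) for $V$ one reads off
\[
(W_{-k-1}V)_\C^\perp = \bigoplus_{p+q\leqslant k}(V^\vee)^{p,q}, \qquad (F^{1-r})^\perp = \bigoplus_{p\geqslant r}(V^\vee)^{p,q}.
\]
Hence $x^\vee$ is a bigraduation of this structure, and in particular it defines a mixed $\Q$-Hodge structure. This gives the first assertion; one can alternatively invoke Lemma \ref{pinkcarac} with $\phi(\pp)$.

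\textbf{Conjugation condition.} This is the only step with any content. Complex conjugation on $V^\vee_\C$ satisfies $\bar f(v) = \overline{f(\bar v)}$, so $\overline{(V^{a,b})^\vee}$ corresponds to $(\overline{V^{a,b}})^\vee$. The annihilator description must be handled with care, since $\overline{V^{-q,-p}}$ only agrees with $V^{-p,-q}$ modulo $L := \bigoplus_{r<-p,\,s<-q}V^{r,s}$.

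Work with the induced graded pieces. Set $U = \bigoplus_{r\leqslant -p,\,s\leqslant -q}V^{r,s}$; condition (\ref{conj}) shows that $U$ and $L$ are stable under conjugation. Take $f\in(V^\vee)^{p,q}$. Then $\bar f$ kills every $V^{r,s}$ with $(r,s)$ outside the conjugate index sets, and (\ref{conj}) for $V$ lets us write $\bar f = f' + g$ with $f'\in (V^\vee)^{q,p}$ and $g$ supported on duals of pieces $V^{r,s}$ with $r>-p$, $s>-q$. The last range is exactly $\bigoplus_{r'<q,\,s'<p}(V^\vee)^{r',s'}$. This is condition (\ref{conj}) for $V^\vee$ at $(q,p)$, which gives the required identity.

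I expect this index bookkeeping, i.e. showing that the error term lands in the strictly lower block, to be the main (if modest) obstacle. The cleanest route is to phrase everything through the subspaces $U$ and $L$ above, whose annihilators are conjugation-stable. Uniqueness in Proposition \ref{delbigrad} then concludes the proof.
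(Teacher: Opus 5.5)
Your overall plan is the same as the paper's. You identify $(V^\vee)^{p,q}=(V^{-p,-q})^\vee$, check that these pieces induce $W_kV^\vee=(W_{-k-1}V)^\perp$ and $F^rV^\vee_\C=(F^{1-r}V_\C)^\perp$, verify (\ref{conj}) for $V^\vee$, and conclude by the uniqueness in Proposition \ref{delbigrad}. Your treatment of the first assertion is fine; the paper instead observes that $x^\vee$ factors through $\phi\circ\rho$, so axioms (i)--(iii) of Lemma \ref{pinkcarac} are inherited. The gap is in the conjugation step. You correctly call it the only step with content, but you do not actually carry it out, and what you do say has three problems.

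\textbf{The auxiliary claim is false.} You assert that $U$ and $L$ are stable under conjugation; this fails when $p\neq q$. Indeed (\ref{conj}) gives $\overline{V^{r,s}}\subset V^{s,r}\oplus\bigoplus_{a<s,\,b<r}V^{a,b}$. Hence conjugation carries $\bigoplus_{r\leqslant a,\,s\leqslant b}V^{r,s}$ onto $\bigoplus_{r\leqslant b,\,s\leqslant a}V^{r,s}$, swapping the two bounds. For a pure structure of weight $-1$ and $(p,q)=(1,0)$ you get $U=V^{-1,0}$ but $\overline{U}=V^{0,-1}$.

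\textbf{The support of $g$ is mis-indexed and unjustified.} It must be the duals of $V^{r,s}$ with $r>-q$, $s>-p$; that is what produces $\bigoplus_{r'<q,\,s'<p}(V^\vee)^{r',s'}$. This needs an argument. For $(r,s)\neq(-q,-p)$ with $r\leqslant -q$ or $s\leqslant -p$, none of the indices $(s,r)$ and $(a,b)$ with $a<s$, $b<r$ equals $(-p,-q)$. Hence $\bar f(V^{r,s})=\overline{f(\overline{V^{r,s}})}=0$.

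\textbf{You only get one inclusion.} Even after this repair, you have $\overline{(V^\vee)^{p,q}}\subset (V^\vee)^{q,p}\oplus R$ with $R=\bigoplus_{r'<q,\,s'<p}(V^\vee)^{r',s'}$, which is only half of (\ref{conj}) at $(q,p)$. Equality modulo $R$ needs two further facts. One is $h^{-p,-q}=h^{-q,-p}$ (Lemma \ref{hsymnum}). The other is that $\overline{(V^\vee)^{p,q}}\to V^\vee_\C/R$ is injective, which uses the other half of (\ref{conj}) for $V$. If $\bar f\in R$, then $\bar f$ kills $V^{-q,-p}$, so $f$ kills $\overline{V^{-q,-p}}$; $f$ also kills $\bigoplus_{a<-p,\,b<-q}V^{a,b}$. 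Since $V^{-p,-q}\subset\overline{V^{-q,-p}}+\bigoplus_{a<-p,\,b<-q}V^{a,b}$, $f$ kills $V^{-p,-q}$, hence $f=0$.

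This dimension and injectivity step is exactly what the paper's Grassmann-formula argument supplies, applied to subspaces of $V_\C$ after passing to annihilators. With it added, your elementwise computation on $V^\vee_\C$ is a valid and somewhat more direct alternative to the paper's route.
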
 
\begin{proof}
By Lemma \ref{pinkcarac}, the morphism $x$ factors through the complexification of a $\Q$-representation $\rho : \pp \rightarrow \gl(V)$ of a connected $\Q$-algebraic group $\pp$ such that properties $(i)-(iii)$ of the lemma are satisfied. By construction, $x^\vee$ then factors through the complexification of the $\Q$-representation $\phi \circ \rho$ of $\pp$ and conditions $(i)-(iii)$ are automatically satisfied as they are relative to the morphism $\s_\C \rightarrow \pp_\C$ which is the same as for $x$. By Lemma \ref{pinkcarac} this shows that $x^\vee$ defines a mixed $\Q$-Hodge structure on $V^\vee$.

It remains to see that $x^\vee$ is the canonical bigraduation of $V^\vee$ for the above mixed $\Q$-Hodge structure, i.e. to check condition (\ref{conj}) of Proposition \ref{delbigrad}. We start by expliciting the bigraduation defined on $V^\vee_\C$ by $x^\vee$. To lighten the notation, we set for $p,q \in \Z$:
\[
(V^{p,q})^\vee := \Big(V /{\bigoplus_{(r,s) \neq (p,q)} V^{r,s}}\Big)^\vee
\]
which is canonically a sub-$\C$-vector space of $V_\C^\vee$. Then, one easily checks that the bigraduation on $V_\C^\vee$ defined by $x^\vee$ is:
\[
(V^\vee)^{p,q} = (V^{-p,-q})^\vee,
\]
for $p,q \in \Z$, and that this bigraduation indeed gives the weight and Hodge filtrations on the dual that appear in the litterature (see e.g. \cite[Sect. 1.1]{hodge2}:
\[
W_lV^\vee = \Big( V/W_{-l-1}\Big)^\vee
\]
and 
\[
F^rV_\C^\vee = \Big( V_\C/F^{1-r}V_\C\Big)^\vee.
\]
Let $p,q \in \Z$, set $R^{p,q} = \bigoplus_{r<p,s<q}(V^\vee)^{r,s}$ and $\pi_{p,q} : V^\vee_\C \rightarrow V^\vee_\C/R^{p,q}$ be the quotient map. We wish to prove that $\pi_{p,q}((V^\vee)^{p,q}) = \pi_{p,q}(\overline{(V^\vee)^{q,p}})$. We start by giving an explicit description of $\pi_{p,q}$. First, one has
{\small\[
R^{p,q} = \bigoplus_{r<p,s<q}\Big(\frac{V_\C}{\bigoplus_{(k,l) \neq (-r,-s)}V^{k,l}}\Big)^\vee =  \Big(\frac{V_\C}{\bigcap_{r<p,s<q} \bigoplus_{(k,l) \neq (-r,-s)} V^{k,l}}\Big)^\vee =  \Big(\frac{V_\C}{\bigoplus_{(r,s) \in I_{p,q}} V^{r,s}}\Big)^\vee
\]}
where $I_{p,q} = \{(r,s) \hspace{0.1cm} : \hspace{0.1cm} r \leqslant -p\} \cup \{(r,s) \hspace{0.1cm} : \hspace{0.1cm} s \leqslant -q\}$.
Then, the quotient vector space $V_\C^\vee / R^{p,q}$ is canonically isomorphic to $\Big(\bigoplus_{(r,s) \in I_{p,q}} V^{r,s}\Big)^\vee$, and under this identification, $\pi_{p,q}$ identifies to the map which sends a $\C$-linear form on $V_\C$ to its restriction to the $\C$-vector subspace $\bigoplus_{(r,s) \in I_{p,q}} V^{r,s}$.

Besides, in view of the definition of the $(V^\vee)^{p,q}$'s, a $\C$-linear form is in $(V^\vee)^{p,q}$ (resp. $\overline{(V^\vee)^{q,p}}$) if and only if $\bigoplus_{(r,s) \neq (-p,-q)}V^{r,s} \subset \ker f$ (resp. $\bigoplus_{(r,s) \neq (-q,-p)}\overline{V^{r,s}} \subset \ker f$). It follows that proving that $\pi_{p,q}((V^\vee)^{p,q}) = \pi_{p,q}(\overline{(V^\vee)^{q,p}})$ reduces to the equality of $\C$-vector subspaces
\[
\Big(\bigoplus_{(r,s) \neq (-p,-q)}V^{r,s} \Big) \cap \Big(\bigoplus_{(r,s)\in I_{p,q}} V^{r,s} \Big) = \Big(\bigoplus_{(r,s) \neq (-p,-q)}\overline{V^{s,r}} \Big) \cap \Big(\bigoplus_{(r,s)\in I_{p,q}} V^{r,s} \Big).
\]
Let's first prove that both have the same dimension. First note that for $r,s \in \Z$, we have $\dim_\C V^{r,s} = h^{r,s} = h^{s,r} = \dim_\C V^{s,r} = \dim_\C \overline{V^{s,r}}$. Therefore,
\[
\dim_\C \bigoplus_{(r,s) \neq (-p,-q)}V^{r,s} = \dim_\C \bigoplus_{(r,s) \neq (-p,-q)}\overline{V^{s,r}}.
\]
Besides, since $(-p,-q) \in I^{p,q}$, one has
\[
\Big(\bigoplus_{(r,s) \neq (-p,-q)}V^{r,s} \Big) + \Big(\bigoplus_{(r,s)\in I_{p,q}} V^{r,s} \Big) = V_\C.
\]
Finally (\ref{conj}) of Proposition \ref{delbigrad} implies that $\overline{V^{-q,-p}} \subset V^{-p,-q} \oplus \bigoplus_{r<-p, s<-q} V^{r,s} \subset \bigoplus_{(r,s) \in I_{p,q}} V^{r,s}$ which shows that
\[
\Big(\bigoplus_{(r,s) \neq (-p,-q)}\overline{V^{s,r}} \Big) + \Big(\bigoplus_{(r,s)\in I_{p,q}} V^{r,s} \Big) = V_\C.
\]
By the Grassman formula, this implies that the $\C$-dimensions of the above two intersections are equal. Therefore, we are reduced to proving one inclusion. Again by (\ref{conj}) of Proposition \ref{delbigrad}, we have
\[
\bigoplus_{(r,s) \in I_{p,q} -\{(-p,-q)\}}\overline{V^{s,r}} \subset \sum_{(r,s) \in I_{p,q} -\{(-p,-q)\}} \Big[V^{r,s} \oplus \bigoplus_{k<r,l<s} V^{k,l}\Big] \subset \bigoplus_{(r,s) \in I_{p,q}-\{(-p,-q)\}} V^{r,s}.
\]
Since the first and last member of this chain of inclusions of $\C$-vector subspaces of $V_\C$ have the same dimension, all inclusions are equalities. Therefore, one finds that
\[
\bigoplus_{(r,s) \in I_{p,q}-\{(-p,-q)\}} V^{r,s} = \bigoplus_{(r,s) \in I_{p,q} -\{(-p,-q)\}}\overline{V^{s,r}} \subset \Big(\bigoplus_{(r,s) \neq (-p,-q)}\overline{V^{s,r}} \Big) \cap \Big(\bigoplus_{(r,s)\in I_{p,q}} V^{r,s} \Big)
\]
and the desired inclusion follows from the remark that:
\[
\Big(\bigoplus_{(r,s) \neq (-p,-q)}V^{r,s} \Big) \cap \Big(\bigoplus_{(r,s)\in I_{p,q}} V^{r,s} \Big) = \bigoplus_{(r,s) \in I^{p,q}-\{(-p,-q)\}} V^{r,s}.
\]
This finishes the proof of the proposition.
\end{proof}
Let $V_1$ and $V_2$ be two mixed $\Q$-Hodge structures and $x_i : \s_\C \rightarrow \gl(V_{i,\C})$ $(i=1,2)$ their respective canonical bigraduations. Consider the map of $\Q$-algebraic groups $\psi : \gl(V_1) \times \gl(V_2) \rightarrow \gl(V_1 \otimes V_2)$ which for any $\Q$-algebra sends a pair of $R$-linear automorphisms $(g_1,g_2)$ to the $R$-linear automorphism of $V_{1,R} \otimes_R V_{2,R}$ defined on pure tensors by $v_1 \otimes v_2 \mapsto g_1(v_1) \otimes g_2(v_2)$. Denote by $x_1 \otimes x_2$ the morphism $\s_\C \rightarrow \gl(V_1 \otimes V_2)$ defined as $\psi_\C \circ (x_1,x_2)$ where $(x_1,x_2)$ is the product morphism $\s_\C \rightarrow \gl(V_{1,\C}) \times \gl(V_{2,\C})$ defined in the usual categorical way.
\begin{lemma}\label{tens}
The morphism $x_1 \otimes x_2$ defines a mixed $\Q$-Hodge structure on $V_1 \otimes V_2$. Furthermore, the canonical bigraduation of $V_1 \otimes V_2$ endowed with this mixed $\Q$-Hodge structure is $x_1 \otimes x_2$.
\end{lemma}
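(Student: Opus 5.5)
The proof follows the pattern of Lemma \ref{dual}: first show via Pink's criterion (Lemma \ref{pinkcarac}) that $x_1 \otimes x_2$ defines a mixed $\Q$-Hodge structure, then verify condition (\ref{conj}) of Proposition \ref{delbigrad}.

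\emph{Mixed Hodge structure.} By Lemma \ref{pinkcarac}, each $x_i$ factors through $\rho_{i,\C}$ for a $\Q$-representation $\rho_i : \pp_i \rightarrow \gl(V_i)$ of a connected group satisfying $(i)$--$(iii)$. Take $\pp = \pp_1 \times \pp_2$ with the representation $\psi \circ (\rho_1 \times \rho_2)$; then $x_1 \otimes x_2$ factors through the morphism $(x_1,x_2) : \s_\C \rightarrow \pp_\C$. Conditions $(i)$--$(iii)$ for $\pp$ follow factorwise, using $\pp^u = \pp_1^u \times \pp_2^u$, $\pp/\pp^u = \pp_1/\pp_1^u \times \pp_2/\pp_2^u$, and the fact that the center of a product is the product of the centers. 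For $(iii)$, note that $\lie(\pp) = \lie(\pp_1) \oplus \lie(\pp_2)$ as $\s_\C$-representations. Lemma \ref{pinkcarac} then shows that $x_1 \otimes x_2$ defines a mixed $\Q$-Hodge structure on $V_1 \otimes V_2$. Explicitly, the bigraduation is
\[
(V_1 \otimes V_2)^{p,q} = \bigoplus_{\substack{a+c = p \\ b+d = q}} V_1^{a,b} \otimes V_2^{c,d}.
\]
Its associated weight and Hodge filtrations are the usual tensor product filtrations, since $W_k(V_1\otimes V_2) = \sum_{i+j=k} W_iV_1 \otimes W_jV_2$ and likewise for $F$. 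So this is the standard mixed Hodge structure on the tensor product.

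\emph{Canonicity.} By the uniqueness in Proposition \ref{delbigrad}, it suffices to check (\ref{conj}) for this bigraduation. Write $R_i^{a,b} = \bigoplus_{r<a,\, s<b} V_i^{r,s}$. Condition (\ref{conj}) for $V_i$ gives $\overline{V_i^{b,a}} \subset V_i^{a,b} \oplus R_i^{a,b}$, where complex conjugation comes from the real structure $V_{1,\R}\otimes V_{2,\R}$, so $\overline{v_1 \otimes v_2} = \bar v_1 \otimes \bar v_2$. For $a+c = p$ and $b+d = q$,
\[
\overline{V_1^{b,a} \otimes V_2^{d,c}} \subset (V_1^{a,b} \oplus R_1^{a,b}) \otimes (V_2^{c,d} \oplus R_2^{c,d}).
\]
Expanding the right-hand side, every term other than $V_1^{a,b} \otimes V_2^{c,d}$ lies in $\bigoplus_{r<p,\, s<q} (V_1\otimes V_2)^{r,s}$. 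This is because $V_1^{r,s}\otimes V_2^{r',s'}$ has bidegree $(r+r', s+s')$, and if at least one factor lies in an $R$ term, both coordinates drop strictly below $(p,q)$. Summing over all decompositions gives $\overline{(V_1\otimes V_2)^{q,p}} \subset (V_1\otimes V_2)^{p,q} \oplus \bigoplus_{r<p,\, s<q}(V_1\otimes V_2)^{r,s}$. This is one inclusion of subspaces in the quotient by $R^{p,q}$.

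\emph{Main obstacle.} The main difficulty is upgrading this inclusion to the equality modulo $R^{p,q}$ required in (\ref{conj}). I will do this by a dimension count, as in Lemma \ref{dual}. The image of $\overline{(V_1\otimes V_2)^{q,p}}$ in $(V_1\otimes V_2)_\C / R^{p,q}$ is injective: it lies in $(V_1 \otimes V_2)^{p,q} \oplus R^{p,q}$, and its projection to $(V_1\otimes V_2)^{p,q}$ is injective. Indeed, it is componentwise the tensor product of the projections $\overline{V_i^{b,a}} \rightarrow V_i^{a,b}$, which are isomorphisms by the same argument applied to each $V_i$ using $h^{a,b} = h^{b,a}$ (Lemma \ref{hsymnum}). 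Since $\dim (V_1\otimes V_2)^{q,p} = \dim (V_1\otimes V_2)^{p,q}$, again by Hodge symmetry of the factors, the inclusion is an equality. This yields (\ref{conj}), so $x_1\otimes x_2$ is the canonical bigraduation.
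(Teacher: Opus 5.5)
Your proof is correct and follows the paper's route: Pink's criterion (Lemma \ref{pinkcarac}) applied to $\pp_1 \times \pp_2$, then a direct check of (\ref{conj}) from the corresponding conditions for $x_1$ and $x_2$, with the uniqueness in Proposition \ref{delbigrad} concluding the argument. Your second step is in fact more careful than the paper's: you keep the cross terms $V_1^{a,b}\otimes R_2^{c,d}$ and $R_1^{a,b}\otimes V_2^{c,d}$ inside the full space $R^{p,q}$ and upgrade the inclusion to an equality by a dimension count, whereas the paper reduces to a congruence modulo the smaller space $\sum_{a+c=p,\,b+d=q} R_1^{a,b}\otimes R_2^{c,d}$, which omits exactly these cross terms and is false in general (take $V_2=\Q(0)$, $(p,q)=(0,0)$, and $V_1$ an extension of $\Q(0)$ by a pure structure of type $\{(-1,-2),(-2,-1)\}$ with $F^0V_{1,\C}$ not real).
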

\begin{proof}
Let $i \in \{1,2\}$. By Lemma \ref{pinkcarac}, there is a connected $\Q$-algebraic group $\pp_i$ and a representation $\rho_i: \pp_i \rightarrow \gl(V_i)$ such that $x_i$ factors by $\rho_{i,\C}$ and satisfies axioms $(i) - (iii)$ of the lemma. Then, $(x_1,x_2)$ factors through the complexification of the $\Q$-representation $\psi \circ (\rho_1,\rho_2)$ of $\pp_1 \times \pp_2$ in $\gl(V_1 \otimes V_2)$. It is easily checked that $(\pp_1 \times \pp_2)^u = (\pp_1)^u \times (\pp_2)^u$ and that under the canonical identification $\mathrm{Lie}(\pp_1 \times \pp_2) \cong \mathrm{Lie}(\pp_1) \oplus \mathrm{Lie}(\pp_2)$, the weight filtration on the former, induced by post-composition with the adjoint representation of $\pp_1 \times \pp_2$  is given by
\[
W_k \mathrm{Lie}(\pp_1 \times \pp_2) \cong W_k \mathrm{Lie}(\pp_1) \oplus W_k \mathrm{Lie}(\pp_2)
\]
for each $k \in \Z$. It follows that $(x_1,x_2) = \s_\C \rightarrow \pp_1 \times \pp_2$ satisfies axioms $(i) - (iii)$ of Lemma \ref{pinkcarac} which proves the first part of the statement.

It remains to see that $\psi_\C \circ (x_1, x_2)$ is indeed the canonical bigraduation of the mixed $\Q$-Hodge structure on $V_1 \otimes V_2$ that it defines. Explicitely, the bigraduation $\psi_\C \circ (x_1,x_2)$ of $V_{1,\C} \otimes V_{2,\C}$ is
\[
(V_1 \otimes V_2)^{p,q} := \bigoplus_{\substack{p_1 + p_2 = p \\ q_1 + q_2 = q}} V_1^{p_1,q_1} \otimes V_2^{p_2,q_2}
\]
for $p,q \in \Z$, where we denoted by $V_i^{p,q}$ the pieces of the bigraduation$x_i$ of $V_{i,\C}$ $(i=0,1)$. We wish to prove condition (\ref{conj}) of Proposition \ref{delbigrad}, that is for any $p,q \in \Z$,
\[
(V_1 \otimes V_2)^{p,q} = \overline{(V_1 \otimes V_2)^{q,p}} \mod \bigoplus_{\substack{r<p \\ s<q}} (V_1 \otimes V_2)^{r,s}.
\]
To see this, remark that
\[
\bigoplus_{\substack{r<p \\ s<q}} (V_1 \otimes V_2)^{r,s}  \supseteq \bigoplus_{\substack{p_1+p_2 =p\\q_1+q_2=q}} \bigoplus_{\substack{r_1< p_1, r_2<p_2 \\ s_1 < q_1, s_2<q_2}} V_1^{r_1,s_1} \otimes V_2^{r_2,s_2} 
\]
so that it suffices to prove the equality of $(V_1 \otimes V_2)^{p,q}$ and  $\overline{(V_1 \otimes V_2)^{q,p}}$ modulo the right hand side of the last equation. But this equality is now an immediate consequence of condition (\ref{conj}) of Proposition \ref{delbigrad} for $x_1$ and $x_2$. This finishes the proof.
\end{proof}
An easy induction combining Lemmata \ref{dual} and \ref{tens} easily implies the following:
\begin{corollary}\label{arbtens}
Let $V$ be a mixed $\Q$-Hodge structure, and $x : \s_\C \rightarrow \gl(V)_\C$ be the canonical bigraduation of $V$. Let $(a,b) \in \Z_{\geqslant 0}^2 - \{(0,0)\}$ and $x_{a,b} = x\otimes \cdots \otimes x \otimes x^\vee \otimes \cdots \otimes x^\vee$ where $x$ appears $a$ times and $x^\vee$ appears $b$ times. The morphism $x_{a,b}$ defines a mixed $\Q$-Hodge structure on $T^{a,b}(V) := V^{\otimes a} \otimes (V^{\vee})^{\otimes b}$ and the canonical bigraduation of the latter is $x_{a,b}$.
\end{corollary}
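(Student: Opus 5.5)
We argue by induction on $a+b \geqslant 1$, using Lemma \ref{dual} and Lemma \ref{tens} as the only inputs. The point is that the canonical bigraduation of a tensor product or dual of mixed $\Q$-Hodge structures is computed componentwise from the canonical bigraduations of the factors. Concretely, we show: for every $(a,b) \neq (0,0)$, the morphism $x_{a,b}$ defines a mixed $\Q$-Hodge structure on $T^{a,b}(V)$, and it is the canonical bigraduation of that structure. Throughout, we fix the ordering of the tensor factors used to define $x_{a,b}$, so that $x_{a,b}$ is literally an iterated application of the constructions $x_1 \otimes x_2$ and $x^\vee$.

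For the base cases: $(a,b) = (1,0)$ is the definition of $x$. The case $(a,b) = (0,1)$ is exactly Lemma \ref{dual}: $x^\vee = \phi_\C \circ x$ defines a mixed $\Q$-Hodge structure on $V^\vee$ and is its canonical bigraduation.

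For the inductive step, suppose $a+b \geqslant 2$. Write $T^{a,b}(V)$ as $T' \otimes V_2$, where $V_2$ is the last tensor factor ($V$ or $V^\vee$) and $T'$ is the tensor product of the remaining $a+b-1$ factors. Then $T' = T^{a',b'}(V)$ with $(a',b') \neq (0,0)$. By the induction hypothesis, $x_{a',b'}$ defines a mixed $\Q$-Hodge structure on $T'$ and is its canonical bigraduation. By the base cases, the same holds for $V_2$ with bigraduation $x$ or $x^\vee$. We then apply Lemma \ref{tens} with $V_1 = T'$, $x_1 = x_{a',b'}$, and $x_2 \in \{x, x^\vee\}$. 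It gives that $x_{a',b'} \otimes x_2$ defines a mixed $\Q$-Hodge structure on $T' \otimes V_2$ and is its canonical bigraduation.

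It remains only to observe that $x_{a',b'} \otimes x_2 = x_{a,b}$. This holds because the map $\psi$ for $(T',V_2)$, composed with the iterated $\psi$'s used to build $x_{a',b'}$, is the natural $a+b$-fold tensor product map $\gl(V)^a \times \gl(V^\vee)^b \to \gl(T^{a,b}(V))$.

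The only genuine subtlety, and even that is minor, is bookkeeping. The mixed Hodge structure on $T^{a,b}(V)$ referred to in the statement must coincide with the one produced by iterated tensor products and duals, and the identification of $x_{a,b}$ with the iterated construction must be compatible with the chosen ordering of the tensor factors. Both follow from associativity of $\psi$ and from the fact that reordering tensor factors is an isomorphism of mixed Hodge structures commuting with the bigraduations. Everything else is a direct invocation of the two lemmas.
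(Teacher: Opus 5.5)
Your proposal is correct and matches the paper's approach, which only says the corollary follows from ``an easy induction combining Lemmata \ref{dual} and \ref{tens}''. Your induction on $a+b$ spells this out: the base case $(0,1)$ is Lemma \ref{dual}, and the inductive step applies Lemma \ref{tens} to $T^{a',b'}(V)\otimes V_2$. Your remark about reordering tensor factors is not even needed, since peeling off the last factor already respects the ordering $V^{\otimes a}\otimes (V^\vee)^{\otimes b}$.
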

\subsection{Mumford-Tate groups}
Let $V$ be a mixed $\Q$-Hodge structure.
\begin{definition}
A \textit{Hodge tensor for $V$} is an element of $F^0(T^{a,b}(V_\C)) \cap W_0(T^{a,b}(V))$ for some $(a,b) \in \Z_{\geqslant 0}^2 - \{(0,0)\}$. We denote by $\hod^{\bul,\bul}(V^\otimes)$ the bigraded (by the bidegree) vector space of Hodge tensors for $V$.
\end{definition}
We then take the following as a definition of Mumford-Tate group, which is equivalent to the usual tannakian definition by classical arguments (see for example \cite{andre}).
\begin{definition}
The Mumford-Tate group $\pp(V)$ of $V$ is the subgroup of $\gl(V)$ fixing all Hodge tensors for $V$.
\end{definition}
We have the following characterisation, which will be crucial for the interpretation of Hodge loci as intersection loci and which makes the link with the definition below \cite[Prop. 2.3]{klingao}. Note that assertion $(ii)$ below is contained in \cite[(I.C.6)]{ggk}.
\begin{proposition}\label{mtmor}
Let $V$ be a polarizable mixed $\Q$-Hodge structure. For any bigraduation $x : \s_\C \rightarrow \gl(V_\C)$ of $V$, denote by $\pp(x)$ the intersection of $\Q$-algebraic subgroups of $\gl(V)$ whose $\C$-points contain $x(\s_\C)$.
\begin{itemize}
\item[(i)] If $x : \s_\C \rightarrow \gl(V_\C)$ is any bigraduation of $V$, one has $\pp(V) \subset \pp(x)$;
\item[(ii)] If $x : \s_\C \rightarrow \gl(V_\C)$ is the canonical bigraduation of $V$ one has $\pp(V) = \pp(x)$.
\end{itemize}
\end{proposition}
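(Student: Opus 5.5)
For (i) and the inclusion $\pp(V)\subset\pp(x)$ in (ii), I will show that every $\Q$-subgroup $\G\subset\gl(V)$ with $x(\s_\C)\subset\G_\C$ contains $\pp(V)$. By Chevalley's theorem, $\G$ is the stabilizer of a line $L=\Q\zeta$ in some $T^{a,b}(V)$. I plan to reduce first to a fixed vector, not just a stable line. The standard trick is to use that $\G$ acts on $L$ through a character $\chi$. Then $\zeta\otimes\zeta^*$, with $\zeta^*\in L^\vee$, lies in $T^{a+b,a+b}(V)$. More precisely, I will write $\G$ as the subgroup fixing a finite set of tensors, which is possible after also using $\det$ twists, as in Deligne's argument for pure Mumford-Tate groups. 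Since $x(\s_\C)\subset\G_\C$, the torus $x(\s_\C)$ fixes each such tensor $t$. So $t$ lies in the $(0,0)$-piece of the bigraduation $x_{a,b}$ of $T^{a,b}(V_\C)$. Here I use Corollary \ref{arbtens}, which says that $x_{a,b}$ is a bigraduation of the induced mixed Hodge structure; for a general bigraduation, Lemmas \ref{dual} and \ref{tens} give this at least at the level of filtrations, because the weight and Hodge filtrations of a tensor product are obtained from those of the factors. A rational vector in $T^{0,0}$ of a bigraduation lies in $F^0\cap W_0$, hence it is a Hodge tensor. Therefore $\pp(V)$ fixes $t$, so $\pp(V)\subset\G$. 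Taking the intersection over all such $\G$ gives $\pp(V)\subset\pp(x)$.

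For the reverse inclusion in (ii), take $x$ to be the canonical bigraduation. It suffices to show that $x(\s_\C)\subset\pp(V)_\C$, since then $\pp(V)$ is one of the groups in the intersection defining $\pp(x)$. By definition, $\pp(V)$ is the subgroup fixing all Hodge tensors. So I need each Hodge tensor $t\in F^0\cap W_0(T^{a,b}V)$ to be fixed by $x(\s_\C)$, which means $t\in T^{0,0}$ for $x_{a,b}$. By Corollary \ref{arbtens}, $x_{a,b}$ is the canonical bigraduation of $T^{a,b}(V)$. This reduces the problem to a statement about a single mixed Hodge structure $U=T^{a,b}(V)$: a rational element $t\in F^0U_\C\cap W_0U$ lies in $U^{0,0}$ for the canonical (Deligne) bigraduation.

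To prove this, I write $t=\sum t^{p,q}$. The condition $t\in W_0$ forces $p+q\le 0$, and $t\in F^0$ forces $p\ge 0$. Since $t=\bar t$, conjugating and using (\ref{conj}) of Proposition \ref{delbigrad} gives that $\overline{U^{p,q}}$ lies in $U^{q,p}$ modulo terms $U^{r,s}$ with $r<q$ and $s<p$. I then take a maximal $(p,q)$ with $t^{p,q}\neq 0$, ordered by $p$ and then by weight. Comparing components of $\bar t=t$ should force $q\ge 0$ as well, so $p=q=0$. Equivalently, I can argue by induction on the weight filtration: on $\gr^W_0$ the image is a pure $(0,0)$ class, while the classes in $\gr^W_k$ with $k<0$ must vanish because a real vector in $F^0$ of a pure structure of negative weight is $0$. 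Then the splitting property of Deligne's $I^{p,q}$, namely that $I^{0,0}\cap W_{-1}$ is trivial and $\bar t=t$, places $t$ in $I^{0,0}$.

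I expect this last linear-algebra step to be the main obstacle. The difficulty is showing that a real element of $F^0\cap W_0$ is exactly of type $(0,0)$, and not just of type $(0,0)$ modulo lower terms. This needs careful bookkeeping of the congruence (\ref{conj}), and it is where the canonicity of the bigraduation is actually used. For a general bigraduation this step fails, which is why (i) is only an inclusion.
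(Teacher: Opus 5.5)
\paragraph{Verdict.} The proposal has a genuine gap in (i), and therefore also in the inclusion $\pp(V)\subset\pp(x)$ of (ii). The reverse inclusion in (ii) is fine and matches the paper.

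\paragraph{The gap in (i).} You claim that every $\Q$-subgroup $\G\subset\gl(V)$ with $x(\s_\C)\subset\G_\C$ is, after determinant twists, the fixator of finitely many tensors. This is false. A Borel subgroup $\mathbf{B}\subset\gl_2$ fixes no tensor that $\gl_2$ does not also fix, because an orbit map from the projective variety $\gl_2/\mathbf{B}$ to a vector space is constant. Nothing prevents such a group from containing $x(\s_\C)$.

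Your $\zeta\otimes\zeta^*$ device fails for the same reason. The functional $\zeta^*$ has to be extended to all of $T^{a,b}(V)$, and the result is $\G$-invariant only if $L$ has a $\G$-stable complement. Deligne's twisting works for the Mumford--Tate group because Hodge theory pins down its character on $L$. For an arbitrary $\G$, the character on the twisted weight-zero line can be nontrivial while still being trivial on $x(\s_\C)$.

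\paragraph{How the paper avoids this.} The paper proves the fixator property only for $\G=\pp(x)$ (Lemma \ref{extchar}), and the idea you are missing is minimality.
\begin{itemize}
\item Chevalley's theorem gives $\pp(x)=\stab(L)$.
\item By Lemma \ref{subgroup}, $x$ satisfies Pink's axioms for $\pp(x)$, so $L$ is a rank-one mixed Hodge structure $\Q(N_1)$.
\item Twisting by a power of $\det W_n(V)$ gives a weight-zero line on which $x(\s_\C)$ acts trivially.
\item The kernel of the character of $\pp(x)$ on this line is then a $\Q$-subgroup containing $x(\s_\C)$, hence it contains $\pp(x)$.
\end{itemize}
After that, your Hodge-tensor argument finishes (i) exactly as in the paper.

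\paragraph{A reality property of $x$ is indispensable.} Your argument uses nothing about $x$ beyond the fact that it splits $(W_\bul,F^\bul)$, and for an arbitrary splitting (i) is false. Take $V=H^1(E,\Q)$ for an elliptic curve $E$ without CM, so that $\pp(V)=\gl(V)$. Set $V^{1,0}=F^1V_\C$ and $V^{0,1}=\C\ell$ with $0\neq\ell\in V$. This satisfies (\ref{weight}) and (\ref{hodge}), but $x(\s_\C)$ stabilizes $\Q\ell$, so $\pp(x)\subset\stab(\Q\ell)\subsetneq\pp(V)$.

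In the paper this input is hidden in Lemma \ref{subgroup} (Pink's axioms, which is where polarizability enters). Statement (i) is only ever applied to such $x$, for example elements of some $X_\M$.

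For the canonical bigraduation you can get the input directly from (\ref{conj}). A rational $x$-stable line lies in a single $T^{p,q}$, and invariance under conjugation forces $p=q$, so the line is a Tate sub-mixed Hodge structure. With that in hand you could even keep an arbitrary $\G=\stab(L)$ and put the twist on the other side. The Mumford--Tate group stabilizes every sub-mixed Hodge structure of a tensor construction (the Tannakian description, cf.\ Andr\'e), so $\pp(V)\subset\G$.

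\paragraph{The reverse inclusion in (ii).} This follows the paper (Corollary \ref{arbtens} and Lemma \ref{tenszero}(ii)), and the step you expected to be hardest is routine.
\begin{itemize}
\item The components $t^{p,q}$ of a rational $t\in F^0\cap W_0$ satisfy $p\geqslant 0$ and $p+q\leqslant 0$, so $(p,q)\neq(0,0)$ forces $q<0$.
\item Choose such a nonzero component of maximal weight.
\item By (\ref{conj}), $\overline{U^{p',q'}}\subset U^{q',p'}\oplus\bigoplus_{r<q',s<p'}U^{r,s}$. So another component can reach $U^{q,p}$ under conjugation only if $p'>p$ and $q'>q$.
\item By maximality of the weight, that would force $(p',q')=(0,0)$, which contradicts $p\geqslant 0$.
\item Hence the $U^{q,p}$-component of $\bar t$ equals that of $\overline{t^{p,q}}$. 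This is nonzero, since otherwise $t^{p,q}$ would lie in $U^{p,q}\cap(W_{p+q-1})_\C=0$.
\item This contradicts $\bar t=t\in F^0$, because $q<0$.
\end{itemize}
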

\begin{proof}
We will use the following two lemmas to be proven below.
\begin{lemma}\label{tenszero}
Let $M$ be a mixed $\Q$-Hodge structure. Let
\[
M_\C = \bigoplus_{p,q} M^{p,q}
\]
be a bigraduation of $M$. Then $M^{0,0} \cap M \subset W_0(M) \cap F^0(M_\C)$. If furthermore the bigraduation is the canonical bigraduation, we have equality $W_0(M) \cap F^0(M_\C) = M^{0,0} \cap M$.
\end{lemma}
\begin{lemma}\label{extchar}
Let $x : \s_\C \rightarrow \gl(V_\C)$ be any bigraduation of $V$, and let $\pp(x)'$ be the largest $\Q$-subgroup of $\gl(V)$ which fixes pointwise all rational tensors of $V$ which are fixed by $\pp(x)$. Then $\pp(x) = \pp(x)'$.
\end{lemma}
$(i)$ Let $x : \s_\C \rightarrow \gl(V_\C)$ be any bigraduation of $V$. Let $a,b \in \Z^2_{\geqslant 0} - \{(0,0)\}$. Let $\zeta \in T^{a,b}(V)$ be a rational tensor which is fixed by $\pp(x)$. It is henceforth fixed by $x_{a,b}(\s_\C)$, or equivalently is an eigentensor for the trivial character of $\s_\C$. It follows that $\zeta$ belongs to $T^{a,b}(V)^{0,0} \cap T^{a,b}(V)$. Applying the first assertion of Lemma \ref{tenszero} to the mixed $\Q$-Hodge structure on $T^{a,b}(V)$ and the bigraduation $x_{a,b}$, we find that any $\zeta$ is a Hodge tensor, hence is fixed by $\pp(V)$. Therefore $\pp(V) \subset \pp(x)'$. Now by Lemma \ref{extchar}, we have $\pp(x) = \pp(x)'$, and the desired inclusion $\pp(V) \subset \pp(x)$ follows.

$(ii)$ Assume futhermore that $x$ is the canonical bigraduation of $V$. Let $a,b \in \Z^2_{\geqslant 0} - \{(0,0)\}$ and $\zeta \in T^{a,b}(V)$ be a Hodge tensor. By Corollary \ref{arbtens} the morphism $x_{a,b}$ is the canonical bigraduation of $T^{a,b}$ and applying Lemma \ref{tenszero}(ii) to the mixed $\Q$-Hodge structure on $T^{a,b}(V)$ and its canonical bigraduation $x_{a,b}$ we find that $\zeta \in T^{a,b}(V)^{0,0} \cap T^{a,b}(V)$. Therefore $\zeta$ is fixed by $x_{a,b}(\s_\C)$. Since this is true for an arbitrary Hodge tensor, one finds that $x(\s_\C) \subset \pp(V)_\C$ hence $\pp(x) \subset \pp(V)$ by minimality of $\pp(x)$, as desired.
\end{proof}
\begin{proof}[Proof of Lemma \ref{tenszero}]
The inclusion $M^{0,0} \cap M \subset W_0(M) \cap F^0(M_\C)$ is an immediate consequence of the definition of a bigraduation of a mixed $\Q$-Hodge structure, since it forces $M^{0,0}$ to lie simultaneously in $W_0(M)_\C$ and $F^0(M_\C)$.

We now assume that the given bigraduation is the canonical bigraduation and seek to prove the inverse inclusion. We need to prove that for any $(p,q) \neq (0,0)$, we have that $\Big( W_0(M) \cap F^0(M_\C)\Big) \cap M^{p,q}  = \{0\}$. We assume this does not hold, and pick some pair of indexes $(p,q) \neq (0,0)$ such that 
\[
\Big( W_0(M) \cap F^0(M_\C)\Big) \cap M^{p,q}  \neq \{0\}
\]
First remark that by equations (\ref{weight}) and (\ref{hodge}) of Proposition \ref{delbigrad}, the indexes have to satisfy $p+q \leqslant 0$ and $p \geqslant 0$. Because $(p,q) \neq (0,0)$, this forces $q < 0$. Furthermore, since $W_0(M) \cap F^0(M_\C)$ consists of rational vectors, one must have by taking complex conjugation of the above identity
\[
\Big( W_0(M) \cap F^0(M_\C)\Big) \cap \overline{M^{p,q}}  \neq \{0\}.
\]
By equation (\ref{conj}) of Proposition \ref{delbigrad}, we find that
\[
\overline{M^{p,q}} \subset M^{q,p} \oplus \bigoplus_{r<q, s<p}M^{r,s}.
\]
But we saw that $q < 0$ so that the right hand side intersects $F^0 M_\C$ in $\{0\}$. This gives the desired contradiction
\end{proof}
\begin{proof}[Proof of Lemma \ref{extchar}]
We adapt the proof of \cite[Lem. 2.(a)]{andre}, replacing the fact that the underlying space of any representation of $\pp(V)$ is canonically endowed with a mixed $\Q$-Hodge structure, by the fact that the underlying space of any rational representation of $\pp(x)$ is endowed with a mixed $\Q$-Hodge structure obtained by precomposing the complexified representation with the cocharacter $x: \s_\C \rightarrow \pp(x)_\C$. The inclusion $\pp(x) \subset \pp(x)'$ is true by definition. To prove the other inclusion, it suffices to prove that there exists a finite set of pairs of indexes $(m_i, n_i)$, a sub-$\gl(V)$-module $W$ of $\bigoplus_i T^{m_i, n_i}(V)$ and a rational vector $w \in W$ such that $\pp(x)$ is the biggest subgroup $\fix_{\gl(V)}(w)$ of $\gl(V)$ which fixes $w$. Indeed assume the latter statement. Then the components $w_i$ of $w$ are rational tensors of $V$ which are fixed by $\pp(x)$. Therefore $\pp(x)' \subset \bigcap_i \fix_{\gl(V)}(w_i)$. Since the latter was assumed to be precisely $\pp(x)$, this gives the desired inclusion.

By \cite[Prop. 3.1(a)-(b))]{delabs}, there exists a finite set of pairs of indexes $(m_i, n_i)$, a sub-$\gl(V)$-module $W$ of $\bigoplus_i T^{m_i, n_i}(V)$ and a rational line $L \subset W$ such that $\pp(x) = \stab_{\gl(V)}(L)$. By the third assertion of Lemma \ref{pinkcarac} one has $\pp(x) \subset \ppol$ for some graded-polarization $(q_k)$ of $V$. Hence, by Lemma \ref{subgroup} the morphism $x$ satisfies Pink's axioms for $\pp(x)$. Therefore post-composition of $x : \s_\C \rightarrow \pp(x)_\C$ with the complexification of the induced representation $\pp(x) \rightarrow \gl(L)$ makes $L$ a mixed $\Q$-Hodge sub-structure of some $\bigoplus_i T^{m_i,n_i}(V)$. From here, one finishes the proof as in \cite[Lem. 2(a)]{andre}, which we recall for completeness. Since it has rank one $L$ must be isomorphic to some Tate Hodge structure $\Q(N_1)$. 

If $N_1 = 0$, then $x(\s_\C)$ acts trivially on $L_\C$ and because $\pp(x)$ is the smallest $\Q$-subgroup of $\gl(V)$ such that $x(\s_\C) \subset \pp(x)_\C$, it must also act trivially on $L$. Combined with $\pp(x) = \stab_{\gl(V)}(L)$, this forces $\pp(x) = \fix_{\gl(V)}(w)$ for any generator $w$ of $L$.

If $N_1 \neq 0$ then $V$ must have non-zero weights, because otherwise all the $T^{m,n}(V)$ would be pure of weight $0$ and so would be the sub-structure $L$. As a consequence, there exists an integer $n \in \Z$ such that $\det(W_n(V))$ has non-zero weight $N_2$. Up to replacing $V$ by $V^\vee$, one can assume that $N_1$ and $N_2$ have the same sign. Let $r = \dim_\Q W_n(V)$. Then, the rank one mixed $\Q$-Hodge sub-structure 
\[
L^{\otimes |N_2|} \otimes (\bigwedge^r W_n(V))^{\otimes 2|N_1|}
\]
of 
\[
(\bigoplus_i T^{m_i,n_i})^{\otimes |N_2|} \otimes (\bigwedge^r V)^{\otimes 2|N_1|}
\]
has weight $0$ hence isomorphic to $\Q(0)$. Arguing as in the $N_1 = 0$ case, one finds that $\pp(x)$ is the fixator in $\gl(V)$ (acting on $(\bigoplus_i T^{m_i,n_i})^{\otimes |N_2|} \otimes (\wedge^r V)^{\otimes 2|N_1|}$) of any generator of the line $L^{\otimes |N_2|} \otimes (\bigwedge^r W_n(V))^{\otimes 2|N_1|}$.

In both case a representation is constructed that satisfies the asked for properties at the beginning of the proof.
\end{proof}
\subsection{Mixed Hodge classes}
In this subsection, following \cite{klingler_atyp}, we recall the formalism of mixed Hodge data, and introduce the notion of mixed Hodge class which gives a nice functorial setting to work with parameter spaces of mixed $\Q$-Hodge structures with prescribed Mumford-Tate group. The main goal, building on the previous two subsections, is a proof that connected mixed Mumford-Tate domains indeed parametrize mixed Hodge structures with prescribed Mumford-Tate group (Proposition \ref{mtbound}).
\subsubsection*{Mixed Hodge data}
The starting point is the following particular case (for faithful representations) of \cite[(1.7)]{pinkthese} whose statement is \cite[Prop. 3.1]{klingler_atyp}:
\begin{proposition}[{\cite[(1.7)]{pinkthese}}] \label{hodgedat}
Let $\pp$ be a connected $\Q$-algebraic group, let $x_0 : \s_\C \rightarrow \pp_\C$ be a morphism of $\C$-algebraic groups satisfying the axioms $(i)-(iv)$ of Lemma \ref{pinkcarac}, and let $X_\pp$ be the $\pp(\R)\pp^u(\C)$-orbit of $x_0$ in $\Hom_\C(\s_\C, \pp_\C)$, where $\pp(\R)\pp^u(\C) \subset \pp(\C)$ acts by conjugation on the target. Let $\rho : \pp \hookrightarrow \gl(V)$ be a faithful and finite-dimensional representation of $\pp$ as a $\Q$-algebraic group. Then,
\begin{itemize}
\item[(i)] any $x \in X_{\pp}$ satisfies the axioms $(i)-(iv)$ of Lemma \ref{pinkcarac};
\item[(ii)] the image $\mathcal{D}_{\pp, \rho}$ of the map (well defined by $(i)$)
\[
\psi_{\rho,X_\pp} : X_\pp \rightarrow \{\mathrm{mixed} \hspace{0.1cm} \Q\mathrm{-Hodge} \hspace{0.1cm} \mathrm{structures} \hspace{0.1cm} \mathrm{on} \hspace{0.1cm} V\}
\]
admits a unique structure of complex manifold such that the Hodge filtration on $V_\C$  corresponding to $\psi_{\rho,X_\pp}(x)$ for $x \in X_\pp$ varies holomorphically with $\psi_{\rho, X_\pp}(x)$. The action of $\pp(\R)\pp^u(\C)$ on $X_\pp$ descends to $D_{\pp, \rho}$ along $\psi_{\rho, X_\pp}$. The complex structure on $D_{\pp, \rho}$ is invariant under this action of $\pp(\R)\pp^u(\C)$ and $\pp^u(\C)$ acts holomorphically on $\mathcal{D}_{\pp, \rho}$;
\item[(iii)] The complex manifold $\mathcal{D}_{\pp, \rho}$ is independent of the choice of the faithful representation $\rho$, and will be denoted $\mathcal{D}_\pp$;
\item[(iv)] For this complex manifold structure, there is an open embedding into a flag manifold $\mathcal{D}_\pp \hookrightarrow \check{\mathcal{D}}_\pp := \pp(\C)/\exp(F^0 \para_\C)$, where $\para = \lie(\pp)$ endowed with the adjoint mixed $\Q$-Hodge structure induced by $x_0$. This embedding realizes $\mathcal{D}_\pp$ as a $\pp(\R)\pp^u(\C)$-orbit in $\check{\mathcal{D}}_\pp$
\end{itemize}
\end{proposition}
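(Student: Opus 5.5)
The plan is to establish the four assertions in order: first the orbit statement (i), then a complex structure on the set of Hodge filtrations (ii), then independence of $\rho$ (iii), and finally the open embedding (iv) as the structural core.

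\emph{Step (i).} Take $g = p u \in \pp(\R)\pp^u(\C)$ and set $x = \mathrm{int}(g)\circ x_0$. The projection to $\G := \pp/\pp^u$ kills $u$, so $\bar{x} = \mathrm{int}(\bar p)\circ \bar{x}_0$ with $\bar p \in \G(\R)$. Axioms $(i)$ and $(iv)$ follow because conjugating a real morphism by a real element gives a real morphism, and conjugating a Cartan involution by a real element gives a Cartan involution. For $(ii)$, $\bar{x}\circ w = \bar{x}_0\circ w$, since the latter is central. For $(iii)$, $\mathrm{Ad}(g)$ preserves $\para_\C$ and $\lieu_\C$, and the weight filtration of $\mathrm{Ad}\circ x$ is the $\mathrm{Ad}(g)$-translate of that of $x_0$. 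Lemma \ref{pinkcarac} then shows that $\rho_\C\circ x$ defines a mixed $\Q$-Hodge structure on $V$.

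\emph{Steps (ii) and (iv).} The weight filtration $W_\bul V$ is the same for all $x \in X_\pp$. Indeed, the weight grading of $x$ is $x\circ w$ up to $\pp^u$-conjugation, and $W_\bul$ is stable under $\pp^u$ because $\lieu \subset W_{-1}\para$. So $\psi_{\rho,X_\pp}(x)$ is determined by its Hodge filtration $F^\bul_x$ on $V_\C$, and $F^\bul_{gx_0} = \rho(g)F^\bul_{x_0}$. Hence $D_{\pp,\rho}$ is identified with the $\pp(\R)\pp^u(\C)$-orbit of $F^\bul_{x_0}$ in the flag variety of $V_\C$. That orbit sits inside the $\pp(\C)$-orbit $\check D_\pp$, which is a homogeneous projective-type variety $\pp(\C)/\mathrm{Stab}(F^\bul_{x_0})$. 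The stabilizer has Lie algebra $F^0\para_\C$, because $\rho$ is faithful and $F^\bul\para$ is the filtration induced on $\End(V)$.

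\emph{Openness.} This is the main obstacle. It reduces to the Lie algebra identity
\[
\para_\C = \lie(\pp(\R)\pp^u(\C)) + F^0\para_\C = \para_\R + \lieu_\C + F^0\para_\C.
\]
To prove it, use the canonical bigraduation of $\para$: since $\lieu_\C = W_{-1}\para_\C$, it remains to handle $\gr^W_0\para$. This piece is a pure weight $0$ Hodge structure, where $\para^{p,-p}$ with $p<0$ is the conjugate of $\para^{-p,p}$. Modulo $W_{-1}$, any element of $\para^{p,-p}$ can be written as a real element minus an element of $F^0$. So the sum is all of $\para_\C$, the orbit map is a submersion, and the orbit is open. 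It then carries the induced complex structure, for which $F^\bul$ varies holomorphically. Uniqueness of that structure follows because the map to the flag variety is an injective immersion and holomorphic dependence forces compatibility. The group $\pp(\R)\pp^u(\C)$ acts through the holomorphic action of $\pp(\C)$, so the structure is invariant and $\pp^u(\C)$ acts holomorphically.

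\emph{Step (iii).} For two faithful representations, replace one by their direct sum. Both $\check D_\pp$ and the orbit are then identified with $\pp(\C)/\exp(F^0\para_\C)$ and its $\pp(\R)\pp^u(\C)$-orbit. These are intrinsic to $(\pp, x_0)$ and independent of $\rho$, because the stabilizers agree: both equal the subgroup preserving the Hodge filtration on the adjoint representation, as generated by tensor constructions (Lemma \ref{tens}).
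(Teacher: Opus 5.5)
Your route differs from the paper's, which proves nothing here: it defers (i)--(iv) to Pink's (1.7), and the preservation of axiom $(iv)$ under conjugation to Pink's (1.12). You instead give a self-contained argument, and steps (i), (ii) and the openness in (iv) are sound. Conjugating $\bar x_0$ by $\bar p\in\G(\R)$ preserves reality, the central weight cocharacter and the Cartan property, which replaces the appeal to (1.12). $W_\bul V$ is indeed constant on $X_\pp$. Reducing openness to $\para_\C=\para_\R+\lieu_\C+F^0\para_\C$ and proving it on $\gr^W_0\para$ by Hodge symmetry is exactly right. (A minor slip: in the mixed case $\check{\mathcal D}_\pp$ is only quasi-projective, e.g.\ it is $\C^n$ in Section~\ref{sect6}, but you never use projectivity.)

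The gap is in (iii), and the same point is used silently in (iv) when you write $\check{\mathcal D}_\pp=\pp(\C)/\exp(F^0\para_\C)$. You have only shown that $\mathrm{Stab}_{\pp(\C)}(F^\bul_{x_0}V_\C)$ has Lie algebra $F^0\para_\C$, which determines its identity component and nothing more. If the stabilizers for two faithful representations differed by finitely many components, the natural map $\mathcal D_{\pp,\rho\oplus\rho'}\to\mathcal D_{\pp,\rho}$ would be a nontrivial finite covering rather than a bijection. So the direct-sum trick does not conclude on its own.

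Your justification, that both stabilizers equal the stabilizer of $F^\bul\para_\C$ ``by tensor constructions'', gives only one inclusion. Since $\para\subset\End(V)$ carries the induced filtration, you get $\mathrm{Stab}_V\subset\mathrm{Stab}_\para$. But $V$ is not obtained from $\para$ by tensor operations, because $\Ad$ kills $Z(\pp)$; for $\pp$ a torus, $\para$ is the trivial representation.

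The missing ingredient is the dynamic description of the stabilizer. Let $\mu$ be the cocharacter of $\pp_\C$ extracted from $x_0$ acting by $t^p$ on $V^{p,q}$. The stabilizer of $F^\bul_{x_0}$ in $\gl(V_\C)$ is $P_{\gl(V)}(\mu)=\{g:\lim_{t\to 0}\mu(t)g\mu(t)^{-1}\text{ exists}\}$. Since $\pp$ is closed in $\gl(V)$, you get $\pp_\C\cap P_{\gl(V)}(\mu)=P_\pp(\mu)=Z_\pp(\mu)\ltimes U_\pp(\mu)$. This group depends only on $(\pp,\mu)$ and is connected because $\pp$ is (Conrad--Gabber--Prasad, \emph{Pseudo-reductive groups}, Prop.~2.1.8). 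Hence the stabilizer equals $\exp(F^0\para_\C)$ for every faithful $\rho$. The $\pp(\C)$-equivariant identification of the open orbits then gives (iii), including as complex manifolds.
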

\begin{proof}
The only point which is not in \cite[(1.7)]{pinkthese} is the fact that axiom $(iv)$ of Lemma \ref{pinkcarac} is preserved under $\pp(\R)\pp^u(\C)$-conjugation of morphisms $x : \s_\C \rightarrow \pp_\C$ on the target. This is contained in \cite[(1.12)]{pinkthese}.
\end{proof}
\begin{remark}
In view of point $(iii)$, there exists a unique map $\psi_{X_\pp} : X_\pp \rightarrow \mathcal{D}_\pp$ such that for any faithful representation $\rho : \pp \hookrightarrow \gl(V)$, there exists an injective holomorphic map \[j_\rho : \mathcal{D}_\pp \rightarrow \{\mathrm{mixed} \hspace{0.1cm} \Q\mathrm{-Hodge} \hspace{0.1cm} \mathrm{structures} \hspace{0.1cm} \mathrm{on} \hspace{0.1cm} V\}\] such that $\psi_{\rho,X_\pp} = j_\rho \circ \psi_{X_\pp}$. Furthermore $\psi_{X_\pp}$ is $\pp(\R)\pp^u(\C)$-equivariant by definition of the $\pp(\R)\pp^u(\C)$-action on $\mathcal{D}_\pp$.
\end{remark}
With this result in hand, one can make sense of the following definition:
\begin{definition}[{\cite[Def. 3.4]{klingler_atyp}}]
A \textit{(connected) mixed Hodge datum} is a triple $(\pp, X_\pp, D_\pp)$, where:
\begin{itemize}
\item[(a)] $\pp$ is a connected $\Q$-algebraic group;
\item[(b)] $X_\pp$ is the $\pp(\R)\pp^u(\C)$-orbit in $\Hom_\C(\s_\C, \pp_\C)$ of a morphism $x_0 : \s_\C \rightarrow \pp_\C$ satisfying axioms $(i)-(iv)$ of Lemma \ref{pinkcarac};
\item[(c)] $D_\pp$ is a connected component of the complex manifold $\mathcal{D}_\pp$ associated in Proposition \ref{hodgedat} to $(\pp,X_\pp)$.
\end{itemize}
A complex manifold of the form $D_\pp$ for a (connected) mixed Hodge datum $(\pp, X_\pp, D_\pp)$ is called a \textit{(connected) Mumford-Tate domain}. It is a $\pp(\R)^+\pp^u(\C)$-orbit in $\mathcal{D}_\pp$.
\end{definition}
\begin{remark}
In the sequel, we will only work with connected Hodge data and connected Mumford-Tate domains in the terminology of \cite{klingler_atyp}, and we will omit the adjective "connected" which we still added in the definition in order to be consistent with the litterature.
\end{remark}
The following is an immediate consequence of \cite[(1.7)]{pinkthese}.
\begin{lemma}\label{subdatimm}
Let $(\pp, X_\pp, D_\pp)$ and $(\pp',X_{\pp'}, D_{\pp'})$ be two mixed Hodge data and let $i : \pp \rightarrow \pp'$ be a group morphism such that post-composition by $i_\C$ maps $X_\pp$ in $X_{\pp'}$. Then, the map $X_\pp \rightarrow X_{\pp'}$ descends to a holomorphic map $\phi : \mathcal{D}_\pp \rightarrow \mathcal{D}_{\pp'}$. If furthermore $i$ is injective, the map $\phi$ is a closed complex-analytic immersion.
\end{lemma}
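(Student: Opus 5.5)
We reduce to the statement of Proposition \ref{hodgedat} and its construction in \cite[(1.7)]{pinkthese}. First we show that the map $X_\pp \rightarrow X_{\pp'}$, $x \mapsto i_\C \circ x$, is equivariant along $i$. The group $\pp(\R)\pp^u(\C)$ acts on the source by conjugation, and it acts on the target through the homomorphism
\[
i : \pp(\R)\pp^u(\C) \rightarrow \pp'(\R)\pp'^u(\C).
\]
This homomorphism is well defined because the image of a unipotent normal subgroup under $i$ is unipotent and normal in $i(\pp)$. One still has to check that $i(\pp^u)$ lands in $\pp'^u$. This follows from axiom (iii): $i$ preserves the weight filtrations on Lie algebras induced by $x$ and $i_\C\circ x$, so $\lie(\pp^u) = W_{-1}\lie(\pp)$ maps into $W_{-1}\lie(\pp') = \lie(\pp'^u)$.

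Next we descend the map. Fix a faithful representation $\rho' : \pp' \hookrightarrow \gl(V)$. Then $\rho' \circ i$ is a representation of $\pp$, and we add a faithful representation $\rho_0$ of $\pp$ to obtain a faithful $\rho = (\rho'\circ i) \oplus \rho_0$ on $V \oplus V_0$. By Proposition \ref{hodgedat}(iii) we may compute $\mathcal{D}_\pp$ with $\rho$.

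If $x, y \in X_\pp$ induce the same mixed Hodge structure on $V \oplus V_0$, then they induce the same one on the summand $V$. Hence $i_\C\circ x$ and $i_\C\circ y$ define the same point of $\mathcal{D}_{\pp',\rho'}$, and $\phi$ is well defined. The map $\phi$ sends the Hodge filtration on $V\oplus V_0$ to its restriction to $V$. Since it is induced by projection on the flag varieties, it is holomorphic on the images $\mathcal{D}_\pp \hookrightarrow \check{\mathcal{D}}_\pp$ and $\mathcal{D}_{\pp'} \hookrightarrow \check{\mathcal{D}}_{\pp'}$ of Proposition \ref{hodgedat}(iv). Concretely, $\phi$ is the restriction of the algebraic map
\[
\check{\mathcal{D}}_\pp = \pp(\C)/\exp(F^0\para_\C) \rightarrow \pp'(\C)/\exp(F^0\para'_\C)
\]
induced by $i$. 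This map is well defined since $i$ is compatible with the Hodge filtrations on Lie algebras.

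For the injective case, we take $\rho = \rho'\circ i$ directly, which is faithful. Then $\phi$ is simply the identity on mixed Hodge structures on $V$, so it is injective. On compact duals, $\check{\mathcal{D}}_\pp \rightarrow \check{\mathcal{D}}_{\pp'}$ is the orbit map of $\pp(\C)$ through the base point. It is an immersion because
\[
\para_\C \cap F^0\para'_\C = F^0\para_\C,
\]
by strictness of morphisms of mixed Hodge structures. So its differential $\para_\C/F^0\para_\C \rightarrow \para'_\C/F^0\para'_\C$ is injective, and by equivariance $\phi$ is an injective immersion.

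The main obstacle is closedness of the image. We plan to use Lemma \ref{subgroup} and Proposition \ref{mtmor}. Any point of $\mathcal{D}_{\pp'}$ in the closure of $\phi(\mathcal{D}_\pp)$ is a limit of Hodge structures whose bigraduations factor through $\pp_\C$. Since $\pp$ is closed, the limiting bigraduation also factors through $\pp_\C$, and by Lemma \ref{subgroup} it satisfies Pink's axioms for $\pp$. We then need it to lie in the single $\pp(\R)\pp^u(\C)$-orbit $X_\pp$. This holds because such orbits are open in the real-algebraic set of morphisms satisfying the axioms (as in \cite[(1.7)]{pinkthese}), so each orbit is also closed there.

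Combined with the immersion property, this shows $\phi(\mathcal{D}_\pp)$ is a closed analytic submanifold. If we cannot establish closedness in full, the fallback is to show it at least locally: $\phi$ restricted to each connected component is a proper injective immersion, hence a closed embedding.
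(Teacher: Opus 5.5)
The paper gives no argument here beyond citing \cite[(1.7)]{pinkthese}, so a direct proof is a different route. Your first steps are sound: the descent via the faithful representation $(\rho'\circ i)\oplus\rho_0$, holomorphy via the $\pp(\C)$-equivariant map of compact duals, injectivity from $j_{\rho'}\circ\phi=j_{\rho'\circ i}$, and the immersion property from strictness of $di$. The gap is in the closedness step, which is where the content of ``closed'' lies, and it has two concrete defects.

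First, there is no ``limiting bigraduation'' for an arbitrary choice. The bigraduations of a fixed mixed Hodge structure are determined only up to conjugation by the non-compact unipotent group $\exp(F^0\cap W_{-1})$ of the adjoint structure. Hence bigraduations of a sequence $\phi(h_n)\to h'$ that factor through $i(\pp)_\C$ can diverge. You have to use the canonical bigraduations $x_n^{\Del}$ of $\phi(h_n)$, which have three properties:
\begin{itemize}
\item by Proposition \ref{mtmor} they factor through $\pp(\phi(h_n))_\C\subset i(\pp)_\C$;
\item they lie in $i_\C\circ X_\pp$, because they agree on $\gr^W$ with the bigraduation coming from $X_\pp$, hence have the same image in $\pp/\pp^u$ by Lemma \ref{subgroup}, and two such morphisms are $\pp^u(\C)$-conjugate since Levi subgroups are;
\item they converge to the canonical bigraduation of $h'$, because Deligne's formula for $I^{p,q}$ only involves intersections and sums of subspaces whose dimensions are constant on the period domain.
\end{itemize}
You never state the third property, and the argument needs it. The closedness of $X_\pp$ should also be proved rather than cited. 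It holds because $X_\pp$ is the full preimage under $x\mapsto\pi\circ x$ of the $(\pp/\pp^u)(\R)$-orbit of $\bar x_0$. That orbit is open, hence closed, in the real points of the $(\pp/\pp^u)(\C)$-conjugacy class of $\bar x_0$, which is itself closed in $\Hom_\C(\s_\C,(\pp/\pp^u)_\C)$.

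Second, a closed image plus an injective immersion does not give a closed embedding: an injective immersion of $\R$ onto a figure eight has compact image. Your fallback simply assumes the properness that has to be proved. What is missing is that $\phi$ is a homeomorphism onto its image, and the corrected sequence argument gives this directly. If $\phi(h_n)\to\phi(h)$, then $x_n^{\Del}$ converges inside $X_\pp$ to the canonical bigraduation of $\phi(h)$, so $h_n=\psi_{X_\pp}(x_n^{\Del})\to h$. Alternatively, $\phi(\mathcal{D}_\pp)$ is a single orbit of the Lie group $i(\pp(\R)\pp^u(\C))$, and a locally closed orbit is an embedded submanifold by the usual Baire category argument. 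With these two points supplied, your proof is complete.
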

\begin{definition}[{\cite[Def. 3.5]{klingler_atyp}}]
A \textit{morphism of mixed Hodge data} $(\pp, X_\pp, D_\pp) \rightarrow (\pp',X_{\pp'}, D_{\pp'})$ is a morphism of $\Q$-algebraic groups $i : \pp \rightarrow \pp'$ such that post-composition of morphisms by $i_\C$ induces a map $X_\pp \rightarrow X_{\pp'}$, and the induced holomorphic map $\mathcal{D}_\pp \rightarrow \mathcal{D}_{\pp'}$ maps $D_\pp$ into $D_{\pp'}$. A \textit{mixed Hodge subdatum} of some mixed Hodge datum $(\pp', X_{\pp'}, D_{\pp'})$ is a mixed Hodge datum $(\pp, X_\pp, D_\pp)$ such that $\pp$ is a $\Q$-algebraic subgroup of $\pp'$ and the inclusion $\pp \hookrightarrow \pp'$ is a morphism of mixed Hodge data.
\end{definition}
\begin{example} Fix the datum of
\begin{itemize}
\item a finite dimensional $\Q$-vector space $V$ endowed with an increasing finite filtration $W_\bul$ by $\Q$-vector subspaces;
\item for each $k \in \Z$ a $(-1)^k$-symmetric non-degenerate bilinear form $q_k : \Gr_k^W(V) \times \Gr_k^W(V) \rightarrow \Q$;
\item a sequence of integers $(h^{p,q})_{p,q \in \Z}$ such that $h^{p,q} = h^{q,p}$ for every $p,q \in \Z$, and $\sum_{p,q \in \Z} h^{p,q} = \dim_\Q V$.
\end{itemize}
Let $M$ (resp. $\check{M}$) be the set of decreasing finite filtrations $F^\bul$ of $V_\C$ such that the triple $(V, W_\bul, F^\bul)$ defines a mixed $\Q$-Hodge structure which is graded-polarized by $(q_k)_{k\in \Z}$ (resp. such that for each $k \in \Z$, the form $q_k$ satisfies condition (\ref{HR1}) of Definition \ref{pol}) and with Hodge numbers $(h^{p,q})_{p,q \in \Z}$. Clearly $\check{M}$ is closed in a product of Grassmanian hence inherits the structure of complex algebraic variety. For this structure, $M$ is clearly a semi-algebraic open subset of $\check{M}$, hence there is an induced structure of complex manifold on $M$. Let $\pp^M = \ppol$. Let $x_0$ be a bigraduation of some mixed $\Q$-Hodge structure $h_0 \in M$. Let $X^M$ be the $\pp^M(\R)(\pp^M)^u(\C)$-orbit of $x_0$ in $\Hom_\C(\s_\C, \pp^M_\C)$. Then for any connected component $M^+$ of $M$ (for the topology induced by the above defined complex-analytic structure on $M$)  the triplet $(\pp^M, X^M, M^+)$ is a mixed Hodge datum. Such a mixed Hodge datum is called a \textit{period mixed Hodge datum}
\end{example}
\begin{remark}
Let $(\pp, X_\pp, D_\pp)$ be a mixed Hodge datum and $\rho : \pp \hookrightarrow \gl(V)$ a faithful representation. Then there exists a datum $((q_k), (h^{p,q}))$ and a $\pp^M(\R)(\pp^M)^u(\C)$-conjugacy class $X^M$ as in the latter example such that $\rho$ factors through $\pp^M$ and the induced inclusion $\pp \hookrightarrow \pp^M$ defines a morphism of mixed Hodge data $(\pp, X_\pp, D_\pp) \rightarrow (\pp^M, X^M, M^+)$ for some connected component $M^+$ of $M$.
\end{remark}
\subsubsection*{Mixed Hodge classes}
Mixed Hodge data have the bad feature of fixing the conjugacy class $X_\pp$ of bigraduations, which might not contain all bigraduations of mixed Hodge structures in $D_\pp$. In particular, for $h \in D_\pp$ the conjugacy class $X_\pp$ might not contain the canonical bigraduation of $h$. Furthermore, this constrains (unnecessarily, as one is primarily interested in mixed Hodge structures, not bigraduations) the notion of morphisms of mixed Hodge data. To palliate to this problem we make the following:
\begin{definition}
\begin{itemize}
\item[(a)] Two mixed Hodge data $(\pp, X_\pp, D_\pp)$ and $(\pp',X_{\pp'}, D_{\pp'})$ are called \textit{essentialy equivalent} if there exists an isomorphism $i : \pp \cong \pp'$ such that for any faithful representation $\rho : \pp' \hookrightarrow \gl(V)$, one has $j_{\rho \circ i}(D_\pp) = j_\rho(D_{\pp'})$. This defines an equivalence relation on the set of mixed Hodge data, called the \textit{essential equivalence relation}.
\item[(b)] A \textit{mixed Hodge class} is an equivalence class, denoted $(\pp,D_\pp)$, of mixed Hodge data for the essential equivalence relation. A mixed Hodge datum in the equivalence class $(\pp, D_\pp)$ is called a \textit{representative of $(\pp, D_\pp)$}.
\item[(c)] A \textit{morphism of mixed Hodge classes} $(\pp, D_\pp) \rightarrow (\pp', D_{\pp'})$ is a morphism of $\Q$-algebraic groups $\pp \rightarrow \pp'$ which defines a morphism of mixed Hodge data between some representatives $(\pp, X_\pp, D_\pp)$ and $(\pp', X_{\pp'}, D_{\pp'})$ of the classes $(\pp, D_\pp)$ and $(\pp', D_{\pp'})$.
\item[(d)] A \textit{mixed Hodge sub-class} of a mixed Hodge class $(\pp', D_{\pp'})$ is a mixed Hodge class $(\pp,D_\pp)$ endowed with an inclusion of $\Q$-algebraic groups $\pp \hookrightarrow \pp'$ which is a morphism of mixed Hodge classes.
\end{itemize}
\end{definition}
\begin{remark}
Let $i : (\pp, D_\pp) \rightarrow (\pp', D_{\pp'})$ be a morphism of mixed Hodge classes. For any representative $(\pp, X_\pp, D_\pp)$ of $(\pp, D_\pp)$, there exists a unique representative $(\pp', X_{\pp'}, D_{\pp'})$ of $(\pp', D_{\pp'})$ such that $i : \pp \rightarrow \pp'$ defines a morphism of mixed Hodge data $(\pp, X_\pp, D_\pp) \rightarrow (\pp', X_{\pp'}, D_{\pp'})$.
\end{remark}
\begin{lemma}
Let $(\M,D_\M)$ be a mixed Hodge sub-class of a mixed Hodge class $(\pp, D_\pp)$, and $i : \M \hookrightarrow \pp$ be the inclusion. Let $(\M, X_\M, D_\M)$, resp. $(\pp, X_\pp, D_\pp)$, be representatives of $(\M,D_\M)$, resp. $(\pp, D_\pp)$, such that $i$ makes the former a mixed Hodge sub-datum of the latter. By Lemma \ref{subdatimm}, these choices give rise to a complex-analytic closed immersion
\[
\phi : D_\M \hookrightarrow D_\pp
\]
induced by the composition-by-$i$ map $X_\M \rightarrow X_\pp$. Then $\phi$ doesn't depend on the choice of representatives of the classes $(\M,D_\M)$ and $(\pp, D_\pp)$. It is denoted $\phi_i$.
\end{lemma}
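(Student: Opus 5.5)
The plan is to show that the map $\phi : D_\M \hookrightarrow D_\pp$ is determined by the mixed Hodge structures it parametrizes, not by the bigraduations used to build it. Fix a faithful representation $\rho : \pp \hookrightarrow \gl(V)$; then $\rho \circ i$ is a faithful representation of $\M$. By Proposition \ref{hodgedat} and the remark following it, we have injective holomorphic maps $j_\rho : \mathcal{D}_\pp \rightarrow \{\text{MHS on } V\}$ and $j_{\rho \circ i} : \mathcal{D}_\M \rightarrow \{\text{MHS on } V\}$. These satisfy $\psi_{\rho,X_\pp} = j_\rho \circ \psi_{X_\pp}$, and similarly for $\M$. The central identity I will establish is
\[
j_\rho \circ \phi = j_{\rho \circ i} \quad \text{on } D_\M .
\]
This is immediate from the construction. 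If $x \in X_\M$, then $\phi(\psi_{X_\M}(x)) = \psi_{X_\pp}(i_\C \circ x)$, so $j_\rho(\phi(\psi_{X_\M}(x)))$ is the mixed Hodge structure on $V$ defined by the bigraduation $\rho_\C \circ i_\C \circ x$. That is exactly $j_{\rho\circ i}(\psi_{X_\M}(x))$. The only input here is Lemma \ref{subdatimm}, which says $\phi$ descends from the composition-by-$i$ map.

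Now take other representatives $(\M, X'_\M, D'_\M)$ and $(\pp, X'_\pp, D'_\pp)$, compatible through $i$, with induced map $\phi'$. By the definition of essential equivalence (with the isomorphism being the identity, since the class is endowed with $\M \subset \pp$), we have $j_{\rho\circ i}(D_\M) = j_{\rho\circ i}(D'_\M)$ and $j_\rho(D_\pp) = j_\rho(D'_\pp)$ as sets of mixed Hodge structures. So $D_\M$ and $D'_\M$ are identified through the injective maps $j_{\rho\circ i}$, and $D_\pp$ and $D'_\pp$ through $j_\rho$. The identity above gives $j_\rho \circ \phi = j_{\rho\circ i} = j_\rho \circ \phi'$ after this identification. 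Injectivity of $j_\rho$ then yields $\phi = \phi'$.

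One point needs care. The identification of $D_\M$ with $D'_\M$ (as subsets of the space of mixed Hodge structures on $V$) should be an identification of complex manifolds, not just of sets. I would handle this with Proposition \ref{hodgedat}(ii): the complex structure is uniquely characterized by holomorphic variation of the Hodge filtration. Equivalently, via (iv), both are open subsets of the same flag variety $\check{\mathcal D}$ given by the Hodge filtrations on $V_\C$. The complex structure is therefore intrinsic to the image set, and the two descriptions agree.

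The remaining thing to verify is independence of $\rho$, which I expect to be the subtle part. For two faithful representations, I would compare via $\rho \oplus \rho'$, which is again faithful; the statement for it restricts to each summand, since a mixed Hodge structure on $V \oplus V'$ coming from $\pp$ determines the ones on $V$ and $V'$. Proposition \ref{hodgedat}(iii) already guarantees that $\mathcal D_\pp$ does not depend on $\rho$, so this part is essentially formal.
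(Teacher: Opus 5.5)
Your proof is correct and takes essentially the same approach as the paper. Fix a faithful $\rho$ and note that $j_\rho(\psi_{X_\pp}(i_\C\circ x)) = j_{\rho\circ i}(\psi_{X_\M}(x))$ holds for either choice of compatible representatives; injectivity of $j_\rho$ then gives $\phi=\phi'$. Your additional checks, that the complex structures agree and that the identification does not depend on $\rho$ (via $\rho\oplus\rho'$), spell out how $D_\M$ and $D_\pp$ are matched across representatives, which the paper takes for granted by writing the same symbols for both.
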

\begin{proof}
Let $(\M, X'_\M, D_\M)$ and $(\pp, X'_\pp, D_\pp)$ be another pair of representatives such that the inclusion $i$ makes the first a mixed Hodge sub-datum of the second. By definition, one has a commutative diagram
\[
\xymatrix{
X_\M \ar[d]_{i_\C \circ (-)} \ar[r]^{\psi_{X_\M}} & D_\M \ar@/^/[d]^\phi \ar@/_/[d]_{\phi'} & X'_\M \ar[l]_{\psi_{X'_\M}} \ar[d]^{i_\C \circ (-)} \\
X_\pp \ar[r]_{\psi_{X_\pp}} & D_\pp & X'_{\pp} \ar[l]^{\psi_{X'_\pp}}
}
\]
We are tasked with proving that for any $h \in D_\M$ one has $\phi(h) = \phi'(h)$. Let $x \in X_\M$ and $x' \in X'_\M$ such that $\psi_{X_\M}(x) = \psi_{ X'_\M}(x') = h$. By definition, one has $\phi(h) = \psi_{X_\pp} (i_\C \circ x)$ and $\phi'(h) = \psi_{X'_\pp}(i_\C \circ x')$. Let $\rho : \pp \hookrightarrow \gl(V)$ be a faithful representation. Since
\[
j_\rho : D_\pp \hookrightarrow \{\mathrm{mixed} \hspace{0.1cm} \Q\mathrm{-Hodge} \hspace{0.1cm} \mathrm{structures} \hspace{0.1cm} \mathrm{on} \hspace{0.1cm} V\}
\]
is injective, it suffices to check that $j_\rho (\phi(h)) = j_\rho(\phi'(h))$. To see this, recall that $j_\rho \circ \psi_{X_\pp}$ maps a morphism $x_\pp : \s_\C \rightarrow \pp_\C$ satisfying Pink's axioms for $\pp$ to the mixed $\Q$-Hodge structure defined by $\rho_\C \circ x_\pp$. Therefore,
\[
j_\rho(\phi(h)) = j_\rho(\psi_{X_\pp}(i_\C \circ x)) = j_{\rho \circ i}(\psi_{X_\M}(x)),
\]
and similarly,
\[
j_\rho(\phi'(h)) = j_\rho(\psi_{X'_\pp}(i_\C \circ x')) = j_{\rho \circ i}(\psi_{X'_\M}(x')).
\]
Since by assumption $\psi_{X_\M}(x) = \psi_{X'_\M}(x')$, this implies that $\phi(h) = \phi'(h)$  as desired.
\end{proof}
\begin{convention}
Let $(\pp, D_\pp)$ be a mixed Hodge class and $(\M,D_\M)$ a mixed Hodge subclass, with underlying group inclusion $i : \M \hookrightarrow \pp$. In the sequel, we will identify $D_\M$ with the complex submanifold $\phi_i(D_\M)$ of $D_\pp$. It is an $\M(\R)^+\M^u(\C)$-orbit in $D_\pp$. 
\end{convention}
\begin{remark}
It makes sense to talk about $\M(\R)^+\M^u(\C)$-orbit in $D_\pp$ as above, because in the setting of the above convention, Lemma \ref{subgroup} ensures that $\M^u(\C) \subset \pp^u(\C)$.
\end{remark}
\subsubsection*{Mixed Hodge structures with bounded Mumford-Tate group}
We can now turn to the proof of the following which states that Mumford-Tate subdomains indeed parametrize mixed Hodge structures with bounded Mumford-Tate group:
\begin{proposition}\label{mtbound}
Let $(\pp, D_\pp)$ be a mixed Hodge class and $i : \M \hookrightarrow \pp$ the inclusion of a connected $\Q$-algebraic subgroup. Let $h \in D_\pp$. The following assertions are equivalent:
\begin{itemize}
\item[(i)] for any faithful $\Q$-representation $\rho : \pp \rightarrow \gl(V)$ of $\pp$, the mixed $\Q$-Hodge structure $j_\rho(h)$ has Mumford-Tate group contained in $\M$;
\item[(ii)] for some faithful $\Q$-representation $\rho : \pp \rightarrow \gl(V)$ of $\pp$, the mixed $\Q$-Hodge structure $j_\rho(h)$ has Mumford-Tate group contained in $\M$;
\item[(iii)] there exists a mixed Hodge sub-class $(\M, D_\M)$ of $(\pp, D_\pp)$ with underlying inclusion $i$, such that $h \in D_\M \subset D_\pp$.
\end{itemize}
\end{proposition}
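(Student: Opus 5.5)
We prove (i) $\Rightarrow$ (ii) $\Rightarrow$ (iii) $\Rightarrow$ (i). The first implication is trivial, since faithful representations of $\pp$ exist. The two other implications rest on Proposition \ref{mtmor}, which compares Mumford-Tate groups with the groups $\pp(x)$ generated by bigraduations, and on Lemma \ref{subgroup}, which turns a bigraduation contained in a subgroup into a point of a Mumford-Tate subdomain.

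For (ii) $\Rightarrow$ (iii), fix $\rho$ as in (ii) and let $x : \s_\C \rightarrow \gl(V)_\C$ be the canonical bigraduation of $j_\rho(h)$. We first check that $x$ factors through $\rho_\C(\pp_\C)$. Choose a representative $(\pp, X_\pp, D_\pp)$ and some $x_\pp \in X_\pp$ with $\psi_{X_\pp}(x_\pp) = h$. Then $\rho_\C \circ x_\pp$ is a bigraduation of $j_\rho(h)$, and by Proposition \ref{mtmor}(i),
\[
\pp(j_\rho(h)) \subset \pp(\rho_\C \circ x_\pp) \subset \rho(\pp).
\]
By Proposition \ref{mtmor}(ii), $\pp(x) = \pp(j_\rho(h)) \subset \rho(\M) \subset \rho(\pp)$. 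So $x$ factors as $\rho_\C \circ x_\M$ with $x_\M : \s_\C \rightarrow \M_\C$.

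We next show that $x_\M$ satisfies Pink's axioms for $\M$. By Lemma \ref{pinkcarac}, $\rho(\pp) \subset \ppol$ for some graded-polarization $(q_k)$, so $\rho(\M) \subset \ppol$. Lemma \ref{subgroup} then gives that $x$ satisfies Pink's axioms for $\rho(\M)$. We set $X_\M$ to be the $\M(\R)\M^u(\C)$-orbit of $x_\M$, and $D_\M$ to be the connected component of $\mathcal{D}_\M$ containing the image of $x_\M$. This gives a mixed Hodge datum $(\M, X_\M, D_\M)$.

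It remains to see that the inclusion $i$ is a morphism of mixed Hodge classes with $h \in D_\M$. Consider the mixed Hodge datum $(\pp, X'_\pp, D'_\pp)$ generated by $i_\C \circ x_\M$. Under $j_\rho$, both $D'_\pp$ and $D_\pp$ contain $j_\rho(h)$. Both are $\pp(\R)^+\pp^u(\C)$-orbits, hence they are the same image. Therefore $(\pp, X'_\pp, D'_\pp)$ is essentially equivalent to $(\pp, X_\pp, D_\pp)$, and $i$ makes $(\M, X_\M, D_\M)$ a subdatum of it. The main obstacle I expect lies here: one must check that the orbits agree inside the space of mixed Hodge structures on $V$, not merely as sets of bigraduations. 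This is precisely why the essential-equivalence formalism was introduced, and I would make the point via $j_\rho$ and the $\pp(\R)^+\pp^u(\C)$-equivariance of $\psi_{X_\pp}$.

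For (iii) $\Rightarrow$ (i), take representatives such that $i$ is a morphism of mixed Hodge data, and pick $x_\M \in X_\M$ mapping to $h$. For any faithful $\rho$, $\rho_\C \circ i_\C \circ x_\M$ is a bigraduation of $j_\rho(h)$; this uses the compatibility $j_\rho \circ \phi_i = j_{\rho \circ i}$ established in the lemma defining $\phi_i$. By Proposition \ref{mtmor}(i), the Mumford-Tate group of $j_\rho(h)$ is contained in $\pp(\rho_\C \circ i_\C \circ x_\M)$, which is contained in $\rho(\M)$ since $\rho(\M)$ is a $\Q$-subgroup whose $\C$-points contain the image. This gives (i).
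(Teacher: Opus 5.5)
Your proposal is correct and follows the paper's proof essentially step for step. You push the canonical bigraduation of $j_\rho(h)$ into $\M$ via Proposition \ref{mtmor}(ii), get Pink's axioms from Lemma \ref{pinkcarac} and Lemma \ref{subgroup}, compare the two $\pp(\R)^+\pp^u(\C)$-orbits through $j_\rho(h)$, and use Proposition \ref{mtmor}(i) for (iii)$\Rightarrow$(i).

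You leave one step implicit; the paper also skips it, calling it easy. Essential equivalence requires $j_{\rho'}(D'_\pp)=j_{\rho'}(D_\pp)$ for every faithful $\rho'$, not only your fixed $\rho$. This holds because $i_\C\circ x_\M$ and any lift $x_\pp\in X_\pp$ of $h$ induce the same Hodge decomposition on $\gr^W V$. They therefore have the same composite to $(\pp/\pp^u)_\C$, hence are $\pp^u(\C)$-conjugate, so in fact $X'_\pp=X_\pp$.
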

\begin{proof}
$(i) \Rightarrow (ii)$: Immediate. 

$(ii) \Rightarrow (iii)$: Assume $(ii)$, let $\rho : \pp \rightarrow \gl(V)$ be as in the statement and denote by $i : \M \hookrightarrow \pp$ the inclusion. By abuse of notation, we shall identify $\pp$ and $\M$ to connected $\Q$-algebraic subgroups of $\gl(V)$ using $\rho$ and $\rho \circ i$. Let $W_\bul$ be the induced weight filtration on $V$. 

Let $(\pp, X_\pp, D_\pp)$ be some representative of $(\pp, D_\pp)$ and $x_0 \in X_\pp$ some lift of $h$. By definition of mixed Hodge data $\rho_\C \circ x_0$ satisfies Pink's axioms for $\pp$, hence by the fourth assertion of Lemma \ref{pinkcarac}, there is some graded-polarization $(q_k)$ of $j_\rho(h)$ such that $\pp \subset \ppol$.

Let $x_\Del$ be the canonical bigraduation of the mixed $\Q$-Hodge structure $j_\rho(h)$. By assumption $\pp(j_\rho(h)) \subset \M$, and by Proposition \ref{mtmor}(ii) one has the inclusion $x_\Del(\s_\C) \subset \pp(j_\rho(h))_\C$, hence $x_\Del(\s_\C) \subset \M_\C$. Since $\M$ is contained in $\pp$ as subgroups of $\gl(V)$, and $\pp$ is contained in $\ppol$ as just proven, it follows that $\M$ is too. Lemma \ref{subgroup} then implies that $x_\Del$ satisfies Pink's axioms for $\M$. Let $X_\M$ be the $\M(\R)\M^u(\C)$-orbit of $x_\Del$ in $\Hom_\C(\s_\C, \M_\C)$, let $\mathcal{D}_\M$ be the image of 
\[
\psi_{\rho \circ i, X_\M} : X_\M \rightarrow  \{\mathrm{mixed} \hspace{0.1cm} \Q\mathrm{-Hodge} \hspace{0.1cm} \mathrm{structures} \hspace{0.1cm} \mathrm{on} \hspace{0.1cm} V\}.
\] 
Finally, let $D_\M$ be the connected component of $\mathcal{D}_\M$ containing $j_\rho(x)$. By construction $(\M,X_\M, D_\M)$ is a mixed Hodge datum. Denote by $(\M,D_\M)$ the associated mixed Hodge class. 

We now proceed to prove that $(\M,D_\M)$ is a mixed Hodge sub-class of $(\pp,D_\pp)$. We saw that $\pp \subset \ppol$ and that $x_\Del(\s_\C) \subset \M_\C \subset \pp_\C$, hence Lemma \ref{subgroup} ensures that $x_\Del$ satisfies Pink's axioms for $\pp$. Using this to identify $x_\Del$ to a $\C$-morphism $\s_\C \rightarrow \pp_\C$, let $X'_\pp$ be the $\pp(\R)\pp^u(\C)$-conjugacy class of $x_\Del$ in $\Hom_\C(\s_\C, \pp_\C)$ and $\mathcal{D'_\pp}$ be the image of 
\[
\psi_{\rho, X'_\pp} : X'_\pp \rightarrow  \{\mathrm{mixed} \hspace{0.1cm} \Q\mathrm{-Hodge} \hspace{0.1cm} \mathrm{structures} \hspace{0.1cm} \mathrm{on} \hspace{0.1cm} V\},
\] 
and $D'_\pp$ be the connected component of $\mathcal{D}'_\pp$ containing $j_\rho(x)$. The $\pp(\R)^+\pp^u(\C)$-orbits $D'_\pp$ and $j_\rho(D_\pp)$ in the set of mixed $\Q$-Hodge structures on $V$ both contain $j_\rho(h)$ hence are equal, and this is easily seen to be true for any faithful representation of $\pp$. Hence $(\pp, X_\pp, D_\pp)$ and $(\pp, X'_\pp, D_\pp)$ are essentially equivalent, and by construction, the inclusion $i : \M \hookrightarrow \pp$ makes $(\M, X_\M, D_\M)$ a mixed Hodge subdatum of $(\pp, X'_\pp, D_\pp)$. Hence $(\M,D_\M)$ is a mixed Hodge subclass of $(\pp, D_\pp)$, and by construction, $j_\rho(h) \in D_\M \subset D_\pp$. This proves $(ii) \Rightarrow (iii)$.

$(iii) \Rightarrow (i)$: Assume $(iii)$, and let $\rho : \pp \rightarrow \gl(V)$ be any faithful $\Q$-representation of $\pp$. Let $(\M, X_\M, D_\M)$ be a representative of the mixed Hodge class $(\M, D_\M)$ whose existence is guaranteed by $(iii)$. Let $x \in X_\M$ such that $\psi_{\rho\circ i, X_\M}(x) = j_\rho(h)$. Then the $\C$-morphism $\rho_\C \circ i_\C \circ x : \s_\C \rightarrow \gl(V)_\C$ is a bigraduation of $j_\rho(h)$  such that $x(\s_\C) \subset \M_\C$. With the notations of Proposition \ref{mtmor}, this implies $\pp(x) \subset \M$. But by Proposition \ref{mtmor}, one has $\pp(j_\rho(h)) \subset \pp(x)$. Therefore, the Mumford-Tate group $\psi_\rho(x)$ is indeed contained in $\M$. This proves $(iii) \Rightarrow (i)$ and completes the proof.
\end{proof}
\begin{corollary}\label{mtindp}
Let $(\pp, D_\pp)$ be a mixed Hodge class and $h \in D_\pp$. For any pair of faithful representations $\rho : \pp \hookrightarrow \gl(V)$ and $\rho' : \pp \hookrightarrow \gl(V')$, the Mumford-Tate groups of the mixed $\Q$-Hodge structures $j_\rho(h)$ and $j_{\rho'}(h)$ are connected $\Q$-algebraic subgroups of $\pp$ and are equal as subgroups of $\pp$.
\end{corollary}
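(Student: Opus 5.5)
The plan is to deduce the corollary from Proposition \ref{mtbound}, using in turn the implications $(ii)\Rightarrow(iii)$ and $(iii)\Rightarrow(i)$.

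\emph{Step 1: the Mumford-Tate groups are subgroups of $\pp$.} Fix a faithful representation $\rho$. Apply Proposition \ref{mtbound} with $\M = \pp$ and $i = \id$. Assertion $(iii)$ holds trivially, with $(\M, D_\M) = (\pp, D_\pp)$ itself. Hence $(i)$ holds, so $\pp(j_\rho(h)) \subset \rho(\pp)$. Connectedness follows from the fact that $\pp(j_\rho(h))$ is the smallest $\Q$-subgroup whose $\C$-points contain $x_\Del(\s_\C)$, by Proposition \ref{mtmor}(ii). Indeed, $\s_\C$ is connected, so the identity component of that group already contains $x_\Del(\s_\C)$, and minimality forces the group to equal its identity component. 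Identifying via $\rho$, we may therefore regard $\M_\rho := \rho^{-1}(\pp(j_\rho(h)))$ as a connected $\Q$-subgroup of $\pp$. Define $\M_{\rho'}$ in the same way from $\rho'$.

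\emph{Step 2: comparison of the two groups.} We have $\M := \M_\rho$ connected, and $j_\rho(h)$ has Mumford-Tate group contained in $\M$, indeed equal to it. This is assertion $(ii)$ of Proposition \ref{mtbound} for the inclusion $\M \hookrightarrow \pp$. Therefore $(i)$ holds, which says that for every faithful representation, in particular $\rho'$, the Mumford-Tate group of $j_{\rho'}(h)$ is contained in $\M$. This gives $\M_{\rho'} \subset \M_\rho$. Exchanging the roles of $\rho$ and $\rho'$ gives the reverse inclusion, hence equality as subgroups of $\pp$.

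\emph{Main obstacle.} The only delicate point is bookkeeping: making sure that ``contained in $\M$'' in Proposition \ref{mtbound} means containment of $\pp(j_\rho(h))$ in $\rho(\M)$ inside $\gl(V)$. Once this is read correctly, the pullbacks to $\pp$ compare as stated, and the argument is just a two-fold application of that proposition.
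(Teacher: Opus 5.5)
Your proposal is correct and follows essentially the paper's argument. You obtain the containment in $\pp$ from Proposition~\ref{mtbound} with $\M=\pp$, whereas the paper applies Proposition~\ref{mtmor}(i) directly to $\rho_\C\circ x$ for a lift $x\in X_\pp$ of $h$; this is the same argument, since it is exactly how $(iii)\Rightarrow(i)$ of Proposition~\ref{mtbound} is proved. The comparison then uses the same two symmetric applications of $(ii)\Rightarrow(i)$ of Proposition~\ref{mtbound}, and your explicit connectedness check via Proposition~\ref{mtmor}(ii) is a useful addition, because that proposition is stated for connected $\M$ and the paper uses this only implicitly.
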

\begin{proof}
Let $\rho$, $\rho'$ and $h$ as in the statement. Let $(\pp, X_\pp, D_\pp)$ be a representative of the mixed Hodge class $(\pp, D_\pp)$. Let $x \in X_\pp$ be such that $\psi_{X_\pp}(x) = h$. Then by Proposition \ref{mtmor}(i), one finds that $\pp(j_\rho(h)) \subset \pp$ and $\pp(j_{\rho'}(h)) \subset \pp$. Applying equivalence between $(i)$ and $(ii)$ in Proposition \ref{mtbound} with $\M = \pp(j_\rho(h))$ gives that $\pp(j_{\rho'}(h)) \subset \pp(j_{\rho}(h))$. Applying the same equivalence with $\M = \pp(j_{\rho'}(h))$ gives that $\pp(j_\rho(h)) \subset \pp(j_{\rho'}(h))$, as desired.
\end{proof}
\begin{definition}
Let $(\pp, D_\pp)$ be a mixed Hodge class, and $h \in D_\pp$. The \textit{Mumford-Tate group of $h$}, denoted $\pp(h)$, is the Mumford-Tate group of the mixed $\Q$-Hodge structure $j_\rho(h)$ for some faithful representation $\rho : \pp \hookrightarrow \gl(V)$. It is a connected $\Q$-algebraic subgroup of $\pp$, as such independent of $\rho$.
\end{definition}
We conclude this subsection with the following two consequences of Proposition \ref{mtbound}
\begin{corollary}\label{mtconn}
Let $(\pp, D_\pp)$ be a mixed Hodge class, let $i : \M \hookrightarrow \pp$ the inclusion of a connected $\Q$-algebraic subgroup of $\pp$ ,and $X \subset D_\pp$ a connected subset such that for any $h \in X$, one has $\pp(h) \subset \M$. Then, there exists a unique mixed Hodge subclass $(\M,D_\M)$ of $(\pp, D_\pp)$ with underlying inclusion $i$ such that $X \subset D_\M \subset D_\pp$.
\end{corollary}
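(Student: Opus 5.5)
The plan is to combine the pointwise statement of Proposition \ref{mtbound} with the structure of the sub-domains $D_\M$ as orbits. The sub-classes with underlying inclusion $i$ then give a partition of the relevant locus into open pieces, and connectedness of $X$ forces a single piece.

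\emph{Existence.} For each $h \in X$, Proposition \ref{mtbound} $(ii)\Rightarrow(iii)$ gives a mixed Hodge sub-class $(\M, D_\M^h)$ of $(\pp, D_\pp)$ with underlying inclusion $i$ and $h \in D^h_\M$. Set
\[
Y = \{ h' \in D_\pp \hspace{0.1cm} : \hspace{0.1cm} \pp(h') \subset \M \}.
\]
By Proposition \ref{mtbound} $(iii)\Rightarrow(i)$, $Y$ is exactly the union of all sub-domains $D_\M \subset D_\pp$ over all sub-classes $(\M, D_\M)$ with underlying inclusion $i$.

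\emph{Pairwise comparison.} I will show that any two such sub-domains are either equal or disjoint. Suppose $h' \in D_\M \cap D'_\M$ for two sub-classes with underlying inclusion $i$. By the Convention, each is an $\M(\R)^+\M^u(\C)$-orbit in $D_\pp$, and both contain $h'$, so they coincide as subsets of $D_\pp$. This is the same argument used in the proof of Proposition \ref{mtbound} to identify $D'_\pp$ with $j_\rho(D_\pp)$. Equality as subsets gives equality of the images $j_{\rho\circ i}(D_\M) = j_{\rho\circ i}(D'_\M)$ for every faithful $\rho$. This is exactly essential equivalence with the identity isomorphism, so $(\M, D_\M) = (\M, D'_\M)$ as classes.

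\emph{Openness and conclusion; the main obstacle.} Next I will show that each $D_\M$ is open in $Y$. It is closed in $D_\pp$ by Lemma \ref{subdatimm}, which is not enough on its own. The argument I will try first uses discreteness. Representatives are determined by an $\M(\R)\M^u(\C)$-conjugacy class of cocharacters $\s_\C \to \M_\C$. After fixing $\M^u$ and a Levi decomposition, these correspond to $\M(\R)$-conjugacy classes of real Hodge cocharacters of the reductive quotient, with fixed rational weight cocharacter. Such classes form a discrete family, since they are classified by finitely many combinatorial data. Moreover, each class has finitely many connected components $D_\M$. Hence the family of sub-domains is locally finite in $D_\pp$. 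For a locally finite family of closed sets, each member's complement in the union is closed, so each member is open in the union $Y$.

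Granting this, $X \subset Y$ is connected and meets some $D_\M$ (for instance $D_\M^h$). The sets $D_\M \cap X$ partition $X$ into relatively open pieces, so $X \subset D_\M$ for a single sub-class. Uniqueness is then immediate from the disjointness step: any second sub-class $(\M, D'_\M)$ with $X \subset D'_\M$ meets $D_\M$ when $X$ is nonempty, hence equals it.

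The expected obstacle is the local finiteness (openness) step. If the discreteness argument is too delicate, the alternative is to show that $Y$ is locally, near any point, equal to the single orbit through it. This would use a dimension or tangent-space argument: the orbit of $\M(\R)^+\M^u(\C)$ through $h'$ has tangent space the image of $\lie(\M)_\R + \lie(\M^u)_\C$, which is the same for all orbits through nearby points of $Y$.
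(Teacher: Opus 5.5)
Your existence step via Proposition \ref{mtbound} and your ``equal or disjoint'' step (two $\M(\R)^+\M^u(\C)$-orbits sharing a point coincide) are exactly the paper's proof, and the paper's proof ends there. You are right that more is needed. A connected set covered by pairwise disjoint closed pieces lies in one of them only if the pieces are relatively open. So some finiteness or openness statement cannot be avoided.

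\textbf{The gap.} Your justification of that statement does not work. ``Classified by finitely many combinatorial data, hence a discrete family, hence locally finite in $D_\pp$'' does not follow. Fixing the weight still leaves infinitely many classes: the real homomorphisms $\s\to\s$, $z\mapsto z^{1+k}\overline{z}^{-k}$, $k\in\Z$, all have the same weight. And disjoint closed sub-domains with identical combinatorial data can accumulate: the translates $g\cdot D_\Lambda$, $g\in\uu(\Q)$, of Section \ref{sect6} do, but they belong to the distinct groups $g\M_\Lambda g^{-1}$.

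\textbf{How to close it.} What saves you is that $\M$ itself is fixed and sits inside a fixed ambient class.
\begin{itemize}
\item[(1)] Put $\G=\pp/\pp^u\supset\G_\M=\M/\M^u$. This is an inclusion because $\M^u=\pp^u\cap\M$ by Lemma \ref{subgroup}.
\item[(2)] For $h\in D_\pp$, the composite $\bar{x}_h:\s\to\G_\R$ of any bigraduation of $h$ is determined by $h$; it is the Hodge cocharacter of $\gr^W$. Since $D_\pp$ is one $\pp(\R)^+\pp^u(\C)$-orbit and $\pp^u(\C)$ does not move $\bar{x}_h$, all $\bar{x}_h$ lie in a single $\G(\R)$-conjugacy class.
\item[(3)] That class meets $\Hom(\s,\G_{\M,\R})$ in only finitely many $\G_\M(\R)$-classes. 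Over $\C$, conjugate into a maximal torus $T_\M\subset T$; there $\G(\C)$-conjugacy becomes Weyl-conjugacy, and Weyl orbits are finite. Then pass to real classes using finiteness of $H^1(\R,-)$ of the centralizer.
\item[(4)] All lifts of a given $\bar{x}$ to $\M_\C$ are $\M^u(\C)$-conjugate. They correspond to the maximal tori of the extension $\M_\C\times_{\G_{\M,\C}}\s_\C$ of $\s_\C$ by $\M^u_\C$, and these tori are $\M^u(\C)$-conjugate. Since $\M(\R)\to\G_\M(\R)$ is surjective, an $\M(\R)\M^u(\C)$-class of bigraduations is therefore determined by the $\G_\M(\R)$-class of $\bar{x}$.
\item[(5)] $\pi_0(\M(\R))$ is finite.
\end{itemize}
Together, (3)--(5) show that only finitely many sub-domains $D_\M$ occur in $D_\pp$. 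Being closed and pairwise disjoint, each is open in $Y$, and your connectedness argument then goes through.

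Your tangent-space fallback would not do this job. Equality of orbit dimensions does not stop disjoint closed orbits from accumulating.
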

\begin{proof}
By the previous two results, we only have to prove that for any two mixed Hodge subclasses $(\M,D_\M), (\M,D'_{\M})$ of $(\pp, D_\pp)$ with the same underlying inclusion of $\Q$-groups $i : \M \hookrightarrow \pp$, the two (connected) subsets $D_\M$ and $D'_\M$ of $D_\pp$ are either equal or disjoint. Assume that they are not disjoint, and $h$ be a point in the intersection. Then $D_\M$ and $D'_\M$ are both $\M(\R)^+\M^u(\C)$-orbits in $D_\pp$ containing $h$. Hence they are equal.
\end{proof}
\subsubsection*{Torsion mixed Hodge subclasses}
To finish this section, we show that every mixed Hodge class contains a non-trivial mixed Hodge subclass, a fact analogous to the existence of CM points in every Shimura variety for example (and whose proof is similar).
\begin{definition}
A mixed Hodge class $(\M, D_\M)$ is called \textit{torsion} if the group $\M$ is a $\Q$-torus.
\end{definition}
Note that if $(\M, D_\M)$ is a torsion mixed Hodge class, then $\dim D_\M = 0$. Every mixed Hodge class contains a torsion mixed Hodge subclass as the following well known result asserts (see \cite[Lem. 3.2]{kp} for a similar result for limiting mixed Hodge structures):
\begin{proposition}\label{torsionex}
Let $(\pp, D_\pp)$ be a mixed Hodge class with $\dim D_\pp > 0$. There exists a strict torsion mixed Hodge subclass $(\mathbf{T}, D_{\mathbf{T}}) \subsetneq (\pp, D_\pp)$.
\end{proposition}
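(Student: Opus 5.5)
We adapt the classical construction of CM points: take a generic maximal torus of the reductive quotient, compatible with a chosen bigraduation, and lift it to $\pp$ via a Levi decomposition.

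\textbf{Setup.} Fix a representative $(\pp, X_\pp, D_\pp)$ and a point $x_0 \in X_\pp$. Fix a Levi decomposition $\pp = \pp^u \rtimes \LL$ over $\Q$; it exists in characteristic $0$. Let $\G = \pp/\pp^u \cong \LL$, and let $\bar x_0 : \s \to \G_\R$ be the real morphism given by axiom $(i)$ of Lemma \ref{pinkcarac}.

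\textbf{Step 1: a rational torus in $\G$.} Let $\mathbf{T}_\R$ be a maximal torus of the centralizer in $\G_\R$ of $\bar x_0(\s)$. It is a maximal torus of $\G_\R$ containing $\bar x_0(\s)$. As in Deligne's proof of the existence of special points, pick a regular element $t \in \mathbf{T}_\R(\R)$. By density of $\G(\Q)$ in $\G(\R)$ and openness of the set of regular elements, there is $g \in \G(\R)^+$ close to $1$ such that $g \mathbf{T}_\R g^{-1}$ is defined over $\Q$. Equivalently, we take $\mathbf{T}'$ to be the centralizer of a rational regular semisimple element $\gamma$ close to $t$, and write $\mathbf{T}'_\R = g\mathbf{T}_\R g^{-1}$. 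Then $g\bar x_0 g^{-1}$ factors through $\mathbf{T}'_\R$.

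\textbf{Step 2: lift to $\pp$.} Identify $\mathbf{T}'$ with a $\Q$-torus of $\LL \subset \pp$. We need a point of $X_\pp$ whose image lands in $\mathbf{T}'_\C$. The map $x_0$ lands in some Levi subgroup of $\pp_\C$, namely the centralizer of $x_0(\s_\C)$ restricted to the weight grading. This subgroup is conjugate to $\LL_\C$ by a unique element of $\pp^u(\C)$. So after conjugating $x_0$ by an element of $\pp^u(\C)$ (allowed in $X_\pp$), we may assume $x_0(\s_\C) \subset \LL_\C$. Here I expect to use that the image of the weight cocharacter $x_0\circ w$ determines a splitting of $W_\bullet$ whose stabilizer is a Levi subgroup.

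Then $x := g x_0 g^{-1}$, with $g \in \LL(\R)^+$, lies in $X_\pp$ and has image in $\mathbf{T}'_\C$. Since $\mathbf{T}' \subset \pp$ and $\pp \subset \ppol$ for a faithful representation, Lemma \ref{subgroup} shows that $x$ satisfies Pink's axioms for $\mathbf{T}'$. Taking the orbit of $x$ gives a mixed Hodge datum for $\mathbf{T}'$, hence a subclass $(\mathbf{T}', D_{\mathbf{T}'})$. Its domain is the point $\psi(x)$, which lies in a connected component that we can arrange to be $D_\pp$ by taking $g$ in the identity component.

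\textbf{Step 3: strictness.} Since $\dim D_\pp > 0$ and $\dim D_{\mathbf{T}'} = 0$, the subclass is strict.

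\textbf{Main obstacle.} The delicate point is Step 2: making the lift land in a rational Levi subgroup while staying in the orbit $X_\pp$, and checking that the component $D_\pp$ is preserved. If Levi conjugation is awkward, there is a fallback. Work with the canonical bigraduation of $h = \psi(x_0)$ and the Mumford–Tate group $\pp(h)$. By Proposition \ref{mtbound} we may replace $\pp$ by $\pp(h)$. Then use Deligne splitting: modify $h$ by $\pp^u(\C)$ to obtain an $\R$-split structure. This gives a bigraduation defined over $\R$, landing in a real Levi subgroup, and the real Levi subgroups are $\pp^u(\R)$-conjugate to $\LL_\R$.
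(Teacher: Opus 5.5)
Your proof is correct and follows the same skeleton as the paper's. You conjugate a lift $x\in X_\pp$ by an element of $\pp^u(\C)$ so that it lands in a rational Levi subgroup $\LL_\C$; it is then defined over $\R$ as a map $\s\to\LL_\R$. You then conjugate by an element of $\LL(\R)^+$ so that it lands in a $\Q$-torus, and conclude via Lemma \ref{subgroup}. The paper instead uses Proposition \ref{mtmor}$(i)$ and Corollary \ref{mtconn} with $X=\{h'\}$, which amounts to the same thing.

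The one substantive difference is how the rational torus is produced. Here your version is the right one.

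\begin{itemize}
\item The paper takes an \emph{arbitrary} maximal $\Q$-torus $\mathbf{T}\subset\LL$ and asserts that the maximal $\R$-torus $Z\supset x_u(\s)$ is $\LL(\R)^+$-conjugate to $\mathbf{T}_\R$.
\item That is false in general, because maximal $\R$-tori of a real reductive group fall into several $\LL(\R)$-conjugacy classes. For example, for $\LL=\gl_2$ and the weight-one Hodge structure of an elliptic curve, $Z$ is the image of $\s$, which has $\R$-rank $1$. It is therefore not $\GL_2(\R)$-conjugate to the split diagonal torus.
\item Your Deligne-style step repairs exactly this. You approximate a regular element $t\in Z(\R)$ by a rational regular semisimple $\gamma$. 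You use that $(g,t')\mapsto gt'g^{-1}$ is a submersion at $(1,t)$ to write $\gamma=gt'g^{-1}$ with $g$ close to $1$. You then set $\mathbf{T}'=Z_{\G}(\gamma)^\circ$.
\item This yields a $\Q$-torus that really is $\LL(\R)^+$-conjugate to $Z$, with the conjugating element automatically in the identity component. So $\psi(x)$ stays in $D_\pp$.
\end{itemize}

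Two minor slips in Step 2, neither affecting correctness:

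\begin{itemize}
\item The element of $\pp^u(\C)$ conjugating a Levi subgroup to $\LL_\C$ is unique only up to $Z_{\pp^u}(\LL)$.
\item The Levi subgroup containing $x_0(\s_\C)$ is the centralizer of the weight cocharacter $x_0\circ w$, not of $x_0(\s_\C)$ itself. Its Lie algebra is $\bigoplus_{p+q=0}\para^{p,q}$, which is complementary to $\lieu_\C$, and it contains $x_0(\s_\C)$ because $\s_\C$ is commutative.
\end{itemize}

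With that justification in place, your fallback via Deligne splitting is unnecessary.
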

\begin{proof}
Let $(\pp, X_\pp, D_\pp)$ be a representative of $(\pp, D_\pp)$, let $h \in D_\pp$ and $x \in X_\pp$ such that $\psi_{X_\pp}(x) = h$. Let $G_\C$ be a Levi factor of $\pp_\C$ containing $x(\s_\C)$. Let $\LL$ be a Levi factor (defined over $\Q$) of the $\Q$-algebraic group $\pp$. Then $\LL_\C$ is a Levi factor of $\pp_\C$ hence \cite[Prop. 5.1]{borser} ensures that there exists a $u \in \pp^u(\C)$ such that $uG_\C u^{-1} = \LL_\C$. Letting $\pi : \pp \rightarrow \pp/\pp^u$ be the quotient and $i : \LL \hookrightarrow \pp$ be the inclusion, we get a commutative diagram
\[
\xymatrix{
\s_\C \ar[rrd] \ar[r]^{x_u} \ar@/^2.0pc/[rr]^{\intt_u \circ x} & \LL_\C \ar[rd]^{\sim} \ar[r]^{i_\C} & \pp_\C \ar[d]^{\pi_\C} \\ 
& & (\pp/\pp^u)_\C
}
\]
where $(\pi \circ i)_\C$ is an isomorphism by definition of Levi factors, and the morphism $x_u$ exists because $\intt_u \circ x(\s_\C) \subset uG_\C u^{-1} = \LL_\C$. Since $\intt_u \circ x$ satisfies Pink's axioms for $\pp$, we know that $\pi_\C \circ \intt_u \circ x$ is defined over $\R$. As the isomorphism $(\pi \circ i)_\C$ is defined over $\Q$, hence over $\R$, it follows that $x_u$ is defined over $\R$, and we see it from now as a morphism of real algebraic groups $x_u : \s \rightarrow \LL_\R$. Let $Z$ be a maximal $\R$-torus in $\LL_\R$ containing $x_u(\s)$. Let $\mathbf{T}$ be a maximal $\Q$-torus in $\LL$. Then $\mathbf{T}_\R$ is a maximal $\R$-torus in $\LL_\R$ hence there exists a $g \in \LL(\R)^+ \subset \pp(\R)^+$ such that $gZg^{-1} = \mathbf{T}_\R$. Let $y = \intt_{g} \circ \intt_u \circ x$. By construction we find that $y \in X_\pp$, that $h' := \psi_{X_\pp}(y) \in D_\pp$ and $y(\s_\C) \subset \mathbf{T}_\C$. It then follows from Proposition \ref{mtmor}$(i)$ that $\pp(h') \subset \mathbf{T}$. Finally Corollary \ref{mtconn} applied to $X = \{h'\}$ ensure the existence of a mixed Hodge subclass $(\mathbf{T}, D_{\mathbf{T}})$ containing $h'$. It is torsion because $\mathbf{T}$ is a $\Q$-torus by definition, and it is strict because $\dim D_{\pp} > 0$. 
\end{proof}
\subsection{Infinitesimal structure of Mumford-Tate domains}
Let $(\pp, D_\pp)$ be a mixed Hodge class and $h \in D_\pp$. By construction $D_\pp$ is a dense real-analytic open subset of its \textit{compact dual} $\check{D}_\pp$ which is $\pp(\C)$-orbit. Let $\mu_h : \pp(\C) \rightarrow \check{D}_\pp, g \mapsto g \cdot h$ be the orbit map. The stabilizer of $h$ in $\pp(\C)$ is $\exp(F_h^0 \para)$ where $\para = \lie(\pp)$ and $F_h^\bul \para$ is the Hodge filtration on $\para_\C$ defined by $h$. Therefore, one has an isomorphism
\[
\Theta^{(\pp,D_\pp)}_h : T_h D_\pp \rightarrow \para_\C / F_h^0 \para
\]
satisfying the following functoriality properties:
\begin{itemize}
\item[$(i)$] the derivative of the orbit map $d_1 \mu_h : \para_\C \rightarrow T_h D_\pp$ composed with $\Theta^{(\pp, D_\pp)}_h$ coincides with the quotient map by $F^0_h \para$;
\item[$(ii)$] If $(R,f) : (\M, D_\M) \rightarrow (\pp, D_\pp)$ is a morphism of mixed Hodge classes, and $h \in D_\M$ then 
\[
d_h f = \Theta^{(\pp, D_\pp)}_{f(h)} \circ r_0 \circ  (\Theta^{(\M, D_\M)}_{h})^{-1}
\]
where $r_0 : \mt_\C/F^0_h \mt \rightarrow \para_\C / F^0_{f(h)} \para$ is the map induced by the morphism of mixed $\Q$-Hodge structures $dR : \mt = \lie(\M) \rightarrow \para$.
\end{itemize}
The Griffiths' transversality axiom for variations of mixed Hodge structures gives a special role to the subspace
\[
T_h^{\mathrm{horiz}} D_\pp := (\Theta^{(\pp, D_\pp)}_h)^{-1}\big(F_h^{-1} \para/ F_h^0\para\big) \subset T_h D_\pp
\]
which we will refer to as the \textit{horizontal tangent space} in the sequel. 

It will sometimes be convenient to work with refined identifications induced by bigraduations. Let $(\pp, X_\pp, D_\pp)$ be a representative of $(\pp, D_\pp)$ and $x \in \psi_{X_\pp}^{-1}(h)$. Let
\[ \para_\C = \bigoplus_{p+q \leqslant 0} \para_x^{p,q} \]
be the corresponding bigraduation of $\para$. Equation (\ref{hodge}) gives an isomorphism
\[
\Theta^{(\pp,X_\pp, D_\pp)}_x : T_h D_\pp \cong \bigoplus_{\substack{p+q \leqslant 0 \\ p < 0}} \para_x^{p,q}
\]
such that
\[
\Theta^{(\pp,X_\pp, D_\pp)}_x\big(T_h^{\mathrm{horiz}}D_\pp \big) = \bigoplus_{q \leqslant 1} \para_x^{-1,q}.
\]

\subsection{Mixed Hodge varieties}
In the study of variations of mixed Hodge structures, the dependance of parallel transport (to identify two fibers of the variation) on a choice of path joining the base points forces to work with mixed Hodge structures up to integral equivalence. We recall in this section the construction, again due to Klingler, of appropriate parameter spaces that allow to deal with such considerations.

Consider a mixed Hodge class $(\pp, D_\pp)$, and let $\Gamma \subset \pp(\Q)^+ := \pp(\Q) \cap \pp(\R)^+$ be an neat arithmetic subgroup of $\pp(\R)^+$ (see \cite[(17.1)]{borelarith}). As a subgroup of $\pp(\R)^+$, the group $\Gamma$ acts on the left on the $\pp(\R)^+\pp^u(\C)$-homogeneous space $D_\pp$. Since $\Gamma$ is torsion-free, it acts freely on $D_\pp$ and since the complex structure on $D_\pp$ is $\pp(\R)^+\pp^u(\C)$-invariant, the quotient $\Gamma \quo D_\pp$ has a natural structure of complex manifold, and the projection $\pi_\Gamma : D_\pp \rightarrow \Gamma \quo D_\pp$ is holomorphic (and is even a local biholomorphism on the source).
\begin{definition}[{\cite[Def. 3.8]{klingler_atyp}}]
A \textit{mixed Hodge variety} is a smooth complex-analytic variety of the form $\period_{(\pp , D_\pp), \Gamma} = \Gamma \quo D_\pp$ for a mixed Hodge class $(\pp, D_\pp)$ and a neat arithmetic subgroup $\Gamma \subset \pp(\Q)^+$.
\end{definition}
Let $i : (\M,D_\M) \subset (\pp, D_\pp)$ be a mixed Hodge subclass. Denote by $\Gamma_\M$ the neat arithmetic subgroup $\Gamma \cap \M(\Q)^+$ of $\M(\R)^+$. Since the map $\phi_i$ is equivariant for the inclusion $\M(\R)^+ \hookrightarrow \pp(\R)^+$, it descends to a holomorphic map
\[
\phi_{i, \Gamma} : \period_{(\M,D_\M), \Gamma_\M} \rightarrow  \period_{(\pp,D_\pp), \Gamma}.
\]
\begin{lemma}
The image $\phi_{i, \Gamma}(\period_{(\M,D_\M), \Gamma_\M})$ is a smooth locally closed complex-analytic subset of the mixed Hodge variety $\period_{(\pp, D_\pp), \Gamma}$. 
\end{lemma}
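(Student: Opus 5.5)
The plan is to reduce to a local statement on $D_\pp$ and then use a finiteness argument for the $\Gamma$-translates of $D_\M$. Write $\pi_\Gamma : D_\pp \to \Gamma\quo D_\pp$ for the projection. Then
\[
\pi_\Gamma^{-1}\big(\phi_{i,\Gamma}(\period_{(\M,D_\M),\Gamma_\M})\big) = \bigcup_{\gamma \in \Gamma/\Gamma_\M'} \gamma \cdot D_\M ,
\]
where $\Gamma'_\M$ is the stabilizer of $D_\M$ in $\Gamma$. Since $\pi_\Gamma$ is a local biholomorphism, it suffices to prove three things. First, each translate $\gamma D_\M$ is a closed complex submanifold of $D_\pp$; this is Lemma \ref{subdatimm} together with the $\pp(\R)^+$-invariance of the complex structure. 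Second, the family $\{\gamma D_\M\}_{\gamma\in\Gamma}$ is locally finite in $D_\pp$. Third, near any point, the finitely many translates through it coincide locally. Together these make the union a locally finite union of closed submanifolds which locally reduces to a single one. The image is therefore a locally closed smooth analytic subset.

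\textbf{Step 1: translates through a common point are equal.} Suppose $h \in \gamma D_\M \cap \gamma' D_\M$. Then $\gamma D_\M$ is a $\gamma\M\gamma^{-1}(\R)^+(\gamma\M\gamma^{-1})^u(\C)$-orbit, and it is the Mumford--Tate subdomain of type $\gamma\M\gamma^{-1}$ containing $h$. This does not give equality directly, since $\gamma\M\gamma^{-1}$ and $\gamma'\M\gamma'^{-1}$ may differ. The intended fix is to replace $(\M,D_\M)$ by a subclass with the same image of $\Gamma_\M \quo D_\M$. By Proposition \ref{mtbound} and Corollary \ref{mtconn}, $D_\M$ is a component of the locus $\{h : \pp(h)\subset\M\}$. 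The image of $\phi_{i,\Gamma}$ is then the image of the locus $\{h : \pp(h) \subset \gamma\M\gamma^{-1} \text{ for some } \gamma\}$ intersected with the appropriate components. Near a point, it is enough to show that only finitely many distinct submanifolds $\gamma D_\M$ pass close to it. One then takes the union of those through the point itself. Smoothness requires that they coincide, and this needs the following additional check. If $h \in D_\M \cap \gamma D_\M$, then $\gamma D_\M = D_{\gamma\M\gamma^{-1}}$. In a neighborhood of a Hodge-generic point $h_0 \in D_\M$, the submanifold $D_\M$ is characterized as the locus where the Hodge tensors fixed by $\M$ stay Hodge. For a point $h$ that is not generic, the translates through $h$ may genuinely differ.

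\textbf{Step 2: local finiteness.} This is the main obstacle. It reduces to showing that, for compact $K \subset D_\pp$, the set $\{\gamma\Gamma'_\M : \gamma D_\M \cap K \neq \emptyset\}$ is finite. The plan is to transfer the argument for Shimura or period subvarieties: lift to bigraduations $x \in X_\pp$ via a continuous section over $K$. Choose a Levi decomposition, and use that $\M(\R)^+\M^u(\C)$-orbits of the lifted cocharacters lie in the fixed locus of a finite set of rational tensors $t_1,\dots,t_r$ (Lemma \ref{extchar}) in an integral lattice $V_\Z$ stable under $\Gamma$. If $\gamma D_\M$ meets $K$, then the tensors $\gamma t_j$ are Hodge at some point of $K$, and therefore lie in $W_0\cap F^0$ there. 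A boundedness argument, in the style of Cattani--Deligne--Kaplan, using the Hodge metric on $\gr^W$ over the compact $K$ shows that the norms of the graded components of the $\gamma t_j$ are bounded. Since the tensors $\gamma t_j$ lie in a fixed lattice, only finitely many choices occur. This controls the graded part. The remaining difficulty is the unipotent direction: a bounded graded part does not bound the $W$-lower terms. I would handle this by using neatness and the fact that $\Gamma\cap\pp^u(\Q)$ is a lattice in $\pp^u(\R)$, which is cocompact, to absorb unipotent translates into $\Gamma_\M$ modulo finitely many cosets.

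Once Steps 1 and 2 are established, the image is locally, in the chart $\pi_\Gamma$, a finite union of closed submanifolds through each point. This proves local closedness. Smoothness follows by checking, using the argument of Step 1, that all translates through a given point coincide near it.
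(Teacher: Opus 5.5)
Your Step 1 leaves a genuine gap, and it cannot be filled. The last sentence of your proposal obtains smoothness ``using the argument of Step 1'', but Step 1 contains no such argument. You observe yourself that for $h\in D_\M\cap\gamma D_\M$ the groups $\M$ and $\gamma\M\gamma^{-1}$ may differ, and that at non-generic $h$ the translates through $h$ may genuinely differ. They do, even for pure data and neat $\Gamma$.

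Take $\pp=\{(g_1,g_2)\in\mathbf{GL}_2\times\mathbf{GL}_2:\det g_1=\det g_2\}$, $D_\pp=\mathbb{H}\times\mathbb{H}$, $\Gamma=\Gamma(3)\times\Gamma(3)$, $\alpha=\mathrm{diag}(5,1)$, $\M=\{(g,\alpha g\alpha^{-1})\}$ and $D_\M=\{(z,5z)\}$. Set $\gamma_1=\begin{pmatrix}1&0\\3&1\end{pmatrix}$ and $\gamma_2=\begin{pmatrix}1&3\\0&1\end{pmatrix}$, both in $\Gamma(3)$. Then $\gamma_2\alpha\gamma_1^{-1}=\beta:=\begin{pmatrix}-4&3\\-3&1\end{pmatrix}$, so $(\gamma_1,\gamma_2)\cdot D_\M$ is the graph of $\beta$. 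The element $\alpha^{-1}\beta$ has determinant $1$ and trace $1/5$, so it is elliptic and fixes $v_0=(3+i\sqrt{11})/10$. Hence the distinct graphs of $\alpha$ and $\beta$ both pass through $(v_0,5v_0)$, crossing transversally there. Since $\pi_\Gamma$ is a local biholomorphism at that point, the Hecke curve $\pi_\Gamma(D_\M)\subset Y(3)\times Y(3)$ has a node. So the smoothness asserted in the statement fails in general, and no completion of your Step 1 can establish it. What does hold is that $\phi_{i,\Gamma}$ is an immersion. Once local finiteness of $\{\gamma D_\M\}$ is known, its image is a closed analytic subset which is locally a finite union of smooth branches.

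For comparison, the paper's proof is two lines. It notes that the image equals $\pi_\Gamma(\phi_i(D_\M))$, that $\phi_i(D_\M)$ is a smooth closed analytic subset of $D_\pp$, and that $\pi_\Gamma$ is a local biholomorphism. This passes silently over exactly the two issues your reduction isolates: local finiteness of the translates, and coincidence of translates through a common point. You have correctly located where that argument is thin, but the second issue is an actual obstruction, not a missing check.

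Local finiteness, by contrast, follows from your Step 2 more directly than you suggest. By Lemma \ref{tenszero}, a rational tensor that is Hodge at $h$ lies in $T^{0,0}_h$, which injects into $\gr^W_0$, and the inverse of this injection varies continuously with $h$. So once the graded parts of the $\gamma t_j$ are bounded over a compact $K$ (as in the pure case, via the polarization on $\gr^W_0$), the tensors themselves are bounded, and integrality leaves only finitely many. There is no separate ``unipotent direction'' to handle. Your appeal to cocompactness of $\Gamma\cap\pp^u(\Q)$ in $\pp^u(\R)$ is not needed, and as stated it would not absorb anything into $\Gamma_\M$, since those unipotent elements need not lie in $\M$.

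Note also that the later transversality arguments, e.g.\ Proposition \ref{typtotrans} and Theorem \ref{allornothing}, are carried out upstairs with the closed submanifold $D_\M\subset D_\pp$, where smoothness is genuinely available.
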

\begin{proof}
By definition, the following diagram commutes
\[
\xymatrix{D_\M \ar[r]^{\phi_i} \ar[d]_{\pi_{\Gamma_\M}} & D_\pp \ar[d]^{\pi_\Gamma} \\ \period_{(\M,D_\M), \Gamma_\M} \ar[r]_{\phi_{i, \Gamma}} & \period_{(\pp, D_\pp), \Gamma}}
\]
hence $\phi_{i, \Gamma}(\period_{(\M,D_\M), \Gamma_\M}) = \pi_\Gamma(\phi_i(D_\M))$. But $\phi_i$ is a complex-analytic closed immersion, hence $\phi_i(D_\M)$ is a smooth closed complex-analytic subset of $D_\pp$, and the result follows from the fact that $\pi_\Gamma$ is a local biholomorphism on the source.
\end{proof}
\begin{definition}
Let $\period_{(\pp, D_\pp), \Gamma}$ be a mixed Hodge variety, and $i : (\M,D_\M) \subset (\pp, D_\pp)$ a mixed Hodge subclass. The \textit{special subvariety of $\period_{(\pp, D_\pp), \Gamma}$ associated to $(\M,D_\M)$} is the smooth locally closed complex-analytic subset $\phi_{i, \Gamma}(\period_{(\M,D_\M), \Gamma_\M}) $ of $\period_{(\pp, D_\pp), \Gamma}$.
\end{definition}
We will also need the following notion in the proof of Theorem \ref{mainfull}:
\begin{definition}
Let $\period_{(\pp, D_\pp), \Gamma}$ be a mixed Hodge variety. A \textit{torsion point} in $\period_{(\pp, D_\pp), \Gamma}$ is a special subvariety associated to a torsion mixed Hodge subclass of $(\pp, D_\pp)$.
\end{definition}
\section{Variations of mixed Hodge structures and their period maps}\label{sect3}
Most of the material of this section is classical, see for example \cite{petste,pearls,klingler_atyp,klingao}. 
\subsection{Definitions}
\subsubsection*{Variations of mixed Hodge structures}
Let $S$ be a connected complex manifold. Following Steenbrink-Zucker in \cite{sz}, an \textit{integral graded-polarized variation of mixed Hodge structures over $S$} is the datum of
\begin{itemize}
\item a local system of torsion-free finitely generated abelian groups $\V$ over $S$;
\item an increasing exhausting finite filtration $\W_\bul$, the \textit{weight filtration}, of $\V_\Q$ by $\Q$-local subsystems;
\item a descending exhausting finite filtration $\mathcal{F}^\bul$, the \textit{Hodge filtration}, of $\mathcal{V} := \V \otimes_{\uZ} \mathcal{O}_S$ by locally free $\mathcal{O}_S$-submodules;
\item for each $k\in \Z$ a flat $(-1)^k$-symmetric bilinear form $\underline{q_k} : \Gr_k^\W\V_\Q \otimes _{\uQ} \Gr_k^\W\V_\Q \rightarrow  \uQ$;
\end{itemize}
subject to the following axioms:
\begin{itemize}
\item denoting $\nabla = id \otimes \mathrm{d}$ the integrable connection on $\mathcal{V}$ induced by the flat structure on $\V$, Griffiths transversality is satisfied:
\[
\nabla(\mathcal{F}^p) \subset \mathcal{F}^{p-1} \otimes_{\mathcal{O}_S} \Omega^1_S;
\]
\item for every $k \in \Z$ and every $s \in S(\C)$, the filtration induced by $\mathcal{F}_s$ on $\Gr_k^{\W_s} (\V_s)_\Q$ defines a pure Hodge structure of weight $k$ on the latter, polarized by $(\underline{q_k})_s$.
\end{itemize}
\begin{remark}
In the particular case where $\W$ consists of exactly one non-trivial step, one recovers the usual notion of integral polarized variation of (pure) Hodge structures, except that the polarization is only defined over $\Q$, and not over $\Z$ as usual. Of course it doesn't make any substantial difference, as this issue can be solved by scaling the polarization form by a positive integer, which preserves the axioms of polarizations.
\end{remark}
\subsubsection*{Admissibility}
In order to get a theory of the asymptotic behaviour of $\V$ at infinity which is similar to the pure case, one has to add a condition on $\V$ which we recall now, introduced by Steenbrink-Zucker in \cite{sz} when $S$ is a curve and generalized by Kashiwara in \cite{kashiwara} for higher dimensional $S$.

To recall it, we start with some linear algebra from \cite[§2]{sz}. Let $V$ be a finite dimensional vector space over a characteristic $0$ field and $N : V \rightarrow V$ a nilpotent endomorphism of $V$. As shown for example in \cite[Lem. 6.4]{schmid}, there exists a unique increasing finite filtration $M_\bul$ of $V$ such that for every $l \geqslant 0$, $N(M_l) \subseteq M_{l-2}$ and the induced map $N^l : \Gr^{M}_{l}V  \rightarrow \Gr^{M}_{-l}V$ is an isomorphism. It is called \textit{the weight filtration of $V$ for $N$}.

Assume furthermore, $V$ is endowed with an increasing finite filtration by sub-vector spaces $W_\bul$ such that for every $k \in \Z$ one has $N(W_k) \subset W_k$ so that $N$ induces an endomorphism ${^k}N$ of $\Gr^{W}_kV$. Denote by ${^k}M_\bul$ the weight filtration of $\Gr^{W}_kV$ for ${^k}N$. Then following \cite[Def. 2.5]{sz}, a \textit{weight filtration of $V$ for $N$ relative to $W_\bul$} is an increasing finite filtration $M_\bul$ of $V$ such that
\begin{itemize}
\item $N(M_l )\subset M_{l-2}$ for every $l \in \Z$;
\item for every $k \in \Z$, the filtration induced by $M_\bul$ on $\Gr^W_kV$ coincides with ${^k}M$.
\end{itemize}
By \cite[Prop. 2.6]{sz}, this filtration is unique if it exists.

Let $\V$ be an integral graded-polarized variation of mixed Hodge structures over the punctured disk $\Delta^\times$. Let $s \in \Delta^\times$ and $T\in \gl(\V_{s,\Q})$ the image of a generator of $\pi_1(\Delta^\times)$ under the monodromy representation at $s$ associated to $\V$. If $T$ is unipotent, Deligne proved in \cite{deledpsr} that the vector bundle with connection $(\mathcal{V} := \V \otimes_\Z \mathcal{O}_{\Delta^\times}, \nabla = \id \otimes d)$ extends uniquely as a vector bundle with connection with at worse logarithmic poles at $0$ denoted $(\overline{\mathcal{V}}, \bar{\nabla})$, over $\Delta$ such that the residue of the connection at $0$ is nilpotent. The residue of the connection at $0$ is then the nilpotent logarithm $N := \log(T)$, defined by the usual power series which converges because $T$ is unipotent. Furthermore, this construction defines an exact functor, so that for every $k \in \Z$ one has a canonical extension $\overline{\mathcal{W}}_k$ of $\mathcal{W}_k := \W_k \otimes_\Q \mathcal{O}_{\Delta^\times}$ which is naturally a sub-vector bundle of $\overline{\mathcal{V}}$. Finally, remark that since the weight filtration is by local sub-systems, the $(\W_k)_s$ are preserved by the action of $T$ hence by the action of $N$. So it makes sense to talk about the weight filtration of $\V_s$ for $N$ relative to $(\W_\bul)_s$

We can now turn to the promised definition. With the notations above, the variation $\V$ is called \textit{admissible} if the following axioms hold:
\begin{itemize} 
\item[$(A_0)$] $T$ is unipotent;
\item[$(A_1)$] There exists a weight filtration of $\V_s$ for $N$ relative to $(\W_\bul)_s$;
\item[$(A_2)$] The Hodge filtration $\mathcal{F}^\bul$ extends to a filtration $\overline{\mathcal{F}}^\bul$ of  $\overline{\mathcal{V}}$ by locally free $\mathcal{O}_\Delta$-modules such that for each $k,p \in \Z$ the $\mathcal{O}_\Delta$-module $\Gr_{\overline{\mathcal{F}}}^p \Gr_k^{\overline{\mathcal{W}}} \overline{\mathcal{V}}$ is locally free.
\end{itemize}

Now let $S$ be a smooth irreducible quasi-projective complex algebraic variety and choose a log-smooth compactification $(\overline{S},E)$ of $S$. Let $\V$ be an integral graded-polarized variation of mixed Hodge structures on $S$. It is called \textit{admissible with respect to $(\overline{S},E)$} if for every holomorphic map $i : \Delta \rightarrow \overline{S}$ which maps $\Delta^\times$ in $S$ such that the pulled-back variation $i^\ast\V$ has unipotent monodromy, the varation $i^\ast \V$ over $\Delta^\times$ is admissible in the above sense. The following ensures that this property doesn't depend on the initial choice of a compactification:
\begin{lemma}[{\cite[Prop. 4.2]{bbkt}}]
Let $(\overline{S}',E')$ be an other log-smooth compactification. Then $\V$ is admissible with respect to $(\overline{S}',E')$ if and only if it is admissible with respect to $(\overline{S},E)$.
\end{lemma}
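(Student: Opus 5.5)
The plan is to reduce to the case where one compactification dominates the other, and then to show that the two families of test arcs $i : \Delta \rightarrow \overline{S}$ appearing in the definition of admissibility give rise to the same set of pulled-back variations on $\Delta^\times$.

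\emph{Step 1: the condition only depends on $i_{|\Delta^\times}$.} First I observe that the condition imposed on a test arc $i : \Delta \rightarrow \overline{S}$ with $i(\Delta^\times) \subset S$ is intrinsic to the restriction $i_{|\Delta^\times} : \Delta^\times \rightarrow S$. Indeed, unipotency of $T$ in $(A_0)$ and the existence of a relative weight filtration in $(A_1)$ only involve the local system $i^\ast\V$ on $\Delta^\times$. For $(A_2)$, Deligne's canonical extension $(\overline{\mathcal{V}}, \bar\nabla)$ and the extensions $\overline{\mathcal{W}}_k$ are uniquely determined by $i^\ast\V$ on $\Delta^\times$. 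Hence, for either compactification, admissibility means the following: for every holomorphic $\gamma : \Delta^\times \rightarrow S$ which extends holomorphically to $\Delta \rightarrow \overline{S}$ (respectively $\Delta \rightarrow \overline{S}'$) and for which $\gamma^\ast\V$ has unipotent monodromy, the variation $\gamma^\ast\V$ is admissible.

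\emph{Step 2: reduction to the dominated case.} Since $S$ is quasi-projective, I would first use Chow's lemma to replace possibly non-projective compactifications by projective ones dominating them. Then I take the closure of the diagonal image of $S$ in $\overline{S} \times \overline{S}'$ and apply Hironaka's resolution, an isomorphism over $S$, to obtain a log-smooth compactification $(\overline{S}'', E'')$ equipped with proper morphisms to $\overline{S}$ and $\overline{S}'$ restricting to the identity on $S$. By transitivity, it then suffices to treat the case of a proper morphism $f : \overline{S}' \rightarrow \overline{S}$ with $f_{|S} = \id$.

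\emph{Step 3: comparing the arcs.} In the dominated case, one inclusion is immediate: if $i' : \Delta \rightarrow \overline{S}'$ is a test arc, then $f \circ i'$ is a test arc for $\overline{S}$ with the same restriction to $\Delta^\times$. So admissibility with respect to $\overline{S}$ implies admissibility with respect to $\overline{S}'$. For the converse I must lift an arc $i : \Delta \rightarrow \overline{S}$ with $i(\Delta^\times) \subset S$ to $\overline{S}'$. The inverse $f^{-1}$ is a meromorphic map $\overline{S} \dashrightarrow \overline{S}'$ which is holomorphic on $S$. Since $i(\Delta) \not\subset \overline{S} - S$, the composite $f^{-1} \circ i$ is a meromorphic map from the smooth curve $\Delta$ to the projective variety $\overline{S}'$. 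Such a map extends holomorphically over $0$: its graph closure in $\Delta \times \overline{S}'$ is a curve that is proper and bimeromorphic over $\Delta$, hence isomorphic to $\Delta$ because $\Delta$ is a smooth curve, equivalently by the valuative criterion for properness of $f$. This yields a test arc $i'$ for $\overline{S}'$ with $i'_{|\Delta^\times} = i_{|\Delta^\times}$. By Step 1 this gives the converse implication.

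The main obstacle I expect is Step 2 together with the extension in Step 3: producing a common log-smooth dominating compactification, and justifying that arcs extend over the puncture. I would handle the latter by the curve-case extension of meromorphic maps into projective varieties, after ensuring projectivity via Chow's lemma.
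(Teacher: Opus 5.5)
Your proof is correct. The paper does not prove this lemma itself; it imports it verbatim from \cite[Prop.~4.2]{bbkt}, so there is no in-paper argument to compare with. What you give is the standard self-contained argument: reduce to a proper $f:\overline{S}'\to\overline{S}$ that restricts to the identity on $S$, note that the test condition only depends on $i_{|\Delta^\times}$, and lift test arcs through $f$.

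Two small refinements.

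\emph{Projectivity and Chow's lemma are not needed.} The lifting in Step~3 uses only that $f$ is proper and an isomorphism over $S$. Consider the fibre product $A=\Delta\times_{\overline{S}}\overline{S}'$, formed via $i$ and $f$. It is a closed analytic subset of $\Delta\times\overline{S}'$ and is proper over $\Delta$. Over $\Delta^\times$ it is exactly the graph of $f^{-1}\circ i_{|\Delta^\times}$. Hence the closure $C$ of that graph is the union of the irreducible components of $A$ not contained in $\{0\}\times\overline{S}'$. So $C$ is an irreducible analytic curve, finite and generically one-to-one over $\Delta$. This is what makes ``$f^{-1}\circ i$ is meromorphic'' rigorous; it rules out a wild closure such as that of $e^{1/t}$.

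\emph{$C$ need not be smooth a priori.} Rather than asserting $C\cong\Delta$ directly, pass to the normalization $\widetilde{C}$. It maps finitely and with degree one onto the smooth curve $\Delta$, hence isomorphically. The composite $\Delta\cong\widetilde{C}\to C\to\overline{S}'$ is then the required holomorphic extension $i'$ of $f^{-1}\circ i_{|\Delta^\times}$. Your alternative, the valuative criterion applied to $\mathrm{Spec}\,\C\{t\}$, works equally well.
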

We are lead to the:
\begin{definition}[{\cite[Def. 14.49]{petste}}]
The variation $\V$ is called \textit{admissible} if it is admissible with respect to some (hence any) log-smooth compactification of $S$.
\end{definition}
\subsection{Generic Mumford-Tate group and monodromy}\label{sect32}
Let $S$ be a smooth, irreducible and quasi-projective complex algebraic variety, and $\V$ a graded-polarized and integral variation of mixed Hodge structures over $S$. Let $s \in S^\an$. 
\subsubsection*{Generic Hodge tensors}
\begin{definition}
A \textit{generic Hodge tensor at $s$ for $\V$} is a tensor $\zeta \in T^{a,b}(\V_s)$ such that for any (possibly constant) path $\tau : s \rightarrow s'$ in $S^\an$, the parallel transport of $\zeta$ along $\tau$ to $T^{a,b}(\V_{s'})$ is a Hodge tensor for the mixed $\Q$-Hodge structure on $T^{a,b}(\V_{s'})$ induced from that on $\V_{s'}$.
\end{definition}
We will denote by $\hod^{\bul,\bul}(\V^\otimes)_s$ the graded vector space of generic Hodge tensors at $s$ for $\V$. There is a chain of inclusions of graded vector spaces
\[
\hod^{\bul,\bul}(\V^\otimes)_s \subset \hod^{\bul,\bul}(\V_s^\otimes) \subset T^{\bul,\bul}(\V_s)
\]
We will also sometimes refer to tensors in $\hod^{\bul,\bul}(\V_s^\otimes) -\hod^{\bul,\bul}(\V^\otimes)_s$ as \textit{exceptional Hodge tensors at $s$}. From the definition, $\hod^{\bul,\bul}(\V^\otimes)_s$ is preserved by the action of $\pi_1(S^\an,s)$ on $T^{\bul,\bul}(\V_s)$. A consequence of the results recalled in this paragraph is that this action factors through a finite quotient of $\pi_1(S^\an,s)$.
\begin{definition}
The \textit{generic Mumford-Tate group of $(\V,S)$ at $s$} is the subgroup $\pp_s$ of $\gl(\V_s)$ fixing all generic Hodge tensors at $s$ for $\V$.
\end{definition}
The following is immediate from the definition.
\begin{lemma}
Let $s \in S^\an$. Then $\pp(\V_s) \subseteq \pp_s$ as subgroups of $\gl(\V_s)$.
\end{lemma}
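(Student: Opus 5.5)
The plan is to compare the two subgroups of $\gl(\V_s)$ through the tensors they fix. By definition, the Mumford-Tate group $\pp(\V_s)$ is the subgroup of $\gl(\V_s)$ fixing every element of $\hod^{\bul,\bul}(\V_s^\otimes)$. The generic Mumford-Tate group $\pp_s$ is the subgroup fixing every element of $\hod^{\bul,\bul}(\V^\otimes)_s$.

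The one input needed is the chain of inclusions already recorded above,
\[
\hod^{\bul,\bul}(\V^\otimes)_s \subset \hod^{\bul,\bul}(\V_s^\otimes).
\]
It holds because a generic Hodge tensor at $s$ is, taking the constant path $\tau$ at $s$ (which is allowed by the definition), a Hodge tensor for the mixed $\Q$-Hodge structure $\V_s$.

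Now let $g \in \pp(\V_s)(R)$ for some $\Q$-algebra $R$. Then $g$ fixes every Hodge tensor of $\V_s$. In particular it fixes every generic Hodge tensor at $s$, so $g \in \pp_s(R)$. Since this holds for the $R$-points for every $\Q$-algebra $R$, we get $\pp(\V_s) \subseteq \pp_s$ as $\Q$-subgroups of $\gl(\V_s)$. The point is that fixing a larger set of tensors cuts out a smaller group.

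No step here is a genuine obstacle. The only thing to check is that both groups are defined by stabilizer conditions on tensors inside the same ambient spaces $T^{a,b}(\V_s)$, and this holds by the definitions given. In particular, the definitions use the same range $(a,b) \neq (0,0)$ in both cases.
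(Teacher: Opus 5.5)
Your proposal is correct and matches the paper, which treats the inclusion as immediate from the definitions. Taking the constant path shows that every generic Hodge tensor at $s$ is a Hodge tensor of $\V_s$, and the fixator of the larger set of tensors is the smaller group.
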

Before proceeding, we record the following two easy results for later use.
\begin{lemma}[{\cite[Lem. 4]{andre}}]\label{exgen}
There exists a countable union $S^\circ$ of proper closed analytic subvarieties of $S^\an$ such that for any $s \in S^\an - S^\circ$, the inclusion $\pp(\V_s) \subset \pp_s$ is an equality. 
\end{lemma}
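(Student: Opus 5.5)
The plan is to define $S^\circ$ as the union, over all tensors that are Hodge at some point without being generic Hodge tensors, of the loci where such a tensor is Hodge. The claim is then that $\pp(\V_s)=\pp_s$ for $s$ outside this union.

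Since $S^\an$ is connected, parallel transport lets us work locally and then globalize. First pass to the universal cover $\pi:\widetilde{S}\rightarrow S^\an$ and trivialize $\pi^\ast\V_\Q\cong \widetilde{S}\times V$, where $V=\V_{s_0,\Q}$ for a fixed base point. For each pair $(a,b)$ and each rational tensor $\zeta\in T^{a,b}(V)$, consider the locus
\[
\widetilde{Z}_\zeta=\{\tilde s\in\widetilde{S} \hspace{0.1cm} : \hspace{0.1cm} \zeta \in F^0_{\tilde s}T^{a,b}(V_\C)\cap W_0 T^{a,b}(V)\}.
\]

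The first step is to show that $\widetilde{Z}_\zeta$ is a closed analytic subset of $\widetilde S$.
\begin{itemize}
\item The condition $\zeta\in W_0$ is locally constant, because the weight filtration is by sub-local systems. It therefore holds either everywhere or nowhere.
\item The condition $\zeta\in F^0$ is analytic. The Hodge filtration $F^0$ on $T^{a,b}$ is a holomorphic subbundle (tensor constructions of the holomorphic subbundles $\mathcal F^p$). Vanishing of the image of the flat section $\zeta$ in the holomorphic quotient bundle $T^{a,b}(\mathcal V)/F^0$ is therefore a closed analytic condition.
\end{itemize}

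The second step is the dichotomy: either $\widetilde Z_\zeta=\widetilde S$ or $\widetilde Z_\zeta$ is a proper closed analytic subset. By the identity principle on the connected manifold $\widetilde S$, a proper $\widetilde Z_\zeta$ is nowhere dense. Moreover, $\widetilde Z_\zeta=\widetilde S$ exactly when $\zeta$ is Hodge at every point after every parallel transport, that is, when $\zeta$ is a generic Hodge tensor.

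Next set
\[
S^\circ=\bigcup_{\zeta\,:\,\widetilde Z_\zeta\neq\widetilde S}\pi(\widetilde Z_\zeta).
\]
This is a countable union, since rational tensors form a countable set. The projection $\pi$ is a local biholomorphism, and $\pi(\widetilde Z_\zeta)$ equals the union over $\gamma\in\pi_1$ of the images of $\widetilde Z_{\gamma\zeta}$. Hence $\pi(\widetilde Z_\zeta)$ is the locus in $S^\an$ where some parallel transport of $\zeta$ is Hodge. This locus is locally a countable union of analytic subsets, and globally it is a closed analytic subvariety of $S^\an$ (the zero set of a multivalued flat section pushed down). To stay safe, I would instead index by pairs $(\zeta,\gamma)$ and use local closedness plus a countable cover of $S^\an$ by simply connected opens, each contributing proper closed analytic subsets. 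Either way $S^\circ$ is a countable union of proper closed analytic subvarieties. I expect this bookkeeping, descending from $\widetilde S$ to $S^\an$ while keeping properness and closedness, to be the main (though minor) obstacle.

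Finally, take $s\notin S^\circ$. Every Hodge tensor in $T^{a,b}(\V_s)$ then has its lift $\zeta$ in some $\widetilde Z_\zeta$ containing a lift of $s$. Since $s\notin S^\circ$, this forces $\widetilde Z_\zeta=\widetilde S$, so the tensor is a generic Hodge tensor. Therefore $\hod^{\bul,\bul}(\V_s^\otimes)=\hod^{\bul,\bul}(\V^\otimes)_s$. The fixators of these two sets of tensors coincide, which gives $\pp(\V_s)=\pp_s$.
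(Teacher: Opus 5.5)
Your strategy is the standard argument behind the cited result of Andr\'e; the paper gives no proof of its own. Your Step~1, the dichotomy, and the final comparison of fixators are fine. The gap is the descent step you flagged yourself, and neither of your two options closes it.

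Note that $\pi^{-1}(\pi(\widetilde Z_\zeta))=\bigcup_{\gamma\in\pi_1(S^\an,s_0)}\widetilde Z_{\rho(\gamma)\zeta}$ is a union over a possibly infinite monodromy orbit. So closedness of $\pi(\widetilde Z_\zeta)$ in $S^\an$, let alone analyticity, is not automatic. The natural route is local finiteness of this family of closed analytic sets. Your phrase ``zero set of a multivalued flat section pushed down'' silently assumes that, and it does not follow from Steps~1--2; it uses the graded polarization. Your ``safe'' variant only yields analytic subsets closed in the members $U_i$ of an open cover. These are not closed analytic subvarieties of $S^\an$, and their closures in $S^\an$ need not be analytic. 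What that variant actually proves is that the non-Hodge-generic locus is meagre. By Baire, this suffices for every use of the lemma in the paper (existence and density of Hodge-generic points), but it is weaker than the statement.

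To close the gap, prove local finiteness as follows.
\begin{itemize}
\item Take a compact $K$ in a simply connected open subset of $S^\an$, with a lift $\widetilde K\subset\widetilde S$. Write $\zeta\in\frac1N T^{a,b}(V_\Z)$, so the whole orbit $\rho(\pi_1)\zeta$ lies in this lattice.
\item Suppose $\rho(\gamma)\zeta$ is Hodge at a point of $\widetilde K$. Its image in $\Gr^W_0T^{a,b}(V)$ is then a rational $(0,0)$-class of the polarized weight-$0$ structure there. Hence its squared Hodge norm equals $q(\bar\zeta,\bar\zeta)$, which is constant along the orbit because the graded polarization is flat.
\item Hodge norms are uniformly comparable over $\widetilde K$, so only finitely many such graded images occur.
\item Next, $W_{-1}\cap F^0_{\tilde s}\cap\overline{F^0_{\tilde s}}=0$ for every $\tilde s$. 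Indeed, if $0\neq v\in W_k$ with $v\notin W_{k-1}$ and $k\leqslant -1$, its class would be a nonzero vector of $F^0\cap\overline{F^0}$ in a pure structure of negative weight.
\item A compactness argument then shows the following set is bounded: vectors of $W_0T_\C$ lying in $F^0_{\tilde s}\cap\overline{F^0_{\tilde s}}$ for some $\tilde s\in\widetilde K$, with image in a fixed bounded subset of $\Gr^W_0$. To see this, normalize a sequence with norms tending to infinity and bounded graded images. A limit vector would be a unit vector of $W_{-1}\cap F^0_{\tilde s}\cap\overline{F^0_{\tilde s}}$, which is impossible. Rational Hodge tensors, being real, lie in $F^0\cap\overline{F^0}$, so this applies to them.
\item Hence only finitely many $\rho(\gamma)\zeta$ are Hodge somewhere over $K$. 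Over the interior of $K$, $\pi(\widetilde Z_\zeta)$ is therefore a finite union of nowhere dense closed analytic subsets.
\end{itemize}
So $\pi(\widetilde Z_\zeta)$ is a proper closed analytic subvariety of $S^\an$, and your $S^\circ$ then works as stated.
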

\begin{definition}
A point $s \in S^\an$ such that the inclusion $\pp(\V_s) \subset \pp_s$ is an equality is called \textit{Hodge generic}
\end{definition}
\begin{lemma}\label{transmt}
Let $s,s' \in S^\an$ and $\tau : [0,1] \rightarrow S^\an$ be a continuous path joining $s$ to $s'$. Let $\tau_\ast^\gl : \gl(\V_s) \rightarrow \gl(\V_{s'})$ be the induced parallel transport isomorphism. Then $\tau_\ast^\gl(\pp_s) = \pp_{s'}$. In particular, $\dim \pp_s = \dim \pp_{s'}$.
\end{lemma}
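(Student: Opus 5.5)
The plan is to show that parallel transport along $\tau$ carries generic Hodge tensors at $s$ bijectively onto generic Hodge tensors at $s'$. The equality of generic Mumford-Tate groups then follows formally, since $\tau_\ast^\gl$ is the group isomorphism induced by the linear isomorphism $\tau_\ast : \V_{s,\Q} \rightarrow \V_{s',\Q}$.

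\textbf{Step 1: transport of generic Hodge tensors.} Let $\tau_\ast : T^{a,b}(\V_s) \rightarrow T^{a,b}(\V_{s'})$ be the induced parallel transport on tensors. Let $\zeta \in \hod^{a,b}(\V^\otimes)_s$ and let $\sigma : s' \rightarrow s''$ be any path. Parallel transport of $\tau_\ast\zeta$ along $\sigma$ equals parallel transport of $\zeta$ along the concatenated path $\tau \cdot \sigma : s \rightarrow s''$. By the definition of generic Hodge tensors, this is a Hodge tensor at $s''$. Hence $\tau_\ast\zeta \in \hod^{a,b}(\V^\otimes)_{s'}$. Applying the same argument to the reverse path $\tau^{-1}$ shows that
\[
\tau_\ast\big(\hod^{a,b}(\V^\otimes)_s\big) = \hod^{a,b}(\V^\otimes)_{s'}
\]
for every $(a,b)$.

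\textbf{Step 2: compatibility of the actions.} Because $\tau_\ast$ on $T^{a,b}$ is the tensor power (with dual factors) of $\tau_\ast$ on $\V_s$, it satisfies
\[
\tau_\ast(g\cdot\zeta) = \tau_\ast^\gl(g)\cdot \tau_\ast\zeta
\]
for $g \in \gl(\V_s)$. Consequently, $g$ fixes all tensors in $\hod^{\bul,\bul}(\V^\otimes)_s$ if and only if $\tau_\ast^\gl(g)$ fixes all tensors in their image. By Step 1, that image is exactly $\hod^{\bul,\bul}(\V^\otimes)_{s'}$.

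To make this scheme-theoretic rather than merely a statement on $\Q$-points, I would phrase it on $R$-points for every $\Q$-algebra $R$. Each $\pp_s$ is the fixator subgroup scheme of a set of rational tensors, and the identity above holds for $R$-points as well. This yields
\[
\tau_\ast^\gl(\pp_s) = \pp_{s'}.
\]
Since $\tau_\ast^\gl$ is an isomorphism of algebraic groups, it follows that $\dim \pp_s = \dim \pp_{s'}$.

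No step presents a real obstacle. The only point requiring care is the path-concatenation argument in Step 1, namely that parallel transport is functorial under composition of paths. This holds because $\V$ is a local system.
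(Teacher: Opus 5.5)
Your proposal is correct and follows essentially the same route as the paper, which observes that $\tau_\ast^\gl(g)=\tau_\ast\circ g\circ\tau_\ast^{-1}$. Hence $\tau_\ast^\gl$ carries the fixator of a family of tensors to the fixator of their transports, and $\tau_\ast$ maps generic Hodge tensors at $s$ bijectively onto those at $s'$; your path-concatenation argument and the remark about $R$-points simply make explicit what the paper treats as immediate from the definitions.
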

\begin{proof}
Let $\tau_\ast : \V_s \rightarrow \V_{s'}$ be the parallel transport operator at the level of $\V$. By definition, for $g \in \gl(\V_s)$, one has $\tau_\ast^\gl(g) = \tau_\ast \circ g \circ \tau_\ast^{-1}$. Hence the image by $\tau_\ast^\gl$ of the fixator of tensors $(t_\alpha)_\alpha$ is the fixator of the tensors $(\tau_\ast(t_\alpha))_\alpha$. Since by definition, $\tau_\ast$ maps generic Hodge tensors at $s$ bijectively to generic Hodge tensors at $s'$, the result follows. 
\end{proof}
\subsubsection*{Algebraic monodromy group}
Let $\rho_s : \pi_1(S^\an,s) \rightarrow \gl(\V_s)$ the monodromy representation, at the base point $s$, associated to the local system underlying $\V$.
\begin{definition}
The algebraic monodromy group of $\V$ at $s$ is the identity connected component $\mon_s$ of the $\Q$-Zariski closure of $\im(\rho_s)$.
\end{definition}
Assume until the end of the subsection that $\V$ is admissible. The following result of André, due to Deligne in the pure case, is a cornerstone of the theory and will be of constant use in the sequel:
\begin{theorem}[\cite{andre}]\label{andre}
The algebraic monodromy group $\mon_s$ of $\V$ at $s$ is a normal subgroup of $\pp_s$, and is contained in the derived subgroup $\pp_s^\der$ of $\pp_s$. In particular, the radical of $\mon_s$ is unipotent.
\end{theorem}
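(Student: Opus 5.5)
The statement is André's theorem; it is cited from \cite{andre} rather than proved here. Still, I would follow André's strategy, adapted to the definitions of this section. The plan has three parts: normality of $\mon_s$ in $\pp_s$, the inclusion $\mon_s \subset \pp_s^\der$, and the consequence for the radical.

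\textbf{Step 1 (normality).} First I would show that $\mon_s \subset \pp_s$. Since $\hod^{\bul,\bul}(\V^\otimes)_s$ is a finite-dimensional $\pi_1$-stable subspace in each bidegree, the image of $\rho_s$ normalizes $\pp_s$. The key claim is that a finite-index subgroup of $\pi_1(S^\an,s)$ fixes every generic Hodge tensor. Fix a Hodge generic point (Lemma \ref{exgen}). The generic Hodge tensors are the flat sections of $T^{a,b}(\V)$ that are pointwise Hodge. On the pure graded pieces, such a tensor lies in a polarized pure weight-$0$ Hodge structure of type $(0,0)$, so the integral monodromy preserves a positive definite form on it. The monodromy there therefore lies in a compact discrete group, hence is finite. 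To lift this from the graded to the tensor itself, I would use that a Hodge tensor lying in $W_0$ and in $F^0$ of a mixed structure is determined by its image in $\gr^W_0$ modulo lower-weight Hodge classes, together with admissibility (rigidity, via the theorem of the fixed part for admissible variations). This gives a finite-index subgroup $\Gamma'$ fixing all generic Hodge tensors, so $\mon_s$, the connected Zariski closure of $\Gamma'$, lies in $\pp_s$.

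For normality, I would use the theorem of the fixed part and semisimplicity of the pure graded part. The key consequence is that for every representation $W$ of $\pp_s$ built from tensors, the invariants $W^{\mon_s}$ form a sub-mixed Hodge structure at a Hodge generic point. Hence the subspace of tensors fixed by $\mon_s$ is stable under $\pp_s = \pp(\V_s)$ for Hodge generic $s$. By Chevalley's characterization (as in Lemma \ref{extchar}), $\mon_s$ is the fixator of a line in some tensor construction, up to the passage to connected components. Combining the two, the normalizer of $\mon_s$ contains $\pp_s$.

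\textbf{Step 2 (contained in the derived group).} Let $\pp_s \to \pp_s/\pp_s^\der$ be the abelianization, a commutative group $\mathbf{A}$ (torus times a unipotent group). The image of monodromy in $\mathbf{A}$ is the monodromy of a variation whose Mumford--Tate group is commutative. I would argue that an admissible variation with commutative generic Mumford--Tate group has finite monodromy, as follows. Its torus part is CM/pure-type, where period maps are constant and integral monodromy is finite. The unipotent abelian part requires a separate argument; André handles it via the condition that $\mon_s$ is contained in $\pp_s$ with Hodge-theoretic weight constraints. Since $\mon_s$ is connected, this finiteness forces $\mon_s \subset \pp_s^\der$.

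\textbf{Step 3 (radical).} Finally, $\mon_s/(\mon_s \cap \pp_s^u)$ injects into the reductive group $\pp_s/\pp_s^u$ as a normal subgroup, hence is reductive. Therefore the radical of $\mon_s$ lies in $\pp_s^u$ and is unipotent.

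\textbf{Main obstacle.} I expect the hardest part to be the unipotent abelian piece in Step 2 together with the normality argument in Step 1. Both need the theorem of the fixed part for admissible variations of mixed Hodge structure, which is where admissibility is genuinely used.
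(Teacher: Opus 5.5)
The paper does not prove this itself. It refers to the proof of \cite[Thm.~1]{andre} for normality and for the inclusion in $\pp_s^\der$, and to \cite[Cor.~2]{andre} for the radical.

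Your reconstruction has the right skeleton: a finite-index subgroup of the monodromy lies in $\pp_s(\Q)$, and by the theorem of the fixed part $\pp_s$ stabilizes $T^{\mon_s}$ for every tensor construction $T$. But the normality step has a genuine gap. Stability of all the spaces $T^{\mon_s}$ under $\pp_s$ only shows that $\pp_s$ normalizes the fixator of all $\mon_s$-invariant tensors. That fixator equals $\mon_s$ only if $\mon_s$ is observable. Chevalley's theorem gives $\mon_s$ as the \emph{stabilizer} of a line $L$, and $L\subset T^{\mon_s}$ only if $\mon_s$ acts trivially on $L$. Without this the implication is false: for a Borel subgroup $\mathbf{B}\subset\gl_2$ one has $T^{\mathbf{B}}=T^{\gl_2}$ for every tensor space $T$, so $\gl_2$ stabilizes all of them, yet $\mathbf{B}$ is not normal. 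The twisting trick of Lemma~\ref{extchar} does not transfer, because $L$ is not known to be a sub-mixed Hodge structure.

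The missing input is that $\mon_s$ has no nontrivial characters. Its image in $\pp_s/\pp_s^u$ is the connected monodromy group of the polarized variation $\gr(\V)$, which is semisimple by Deligne's semisimplicity theorem \cite{hodge2}. Its intersection with $\pp_s^u$ is unipotent. Hence the Chevalley line is fixed pointwise, $\mon_s$ is the fixator of a tensor whose $\pp_s$-translates remain $\mon_s$-invariant, and normality follows. You mention semisimplicity of the pure graded part, but you never use it where it is needed.

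The same input is what your Steps 2 and 3 lack.
\begin{itemize}
\item \textbf{Step 3.} Being normal in the reductive group $\pp_s/\pp_s^u$ only makes the image of $\mon_s$ reductive. A reductive group can have a central torus, so this does not show that the radical of $\mon_s$ is unipotent. You need the image to be semisimple. That follows either from Deligne directly, or from Step 2 plus normality: a connected normal subgroup contained in the semisimple group $(\pp_s/\pp_s^u)^\der$ is semisimple.
\item \textbf{Step 2.} The group $\pp_s/\pp_s^\der$ has no unipotent part. The weight cocharacter acts on $\lie(\pp_s^u)=W_{-1}\lie(\pp_s)$, whose complexification is $\bigoplus_{p+q\leqslant -1}\para^{p,q}$, with nonzero weights. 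So $\pp_s^u\subset\pp_s^\der$ and $\mathbf{A}$ is a torus, which makes the case you hand-wave vacuous. Once $\mon_s$ has no characters, its image in the torus $\mathbf{A}$ is trivial, so $\mon_s\subset\pp_s^\der$ is immediate. Your direct finiteness argument for the torus can also be completed: the Hodge structure on a faithful representation of $\mathbf{A}$ is constant, so monodromy commutes with $h(i)$ and preserves a flat polarization, hence a positive definite form and a lattice.
\item \textbf{Step 1 (minor).} No admissibility is needed to pass from $\gr^W_0$ to the tensor itself. Since $W_{-1}T_\Q\cap F^0T_\C=0$, generic Hodge tensors inject into $\gr^W_0$.
\end{itemize}
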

\begin{proof}
The proof of \cite[Thm. 1]{andre} shows that $\mon_s$ is a subgroup of $\pp_s^\der$ and is normal in $\pp_s$. The second statement is \cite[Cor. 2]{andre}.
\end{proof}
Let us record the following fact which is an easy consequence of the previous result, and will be useful for us.
\begin{corollary}\label{neatetal}
There exists a finite étale cover $\pi : S'\rightarrow S$ such that after replacing $S$ by $S'$ and $\V$ by $\pi^\ast \V$, for any $s \in S^\an$, the image of $\rho_{s}$ is contained in a neat arithmetic subgroup of $\pp_{s}(\Q)$.
\end{corollary}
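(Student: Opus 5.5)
The plan is to reduce to the classical fact that a finitely generated subgroup of $\gl_n(\Q)$ contained in an arithmetic group has a finite index neat subgroup (Borel, \cite[(17.4)]{borelarith}). We then realize the passage to that finite index subgroup by a finite étale cover of $S$.

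Fix a base point $s \in S^\an$. By Theorem \ref{andre}, $\mon_s$ is a normal subgroup of $\pp_s$. The first step is to check that $\im(\rho_s) \subset \pp_s(\Q)$, possibly after a first finite étale cover. Monodromy preserves the space $\hod^{\bul,\bul}(\V^\otimes)_s$ of generic Hodge tensors. We would like it to act trivially on a finite generating set of tensors defining $\pp_s$. I would argue that the induced action of $\pi_1(S^\an,s)$ on the finitely many relevant finite-dimensional spaces $\hod^{a,b}(\V^\otimes)_s$ factors through a finite group.

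The mechanism is that these tensors are integral up to bounded denominators and are Hodge classes at every point, so the monodromy group preserves a lattice in $\hod^{a,b}(\V^\otimes)_s$. It also acts through the reductive quotient. Indeed, on $W_0 \cap F^0$ the graded pieces carry the polarization of weight $0$, and the fibres are pure Hodge classes of type $(0,0)$ in $\gr^W_0$. There the polarization is definite, so the action preserves a positive definite form together with a lattice and is therefore finite. One must also check that it acts trivially on lower weight parts. I expect this to follow from the fact that $\W_{-1}\cap F^0$ contains no rational Hodge classes by Lemma \ref{tenszero}, so $\hod$ injects into $\gr^W_0$.

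Taking the finite étale cover $S_1 \to S$ corresponding to the kernel of this finite action, we get $\im(\rho_s)\subset \pp_s(\Q)$. Moreover, $\im(\rho_s)$ preserves the lattice $\V_s$, so it lies in the arithmetic subgroup $\Gamma_0 = \pp_s(\Q) \cap \gl(\V_s)(\Z)$. Intersecting further with $\pp_s(\R)^+$ costs only another finite index, hence another finite étale cover.

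Next, Borel's theorem gives a neat normal subgroup $\Gamma \subset \Gamma_0$ of finite index, for instance a principal congruence subgroup of level $N \geqslant 3$. The preimage $\rho_s^{-1}(\Gamma)$ has finite index in $\pi_1(S^\an,s)$. By the Riemann existence theorem (Grauert–Remmert), it corresponds to a finite étale cover $\pi : S' \to S$, and $S'$ is again smooth, irreducible and quasi-projective. For a point $s'$ over $s$, the monodromy of $\pi^\ast\V$ is $\im(\rho_s)\cap\Gamma \subset \Gamma$. The generic Mumford–Tate group is unchanged, since it is defined through tensors that are Hodge on a dense set and hence is insensitive to finite covers; alternatively, Lemma \ref{exgen} applies at a Hodge generic point. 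Finally, for any other $s'$ we use Lemma \ref{transmt}: parallel transport $\tau^\gl_\ast$ conjugates $\pp_s$ to $\pp_{s'}$, sends the image of $\rho_s$ to that of $\rho_{s'}$, and maps neat arithmetic subgroups to neat arithmetic subgroups, because it comes from an isomorphism of integral lattices.

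The main obstacle is the first step, namely showing that monodromy acts finitely on generic Hodge tensors in the mixed setting. The key input there is that rational Hodge tensors inject into $\gr^W_0$, where the polarization yields finiteness.
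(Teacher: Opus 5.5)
Your argument is correct, but its key first step differs from the paper's. The paper never examines the generic Hodge tensors. It takes the $\Q$-Zariski closure $\tilde{\mon}_s$ of $\im(\rho_s)$ and passes to the finite \'etale cover killing the finite group $\tilde{\mon}_s(\Q)/\mon_s(\Q)$, so that monodromy lands in the identity component $\mon_s(\Q)$. It then quotes Andr\'e's theorem (Theorem \ref{andre}) for $\mon_s \subset \pp_s$. The arithmetic group $\Gamma_0 = \GL(\V_{\Z,s}) \cap \pp_s(\Q)$ and Borel's neat finite-index subgroup finish exactly as in your second half.

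You instead show directly that $\pi_1(S^\an,s)$ acts through a finite group on the generic Hodge tensors, and deduce the inclusion in $\pp_s(\Q)$ by passing to the kernel. Your mechanism works:
\begin{itemize}
\item The projection $\hod^{a,b}(\V_s^\otimes) \rightarrow \gr^W_0 T^{a,b}(\V_s)$ is injective. By Lemma \ref{tenszero}, a rational element of $W_0 \cap F^0$ lies in the $(0,0)$-piece of the canonical bigraduation, which meets $(W_{-1})_\C$ trivially.
\item Its restriction to the monodromy-stable subspace $\hod^{a,b}(\V^\otimes)_s$ is equivariant.
\item On the image, the flat weight-zero polarization induced by the $q_k$ is positive definite, since these are real $(0,0)$-classes at $s$. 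The image of the integral lattice is also preserved, so the action is finite.
\end{itemize}
This is essentially the classical argument behind the inclusion half of Andr\'e's theorem. Your route is therefore self-contained and uses only integrality and graded-polarizability, with no admissibility and no theorem of the fixed part. The paper's route is a two-line reduction to a theorem it has already stated.

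When writing it up, tighten a few points:
\begin{itemize}
\item State the injectivity as such, rather than as the action being trivial on lower weight parts.
\item Drop the unused appeal to Theorem \ref{andre}.
\item Say explicitly that finitely many bidegrees $(a_i,b_i)$ suffice to cut out $\pp_s$, by Noetherianity.
\item Justify that $\pp_s$ is unchanged by a finite \'etale cover via path lifting: a tensor is a generic Hodge tensor for $\pi^\ast\V$ at a lift of $s$ if and only if it is one for $\V$ at $s$. You need this already after your first cover.
\end{itemize}
Your extra finite-index step to land in $\pp_s(\R)^+$ is a point the paper's proof leaves implicit.
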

\begin{proof}
Let $s \in S^\an$ and let $\tilde{\mon}_s$ be the $\Q$-Zariski closure of $\im(\rho_s)$ in $\gl(V_\Q)$. It is a $\Q$-algebraic subgroup of $\gl(V_\Q)$ whose $\Q$-points contain $\im(\rho_s)$. Its identity connected component is by definition $\mon_s$. Therefore $\tilde{\mon}_s(\Q)/\mon_s(\Q)$ is finite. The kernel of the composition of $\rho_s$ with the projection $\tilde{\mon}_s(\Q) \rightarrow \tilde{\mon}_s(\Q)/\mon_s(\Q)$ is a finite index normal subgroup of $\pi_1(S^\an,s)$. Denote by $\pi : S' \rightarrow S$ the corresponding finite étale cover of $S$, whose construction is easily seen by direct computation not to depend on the choice of base point $s \in S^\an$. In the sequel of the proof, we replace $S$ by $S'$ and $\V$ by $\pi^\ast(\V)$. By construction of $\pi$, for any $s \in S^\an$ one has that $\im(\rho_s) \subset \mon_s(\Q) \subset \pp_s(\Q)$.

Since the local system $\V$ is integral one has on the other hand that $\im(\rho_s) \subset \gl(\V_{\Z,s})$. This implies that $\im(\rho_s)$ is contained in the arithmetic subgroup $\Gamma_0 = \GL(\V_{\Z,s}) \cap \pp_s(\Q)$ of $\pp_s(\Q)$. By \cite[Prop. 17.4]{borel} there exists a neat finite index subgroup $\Gamma$ of $\Gamma_0$, which is therefore a neat arithmetic subgroup $\Gamma$ of $\pp_s(\Q)$. After passing to the finite etale cover of $S$ corresponding to the kernel of $\rho_s : \pi_1(S^\an,s) \rightarrow \Gamma_0 \rightarrow \Gamma_0/\Gamma$, the image of $\rho_s$ becomes contained in $\Gamma$.
\end{proof}
\begin{remark}
Let  $s \in S^\an$, let $W_\bul$ be the weight filtration on $\V_s$ and $(q_k)$ the graded-polarization at $s$. As by definition $\W_\bul$ and $(\underline{q}_k)$ are flat, the image of $\rho_s$ lies in $\ppol(\Q)$ (without having to pass to a finite étale cover).
\end{remark}
\subsection{Generic mixed Hodge class and the normalized period map}
Let $\V$ be an admissible and graded-polarized integral variation of mixed Hodge structures over a smooth, irreducible and quasi-projective complex algebraic variety $S$. Fix a base point $s_0 \in S^\an$,  let $\pi : \widetilde{S^\an} \rightarrow S^\an$ be the universal cover of $S^\an$ and fix a lift $\tilde{s}_0$ of $s_0$ in $\widetilde{S^\an}$. Denote by $\rho = \rho_{s_0}$ be the monodromy representation at $s_0$, by $W_\bul = (\W_\bul)_{s_0}$ the weight filtration on $\V_{s_0}$, by $q_k = (\underline{q_k})_{s_0}$ ($k \in \Z$) the polarisations on the graded pieces and by $(h^{p,q})_{p,q \in \Z}$ the Hodge numbers of $(\V_{s_0},W_\bul, \mathcal{F}^\bul_{s_0})$.
\subsubsection*{Construction of the mixed Hodge data}
Let $M_{s_0}$ be the parameter space for mixed $\Q$-Hodge structures on $\V_{s_0}$ with weight filtration $W_\bul$, Hodge numbers $(h^{p,q})$ and graded-polarized by the $q_k$'s.  Let $m_0$ be the point of $M_{s_0}$ corresponding to the mixed $\Q$-Hodge structure $(\V_{s_0}, W_\bul, \mathcal{F}_{s_0})$, and let $x_0 : \s_\C \rightarrow \gl(\V_{s_0})_\C$ be the canonical bigraduation of $m_0$. 
\begin{lemma}
The morphism $x_0$ satisfies Pink's axioms for $\pp_{s_0}$ and $\pp^{M_{s_0}}$.
\end{lemma}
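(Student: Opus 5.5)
The plan is to reduce everything to Lemma \ref{subgroup}. That lemma says that a bigraduation of a graded-polarized mixed $\Q$-Hodge structure satisfies Pink's axioms for any $\Q$-subgroup $\pp$ of $\ppol$ whose complex points contain its image. Take $V = \V_{s_0}$, with weight filtration $W_\bul$ and graded-polarization $(q_k)$. Then $\pp^{M_{s_0}} = \pp_{W_\bul,(q_k)}$ by definition, so I need two facts: that $x_0(\s_\C) \subset (\pp_{s_0})_\C$, and that $\pp_{s_0} \subset \pp^{M_{s_0}}$. With these, Lemma \ref{subgroup} applies once to $\pp = \pp^{M_{s_0}}$ itself, using only the first inclusion composed with the second, and once to $\pp = \pp_{s_0}$.

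For the first inclusion I use Proposition \ref{mtmor}(ii). Since $x_0$ is the canonical bigraduation of $m_0$, the Mumford-Tate group $\pp(\V_{s_0})$ equals $\pp(x_0)$, the smallest $\Q$-subgroup whose complex points contain $x_0(\s_\C)$. Hence $x_0(\s_\C) \subset \pp(\V_{s_0})_\C$. Next, the lemma following the definition of the generic Mumford-Tate group gives $\pp(\V_{s_0}) \subset \pp_{s_0}$. Together these yield $x_0(\s_\C) \subset (\pp_{s_0})_\C$.

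For the second inclusion I exhibit tensors that are generic Hodge tensors at $s_0$ and whose common fixator lies in $\ppol$. The weight filtration and the polarizations are flat, so this comes down to encoding the conditions ``$g$ preserves each $W_k$'' and ``$\gr_k^W(g)$ preserves $q_k$ up to scalar'' through Hodge tensors.

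For the filtration, the natural choice is the projector-type tensors, or equivalently the lines $\det W_k \subset \bigwedge^{\dim W_k} V$. The line $\det W_k$ is a Hodge substructure, but it is not of weight $0$. I would therefore twist, exactly as in the proof of Lemma \ref{extchar}, by tensoring with suitable powers of determinant lines of other weights, so as to obtain rank-one weight-$0$ substructures isomorphic to $\Q(0)$. A generator of such a line is then a Hodge tensor at every point, and it is flat by flatness of $\W_\bul$, so it is a generic Hodge tensor.

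Scaling of the polarizations is handled similarly. The form $q_k$ defines a morphism of mixed Hodge structures $\gr^W_k V \otimes \gr^W_k V \to \Q(-k)$. This lifts to a tensor on $V$ after twisting by a weight-compensating determinant line, and the result is a generic Hodge tensor.

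A simpler alternative I would try first avoids this bookkeeping. Every $g \in \pp_{s_0}$ fixes all generic Hodge tensors. The Mumford-Tate group at a Hodge-generic point (Lemma \ref{exgen}) equals $\pp_s$, and a Mumford-Tate group is contained in $\ppol$ for the graded-polarization at that point. That containment holds because $\pp(\V_s) = \pp(x_\Del) \subset \ppol$, by the third assertion of Lemma \ref{pinkcarac}. Transporting back to $s_0$ with Lemma \ref{transmt}, and using flatness of $W_\bul$ and $(q_k)$, would give $\pp_{s_0} \subset \pp^{M_{s_0}}$.

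I expect this second inclusion to be the main obstacle. The delicate point is that the fourth assertion of Lemma \ref{pinkcarac} only provides \emph{some} graded-polarization; one must check it can be taken to be the given flat $(q_k)$, or else argue directly with the tensors above. Once both inclusions hold, Lemma \ref{subgroup} concludes immediately.
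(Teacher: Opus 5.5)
Your proposal is correct and follows the paper's proof: you obtain $x_0(\s_\C)\subset(\pp_{s_0})_\C$ from the canonical bigraduation and $\pp(\V_{s_0})\subset\pp_{s_0}$, then $\pp_{s_0}\subset\pp^{M_{s_0}}$, and conclude with Lemma \ref{subgroup}. The paper dispatches the second inclusion with the phrase ``by flatness'', and your Hodge-generic-point argument fills this in soundly.

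The point you flag as delicate is harmless. The third assertion of Lemma \ref{pinkcarac} holds for every graded-polarization. Directly: $x_\Del$ preserves $W_\bul$, and by the first Hodge--Riemann relation it rescales each $q_k$ by a character, so $\pp(x_\Del)\subset\ppol$ for the given flat $(q_k)$.

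Your alternative tensor-encoding route would need more care than you indicate. The form $q_k$ lives on a subquotient of $V$, so it does not lift canonically to a tensor on $V$.
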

\begin{proof}
By flatness of the weight filtration and the polarization, one has $\pp_{s_0} \subset \pp^{M_{s_0}} := \ppol$. Furthermore, as $\pp(m_0) = \pp(\V_{s_0}) \subset \pp_{s_0}$ as subgroups of $\gl(\V_{s_0})$, one has by Proposition \ref{mtbound} that $x_0(\s_\C) \subset (\pp_{s_0})_\C \subset \pp^{M_{s_0}}_\C$. Hence by Lemma \ref{subgroup} the bigraduation $x_0$ satisfies Pink's axioms for $\pp_{s_0}$ and $\pp^{M_{s_0}}$, as claimed.
\end{proof}
 Let $X^{M_{s_0}}$ (resp. $X_{\pp_{s_0}}$) be the $\pp^{M_{s_0}}(\R)\uu_{\pp^{M_{s_0}}}(\C)$-orbit (resp. $\pp_{s_0}(\R)\uu_{\pp_{s_0}}(\C)$) of $x_0$ in $\Hom_\C(\s_\C, \pp^{M_{s_0}}_\C)$ (resp. in $\Hom_\C(\s_\C, (\pp_{s_0})_\C)$). Let ${M_{s_0}}^+$ be the connected component of ${M_{s_0}}$ containing $m_0$, and $D_{\pp_{s_0}}$ be the connected component of $\mathcal{D}_{\pp_{s_0}}$ containing $m_0$. By construction, the inclusion $\pp_{s_0} \hookrightarrow \pp^{M_{s_0}}$ makes $(\pp_{s_0}, X_{\pp_{s_0}}, D_{\pp_{s_0}})$ a mixed Hodge sub-datum of $(\pp^{M_{s_0}}, X^{M_{s_0}}, {M_{s_0}}^+)$. A priori, they both depend on the choice of $s_0$.
\subsubsection*{The lifted period map}
Since $\widetilde{S^\an}$ is simply-connected, the local system $\pi^\ast \V$ on $\widetilde{S^\an}$ is trivial and inherits a graded-polarized integral variation of mixed Hodge structures. By triviality of the local system $\pi^\ast \V$, there is for every $\tilde{s} \in \widetilde{S^\an}$ a canonical isomorphism $i_{\tilde{s},\tilde{s}_0} : \V_{\pi(\tilde{s})} = (\pi^\ast \V)_{\tilde{s}} \cong \V_{s_0}$. Let $\tilde{\Phi}_{\tilde{s}_0} : \widetilde{S^\an} \rightarrow M_{s_0}$ be the map which associates to $\tilde{s} \in \widetilde{S^\an}$ the mixed $\Q$-Hodge structure $(\V_{s_0}, W_\bul, i_{\tilde{s},\tilde{s}_0}(\mathcal{F}_{s}))$ image of the mixed $\Q$-Hodge structure on $(\pi^\ast \V)_{\tilde{s}}$ under the isomorphism $i_{\tilde{s}, \tilde{s}_0}$. It has Hodge numbers $(h^{p,q})$, has weight filtration $W_\bul$ and is graded-polarized by the $q_k$'s by flatness of $\W_\bul$ and $\underline{q_k}$. Hence $(\V_{s_0}, W_\bul, i_{\tilde{s},\tilde{s}_0}(\mathcal{F}_{s}))$ indeed defines a point of $M_{s_0}$. The following is immediate from the definitions:
\begin{lemma}\label{holper}
The map $\tilde{\Phi}_{\tilde{s_0}}$ is holomorphic and $\rho$-equivariant, where $\pi_1(S^\an,s_0)$ acts on $\widetilde{S^\an}$ by deck transformations, and $\pp^{M_{s_0}}(\Q)$ acts on the $\pp^{M_{s_0}}(\R)\uu_{\pp^{M_{s_0}}}(\C)$-homogeneous space $M_{s_0}$ as a subset of $\pp^{M_{s_0}}(\R)\uu_{\pp^{M_{s_0}}}(\C)$.
\end{lemma}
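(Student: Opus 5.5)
We argue the two assertions of Lemma \ref{holper} separately, reducing holomorphy to the corresponding property of the Hodge filtration of $\V$ and equivariance to the functoriality of parallel transport.

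\emph{Holomorphy.} By Proposition \ref{hodgedat}(ii), and the description of the complex structure on $M_{s_0}$ as an open subset of a closed subvariety $\check{M}_{s_0}$ of a product of Grassmannians of $\V_{s_0,\C}$, a map into $M_{s_0}$ is holomorphic if and only if each Hodge filtration step varies holomorphically as a point of the relevant Grassmannian. The weight filtration $W_\bul$ is fixed, so only $F^p$ needs attention. On $\widetilde{S^\an}$ the local system $\pi^\ast\V$ is trivialized by the isomorphisms $i_{\tilde s,\tilde s_0}$, which are exactly the horizontal sections of the flat connection $\nabla$. These give a holomorphic trivialization $\pi^\ast\mathcal{V}\cong \V_{s_0}\otimes\mathcal{O}_{\widetilde{S^\an}}$. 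Under this trivialization each $\pi^\ast\mathcal{F}^p$ is a holomorphic subbundle, hence a holomorphic map to the Grassmannian of $(\dim F^p)$-planes in $\V_{s_0,\C}$. The dimensions are constant because the ranks of the locally free sheaves $\mathcal{F}^p$ are constant on the connected $S$. Holomorphy of $\tilde\Phi_{\tilde s_0}$ follows.

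\emph{Equivariance.} Let $\gamma\in\pi_1(S^\an,s_0)$ act by the deck transformation $\tilde s\mapsto\gamma\cdot\tilde s$. Then $\pi(\gamma\tilde s)=\pi(\tilde s)$, and the two identifications $i_{\gamma\tilde s,\tilde s_0}$ and $i_{\tilde s,\tilde s_0}$ of $\V_{\pi(\tilde s)}$ with $\V_{s_0}$ differ by parallel transport around the loop $\gamma$. That is,
\[
i_{\gamma\tilde s,\tilde s_0}=\rho(\gamma)\circ i_{\tilde s,\tilde s_0},
\]
where the precise placement of $\rho(\gamma)$ versus $\rho(\gamma)^{-1}$ is fixed by the convention defining $\rho$ and the deck action. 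Applying this to $\mathcal{F}_{\pi(\tilde s)}$ gives $\tilde\Phi_{\tilde s_0}(\gamma\tilde s)=\rho(\gamma)\cdot\tilde\Phi_{\tilde s_0}(\tilde s)$.

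It remains to check that this is the action of $\rho(\gamma)\in\pp^{M_{s_0}}(\Q)$ on $M_{s_0}$. By the Remark following Corollary \ref{neatetal}, $\rho(\gamma)$ preserves $W_\bul$ and the forms $q_k$ up to scaling, indeed exactly by flatness. It therefore lies in $\pp^{M_{s_0}}(\Q)$. The action of $g\in\pp^{M_{s_0}}$ on $M_{s_0}$ is $F^\bul\mapsto gF^\bul$, compatible with conjugation of bigraduations, so the two actions agree.

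The only genuinely delicate point is bookkeeping: matching the left/right conventions of the deck action with those of the monodromy representation, so that the sign of $\gamma$ comes out consistently. No further input is needed.
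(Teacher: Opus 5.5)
Your proposal is correct. It is exactly the unwinding of definitions that the paper leaves implicit, since the paper asserts the lemma as ``immediate from the definitions'' and gives no further argument. Holomorphy follows from the flat trivialization $\pi^\ast\mathcal{V}\cong \V_{s_0}\otimes\mathcal{O}_{\widetilde{S^\an}}$ together with the Hodge filtration being by holomorphic subbundles. Equivariance follows from $i_{\gamma\tilde s,\tilde s_0}=\rho(\gamma)\circ i_{\tilde s,\tilde s_0}$ and the fact that $\rho(\gamma)$ preserves $W_\bullet$ and each $q_k$ exactly by flatness, so it acts on $M_{s_0}$ by $F^\bullet\mapsto\rho(\gamma)F^\bullet$.
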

\begin{remark}
The $\rho$-equivariance of $\tilde{\Phi}_{\tilde{s_0}}$ ensures that for any arithmetic subgroup $\Gamma^{M_{s_0}}$ of $\pp^{M_{s_0}}(\R)$ containing $\im(\rho)$, the map $\tilde{\Phi}_{\tilde{s}_0}$ descends to an analytic map between orbifolds
\[
\Phi_{s_0} : S^\an \rightarrow \Gamma^{M_{s_0}} \quo M_{s_0}
\]
which is classically referred to as the period map of $\V$. However this construction does not exploit the fact that $\V$ might carry non-trivial generic Hodge tensors. The sequel of this subsection is dedicated to explain how one takes this information into account.
\end{remark}
As constructed, the map $\tilde{\Phi}_{\tilde{s}_0}$ depends on the choice of $\tilde{s}_0$. However the maps resulting from two choices of points of $\widetilde{S^\an}$ can be canonically identified. More precisely the construction immediately implies the following:
\begin{lemma}\label{idpermap}
Let $\tilde{s}_1 \in \widetilde{S^\an}$ be another base point, and $s_1 = \pi(\tilde{s}_1)$. Let $f_{\tilde{s}_0, \tilde{s}_1} : (\widetilde{S^\an}, \tilde{s}_0) \rightarrow (\widetilde{S^\an}, \tilde{s}_1)$ be the deck transformation of $\widetilde{S^\an}$ mapping $\tilde{s}_0$ to $\tilde{s}_1$. Let $g_{\tilde{s}_0, \tilde{s}_1} : M_{s_0} \rightarrow M_{s_1}$ the biholomorphism induced by the isomorphism $i_{\tilde{s}_0, \tilde{s}_1} : \V_{s_0} \rightarrow \V_{s_1}$. Then the diagram
\[
\xymatrix{\widetilde{S^\an} \ar[rr]^{\tilde{\Phi}_{\tilde{s}_0}} \ar[d]_{f_{\tilde{s}_0, \tilde{s}_1}} & & M_{s_0} \ar[d]^{g_{\tilde{s}_0, \tilde{s}_1}} \\ \widetilde{S^\an} \ar[rr]_{\tilde{\Phi}_{\tilde{s}_1}} & & M_{s_1}}
\]
commutes.
\end{lemma}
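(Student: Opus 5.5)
This is a formal verification from the definitions. Write $\tilde{\Phi}_{\tilde{s}_j}(\tilde{s})$ for the mixed $\Q$-Hodge structure $(\V_{s_j}, W_\bul, i_{\tilde{s},\tilde{s}_j}(\mathcal{F}_{\pi(\tilde{s})}))$. For $\tilde{s} \in \widetilde{S^\an}$, the claim is
\[
g_{\tilde{s}_0,\tilde{s}_1}\big(\tilde{\Phi}_{\tilde{s}_0}(\tilde{s})\big) = \tilde{\Phi}_{\tilde{s}_1}\big(f_{\tilde{s}_0,\tilde{s}_1}(\tilde{s})\big).
\]
The plan is to rewrite both sides as the image of the same Hodge filtration $\mathcal{F}_{\pi(\tilde{s})}$ under explicit linear isomorphisms into $\V_{s_1}$, and then show that these isomorphisms coincide.

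First, by definition, $g_{\tilde{s}_0,\tilde{s}_1}$ transports a filtration on $\V_{s_0,\C}$ via $i_{\tilde{s}_0,\tilde{s}_1}$. So the left side is the structure whose Hodge filtration is $i_{\tilde{s}_0,\tilde{s}_1}\circ i_{\tilde{s},\tilde{s}_0}(\mathcal{F}_{\pi(\tilde{s})})$. Here we use the transitivity $i_{\tilde{s}_0,\tilde{s}_1}\circ i_{\tilde{s},\tilde{s}_0} = i_{\tilde{s},\tilde{s}_1}$ of the trivialization of $\pi^\ast\V$. This holds because each $i$ is parallel transport for the trivial local system on the simply connected $\widetilde{S^\an}$, so it is independent of paths and compatible with composition.

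Second, the deck transformation $f := f_{\tilde{s}_0,\tilde{s}_1}$ satisfies $\pi\circ f = \pi$. Hence $(\pi^\ast\V)_{f(\tilde{s})}$ and $(\pi^\ast\V)_{\tilde{s}}$ are both the fiber $\V_{\pi(\tilde{s})}$, with the same Hodge filtration $\mathcal{F}_{\pi(\tilde{s})}$. The right side therefore has Hodge filtration $i_{f(\tilde{s}),\tilde{s}_1}(\mathcal{F}_{\pi(\tilde{s})})$, where $i_{f(\tilde{s}),\tilde{s}_1}$ is viewed as a map $\V_{\pi(\tilde{s})} \to \V_{s_1}$.

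The remaining step, and the only one with any content, is the identity
\[
i_{f(\tilde{s}),\tilde{s}_1} = i_{\tilde{s},\tilde{s}_1} \quad \text{as maps } \V_{\pi(\tilde{s})} \rightarrow \V_{s_1}.
\]
To prove it, choose a path $\tilde{\gamma}$ in $\widetilde{S^\an}$ from $\tilde{s}$ to $\tilde{s}_0$. Then $f\circ\tilde{\gamma}$ is a path from $f(\tilde{s})$ to $f(\tilde{s}_0)=\tilde{s}_1$, and it has the same projection $\pi\circ\tilde{\gamma}$ to $S^\an$. Since $i$ is computed as parallel transport of $\V$ along the projected path, we get $i_{f(\tilde{s}),\tilde{s}_1} = i_{\tilde{s},\tilde{s}_0}$ as maps $\V_{\pi(\tilde{s})}\to\V_{s_1}$.

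The main obstacle is bookkeeping. The map $i_{\tilde{s},\tilde{s}_0}$ has target $\V_{s_0}$, while $i_{f(\tilde{s}),\tilde{s}_1}$ has target $\V_{s_1}$. The identification of these two targets is precisely the one $g_{\tilde{s}_0,\tilde{s}_1}$ is meant to encode, so one must fix a convention for $i_{\tilde{s}_0,\tilde{s}_1}$ that makes the comparison meaningful. I would take the convention that $g_{\tilde{s}_0,\tilde{s}_1}$ is induced by the deck-transformation identification $i_{f(\tilde{s}_0),\tilde{s}_1}\circ i_{\tilde{s}_0,\tilde{s}_0}^{-1}$. Under this convention the displayed identity gives commutativity of the diagram at once.

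Finally, one checks that $W_\bul$ and the graded-polarizations $(q_k)$ correspond under these identifications. This follows from their flatness, so the identity holds as an identity of points of $M_{s_1}$.
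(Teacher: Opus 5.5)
There is a genuine gap, and it sits exactly at the step you single out as the only one with content. After your first step, commutativity is equivalent to $i_{f(\tilde s),\tilde s_1}=i_{\tilde s,\tilde s_1}$. Your path argument instead proves $i_{f(\tilde s),\tilde s_1}=i_{\tilde s,\tilde s_0}$, and this only makes sense because a deck transformation with $f(\tilde s_0)=\tilde s_1$ forces $s_1=s_0$. By the transitivity you yourself invoked, $i_{\tilde s,\tilde s_1}=i_{\tilde s_0,\tilde s_1}\circ i_{\tilde s,\tilde s_0}$, so the two maps differ by $i_{\tilde s_0,\tilde s_1}$. That map is parallel transport along the loop $\pi\circ\tilde\alpha$ for a path $\tilde\alpha$ from $\tilde s_0$ to $\tilde s_1$, i.e. a monodromy operator $\rho(\gamma)^{\pm1}$, and it need not act trivially on $M_{s_0}$. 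For instance, take $S=\C^\times$ with period map the identity (as in Proposition \ref{contrexc1}), $\widetilde{S^\an}=\C$ and $f(z)=z+1$. Then $\tilde\Phi_{\tilde s_1}\circ f=\tilde\Phi_{\tilde s_0}$, while $g_{\tilde s_0,\tilde s_1}$ is a nontrivial translation of $M_{s_0}\cong\C$.

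Your ``convention'' does not fix this. Since $i_{f(\tilde s_0),\tilde s_1}\circ i_{\tilde s_0,\tilde s_0}^{-1}=i_{\tilde s_1,\tilde s_1}=\mathrm{id}$, it silently replaces the $g_{\tilde s_0,\tilde s_1}$ of the statement by the identity of $M_{s_0}$. It also contradicts your own first step, where the left-hand side was computed with $g$ induced by $i_{\tilde s_0,\tilde s_1}$. What your argument really establishes is $\tilde\Phi_{\tilde s_1}\circ f_{\tilde s_0,\tilde s_1}=\tilde\Phi_{\tilde s_0}$ for $\tilde s_1\in\pi^{-1}(s_0)$, which is a different (true) statement.

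In fact your computation shows that the square as literally printed only commutes up to monodromy. That is, with a deck transformation on the left \emph{and} $g$ induced by $i_{\tilde s_0,\tilde s_1}$ on the right, the discrepancy is the $\rho$-equivariance of Lemma \ref{holper}. The paper gives no argument beyond ``the construction immediately implies''.

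What the paper actually needs is the square with the identity of $\widetilde{S^\an}$ as left arrow and $\tilde s_1$ arbitrary, i.e. $\tilde\Phi_{\tilde s_1}=g_{\tilde s_0,\tilde s_1}\circ\tilde\Phi_{\tilde s_0}$. It uses this in two places:
\begin{itemize}
\item in the next corollary, where $s_1\in S^\an$ is arbitrary (so no deck transformation exists) and $g_{\tilde s_0,\tilde s_1}$ must carry $\pp_{s_0}$ to $\pp_{s_1}$ via Lemma \ref{transmt};
\item in the independence of $\Phi$ from the lift $\tilde s_0$, where for $\tilde s_1\in\pi^{-1}(s_0)$ the map $g_{\tilde s_0,\tilde s_1}$ is a monodromy operator, hence lies in $\Gamma$.
\end{itemize}
That identity is exactly your first step: transitivity of the trivialization of $\pi^\ast\V$ on the simply connected $\widetilde{S^\an}$. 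Add flatness of $W_\bul$ and the $q_k$, so that $g_{\tilde s_0,\tilde s_1}$ maps $M_{s_0}$ to $M_{s_1}$. You should have stopped there and flagged the deck transformation as a misstatement, rather than redefining $g$ to force the printed square to close.
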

\subsubsection*{The normalized lifted period map}
Let $(\pp_{s_0}, D_{\pp_{s_0}})$ (resp. $(\pp^{M_{s_0}}, M_{s_0}^+)$) be the essential equivalence class of $(\pp_{s_0}, X_{\pp_{s_0}}, D_{\pp_{s_0}})$ (resp. $(\pp^{M_{s_0}}, X^{M_{s_0}}, M_{s_0}^+)$). 
\begin{lemma}\label{uniqmhc}
One has $\tilde{\Phi}_{\tilde{s_0}}(\widetilde{S^\an}) \subset D_{\pp_{s_0}}$. Furthermore:
\begin{itemize}
\item[(i)] $(\pp_{s_0}, D_{\pp_{s_0}})$ is the unique mixed Hodge subclass of $(\pp^{M_{s_0}}, M_{s_0}^+)$ with underlying inclusion $\pp_{s_0} \hookrightarrow \pp^{M_{s_0}}$ whose associated Mumford-Tate domain contains $\tilde{\Phi}_{\tilde{s}_0}(\widetilde{S^\an})$;
\item[(ii)] $(\pp_{s_0}, D_{\pp_{s_0}})$ is minimal for the inclusion among mixed Hodge subclasses of $(\pp^{M_{s_0}}, M_{s_0}^+)$ whose associated Mumford-Tate domain contains $\tilde{\Phi}_{\tilde{s}_0}(\widetilde{S^\an})$.
\end{itemize}
\end{lemma}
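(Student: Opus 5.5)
The strategy is to reduce everything to Corollary \ref{mtconn} and Proposition \ref{mtbound}, applied to the connected subset $X := \tilde{\Phi}_{\tilde{s}_0}(\widetilde{S^\an}) \subset M_{s_0}^+$. First, $X$ is connected, being the image of the connected space $\widetilde{S^\an}$ under a continuous map (Lemma \ref{holper}). It lies in $M_{s_0}^+$ because it contains $m_0 = \tilde{\Phi}_{\tilde{s}_0}(\tilde{s}_0)$.

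The key input is that every point of $X$ has Mumford-Tate group contained in $\pp_{s_0}$. Let $\tilde{s} \in \widetilde{S^\an}$ and let $\zeta$ be a generic Hodge tensor at $s_0$. Transport $\zeta$ along the image in $S^\an$ of a path in $\widetilde{S^\an}$ from $\tilde{s}_0$ to $\tilde{s}$. By definition of generic Hodge tensors, the result is a Hodge tensor of $\V_{\pi(\tilde{s})}$. Under the identification $i_{\tilde{s},\tilde{s}_0}$, this says exactly that $\zeta$ is a Hodge tensor for the mixed Hodge structure $\tilde{\Phi}_{\tilde{s}_0}(\tilde{s})$ on $\V_{s_0}$. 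Hence $\pp(\tilde{\Phi}_{\tilde{s}_0}(\tilde{s}))$ fixes every generic Hodge tensor, so it is contained in $\pp_{s_0}$.

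Here the relevant faithful representation is the tautological one $\pp^{M_{s_0}} \subset \gl(\V_{s_0})$. By Proposition \ref{mtbound}, the Mumford-Tate group computed this way agrees with $\pp(h)$ for $h \in M_{s_0}^+$ viewed as a point of the class $(\pp^{M_{s_0}}, M_{s_0}^+)$.

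Corollary \ref{mtconn} then gives a unique mixed Hodge subclass of $(\pp^{M_{s_0}}, M_{s_0}^+)$ with underlying inclusion $\pp_{s_0} \hookrightarrow \pp^{M_{s_0}}$ whose domain contains $X$. The class $(\pp_{s_0}, D_{\pp_{s_0}})$ is a subclass with this underlying inclusion, and its domain contains $m_0 \in X$. By the disjoint-or-equal argument in the proof of Corollary \ref{mtconn}, any two such subclasses meeting at $m_0$ are equal. Therefore the subclass produced by the corollary is $(\pp_{s_0}, D_{\pp_{s_0}})$. This proves both $X \subset D_{\pp_{s_0}}$ and assertion (i).

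For (ii), let $(\M, D_\M)$ be a mixed Hodge subclass of $(\pp^{M_{s_0}}, M_{s_0}^+)$ with $X \subset D_\M$. By Proposition \ref{mtbound} (iii)$\Rightarrow$(i), every $h \in X$ has $\pp(h) \subset \M$. Lemma \ref{exgen} gives a Hodge generic point $s$, and by Lemma \ref{transmt} we may assume it comes from some $\tilde{s}$. At such a point, $\pp(\tilde{\Phi}_{\tilde{s}_0}(\tilde{s}))$ is the transport of $\pp_{\pi(\tilde{s})}$, which is $\pp_{s_0}$. Hence $\pp_{s_0} \subset \M$.

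It remains to compare domains. $D_{\pp_{s_0}}$ and $D_\M$ are orbits under $\pp_{s_0}(\R)^+\pp_{s_0}^u(\C)$ and $\M(\R)^+\M^u(\C)$ respectively, and both contain $m_0$. Since the first group is contained in the second, $D_{\pp_{s_0}} \subset D_\M$.

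The main obstacle is the bookkeeping of Mumford-Tate groups. One must check that the group of the point $\tilde{\Phi}_{\tilde{s}_0}(\tilde{s})$ in the class $(\pp^{M_{s_0}}, M_{s_0}^+)$ really is the parallel transport of $\pp(\V_{\pi(\tilde{s})})$, and that this transport carries $\pp_{\pi(\tilde{s})}$ to $\pp_{s_0}$. Both facts follow from the definitions via $i_{\tilde{s},\tilde{s}_0}$ together with Corollary \ref{mtindp}.
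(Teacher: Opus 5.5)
Your proposal is correct and follows essentially the same route as the paper. The containment and (i) come from Corollary \ref{mtconn} applied to the connected image, combined with the disjoint-or-equal argument at $m_0$; (ii) comes from a Hodge generic point together with Proposition \ref{mtbound}.

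The one variation is in (ii). The paper only considers subclasses contained in $(\pp_{s_0}, D_{\pp_{s_0}})$ and concludes equality via the uniqueness in (i). You instead take an arbitrary subclass $(\M, D_\M)$ containing the image and show $D_{\pp_{s_0}} \subset D_\M$ by comparing orbits through $m_0$. That orbit comparison needs $\pp_{s_0}^u \subset \M^u$, which you should cite from Lemma \ref{subgroup} (both unipotent radicals are the intersections with $(\ppol)^u$). With that cited, you obtain the slightly stronger statement that $(\pp_{s_0}, D_{\pp_{s_0}})$ is the least such subclass, not merely a minimal one.
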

\begin{proof}
$(i)$ By definition of the generic Mumford-Tate group $\pp_{s_0}$ of $\V$ at $s_0$ and Lemma \ref{transmt}, every $h \in \tilde{\Phi}_{\tilde{s}_0}(\widetilde{S^\an})$ satisfies $\pp(h) \subset \pp_{s_0}$. By Corollary \ref{mtconn}, there exists a unique mixed Hodge subclass $(\pp_{s_0}, D'_{\pp_{s_0}})$ of $(\pp^{M_{s_0}}, M_{s_0}^+)$ with underlying inclusion $\pp_{s_0} \hookrightarrow \pp^{M_{s_0}}$ such that $\tilde{\Phi}_{\tilde{s}_0}(\widetilde{S^\an}) \subset D'_{\pp_{s_0}}$. As $m_0 \in \tilde{\Phi}_{\tilde{s}_0}(\widetilde{S^\an})\cap D_{\pp_{s_0}}$, one has $m_0 \in D_{\pp_{s_0}}' \cap D_{\pp_{s_0}} \neq \emptyset$ and the proof of Corollary \ref{mtconn} shows that this mixed Hodge subclass is precisely the essential equivalence class of the mixed Hodge sub-datum $(\pp_{s_0}, X_{\pp_{s_0}}, D_{\pp_{s_0}})$ of $(\pp^{M_{s_0}}, X^{M_{s_0}}, M_{s_0}^+)$, and we denote it by $(\pp_{s_0}, D_{\pp_{s_0}})$. 

$(ii)$ Assume that there is a mixed Hodge subclass $(\M,D_\M) \subseteq (\pp_{s_0}, D_{\pp_{s_0}})$ such that $\tilde{\Phi}_{\tilde{s}_0}(\widetilde{S^\an}) \subset D_\M$. By Lemma \ref{exgen}, there exists $s \in S^\an$ such that $\pp(\V_s) = \pp_s$. Therefore, for any lift $\tilde{s}$ of $s$ to $\widetilde{S^\an}$, one has $\pp(\tilde{\Phi}_{\tilde{s}_0}(\tilde{s})) = \pp_{s_0}$. On the other hand, by assumption $\tilde{\Phi}_{\tilde{s}_0}(\tilde{s}) \in D_\M$, so that by Lemma \ref{mtbound} one has $\pp(\tilde{\Phi}_{\tilde{s}_0}(\tilde{s})) \subset \M$ as subgroups of $\pp^{M_{s_0}}$. This proves $\pp_{s_0} \subset \M$ hence $\pp_{s_0} = \M$ as subgroups of $\pp^{M_{s_0}}$. By the uniqueness assertion $(i)$, this proves the desired minimality of $(\pp_{s_0}, D_{\pp_{s_0}})$.
\end{proof}
In particular, $\tilde{\Phi}_{\tilde{s}_0}$ factors uniquely through a holomorphic map $\widetilde{S^\an} \rightarrow D_{\pp_{s_0}}$, which we still denote by $\tilde{\Phi}_{\tilde{s}_0}$.
 \begin{corollary}
 The essential equivalence class of $(\pp_{s_0}, X_{\pp_{s_0}}, D_{\pp_{s_0}})$ does not depend on the choice of $s_0 \in S^\an$. 
 \end{corollary}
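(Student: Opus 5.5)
The plan is to compare the constructions made at two base points $s_0, s_1 \in S^\an$ by transporting everything along a path, and to check that the transported datum is essentially equivalent to the one built at $s_1$. Concretely, fix lifts $\tilde{s}_0, \tilde{s}_1 \in \widetilde{S^\an}$ and the canonical isomorphism $i_{\tilde{s}_0,\tilde{s}_1} : \V_{s_0} \rightarrow \V_{s_1}$, which is parallel transport along the image of a path from $\tilde{s}_0$ to $\tilde{s}_1$. By Lemma \ref{transmt}, conjugation by $i_{\tilde{s}_0,\tilde{s}_1}$ induces an isomorphism of $\Q$-groups $\iota : \pp_{s_0} \cong \pp_{s_1}$. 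This isomorphism also carries $\ppol$ at $s_0$ onto the corresponding group at $s_1$, because the weight filtration and the graded polarizations are flat.

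The first step is to observe that the biholomorphism $g_{\tilde{s}_0,\tilde{s}_1} : M_{s_0} \rightarrow M_{s_1}$ of Lemma \ref{idpermap} is compatible with $\iota$. It sends $j_\rho(h)$ to the mixed Hodge structure obtained by transport of structure, and canonical bigraduations are functorial under isomorphisms of mixed Hodge structures, by the uniqueness in Proposition \ref{delbigrad}. Hence postcomposition by $\iota_\C$ maps $X_{\pp_{s_0}}$ onto the $\pp_{s_1}(\R)\pp_{s_1}^u(\C)$-orbit of $\iota_\C \circ x_0$. In this way the triple $(\pp_{s_0}, X_{\pp_{s_0}}, D_{\pp_{s_0}})$ is transported to a mixed Hodge datum $(\pp_{s_1}, \iota_\C \circ X_{\pp_{s_0}}, g(D_{\pp_{s_0}}))$, which is a mixed Hodge subdatum of $(\pp^{M_{s_1}}, \iota\circ X^{M_{s_0}}, g(M^+_{s_0}))$.

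The second step uses the commutative square of Lemma \ref{idpermap} together with Lemma \ref{uniqmhc}. The transported domain $g(D_{\pp_{s_0}})$ contains $g(\tilde{\Phi}_{\tilde{s}_0}(\widetilde{S^\an})) = \tilde{\Phi}_{\tilde{s}_1}(\widetilde{S^\an})$, since the deck transformation is surjective. It is a Mumford-Tate domain for the group $\pp_{s_1}$ sitting inside $M_{s_1}$. In particular it contains $m_1 = \tilde{\Phi}_{\tilde{s}_1}(\tilde{s}_1)$, so $g(M^+_{s_0}) = M^+_{s_1}$. By the uniqueness part (i) of Lemma \ref{uniqmhc}, or directly by the argument of Corollary \ref{mtconn}, $g(D_{\pp_{s_0}})$ and $D_{\pp_{s_1}}$ are $\pp_{s_1}(\R)^+\pp_{s_1}^u(\C)$-orbits in the set of mixed Hodge structures on $\V_{s_1}$ that share the point $m_1$. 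They are therefore equal for the tautological faithful representation.

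The main obstacle is upgrading this to the definition of essential equivalence, which requires $j_{\rho\circ\iota}(D_{\pp_{s_0}}) = j_\rho(D_{\pp_{s_1}})$ for every faithful $\rho$, and not only for the standard one. For this I would argue as in the proof of Proposition \ref{mtbound}. Both sides are $\pp_{s_1}(\R)^+\pp_{s_1}^u(\C)$-orbits containing the image of $m_1$. The equivariance of $\psi_{\rho,X}$ and the independence statement of Proposition \ref{hodgedat}(iii) show that equality of the orbits for one faithful representation propagates to all of them.

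We conclude that $\iota$ realizes an essential equivalence, so the class $(\pp_{s_0}, D_{\pp_{s_0}})$ is independent of $s_0$ up to the canonical identification. That identification depends on the path only through conjugation by an element of $\rho(\pi_1)$, which lies in $\pp_{s_1}(\Q)$; this only changes the choice of representative within the class.
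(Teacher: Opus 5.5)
Your proof is correct and follows the paper's route: transport by $g_{\tilde{s}_0,\tilde{s}_1}$, the commutative square of Lemma \ref{idpermap}, and the uniqueness statement of Lemma \ref{uniqmhc}. Your extra check that $j_{\rho\circ\iota}(D_{\pp_{s_0}}) = j_\rho(D_{\pp_{s_1}})$ for every faithful $\rho$ just spells out what the paper compresses into ``unwrapping definitions''. One small correction to your closing remark on path-dependence, which is not needed for the statement: in general the monodromy only normalizes $\pp_{s_1}$, and it lies in $\pp_{s_1}(\Q)$ only after the finite \'etale cover of Corollary \ref{neatetal}. This does not affect the argument.
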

 \begin{proof}
Let $s_1 \in S^\an$ and $\tilde{s}_1 \in \widetilde{S^\an}$ be a lift of $s_1$. Unwrapping definitions, it suffices to prove that $g_{\tilde{s}_0, \tilde{s}_1}(D_{\pp_{s_0}}) \subset D_{\pp_{s_1}}$, where we use the notation introduced in Lemma \ref{idpermap}. This follows from the uniqueness statement in Lemma \ref{uniqmhc} and the commutativity of the diagram in Lemma \ref{idpermap}.
 \end{proof}
 Denote by $(\pp, D_\pp)$ the corresponding mixed Hodge class. Summing up, we have shown that for any $\tilde{s} \in \widetilde{S^\an}$, the map $\tilde{\Phi}_{\tilde{s}}$ factors as a holomorphic map $\widetilde{S^\an} \rightarrow D_{\pp_{\pi(\tilde{s})}}$ and that the map obtained for any other choice of base point $\tilde{s}' \in \widetilde{S^\an}$ can be canonically identified as in Lemma \ref{idpermap}. 
\begin{definition}
\begin{itemize}
\item[(a)] The mixed Hodge datum $(\pp_{s_0}, X_{\pp_{s_0}}, D_{\pp_{s_0}})$ is called the \textit{generic mixed Hodge datum of $\V$ at $s_0$};
\item[(b)] The mixed Hodge class $(\pp,D_\pp)$ is called the \textit{generic mixed Hodge class of $\V$};
\item[(c)] The map $\tilde{\Phi}_{\tilde{s}_0} : \widetilde{S^\an} \rightarrow D_{\pp_{s_0}}$ is called the \textit{(normalized) lifted period map of $\V$ at $\tilde{s}_0$}.
\end{itemize}
\end{definition}
As a consequence of Lemma \ref{holper} and Griffiths' transversality axiom, one finds:
\begin{lemma}\label{holpernorm}
The normalized lifted period map of $\V$ at $\tilde{s}_0$ is holomorphic and horizontal, i.e. for any $\tilde{s} \in \widetilde{S^\an}$ the derivative $d_{\tilde{s}} \tilde{\Phi}_{\tilde{s}_0}$ of $\tilde{\Phi}_{\tilde{s}_0}$ at $\tilde{s}$ has image in $T_{\tilde{\Phi}_{\tilde{s}_0}(\tilde{s})}^{\mathrm{horiz}} D_{\pp_{s_0}}$.
\end{lemma}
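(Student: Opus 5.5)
The plan is to prove holomorphy and horizontality separately, in both cases reducing to facts already established for the unnormalized lifted period map into $M_{s_0}$.

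\textbf{Holomorphy.} By Lemma \ref{holper}, the map $\tilde{\Phi}_{\tilde{s}_0} : \widetilde{S^\an} \rightarrow M_{s_0}^+$ is holomorphic. By Lemma \ref{uniqmhc}, it takes values in $D_{\pp_{s_0}}$. By Lemma \ref{subdatimm}, the inclusion $D_{\pp_{s_0}} \hookrightarrow M_{s_0}^+$ coming from the mixed Hodge subdatum is a closed complex-analytic immersion, so it identifies $D_{\pp_{s_0}}$ with a closed complex submanifold of $M^+_{s_0}$. A holomorphic map into a complex manifold with image in a closed embedded submanifold is holomorphic as a map into that submanifold. This gives holomorphy of the factored map.

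\textbf{Horizontality.} Let $\tilde{s} \in \widetilde{S^\an}$ and $h = \tilde{\Phi}_{\tilde{s}_0}(\tilde{s})$. We use the identifications $\Theta_h$ together with functoriality property $(ii)$ for the inclusion of classes $(\pp_{s_0}, D_{\pp_{s_0}}) \hookrightarrow (\pp^{M_{s_0}}, M^+_{s_0})$. These show that the differential of the inclusion corresponds to the map
\[
\para_\C/F^0_h\para \rightarrow \mathfrak{gl}(\V_{s_0})_\C/F^0_h
\]
(restricted to the Lie algebra of $\pp^{M_{s_0}}$). This map is injective because morphisms of mixed Hodge structures are strict with respect to $F^\bul$. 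It also sends the $F^{-1}/F^0$ piece into the $F^{-1}/F^0$ piece. By strictness, it is therefore enough to check horizontality after composing with the inclusion into $M^+_{s_0}$. Indeed, strictness gives $F^{-1}_h\para = \para_\C \cap F^{-1}_h \mathfrak{gl}$, so a tangent vector of $D_{\pp_{s_0}}$ whose image is horizontal in $M_{s_0}^+$ is already horizontal in $D_{\pp_{s_0}}$.

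It then remains to show that $d_{\tilde s}\tilde{\Phi}_{\tilde{s}_0}$, viewed in $T_h M_{s_0}^+$, lies in the image of $F^{-1}_h\mathfrak{gl}(\V_{s_0})_\C$. Here we use the description of $T_hM_{s_0}^+$ as a space of first-order deformations of the filtration. Under the trivialization $i_{\tilde{s},\tilde{s}_0}$, the flat connection becomes the trivial connection $d$. A tangent vector $v$ then acts on the filtration by sending a local section $\sigma$ of $\mathcal{F}^p$ to the class of $\nabla_v\sigma$ in $\V_{s_0,\C}/F^p$. Griffiths transversality $\nabla\mathcal{F}^p \subset \mathcal{F}^{p-1}\otimes\Omega^1$ says that this class lies in $F^{p-1}/F^p$ for every $p$. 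An element $X \in \mathfrak{gl}_\C$ whose induced map $F^p \rightarrow \V/F^p$ lands in $F^{p-1}/F^p$ for every $p$ can be modified by an element of $F^0\mathfrak{gl}$ so that it lies in $F^{-1}\mathfrak{gl}$. To see this, choose a splitting via a bigraduation $x$ and use the decomposition $\mathfrak{gl}_\C = \bigoplus \mathfrak{gl}_x^{p,q}$. The components of $X$ with $p \leqslant -2$ must then vanish, as one sees from the grading.

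I expect the main (mild) obstacle to be this final linear-algebra identification of the tangent vector of the period map with an element of $\mathfrak{gl}_\C/F^0$. The point to check is that the orbit-map identification $\Theta_h$ matches the "derivative of the filtration" description. I would verify this using property $(i)$ of $\Theta$: the curve $\exp(tX)\cdot h$ moves $F^p$ to $\exp(tX)F^p$, whose derivative is the class of $X|_{F^p}$ in $\V/F^p$.
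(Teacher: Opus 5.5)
Your proposal is correct and follows the paper's route: the paper deduces the lemma directly from Lemma \ref{holper} and Griffiths transversality, and your two additions make that deduction explicit, namely factoring through the closed submanifold $D_{\pp_{s_0}}$ and using the strictness identity $F^{-1}_h\para = \para_\C \cap F^{-1}_h\mathfrak{gl}(\V_{s_0})_\C$. One minor simplification: the condition that $X$ sends $F^p$ into $F^{p-1}$ modulo $F^p$ already says $X(F^p)\subset F^{p-1}$ for all $p$, i.e.\ $X\in F^{-1}\mathfrak{gl}$, so no correction by an element of $F^0\mathfrak{gl}$ is needed.
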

\subsubsection*{The normalized period map}
We saw above that $\tilde{\Phi}_{\tilde{s}_0}(\widetilde{S^\an}) \subset D_{\pp_{s_0}}$, which implies that
\[
\im \Phi^M \subset \phi_{i, \Gamma^M}(\period_{(\pp, D_\pp), \Gamma^M \cap \pp(\Q)^+}).
\]
In general $\Phi^M$ does not necessarily factor through a map $S^\an \rightarrow \period_{(\pp, D_\pp), \Gamma^M \cap \pp(\Q)^+}$, as it might happen that $\im(\rho)$ is not contained in full in $\pp(\R)^+$.  Assume that $\im(\rho)$ is contained in a neat arithmetic subgroup of $\pp(\R)^+$ which, by Corollary \ref{neatetal}, can be arranged after replacing $S$ by a finite étale cover and $\V$ by its pullback along this cover. Then $\tilde{\Phi}_{\tilde{s}_0} : \widetilde{S^\an} \rightarrow D_{\pp_{s_0}}$ is equivariant under $\rho : \pi_1(S^\an, s_0) \rightarrow \Gamma \subset \pp_{s_0}(\Q)^+$ and descends to a holomorphic map $\Phi : S^\an \rightarrow \period_{(\pp, D_\pp), \Gamma}$, which does not depend on the initial choice of $\tilde{s}_0$ as a consequence of Lemma \ref{idpermap} and the fact that $\Gamma$ contains all parallel transport operators.
\begin{definition}
Assume that $\im(\rho)$ is contained in a neat arithmetic subgroup $\Gamma$ of $\pp(\R)^+$. The holomorphic map between complex manifolds $\Phi : S^\an \rightarrow \period_{(\pp, D_\pp), \Gamma}$ constructed above is called \textit{the (normalized) period map of $\V$}.
\end{definition}
Remarkably, the operation that associates to $\V$ its normalized period map essentially loses no information, as the following classical result asserts: 
\begin{proposition}\label{pervarequiv}
Let $(\pp, D_\pp)$ be a mixed Hodge class, let $\Gamma \subset \pp(\Q)^+$ be a neat arithmetic subgroup and $s \in S^\an$. The map which associates to an integral and graded-polarized variation of mixed Hodge structures over $S$ with generic mixed Hodge class $(\pp, D_\pp)$ the normalized period map of $\V$ and the faithful representation $\pp \cong \pp_s \hookrightarrow \gl(\V_s)$ is a bijection between:
\begin{itemize}
\item the set of integral and graded-polarized variations of mixed Hodge structures on $S$ with generic mixed Hodge class $(\pp, D_\pp)$ and whose associated monodromy representation has image in $\Gamma$;
\item the set of pairs $(\Phi, \rho)$ where $\Phi$ is a holomorphic horizontal map from $S^\an$ to $\period_{(\pp, D_\pp), \Gamma}$ and $\rho$ is a faithful representation of $\pp$.
\end{itemize}
\end{proposition}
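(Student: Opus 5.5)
The plan is to build mutually inverse constructions, using the material of Sections 2 and 3.

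\emph{Forward direction.} Given $\V$ with generic mixed Hodge class $(\pp, D_\pp)$ and monodromy in $\Gamma$, the period map $\Phi$ is holomorphic and horizontal by Lemma \ref{holpernorm}. The representation $\rho_s : \pp \cong \pp_s \hookrightarrow \gl(\V_s)$ is faithful by construction. The identification $\pp \cong \pp_s$ is the one fixed by the essential-equivalence formalism, so it is independent of choices by Lemma \ref{idpermap} and the subsequent corollary.

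\emph{Inverse direction.} Start from a pair $(\Phi, \rho)$ with $\rho : \pp \hookrightarrow \gl(V)$. Lift $\Phi$ to $\tilde\Phi : \widetilde{S^\an} \rightarrow D_\pp$; this lift is equivariant for some representation $\pi_1(S^\an,s) \rightarrow \Gamma$, because $D_\pp \rightarrow \Gamma\quo D_\pp$ is a covering, $\Gamma$ being neat. Then proceed in four steps.
\begin{itemize}
\item Define the local system $\V_\Q$ by $\widetilde{S^\an}\times V$ modulo the diagonal action through $\rho$. The integral structure comes from a $\Gamma$-stable lattice $V_\Z \subset V$, which exists since $\Gamma$ is arithmetic.
\item Take the weight filtration and the graded polarization to be the constant ones defined by any point of $D_\pp$. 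These are $\pp$-invariant up to scaling by Lemma \ref{pinkcarac}, because $\pp \subset \ppol$.
\item Take the Hodge filtration to be $j_\rho(\tilde\Phi(\tilde s))$, which descends because of equivariance.
\item Check the axioms. Holomorphy follows from Proposition \ref{hodgedat}(ii). Griffiths transversality follows from horizontality, using the identification $T^{\mathrm{horiz}}D_\pp \cong F^{-1}\para/F^0\para$ together with the fact that $\para$ acts on $V$ compatibly with the filtrations. The fibres are graded-polarized mixed Hodge structures by Proposition \ref{hodgedat}(i).
\end{itemize}

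\emph{Checking the generic class.} The resulting variation has generic Mumford--Tate group contained in $\rho(\pp)$, by Proposition \ref{mtbound}. The main obstacle is equality: the period map could a priori land in a smaller Hodge subvariety. I would handle this by reading the second set as consisting of pairs for which $\Phi$ has generic class exactly $(\pp,D_\pp)$, i.e.\ $\Phi(S^\an)$ lies in no strict special subvariety. This is implicit in the statement, and I would argue it is the intended meaning. With that reading, Lemma \ref{uniqmhc}(ii) gives equality of the generic class.

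\emph{Mutual inverseness.} Starting from $\V$, the reconstructed local system, filtrations and polarization agree with the original ones via parallel transport $i_{\tilde s,\tilde s_0}$, because the period map was defined by transporting exactly these data. Conversely, starting from $(\Phi,\rho)$, the period map of the reconstructed variation is $\Phi$ by construction, and its fibre representation is $\rho$. The only bookkeeping is the uniqueness of the Mumford--Tate domain from Corollary \ref{mtconn}, which ensures the period map lands in the same $D_\pp$.
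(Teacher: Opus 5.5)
The paper states this proposition as classical and gives no proof, so your argument is judged on its own terms. Your construction is the standard one: lift $\Phi$ to a $\Gamma$-equivariant $\tilde\Phi$, form $\widetilde{S^\an}\times_{\pi_1(S^\an,s)}V$, read the Hodge filtration off $j_\rho\circ\tilde\Phi$, and deduce Griffiths transversality from horizontality. Your reinterpretation of the second set as Hodge-generic $\Phi$ is not just harmless but necessary. A horizontal $\Phi$ with image in $\pi_\Gamma(D_\M)$, for a strict subclass $(\M,D_\M)$, yields a variation whose generic class is strictly smaller than $(\pp,D_\pp)$, so the map is not surjective as literally stated.

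The genuine gap is the integral structure. The pair $(\Phi,\rho)$, with $\rho$ a $\Q$-representation, only determines the rational variation. The monodromy $\pi_1(S^\an,s)\to\Gamma$ is recovered from $\Phi$ because $\Gamma$ acts freely on $D_\pp$, but nothing singles out a lattice. Your inverse construction picks ``a $\Gamma$-stable lattice'', which is far from unique, so the inverse depends on an arbitrary choice. In the mutual-inverseness paragraph you implicitly take $V_\Z=\V_{\Z,s}$, which cannot be read off from $(\Phi,\rho)$.

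In fact the forward map is not injective on integral variations. Take $S$ a point and a $2$-isogeny $E\to E'$ of elliptic curves without CM. Pullback identifies $H^1(E',\Z)$ with an index-$2$ sublattice of $H^1(E,\Z)$, inside one $\Q$-Hodge structure whose automorphism group is $\Q^\times$. An index-$2$ sublattice is never of the form $cL$, so the two integral Hodge structures are not isomorphic. Yet they have the same period point and the same $\rho$. To get a correct statement you must either work with rational variations (equivalently, integral ones up to isogeny), or enlarge the data to triples $(\Phi,\rho,V_\Z)$ with $V_\Z\subset V$ a lattice stable under the monodromy determined by $\Phi$. With either change your argument goes through.

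There are two smaller points.
\begin{itemize}
\item Invariance of the $q_k$ up to scaling does not make them flat; you need the similitude characters $\nu_k$ to be trivial on $\Gamma$. This holds. For $\gamma\in\Gamma$, comparing determinants on a $\Gamma$-stable lattice of $\gr^W_k V$ gives $\nu_k(\gamma)^{\dim \gr^W_k V}=\det(\gr^W_k\gamma)^2=1$. Since $\nu_k$ is positive on the connected group $\pp(\R)^+$, it follows that $\nu_k(\gamma)=1$.
\item The identification $\pp\cong\pp_s$ is not canonical: it depends on the lift $\tilde s$ and on automorphisms of $(\pp,D_\pp)$. Lemma \ref{idpermap} and the corollary after it show that the \emph{class} is independent of choices, not the identification. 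So both sets must be understood up to the evident isomorphisms.
\end{itemize}
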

%
\subsection{Definable fundamental domains and algebraic period maps}
In general, a mixed Hodge variety $\period_{(\pp, D_\pp), \Gamma}$ is not algebraic but merely a (smooth) complex-analytic variety, and the period map is only a holomorphic map. However in recent years, the use of o-minimal techniques enabled Hodge theorists to bound the transcendence of these analytic objects. We recall the aspects relevant to this work in the present section.

We refer to \cite{vdd} for a general introduction to o-minimality and to \cite{klingicm} and references therein for a general discussion of the applications of o-minimality to Hodge theory. Our main reference for the theory of complex-analytic definable spaces for some o-minimal structure is \cite{bbt}. We start with some general recollection on fundamental sets in the o-minimal setting.
\begin{definition}[{\cite[Def. 2.1]{bbkt}}]\label{fundset}
Let $X$ be a locally compact Hausdorff topological space, endowed with a definable space structure (in an o-minimal structure) in the sense of \cite[Chap. 10, (1.2)]{vdd}. Let $\Gamma$ be a group acting on $X$ by definable homeomorphisms. A \textit{fundamental set} for the action of $\Gamma$ on $X$ is an open definable subset $\mathscr{F} \subseteq X$ such that:
\begin{itemize}
\item[(a)] $\Gamma \cdot \mathscr{F} = X$;
\item[(b)] the set $\{\gamma \in \Gamma \hspace{0.1cm} : \hspace{0.1cm} \gamma \cdot \mathscr{F} \cap \mathscr{F} \neq \emptyset\}$ is finite.
\end{itemize}
\end{definition}
The existence of such fundamental sets is enough to endow the quotient with a definable structure as the following asserts.
\begin{proposition}[{\cite[Prop. 2.3]{bbkt}}]\label{ominfund}
In the setting of Definition \ref{fundset}, if $\mathscr{F}$ is a fundamental set for the action of $\Gamma$ on $X$, there exists a unique structure of definable space (in the same o-minimal structure) on $\Gamma \quo X$ such that the restriction $\mathscr{F} \rightarrow \Gamma \quo X$ to $\mathscr{F}$ of the quotient map is definable.
\end{proposition}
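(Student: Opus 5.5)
The plan is to realise $\Gamma \quo X$ as the quotient of the definable set $\mathscr{F}$ by a \emph{definable} equivalence relation. We then invoke van den Dries' theory of definable quotients (\cite[Chap. 10]{vdd}) for existence, and use the surjectivity of $\pi|_{\mathscr{F}}$ for uniqueness. Here $\pi : X \rightarrow \Gamma \quo X$ denotes the quotient map.

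\emph{Step 1: the equivalence relation is definable.} By condition (b) of Definition \ref{fundset}, the set $\Gamma_{\mathscr{F}} := \{\gamma \in \Gamma : \gamma\cdot\mathscr{F} \cap \mathscr{F} \neq \emptyset\}$ is a finite set $\{\gamma_1, \dots, \gamma_n\}$. Hence the relation induced on $\mathscr{F}$ by the $\Gamma$-action is
\[
E := \{(x,y) \in \mathscr{F} \times \mathscr{F} : y \in \Gamma \cdot x\} = \bigcup_{i=1}^n \{(x, \gamma_i x) : x \in \mathscr{F} \cap \gamma_i^{-1}\mathscr{F}\}.
\]
Each $\gamma_i$ acts by a definable homeomorphism, so this is a finite union of graphs of definable maps on definable open sets, and $E$ is definable. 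Each graph is closed in $(\mathscr{F}\cap\gamma_i^{-1}\mathscr{F}) \times \mathscr{F}$ because $X$ is Hausdorff. Moreover, the projections $E \rightarrow \mathscr{F}$ are finite-to-one, with fibres of cardinality at most $n$. By condition (a), the natural map $\mathscr{F}/E \rightarrow \Gamma \quo X$ is a bijection.

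\emph{Step 2: existence.} We apply the definable quotient theorem of \cite[Chap. 10]{vdd} to $(\mathscr{F}, E)$. This theorem produces a definable space $\mathscr{F}/E$ together with a definable surjection $\mathscr{F} \rightarrow \mathscr{F}/E$ whose fibres are the $E$-classes. We transport this structure to $\Gamma \quo X$ via the bijection of Step 1. Definability of $\pi|_{\mathscr{F}}$ then holds by construction. We must still check that the definable-space topology agrees with the quotient topology on $\Gamma \quo X$. This reduces to showing that $\pi|_{\mathscr{F}}$ is open and that the definable charts are compatible with it. Openness holds because $\pi^{-1}(\pi(U)) = \bigcup_\gamma \gamma U$ is open for every open $U \subset \mathscr{F}$.

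\emph{Step 3: uniqueness.} Suppose we have two definable-space structures $\mathcal{S}_1, \mathcal{S}_2$ on $\Gamma\quo X$, each making $\pi|_{\mathscr{F}}$ definable. The graph of $\mathrm{id} : (\Gamma \quo X, \mathcal{S}_1) \rightarrow (\Gamma\quo X, \mathcal{S}_2)$ is the image of the diagonal of $\mathscr{F}$ under the definable map $\pi|_{\mathscr{F}} \times \pi|_{\mathscr{F}}$. Since images of definable sets under definable maps are definable, this graph is definable, so the identity is a definable isomorphism.

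The main obstacle is Step 2, namely verifying the precise hypotheses of the definable quotient theorem. These are that $E$ is closed in $\mathscr{F}\times\mathscr{F}$ and that the projection $E \rightarrow \mathscr{F}$ is definably proper. Closedness is delicate because $\mathscr{F}$ is open rather than closed, so a limit of points $\gamma_i x_k$ might leave $\mathscr{F}$. If these hypotheses cannot be checked directly, I would instead build the charts by hand. Concretely, I would take a finite open definable cover of $\mathscr{F}$, compatible with the finitely many sets $\mathscr{F}\cap\gamma_i^{-1}\mathscr{F}$, on each piece of which $\pi$ is injective. The images of these pieces then serve as definable charts, and the transition maps are restrictions of the $\gamma_i$, hence definable.
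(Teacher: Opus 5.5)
The paper gives no argument of its own here (it quotes \cite[Prop. 2.3]{bbkt}), so your proposal has to stand on its own. Steps 1 and 3 are correct; uniqueness in Step 3 assumes both structures induce the quotient topology. The gap is existence (Step 2): neither of your two routes works as stated.

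For the quotient theorem of \cite[Chap. 10]{vdd}, closedness is not the issue. $E=(\mathscr{F}\times\mathscr{F})\cap\bigcup_i\mathrm{Graph}(\gamma_i)$ is closed in $\mathscr{F}\times\mathscr{F}$, because graphs of continuous self-maps of a Hausdorff space are closed. What fails is definable properness of $E\to\mathscr{F}$. Take $X=\R$, $\Gamma=\Z$ acting by translations, and $\mathscr{F}=(-1,1)$. The preimage of the definably compact set $[-\tfrac12,0]$ contains $\{(x,x+1): -\tfrac12\leqslant x<0\}$, which accumulates at $(0,1)\notin\mathscr{F}\times\mathscr{F}$. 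The phenomenon you noticed (limits of $\gamma_i x_k$ leaving $\mathscr{F}$) destroys properness, so the theorem does not apply to $(\mathscr{F},E)$.

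Your fallback assumes $\pi|_{\mathscr{F}}$ is a local homeomorphism, i.e.\ that $\Gamma$ acts freely, which Definition \ref{fundset} does not require. For $\Z/2$ acting on $\R$ by $x\mapsto -x$ with $\mathscr{F}=\R$, or the infinite dihedral group with $\mathscr{F}=(-1,1)$, no neighbourhood of $0$ maps injectively. Even for free actions (the only case used later, since $\Gamma$ is neat), you assert but do not prove that $\mathscr{F}$ has a \emph{finite} definable open cover by injectivity domains. Pointwise you only get an infinite definable family of such neighbourhoods, and finiteness of the atlas is exactly the o-minimal content of the statement.

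One way to repair Step 2 in general is to shrink $\mathscr{F}$ to a definable $C\subset\mathscr{F}$ that is closed in $X$ and still satisfies $\Gamma C=X$. Replace $X$ by an affine model; this is possible since $X$ is locally compact Hausdorff, hence regular, so van den Dries' affineness theorem applies. Put $\delta(x)=\min\{1,\operatorname{dist}(x,X\setminus\mathscr{F})\}$ and $C=\{x\in\mathscr{F}:\delta(x)\geqslant\delta(\gamma_ix)\text{ whenever }\gamma_ix\in\mathscr{F}\}$.
\begin{itemize}
\item $C$ meets every orbit, since each orbit meets $\mathscr{F}$ in a finite set and a $\delta$-maximiser lies in $C$.
\item $C$ is closed in $X$. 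Suppose $x_k\in C$ converge to $x=\eta y$ with $y\in\mathscr{F}$ and $x\notin\mathscr{F}$. Then eventually $\eta^{-1}\in\Gamma_{\mathscr{F}}$, while $\delta(x_k)\to0$ and $\delta(\eta^{-1}x_k)\to\delta(y)>0$, contradicting $x_k\in C$. If instead $x\in\mathscr{F}$, the defining inequalities pass to the limit.
\item On $C$ the induced relation is closed and its projection is definably proper, since $C\cap\gamma_i^{-1}C$ is closed in $C$. So the quotient theorem yields $p: C\to C/E_C$, and it also handles finite stabilisers.
\item The family $\{\eta C\}_{\eta\in\Gamma}$ is locally finite: $\eta'\mathscr{F}$ meets only those $\eta C$ with $\eta\in\eta'\Gamma_{\mathscr{F}}$. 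Hence $\pi|_C$ is a closed map, which identifies $C/E_C$ with $\Gamma\quo X$ topologically.
\item Finally, $\pi|_{\mathscr{F}}$ is definable because it agrees with $p\circ\gamma_i$ on the finitely many definable pieces $\mathscr{F}\cap\gamma_i^{-1}C$.
\end{itemize}
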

Let $(\pp, D_\pp)$ be a mixed Hodge class and $\Gamma$ a neat arithmetic subgroup of $\pp(\R)^+$. By Proposition \ref{hodgedat}$(iv)$, the space $D_\pp$ is a connected component of the real semi-algebraic subset $\mathcal{D}_\pp$ of a projective complex algebraic variety, the flag variety $\check{\mathcal{D}}_\pp$. This endows $D_\pp$ with the structure of a definable complex-analytic space in the o-minimal structure $\alg$. Furthermore elements of $\Gamma$ act on $D_\pp$ by $\alg$-definable homeomorphisms, hence we are in the setting of Definition \ref{fundset}.
\begin{theorem}[{\cite[Thm. 4.4]{bbkt}}]\label{defpermap}
Let $\V$ be an admissible and graded-polarized integral variation of mixed Hodge structures on a smooth and irreducible quasi-projective complex algebraic variety $S$, let $(\pp, D_\pp)$ be its generic mixed Hodge datum, and assume that the image of the monodromy representation associated to $\V$ lies in a neat arithmetic subgroup $\Gamma$ of $\pp(\R)^+$. Let $\Phi : S^\an \rightarrow \period_{(\pp, D_\pp), \Gamma}$  be the associated period map. Then,
\begin{itemize}
\item[(i)] there exists an $\alg$-definable fundamental set $\mathscr{F}$ for the action of $\Gamma$ on $D_\pp$ such that the induced (by Proposition \ref{ominfund}) structure of $\alg$-definable space on $\period_{(\pp, D_\pp), \Gamma}$ is functorial for morphisms of mixed Hodge varieties;
\item[(ii)] the map $\Phi$ is definable in $\anexp$ for the structure of $\anexp$-definable space on $S^\an$ (resp. $\period_{(\pp, D_\pp), \Gamma}$) extending the structure of $\alg$-definable space coming from its structure of algebraic variety (resp. the structure of $\alg$-definable space defined in $(i)$).
\end{itemize}
\end{theorem}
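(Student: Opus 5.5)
The statement is \cite[Thm. 4.4]{bbkt}, so the plan is to recover the structure of that argument in our normalized setting, using Corollary \ref{neatetal} to assume that the monodromy lies in $\Gamma$. Everything is reduced to the two factors of $\pp$: its reductive quotient $\G = \pp/\pp^u$ and its unipotent radical $\pp^u$.

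For $(i)$, I would fix a representative $(\pp, X_\pp, D_\pp)$ and a Levi decomposition $\pp = \LL \ltimes \pp^u$ over $\Q$. The quotient map $D_\pp \rightarrow D_\G$ onto the pure Mumford--Tate domain of $\G$ is a $\pp^u(\C)$-fibration. For $\Gamma_\G = \Gamma/(\Gamma \cap \pp^u)$, I would take finitely many Siegel sets $\mathfrak{S}_i$ in $\G(\R)^+$ together with a finite set $C \subset \G(\Q)$, so that $\mathfrak{F}_\G = C\cdot\bigcup_i \mathfrak{S}_i$ is a fundamental set. Such sets are semi-algebraic, and Borel's finiteness property gives condition $(b)$ of Definition \ref{fundset}. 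For the unipotent directions, $\Gamma \cap \pp^u(\Q)$ is cocompact in $\pp^u(\R)$, so it admits a relatively compact semi-algebraic fundamental set $\mathfrak{F}_u$ (for instance a box in Malcev coordinates); the complex directions of $\pp^u(\C)/\pp^u(\R)$ are handled by semi-algebraic slices over $\pp^u(\R)$.

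I would then set $\F = \mathfrak{F}_u \cdot \mathfrak{F}_\G \cdot x_0$, thickened slightly to be open. Semi-algebraicity is clear, and finiteness of $\{\gamma : \gamma\F \cap \F \neq \emptyset\}$ reduces to the Siegel property on $\G$ together with compactness of $\mathfrak{F}_u$. To make the structure functorial, one must check that for a morphism of mixed Hodge classes the images of Siegel sets are covered by finitely many Siegel sets of the target; this is Orr's result in the reductive case, and the unipotent part is compact. Proposition \ref{ominfund} then yields the definable structure.

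For $(ii)$, I would choose a log-smooth compactification $(\overline{S},E)$ and cover $S$ by finitely many relatively compact definable pieces. Away from $E$ these are compact, so definability there is automatic. Near $E$ they are of the form $(\Delta^\times)^k \times \Delta^{n-k}$. On such a polydisk the monodromies are unipotent after a finite cover, and the lifted period map can be written as
\[
\tilde{\Phi}(z) = \exp\Big(\sum_j z_j N_j\Big)\cdot \psi(e^{2i\pi z}),
\]
with $\psi$ holomorphic across the boundary, by admissibility (axiom $(A_2)$). Restricted to a vertical strip $\{0 \leqslant \mathrm{Re}\, z_j \leqslant 1, \ \mathrm{Im}\, z_j > 1\}$, this expression is $\anexp$-definable, since it involves restricted analytic functions and $\exp$ only.

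The main obstacle is to show that the image of such a strip meets only finitely many $\Gamma$-translates of $\F$, so that $\Phi$ is definable on the piece. In the pure case this follows from the multivariable $SL_2$-orbit theorem of Cattani--Kaplan--Schmid, which places nilpotent orbits in finitely many Siegel sets. In the mixed admissible case I would invoke the mixed $SL_2$-orbit theorem of Kato--Nakayama--Usui, or Pearlstein's version for admissible variations, applied to the relative weight filtrations from $(A_1)$. This controls the projection to $D_\G$ by the pure argument. For the unipotent coordinates, I would show they grow at most polynomially in $\mathrm{Im}\, z$ and remain in finitely many translates of $\mathfrak{F}_u$ after correcting by $\exp(\sum_j \mathrm{Re}(z_j) N_j)$, which ranges over a compact set. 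Carrying out this last verification is where I expect the real work to lie.
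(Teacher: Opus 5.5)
Insofar as you rest on \cite[Thm. 4.4]{bbkt}, you do exactly what the paper does: it gives no argument beyond that citation. Your overall architecture (reduction theory for $\pp/\pp^u$, a compact fundamental domain for $\Gamma\cap\pp^u(\Q)$ in $\pp^u(\R)$, nilpotent and $SL_2$-orbit theorems near the boundary) is the right kind of argument. As a reconstruction, however, it has two gaps.

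First, in $(i)$ the set $\F=\mathfrak{F}_u\cdot\mathfrak{F}_\G\cdot x_0$, even slightly thickened, does not cover $D_\pp$. It stays close to the $\pp(\R)$-orbit of $x_0$, whereas $D_\pp$ is only a $\pp(\R)\pp^u(\C)$-orbit. As soon as $W_{-2}\lie(\pp)\neq 0$ it has non-compact directions transverse to the $\pp(\R)$-orbits, so $\Gamma\cdot\F\neq D_\pp$. In the example of Section~\ref{sect6}, $D_\pp=\C^n$, $\Gamma$ acts by integral real translations and $\pp(\R)\cdot 0=\R^n$, so all of $i\R^n$ is missed.

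Your ``semi-algebraic slices'' cannot be the naive ones either. Saturating by $\exp(i\lie(\pp^u)_\R)$ destroys condition $(b)$ of Definition~\ref{fundset}. For normal functions (weights $0,-1$ only on $\lie(\pp)$), $\pp^u(\R)$ already acts transitively on the fibres of $D_\pp\to D_\G$. So $\F$ would contain whole fibres, and every $\gamma\in\Gamma\cap\pp^u(\Q)$ would satisfy $\gamma\F\cap\F\neq\emptyset$.

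What is missing is a canonical $\pp(\R)$-equivariant semi-algebraic transversal, which the Cattani--Kaplan--Schmid $\delta$-splitting provides. Each $F\in D_\pp$ is uniquely $e^{i\delta}\cdot F_\R$ with $F_\R$ $\R$-split and $\delta\in\Lambda^{-1,-1}_{F_\R}\cap\lie(\pp)_\R$. The map $F\mapsto F_\R$ is a semi-algebraic $\pp(\R)$-equivariant retraction onto the $\R$-split locus. You should take $\F$ to be the preimage of an open fundamental set on that locus; only then does your finiteness argument apply. The same correction is needed for functoriality: the unipotent part is compact only after this (functorial) retraction, not on $D_\pp$.

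Second, in $(ii)$ the decisive step is precisely the one you postpone, and the mechanism you indicate cannot work as stated. Real unipotent coordinates that grow, even polynomially, in $\mathrm{Im}\,z$ cannot stay in finitely many $\Gamma$-translates of the compact set $\mathfrak{F}_u$. What must be shown is that, on the strip, the growth of the mixed nilpotent orbit $\exp(\sum_j z_jN_j)\cdot F_\infty$ is entirely absorbed by the Siegel coordinate of the graded part and by the non-compact $\delta$-fibres. The residual $\pp^u(\R)$-coordinate of the $\R$-split retraction must stay bounded.

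This boundedness fails without admissibility. For an extension of $\Z(0)$ by a constant weight $-1$ structure with $N(W_0)\not\subset W_{-2}$, the Abel--Jacobi coordinate winds linearly around the compact torus. So this is exactly where you must use the relative weight filtrations of $(A_1)$ and a several-variable mixed $SL_2$-orbit theorem (Kato--Nakayama--Usui; Pearlstein's version is one-variable). You then need a comparison of $\tilde{\Phi}$ with its limiting nilpotent orbit.

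Note also that holomorphy of $\psi$ across a normal crossing boundary does not follow from axiom $(A_2)$, which is a condition along curves; it follows from Kashiwara's theorem \cite{kashiwara}. Until these points are carried out, $(ii)$ is not proved; once they are, you have essentially rederived \cite{bbkt}, which the paper simply invokes.
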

This definability is key for some remarkable algebraization theorems in Hodge theory. We start by recalling the o-minimal Chow theorem of Peterzil-Starchenko (in the restricted generality that we need), which is not specific to Hodge theory but will be used in the sequel.
\begin{theorem}[{\cite[Cor. 4.5]{PS}, \cite[Thm. 3.3]{bbt}}]\label{petstar}
Fix an o-minimal structure. Let $X$ be a smooth complex algebraic variety and endow $X^\an$ with the induced structure of complex-analytic definable space in the fixed o-minimal structure. Let $Y \subset X^\an$ be a closed complex-analytic and definable subset of $X^\an$. Then $Y$ is the analytification of an algebraic subset of $X$. 
\end{theorem}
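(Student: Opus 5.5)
The plan is to reduce to the classical Chow theorem. Since $X$ is smooth and quasi-projective, we fix an embedding $X \hookrightarrow \overline{X} \subset \mathbb{P}^N$ with $\overline{X}$ projective and $E := \overline{X} - X$ a Zariski-closed subset. The structure of definable space on $\overline{X}^\an$ induced from its semi-algebraic structure restricts to the given one on $X^\an$. Hence $Y$, and its topological closure $\overline{Y} \subset \overline{X}^\an$, are definable. If we show that $\overline{Y}$ is a closed complex-analytic subset of $\overline{X}^\an$, then Chow's theorem makes $\overline{Y}$ algebraic, and $Y = \overline{Y} \cap X^\an$ is the analytification of the algebraic subset $\overline{Y} \cap X$. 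We may work one irreducible component at a time: the components of $Y$ are again definable (by o-minimal cell decomposition, $Y$ has only finitely many components), so we may assume $Y$ has pure dimension $d$.

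The whole argument is therefore the analytic-extension statement: \emph{a pure $d$-dimensional closed analytic subset $Y$ of $U - E$, definable in $U \subset \C^N$, has analytic closure in $U$}. This is local, so we work in an affine chart $\C^N$ around a point $p \in \overline{Y} \cap E$. Two o-minimal inputs are used. First, the frontier $\overline{Y} - Y$ is definable of real dimension at most $2d-1$, because frontiers in o-minimal structures have strictly smaller dimension. Second, o-minimal uniform finiteness holds: for a definable family of finite sets, the cardinalities are uniformly bounded.

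Using these, we choose a generic linear projection $\pi : \C^N \to \C^d$ and small polydiscs $\Delta' \times \Delta''$ around $p$ such that $\overline{Y} \cap (\Delta' \times \partial\Delta'') = \emptyset$. Genericity is possible because $\overline{Y}$ has real dimension $2d$, so a generic $(N-d)$-dimensional linear fibre through $p$ meets $\overline{Y}$ in a set of dimension $0$ near $p$. Then $\pi : Y \cap (\Delta' \times \Delta'') \to \Delta'$ is proper away from $\pi(\overline{Y} - Y)$, and this set is a definable subset of real dimension at most $2d-1$, hence closed and nowhere dense. Its fibres are finite, and by uniform finiteness their cardinality is bounded by some $k$. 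Over the complement of a thin set, $Y$ is therefore a branched covering of bounded degree. Consider the elementary symmetric functions in the coordinates of the points of $Y$ in each fibre, or more canonically the coefficients of the canonical defining polynomials $P_\ell(z', w) = \prod_{y \in \pi^{-1}(z')} (\ell(w) - \ell(y))$ for linear forms $\ell$. These are holomorphic and bounded on $\Delta' - \pi(\overline{Y} - Y)$. They extend holomorphically across the thin set, by a Riemann extension theorem for bounded holomorphic functions off a closed set of real codimension at least $1$ that is definable; one checks continuity first and then applies Radó's theorem. The common zero locus of the extended $P_\ell$ is an analytic set that agrees with $\overline{Y}$ near $p$.

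I expect the main obstacle to be this extension step. The exceptional set $\pi(\overline{Y} - Y)$ has real codimension only $1$, not $2$, so the classical Riemann extension does not apply directly. The first thing I would try is Radó's theorem: the coefficients $P_\ell$ extend continuously to $\Delta'$ (by continuity of the roots, using $\overline{Y}$ and boundedness), and they are holomorphic off the zero set of a continuous function. An alternative is to show, using definability, that $\overline{Y} - Y$ is contained in a definable set of real dimension at most $2d-2$, by applying the same construction inductively on $\dim E$ and a stratification of $E$; the classical Remmert–Stein/Shiffman extension then applies. Once local analyticity of $\overline{Y}$ at every point of $E$ is established, Chow's theorem on $\overline{X}$ concludes as above.
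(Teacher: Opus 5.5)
The paper gives no proof of this statement; it quotes it from Peterzil--Starchenko and Bakker--Brunebarbe--Tsimerman. Your global reduction (show that the closure $\overline{Y}$ in a projective compactification is analytic, then apply Chow) is the standard way to deduce a definable Chow theorem from a definable Remmert--Stein extension theorem. So everything rests on the local extension step, and there your proposal has a genuine gap.

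The tool you invoke does not exist. There is no Riemann extension theorem for bounded holomorphic functions off a definable closed set of real codimension $1$: the function equal to $1$ on the upper half-plane and $0$ on the lower one is a counterexample. More fundamentally, your local argument never uses that $E$ is complex analytic, and without that hypothesis the conclusion is false. Take $U=\C^2$ with coordinates $(z,w)$, $E=\{\mathrm{Im}\,z\leqslant 0\}$ (closed and semi-algebraic), and $Y=\{w=0,\ \mathrm{Im}\,z>0\}$. Then $Y$ is a definable closed analytic subset of $U-E$ of pure dimension $1$, but $\overline{Y}=\{w=0,\ \mathrm{Im}\,z\geqslant 0\}$ is not analytic. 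Every step of your construction goes through here: the projection $\pi(z,w)=z$, properness off $\pi(\overline{Y}-Y)=\R$, and uniformly bounded fibres. Yet $\Delta'-\pi(\overline{Y}-Y)$ is disconnected, and the covering has degree $1$ on one side and $0$ on the other. So $P_\ell$ equals $\ell(w)$ over the upper half-plane and $1$ (the empty product) over the lower one, and it cannot extend.

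The same example blocks the continuity-plus-Rad\'o route. A priori the degree can jump across a real hypersurface, and the fibre of $\overline{Y}$ over a frontier point need not even be finite. Moreover, Rad\'o's theorem concerns functions holomorphic off their \emph{own} zero set, not off an arbitrary closed set. Your other alternative, proving $\dim_\R(\overline{Y}-Y)\leqslant 2d-2$ and then applying Shiffman, would suffice. But that bound is essentially the whole content of the theorem, you give no argument for it, and any argument must use the analyticity of $E$.

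One standard way to close the gap uses your second o-minimal input differently:
\begin{itemize}
\item Uniform finiteness combined with the Cauchy--Crofton formula shows that a bounded definable set of real dimension $2d$ has finite $2d$-dimensional Hausdorff measure. Hence $Y$ has locally finite volume near every point of $E$.
\item Bishop's extension theorem then applies: a pure $d$-dimensional closed analytic subset of $U-E$, with $E$ a closed \emph{analytic} subset of $U$ and locally finite volume near $E$, has analytic closure in $U$. This is exactly where the analyticity of $E$ enters.
\end{itemize}

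There are also some smaller points to fix:
\begin{itemize}
\item The existence of a complex $(N-d)$-plane through $p$ meeting $\overline{Y}$ only at $p$ near $p$ does not follow from a real dimension count, since complex planes are not generic among real ones. It does follow from the o-minimal bound on the dimension of the tangent cone of $\overline{Y}$ at $p$: the complex lines through $p$ meeting that cone nontrivially form a set of real dimension at most $2d-1$ in $\mathbb{P}^{N-1}$.
\item The statement only assumes $X$ smooth, so you should first reduce to the members of a finite affine cover.
\item You should work with the (definable) pure-dimensional parts of $Y$ rather than its irreducible components. Definability of the latter does not follow from cell decomposition alone.
\end{itemize}
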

A remarkable application of Theorems \ref{defpermap} and \ref{petstar} is the following, which recovers a fundamental result of Cattani-Deligne-Kaplan to be discussed later:
\begin{theorem}[{\cite[Cor. 6.7]{bbkt}}] \label{algspec}
Let $(\M,D_\M) \subsetneq (\pp, D_\pp)$ be a strict mixed Hodge subclass. Then, any complex-analytic irreducible component of $\Phi^{-1}(\phi_{i, \Gamma}(\period_{(\M,D_\M), \Gamma_\M}))$ is an irreducible algebraic subvariety of $S$.
\end{theorem}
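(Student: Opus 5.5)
The plan is to combine the definability of the period map (Theorem \ref{defpermap}) with the o-minimal Chow theorem (Theorem \ref{petstar}), reducing the problem to a definability statement in $S^\an$. Write $\period = \period_{(\pp,D_\pp),\Gamma}$ and $\period_\M = \phi_{i,\Gamma}(\period_{(\M,D_\M),\Gamma_\M})$. The first step is to show that $\period_\M$ is an $\alg$-definable subset of $\period$. By the functoriality in Theorem \ref{defpermap}(i), the map $\phi_{i,\Gamma}$ is $\alg$-definable once $\period_{(\M,D_\M),\Gamma_\M}$ carries its own definable structure. The image of a definable set under a definable map is definable, so this gives the claim.

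The second step pulls back along $\Phi$. By Theorem \ref{defpermap}(ii), $\Phi$ is $\anexp$-definable, so $\Phi^{-1}(\period_\M)$ is an $\anexp$-definable subset of $S^\an$. It is complex-analytic but only locally closed, since $\period_\M$ is merely locally closed in $\period$. To apply Theorem \ref{petstar} we need a closed analytic definable set.

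I will get closedness from the other special subvarieties. The closure of $\period_\M$ should be contained in a finite union of special subvarieties associated to subclasses of smaller dimension, and its boundary should be a definable set of strictly smaller dimension. The cleanest route is to replace $\Phi^{-1}(\period_\M)$ by $\Phi^{-1}(\overline{\period_\M})$, where $\overline{\period_\M}$ is the closure in the definable topology. This set is closed and definable, and it is analytic provided $\overline{\period_\M}$ is a closed analytic subset of $\period$. Alternatively, one can work on the open set $U = S^\an - \Phi^{-1}(\overline{\period_\M}-\period_\M)$, which is the complement of a closed definable set. By Theorem \ref{petstar} applied to the preimage of that closed definable set, $U$ is Zariski open. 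Theorem \ref{petstar} applies on the smooth algebraic variety $U$, where $\Phi^{-1}(\period_\M)\cap U$ is closed. This shows it is algebraic in $U$. Each irreducible component $Z$ is then the restriction of its Zariski closure $\overline{Z}$ in $S$.

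It remains to see that $Z$ is all of an irreducible algebraic subvariety of $S$. The components that meet the boundary locus must be handled by induction on $\dim \M$. The boundary of $\period_\M$ should be covered by images of Mumford--Tate domains of subgroups $\M' \subsetneq \M$ (Proposition \ref{mtbound}). On those strata the same argument applies. This shows the analytic component equals $\overline{Z}\cap(\text{locally closed algebraic})$, and it is analytically closed in $S^\an$ when $Z$ is a component of the preimage.

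I expect the main obstacle to be this closedness issue. It requires controlling the closure of the image of $D_\M$ in $\Gamma\backslash D_\pp$, including the non-closedness caused by the unipotent $\pp^u(\C)$-directions. I would first try to follow \cite[Cor. 6.7]{bbkt} directly. That argument shows $\Phi^{-1}(\period_\M)$ is locally the zero set of definable holomorphic functions, namely the Hodge-tensor conditions. It then uses the fact that an analytic component of a definable analytic set is definable, which holds because there are finitely many definable connected components. Combined with Theorem \ref{petstar}, this concludes the proof.
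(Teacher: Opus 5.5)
Your first two steps are the right skeleton. The paper gives no argument of its own: it cites \cite{bbkt} and presents the result as an application of Theorem \ref{defpermap} and Theorem \ref{petstar}. Your definability of $\period_\M=\phi_{i,\Gamma}(\period_{(\M,D_\M),\Gamma_\M})$ and of $\Phi^{-1}(\period_\M)$ is fine. The gap is the closedness step, which is exactly where the content lies.

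Theorem \ref{petstar} needs a \emph{closed} definable analytic subset of $S^\an$, and neither of your routes supplies one. Replacing $\period_\M$ by its closure simply assumes that $\overline{\period_\M}$ is analytic. Working on $U=S^\an\setminus\Phi^{-1}(\overline{\period_\M}\setminus\period_\M)$ applies Theorem \ref{petstar} to a set that is closed and definable but has no reason to be analytic: the frontier of the unit disc in $\C$ is a circle. No manipulation of this kind can succeed, because definability plus local closedness does not imply algebraicity. The open unit disc in $\mathbb{A}^1(\C)$ is semialgebraic and a locally closed analytic subset, yet it is not Zariski-constructible.

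The induction on $\dim\M$ is also unfounded. Take a frontier point $h=\lim_n\gamma_n\cdot m_n$ with $m_n\in D_\M$ and $\gamma_n\in\Gamma$ leaving every finite set. Then $h$ does not lie in $\Gamma\cdot D_\M$. The rational tensors cutting out $\gamma_n\M\gamma_n^{-1}$ are $\gamma_n$-translates of fixed tensors, hence unbounded, so no Hodge condition passes to the limit. Nothing forces $\pp(h)$ into a conjugate of $\M$, let alone into a proper subgroup $\M'\subsetneq\M$, and Proposition \ref{mtbound} says nothing about limits. Your last paragraph (local Hodge-tensor equations, definability of analytic components) again yields only local closedness.

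The missing idea is that there is no frontier. The set $\Gamma\cdot D_\M$ is closed in $D_\pp$, so $\period_\M$ is a closed analytic subset of $\period$; equivalently, $\Gamma_\M\backslash D_\M\to\Gamma\backslash D_\pp$ is a closed (indeed proper) map. This is an arithmetic fact about the $\Q$-subgroup $\M\subset\pp$ and the arithmetic group $\Gamma$, and o-minimality alone cannot provide it. In the reductive directions it is the classical properness of maps between arithmetic quotients. In the unipotent directions it rests on $\M^u$ being defined over $\Q$. Compare the torus case of Section \ref{sect6}, where $\Gamma\cdot D_\Lambda$ is closed because $D_\Lambda$ is cut out by rational linear equations.

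Granting this, $\Phi^{-1}(\period_\M)$ is a closed $\anexp$-definable analytic subset of $S^\an$. Theorem \ref{petstar} shows it is $Y^\an$ for a closed algebraic subset $Y\subset S$. Its analytic irreducible components are then the analytifications of the irreducible components of $Y$, since the analytification of an irreducible variety is irreducible.
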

Building on a wide generalization of the result of Peterzil-Starchenko, Bakker-Brunebarbe-Tsimerman proved the following strong algebraicity result, conjectured by Griffiths in the case of pure variations of Hodge structures.
\begin{theorem}[{\cite[Thm. 1.1]{bbtmixte}}]\label{facto}
There exists a complex algebraic quasi-projective variety $T$, a closed definable complex-analytic immersion $i : T^{\mathrm{def}} \hookrightarrow \period_{(\pp,D_\pp), \Gamma}$ and a dominant regular morphism $f : S \rightarrow T$ such that $\Phi = i^\an \circ f^\an$.
\end{theorem}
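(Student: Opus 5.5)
The statement is \cite[Thm. 1.1]{bbtmixte}, so I do not aim to reprove it from scratch. Instead I outline how I would deduce it from the definability results recalled above together with the definable GAGA machinery of \cite{bbt}. I work in the o-minimal structure $\anexp$ throughout, and after Corollary \ref{neatetal} I assume that $\Gamma$ is neat.

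\textbf{Step 1: fibres and the definable image.} By Theorem \ref{defpermap}, $\Phi : S^\an \rightarrow \period_{(\pp,D_\pp),\Gamma}$ is a definable holomorphic map between definable complex-analytic spaces. Its fibres $\Phi^{-1}(y)$ are closed definable analytic subsets of $S^\an$, so by Theorem \ref{petstar} they are algebraic. I then pass to the definable Stein factorization. Consider the relation $R = S^\an \times_{\period} S^\an \subset (S \times S)^\an$ given by lying in the same connected component of a fibre. It is closed, analytic and definable, hence algebraic again by Theorem \ref{petstar}. This yields a definable proper equivalence relation on $S$ whose definable quotient $T'$ maps to $\period$ with finite fibres. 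The image $\Phi(S^\an)$ is then a definable complex-analytic subspace of $\period$ by the definable proper mapping theorem of \cite{bbt}, applied after compactifying $S$ and using the local behaviour of $\Phi$ at the boundary; that local behaviour is supplied by admissibility.

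\textbf{Step 2: algebraization of the quotient.} The key input is a definable GAGA statement in the style of \cite{bbt}. A definable complex-analytic space that is the quotient of an algebraic space by an algebraic proper equivalence relation, and that carries a definable line bundle positive enough, is the analytification of a unique quasi-projective variety $T$, and definable maps between such spaces are algebraic. On the pure quotient I would use the Griffiths line bundle $\bigotimes_p \det \mathcal{F}^p$ of $\gr(\V)$. In the unipotent directions, that is along the fibres of $\period \rightarrow \period/\pp^u$, I would use the fact that $\period$ is a tower of torus and abelian-type fibrations over the pure Hodge variety. These fibrations carry natural relatively ample definable bundles, for instance the line bundles coming from the biextension structure. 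Applying this to $T'$ gives the algebraic $T$ and shows that $S \rightarrow T$ is regular and dominant by construction. Since $T'\rightarrow \period$ has finite fibres and is definable, its normalization is a closed immersion onto $\Phi(S^\an)$, which gives the factorization $\Phi = i^\an \circ f^\an$.

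\textbf{Main obstacle.} I expect the hard step to be quasi-projectivity, namely producing an ample line bundle on $T$ in the mixed setting. The Griffiths bundle of $\gr(\V)$ is only semi-positive and is trivial along fibres where only the extension data vary. One therefore has to show that the relative directions of $\period \rightarrow \period/\pp^u$ restricted to $\Phi(S)$ carry an ample definable bundle. My first attempt would be induction on the length of the weight filtration: at each step $\W_{k}$, realize the fibres of the period map as subvarieties of a generalized intermediate Jacobian or semi-abelian family, whose algebraicity and relative ampleness are classical. I would then glue these using positivity of the pure Griffiths bundle on the base, via a relative Nakai--Moishezon type argument.
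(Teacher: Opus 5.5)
The paper gives no proof of this statement. It is quoted from \cite{bbtmixte} and used as a black box, so the citation is the whole argument. Read as a proof, your outline has two genuine gaps.

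\textbf{Step 1 needs $\Phi$ to be proper, and it is not.}
\begin{itemize}
\item The fibre product $S^\an \times_{\period} S^\an$ is indeed algebraic by Theorem \ref{petstar}. However, its projections to $S$ fail to be proper as soon as $\Phi$ has a non-compact fibre, so it is not a proper equivalence relation.
\item The finer relation ``same connected component of a fibre'', which you conflate with it, need not even be closed for a non-proper map. Components of nearby fibres can be joined through infinity and then separate in the limit.
\item So there is no quotient $T'$ to work with. Compactifying $S$ does not help either, since $\Phi$ cannot extend over boundary components with infinite local monodromy.
\item Your phrase ``admissibility supplies the local behaviour'' hides exactly the step that has to be proved. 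After a finite \'etale cover, you must extend $\Phi$ over the boundary components with finite local monodromy and show that the extended map is proper.
\item Only then does the definable image theorem of \cite{bbt} apply. It directly puts an algebraic space structure on the closed image, which is $T$, with $i$ its inclusion. No Stein factorization is needed.
\item Your final assembly is wrong in any case. A finite map onto $\Phi(S^\an)$ does not become a closed immersion after normalization; normalizing a nodal image gives a finite, non-injective map.
\end{itemize}

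\textbf{Step 2 and your ``main obstacle'' rest on a false premise.}
\begin{itemize}
\item The fibres of $\period \to \period/\pp^u$ are in general not algebraic.
\item The weight $-1$ layer consists of generalized intermediate Jacobians. These are compact complex tori that are usually not abelian varieties, because the hermitian form coming from the polarization is indefinite once the level exceeds one.
\item The deeper layers are non-compact quotients of complex vector spaces by discrete subgroups of non-maximal rank.
\item So there is no classical relatively ample definable line bundle on $\period$ over $\period/\pp^u$ that you could restrict to the image.
\item The relevant positivity exists only along horizontal directions. Ampleness therefore has to be proved on $T$ itself, using Griffiths transversality of $\Phi$.
\item If you want a Nakai--Moishezon type criterion, you must first control the line bundle on a compactification of $T$, since $T$ is not proper.
\end{itemize}
That positivity argument is exactly the content of \cite{bbtmixte}. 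Your proposed induction on the weight filtration names the difficulty but does not resolve it.

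Note also that the algebraic-space structure and quasi-projectivity are separate issues. The former needs no line bundle at all, contrary to how you formulated your ``definable GAGA statement''.
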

We will sometimes use the above result through the following corollary:
\begin{corollary}\label{opensm}
There exists a non-empty Zariski open subset $S'$ of $S$ such that $\Phi((S')^\an)$ is a smooth locally closed subset of $\period_{(\pp, D_\pp),\Gamma}$.
\end{corollary}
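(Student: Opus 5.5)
The plan is to deduce the statement from Theorem \ref{facto} combined with generic smoothness of algebraic varieties. First, apply Theorem \ref{facto} to obtain a quasi-projective variety $T$, a dominant regular morphism $f : S \rightarrow T$, and a closed definable complex-analytic immersion $i : T^{\mathrm{def}} \hookrightarrow \period_{(\pp,D_\pp),\Gamma}$ with $\Phi = i^\an \circ f^\an$. Since $S$ is irreducible and $f$ is dominant, $T$ may be taken irreducible; replacing $T$ by its reduced structure if necessary, the smooth locus $T^{\sm}$ is a non-empty Zariski open subset of $T$.

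Next, shrink to the locus where $f$ has good image. By Chevalley's theorem, $f(S)$ is constructible and dense in $T$, so it contains a non-empty Zariski open subset $U$ of $T$. Set $U' = U \cap T^{\sm}$, which is non-empty and open, and define $S' = f^{-1}(U')$. This set is Zariski open in $S$ and non-empty, because $U' \subset f(S)$. Its image satisfies $f(S') = U'$, so
\[
\Phi((S')^\an) = i^\an((U')^\an).
\]

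It remains to check that $i^\an((U')^\an)$ is a smooth locally closed subset of $\period_{(\pp,D_\pp),\Gamma}$. Since $i$ is a closed complex-analytic immersion, $i^\an(T^\an)$ is a closed analytic subset isomorphic to $T^\an$. The subset $(U')^\an$ is open in $T^\an$ and smooth, so its image is an open subset of a closed analytic set. It is therefore locally closed, and it is smooth because $i$ restricts to an isomorphism onto its image.

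I expect the main obstacle to be the precise meaning of "closed definable immersion" in Theorem \ref{facto}. One must ensure that $i$ is an isomorphism of $T^\an$ onto a closed analytic subspace, and not merely an injective map, so that smoothness of $U'$ transfers to its image. The intended reading of \cite{bbtmixte} gives exactly this, so the argument goes through once that is granted.
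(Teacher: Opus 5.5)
Your proposal is correct and matches the paper's proof: factor $\Phi$ via Theorem \ref{facto}, use dominance to get a dense Zariski-open $U \subset f(S)$, restrict to its smooth locus, and set $S'$ to be the preimage under $f$. The point you flag, that the closed definable immersion $i$ is an isomorphism onto a closed analytic subspace, is also taken for granted in the paper.
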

\begin{proof}
Let $(T,i,f)$ as in Theorem \ref{facto}. As $f$ is dominant $f(S)$ contains a Zariski-dense Zariski-open subset $U$ of $T$. Let $U^\sm$ be the smooth locus of $U$ and $S' = f^{-1}(U^\sm)$ which is a non-empty Zariski open subset of $S$. By construction $\Phi((S')^\an) = i^\an((U^\sm)^\an)$ is a smooth locally closed complex analytic subset of $\period_{(\pp, D_\pp), \Gamma}$ as desired.
\end{proof}
\subsection{A preparation lemma}
The results of this work will be concerned with establishing density (for the Zariski or euclidean topology) and emptiness results for Hodge loci. The aim of this subsection is to explain that to prove such properties of Hodge loci one can freely replace $S$ by a finite étale cover or a non-empty Zariski-open subset, and to collect in a convenient definition all the assumptions that can be made on a mixed variation of Hodge structures up to making changes of $(S,\V)$ of the type described above.
\begin{lemma}\label{stabdens}
Let $S$ be a smooth and irreducible complex algebraic quasi-projective variety. Let $i : S' \rightarrow S$ be a dominant morphism. Let $\Sigma \subset S$ be any subset. Then:
\begin{itemize}
\item[(i)] if $i^{-1}(\Sigma)$ is dense in $S'^\an$ for the euclidean topology, then $\Sigma$ is dense in $S^\an$ for the euclidean topology;
\item[(ii)] if $i^{-1}(\Sigma)$ is Zariski dense in $S'$, then $\Sigma$ is Zariski dense in $S$.
\end{itemize}
In particular, this holds if $i$ is a finite étale cover or an open immersion.
\end{lemma}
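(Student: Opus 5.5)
The plan is to prove both assertions by contraposition, using only elementary topology and the basic properties of dominant morphisms of complex algebraic varieties. Set $\Sigma' = i^{-1}(\Sigma)$ and argue via closures. The key input is that a dominant morphism $i : S' \rightarrow S$ of algebraic varieties (with $S$ irreducible) has constructible image containing a non-empty Zariski-open subset $U$ of $S$. Moreover, $i^{\an} : S'^\an \rightarrow S^\an$ is continuous for the euclidean topologies.

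For (ii), let $Z$ be the Zariski closure of $\Sigma$ in $S$ and suppose $Z \neq S$. Then $i^{-1}(Z)$ is Zariski closed in $S'$ and contains $\Sigma'$. Since $\Sigma'$ is Zariski dense, $i^{-1}(Z) = S'$, so $i(S') \subset Z$. This contradicts dominance, because the image is then contained in a strict closed subset. Hence $Z = S$.

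For (i), let $\overline{\Sigma}$ be the euclidean closure of $\Sigma$ in $S^\an$. By continuity of $i^\an$, the set $(i^{\an})^{-1}(\overline{\Sigma})$ is closed and contains the dense set $\Sigma'$, so it equals $S'^\an$. Thus $i(S'^\an) \subset \overline{\Sigma}$. The image $i(S')$ contains a non-empty Zariski-open $U \subset S$. Since $S$ is irreducible, $U^\an$ is dense in $S^\an$ for the euclidean topology; this is the standard fact that a non-empty Zariski open subset of an irreducible variety is analytically dense. Therefore $\overline{\Sigma} \supset \overline{U^\an} = S^\an$.

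The only non-formal ingredients are Chevalley's theorem, which gives the open subset $U$ inside the image, and the euclidean density of Zariski-open subsets of irreducible varieties. The latter step is the one I expect to need a citation rather than a proof. Finally, both finite étale covers and open immersions with non-empty source are dominant onto the irreducible $S$: finite étale maps are open and surjective onto a component, and a non-empty open subset of an irreducible variety is dense. This gives the last sentence of the statement.
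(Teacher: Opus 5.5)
Your proof is correct and follows essentially the same route as the paper: a strict closed subset $Z$ (euclidean or Zariski) containing $\Sigma$ would force $i^{-1}(Z)=S'$, hence $i(S')\subset Z$. This is impossible because the image of the dominant morphism $i$ contains a non-empty Zariski-open subset of the irreducible $S$ (Chevalley), and such a subset is dense for both topologies. The paper uses this fact implicitly and you make it explicit; your treatment of (ii) is slightly more direct, since it uses dominance alone, whereas the paper runs both cases through the density of Zariski-open subsets.
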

\begin{proof}
Assume that $\Sigma$ is contained in a strict subset $Z$ of $S$ closed for the euclidean (resp. Zariski) topology. Then $i^{-1}(\Sigma)$ is contained in $i^{-1}(Z)$ which is a closed subset of $S'$ (for the same topology). One simply has to argue it is a strict subset. Since $i$ is dominant and $S$ is irreducible (hence a non-empty Zariski open subset of $S$ is dense in $S(\C)$ for both the euclidean and the Zariski topology) one has $Z \cap i(S') \subsetneq i(S')$ hence $i^{-1}(i(S')-Z) \neq \emptyset$. On the other hand, one tautologically has $i^{-1}(Z) \cap i^{-1}(i(S')-Z) = \emptyset$. This shows that $i^{-1}(Z) \subsetneq S'$.
\end{proof}
\begin{definition}
Let $\V$ be an integral variation of mixed Hodge structures on a smooth and irreducible quasi-projective variety $S$. We say that $\V$ is \textit{effective} if the following hold:
\begin{itemize}
\item[(a)] $\V$ is graded-polarized;
\item[(b)] $\V$ is admissible;
\item[(c)] the image of the monodromy representation of $\V$ at $s$ is contained in a neat arithmetic subgroup of $\pp_s(\Q)^+$ for some (equivalently any) $s \in S^\an$;
\item[(d)] the image of the period map of $\V$ is a smooth locally closed complex-analytic subset of $\period_{(\pp, D_\pp), \Gamma}$.
\end{itemize}
\end{definition}
Let $S$ be a smooth and irreducible quasi-projective variety and $\V$ a variation of mixed Hodge structures on $S$. Let $S' \subset S$ (resp. $\pi : S' \rightarrow S$) be a non-empty Zariski open subset (resp. a finite étale cover). Then the local system $\V \vert_{S'}$ (resp. $\pi^\ast \V$) is easily seen to still carry a variation of mixed Hodge structures. In the sequel, by "up to replacing $S$ by a non-empty Zariski open subset $S'$" we will mean replacing $(S, \V)$ by $(S', \V\vert_{S'})$. By "up to replacing $S$ by a finite étale cover $\pi : S' \rightarrow S$", we will mean replacing $(S, \V)$ by $(S', \pi^\ast \V)$.
\begin{proposition}\label{pbeff}
Let $S$ be a smooth and irreducible complex quasi-projective variety and $\V$ be an admissible and graded-polarized integral variation of mixed Hodge structures on $S$. After successively replacing finitely many times $S$ by non-empty Zariski-open subsets or finite étale covers, the variation $\V$ can be made effective.
\end{proposition}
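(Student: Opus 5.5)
The plan is to handle the four conditions (a)--(d) of effectivity one after the other. Each step will replace $S$ either by a finite étale cover or by a non-empty Zariski-open subset, and at each step we check that the conditions already obtained are not destroyed.

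\textbf{Conditions (a) and (b).} Graded-polarization is stable under pullback along any morphism: the flat forms $\underline{q_k}$ simply pull back. Admissibility is also preserved under both kinds of modification.
\begin{itemize}
\item For a Zariski-open $S' \subset S$, choose a log-smooth compactification of $S'$ dominating one of $S$. Any disc $\Delta \to \overline{S'}$ with $\Delta^\times \to S'$ composes to a disc in $\overline{S}$ with punctured disc mapping into $S$. The admissibility conditions $(A_0)$--$(A_2)$ are conditions on the pulled-back variation over $\Delta^\times$, so they are inherited.
\item For a finite étale cover $\pi : S' \to S$, compose a disc in $\overline{S'}$ with the extension of $\pi$ to compactifications. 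This again gives a disc in $\overline{S}$ sending $\Delta^\times$ into $S$, and the same argument applies.
\end{itemize}
Since the definition of admissibility is independent of the compactification (by the result of \cite{bbkt} recalled above), this suffices.

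\textbf{Condition (c).} This is exactly Corollary \ref{neatetal}, which produces a finite étale cover $S'$ after which the image of $\rho_s$ lies in a neat arithmetic subgroup of $\pp_s(\Q)$. We also need this subgroup to lie in $\pp_s(\Q)^+$. For this, further pass to the finite étale cover associated with the kernel of $\pi_1 \to \Gamma \to \Gamma/(\Gamma \cap \pp_s(\R)^+)$, which is a finite quotient.

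One point must be checked: the generic Mumford-Tate group may not change under these operations.
\begin{itemize}
\item Generic Hodge tensors are determined by being Hodge at a very general point.
\item Restricting to a Zariski-open subset keeps the very general points, by Lemma \ref{exgen}. So $\pp_s$ is unchanged for $s \in S'$.
\item Under a finite étale cover, parallel transport along paths in $S'$ maps to paths in $S$. Hence a generic Hodge tensor for $\V$ stays generic for $\pi^\ast \V$. Conversely, a tensor that is Hodge along $S'$ is Hodge at all points of $S$ reached by lifted paths, and these are all points, since $\pi$ is surjective. So the Hodge-tensor sets agree up to the finite monodromy action, and the identity components agree.
\end{itemize}
Neatness and property (c) pass to subgroups. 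They are therefore preserved by the later restriction to a Zariski-open subset, where the monodromy image can only shrink.

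\textbf{Condition (d).} Having (a)--(c), the normalized period map $\Phi$ is defined, and Corollary \ref{opensm} gives a non-empty Zariski-open $S'$ with $\Phi((S')^\an)$ smooth and locally closed. Restricting to $S'$ preserves (a)--(c) by the preceding paragraphs. The period map of $\V|_{S'}$ is the restriction of $\Phi$ composed with the natural map $\period_{(\pp,D_\pp),\Gamma'} \to \period_{(\pp,D_\pp),\Gamma}$, where $\Gamma' \subset \Gamma$ contains the monodromy of $S'$.

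Taking $\Gamma$ itself as the lattice keeps the target unchanged; that is permitted because the definition only requires monodromy in some neat lattice. With this choice, the image of the new period map is exactly $\Phi((S')^\an)$, which is smooth and locally closed.

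\textbf{Main obstacle.} The step I expect to be most delicate is the invariance of the generic Mumford-Tate class $(\pp,D_\pp)$ under finite étale covers and open restriction. It is needed so that conditions (c) and (d) refer to the same group and period domain. I would handle it through Lemma \ref{exgen} and Lemma \ref{uniqmhc}$(ii)$: the lifted period map of the modified variation has the same image closure in the relevant sense, and it contains a Hodge generic point, whose Mumford-Tate group is $\pp$.
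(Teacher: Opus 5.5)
Your proposal is correct and follows the same route as the paper: it notes that admissibility and graded-polarization survive open restriction and finite étale covers, then invokes Corollary \ref{neatetal} for (c) and Corollary \ref{opensm} for (d). Your extra checks are sound and fill in details the paper leaves implicit: the further cover cut out by $\Gamma\cap\pp_s(\R)^+$, keeping the lattice $\Gamma$ fixed in (d), and invariance of the generic Mumford--Tate class (there, path lifting in fact shows the generic Hodge tensors coincide exactly, not just up to the monodromy action).
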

\begin{proof}
Let $(S,\V)$ be as in the statement. The properties of being admissible and graded-polarized are preserved after passing to non-empty Zariski-open subset (\cite[Lem. 3.5]{gao15}) or to a finite étale cover (obvious from the definition as axioms $(A_1)$ and $(A_2)$ are insensitive to finite étale covers). We are therefore tasked with proving that properties $(c)$ and $(d)$ can be achieved after passing to a finite étale cover or to a non-empty Zariski-open subset. For $(c)$, this is precisely the content of Corollary \ref{neatetal}. For $(d)$ this is Corollary \ref{opensm}.
\end{proof}
\subsubsection*{The pushforward variation}
Let $\V$ be an effective integral variation of mixed Hodge structures on a smooth and irreducible quasi-projective complex algebraic variety $S$, denote by $(\pp, D_\pp)$ its generic mixed Hodge class and $\Phi : S^\an \rightarrow \period_{(\pp, D_\pp), \Gamma}$ its period map. By Theorem \ref{facto}, there exists a quasi-projective complex algebraic variety $Y$, a dominant morphism $f : S \rightarrow Y$ and a definable closed complex-analytic immersion $i : Y^\deef \hookrightarrow \period_{(\pp, D_\pp), \Gamma}^\deef$ such that $\Phi = i^\an \circ f^\an$. Since $\V$ is effective and $i^\an$ is a closed complex-analytic immersion, one finds that $Y_0 := f^\an(S^\an) = (i^\an)^{-1}(\Phi(S^\an))$ is a smooth locally-closed complex-analytic subset of $Y^\an$, hence a smooth quasi-projective variety. Denote $\Phi_0 = i^\an \vert_{Y_0} : Y_0 \rightarrow \period_{(\pp, D_\pp), \Gamma}$.
\begin{lemma}\label{auxper}
The map $\Phi_0$ is the period map of an effective integral variation of mixed Hodge structures $\V_0$ on $Y_0$.
\end{lemma}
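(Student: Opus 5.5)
The plan is to build $\V_0$ from $\Phi_0$ via the bijection of Proposition \ref{pervarequiv}, and then to check the four effectivity axioms one by one. Fix a faithful representation $\rho:\pp\cong\pp_{s}\hookrightarrow\gl(\V_s)$ for some $s\in S^\an$.

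\textbf{Step 1: horizontality of $\Phi_0$.} The first point is that $\Phi_0:Y_0\rightarrow\period_{(\pp,D_\pp),\Gamma}$ is holomorphic and horizontal. It is holomorphic because it is the restriction of the closed immersion $i^\an$ to the smooth locally closed analytic subset $Y_0$. For horizontality, recall that $f^\an:S^\an\rightarrow Y_0$ is surjective. By generic smoothness of the dominant morphism $f$, the differential $d_sf$ is surjective onto $T_{f(s)}Y_0$ for $s$ in a dense Zariski open subset of $S$. At such points, $d\Phi_0(T Y_0)=d\Phi(T_sS)$, and this lies in the horizontal subbundle by Lemma \ref{holpernorm}. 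Horizontality is a closed condition on the tangent bundle of the connected manifold $Y_0$ (it is irreducible, being the image of the irreducible $S$). It therefore propagates to all of $Y_0$. Proposition \ref{pervarequiv} applied to $(\Phi_0,\rho)$ then produces an integral graded-polarized variation $\V_0$ on $Y_0$ with monodromy in $\Gamma$ whose period map is $\Phi_0$.

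A subtlety here is that Proposition \ref{pervarequiv} is stated for variations with generic class $(\pp,D_\pp)$. So I have to check that the generic Mumford--Tate group of $\V_0$ is $\pp$. There is a canonical identification $f^\ast\V_0\cong\V$ up to the choice of local system; more precisely, the local system of $\V_0$ is obtained by pulling back the $\Gamma$-local system of $\period$. I would arrange the construction so that $\Phi_0\circ f^\an=\Phi$ yields $f^\ast\V_0\cong\V$. Then generic Hodge tensors of $\V_0$ pull back to generic Hodge tensors of $\V$, which gives $\pp_{\V}\subset\pp_{\V_0}$. Conversely $\pp_{\V_0}\subset\pp$, because the lift of $\Phi_0$ lands in $D_\pp$. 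This gives equality. The monodromy inclusion $\pi_1(S)\rightarrow\pi_1(Y_0)\rightarrow\Gamma$ also shows that the monodromy of $\V_0$ lies in the neat group $\Gamma\subset\pp(\Q)^+$, which is axiom (c).

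\textbf{Step 2: axioms (a), (b), (d).} Axiom (a) holds by construction. Axiom (d) is immediate, since $\Phi_0(Y_0)=\Phi(S^\an)$ is smooth and locally closed, because $\V$ is effective. The main obstacle is admissibility, axiom (b). To handle it, I would use that admissibility can be tested on curves, and that $f$ admits local multisections. Take a log-smooth compactification $\overline{Y_0}$ and a holomorphic disc $\Delta\rightarrow\overline{Y_0}$ with $\Delta^\times\mapsto Y_0$. After a finite base change $z\mapsto z^m$, which does not affect conditions $(A_1)$ and $(A_2)$, I would lift it to a disc in a compactification $\overline S$ of $S$. This uses properness of an extension $\bar f:\overline S\rightarrow\overline{Y}$ and the fact that $f$ is surjective onto $Y_0$. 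The pullback of $\V_0$ along the disc then coincides with the pullback of the admissible variation $\V$, and so it is admissible.

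If lifting discs turns out to be delicate near the boundary, I would fall back on a different route. Admissibility is equivalent to the o-minimal/algebraic extension criterion implicit in \cite{bbkt}, namely definability of the period map together with the existence of the relative weight filtration. Those properties are inherited by $\Phi_0=i^\an|_{Y_0}$, since $i$ is definable by Theorem \ref{facto}. This yields (b), so $\V_0$ is effective.
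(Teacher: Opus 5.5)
Your proposal is correct and follows essentially the same route as the paper: you construct $\V_0$ from $(\Phi_0,\rho)$ via Proposition \ref{pervarequiv}, and you prove admissibility by lifting a test disc $j:\Delta\rightarrow\overline{Y_0}$ through $\bar f$ after a base change $z\mapsto z^k$, using $\V=(f^\an)^\ast\V_0$ and the fact that $(A_1)$, $(A_2)$ are insensitive to finite covers; your extra verification of horizontality, of the generic Mumford--Tate group, and of axioms (c) and (d) fills in points the paper leaves implicit. The o-minimal ``fallback'' you sketch is not an established characterization of admissibility and should be dropped, but you do not need it, since the disc-lifting argument already suffices.
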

\begin{proof}
Let $\rho : \pp \hookrightarrow \gl(V)$ be a faithful representation associated with a point $s \in S^\an$. By Proposition \ref{pervarequiv} the pair $(\Phi_0, \rho)$ defines an integral and graded-polarized variation of mixed Hodge structures $\V_0$ on $Y_0$ with normalized period map $\Phi_0$. We need to prove that $\V_0$ is admissible. Let $\overline{Y_0}$ (resp. $\overline{S}$) be log-smooth compactifications of $Y_0$ and $S$ such that $f : S \rightarrow Y_0$ extends as $\bar{f} : \overline{S} \rightarrow \overline{Y_0}$ (such compactifications exist, see the proof of \cite[Lem. 3.5]{gao15} for instance). Let $j : \Delta \rightarrow \overline{Y_0}^\an$ be a holomorphic map such that $j(\Delta^\times) \subset (Y_0)^\an$ and $j^\ast \V_0$ has unipotent monodromy. Since $f$ surjects $S$ onto $Y_0$, there exists an integer $k \geqslant 1$ such that denoting by $\pi_k : \Delta^\times \rightarrow \Delta^\times, z \mapsto z^k$ the corresponding cover of $\Delta^\times$, the map $j$ admits (up to shrinking $\Delta$ and restricting $j$ to avoid the branch locus of $f$) a lift $\tilde{j} : \Delta \rightarrow \overline{S}^\an$ which fits in a commutative diagram
\[
\xymatrix{\Delta^\times \ar[r]^{\tilde{j}} \ar[d]_{\pi_k} & S^\an \ar[d]^f \\ \Delta^\times \ar[r]_{j} & (Y_0)^\an}.
\]
Since by construction of $\V_0$ one has $\V = (f^\an)^\ast \V_0$, the commutativity of the above diagram shows that $\tilde{j}^\ast \V = (\pi_k)^\ast(j^\ast \V_0)$. By assumption $j^\ast \V_0$ has unipotent monodromy hence the latter identity ensures that $\tilde{j}^\ast \V$ has unipotent monodromy. It then follows from the admissibility of $\V$ that $\tilde{j}^\ast \V$ is admissible. As properties $(A_1)$ and $(A_2)$ are insensitive to finite étale covers, it follows that $j^\ast \V_0$ is admissible. This finishes the proof.
\end{proof}

\section{Hodge loci as intersection loci}\label{sect4}
The aim of this section is to recall the realization made in \cite{klingler_atyp} of the Hodge locus as union of intersection loci, so-called special subvarieties of $S$ for $\V$ and the notion of (a)typicality introduced in \cite{bu} generalizing \cite{bku}.
\subsection{The Hodge locus}
Let $\V$ be an integral, graded-polarized and admissible variation of mixed Hodge structures on a smooth irreducible quasi-projective complex variety $S$. Let $(\pp, D_\pp)$ be the generic mixed Hodge class of $(S, \V)$. The \textit{Hodge locus (of $S$ for $\V$)} is the subset 
\[\HL = \{s \in S^\an \hspace{0.1cm} : \hspace{0.1cm} \hod^{\bul,\bul}(\V^\otimes)_s \subsetneq \hod^{\bul,\bul}(\V_s^\otimes)\} = \{s \in S^\an \hspace{0.1cm} : \hspace{0.1cm}\pp(\V_s) \subsetneq \pp_s\} \]
of $S$. 
%
The following is immediate from the definitions:
\begin{lemma}\label{hlpb}
Let $i : S' \rightarrow S$ be either an open immersion or a finite étale cover. Then $\mathrm{HL}(S', (i^\ast \V)^\otimes) = i^{-1}(\HL)$.
\end{lemma}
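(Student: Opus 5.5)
The plan is to compare generic Hodge tensors for $i^\ast\V$ and for $\V$ fibrewise. For every $s' \in S'^\an$ we have canonically $(i^\ast \V)_{s'} = \V_{i(s')}$ as mixed Hodge structures, so $\hod^{\bul,\bul}(((i^\ast\V)_{s'})^\otimes) = \hod^{\bul,\bul}(\V_{i(s')}^\otimes)$ and $\pp((i^\ast\V)_{s'}) = \pp(\V_{i(s')})$. By the second description of the Hodge locus, it therefore suffices to prove the equality of generic Hodge tensors
\[
\hod^{\bul,\bul}((i^\ast\V)^\otimes)_{s'} = \hod^{\bul,\bul}(\V^\otimes)_{i(s')}.
\]

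First I would prove the inclusion $\hod^{\bul,\bul}(\V^\otimes)_{i(s')} \subseteq \hod^{\bul,\bul}((i^\ast\V)^\otimes)_{s'}$. Any path $\tau$ in $S'^\an$ starting at $s'$ pushes forward to the path $i\circ\tau$ in $S^\an$. Parallel transport in $i^\ast\V$ along $\tau$ coincides with parallel transport in $\V$ along $i \circ \tau$. Hence a tensor that stays Hodge along all paths in $S^\an$ stays Hodge along all paths in $S'^\an$.

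For the reverse inclusion, the two cases are treated separately. If $i$ is a finite étale cover, every path in $S^\an$ starting at $i(s')$ lifts uniquely to a path in $S'^\an$ starting at $s'$, because $i^\an$ is a topological covering; transport along the lift agrees with transport along the original path, which gives the inclusion. If $i$ is an open immersion, path lifting fails, since a path in $S^\an$ may leave $S'^\an$. Here I would use that $S'^\an$ is the complement of a proper analytic subset of the connected complex manifold $S^\an$, so $\pi_1(S'^\an, s') \to \pi_1(S^\an, s')$ is surjective.

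Let $\zeta$ be generic Hodge for $i^\ast\V$ at $s'$ and let $\tau$ be a path in $S^\an$ from $s'$ to some $t$. Choose a path $\sigma$ from $s'$ to $t' \in S'^\an$ inside $S'^\an$ with $t'$ close to $t$, and set $\gamma = \tau \cdot (\text{short path from } t \text{ to } t') \cdot \sigma^{-1}$. Then $\gamma$ is a loop at $s'$ in $S^\an$, homotopic to a loop in $S'^\an$, so transport along $\tau$ followed by the short path is the same as transport along a path lying in $S'^\an$. It remains to show that the transported tensor is Hodge at points $t \notin S'^\an$. Its transport $\zeta_u$ along the short path is Hodge at every $u$ in a dense open subset of a small ball around $t$. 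On a small ball, $\zeta_u$ is a flat section, and the condition $\zeta_u \in F^0 \cap W_0$ is closed: the weight filtration is flat, and $\mathcal{F}^0$ is a holomorphic subbundle. Hence $\zeta_u$ is Hodge at $t$ as well.

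Combining the two inclusions gives $\pp_{s'}$ for $i^\ast\V$ equal to $\pp_{i(s')}$ for $\V$, so $s' \in \mathrm{HL}(S', (i^\ast\V)^\otimes)$ if and only if $i(s') \in \HL$. The main obstacle is the open-immersion case, namely the surjectivity of $\pi_1$ together with the closedness argument. Both are standard, so I expect no real difficulty.
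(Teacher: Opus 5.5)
Your proof is correct and follows the paper, which states the lemma as immediate from the definitions. Your argument is that unpacking: the easy inclusion of generic Hodge tensors by pushing paths forward, the finite étale case by path lifting, and the open-immersion case (the only non-literal point) via surjectivity of $\pi_1((S')^\an)\to\pi_1(S^\an)$ together with closedness of the Hodge condition on flat sections, all handled correctly. As an alternative for the open-immersion case, the locus in $\widetilde{S^\an}$ where the flat extension of $\zeta$ lies in $F^0$ is a closed analytic subset containing a non-empty open set, hence is all of $\widetilde{S^\an}$ by the identity theorem.
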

Combining the latter to Lemma \ref{stabdens} and Proposition \ref{pbeff}, one can always assume that $\V$ is effective in order to prove (complex-analytic or Zariski) density results of the full Hodge locus or of the Hodge locus of some type.
\subsection{Mixed Hodge classes of subvarieties}
Let $Z \subsetneq S$ be a strict irreducible subvariety and denote by $Z^\sm$ its smooth locus.
\subsubsection*{Construction}
\begin{definition}
The generic mixed Hodge class of $Z$ is the generic mixed Hodge class $(\pp_Z, D_{\pp_Z})$ of $\V\vert_{Z^\sm}$.
\end{definition}
Let $s_0 \in Z^\sm(\C)$. One has an inclusion $i_{Z,s_0} : \pp_{Z,s_0} \subset \pp_{s_0}$ and the following is immediate from the construction of the generic mixed Hodge datum of a variation at some base point.
\begin{lemma}\label{min}
The inclusion $i_{Z,s_0}$ realizes $(\pp_{Z,s_0}, X_{\pp_{Z,s_0}}, D_{\pp_{Z,s_0}})$ as a mixed Hodge subdatum of $(\pp_{s_0}, X_{\pp_{s_0}}, D_{\pp_{s_0}})$, and the realization of $(\pp_Z, D_{\pp_Z})$ as a mixed Hodge subclass of $(\pp, D_\pp)$ does not depend on the choice of $s_0$.
\end{lemma}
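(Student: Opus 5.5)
The plan is to mirror, step by step, the construction of the generic mixed Hodge datum of $\V$ carried out in Sect. \ref{sect3}, now applied to $\V\vert_{Z^\sm}$ inside the bigger space, and to compare the two via the uniqueness statements of Corollary \ref{mtconn} and Lemma \ref{uniqmhc}.

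\textbf{Inclusion and Pink's axioms.} First, every generic Hodge tensor of $\V$ at $s_0$ is a generic Hodge tensor of $\V\vert_{Z^\sm}$ at $s_0$, because paths in $Z^\sm$ are paths in $S$. Hence the fixator only shrinks, giving the inclusion $\pp_{Z,s_0} \subset \pp_{s_0} \subset \ppol$. Let $x_0$ be the canonical bigraduation of $\V_{s_0}$. By Proposition \ref{mtmor}(ii) we have $x_0(\s_\C) \subset \pp(\V_{s_0})_\C \subset (\pp_{Z,s_0})_\C$. Lemma \ref{subgroup} then shows that $x_0$ satisfies Pink's axioms for both groups. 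The datum $(\pp_{Z,s_0}, X_{\pp_{Z,s_0}}, D_{\pp_{Z,s_0}})$ is built from the orbit of this same $x_0$, and $(\pp_{s_0}, X_{\pp_{s_0}}, D_{\pp_{s_0}})$ is built from the orbit of $x_0$ in $\pp_{s_0}$. Post-composing with the inclusion therefore sends $X_{\pp_{Z,s_0}}$ into $X_{\pp_{s_0}}$, since $\pp_{Z,s_0}(\R)\pp_{Z,s_0}^u(\C) \subset \pp_{s_0}(\R)\pp_{s_0}^u(\C)$. The induced map of Lemma \ref{subdatimm} sends $D_{\pp_{Z,s_0}}$, the connected component containing $m_0$, into the connected component $D_{\pp_{s_0}}$ containing $m_0$. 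This gives the subdatum claim.

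\textbf{Independence of $s_0$.} Take another point $s_1 \in Z^\sm$ and a path $\tau$ in $Z^\sm$ from $s_0$ to $s_1$. By Lemma \ref{transmt}, applied to both $\V$ and $\V\vert_{Z^\sm}$, parallel transport $\tau^\gl_\ast$ carries $\pp_{Z,s_0}$ to $\pp_{Z,s_1}$ and $\pp_{s_0}$ to $\pp_{s_1}$, compatibly with the inclusions. The induced biholomorphism $g$ of Lemma \ref{idpermap} maps the lifted period image of $Z^\sm$ at the first base point to that at the second. By the uniqueness statement in Lemma \ref{uniqmhc}(i) applied to $\V\vert_{Z^\sm}$, $g$ then maps $D_{\pp_{Z,s_0}}$ onto $D_{\pp_{Z,s_1}}$; the same argument applied to $\V$ gives $g(D_{\pp_{s_0}}) = D_{\pp_{s_1}}$. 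So under the identifications $\pp \cong \pp_{s_i}$ used to define the class $(\pp, D_\pp)$, the two subclasses agree.

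\textbf{Main obstacle.} The one subtle point is that identifying $\pp_{s_0}$ with $\pp_{s_1}$ depends on the choice of path. If we instead use a path in $S$ not lying in $Z^\sm$, or if $Z^\sm$ is moved by monodromy, the subgroup changes by conjugation by an element of $\Gamma \subset \pp(\Q)^+$. I would handle this by noting that $Z^\sm$ is connected, since $Z$ is irreducible, so paths in $Z^\sm$ always exist. Independence should then be understood as: the subclass is well defined up to this $\Gamma$-conjugation, which Lemma \ref{idpermap} already absorbs. Under the canonical identification via paths in $Z^\sm$, the subclass is literally independent of $s_0$.
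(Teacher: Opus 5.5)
Your proof is correct and follows essentially the paper's approach. The paper only declares the lemma immediate from the construction; its subsequent discussion uses exactly your chain $\pp(\V_{s})\subset \pp_{Z,s}\subset \pp_{s}$ to factor the canonical bigraduation, and your independence argument reproduces the paper's proof that the generic class does not depend on the base point (Lemma \ref{idpermap} together with the uniqueness in Lemma \ref{uniqmhc}). Your remark that independence holds literally only for identifications along paths in $Z^\sm$, and otherwise up to $\Gamma$-conjugation, is a correct clarification of a point the paper leaves implicit.
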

From now on, we identify $(\pp_Z, D_{\pp_Z})$ as a mixed Hodge subclass of $(\pp, D_\pp)$ via the inclusion described in the previous lemma.
\begin{lemma}\label{minsubv}
Let $Z$ be an irreducible subvariety of $S$. Then $\Phi(Z^\an) \subset \phi_{i, \Gamma}(\period_{(\pp_Z, D_{\pp_Z}), \Gamma_{\pp_Z}})$
\end{lemma}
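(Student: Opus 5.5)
The plan is to reduce the statement to Lemma \ref{uniqmhc} (the facts behind it, for $\V\vert_{Z^\sm}$), and then use continuity to pass from $Z^\sm$ to all of $Z$. Throughout we assume, as in the definition of $\Phi$, that $\V$ is effective. Fix a base point $s_0 \in Z^\sm(\C)$ and a lift $\tilde s_0 \in \widetilde{S^\an}$.

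\emph{Step 1: the smooth locus.} Apply the construction of the normalized lifted period map to $\V\vert_{Z^\sm}$. For every point $\tilde z$ of the universal cover of $(Z^\sm)^\an$ (mapping to $\widetilde{S^\an}$ compatibly with $\tilde s_0$), parallel transport to $s_0$ sends generic Hodge tensors of $\V\vert_{Z^\sm}$ to Hodge tensors. By Lemma \ref{transmt}, applied to $\V\vert_{Z^\sm}$, this gives $\pp(\tilde\Phi_{\tilde s_0}(\tilde z)) \subset \pp_{Z,s_0}$. The image of the universal cover of $(Z^\sm)^\an$ is connected, since $Z^\sm$ is irreducible. Corollary \ref{mtconn} and Lemma \ref{min} then show that this image lies in $D_{\pp_{Z,s_0}} \subset D_{\pp_{s_0}}$. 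Projecting by $\pi_\Gamma$ gives
\[
\Phi((Z^\sm)^\an) \subset \pi_\Gamma(D_{\pp_Z}) = \phi_{i,\Gamma}(\period_{(\pp_Z,D_{\pp_Z}),\Gamma_{\pp_Z}}).
\]
Here I use the commutative square relating $\phi_i$ and $\phi_{i,\Gamma}$ from the lemma on special subvarieties. Changing the lift only conjugates by an element of $\Gamma$, so this step involves no ambiguity.

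\emph{Step 2: the singular points.} This is the main obstacle. The special subvariety $\pi_\Gamma(D_{\pp_Z})$ is only locally closed, so continuity alone does not put $\Phi(Z^\an)$ inside it. I would argue with Mumford–Tate groups instead. Take $z \in Z^\an \setminus Z^\sm$ and a path in $Z^\an$ from $s_0$ to $z$ that passes through $Z^\sm$ except at its endpoint. The generic Hodge tensors of $\V\vert_{Z^\sm}$ are flat, and they are Hodge at every point of $Z^\sm$. Being a Hodge tensor, i.e. lying in $F^0 \cap W_0$, is a closed condition, since $F^0$ varies holomorphically and $W_0$ is flat. So the transported tensors remain Hodge at $z$. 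Hence, after transport, $\pp(\V_z) \subset \pp_{Z}$.

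Now Proposition \ref{mtbound} produces a subclass $(\pp_Z, D')$ containing the lifted point $\tilde\Phi(\tilde z)$. To conclude, I need $D' = D_{\pp_Z}$. Both are $\pp_Z(\R)^+\pp_Z^u(\C)$-orbits. The lifted point is a limit of points of $D_{\pp_Z}$, and these orbits are open in their common closure inside the closed analytic set $\{h \in D_\pp : \pp(h) \subset \pp_Z\}$. Each orbit is a connected component of this set, by the argument in the proof of Corollary \ref{mtconn} together with the dimension count from Proposition \ref{hodgedat}(iv). Therefore the orbit containing the limit point is $D_{\pp_Z}$ itself. With this, $\Phi(z) \in \pi_\Gamma(D_{\pp_Z})$, which completes the inclusion.

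If this component argument turns out to be delicate, my fallback is to observe that the statement is only used for $Z$ an irreducible component of a preimage of a special subvariety. In that case $\Phi(Z^\an)$ already lies in some special subvariety, and the minimality part of Lemma \ref{uniqmhc} handles the points of $Z \setminus Z^\sm$.
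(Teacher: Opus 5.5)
Your Step 1 is the paper's argument: compare the lifted period maps of $\V\vert_{Z^\sm}$ and $\V$ on universal covers, then apply Lemma \ref{uniqmhc} and project by $\pi_\Gamma$. For the singular points the paper uses one line. It asserts that $\Phi^{-1}(\phi_{i,\Gamma}(\period_{(\pp_Z,D_{\pp_Z}),\Gamma_{\pp_Z}}))$ is a closed analytic subset of $S^\an$, and uses density of $Z^\sm(\C)$ in $Z^\an$. You instead work upstairs in $D_\pp$ along a lifted path, which is a genuinely different route. It needs closedness of the Mumford–Tate subdomain $D_{\pp_Z}$ in $D_\pp$, not closedness of its image in $\period$. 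Your caution is justified: the paper's preceding lemma only shows the Hodge subvariety is locally closed, yet its proof takes the closedness of the preimage for granted.

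As written, though, the last step of your Step 2 is not justified. Corollary \ref{mtconn} only shows that two $\pp_Z(\R)^+\pp_Z^u(\C)$-orbits with the same underlying group are equal or disjoint. Proposition \ref{hodgedat}(iv) only gives their dimension. Neither shows that an orbit is open in $\{h : \pp(h)\subset \pp_Z\}$, so nothing rules out other orbits accumulating at your limit point.

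You do not need this claim. By Lemma \ref{subdatimm}, $\phi_i$ is a closed immersion, so $D_{\pp_{Z,s_0}}$ is closed in $D_{\pp_{s_0}}$. Step 1 gives $\tilde\Phi(\tilde\gamma(t))\in D_{\pp_{Z,s_0}}$ for $t<1$. By continuity, $\tilde\Phi(\tilde\gamma(1))\in D_{\pp_{Z,s_0}}$, hence $\Phi(z)\in\pi_\Gamma(D_{\pp_Z})$. The detour through Hodge tensors at $z$, Lemma \ref{inclmtz} and Proposition \ref{mtbound} can then be dropped.

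Your fallback would not cover the lemma's actual use. In the \emph{if} direction of Proposition \ref{specarac}, it is applied to an arbitrary intermediate $Z'$. About $Z'$ one only knows its generic mixed Hodge class, not that it is a component of a preimage.
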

\begin{proof}
As $\Phi^{-1}(\phi_{i, \Gamma}(\period_{(\pp_Z, D_{\pp_Z}), \Gamma_{\pp_Z}}))$ is a closed complex-analytic subset of $S^\an$, it suffices to prove that it contains $Z^\sm(\C)$. Let $s_0 \in Z^\sm(\C)$, let $\pi_Z : \widetilde{(Z^\sm)^\an} \rightarrow (Z^\sm)^\an$ be the universal cover of $(Z^\sm)^\an$, let $\tilde{s}_0$ be a lift of $s_0$ to $\widetilde{(Z^\sm)^\an}$ and let $\widetilde{S^\an}$. From the constructions, one has the following commutative diagram in the complex-analytic category:
\[
\xymatrix{(Z^\sm)^\an \ar[d]^i & \widetilde{(Z^\sm)^\an} \ar[l]_{\pi_Z}\ar[r]^{\tilde{\Phi}_{Z, \tilde{s}_0}} \ar[d]^{\tilde{i}} & D_{\pp_{Z,s_0}} \ar[d] \ar[rd] & \\ S^\an & \widetilde{S^\an} \ar[r]_{\tilde{\Phi}_{\tilde{i}(\tilde{s}_0)}} \ar[l]^{\pi_S} & D_{\pp_{s_0}} \ar[r] & M_{s_0}}
\]
where we denote by $i : (Z^\sm)^\an \rightarrow S^\an$ the inclusion, by $\tilde{i} : \widetilde{(Z^\sm)^\an}  \rightarrow \widetilde{S^\an}$ the natural (complex-analytic) map (which set theoretically maps the homotopy class of a continuous path in $(Z^\sm)^\an$ based at $s_0$ to the homotopy class of the same path seen as a continuous path in $S^\an$), by $\tilde{\Phi}_{Z, \tilde{s}_0}$ the lifted period map of $\V\vert_{Z^\sm}$ based at $\tilde{s}_0$ and by $\tilde{\Phi}_{\tilde{i}(\tilde{s}_0)}$ is the lifted period map of $\V$ based at $\tilde{i}(\tilde{s}_0)$. The triangle on the right consists of the obvious inclusions. The commutativity of the right square ensures that $\tilde{\Phi}_{\tilde{i}(\tilde{s}_0)}(\tilde{i}(\widetilde{(Z^\sm)^\an}))$ is contained in $D_{\pp_{Z,s_0}} \subset D_{\pp_{s_0}}$. The commutativity of the left square ensures that $\pi_S(\tilde{i}(\widetilde{(Z^\sm)^\an})) = Z^\sm(\C)$. Combining the two shows that $\Phi(Z^\sm(\C)) \subset \phi_{i, \Gamma}(\period_{(\pp_Z, D_{\pp_Z}), \Gamma_{\pp_Z}})$ as desired.
\end{proof}
\subsubsection*{Functoriality}
\begin{lemma}\label{fonc}
Let $Z \subset Z'$ be two nested irreducible subvarieties of $S$. Then $(\pp_Z, D_{\pp_Z}) \subseteq (\pp_{Z'}, D_{\pp_{Z'}})$ as mixed Hodge subclasses of $(\pp, D_\pp)$.
\end{lemma}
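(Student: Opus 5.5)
The plan is to compare the generic Mumford--Tate groups at a common base point, and then use the uniqueness statement of Corollary \ref{mtconn} to compare the Mumford--Tate domains.

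Fix a base point $s_0 \in (Z^\sm)(\C) \cap (Z'^\sm)(\C)$. Such a point may not exist if $Z \subset \mathrm{Sing}(Z')$. In that case I would first pass to the smooth locus of $Z$ and compare both groups at a point of $Z^\sm$. The comparison goes through the ambient group $\pp_{s_0}$, after transporting along a path in $S^\an$, which is legitimate by Lemma \ref{transmt} and the independence statement of Lemma \ref{min}.

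\textbf{Step 1: the group inclusion $\pp_{Z,s_0} \subseteq \pp_{Z',s_0}$ inside $\gl(\V_{s_0})$.} The group $\pp_{Z',s_0}$ is the fixator of the generic Hodge tensors of $\V\vert_{Z'^\sm}$ at $s_0$. Let $\zeta$ be such a tensor. Then $\zeta$ is a Hodge tensor at every point of $Z'^\sm$. The locus in $Z'$ where the flat transport of $\zeta$ is Hodge is closed analytic, by Theorem \ref{algspec} or simply by closedness of the Hodge condition $F^0 \cap W_0$. So $\zeta$ remains Hodge on all of $Z'$, in particular along all paths in $Z^\sm \subset Z'$. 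Hence $\zeta$ is a generic Hodge tensor for $\V\vert_{Z^\sm}$, and therefore $\pp_{Z,s_0} \subseteq \pp_{Z',s_0}$.

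This step is the main obstacle. The subtlety is that paths in $Z^\sm$ need not be paths in $Z'^\sm$, so the monodromy orbit of $\zeta$ along $Z$ must be controlled. I would handle it by the closedness argument above. Alternatively, I would use the characterization $\pp_{Z} = $ Mumford--Tate group at a Hodge generic point of $Z$ (Lemma \ref{exgen}), together with $\pp(\V_s) \subset \pp_{Z'}$ for every $s \in Z$. The latter follows from Lemma \ref{minsubv}, which gives $\Phi(Z^\an) \subset \Phi(Z'^\an) \subset \period_{(\pp_{Z'},D_{\pp_{Z'}})}$, combined with Proposition \ref{mtbound}.

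\textbf{Step 2: the domains.} By Lemma \ref{minsubv} applied to $Z'$, the lifted period map of $Z$ lands in a $\Gamma$-translate of $D_{\pp_{Z'}}$. After choosing the lift compatibly at $s_0$, it lands in $D_{\pp_{Z'}}$ itself. Its image is also contained in $D_{\pp_Z}$. Both $D_{\pp_Z}$ and $D_{\pp_{Z'}}$ are $\M(\R)^+\M^u(\C)$-orbits for $\M = \pp_Z$ and $\pp_{Z'}$ respectively, and $\pp_Z \subseteq \pp_{Z'}$ by Step 1. Hence the orbit $D_{\pp_Z}$ through the common point $\tilde\Phi(\tilde s_0)$ is contained in $D_{\pp_{Z'}}$. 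Lemma \ref{subgroup} ensures that the unipotent radicals are compatible, so these orbits are well defined.

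\textbf{Step 3: conclusion.} Corollary \ref{mtconn}, applied inside $(\pp_{Z'}, D_{\pp_{Z'}})$ with the connected set $\tilde\Phi_Z(\widetilde{Z^\sm})$, produces a mixed Hodge subclass of $(\pp_{Z'}, D_{\pp_{Z'}})$ with group $\pp_Z$ containing this set. By the uniqueness part of Lemma \ref{uniqmhc}$(i)$, applied in $(\pp,D_\pp)$, this subclass equals $(\pp_Z, D_{\pp_Z})$. This gives $(\pp_Z, D_{\pp_Z}) \subseteq (\pp_{Z'}, D_{\pp_{Z'}})$ as mixed Hodge subclasses of $(\pp,D_\pp)$.
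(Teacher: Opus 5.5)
Your argument is correct once you commit to the route you call the alternative in Step~1, and that route is exactly the paper's proof. Take $s\in Z^\an$ with $\pp(\V_s)=\pp_{Z,s}$ (Lemma~\ref{inclmtz}). Note that $\pp(\V_s)\subset\pp_{Z',s}$, and conclude $\pp_{Z,s}\subset\pp_{Z',s}$. The paper gets the middle inclusion from closedness of the Hodge condition for the locally transported generic Hodge tensors of $Z'$. You get it from Lemma~\ref{minsubv} and Proposition~\ref{mtbound}, which only gives inclusion in a $\Gamma$-conjugate of $\pp_{Z'}$; this is harmless once the lift of $\Phi(s)$ is chosen to match.

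The paper stops at the group inclusion, because both data at $s$ are built from the canonical bigraduation of $\V_s$. Your Steps~2--3 are therefore correct but redundant. If you do run them, the base point $s_0$ must be this Hodge generic point. Also, the claim that the whole lifted image of $Z^\sm$ lies in a single $\Gamma$-translate of $D_{\pp_{Z'}}$ does not follow from Lemma~\ref{minsubv} plus connectedness, since distinct translates can meet. You only need a common point at $\tilde\Phi(\tilde s_0)$.

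The first version of Step~1, however, does not resolve the difficulty you yourself flag. Closedness of the locus where a tensor is Hodge applies to a single-valued flat section. It shows that at each point of $Z$ the limits of transports of $\zeta$ along paths in $Z'^\sm$ are Hodge. It says nothing directly about the transport of $\zeta$ along a loop in $Z^\sm$ when $Z\subset\mathrm{Sing}(Z')$. To make that route work you would need an extra push-off argument: local topological triviality of $Z'$ along a dense open stratum of $Z$, so that paths in $Z^\sm$ can be deformed into $Z'^\sm$. The Hodge-generic-point argument avoids this entirely, because it uses the Hodge property at one point only and no transport along $Z^\sm$.
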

The only non-trivial case is when $Z \subset Z' - (Z')^\sm$, because then there is no easily comparable representatives of $(\pp_Z, D_{\pp_Z})$ and $(\pp_{Z'}, D_{\pp_{Z'}})$. This motivates the following construction.

Let $Z$ be an irreducible subvariety of $S$. For $s \in Z^\sm(\C)$ denote by $\hod^{\bul,\bul}(\V^\otimes)_{Z,s}$ the graded vector space $\hod^{\bul,\bul}((\V\vert_{Z^\sm})^\otimes)_s$ of generic Hodge tensors at $s$ for $\V\vert_{Z^\sm}$ as defined in Sect. \ref{sect32}. By definition, $\pp_{Z,s}$ is the fixator in $\gl(\V_s)$ of tensors in $\hod^{\bul,\bul}(\V^\otimes)_{Z,s}$. For a singular point $s \in Z(\C)-Z^\sm(\C)$, choose a simply-connected open neighborhood $U$ of $s$ in $S^\an$ and some $s' \in U \cap Z^\sm(\C)$. Let $\tau$ be a continuous path in $U$ from $s'$ to $s$ (there is a unique homotopy class of such paths). Define $\hod^{\bul,\bul}(\V^\otimes)_{Z,s}$ as the parallel transport of $\hod^{\bul,\bul}(\V^\otimes)_{Z,s'}$ along $\tau$. One easily checks that this doesn't depend on the choice of $U$ and $s'$. Let $\pp_{Z,s}$ be the fixator in $\gl(\V_s)$ of tensors in $\hod^{\bul,\bul}(\V^\otimes)_{Z,s}$. By construction, for $U$, $s' \in Z^\sm(\C) \cap U$ and $\tau$ as above, the isomorphism $\tau_\ast^\gl : \gl(\V_{s'}) \cong \gl(\V_s)$ given by parallel transport along $\tau$ induces an isomorphism $\pp_{Z,s'} \cong \pp_{Z,s}$ which is independent of the above choices.
\begin{lemma}\label{inclmtz}
Let $s \in Z^\an$. Then $\pp(\V_s) \subset \pp_{Z,s} \subset \pp_s$ as subgroups of $\gl(\V_s)$. Furthermore, there exists a point $s \in Z^\an$ such that the inclusion $\pp(\V_s) \subseteq \pp_{Z,s}$ is an equality.
\end{lemma}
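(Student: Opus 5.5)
Three things have to be shown: the two inclusions $\pp(\V_s) \subset \pp_{Z,s}$ and $\pp_{Z,s} \subset \pp_s$, and the existence of a point of $Z^\an$ where the first is an equality. Since all three groups are fixators of families of tensors, each inclusion comes from the reverse inclusion of tensor spaces,
\[
\hod^{\bul,\bul}(\V^\otimes)_s \subset \hod^{\bul,\bul}(\V^\otimes)_{Z,s} \subset \hod^{\bul,\bul}(\V_s^\otimes).
\]

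I would first treat a smooth point $s \in Z^\sm(\C)$. A generic Hodge tensor for $\V$ at $s$ stays Hodge under parallel transport along every path in $S^\an$, in particular along paths in $(Z^\sm)^\an$. It is therefore a generic Hodge tensor for $\V\vert_{Z^\sm}$, which gives the first inclusion of tensor spaces. A generic Hodge tensor for $\V\vert_{Z^\sm}$ at $s$ is in particular Hodge at $s$, taking the constant path, which gives the second. Passing to fixators yields $\pp(\V_s) \subset \pp_{Z,s} \subset \pp_s$.

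Next I would treat a singular point $s \in Z(\C) - Z^\sm(\C)$. Choose $U$, $s'$ and $\tau$ as in the construction above. The inclusion $\hod^{\bul,\bul}(\V^\otimes)_{Z,s} \subset \hod^{\bul,\bul}(\V^\otimes)_s$ transports along $\tau$ from the corresponding statement at $s'$, because parallel transport in $S^\an$ identifies generic Hodge tensors of $\V$ at $s'$ with those at $s$ (Lemma \ref{transmt}). Taking fixators gives $\pp_{Z,s} \subset \pp_s$.

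The delicate step is the inclusion $\hod^{\bul,\bul}(\V^\otimes)_{Z,s} \subset \hod^{\bul,\bul}(\V_s^\otimes)$, i.e.\ that the transport of $\zeta \in \hod^{\bul,\bul}(\V^\otimes)_{Z,s'}$ to $s$ is a Hodge tensor. I would use that $\zeta$ extends to a flat section $\tilde\zeta$ of $T^{a,b}(\V)$ over the simply connected set $U$. Its restriction to $Z^\sm \cap U$ is Hodge at every point: for points in the connected component of $s'$ this follows by transporting along a path in $Z^\sm \cap U$ (if $Z^\sm \cap U$ is disconnected, shrink $U$ and use that the definition does not depend on the choices). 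Being a Hodge tensor is a closed condition in the flat-section parameter, since $W_0$ is flat and $F^0$ varies holomorphically in a locally free subbundle. The locus where $\tilde\zeta$ is Hodge is thus a closed analytic subset containing a dense subset of $Z \cap U$ near $s$, hence it contains $s$. This gives $\pp(\V_s) \subset \pp_{Z,s}$ at singular points.

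For the existence statement, apply Lemma \ref{exgen} to the variation $\V\vert_{Z^\sm}$ on $Z^\sm$, whose generic Mumford-Tate group at $s$ is $\pp_{Z,s}$. This produces $s \in (Z^\sm)^\an$ outside a countable union of proper analytic subsets with $\pp(\V_s) = \pp_{Z,s}$. Such a point exists by Baire's theorem, since $(Z^\sm)^\an$ is a nonempty connected complex manifold. The main obstacle is the closedness argument at singular points; the rest is formal.
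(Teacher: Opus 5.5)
Your proposal is correct and follows the paper's proof: the same chain $\hod^{\bul,\bul}(\V^\otimes)_s \subset \hod^{\bul,\bul}(\V^\otimes)_{Z,s} \subset \hod^{\bul,\bul}(\V_s^\otimes)$, the middle inclusion at singular points from closedness of the Hodge condition for a flat tensor plus density of $Z^\sm$, and existence from Lemma \ref{exgen} applied to $\V\vert_{Z^\sm}$.

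Two small corrections. First, in your singular-point paragraph the tensor inclusion is written backwards: it should be $\hod^{\bul,\bul}(\V^\otimes)_s \subset \hod^{\bul,\bul}(\V^\otimes)_{Z,s}$, which is what gives $\pp_{Z,s} \subset \pp_s$. Second, in the closedness step you only need $\tilde\zeta$ to be Hodge on the connected component of $Z^\sm \cap U$ through $s'$, and this component has $s$ in its closure once $U$ is a small ball around $s$ (local conic structure). So you should neither claim Hodge-ness on all of $Z^\sm \cap U$ nor lean on independence of choices. That independence can fail when $Z$ has several local branches at $s$: at a node of a Hecke curve in a product of modular curves, the two branches carry non-proportional isogeny classes.
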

\begin{proof}
The claimed chain of inclusions is equivalent to
\[
\hod^{\bul,\bul}(\V^\otimes)_s \subset \hod^{\bul,\bul}(\V^\otimes)_{Z,s} \subset \hod^{\bul,\bul}(\V_s^\otimes)
\]
The first inclusion is true by definition. The second one is a closed condition on $s\in Z^\an$ (it amounts to asking that some tensors are Hodge) and is true by definition for $s \in Z^\sm(\C)$. As $Z$ is irreducible the subset $Z^\sm(\C)$ is dense in $Z^\an$ for the euclidean topology, and the first assertion follows.

The second part of the statement is Lemma \ref{exgen} applied to $\V\vert_{Z^\sm}$, using that $Z^\sm(\C) \neq \emptyset$.
\end{proof}
Therefore, for $s \in Z^\an$, the first inclusion shows that the canonical bigraduation of $(\V_s, (\W_\bul)_s, \mathcal{F}^\bul_s)$ factors through $(\pp_{Z,s})_\C$ and denoting $(\pp_{Z,s}, X_{\pp_{Z,s}}, D_{\pp_{Z,s}})$ the associated mixed Hodge datum, we have proved so far the following generalization of Lemma \ref{min} to singular points:
\begin{lemma}
For any $s \in Z^\an$, the mixed Hodge subdatum  $(\pp_{Z,s}, X_{\pp_{Z,s}}, D_{\pp_{Z,s}})$ represents $(\pp_Z, D_{\pp_Z})$ and the inclusion $i_{Z,s} : \pp_{Z,s} \hookrightarrow \pp_s$ realizes the inclusion of mixed Hodge classes $(\pp_Z, D_{\pp_Z}) \hookrightarrow (\pp, D_\pp)$
\end{lemma}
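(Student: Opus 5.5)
To prove that for any $s \in Z^\an$ the datum $(\pp_{Z,s}, X_{\pp_{Z,s}}, D_{\pp_{Z,s}})$ represents $(\pp_Z, D_{\pp_Z})$ and that $i_{Z,s}$ realizes the inclusion $(\pp_Z, D_{\pp_Z}) \hookrightarrow (\pp, D_\pp)$, I would reduce to the smooth case, Lemma \ref{min}, by parallel transport. For $s \in Z^\sm(\C)$ there is nothing to add, since the datum constructed is exactly the generic mixed Hodge datum of $\V\vert_{Z^\sm}$ at $s$. For singular $s$, fix a simply-connected open neighbourhood $U$ of $s$, a point $s' \in U \cap Z^\sm(\C)$, and a path $\tau$ in $U$ from $s'$ to $s$. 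By construction, $\tau_\ast^\gl$ maps $\pp_{Z,s'}$ isomorphically onto $\pp_{Z,s}$ and $\pp_{s'}$ onto $\pp_s$ (Lemma \ref{transmt}), compatibly with the inclusions $i_{Z,s'}$ and $i_{Z,s}$. It therefore suffices to compare Mumford-Tate domains: for a faithful representation, the two orbits should correspond under $\tau_\ast$.

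\textbf{Step 1: the datum at $s$ is a mixed Hodge datum.} By Lemma \ref{inclmtz}, $\pp(\V_s) \subset \pp_{Z,s} \subset \pp_s$. Proposition \ref{mtmor}(ii) then shows that the canonical bigraduation $x_s$ of $\V_s$ factors through $(\pp_{Z,s})_\C$, and Lemma \ref{subgroup} (applied using $\pp_s \subset \ppol$) shows that $x_s$ satisfies Pink's axioms for both $\pp_{Z,s}$ and $\pp_s$. Consequently $(\pp_{Z,s}, X_{\pp_{Z,s}}, D_{\pp_{Z,s}})$ is a mixed Hodge subdatum of the datum $(\pp_s, X'_{\pp_s}, D'_{\pp_s})$ built from $x_s$.

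\textbf{Step 2: identify the connected components.} This is the main obstacle. Using the trivialization of $\V$ over $U$, the lifted period map $\tilde\Phi$ restricted to a lift of $U$ is continuous into $M_{s'}$. Transport the point $h_s := \tau_\ast^{-1}(\V_s)$ back to $\V_{s'}$. The pulled-back structures along $Z \cap U$ all have Mumford-Tate group inside $\pp_{Z,s'}$, because the generic Hodge tensors of $Z$ stay Hodge, which is a closed condition used in Lemma \ref{inclmtz}. Moreover $Z\cap U$ is connected near $s$ after shrinking $U$, since an irreducible variety is locally analytically unibranch up to finitely many branches. To handle several branches, I would pick $s'$ on one branch and use the fact that the Hodge tensors defining $\pp_{Z,s}$ are independent of that choice. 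Then Corollary \ref{mtconn}, applied to the connected set $\tilde\Phi(\widetilde{Z\cap U})$ with $\M = \pp_{Z,s'}$, shows that this set lies in a unique Mumford-Tate subdomain. That subdomain contains both $m_{s'}$ (giving $D_{\pp_{Z,s'}}$) and $h_s$ (giving the transported $D_{\pp_{Z,s}}$), so the two coincide.

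\textbf{Step 3: conclude.} The same argument with $\pp_s$ in place of $\pp_{Z,s}$, or simply Lemma \ref{uniqmhc}(i), identifies the ambient domains. Hence the two data are essentially equivalent after transport, and the inclusion $i_{Z,s}$ corresponds to $i_{Z,s'}$. Finally, Lemma \ref{min} shows that the resulting subclass is independent of $s'$, which gives the claim.
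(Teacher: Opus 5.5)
Your proposal is correct and follows the paper's route: the paper deduces the lemma directly from Lemma \ref{inclmtz}, Proposition \ref{mtmor}(ii) and the parallel-transport definition of $\pp_{Z,s}$, which is essentially your Step 1, and treats the identification with the class at a smooth point as immediate from the construction. Your Step 2 makes explicit a check the paper leaves implicit, namely that the transported structure at $s$ lies in $D_{\pp_{Z,s'}}$, using continuity of the period map over $U$, closedness of the Hodge condition and Corollary \ref{mtconn}; the cleanest way to do this is to apply Corollary \ref{mtconn} to the image of the closure of the smooth locus of the single local branch through $s'$, which is connected and contains $s$, and on which the Mumford--Tate bound holds without invoking independence of the choice of branch.
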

\begin{proof}[Proof of Lemma \ref{fonc}]
Let $Z \subset Z'$ be nested irreducible subvarieties of $S$. We need to prove that for some $s \in Z^\an$, one has the inclusion $\pp_{Z,s} \subset \pp_{Z',s}$ as subgroup of $\pp_s$. Let $s \in Z^\an$ such that $\pp(\V_s) = \pp_{Z,s}$ (it exists by Lemma \ref{inclmtz}). Since $s \in Z'^\an$, we find that $\pp(\V_s) \subset \pp_{Z',s}$. Combining both gives the desired result.
\end{proof}
\subsection{Special subvarieties}
\begin{definition}
A strict irreducible subvariety $Z$ of $S$ is a \textit{special subvariety of $S$ for $\V$} if for any intermediate (for the inclusion) irreducible subvariety $Z \subsetneq Z' \subset S$, the inclusion
\[
(\pp_Z, D_{\pp_Z}) \subseteq (\pp_{Z'}, D_{\pp_{Z'}})
\]
of mixed Hodge subclasses of $(\pp, D_\pp)$ is strict.
\end{definition}
The link between special subvarieties and the Hodge locus of $S$ for $\V$ relies on the following fundamental result, first proven in \cite{bps} and reproven recently independently in \cite{bbkt} (see Theorem \ref{algspec}):
\begin{theorem}[{\cite[Thm. 1]{bps}}]\label{bps}
The Hodge locus $\HL$ of $S$ for $\V$ is a countable union of irreducible algebraic subvarieties of $S$ for $\V$.
\end{theorem}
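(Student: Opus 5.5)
The plan is to show that $\HL$ is exactly the union of the special subvarieties of $S$ for $\V$. Each special subvariety is algebraic and irreducible, and there are only countably many of them. This gives the statement.

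\textbf{Step 1: special subvarieties lie in $\HL$.} Let $Z$ be special and $s \in Z^\an$ a point where $\pp(\V_s) = \pp_{Z,s}$; such a point exists by Lemma \ref{inclmtz}. Since $Z \subsetneq S$, speciality applied to $Z' = S$ gives the strict inclusion $(\pp_Z, D_{\pp_Z}) \subsetneq (\pp, D_\pp)$. Because the groups are connected, this strictness is at the level of groups. Hence $\pp(\V_s) \subsetneq \pp_s$ for such $s$. For an arbitrary point of $Z$, Lemma \ref{inclmtz} still gives $\pp(\V_s) \subset \pp_{Z,s} \subsetneq \pp_s$. So $Z \subset \HL$.

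\textbf{Step 2: every point of $\HL$ lies in a special subvariety.} Let $s \in \HL$ and set $\M = \pp(\V_s) \subsetneq \pp_s$. Transporting to a base point and using Proposition \ref{mtbound} and Corollary \ref{mtconn}, we get a strict mixed Hodge subclass $(\M, D_\M)$ with $\Phi(s) \in \period_{(\M,D_\M)}$. Take an analytic irreducible component $Z$ of $\Phi^{-1}(\phi_{i,\Gamma}(\period_{(\M, D_\M),\Gamma_\M}))$ through $s$. By Theorem \ref{algspec}, $Z$ is an irreducible algebraic subvariety.

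We have $\pp_Z \subset \M$, since every point of $Z$ has Mumford--Tate group in a conjugate of $\M$; this follows from Proposition \ref{mtbound} together with Lemma \ref{minsubv}. We may not have $\pp_Z = \M$. To get a special subvariety, choose among the components through $s$ of preimages of special subvarieties $\period_{(\pp_Z,D_{\pp_Z})}$ one that is maximal. Then define $Z^{\mathrm{sp}}$ as the largest irreducible subvariety containing $Z$ with the same generic mixed Hodge class. This is itself an irreducible component of $\Phi^{-1}(\period_{(\pp_Z,D_{\pp_Z})})$, by the same algebraicity, and it is special by construction. Finally, $Z^{\mathrm{sp}} \ne S$ because its generic group is contained in $\M \subsetneq \pp$.

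\textbf{Step 3: countability.} Mixed Hodge subclasses $(\M, D_\M)$ of $(\pp, D_\pp)$ form a countable set. Indeed, $\M$ is a $\Q$-subgroup, so it is determined by countable data; and for fixed $\M$ the domains $D_\M$ are $\M(\R)^+\M^u(\C)$-orbits, of which only countably many meet the image (up to $\Gamma$). Each preimage has countably many analytic components, since $S^\an$ is second countable. So the special subvarieties are countable in number.

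The main obstacle is Step 2: we need the maximal algebraic subvariety with fixed generic class to exist and to be special. Its existence rests on Theorem \ref{algspec}, which is the o-minimal input of \cite{bbkt} and is genuinely where the content of Theorem \ref{bps} lies. The other two steps are bookkeeping.
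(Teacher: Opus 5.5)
The paper gives no proof of Theorem~\ref{bps}: it cites \cite{bps}, and notes that the result was reproven independently in \cite{bbkt}, in the form recorded here as Theorem~\ref{algspec}. Your argument follows that second route and is correct in substance. The original proof of \cite{bps} is Hodge-theoretic: it builds on Cattani--Deligne--Kaplan \cite{cdk} and on the algebraicity of zero loci of admissible normal functions (see \cite{bpan,bpduk,bpcomp}), and uses no o-minimality. You instead put all the content into the o-minimal input, namely definability of $\Phi$ plus definable Chow. In exchange you obtain at once the description of $\HL$ as the union of special subvarieties, which the paper deduces afterwards from Theorem~\ref{bps} by a noetherian ascent.

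Because you work with $\Phi$, you need a reduction you did not state. First pass to a finite étale cover $\pi : S' \to S$ so that the monodromy lies in a neat arithmetic $\Gamma \subset \pp(\Q)^+$ (Corollary~\ref{neatetal}). Then $\HL = \pi\big(\mathrm{HL}(S', (\pi^\ast\V)^\otimes)\big)$ by Lemma~\ref{hlpb}. Since $\pi$ is finite, it sends irreducible closed subvarieties to irreducible closed subvarieties.

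Several points should also be tightened.
\begin{itemize}
\item In Step~2, transporting to a point $s' \in Z^\an$ with $\pp(\V_{s'}) = \pp_{Z,s'}$ only gives $\pp_{Z,s} \subset \gamma \M \gamma^{-1}$ for some $\gamma \in \Gamma$, not $\pp_Z \subset \M$. But Lemma~\ref{inclmtz} gives $\M = \pp(\V_s) \subset \pp_{Z,s}$, so connectedness and equal dimensions force $\pp_{Z,s} = \M$. The component $Z$ through $s$ is then already special by Proposition~\ref{specarac}, and your maximality detour is unnecessary.
\item For Theorem~\ref{bps} alone you do not even need speciality. You only need $Z \subset \HL$, which follows from the inclusion into a $\Gamma$-conjugate of $\M$.
\item If you keep the detour, replace ``the largest'' by ``a maximal'' irreducible subvariety with the same generic class. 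Such a one exists by noetherianity, but uniqueness is not clear.
\item In Step~3, ``only countably many meet the image'' is not an argument. What you need is that, for a fixed $\Q$-subgroup $\M$, there are only countably many subclasses $(\M, D_\M)$. This holds for three reasons. Two morphisms $\s_\C \to \M_\C$ with the same image in $(\M/\M^u)_\C$ are $\M^u(\C)$-conjugate. The $\R$-morphisms $\s \to (\M/\M^u)_\R$ fall into countably many $\M(\R)$-conjugacy classes. Each resulting $\M(\R)\M^u(\C)$-orbit has finitely many connected components.
\item In Step~1, strictness at the level of groups does not come from connectedness. It holds because a subclass with group $\pp$ has domain a $\pp(\R)^+\pp^u(\C)$-orbit in $D_\pp$, hence equal to $D_\pp$.
\end{itemize}
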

\begin{corollary}
The Hodge locus $\HL$ of $S$ for $\V$ is the union of special subvarieties of $S$ for $\V$.
\end{corollary}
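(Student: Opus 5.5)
We need to prove: $\HL$ is the union of special subvarieties of $S$ for $\V$. The plan is to prove both inclusions, with Theorem \ref{bps} doing the main work for one of them.

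\emph{Special subvarieties lie in $\HL$.} Let $Z$ be special and $s \in Z^\an$. By Lemma \ref{inclmtz}, $\pp(\V_s) \subset \pp_{Z,s} \subset \pp_s$. It therefore suffices to show $\pp_{Z,s} \subsetneq \pp_s$. Apply the definition of special with $Z' = S$: the inclusion $(\pp_Z, D_{\pp_Z}) \subseteq (\pp, D_\pp)$ is strict. The one point to check is that this forces $\pp_{Z,s} \neq \pp_s$. If the groups were equal, then $D_{\pp_Z}$ would be an $\pp(\R)^+\pp^u(\C)$-orbit containing $\Phi$-lifts of points of $Z$, hence equal to $D_\pp$ by the orbit argument of Corollary \ref{mtconn}, contradicting strictness. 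Thus $\pp(\V_s) \subsetneq \pp_s$, i.e.\ $s \in \HL$.

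\emph{$\HL$ lies in a union of special subvarieties.} Let $s \in \HL$. By Theorem \ref{bps}, $s$ lies in some irreducible algebraic subvariety $Y \subset \HL$. Rather than use $Y$ directly, I would argue with Mumford--Tate groups:
\begin{itemize}
\item[1.] Set $\M = \pp(\V_s) \subsetneq \pp_s$, and take $Z$ to be an irreducible component through $s$ of $\Phi^{-1}$ of the special subvariety of $\period$ attached to the subclass $(\M, D_\M)$ containing the period point of $s$. This subclass is given by Proposition \ref{mtbound}. By Theorem \ref{algspec}, $Z$ is algebraic.
\item[2.] Every point of $Z$ has Mumford--Tate group contained in a $\Gamma$-conjugate of $\M$; locally near $s$ it is contained in $\M$ itself. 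Hence $\pp_{Z,s} \subset \M \subsetneq \pp_s$, so $Z \neq S$.
\item[3.] Among irreducible subvarieties $Z'' \supseteq Z$ with $(\pp_{Z''}, D_{\pp_{Z''}}) = (\pp_Z, D_{\pp_Z})$, choose a maximal one $Z'$. A maximal element exists by Noetherianity: consider the irreducible component through $Z$ of $\Phi^{-1}$ of the special subvariety attached to $\pp_Z$, which is algebraic by Theorem \ref{algspec}.
\item[4.] This $Z'$ is special: any strictly larger irreducible $Z'''$ has strictly larger class by maximality, using Lemma \ref{fonc} to get the inclusion $(\pp_{Z'}, D_{\pp_{Z'}}) \subseteq (\pp_{Z'''}, D_{\pp_{Z'''}})$. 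Also $Z' \ne S$, since its class equals that of $Z$, which is strict.
\end{itemize}
Since $s \in Z \subset Z'$, this proves the second inclusion.

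The main obstacle is step 3. Maximality must be taken correctly: an ascending chain of irreducible subvarieties with constant class must stabilize. To guarantee this, I would show that any $Z''$ in the chain lies in a fixed component $C$ of $\Phi^{-1}(\period_{(\pp_Z, D_{\pp_Z})})$, which holds by Lemma \ref{minsubv}. Then $C$ itself has class $(\pp_Z, D_{\pp_Z})$, because its class is contained in $\pp_Z$ by the same argument as in step 2 and contains it by Lemma \ref{fonc}. So $Z' = C$ works.
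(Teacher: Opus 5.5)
Your proof is correct once $\V$ is effective, and your first inclusion is the paper's argument, with the step $\pp_{Z,s}\subsetneq\pp_s$ spelled out more carefully than in the paper.

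For the second inclusion you take a genuinely different route.

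\emph{The paper's route.} It never uses a period map. It takes the irreducible algebraic subvarieties $Y\subset\HL$ supplied by Theorem \ref{bps}. Each has strict generic class: at a point of $Y$ where $\pp(\V_s)=\pp_{Y,s}$ (Lemma \ref{inclmtz}) one gets $\pp_{Y,s}\subsetneq\pp_s$. It then enlarges $Y$ through irreducible subvarieties with the same class until it becomes special, and this process terminates.

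\emph{Your route.} You build the special subvariety directly, as a component $Z$ through $s$ of $\Phi^{-1}$ of the Hodge subvariety attached to $\M=\pp(\V_s)$, with algebraicity from Theorem \ref{algspec}.

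\emph{Steps 3--4 are unnecessary.} Combine Step 2 with $\M=\pp(\V_s)\subset\pp_{Z,s}$ from Lemma \ref{inclmtz}. This gives $\pp_{Z,s}=\M$; the cleanest justification is the dimension argument $\M\subset\pp_{Z,s}\subset\gamma\M\gamma^{-1}$ from the proof of Proposition \ref{specarac}. So $Z$ is already a component of the preimage of the Hodge subvariety attached to its own generic class. By Lemma \ref{minsubv}, any irreducible $Z''\supsetneq Z$ with the same class would lie in that preimage, which is impossible since $Z$ is a component. Hence your $C$ is $Z$ itself, and the chain-stabilization worry disappears. What you have really done is reprove the ``if'' direction of Proposition \ref{specarac}.

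\emph{What your route buys.} You get a sharper statement: every $s\in\HL$ lies on a special subvariety whose generic Mumford--Tate group is exactly $\pp(\V_s)$.

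\emph{What it costs.} $\Phi$ and Theorem \ref{algspec} require the monodromy to lie in a neat arithmetic subgroup. The corollary, however, is stated for an arbitrary admissible graded-polarized $\V$; the effectivity assumption is only imposed after it. Lemma \ref{hlpb} lets you pass to a finite \'etale cover $\pi:S'\to S$ as far as $\HL$ is concerned. You would also need that $\pi$ maps special subvarieties of $S'$ onto special subvarieties of $S$. This is true: generic Hodge tensors are unchanged since paths lift, and an irreducible $Y\supsetneq\pi(Z')$ has a component of $\pi^{-1}(Y)$ strictly containing $Z'$. But it is an extra step not in the paper, whereas the paper's enlargement argument works in the stated generality as is.
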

\begin{proof}
Let $Z \subset S$ be a special subvariety of $S$ for $\V$. Let $s \in Z^\an$. Applying the definition of special subvarieties to $Z' = S$, one finds that $\pp(\V_s) \subset \pp_{Z,s} \subsetneq \pp_s$. It follows that $s \in \HL$.

Conversely, by Theorem \ref{bps} the Hodge locus is a countable union of irreducible algebraic subvarieties of $S$ for $\V$. Let $Z$ be any of these. We are tasked with proving that $Z$ is contained in a special subvariety of $S$ for $\V$. If it is itself a special subvariety, there is nothing to prove. Otherwise, there is some $Z \subsetneq Z' \subset S$ such that for any $s \in Z^\an$ one has $(\pp_{Z}, D_{\pp_Z}) = (\pp_{Z'}, D_{\pp_{Z'}})$ as mixed Hodge subdata of $(\pp, D_\pp)$. Since $Z \subset \HL$, one has for $(\pp_{Z}, D_{\pp_Z}) \subsetneq (\pp, D_\pp)$, hence $(\pp_{Z'}, D_{\pp_{Z'}}) \subsetneq (\pp, D_\pp)$. Therefore, for any $s \in Z'^\an$, one has $\pp(\V_s) \subset \pp_{Z',s} \subsetneq \pp_s$ hence $Z' \subset \HL$. One can repeat this process with $Z$ replaced by $Z'$ and so on. The process ends because $S$ is noetherian, and outputs a special subvariety of $S$ for $\V$ containing $Z$. This finishes the proof.
\end{proof}
Thereby, the study of the Hodge locus boils down to that of special subvarieties. The latter have a nice characterization formulated by Klingler in \cite{klingler_atyp}, which was the main motivation of all the work we have done so far. Until the end of this subsection, we assume that $\V$ is effective, denote by $(\pp, D_\pp)$ the generic mixed Hodge class of $\V$ and $\Phi : S^\an \rightarrow \period_{(\pp, D_\pp), \Gamma}$ the period map of $\V$, where $\Gamma \subset \pp(\Q)^+$ is a neat arithmetic subgroup containing the image of the monodromy representation.
\begin{proposition}\label{specarac}
Let $Z \subsetneq S$ be a strict irreducible subvariety. Then, $Z$ is a special subvariety of $S$ for $\V$ if and only if $(\pp_Z,D_{\pp_Z})$ is a strict mixed Hodge subclass of $(\pp, D_\pp)$ and $Z^\an$ is a complex-analytic irreducible component of $\Phi^{-1}(\phi_{i, \Gamma}(\period_{(\pp_Z, D_{\pp_Z}), \Gamma_{\pp_Z}}))$.
\end{proposition}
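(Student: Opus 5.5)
The plan is to prove the two implications separately. Throughout, write $\period_Z := \phi_{i,\Gamma}(\period_{(\pp_Z, D_{\pp_Z}), \Gamma_{\pp_Z}})$. The two tools are Lemma \ref{minsubv}, which gives $\Phi(Z^\an) \subset \period_Z$, and the functoriality Lemma \ref{fonc}.

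For the direct implication, assume $Z$ is special. The first condition follows from the definition applied to $Z' = S$: this gives $(\pp_Z, D_{\pp_Z}) \subsetneq (\pp_S, D_{\pp_S}) = (\pp, D_\pp)$. By Lemma \ref{minsubv}, $Z^\an$ is contained in $\Phi^{-1}(\period_Z)$, which is a closed analytic subset of $S^\an$ because $\period_Z$ is closed in the image. Since $Z$ is irreducible, $Z^\an$ lies in some irreducible component $Y$ of this preimage. By Theorem \ref{algspec}, $Y = Z'^\an$ for an irreducible algebraic subvariety $Z' \supseteq Z$. Every point of $Z'$ maps into $\period_Z$, so along a lift the period map lands in $D_{\pp_Z}$. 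By Proposition \ref{mtbound}, this gives $\pp(\V_s) \subset \pp_{Z,s}$ for all $s \in Z'^\an$, with $\pp_{Z,s}$ transported along paths inside $Z'$. Consequently, the generic Hodge tensors of $Z$ are Hodge along all of $Z'^\sm$ and hence generic for $\V|_{Z'^\sm}$. This gives $\pp_{Z'} \subseteq \pp_Z$, and combined with Lemma \ref{fonc} we get equality of the classes. Specialness then forces $Z' = Z$, so $Z^\an = Y$ is a component.

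For the converse, assume $(\pp_Z, D_{\pp_Z})$ is strict and $Z^\an$ is a component of $\Phi^{-1}(\period_Z)$. Let $Z \subsetneq Z' \subset S$ be irreducible, and suppose for contradiction that the classes of $Z$ and $Z'$ coincide. By Lemma \ref{minsubv} applied to $Z'$, we get $\Phi(Z'^\an) \subset \period_{Z'} = \period_Z$. Then $Z'^\an$ is an irreducible analytic subset of $\Phi^{-1}(\period_Z)$ strictly containing the component $Z^\an$, which is a contradiction. Hence every such inclusion of classes is strict, and $Z$ is special.

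The main obstacle is the monodromy and base-point bookkeeping in the direct implication. The claim is that a lift of $\Phi|_{Z'}$ lands in a single $\Gamma$-translate $D_{\pp_Z}$, and not in varying translates $\gamma D_{\pp_Z}$. This should follow from analytic continuation: the preimage of $\period_Z$ in $D_\pp$ is a locally finite union of translates $\gamma \cdot D_{\pp_Z}$, and the connected lift stays in one of them. The identification of Hodge tensors across $Z$ and $Z'$ is then handled as in Corollary \ref{mtconn}.
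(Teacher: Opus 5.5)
Your converse is the same as the paper's argument. The skeleton of your direct implication also matches the paper: enlarge $Z^\an$ to a component $Z'^\an$ of $\Phi^{-1}(\period_Z)$, algebraize it via Theorem \ref{algspec}, show the classes of $Z$ and $Z'$ coincide, and conclude by specialness. The gap is at the step you flag yourself, and the justification you offer for it does not work.

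The set $\pi_\Gamma^{-1}(\period_Z)=\bigcup_{\gamma\in\Gamma}\gamma\cdot D_{\pp_Z}$ is not a disjoint union of sheets. Two distinct translates can meet along loci of points whose Mumford--Tate group lies in $\pp_Z\cap\gamma\pp_Z\gamma^{-1}$; this is how special subvarieties of $\period$ acquire self-intersections. So a connected lift of $Z'$ can a priori pass from one translate to another, and local finiteness does not prevent this. Moreover, even if the lift lies in a single translate $\gamma\cdot D_{\pp_Z}$, nothing in your argument shows $\gamma\cdot D_{\pp_Z}=D_{\pp_Z}$, because the point $\tilde\Phi(\tilde s_0)$ over $s_0\in Z$ may lie in several translates. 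What your reasoning actually gives is $\pp_{Z'}\subseteq\gamma\pp_Z\gamma^{-1}$ for an uncontrolled $\gamma\in\Gamma$, not $\pp_{Z'}\subseteq\pp_Z$. So the equality of classes is not established.

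The missing ingredient is a dimension comparison, which in the paper makes the single-translate claim unnecessary.
\begin{enumerate}
\item Choose $s\in Z^\an$ with $\pp(\V_s)=\pp_{Z,s}$ and $s'\in Z'^\an$ with $\pp(\V_{s'})=\pp_{Z',s'}$ (Lemma \ref{inclmtz}), and a path $\tau$ from $s$ to $s'$ in $Z'^\an$.
\item Since $\Phi(s')\in\period_Z$, one has $\tilde\Phi_{\tilde s}(\tilde s')\in\gamma\cdot D_{\pp_{Z,s}}$ for \emph{some} $\gamma\in\Gamma$. Proposition \ref{mtbound} then gives $\pp_{Z',s}=(\tau^{\gl}_\ast)^{-1}(\pp(\V_{s'}))\subseteq\gamma\pp_{Z,s}\gamma^{-1}$.
\item The proof of Lemma \ref{fonc} gives $\pp_{Z,s}\subseteq\pp_{Z',s}$. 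Hence $\pp_{Z,s}\subseteq\pp_{Z',s}\subseteq\gamma\pp_{Z,s}\gamma^{-1}$, and since the outer groups are connected of the same dimension, all three coincide.
\end{enumerate}
You never need to know which translate the lift lies in.

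If you prefer to keep your route, replace connectedness by irreducibility. A connected component of the preimage of $Z'^{\sm}$ in $\widetilde{S^\an}$ is a connected manifold, covered by the countably many closed analytic subsets $\tilde\Phi^{-1}(\gamma\cdot D_{\pp_Z})$. By Baire and the identity theorem, one of them is the whole component. You must then still take $s_0$ Hodge generic in $Z$, so that the same dimension argument forces $\gamma$ to normalize $\pp_Z$ and hence $\gamma\cdot D_{\pp_Z}=D_{\pp_Z}$. At that point you are essentially running the paper's argument with extra steps.
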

\begin{proof}
Assume that $(\pp_Z, D_{\pp_Z})$ is a strict mixed Hodge subclass of $(\pp, D_\pp)$ and that $Z^\an$ is a complex-analytic irreducible component of $\Phi^{-1}(\phi_{i, \Gamma}(\period_{(\pp_Z, D_{\pp_Z}), \Gamma_{\pp_Z}}))$. Let $Z \subset Z' \subset S$ be an intermediate irreducible subvariety such that $(\pp_{Z'}, D_{\pp_{Z'}}) = (\pp_Z, D_{\pp_Z})$. By Lemma \ref{minsubv}, it follows that $\Phi(Z'^\an) \subset  \phi_{i, \Gamma}(\period_{(\pp_Z, D_{\pp_Z}), \Gamma_{\pp_Z}})$, hence $Z = Z'$ by the assumption that $Z^\an$ is an irreducible component of $\Phi^{-1}(\phi_{i, \Gamma}(\period_{(\pp_Z, D_{\pp_Z}), \Gamma_{\pp_Z}})$. This proves that $Z$ is special and thereby the "if" part of the statement.

Conversely, assume that $Z$ is special. This readily forces $(\pp_Z, D_{\pp_Z}) \subsetneq (\pp, D_\pp)$. Let $Z'^\an$ be a complex-analytic irreducible component of $\Phi^{-1}(\phi_{i, \Gamma}(\period_{(\pp_Z, D_{\pp_Z}), \Gamma_{\pp_Z}})$ containing $Z^\an$. Theorem \ref{algspec} guarantees that $Z'^\an$ is the analytification of an irreducible algebraic subvariety $Z'$ of $S$ (hence the notation). We want to prove that $(\pp_Z , D_{\pp_Z}) = (\pp_{Z'}, D_{\pp_{Z'}})$. Let $s \in Z^\an$ such that $\pp(\V_s) = \pp_{Z,s}$, whose existence is guaranteed by Lemma \ref{inclmtz}. By the proof of Lemma \ref{fonc}, the inclusion $i_{Z,s} : \pp_{Z,s} \hookrightarrow \pp_s$ factors through $i_{Z',s} : \pp_{Z',s} \hookrightarrow \pp_s$, resulting in an inclusion $i_{Z,Z',s} : \pp_{Z,s} \hookrightarrow \pp_{Z',s}$ as subgroups of $\pp_s$. Furthermore $i_{Z,Z',s}$ induces an inclusion of mixed Hodge data $(\pp_{Z,s}, X_{\pp_{Z,s}}, D_{\pp_{Z,s}}) \hookrightarrow (\pp_{Z',s}, X_{\pp_{Z',s}}, D_{\pp_{Z',s}})$ as mixed Hodge subdata of $(\pp_s, X_{\pp_s}, D_{\pp_s})$. Therefore, one has an inclusion $(\pp_Z, D_{\pp_Z}) \subseteq (\pp_{Z'}, D_{\pp_{Z'}})$ as mixed Hodge subclasses of $(\pp, D_\pp)$, and to prove that this inclusion is an equality, it suffices to prove that $i_{Z,Z',s} : \pp_{Z,s} \hookrightarrow \pp_{Z',s}$ is an equality. To see this, let $s' \in Z'^\an$ be a point such that $\pp(\V_{s'}) = \pp_{Z',s'}$ (again, it is possible by Lemma \ref{inclmtz}). Let $\tau : [0,1] \rightarrow Z'^\an$ be a continuous path with $\tau(0) = s$ and $\tau(1) = s'$. Let $\tilde{s} \in \widetilde{S^\an}$ be a lift of $s$ to the universal cover, and $\tilde{s'}$ be the lift of $s'$ to $\widetilde{S^\an}$ which is the endpoint of the canonical lift of $\tau$ to $\widetilde{S^\an}$. Let $\tilde{\Phi}_{\tilde{s}}$ be the lifted period map based at $\tilde{s}$. Since $\Phi(Z'^\an) \subset \phi_{i, \Gamma}(\period_{(\pp_Z, D_{\pp_Z}), \Gamma_{\pp_Z}})$, there exists a $\gamma \in \Gamma$ such that $\tilde{\Phi}_{\tilde{s}}(\tilde{s}') \in \gamma \cdot D_{\pp_{Z,s}} \subset D_{\pp_s}$. From the construction of the lifted period map, and Proposition \ref{mtbound} one finds:
\[
(\tau_\ast^\gl)^{-1}(\pp(\V_{s'})) = \pp(\tilde{\Phi}_{\tilde{s}}(\tilde{s}')) \subset \gamma \pp_{Z,s} \gamma^{-1},
\]
as subgroups of $\pp_s$. As we chose $s'$ such that $\pp(\V_{s'}) = \pp_{Z',s'}$, and since by construction $(\tau_\ast^\gl)^{-1}(\pp_{Z',s'}) = \pp_{Z',s}$, we find:
\[
\pp_{Z,s} \subset \pp_{Z',s} \subset \gamma \pp_{Z,s} \gamma^{-1}
\]
as subgroups of $\pp_s$. The first and last members of the latter chain of inequality have the same dimension, and all members are connected $\Q$-algebraic subgroups, hence they are equal which proves in particular that $\pp_{Z,s} = \pp_{Z',s}$ as subgroups of $\pp_s$. We have shown that $(\pp_Z , D_{\pp_Z}) = (\pp_{Z'}, D_{\pp_{Z'}})$. Since $Z$ was assumed to be special, it follows that $Z = Z'$. Therefore $Z^\an$ is a complex-analytic irreducible component of $\Phi^{-1}(\phi_{i, \Gamma}(\period_{(\pp_Z, D_{\pp_Z}), \Gamma_{\pp_Z}})$ which proves the "only if" part of the statement.
\end{proof}
\subsection{(A)typical and transverse special subvarieties}
\subsubsection*{Recollections on transverse intersections}
We start by introducing some language to work with transverse intersections, which will be useful in the proofs of the main results. First recall the following standard statement in complex geometry
\begin{lemma}[{\cite[Thm. II.6.2, Prop. II.4.32]{demaillybook}}]\label{intersection}
Let $X,Y$ be irreducible locally closed complex-analytic subsets of a complex manifold $Z$. Let $W$ be a complex-analytic irreducible component of $X \cap Y$. Then:
\begin{itemize}
\item[(i)] $W$ is an irreducible locally closed complex-analytic subset of $Z$;
\item[(ii)] $\dim W \geqslant \dim X + \dim Y - \dim Z$;
\end{itemize}
\end{lemma}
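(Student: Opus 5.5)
The plan is to reduce both assertions to local statements about analytic germs and then to use the diagonal trick to turn the intersection inequality into a count of defining equations.

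For $(i)$, write $X$ (resp. $Y$) as a closed complex-analytic subset of an open subset $U$ (resp. $V$) of $Z$. Then $X \cap Y$ is a closed complex-analytic subset of the open set $U \cap V$, since locally it is cut out by the union of the local equations of $X$ and of $Y$. By the global decomposition of a complex-analytic space into irreducible components, every irreducible component $W$ of $X \cap Y$ is an irreducible closed analytic subset of $U \cap V$. Hence $W$ is an irreducible locally closed complex-analytic subset of $Z$. This step is routine.

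For $(ii)$, I use that $W$ is irreducible, hence pure-dimensional, so it suffices to bound the local dimension $\dim_w W$ at a single point $w \in W$; I choose $w$ lying on no other component of $X \cap Y$. The key step is to identify $X \cap Y$ with $(X \times Y) \cap \Delta_Z$ inside $Z \times Z$, via the map $z \mapsto (z,z)$. The product $X \times Y$ is irreducible of dimension $\dim X + \dim Y$, hence pure-dimensional of that dimension at $(w,w)$. Near $(w,w)$, in a chart $(z_1,\dots,z_n,z'_1,\dots,z'_n)$ with $n = \dim Z$, the diagonal is cut out by the $n$ holomorphic functions $f_i = z_i - z'_i$. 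The germ of $W$ at $w$ is then a union of irreducible components of the germ $(X \times Y)_{(w,w)} \cap \{f_1 = \dots = f_n = 0\}$. The inequality therefore follows by induction on the number of equations from the following claim: if $A$ is a germ of pure dimension $d$ and $f$ is a holomorphic function, then every irreducible component of $A \cap \{f = 0\}$ has dimension $\geqslant d-1$. The components of $A$ on which $f$ vanishes identically contribute components of dimension $d$. Each induction step lowers the dimension bound by at most one, so after $n$ steps every component has dimension at least $\dim X + \dim Y - n$.

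The main obstacle is this hypersurface-section claim, an analytic analogue of Krull's principal ideal theorem. To prove it I would take an irreducible component $A'$ of $A$ on which $f \not\equiv 0$. Using the local parametrization theorem, I realize $A'$ as a finite branched covering $\pi : A' \to \Delta^d$ of a polydisc. Next I form the norm $N(f)(t) = \prod_{a \in \pi^{-1}(t)} f(a)$, counted with multiplicity over the regular locus and extended holomorphically. This $N(f)$ is a holomorphic function on $\Delta^d$ which is not identically zero. Its zero set $\{N(f) = 0\}$ is a hypersurface in $\Delta^d$, which has pure dimension $d-1$ by the Weierstrass preparation theorem. Moreover $\{N(f) = 0\} = \pi(A' \cap \{f=0\})$. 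Since $\pi$ is finite, $\dim$ is preserved under $\pi$, and one concludes that every component of $A' \cap \{f = 0\}$ has dimension exactly $d-1$; the careful bookkeeping of components under $\pi$ is the delicate point. Alternatively, at this step I would simply invoke \cite[Prop. II.4.32]{demaillybook}, which is precisely this statement.
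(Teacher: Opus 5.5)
Your proposal is correct and follows essentially the standard argument behind the paper's citation of Demailly, which is all the paper offers: global decomposition into irreducible components for (i), and for (ii) the identification $X\cap Y\cong (X\times Y)\cap\Delta_Z$ combined with the analytic Krull bound applied to the $n$ local equations of the diagonal. The one step you leave loose is purity of $A'\cap\{f=0\}$, and it closes as follows: suppose $C$ is a component of dimension $\leqslant d-2$, take a point $a$ lying on no other component, and center the finite projection $\pi$ on each component of the germ $A_a$ so that $\pi^{-1}(0)=\{a\}$; then $\pi(C_a)$ would contain the $(d-1)$-dimensional hypersurface germ $\{N(f)=0\}$ at $0$, which is impossible.
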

In the setting where Lemma \ref{intersection}(ii) is an equality, we say that \textit{$X$ and $Y$ intersect transversally in $Z$ along $W$.} The following closely related differential theoretic definition of this notion will be crucial:
\begin{definition}
Let $f : X \rightarrow Z$ be a smooth map between smooth manifolds, let $x \in X$ and let $Y$ be a submanifold of $Z$. The map $f$ is said to be \textit{transverse to $Y$ at $x$} if 
\[
T_{f(x)} Y + \im(d_x f) = T_{f(x)} Z.
\]
\end{definition}
The following is immediate from the definitions:
\begin{lemma}[{\cite[Chap. I,\S 5.]{pollguill}}]\label{equivdifftrans}
Let $X,Y$ be smooth and irreducible locally closed complex-analytic subsets of a complex manifold $Z$, and let $x \in X \cap Y$. Consider the following assertions:
\begin{itemize}
\item[(i)] there exists a complex-analytic irreducible component $W$ of $X \cap Y$ containing $x$ such that $X$ and $Y$ intersect transversally along $W$ in $Z$;
\item[(ii)] the inclusion $i : X_\R \rightarrow Z_\R$ (seen as a smooth map between smooth manifolds) is transverse to $Y_\R$ at $x$.
\end{itemize}
Then $(ii)$ implies $(i)$.
\end{lemma}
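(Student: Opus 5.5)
We argue in local holomorphic coordinates and reduce to a dimension count at $x$. Put $n = \dim Z$, $p = \dim X$ and $q = \dim Y$; since $X$ and $Y$ are smooth these are also the complex dimensions of the tangent spaces at $x$. Assumption $(ii)$ says $T_xX_\R + T_xY_\R = T_xZ_\R$ as real vector spaces. Because $T_xX$ and $T_xY$ are complex subspaces of $T_xZ$, whose underlying real spaces are the $T_x(\cdot)_\R$, this is the complex equality $T_xX + T_xY = T_xZ$. By the Grassmann formula,
\[
\dim_\C (T_xX \cap T_xY) = p + q - n.
\]

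Next we produce a manifold structure on the intersection near $x$. Choose holomorphic coordinates on $Z$ near $x$ in which $Y$ is the zero locus of $n-q$ holomorphic functions $g_1, \dots, g_{n-q}$ with linearly independent differentials at $x$. The restrictions $g_j\vert_X$ define $X \cap Y$ inside the manifold $X$ near $x$, and their differentials at $x$ are the restrictions of $d_xg_j$ to $T_xX$. The common kernel of the $d_xg_j$ is $T_xY$, and $T_xX + T_xY = T_xZ$. Hence the map $T_xX \to \C^{n-q}$ given by the $d_xg_j$ is surjective, so its kernel $T_xX\cap T_xY$ has dimension $p+q-n$. By the holomorphic implicit function theorem, there is a neighborhood $U$ of $x$ in $Z$ such that $X \cap Y \cap U$ is a connected complex submanifold of dimension $p+q-n$.

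Finally we identify the component. The germ of $X \cap Y$ at $x$ is irreducible of dimension $p+q-n$, so there is a unique irreducible component $W$ of $X \cap Y$ through $x$. Its germ coincides with that smooth germ, hence $\dim W = p+q-n$. By Lemma \ref{intersection}$(ii)$ every component already satisfies $\geqslant$, so equality holds, which is exactly the statement that $X$ and $Y$ intersect transversally along $W$.

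The main obstacle is this last identification. One has to check that the dimension of a global irreducible component equals the dimension of its germ at a point where it is smooth, and that only one component passes through $x$. Both follow from the local irreducibility of a connected manifold germ together with the fact that analytic irreducible components have constant dimension. I would cite these standard properties from \cite{demaillybook} rather than reprove them.
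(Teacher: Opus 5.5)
Your argument is correct and is essentially the standard proof the paper defers to: the paper simply calls the lemma immediate from the definitions and cites Guillemin--Pollack, Chap.~I, \S 5 (a transverse preimage of a submanifold is a submanifold of the expected codimension), and you carry out that argument holomorphically. Your additional step --- that the smooth connected germ of $X \cap Y$ at $x$ lies in a unique irreducible component, of dimension exactly $\dim X + \dim Y - \dim Z$ --- is right, and it makes the final appeal to Lemma~\ref{intersection}$(ii)$ unnecessary.
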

However the converse does not hold in general (for example of the parabola and its complex tangent line at $(0,0)$ in $\C^2$). We will explain in Proposition \ref{typtotrans} how to bypass this issue in our Hodge theoretic setting.
The pertinence of this differential theoretic formulation of transversality for this work comes from the following stability result:
\begin{theorem}[{\cite[Chap. 1,\S 6, Stability Theorem]{pollguill}}]\label{stabtrans}
Let $X,S$ and $Z$ be smooth manifolds, let $Y$ be a smooth submanifold of $Z$ and let $F : X \times S \rightarrow Z$ be a smooth map. Let $\pi : X \times S \rightarrow S$ be the projection to $S$, and for $s \in S$ let $f_s : X \rightarrow Z, x \mapsto F(x,s)$. Assume that there exists a point $(x_o,o) \in X \times S$ such that $f_o$ is transverse to $Y$ at $x_o$. Then, there exists
\begin{itemize}
\item[(i)] an open neighborhood $\Omega$ of $o$ in $S$,
\item[(ii)] a smooth map $\sigma : \Omega \rightarrow X$ such that $\sigma(o) = x_o$,
\end{itemize}
such that for every $s \in \Omega$, the smooth map $f_s$ is transverse to $Y$ at $\sigma(s)$.
\end{theorem}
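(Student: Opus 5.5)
The plan is to reduce everything to the implicit function theorem, working in a small neighbourhood of $(x_o,o)$. Note that the definition of transversality at $x$ implicitly requires $f(x)\in Y$, since it involves $T_{f(x)}Y$. So the map $\sigma$ must satisfy two conditions: $f_s(\sigma(s))\in Y$, and $f_s$ is transverse to $Y$ at $\sigma(s)$.

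\emph{Step 1: local reformulation.} Set $z_o=f_o(x_o)\in Y$ and $k=\dim Z-\dim Y$. Since $Y$ is a submanifold, there is an open neighbourhood $U$ of $z_o$ in $Z$ and a submersion $g:U\rightarrow \R^k$ with $Y\cap U=g^{-1}(0)$ and $\ker d_zg=T_zY$ for $z\in Y\cap U$. For $x$ with $f(x)\in Y\cap U$, I would then check the equivalence
\[
T_{f(x)}Y+\im(d_xf)=T_{f(x)}Z \iff d_x(g\circ f)\ \text{is surjective onto}\ \R^k .
\]
Both directions follow because $d_{f(x)}g$ is surjective with kernel $T_{f(x)}Y$. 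Now consider $G=g\circ F$, defined on the open set $F^{-1}(U)\subset X\times S$, which contains $(x_o,o)$. The hypothesis says $G(x_o,o)=0$ and that the partial differential $\partial_xG(x_o,o)$ is surjective.

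\emph{Step 2: construction of $\sigma$.} Take local coordinates on $X$ centred at $x_o$. Choose a $k$-dimensional subspace $C\subset T_{x_o}X$ on which $\partial_xG(x_o,o)$ restricts to an isomorphism, and split the coordinates as $x=(u,v)$ with $v$ running along $C$. Then $\partial_vG(x_o,o)$ is invertible. The implicit function theorem applied to $(v,s)\mapsto G(u_o,v,s)$ gives an open neighbourhood $\Omega$ of $o$ and a smooth map $v:\Omega\rightarrow\R^k$ with $v(o)=v_o$ and $G(u_o,v(s),s)=0$. Set $\sigma(s)=(u_o,v(s))$. By construction $\sigma(o)=x_o$ and $f_s(\sigma(s))\in Y$ for all $s\in\Omega$.

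\emph{Step 3: openness of transversality.} Surjectivity of a linear map is an open condition, since it is the non-vanishing of some $k\times k$ minor. The map $s\mapsto \partial_xG(\sigma(s),s)$ is continuous, and it is surjective at $s=o$. So after shrinking $\Omega$ it is surjective for all $s\in\Omega$. By the equivalence of Step 1, $f_s$ is then transverse to $Y$ at $\sigma(s)$ for every $s\in\Omega$.

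The main, and rather mild, obstacle is the bookkeeping in Step 1. One has to choose $U$ so that a single defining submersion $g$ for $Y$ works near $z_o$, and then use continuity of $F$ to shrink the neighbourhood of $(x_o,o)$ so that $F$ lands in $U$; only then does the global notion of transversality agree with the surjectivity condition on $\partial_xG$. Once that is set up, Steps 2 and 3 are direct applications of the implicit function theorem and of lower semicontinuity of rank.
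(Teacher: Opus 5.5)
Your proof is correct and takes essentially the paper's route. The paper gets $\sigma$ as a local section of $\pi$ restricted to $F^{-1}(Y)$, which is a submersion at $(x_o,o)$ because $F$ is transverse to $Y$ there; your implicit-function-theorem construction with the local defining submersion $g$ produces exactly such a section in coordinates. For the remaining step, the paper cites stability of transversality, whereas you check it directly via openness of surjectivity of $\partial_x G(\sigma(s),s)$. That direct local check is if anything cleaner than the citation, since the Guillemin--Pollack stability theorem is stated globally for compact $X$ and closed $Y$.
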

\begin{proof}
Using \textit{op. cit.}, it suffices to construct an open neighborhood $\Omega$ of $o$ and map $\sigma : \Omega \rightarrow X$ with $\sigma(o) = x_o$ such that for any $s \in S$ one has $F(\sigma(s),s) \in Y$. For this, it suffices to show that the restriction of $\pi$ to $F^{-1}(Y)$ is a submersion at $(x_o,o)$. This follows easily from a dimension count, using that $F$ is transverse to $Y$ at $(x_o,o)$ since $f_o$ is transverse to $Y$ at $x_o$.
\end{proof}
\subsubsection*{(A)typical special subvarieties and the Zilber-Pink Conjecture}
Back to our original setting, let $Z$ be a special subvariety of $S$ for $\V$. Let $(\pp_Z, D_{\pp_Z}) \subsetneq (\pp,D_\pp)$ be its generic mixed Hodge class. By Proposition \ref{specarac} one has that $\Phi(Z^\an)$ is a complex-analytic irreducible component of the intersection of the locally-closed complex-analytic subsets $\Phi(S^\an)$ and $\phi_{i,\Gamma}(\period_{(\pp_Z, D_{\pp_Z}), \Gamma_{\pp_Z}})$ of the complex manifold $\period_{(\pp,D_\pp), \Gamma}$. The following definition was introduced in spirit in \cite{klingler_atyp}, later refined by \cite{bku} in the pure case and finally appeared in the present form in \cite{bu} for the mixed setting:
\begin{definition}
A special subvariety $Z$ of $S$ for $\V$ is called
\begin{itemize}
\item[(a)]\textit{atypical} if either $\Phi(Z^\an)$ is contained in the singular locus of $\Phi(S^\an)$ or 
\[
\dim \Phi(Z^\an) > \dim \Phi(S^\an) + \dim \phi_{i,\Gamma}(\period_{(\pp_Z, D_{\pp_Z}), \Gamma_{\pp_Z}} - \dim\period_{(\pp,D_\pp), \Gamma}.
\]
\item[(b)] \textit{typical} otherwise.
\end{itemize}
The union of atypical (resp. typical) special subvarieties of $S$ for $\V$ is called the \textit{atypical} (resp. \textit{typical}) \textit{Hodge locus of $S$ for $\V$} and is denoted $\HL_\atyp$ (resp. $\HL_\typ$).
\end{definition}
With this definition in hand, we can state the main conjecture motivating this work:
\begin{conjecture}\label{zpconj}
Let $\V$ be an admissible integral and graded-polarized variation of mixed Hodge structures on a smooth and irreducible quasi-projective complex algebraic variety $S$. Then
\begin{itemize}
\item[(a)] $\HL_\atyp$ is a Zariski-closed subset of $S$;
\item[(b)] If $\HL_\typ$ is non-empty, it is dense in $S$ for the Zariski topology.
\end{itemize}
\end{conjecture}
Part $(a)$ of Conjecture \ref{zpconj} was stated in this generality in \cite{bu}, refining the statement in \cite{klingler_atyp}. Part $(b)$ is a direct generalization of a conjecture stated in \cite{bku}, and is the main object of study of the present work which gives strong evidence for its validity. In particular we show that $(a)$ implies $(b)$.
\subsubsection*{Transverse special subvarieties}
The purpose of this section is to define the notion of transverse special subvariety, a weakened notion of typicality, which appears to us to be the right setting to study density phenomena for Hodge loci (see Remark \ref{typtrans}). We start with the following:
\begin{definition}
Let $Z$ be an irreducible algebraic subvariety of $S$ and $(\M,D_\M)$ a mixed Hodge subclass of $(\pp, D_\pp)$. We say that $Z$ is \textit{defined by $(\M, D_\M)$} if it is a complex-analytic irreducible component of $\Phi^{-1}(\phi_{i, \Gamma}(\period_{(\M,D_\M), \Gamma_\M})$.
\end{definition}
\begin{remark}\label{defgen}
By Proposition \ref{specarac}, a special subvariety of $S$ for $\V$ is always defined by its generic mixed Hodge class, and an irreducible subvariety of $S$ defined by a strict mixed Hodge subclass of $(\pp,D_\pp)$ is special. We however emphasize on the fact that the usefulness of this notion is that a special subvariety of $S$ for $\V$ can also be defined by a Hodge subclass of $(\pp,D_\pp)$ which contains strictly its generic Hodge class (see Remark \ref{typtrans}).
\end{remark}
\begin{definition}\label{deftrans}
Let $Z$ be a special subvariety of $S$ for $\V$ and $(\M, D_\M)$ a mixed Hodge subclass of $(\pp, D_\pp)$. We say that $Z$ is $(\M,D_\M)$-\textit{transverse} if 
\begin{itemize}
\item $\Phi(Z^\an)$ is not contained in the singular locus of $\Phi(S^\an)$;
\item $Z$ is defined by $(\M,D_M)$;
\item $\dim \Phi(Z^\an) = \dim \Phi(S^\an) + \dim D_\M - \dim D$.
\end{itemize}
The \textit{transverse Hodge locus of type $(\M,D_\M)$} is the union $\HLM_\trans$ of special subvarieties of $S$ for $\V$ which are $(p\M p^{-1},p \cdot D_M)$-transverse for some $p \in \pp(\Q)^+$. The \textit{(full) transverse Hodge locus} is
\[
\HL_\trans = \bigcup_{(\M,D_\M) \subsetneq (\pp, D_\pp)} \HLM_\trans.
\]
\end{definition}
\begin{remark}\label{typtrans}
Obviously, a typical special subvariety of $S$ for $\V$ is automatically transverse but the resulting containment $\HL_\typ \subset \HL_\trans$ is strict in general as a special subvariety defined transversally by some mixed Hodge datum $(\M,D_\M)$ may in principle have a generic mixed Hodge datum strictly smaller than $(\M,D_\M)$ (in which case it is atypical). However, assuming Conjecture \ref{zpconj}$(a)$ one has that $\HL_\typ$ is dense in $S$ for the Zariski (resp. euclidean) topology if and only if $\HL_\trans$ is. Ultimately the necessity of introducing the notion of transverse special subvarieties comes from the lack of current knowledge on the distribution of $\HL_\atyp$, in particular its zero-dimensional part (see \cite{bu} for more detail).
\end{remark}
To conclude this section, we prove the following technical result that will be crucial to work with the differential theoretic notion of transversality. Its statement and proof arose from discussions with O. Gabber and D. Urbanik, but any mistake is my responsibility.
\begin{proposition}\label{typtotrans}
Let $(\pp, D_\pp)$ be a mixed Hodge class and $(\M, D_\M)$ be a mixed Hodge subclass. Let $\I \subset D_\pp$ be a smooth locally-closed complex analytic subset which is definable in $\anexp$ and assume that there exists a $p_0 \in \pp(\R)^+\pp^u(\C)$ and a complex-analytic irreducible component $Z$ of $\I \cap (p \cdot D_\M)$ such that $\I$ and $p \cdot D_\M$ intersect transversally along $Z$. There exists a non-empty euclidean open neighborhood $B$ of $p_0$ in $\pp(\R)^+ \pp^u(\C)$ and a (strict) subset $B_{\mathrm{crit}} \subset B$ of Lebesgue measure $0$ such that for every $p \in B - B_{\mathrm{crit}}$, the inclusion $i_p : p \cdot D_\M \hookrightarrow D_\pp$ is transverse to $\I$ at every $z \in \I \cap (p \cdot D_\M)$.
\end{proposition}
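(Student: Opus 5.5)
The plan is a parametric transversality (Sard-type) argument over the group $G := \pp(\R)^+\pp^u(\C)$, which acts transitively on $D_\pp$. Consider the smooth map
\[
F : G \times D_\M \rightarrow D_\pp, \qquad (p, m) \mapsto p \cdot m .
\]
For each fixed $p$, the map $F(p,\cdot)$ is the embedding $i_p$ of $p\cdot D_\M$. The parametric transversality theorem says that if $F$ is transverse to $\I$, then for almost every $p$ the map $i_p$ is transverse to $\I$ at every point of $i_p^{-1}(\I)$. So the first step is to check this for $F$. This is immediate: already the restriction of $F$ to $G \times \{m\}$ is the orbit map, which is a submersion onto the homogeneous space $D_\pp$. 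Hence $F$ is a submersion, and in particular transverse to the smooth submanifold $\I$.

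Next I apply the parametric transversality theorem (the Guillemin--Pollack transversality theorem, whose proof uses Sard's theorem). It shows that the set $B_0$ of $p\in G$ for which $i_p$ fails to be transverse to $\I$ at some point of $\I\cap p\cdot D_\M$ has Lebesgue measure zero in $G$. The technical point is the usual one: $W := F^{-1}(\I)$ is a submanifold of $G\times D_\M$, and $B_0$ is the set of critical values of the projection $W\rightarrow G$. Sard's theorem applies because $W$ is second countable. Note that no compactness is needed, since the argument is pointwise in $p$.

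The remaining task is to choose the neighbourhood $B$ of $p_0$ and to make sure $B - B_{\mathrm{crit}}$ is non-empty. Here I would use the hypothesis that $\I$ and $p_0\cdot D_\M$ intersect transversally along $Z$. Taking $B$ to be any open neighbourhood of $p_0$ and $B_{\mathrm{crit}} = B\cap B_0$ already gives a measure zero subset. Since $B$ is a non-empty open set in a Lie group, it has positive measure, so $B_{\mathrm{crit}}$ is a strict subset.

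The main obstacle I expect is the following. The statement wants transversality at every $z$ in the intersection, and presumably it should also be compatible with the intersection being non-empty near $Z$, which is what later applications need. For this I would combine two ingredients:
\begin{itemize}
\item the dimension equality along $Z$;
\item $\anexp$-definability of $\I$, which guarantees that the set of $p$ such that the intersection has an excess-dimensional component is definable.
\end{itemize}
A definable set of measure zero is nowhere dense. So I would shrink $B$ so that the intersection $\I\cap p\cdot D_\M$ persists near $Z$ for all $p\in B$. Then for generic $p$, the intersection near $Z$ is a genuinely transverse intersection, in the sense of Lemma~\ref{equivdifftrans}(ii), rather than merely one of the expected dimension.
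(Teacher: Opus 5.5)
Your Sard argument is correct and is exactly the second half of the paper's proof. The paper also forms $\sigma : \pp(\R)^+\pp^u(\C)\times D_\M\to D_\pp$, uses that it is a submersion, and takes $B_{\mathrm{crit}}$ to be the critical values of the projection $\sigma^{-1}(\I)\to\pp(\R)^+\pp^u(\C)$. As you essentially observe, this already gives the conclusion as literally stated, for any $B$, without using the hypothesis along $Z$ or the definability of $\I$. But that conclusion is vacuous whenever $\I\cap(p\cdot D_\M)=\emptyset$. The first half of the paper's proof, which is where the hypotheses are used, shows that $B$ can be chosen so that $\I\cap(p\cdot D_\M)\neq\emptyset$ for every $p\in B$. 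This is what the later applications (Proposition \ref{likconstr}, Theorem \ref{allornothing}) need, since there one wants a transverse intersection point near a given one.

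Your sketch for that part has a genuine gap. Definability of the locus where the intersection has an excess-dimensional component, together with nowhere-density of definable null sets, says nothing about whether the intersection survives perturbing $p$. The danger is that the intersection becomes empty, not that it becomes too large. Persistence also cannot be read off from a dimension count in your real set-up: a smooth map $\sigma^{-1}(\I)\to\pp(\R)^+\pp^u(\C)$ whose fibre has the expected dimension at a point need not be open there.

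The missing idea is to complexify the parameter group.
\begin{itemize}
\item The map $F:\pp(\C)\times D_\M\to\check{D}_\pp$ is a holomorphic submersion. Hence $X_\I=F^{-1}(\I)$ is a complex manifold of dimension $\dim\pp+d$, where $d=\dim D_\M+\dim\I-\dim D_\pp$.
\item Every fibre of $\pi_\I:X_\I\to\pp(\C)$ therefore has dimension $\geqslant d$ at each of its points.
\item Take a point $z_0$ over $p_0$ lying on $Z$ and on no other component. There the fibre dimension is exactly $d$; this is where transversality along $Z$ enters.
\item By upper semicontinuity the fibre dimension equals $d$ on a neighbourhood $N$ of $z_0$. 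Remmert's open mapping theorem then makes $\pi_\I(N)$ an open neighbourhood of $p_0$.
\item Intersecting $\pi_\I(N)$ with $\pp(\R)^+\pp^u(\C)$ gives a $B$ such that $p\cdot D_\M$ meets $\I$, indeed near $Z$, for all $p\in B$.
\end{itemize}
The paper phrases the openness step via a definable dimension count on $P_\I=\pi_\I(X_\I)$, and that is where it uses definability. Combined with your Sard step, this yields the non-vacuous form of the statement.
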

\begin{proof}
Consider the diagram
\[
\xymatrix{\pp(\C) \times D_\M \ar[r]^{\hspace{0.6cm}F} \ar[d]_\pi & \check{D}_\pp \\ \pp(\C) & }
\]
in the category of $\anexp$-definable complex-analytic spaces. Let $X_\I = F^{-1}(\I)$. Since $F$ is holomorphic and everywhere a submersion, the subset $X_\I$ of $\pp(\C) \times D_\M$ is a locally closed complex-analytic analytic of dimension $d + \dim \pp(\C)$ where $d = \dim D_\M + \dim \I - \dim D_\pp$. Let $\pi_\I : X_\I \rightarrow \pp(\C)$ be the restriction of $\pi$ to $X_\I$ and let $P_\I = \pi_\I(X_\I)$. Then $P_\I$ is a definable subset of $\pp(\C)$.

We first claim that it contains a non-empty euclidean open neighborhood of $p_0$. For $z \in X_\I$ let $d_z$ be the dimension of the germ of $\pi_\I^{-1}(\pi_\I(z))$ at $z$. By \cite[V.3, Cor. 2]{compan}, one has $d_z \geqslant d$ for every $z \in X_\I$. Furthermore, by assumption, there exists a $z_0 \in X_\I$ such that $d_{z_0} = d$ and $\pi_\I(z_0) = p_0$. By upper semi-continuity of $z \in X_\I \mapsto d_z$ (\cite[V.3, Thm. 3]{compan}), there exists a non-empty euclidean open neighborhood $B_0$ of $z_0$ such that for any $z \in B_0$ one has $d_z = d$. In particular $p_0$ is in the interior of $P_\I$ hence by definable cell decomposition (\cite[III. (2.11)]{vdd}), our claim would follow from the fact that $P_\I$ has dimension $\dim \pp(\C)$. Let $P_\I(2d)$ be the definable subset of $P_\I$ consisting of those $p \in U_\I$ such that $\dim \pi_\I^{-1}(p) = 2d$ (this is the definable dimension which is a real invariant hence the factor $2$). On the one hand $B_0$ is contained in $\pi_\I^{-1}(P_\I(2d))$ hence 
\[
\dim \pi_\I^{-1}(P_\I(2d)) = \dim_\R X_\I = 2d + \dim_\R \pp(\C).
\]
On the other hand, by \cite[Chap. 4, (1.5)]{vdd}, we have
\[
\dim \pi_\I^{-1}(P_\I(2d)) = 2d + \dim P_\I.
\]
Combining the above two equalities then yields $\dim U_\I = \dim_\R \pp(\C)$, and the claim follows.

Let $B' \subset \pp(\C)$ be a non-empty euclidean open neighborhood of $p_0$ in $P_\I$ and let $B = B' \cap \pp(\R)^+\pp^u(\C)$. Then $B$ is a non-empty euclidean open neighborhood of $p_0$ in $\pp(\R)^+\pp^u(\C)$ such that for any $p \in B$ the subsets $p \cdot D_\M$ and $\I$ of $D_\pp$ have non-empty intersection. Consider now the diagram of smooth manifolds
\[
\xymatrix{\pp(\R)^+\pp^u(\C) \times D_\M \ar[r]^{\hspace{0.7cm}\sigma} \ar[d]_q & D_\pp \\ \pp(\R)^+ & }
\]
and let $Y_\I = \sigma^{-1}(\I)$ and $q_\I : Y_\I \rightarrow \pp(\R)^+\pp^u(\C)$ the restriction of $q$ to $Y_\I$. Then $Y_\I$ is a smooth submanifold of $\pp(\R)^+\pp^u(\C) \times D_\M$ as $\sigma$ is everywhere a submersion, and $B$ is contained in $q_\I(Y_\I)$ by construction. By Sard's theorem \cite[40]{pollguill}, the set of critical values of $q_\I$ in $B$ is the complement of a Lebesgue measure $0$ subset $B_{\mathrm{crit}}$ of $B$. Therefore, every $p \in B - B_{\mathrm{crit}}$ is a regular value of $q_\I$. To prove the statement, it remains to show that a $p \in \pp(\R)^+\pp^u(\C)$ is a regular value of $q_\I$ if and only if the inclusion $i_p : p \cdot D_\M \hookrightarrow D_\pp$ is transverse to $\I$ at any point of $\I \cap (p \cdot D_\M)$, which is an easy dimension computation and is left to the reader.
\end{proof}
\section{Weakly special subvarieties and Ax-Schanuel}\label{sect5}
We now recollect the necessary notions and results about weakly special subvarieties. This is simply to set notations as there are very good accounts of the theory in the literature, see notably \cite{bu} for a thorough discussion of the Ax-Schanuel theorem. 
\subsection{Weakly special subvarieties as intersection loci}
Let $\V$ be an integral, graded-polarized and admissible variation of mixed Hodge structures on a smooth, irreducible and quasi-projective complex algebraic variety $S$. Let $i : Z \hookrightarrow S$ be the inclusion of an irreducible subvariety. The \textit{algebraic monodromy group of $Z$ for $\V$ at $s \in Z^\sm(\C)$} is the identity component $\mon_{Z,s}$ of the Zariski closure of the image of 
\[
r_{s} = \rho_s \circ i_\ast \circ n_\ast  : \pi_1((Z^\mathrm{nor})^\an, s) \rightarrow \pi_1(Z^\an,s) \rightarrow \pi_1(S^\an,s) \rightarrow \gl(\V_s),
\]
where $\rho_s : \pi_1(S^\an,s) \rightarrow \gl(\V_s)$ is the monodromy representation of $\V$ at $s$ and $n : Z^\mathrm{nor} \rightarrow Z$ is the normalization of $Z$, which is one-to-one when restricted to $n^{-1}(Z^\sm)$ allowing to abuse notation by writing $s$ for the $\C$-point of $Z^\mathrm{nor}$ corresponding to $s \in Z^\sm(\C)$. One easily checks that $\dim \mon_{Z,s}$ doesn't depend on the choice of $s \in Z^\sm(\C)$, and we denote it by $\dim \mon_Z$. This allows to make sense of the following:
\begin{definition}
Let $Z \subset S$ be a strict irreducible subvariety. It is called \textit{weakly special} if for any intermediate (for the inclusion) irreducible subvariety $Z \subsetneq Z' \subseteq S$, one has:
\[
\dim \mon_{Z} < \dim \mon_{Z'}.
\]
\end{definition}
Weakly special subvarieties have an intersection theoretic characterization similar to Proposition \ref{specarac} which will be of constant use in Sect. \ref{sect7}. We recall it now.
\begin{definition}[{\cite[Def. 2.5]{klingao}}]
Let $(\pp, D_\pp)$ be a mixed Hodge class. A \textit{weak Mumford-Tate subdomain} of $D_\pp$ is the $\NN(\R)^+\NN^u(\C)$-orbit of some $h \in D_\M$, where $(\M, D_\M)$ is a mixed Hodge subclass of $(\pp, D_\pp)$ and  $\NN$ is a normal $\Q$-subgroup of $\M$ whose radical is unipotent. The datum $((\M, D_\M),\NN,h)$ is called the \textit{weak Hodge class} of the generic weak Mumford-Tate subdomain.
\end{definition}
One has:
\begin{proposition}[{\cite[Prop. 2.6]{klingao}}]
Let $(\pp,D_\pp)$ be a mixed Hodge class. A generic weak Mumford-Tate subdomain of $D_\pp$ is a smooth closed complex-analytic subspace of $D_\pp$.
\end{proposition}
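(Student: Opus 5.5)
The plan is to realise the weak Mumford-Tate subdomain as a union of connected components of a fibre of a holomorphic submersion. Let $((\M, D_\M),\NN,h)$ be the weak Hodge class and $O := \NN(\R)^+\NN^u(\C)\cdot h$. By Lemma \ref{subdatimm}, $D_\M$ is a closed complex submanifold of $D_\pp$. It therefore suffices to show that $O$ is a closed complex submanifold of $D_\M$.

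\textbf{Step 1 (quotient datum).} Choose a representative $(\M,X_\M,D_\M)$ and a lift $x\in X_\M$ of $h$. Let $q:\M\to\M/\NN$ be the quotient. I will check that $q_\C\circ x$ satisfies Pink's axioms $(i)$--$(iv)$ of Lemma \ref{pinkcarac} for $\M/\NN$. The inputs are:
\begin{itemize}
\item $(\M/\NN)^u$ is the image of $\M^u$, because $\NN$ has unipotent radical;
\item the reductive quotient of $\M/\NN$ is a quotient of $\M/\M^u$ by a normal subgroup, so axioms $(i)$, $(ii)$ and $(iv)$ pass to it;
\item for $(iii)$, $\lie(\M/\NN)=\mt/\lie(\NN)$ is a quotient mixed Hodge structure, and $W$ is exact.
\end{itemize}
This produces a mixed Hodge class $(\M/\NN, D_{\M/\NN})$ and, by Lemma \ref{subdatimm} (first part), a holomorphic map $p_\NN : D_\M\to D_{\M/\NN}$. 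This map is equivariant for $\M(\R)^+\M^u(\C)\to(\M/\NN)(\R)^+(\M/\NN)^u(\C)$.

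\textbf{Step 2 (tangent computation).} Using the identifications $\Theta_h$ and their functoriality $(i)$--$(ii)$, $d_hp_\NN$ is the map
\[
\mt_\C/F^0_h\mt\to(\mt/\lie(\NN))_\C/F^0_h(\mt/\lie(\NN)).
\]
By strictness of morphisms of mixed Hodge structures with respect to $F$, this map is surjective with kernel $\lie(\NN)_\C/F^0_h\lie(\NN)$. The same holds at every point of $D_\M$ by homogeneity. Hence $p_\NN$ is a holomorphic submersion. Its fibre $\Sigma := p_\NN^{-1}(p_\NN(h))$ is therefore a closed complex submanifold of $D_\M$, with $T_h\Sigma=\lie(\NN)_\C/F^0_h\lie(\NN)$.

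\textbf{Step 3 (orbits are components).} Equivariance and normality of $\NN$ show that $\NN(\R)^+\NN^u(\C)$ preserves $\Sigma$. The differential of the orbit map at $h$ has image $\lie(\NN)_\C/F^0_h\lie(\NN)=T_h\Sigma$; this uses functoriality $(i)$ together with $\lie(\NN(\R)^+\NN^u(\C))\otimes\C$ surjecting onto $\lie(\NN)_\C/F^0$, as in the proof that $D_\pp$ is open in $\check D_\pp$. So $O$ is open in $\Sigma$, and likewise every orbit through a point of $\Sigma$ is open. Since the orbits partition $\Sigma$, each orbit is also closed in $\Sigma$. Being connected, $O$ is a connected component of $\Sigma$, hence a closed complex submanifold of $D_\M$, and thus of $D_\pp$.

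I expect the main obstacle to be Step 1. One must make the quotient datum rigorous: Pink's axiom $(iii)$ for the quotient Lie algebra, the Cartan-involution axiom $(iv)$ when the centre of $\M/\M^u$ meets $\NN$, and the identification $q(\M^u)=(\M/\NN)^u$. For the last point I would first use that $\NN\cap\M^u=\NN^u$, as in the proof of Lemma \ref{subgroup}, and then the exact sequence of Levi quotients. If this route is awkward, a fallback is to avoid $D_{\M/\NN}$ altogether. One would embed $\M\hookrightarrow\gl(V)$, take a representation of $\M$ whose kernel is $\NN$, and use the corresponding period-type map to a flag variety. The fibre argument of Steps 2 and 3 then goes through unchanged, with Step 2's surjectivity replaced by a constant-rank argument on the homogeneous space.
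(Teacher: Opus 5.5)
Your argument is correct and is essentially the route the paper delegates to \cite{klingao} (compare Lemma \ref{weakmtcar}): realise the domain inside the closed submanifold $D_\M$ as a fibre of the holomorphic quotient map $D_\M\to D_\M/\NN$. Your version has the mild advantage of needing only that the orbit is an open-and-closed connected piece of that fibre, rather than the whole fibre, and the one slip to fix is in Step~3: what is needed is that the \emph{real} Lie algebra $\lie(\NN)_\R+\lie(\NN^u)_\C$ itself (not its complexification) surjects $\R$-linearly onto $\lie(\NN)_\C/F^0_h\lie(\NN)$, which holds because $\gr^W_0\lie(\NN)_\C=\gr^W_0\lie(\NN)_\R+F^0_h\gr^W_0\lie(\NN)_\C$ by Hodge symmetry in weight $0$.
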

Let $(\pp, D_\pp)$ be a mixed Hodge class and $\NN$ be a normal subgroup of $\pp$. Gao and Klingler define in \cite[Prop. 5.1]{klingao} a "quotient" mixed Hodge class $(\pp/\NN, D_\pp/\NN)$ together with a holomorphic map $q_{\NN, D_\pp} : D_\pp \rightarrow D_\pp/\NN$ such that each fiber of $q_{\NN, D_\pp}$ is an $\NN(\R)^+\NN^u(\C)$-orbit in $D_\pp$, and conversely any $\NN(\R)^+\NN^u(\C)$-orbit of $D_\pp$ arises as a fiber of $q_{\NN, D_\pp}$. Notably, one has:
\begin{lemma}\label{weakmtcar}
Let $(\pp, D_\pp)$ be a mixed Hodge class and $(\M, D_\M)$ a mixed Hodge subclass with underlying inclusion $i : \M \hookrightarrow \pp$. The weak Mumford-Tate subdomain $D$ of $D_\pp$ with weak Hodge class $((\M,D_\M),\NN,h)$ is the fiber $q_{\NN, D_\M}^{-1}(q_{\NN, D_\M}(h))$ of $q_{\NN, D_\M}$ seen as a subset of $D_\pp$ through the natural inclusion $D_\M \hookrightarrow D_\pp$.
\end{lemma}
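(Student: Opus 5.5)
The plan is to unwind the definitions and reduce everything to the description of the fibers of the Gao--Klingler quotient map, applied to the mixed Hodge class $(\M, D_\M)$ rather than to $(\pp, D_\pp)$. By definition, the weak Mumford-Tate subdomain $D$ with weak Hodge class $((\M,D_\M),\NN,h)$ is the $\NN(\R)^+\NN^u(\C)$-orbit of $h \in D_\M$, where the orbit is taken inside $D_\pp$. Here $\NN$ is a normal $\Q$-subgroup of $\M$ with unipotent radical.

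First I check that this orbit is the same whether it is computed in $D_\M$ or in $D_\pp$. We identify $D_\M$ with $\phi_i(D_\M) \subset D_\pp$, and $\phi_i$ is induced by post-composition with $i_\C$ on bigraduations. It is therefore equivariant for the inclusion $\M(\R)^+\M^u(\C) \hookrightarrow \pp(\R)^+\pp^u(\C)$; this inclusion makes sense because Lemma \ref{subgroup} gives $\M^u = \pp^u \cap \M$. For the subgroup $\NN(\R)^+\NN^u(\C)$, I note that $\NN^u$ is a connected unipotent normal subgroup of $\M$, since it is characteristic in the normal subgroup $\NN$. Hence $\NN^u \subset \M^u$, and so $\NN(\R)^+\NN^u(\C) \subset \M(\R)^+\M^u(\C)$. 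It follows that $\NN(\R)^+\NN^u(\C)$ preserves $D_\M$, and that its orbit of $h$ in $D_\pp$ equals the image under $\phi_i$ of its orbit of $h$ in $D_\M$.

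Second, I apply \cite[Prop. 5.1]{klingao} to the mixed Hodge class $(\M, D_\M)$ and its normal subgroup $\NN$. This gives the quotient map $q_{\NN, D_\M} : D_\M \rightarrow D_\M/\NN$, each of whose fibers is a single $\NN(\R)^+\NN^u(\C)$-orbit in $D_\M$. The fiber $q_{\NN, D_\M}^{-1}(q_{\NN, D_\M}(h))$ contains $h$, so it is exactly the orbit of $h$. Combining this with the first step gives $D = \phi_i\big(q_{\NN,D_\M}^{-1}(q_{\NN,D_\M}(h))\big)$, which is the claim.

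The only delicate point I expect is making sure the fibers are single orbits rather than finite unions of orbits. This could fail if $D_\M/\NN$ were built from $\NN(\R)$ instead of $\NN(\R)^+$, or if $D_\M$ were disconnected. I would check that Gao--Klingler's construction is compatible with the connected convention used here, where $D_\M$ is an $\M(\R)^+\M^u(\C)$-orbit. If needed, I would replace the fiber by its connected component through $h$, which is the $\NN(\R)^+\NN^u(\C)$-orbit since that group is connected.
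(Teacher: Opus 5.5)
Your proposal is correct and follows the paper's route. The paper gives no separate argument: it states the lemma as an immediate consequence of the Gao--Klingler description of the fibers of $q_{\NN, D_\M}$ as single $\NN(\R)^+\NN^u(\C)$-orbits, applied to $(\M, D_\M)$ and its normal subgroup $\NN$. Your extra checks — that $\NN^u \subset \M^u$, and that $\phi_i$ is equivariant so the orbit of $h$ is the same in $D_\M$ and in $D_\pp$ — are sound elaborations consistent with the paper's convention identifying $D_\M$ with an $\M(\R)^+\M^u(\C)$-orbit in $D_\pp$. The fallback in your last paragraph is unnecessary, since the cited result already gives single orbits.
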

Let $(\pp, D_\pp)$ be a mixed Hodge class, let $(\M, D_\M)$ be a mixed Hodge subclass with underlying inclusion $i : \M \hookrightarrow \pp$, let $\NN$ be a normal subgroup of $\M$ whose radical is unipotent, let $\Gamma$ be a neat arithmetic subgroup of $\pp(\Q)^+$ and $\pi_\Gamma : D_\pp \rightarrow \period_{(\pp, D_\pp), \Gamma}$ be the quotient by $\Gamma$. Then \[\Gamma_{\M/\NN} := \Big[\Gamma \cap \M(\Q)/\Gamma \cap \NN(\Q)\Big] \cap (\M/\NN)(\Q)^+\] is a neat arithmetic subgroup of  $(\M/\NN)(\Q)^+$ (see \cite[Chap. III, \S 7]{amrt}). By \cite{klingao}, the quotient $q_{\NN, D_\M} : D_\M \rightarrow D_\M/\NN$ descends to a holomorphic map $p_{\NN, D_\M, \Gamma} : \period_{(\M,D_\M), \Gamma_\M} \rightarrow \period_{(\M/\NN, D_\M/\NN), \Gamma_{\M/\NN}}$.
\begin{definition}
Let $\period_{(\pp, D_\pp), \Gamma}$ be a mixed Hodge variety. A \textit{weak Hodge subvariety} of $\period_{(\pp, D_\pp), \Gamma}$ is a complex-analytic locally closed subset arising as the image under $\pi_\Gamma$ of a weak Mumford-Tate subdomain of $D_\pp$. We denote by $\mathscr{W}_{((\M, D_\M), \NN, h)}$ the image under $\pi_\Gamma$ of the weak Mumford-Tate subdomain of $D_\pp$ with weak Hodge class $((\M, D_\M), \NN, h)$.
\end{definition}
As in Lemma \ref{weakmtcar}, one has:
\begin{lemma}
Let $\period_{(\pp, D_\pp), \Gamma)}$ be a mixed Hodge variety, let $(\M, D_\M)$ be a mixed Hodge subclass with underlying inclusion $i : \M \hookrightarrow \pp$ and a let $\NN$ be a normal subgroup of $\M$. Then $\mathscr{W}_{((\M, D_\M), \NN, h)}$ is the image under $\phi_{i, \Gamma}$ of a fiber of $p_{\NN, D_\M, \Gamma}$.
\end{lemma}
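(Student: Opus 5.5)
By definition, $\mathscr{W}_{((\M, D_\M), \NN, h)} = \pi_\Gamma(D)$, where $D$ is the weak Mumford-Tate subdomain with weak Hodge class $((\M,D_\M),\NN,h)$. Lemma \ref{weakmtcar} gives $D = \phi_i(F)$ with $F := q_{\NN, D_\M}^{-1}(q_{\NN, D_\M}(h)) \subset D_\M$. So the plan is to show that $\pi_{\Gamma_\M}(F)$ is exactly the fiber of $p_{\NN, D_\M, \Gamma}$ over $y := p_{\NN, D_\M,\Gamma}(\pi_{\Gamma_\M}(h))$. Then we push forward by $\phi_{i,\Gamma}$.

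Two commutative squares do the bookkeeping. The first is $\pi_\Gamma \circ \phi_i = \phi_{i,\Gamma} \circ \pi_{\Gamma_\M}$, which was already used to show that special subvarieties are locally closed. The second is $p_{\NN, D_\M, \Gamma} \circ \pi_{\Gamma_\M} = \pi_{\Gamma_{\M/\NN}} \circ q_{\NN, D_\M}$, which holds by the very construction of $p_{\NN,D_\M,\Gamma}$ in \cite{klingao} as the descent of $q_{\NN,D_\M}$. The first square gives
\[
\mathscr{W}_{((\M, D_\M), \NN, h)} = \pi_\Gamma(\phi_i(F)) = \phi_{i,\Gamma}(\pi_{\Gamma_\M}(F)).
\]
The second square gives the inclusion $\pi_{\Gamma_\M}(F) \subset p_{\NN, D_\M,\Gamma}^{-1}(y)$ immediately.

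For the reverse inclusion, take $x$ in the fiber over $y$ and a lift $\tilde x \in D_\M$. By the second square, $q_{\NN,D_\M}(\tilde x) = \bar\gamma \cdot q_{\NN,D_\M}(h)$ for some $\bar\gamma \in \Gamma_{\M/\NN}$. By the definition of $\Gamma_{\M/\NN}$ as a subgroup of the image of $\Gamma \cap \M(\Q)$, we can lift $\bar\gamma$ to some $\gamma \in \Gamma \cap \M(\Q)$. The map $q_{\NN,D_\M}$ is equivariant for $\M(\R)^+\M^u(\C) \to (\M/\NN)(\R)^+(\M/\NN)^u(\C)$. Hence $q_{\NN,D_\M}(\gamma^{-1}\tilde x) = q_{\NN,D_\M}(h)$, so $\gamma^{-1}\tilde x \in F$ and $x = \pi_{\Gamma_\M}(\gamma^{-1}\tilde x) \in \pi_{\Gamma_\M}(F)$.

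The main obstacle is a connected-component issue: $\gamma$ must actually lie in $\Gamma_\M = \Gamma \cap \M(\Q)^+$, so that it preserves $D_\M$ and is killed by $\pi_{\Gamma_\M}$. A priori we only know that $\gamma$ maps into $(\M/\NN)(\R)^+$, so $\gamma \in \M(\R)^+\NN(\R)$. I would try to fix this by modifying $\gamma$ by an element of $\Gamma \cap \NN(\Q)$, using that the fibers of $q_{\NN,D_\M}$ are $\NN(\R)^+\NN^u(\C)$-orbits. Failing that, I would pass to finite index, i.e. shrink $\Gamma$, or use real approximation, to reduce to the case where $\Gamma \cap \M(\Q)$ maps onto $\Gamma_{\M/\NN}$ through $\Gamma_\M$. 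Once the lift lands in $\Gamma_\M$, the argument above closes, and applying $\phi_{i,\Gamma}$ gives the statement.
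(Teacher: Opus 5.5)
Your reduction is sound. By Lemma \ref{weakmtcar} and the two commutative squares, $\mathscr{W}_{((\M,D_\M),\NN,h)}=\phi_{i,\Gamma}(\pi_{\Gamma_\M}(F))$, and $\pi_{\Gamma_\M}(F)$ lies in the fiber of $p_{\NN,D_\M,\Gamma}$ through $\pi_{\Gamma_\M}(h)$. The gap is the reverse inclusion, exactly where you flag it, and none of your proposed remedies closes it.

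With $\Gamma_{\M/\NN}$ as defined in the paper, a lift $\gamma\in\Gamma\cap\M(\Q)$ of $\bar\gamma$ is only known to lie in $\M(\R)^+\NN(\R)$. Then $\gamma\cdot D_\M$ is the $\M(\R)^+\M^u(\C)$-orbit of $\gamma\cdot h$ inside $D_\pp$, which equals $D_\M$ only if $\gamma\cdot h\in D_\M$. So neither the equivariance step giving $\gamma^{-1}\tilde x\in F$ nor the identity $x=\pi_{\Gamma_\M}(\gamma^{-1}\tilde x)$ is justified. Your three fixes each fail:
\begin{itemize}
\item Correcting $\gamma$ by an element of $\Gamma\cap\NN(\Q)$ needs this discrete group to meet a prescribed coset of $\NN(\R)\cap\M(\R)^+$ in $\NN(\R)$, and nothing guarantees that.
\item Shrinking $\Gamma$ replaces $\period_{(\M,D_\M),\Gamma_\M}$, $\Gamma_{\M/\NN}$ and $p_{\NN,D_\M,\Gamma}$ by different objects, so it proves a statement about other spaces.
\item Real approximation produces elements of $\M(\Q)$ or $\NN(\Q)$, not of $\Gamma$.
\end{itemize}
Working in $D_\pp$ instead, where $\gamma$ does act and $\pi_\Gamma\circ\gamma=\pi_\Gamma$, settles the piece indexed by $\bar\gamma$ only under an extra condition. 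Writing $\gamma=mn$ with $m\in\M(\R)^+$ and $n\in\NN(\R)$, you would need $n\cdot h\in\NN(\R)^+\NN^u(\C)\cdot h$, which is again not automatic.

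What your argument does give is the following. Let $\overline{\Gamma_\M}$ be the image of $\Gamma_\M$ in $(\M/\NN)(\Q)$; it has finite index in $\Gamma_{\M/\NN}$ because $\M(\R)/\M(\R)^+$ is finite. Put $t=q_{\NN,D_\M}(h)$, and let $\bar\gamma_1=1,\dots,\bar\gamma_r$ represent the cosets $\overline{\Gamma_\M}\backslash\Gamma_{\M/\NN}$. Then
\[
p_{\NN,D_\M,\Gamma}^{-1}\bigl(\pi_{\Gamma_{\M/\NN}}(t)\bigr)=\bigcup_{j=1}^{r}\pi_{\Gamma_\M}\bigl(q_{\NN,D_\M}^{-1}(\bar\gamma_j\cdot t)\bigr).
\]
Two pieces are equal or disjoint according as $\bar\gamma_j t$ and $\bar\gamma_k t$ lie in the same $\overline{\Gamma_\M}$-orbit or not. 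So your proof gives the lemma exactly when $\Gamma_{\M/\NN}\cdot t=\overline{\Gamma_\M}\cdot t$; otherwise it only shows that $\mathscr{W}_{((\M,D_\M),\NN,h)}$ is the image under $\phi_{i,\Gamma}$ of one piece of a fiber.

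The clean fix is to set $\Gamma_{\M/\NN}:=\overline{\Gamma_\M}$. This is still a neat arithmetic subgroup of $(\M/\NN)(\Q)^+$, and $q_{\NN,D_\M}$ still descends through it. The lift of $\bar\gamma$ can then be taken in $\Gamma_\M$ from the outset, and your argument is complete as written. The paper gives no proof beyond ``as in Lemma \ref{weakmtcar}'' and does not address this point. You should either make the normalization explicit or weaken the conclusion accordingly.
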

Record the following, which is a direct consequence of the theorem of the fixed part:
\begin{lemma}[{\cite[Thm. 7.12]{bz}}]\label{monorb}
Let $Z \subsetneq S$ be an irreducible subvariety whith generic mixed Hodge class is $(\pp_Z, D_{\pp_Z})$, and denote by $i_Z : \pp_Z \hookrightarrow \pp$ the inclusion in the generic Mumford-Tate domain. Let $\mon_Z$ be the algebraic monodromy group of $Z$, which is a normal subgroup of $\pp_Z$ whose radical is unipotent. Then, the set $p_{\mon_Z, D_{\pp_Z}, \Gamma}(\phi_{i_Z, \Gamma}^{-1}(\Phi(Z^\an)))$ is a singleton $\{[h_Z]\}$.
\end{lemma}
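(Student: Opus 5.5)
The strategy is to pass to the quotient by $\mon_Z$, where the induced variation has finite monodromy, and to conclude with the theorem of the fixed part. I would first reduce to the smooth locus $Z^\sm$, where the generic mixed Hodge class $(\pp_Z, D_{\pp_Z})$ and the lifted period map are defined. Since $Z^\sm(\C)$ is dense in $Z^\an$ and $p_{\mon_Z, D_{\pp_Z}, \Gamma}$ is continuous, once we show that $Z^\sm$ is sent to one point $[h_Z]$, the closure argument gives the same for $Z^\an$. Here I read $\phi_{i_Z,\Gamma}^{-1}(\Phi(Z^\an))$ as the connected piece of the preimage containing the natural lift of $\Phi(Z^\an)$ to $\period_{(\pp_Z, D_{\pp_Z}), \Gamma_{\pp_Z}}$, namely the one given by the lifted period map of $\V\vert_{Z^\sm}$. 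After a finite étale cover of $Z^\sm$, which does not change $\mon_Z$ or the image, Corollary \ref{neatetal} lets me assume that the monodromy of $\V\vert_{Z^\sm}$ lands in $\Gamma \cap \mon_Z(\Q)$.

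Fix $s_0 \in Z^\sm(\C)$ and consider the normalized lifted period map $\tilde{\Phi}_{Z,\tilde{s}_0} : \widetilde{(Z^\sm)^\an} \to D_{\pp_{Z,s_0}}$. It is equivariant with respect to the monodromy representation, whose image lies in $\mon_Z(\Q)$. Compose it with the Gao--Klingler quotient $q_{\mon_Z, D_{\pp_Z}} : D_{\pp_Z} \to D_{\pp_Z}/\mon_Z$. The group $\mon_Z$ acts trivially on the target, so the composite is invariant under deck transformations. It therefore descends to a holomorphic map $\Psi : (Z^\sm)^\an \to D_{\pp_Z}/\mon_Z$. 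The map $\Psi$ is horizontal, because the quotient map is induced by a morphism of mixed Hodge data and so respects horizontal tangent spaces by functoriality $(ii)$ of $\Theta$.

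Next I realize $\Psi$ as the period map of an admissible variation with trivial monodromy. Choose a faithful representation of $\pp_Z/\mon_Z$, or simply take $\mon_Z$-invariants in a suitable tensor construction of $\V_{s_0}$. This yields a graded-polarized variation $\V'$ on $Z^\sm$, obtained from $\V\vert_{Z^\sm}$ by tensor operations and quotients by sub-local systems. Its monodromy is trivial, since it factors through $\pp_Z/\mon_Z$ on which the monodromy acts trivially, and its period map is $\Psi$ via Proposition \ref{pervarequiv}. Admissibility of $\V'$ follows from admissibility of $\V$, since admissibility is preserved under these operations. The theorem of the fixed part for admissible variations \cite[Thm. 7.12]{bz}, applied on the quasi-projective base $Z^\sm$, then says that the global flat sections of $\V'$ carry a mixed Hodge structure whose Hodge filtration is constant. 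Because the local system of $\V'$ is trivial, every fiber is spanned by global flat sections, so the Hodge filtration of $\V'$ is constant. Hence $\Psi$ is constant, and its value is the point $[h_Z]$.

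The step I expect to be the main obstacle is this construction of $\V'$ and the identification of its period map with $\Psi$. Producing the variation is routine, but matching its period map with the Gao--Klingler quotient map requires care with representatives of the mixed Hodge classes, and with the unipotent part of $\mon_Z$ acting through $\mon_Z^u(\C)$ on $D_{\pp_Z}$. If this matching proves delicate, I would fall back on a more hands-on argument. For each $\mon_Z$-invariant vector $v$ in a tensor construction, the theorem of the fixed part shows that $v$ spans a sub-mixed Hodge structure that is constant along $Z$. The fibers of $q_{\mon_Z, D_{\pp_Z}}$ are exactly the $\mon_Z(\R)^+\mon_Z^u(\C)$-orbits, and these are determined by the Hodge data on such invariants. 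Together these two facts would give the constancy of $\Psi$ directly.
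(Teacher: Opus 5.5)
Your proposal is correct and follows the route the paper intends. The paper gives no argument beyond recording the lemma as a direct consequence of the theorem of the fixed part (via \cite{bz}), and your steps unpack exactly that: a finite \'etale cover putting the monodromy of $\V\vert_{Z^\sm}$ in $\mon_Z(\Q)$, descent of $q_{\mon_Z, D_{\pp_Z}} \circ \tilde{\Phi}_{Z,\tilde{s}_0}$ to $Z^\sm$, realization of it through an admissible variation with trivial monodromy attached to a representation of $\pp_Z/\mon_Z$, and the fixed part theorem. One small precision: the set to consider is the closure of the image of the natural lift of $(Z^\sm)^\an$ in $\period_{(\pp_Z, D_{\pp_Z}), \Gamma_{\pp_Z}}$, not the connected component of the full preimage, since $\phi_{i_Z,\Gamma}$ need not be injective and further branches of the preimage could meet that lift.
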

We can now turn to the intersection-theoretic characterization of generic weakly special subvarieties of $S$ for $\V$ of \cite{klingler_atyp}. 
\begin{proposition}\label{weakcarac}
Let $Z \subsetneq S$ be an irreducible subvariety with generic mixed Hodge class $(\pp_Z, D_{\pp_Z})$ and denote by $i_Z : \pp_Z \hookrightarrow \pp$ the inclusion in the generic Mumford-Tate group. Then $Z$ is a weakly special subvariety of $S$ for $\V$ if and only if $Z$ is a complex-analytic irreducible component of $\Phi^{-1}(\mathscr{W}_{((\pp_Z, D_{\pp_Z}), \mon_Z, h_Z)})$.
\end{proposition}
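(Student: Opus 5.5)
We prove the two implications separately, modelling each on the proof of Proposition \ref{specarac}, with the algebraic monodromy group $\mon_Z$ and the weak Hodge class $((\pp_Z, D_{\pp_Z}), \mon_Z, h_Z)$ playing the roles of the generic Mumford-Tate group and the generic mixed Hodge class. Set $\mathscr{W}_Z := \mathscr{W}_{((\pp_Z, D_{\pp_Z}), \mon_Z, h_Z)}$.

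The first step is to locate $Z$ in the preimage. By Lemma \ref{minsubv}, $\Phi(Z^\an)$ lies in the special subvariety attached to $(\pp_Z, D_{\pp_Z})$. By Lemma \ref{monorb}, its image under $p_{\mon_Z, D_{\pp_Z}, \Gamma}$ is the single point $[h_Z]$. Together these give $\Phi(Z^\an) \subset \mathscr{W}_Z$. So $Z^\an$ lies in some complex-analytic irreducible component $Z'^\an$ of $\Phi^{-1}(\mathscr{W}_Z)$.

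Next we need $Z'$ to be algebraic. Arguing as in Theorem \ref{algspec}, $\mathscr{W}_Z$ is the image of a fiber of a definable map between mixed Hodge varieties, hence definable. Since $\Phi$ is definable (Theorem \ref{defpermap}), $\Phi^{-1}(\mathscr{W}_Z)$ is a closed definable analytic subset of $S^\an$, and Theorem \ref{petstar} makes $Z'$ algebraic. The heart of the proof is then the following claim:
\begin{quote}
for any irreducible $Z \subset Z'$ with $\Phi(Z'^\an) \subset \mathscr{W}_Z$, one has $\dim \mon_{Z'} = \dim \mon_Z$.
\end{quote}
The inequality $\mon_Z \subset \mon_{Z'}$ up to conjugation (via a path) is clear from the definition. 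For the reverse, lift to the universal cover. The lifted period map of $Z'$ lands in a single $\mon_Z(\R)^+\mon_Z^u(\C)$-orbit $\gamma\cdot(\mon_Z(\R)^+\mon_Z^u(\C)\cdot h)$. Monodromy equivariance then forces the monodromy of $Z'$ to preserve this orbit, so it normalizes the relevant piece, and we aim to show it lies in $\mon_Z$ up to finite index. The cleanest route is to first apply Lemma \ref{monorb} to $Z'$. This puts the generic Mumford-Tate group of $Z'$ inside the stabilizer of the weak domain. André's theorem (Theorem \ref{andre}) then gives $\mon_{Z'} \subset \pp_{Z'}^\der$ and normal, and the identity component of the stabilizer in $\pp_{Z'}$ of the fiber is $\mon_Z\cdot(\text{centralizer part})$. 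Combining with the fact that $\mon_{Z'}$ acts on the fiber through $\mon_Z$ (its orbit of $\tilde\Phi(\tilde s)$ is contained in the weak domain) yields $\mon_{Z'} \subset \mon_Z$ up to conjugation. I expect this group-theoretic identification to be the main obstacle, and I would try to reduce it to \cite{klingao}'s description of the fibers of $q_{\NN,D}$.

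Granting the claim, both directions follow. Suppose $Z$ is weakly special. Then the claim applied to $Z'$ gives $\dim \mon_{Z'} = \dim \mon_Z$, which forces $Z = Z'$, so $Z^\an$ is a component of $\Phi^{-1}(\mathscr{W}_Z)$. Conversely, suppose $Z^\an$ is a component, and let $Z \subsetneq Z'$ with $\dim \mon_{Z'} = \dim \mon_Z$. Then $\mon_Z$ and $\mon_{Z'}$ coincide up to parallel transport. By Lemma \ref{monorb} applied to $Z'$, $\Phi(Z'^\an)$ lies in a single weak subvariety for $\mon_Z$. This subvariety contains $\Phi(Z^\an)$, so it equals $\mathscr{W}_Z$, using that fibers are determined by any point and that $(\pp_Z,D_{\pp_Z})\subset(\pp_{Z'},D_{\pp_{Z'}})$ by Lemma \ref{fonc}. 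Hence $Z'^\an \subset \Phi^{-1}(\mathscr{W}_Z)$, which contradicts maximality of $Z$. Therefore $Z$ is weakly special.
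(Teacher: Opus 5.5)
Your architecture is the standard one; the paper gives no proof and imports the statement from Klingler. The direction ``component $\Rightarrow$ weakly special'' is essentially fine: if $\dim\mon_{Z'}=\dim\mon_Z$, the two connected groups coincide, and Lemma~\ref{monorb} for $Z'$ puts the lift of $Z'$ in the orbit of that same group through a lift of a point of $Z$. Hence $\Phi(Z'^\an)\subset\mathscr{W}_Z$.

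The gap is the displayed claim, on which the whole forward direction rests, and your sketch of it does not go through. Write $\widetilde{\mathscr{W}}:=\mon_Z(\R)^+\mon_Z^u(\C)\cdot h$ and suppose the lift of $Z'$ lies in $\gamma\cdot\widetilde{\mathscr{W}}$.

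First, Lemma~\ref{monorb} applied to $Z'$ says nothing about $\widetilde{\mathscr{W}}$, and $\pp_{Z'}$ does not stabilise it. What you need is the minimality of the generic mixed Hodge class: Lemma~\ref{uniqmhc}(ii) for $\V\vert_{Z'^{\sm}}$, evaluated at a Hodge generic point of $Z'$, exactly as in the proof of Proposition~\ref{specarac}. Since $\gamma\cdot\widetilde{\mathscr{W}}\subset\gamma\cdot D_{\pp_Z}$, this gives $\pp_{Z'}\subset\gamma\pp_Z\gamma^{-1}$. Only this makes $\gamma\mon_Z\gamma^{-1}$ normal in a group containing the monodromy of $Z'$. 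Then $\gamma\cdot\widetilde{\mathscr{W}}$ is a fibre $q^{-1}(t)$ of $q:=q_{\gamma\mon_Z\gamma^{-1}}$, and its monodromy translates are fibres, hence equal to it or disjoint from it. Without this, two translates of the weak domain can meet along the image of $Z'$ without coinciding, so ``monodromy preserves the orbit'' is unjustified.

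Second, and more seriously, stabilising $q^{-1}(t)$ only places $\mon_{Z'}$ in the preimage of the isotropy group of $t$ under $\gamma\pp_Z\gamma^{-1}\to\gamma\pp_Z\gamma^{-1}/\gamma\mon_Z\gamma^{-1}$. That preimage is strictly larger than $\gamma\mon_Z\gamma^{-1}$. Your sentence ``$\mon_{Z'}$ acts on the fibre through $\mon_Z$ (its orbit of $\tilde\Phi(\tilde s)$ is contained in the weak domain)'' restates the conclusion; the parenthetical only yields stabilisation.

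The missing idea is arithmetic rigidity. The image of the monodromy of $Z'$ in the quotient group lies in the image of $\Gamma$, which is arithmetic and hence discrete, and it fixes $t$. The isotropy group of a point of a mixed Mumford--Tate domain meets an arithmetic group in a finite set, for two reasons:
\begin{itemize}
\item On $\gr^W$ it acts by automorphisms of polarised Hodge structures. These form a compact group once the similitude factor is $\pm1$, as it is for lattice-preserving elements (cf.\ Pink's axiom (iv) in Lemma~\ref{pinkcarac}).
\item Its kernel on $\gr^W$ is trivial, because $F^0\cap\overline{F^0}\cap W_{-1}=0$.
\end{itemize}
Hence a finite-index subgroup of the monodromy of $Z'$ lies in $\gamma\mon_Z\gamma^{-1}$, i.e.\ $\mon_{Z'}\subset\gamma\mon_Z\gamma^{-1}$, which is the claim.

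Two smaller points. That the lift of $Z'$ lies in a \emph{single} translate needs local finiteness of $\bigcup_\gamma\gamma\cdot\widetilde{\mathscr{W}}$ together with irreducibility. And Theorem~\ref{petstar} requires $\Phi^{-1}(\mathscr{W}_Z)$ to be closed, so you must either check that $\mathscr{W}_Z$ is closed in $\period$ or quote the algebraicity of weakly special subvarieties directly.
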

\subsection{Mixed Ax-Schanuel theorem}
We now recall the statement of the Ax-Schanuel theorem for admissible and graded-polarized integral variations of mixed Hodge structure, which is a key ingredient to our proofs. It was first shown independently in \cite{klingao} and \cite{chiu}. Note that after these proofs, a remarkable new perspective on Ax-Schanuel type theorems was discovered in \cite{diffaxschan} yielding a new proof of this statement in \cite{btandre}. 

Start with some notations. Lemma \ref{monorb} applied to $Z = S$ ensures the existence of a point $h_S \in D_\pp/\mon_S$ such that letting $D_{\mon_S} := q_{\mon_S}^{-1}(h_S)$ , the following diagram commutes:
\[
\xymatrix{\widetilde{S^\an} \ar[rr]^{\tilde{\Phi}} \ar[d]_{\pi_S} & & D_{\mon_S} \subset D_\pp \ar[d]^{\pi_\Gamma} \\ S^\an \ar[rr]_{\Phi} & & \period_{(\pp, D_\pp), \Gamma}}
\]
\begin{theorem}[Ax-Schanuel Theorem]\label{axschan}
Let $\Delta = S \times_{\period_{(\pp, D_\pp), \Gamma}} D_{\mon_S}$ and $W \subset S \times D_{\mon_S}$ be an algebraic subvariety. Let $p_S : S \times D_{\mon_S} \rightarrow S$ be the projection to $S$. If a complex-analytic irreducible component $U$ of $\Delta \cap W$ satisfies
\[
\dim U > \dim \Delta + \dim W - \dim S \times D_{\mon_S}
\]
then $p_S(U)$ is contained in a strict weakly special subvariety of $S$ for $\V$.
\end{theorem}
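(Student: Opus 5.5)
The plan is not to reprove Ax-Schanuel from scratch. Instead I would deduce the statement from the versions in \cite{klingao} and \cite{chiu}, checking that their hypotheses and conclusions match the normalizations used here. First, using Proposition \ref{pbeff} and Lemma \ref{hlpb}, I may assume $\V$ is effective, since passing to a finite étale cover or a Zariski-open subset changes neither the problem nor weakly special subvarieties. Then $\Gamma$ is neat and $\Phi$ is defined. By Lemma \ref{monorb} applied to $S$, the lifted period map $\tilde{\Phi}$ lands in the single fibre $D_{\mon_S}$ of $q_{\mon_S, D_\pp}$, which is an $\mon_S(\R)^+\mon_S^u(\C)$-orbit. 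Consequently $\Delta$ is the graph of $\tilde{\Phi}$ pushed down to $S\times D_{\mon_S}$, so $\dim \Delta = \dim S$, and the inequality in the statement is the usual ``atypical dimension'' condition of \cite{klingao}.

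Second, I would check that the ambient ``algebraic'' structure agrees with the cited one. In \cite{klingao} algebraicity of $W$ is understood in $S\times \check{D}$, with $\check{D}$ the compact dual of the orbit of the algebraic monodromy group. Here $D_{\mon_S}$ is open in its $\mon_S(\C)$-orbit inside $\check{D}_\pp$, by Proposition \ref{hodgedat}(iv). So an algebraic $W\subset S\times D_{\mon_S}$ is the trace of an algebraic subvariety of $S\times \check{D}_{\mon_S}$. Intersecting with it does not change the dimension count, because $\dim D_{\mon_S}=\dim \check{D}_{\mon_S}$.

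Third, I would translate the conclusion. The cited theorem gives that $p_S(U)$ lies in a strict subvariety $Z\subsetneq S$ with $\dim \mon_Z<\dim\mon_S$, or equivalently in a strict weakly special subvariety in their intersection-theoretic sense. Using Proposition \ref{weakcarac}, a complex-analytic component of $\Phi^{-1}(\mathscr{W}_{((\pp_Z,D_{\pp_Z}),\mon_Z,h_Z)})$ containing $Z$ is weakly special in the sense defined here, so the conclusion transfers.

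If one insisted on a self-contained argument, I would follow the now-standard strategy. Take a fundamental set $\mathscr{F}$ from Theorem \ref{defpermap}, which makes $\Phi$ definable in $\anexp$. Consider the definable family of $\gamma\in\mon_S(\R)^+\mon_S^u(\C)$ such that $\gamma W$ meets $\Delta$ atypically over $\mathscr{F}$. Apply Pila--Wilkie to produce semialgebraic curves of such $\gamma$. Then induct on dimension to show that the stabilizer of the Zariski closure of a maximal counterexample is a positive-dimensional algebraic group. The key input is that it contains a Zariski-dense subset of monodromy, which forces $p_S(U)$ into a subvariety with smaller monodromy.

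The main obstacle in this route is the volume or point-count lower bound needed for Pila--Wilkie: enough lattice points of height at most $T$ must give translates of $W$ meeting $\mathscr{F}$. In the mixed setting this requires controlling growth along the unipotent $\mon_S^u(\C)$-directions, using admissibility and the asymptotics of the period map near the boundary. I would instead sidestep this via the differential-algebraic approach of \cite{diffaxschan,btandre}, which needs only the $\mon_S$-invariance and algebraicity of the flat structure on $\widetilde{S^\an}\times \check{D}$.
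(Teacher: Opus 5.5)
Your proposal is correct and takes the same route as the paper, which gives no proof of this statement and simply quotes it from \cite{klingao} and \cite{chiu} (and \cite{btandre}); your normalization checks are the right bookkeeping for that citation: the ambient is $S\times D_{\mon_S}$ rather than $S\times D_\pp$, $\dim\Delta=\dim S$, and ``weakly special'' is translated between the two definitions. One small caution: the preliminary reduction to effective $\V$ is unnecessary and not quite harmless, because shrinking $S$ to a Zariski-open subset could discard $U$ when $p_S(U)$ lies in the removed locus, so apply the cited theorem directly to $\Phi$ instead.
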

\subsection{Geometric Zilber-Pink}
A remarkable use of the Ax-Schanuel theorem was the proof by Baldi-Urbanik of their Geometric Zilber-Pink theorem. We recall its general statement and explain how it relates to the Zilber-Pink conjecture.
\begin{definition}[{\cite{bu}}]\label{monodratyp}
A weakly special subvariety $Z$ of $S$ for $\V$ is called \textit{monodromically atypical} if
\[
\dim \Phi(Z^\an) > \dim \Phi(S^\an) + \dim \mathscr{W}_{((\pp_Z, D_{\pp_Z}), \mon_Z, h_Z)} - \dim \mathscr{W}_{((\pp, D_\pp), \mon, h_S)}.
\]
Furthermore, it is called \textit{maximal} if it is maximal for the inclusion among monodromically atypical weakly special subvarieties of $S$ for $\V$.
\end{definition}
With this terminology in hand, Baldi-Urbanik's Geometric Zilber-Pink theorem reads:
\begin{theorem}[{\cite{bu}}]\label{geozp}
There exists a finite set $\Sigma_{(S,\V)}$ of pairs $((\M, D_\M), \NN)$ where $(\M, D_\M)$ is a mixed Hodge subclass of $(\pp, D_\pp)$ and $\NN$ is a normal subgroup of $\M$ whose radical is unipotent, such that for any maximal monodromically atypical weakly special subvariety $Z$ of $S$ for $\V$ there exists $((\M, D_\M), \NN) \in \Sigma_{(S, \V)}$ such that $\NN = \mon_Z$ as subgroups of $\pp$.
\end{theorem}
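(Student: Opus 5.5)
The plan is to follow the strategy of Baldi--Urbanik, which combines the Ax--Schanuel Theorem \ref{axschan}, o-minimal definability (Theorem \ref{defpermap}) and a ``countable plus definable implies finite'' argument. By Proposition \ref{pbeff} and Lemma \ref{hlpb} we may assume $\V$ effective, so that $\Phi$ is $\anexp$-definable and $\Phi(S^\an)$ is smooth. Via Proposition \ref{weakcarac}, a maximal monodromically atypical weakly special $Z$ is an analytic irreducible component of $\Phi^{-1}(\mathscr{W})$ with $\mathscr{W}=\mathscr{W}_{((\pp_Z,D_{\pp_Z}),\mon_Z,h_Z)}$. On a lift to $D_{\mon_S}$, this component corresponds to an excess-dimensional intersection of $\tilde\Phi(\widetilde{S^\an})$ with the orbit $\mon_Z(\R)^+\mon_Z^u(\C)\cdot \tilde h$.

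First, fix a fundamental set $\F$ as in Theorem \ref{defpermap}(i). For each of the finitely many possible values of the relevant dimensions, I would consider the definable family of real orbits $g\cdot \NN_0(\R)^+\NN_0^u(\C)\cdot h$. Here $\NN_0$ runs over the finitely many $\pp(\R)^+\pp^u(\C)$-conjugacy classes of connected subgroups that can occur as normal subgroups with unipotent radical of Mumford--Tate groups of points of $D_\pp$. This finiteness of conjugacy classes is standard: Levi decomposition together with finiteness of semisimple Hodge-type subalgebras up to conjugacy. In this family I would form the definable set $\Xi$ of triples $(s,g,h)$, with $\tilde s\in\F$, such that the intersection of $\tilde\Phi(\widetilde{S^\an})\cap\F$ with the orbit through $\tilde\Phi(\tilde s)$ has a local component at $\tilde\Phi(\tilde s)$ of dimension exceeding the expected one. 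The dimension appearing here is the one of Definition \ref{monodratyp}.

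Second, I would apply Ax--Schanuel to each such atypical local component, taking $W = S\times(\text{orbit})$, after Zariski closure in the compact dual. This forces the component to lie in a strict weakly special subvariety $Z'$ whose monodromy group $\mon_{Z'}$ is contained in the corresponding conjugate of $\NN_0$. Iterating, with dimension decreasing, and using the maximality of $Z$, I would show that a maximal $Z$ is exactly such a component and that $\mon_Z$ is the Zariski closure of $\Gamma\cap g\NN_0 g^{-1}$, a $\Q$-group. The main obstacle is the rigidity step: in the definable family, the locus of parameters $g$ for which the orbit comes from a maximal atypical component must be shown to be a union of finitely many definable pieces along each of which $\mon_Z$ is locally constant. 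I would attempt this by applying Ax--Schanuel to a positive-dimensional connected definable curve of parameters: moving $g$ continuously while keeping atypicality would produce a larger atypical intersection, contradicting maximality unless $g\NN_0g^{-1}$ stays fixed.

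Granting this, the set of groups $\{\mon_Z\}$ is the image of a definable set under a definable map to the variety of subgroups, and its image has finitely many connected components. Each value is a $\Q$-subgroup, so there are countably many values, and a countable definable set is finite. The groups $\NN=\mon_Z$ therefore form a finite set. For each such $\NN$ I would choose some $Z$ realizing it and put $\M=\pp_Z$, with $(\M,D_\M)$ its generic mixed Hodge class. This produces the finite set $\Sigma_{(S,\V)}$.
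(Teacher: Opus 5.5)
The paper gives no proof of this statement; it imports it from \cite{bu}. Your outline follows the o-minimal ``Ax--Schanuel, definability, countability'' strategy familiar from the pure case (cf.\ \cite[Thm.~3.1]{bku}). However, it leaves out the one step that makes that strategy work, so there is a genuine gap.

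The step ``countable and definable implies finite'' needs a \emph{definable} set of parameters on which the real group $g\NN_0g^{-1}$ is automatically defined over $\Q$. Your set $\Xi$ of atypical parameters is not such a set. Take a curve $C\subset(\C^\times)^3$ as in Section~\ref{sect6}, not contained in a translate of a proper subtorus, so that $\mon=\uu$ and $D_{\mon}=D_\pp\cong\C^3$. Any definable family containing the infinitely many rational lines in $\lie(\uu)$ contains a positive-dimensional, hence uncountable, set of real lines $L\subset\R^3$. For each such $L$ and each lifted point $h=\tilde\Phi(\tilde s)$, the intersection of $\tilde\Phi(\widetilde{C^\an})$ with the orbit $h+L_\C$ of $\exp(L)$ is zero-dimensional. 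Since the expected dimension is $1+1-3<0$, it is atypical. So the image of $\Xi$ contains uncountably many non-rational groups.

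The ``rigidity step'' you grant is therefore the whole content of the theorem, and the mechanism you suggest for it fails.
\begin{itemize}
\item In this example $L$ moves continuously through atypical parameters without producing any larger atypical intersection.
\item Maximality of $Z$ is maximality among weakly special subvarieties, i.e.\ among objects attached to $\Q$-groups, so it constrains nothing at nearby non-rational $g$.
\item A union of orbits over a definable curve of parameters is not an algebraic $W$ to which Ax--Schanuel applies.
\item That $\mon_Z$ is a $\Q$-group is tautological for the given $Z$ and does not propagate through the family.
\end{itemize}

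The missing idea is to impose optimality \emph{inside the real family}. Call a local component $U$ of $\tilde\Phi(\widetilde{S^\an})\cap O$ optimal if no other member $O'$ of the family has a component $U'\supseteq U$ with $\dim O'-\dim U'\leqslant\dim O-\dim U$. This quantifies only over the family, so it is definable.
\begin{itemize}
\item Let $Y$ be the Zariski closure of the projection of $U$. Relative to $Y$, $U$ cannot be atypical, since its projection is Zariski dense in $Y$.
\item Ax--Schanuel for $\V$ restricted to $Y$ then gives $\dim\mathscr{W}_Y-\dim\Phi(Y^\an)\leqslant\dim O-\dim U$, where $\mathscr{W}_Y$ is the weak Hodge subvariety attached to $\mon_Y$.
\item Optimality then forces $O$ to be the corresponding lift of $\mathscr{W}_Y$, which is rational.
\item Conversely, a maximal monodromically atypical $Z$ yields an optimal component. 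A competitor would, via the weakly special closure of the Zariski closure of its projection, produce a monodromically atypical weakly special subvariety strictly containing $Z$.
\end{itemize}
Only once you have this definable subfamily on which all groups are rational does the countability argument go through.

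Two further problems. First, the finiteness of $\pp(\R)^+\pp^u(\C)$-conjugacy classes that you call standard is false for mixed data. In Section~\ref{sect6}, conjugation by $\pp(\R)^+\pp^u(\C)$ acts on $\uu$ through scalars. Hence the groups $\uu_{a,b}$, which are normal with unipotent radical in the Mumford--Tate groups $\M_{a,b}$, are pairwise non-conjugate for non-proportional $(a,b)$. Finiteness up to conjugacy holds only for reductive parts; the unipotent parts must range over an algebraic family, e.g.\ a Grassmannian of subalgebras of $W_{-1}\lie(\pp)$. This is repairable, but it is precisely what forces the non-rational members above into any definable family.

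Second, Proposition~\ref{pbeff} and Lemma~\ref{hlpb} justify reductions only for density statements. For a finiteness statement, shrinking $S$ to a Zariski-open subset discards the weakly special subvarieties lying in its complement. Smoothness of $\Phi(S^\an)$ is not needed for this argument anyway.
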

This statement in particular contains finiteness statements for some classes of atypical special subvarieties of $S$ for $\V$, which is the form in which geometric Zilber-Pink theorems first appeared (see for instance \cite[Thm. 1.3]{axlindul} and \cite[Thm. 3.1]{bku}). Let $\Sigma_{(S,\V)}^\mathrm{gen}$ be the set of triples $((\M, D_\M) , \NN) \in \Sigma_{(S, \V)}$ such that $(\M, D_\M) = (\pp, D_\pp)$ and $\Sigma_{(S, \V)}^\mathrm{sp} = \Sigma_{(S,\V)} - \Sigma_{(S,\V)}^\mathrm{gen}$.
\begin{definition}
A special subvariety $Z$ of $S$ for $\V$ has \textit{$\V$-factorwise positive dimension} if for every $(\pp, D_\pp), \NN) \in \Sigma{(S, \V)}^\mathrm{gen}$ the inequality
\[
\dim \Phi_{/\NN}(Z^\an) > 0
\]
holds. 
\end{definition}
\begin{remark}\label{finposdim}
In particular if a special subvariety $Z$ of $S$ for $\V$ satisfies $\dim \Phi_{/\NN}(Z^\an) > 0$, it has $\V$-factorwise positive dimension. The latter has the advantage of being a condition which depends only on the generic Mumford-Tate group $\pp$ (which is why we used this notion in the introduction) but has the inconvenient of consisting in a priori infinitely many conditions.
\end{remark}
We denote by $\HL_{\mathrm{atyp}, \mathrm{f-pos}}$ the union of special subvarieties of $S$ for $\V$ which are atypical and have $\V$-factorwise positive dimension. One then deduces the following finiteness result from Baldi-Urbanik's Geometric Zilber-Pink theorem, which somehow motivates the name of the result: it gives a finiteness result for maximal atypical special subvarieties which are not constructed from atypical special points for sub-quotient variations.
\begin{corollary}\label{zpcor}
There exists a finite set $\{Z_1, \cdots, Z_l\}$ of (strict) special subvarieties of $S$ for $\V$ such that 
\[
\HL_{\mathrm{atyp}, \mathrm{f-pos}} \subset \bigcup_{k = 1}^l Z_k.
\]
In particular $\HL_{\mathrm{atyp}, \mathrm{f-pos}}$ is not dense in $S$ for the Zariski topology.
\end{corollary}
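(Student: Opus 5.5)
The corollary should follow from Theorem \ref{geozp} by tying maximal atypical special subvarieties of $\V$-factorwise positive dimension to maximal monodromically atypical weakly special subvarieties, and then showing that only finitely many such subvarieties can occur.

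\emph{Step 1: reduce to a maximal monodromically atypical weakly special $Y$.} Let $Z$ be an atypical special subvariety of $S$ for $\V$ with $\V$-factorwise positive dimension. The atypicality inequality for $Z$ relative to $\period_{(\pp_Z, D_{\pp_Z})}$ should pass to the weak Hodge subvariety $\mathscr{W}_{((\pp_Z, D_{\pp_Z}), \mon_Z, h_Z)} \subset \phi_{i,\Gamma}(\period_{(\pp_Z, D_{\pp_Z}), \Gamma_{\pp_Z}})$. The point is that $\Phi(Z^\an)$ lies in this smaller subvariety (Lemma \ref{monorb}), so subtracting the fibre dimensions of the quotient by $\mon_Z$ keeps the inequality strict. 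The same should hold relative to $\mathscr{W}_{((\pp, D_\pp), \mon, h_S)}$, because $\Phi(S^\an)$ lies in $\pi_\Gamma(D_{\mon_S})$. The singular-locus case needs separate treatment: I would handle it by passing to an open subset where $\Phi(S^\an)$ is smooth (Proposition \ref{pbeff}) and arguing by noetherian induction on $S$. Granting this, $Z$ is monodromically atypical, being weakly special since it is special. Hence $Z$ lies in some maximal monodromically atypical weakly special subvariety $Y$.

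\emph{Step 2: if $\mon_Y = \NN$ with $((\pp,D_\pp),\NN) \in \Sigma^{\mathrm{gen}}_{(S,\V)}$, then $Y$ is an excluded, positive-dimensional fibre of $\Phi_{/\NN}$.} By Proposition \ref{weakcarac}, $Y$ is a component of a fibre of $\Phi_{/\NN} = p_\NN \circ \Phi$. Then $\Phi_{/\NN}(Z^\an)$ is a point, which contradicts the $\V$-factorwise positive dimension of $Z$. So this case cannot contain any such $Z$.

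\emph{Step 3: if $\mon_Y = \NN$ with $((\M,D_\M),\NN) \in \Sigma^{\mathrm{sp}}_{(S,\V)}$, then $Y$ lies in one of finitely many special subvarieties.} Here $\M \subsetneq \pp$. The Mumford-Tate group $\pp_Y$ contains $\NN$ as a normal subgroup, and $Y$ is a component of $\Phi^{-1}$ of a weak Hodge subvariety defined by $\NN$. I would argue that $Y$ lies in a component of $\Phi^{-1}(\pi_\Gamma(\gamma \cdot D_{\NN'}))$, where $\NN'$ is the smallest mixed Hodge subclass containing the $\NN$-orbit. This special subvariety depends only on the $\Gamma$-conjugacy class of $\NN$, and there are finitely many such classes for each pair in the finite set. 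Each such preimage has finitely many components, since it is algebraic by Theorem \ref{algspec}.

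Step 3 is the main obstacle. Its two delicate points are that $Y \neq S$ in this case, and that the $\Gamma$-orbit of $\NN$ contributes only finitely many subvarieties. For the first, I would use that $\pp_Y \ne \pp$ when $\mon_Y$ is normal in $\M \subsetneq \pp$ rather than in $\pp$. For the second, I would try to use that $\mathrm{N}_\pp(\NN)$ together with $\Gamma$ has finitely many double cosets, the standard argument for finiteness of Hodge-generic types.

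Collecting these subvarieties gives the finite list $Z_1,\dots,Z_l$, and $\HL_{\mathrm{atyp}, \mathrm{f-pos}} \subset \bigcup_k Z_k$ follows. Each $Z_k$ is strict and $S$ is irreducible, so the finite union is a proper Zariski-closed subset. Hence $\HL_{\mathrm{atyp}, \mathrm{f-pos}}$ is not Zariski-dense in $S$.
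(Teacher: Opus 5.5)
Your outline is the paper's: atypical implies monodromically atypical weakly special, pass to a maximal $Y$, and rule out $\Sigma^{\mathrm{gen}}_{(S,\V)}$ by factorwise positivity. Your Step 2 is exactly the paper's argument. But Step 3, as you set it up, cannot give finiteness. The subdomain $D_{\NN'}$ you attach to $Y$ is the smallest Mumford--Tate subdomain containing the orbit $\NN(\R)^+\NN^u(\C)\cdot h$ through a lift of $\Phi(Y^\an)$. That is nothing but the generic Mumford--Tate domain of $Y$, so you are attaching to $Y$ its special closure, and this depends on the point $h$, not only on $\NN$. Already in the pure case, with $\NN=\G_1$ a direct factor of some $\G_1\times\G_2\subset\M$, the orbits are $D_1\times\{t\}$. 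The smallest Mumford--Tate subdomain containing $D_1\times\{t\}$ changes as $t$ runs over the infinitely many CM points of $D_2$. So ``depends only on the $\Gamma$-conjugacy class of $\NN$'' is false, and no double-coset count repairs it; $\Gamma$-translation is harmless anyway, since $\pi_\Gamma(\gamma\cdot D)=\pi_\Gamma(D)$.

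The missing idea is to use the \emph{first} entry of the pair. The paper takes $Z_1,\dots,Z_l$ to be the irreducible components of $\Phi^{-1}(\pi_\Gamma(D_\M))$ for $((\M,D_\M),\NN)\in\Sigma^{\mathrm{sp}}_{(S,\V)}$. There are finitely many because $\Sigma_{(S,\V)}$ is finite and these preimages are algebraic (Theorem \ref{algspec}). They are strict special subvarieties by Proposition \ref{specarac} because $(\M,D_\M)\subsetneq(\pp,D_\pp)$. The paper then gets $Y\subset Z_k$ by ``unwrapping definitions'': this uses that the pair attached to $Y$ also satisfies $\Phi(Y^\an)\subset\pi_\Gamma(D_\M)$, i.e.\ $Y$ is cut out by a weak Hodge subvariety of weak Hodge class $((\M,D_\M),\NN,h)$. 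That link is part of Baldi--Urbanik's theorem, even though the paper's Theorem \ref{geozp} only records $\NN=\mon_Y$. Your strictness argument (``$\NN$ normal in $\M$ rather than in $\pp$'') also fails, since $\NN$ may be normal in $\pp$ as well. Strictness comes from $(\M,D_\M)\neq(\pp,D_\pp)$.

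Step 1 also has a hole at the one place where something must be proved. Replacing $\dim D_{\pp_Z}$ by $\dim\mathscr{W}_{((\pp_Z,D_{\pp_Z}),\mon_Z,h_Z)}$ lowers the right-hand side, but replacing $\dim D_\pp$ by $\dim\mathscr{W}_{((\pp,D_\pp),\mon,h_S)}$ raises it. So atypicality implies monodromic atypicality only if $\dim D_{\pp_Z}/\mon_Z\geqslant\dim D_\pp/\mon$, and the containments you cite do not give this. The paper proves it by genericity. Since $\mon_Z\subset\mon$, the map $D_{\pp_Z}\to D_\pp/\mon$ factors through $D_{\pp_Z}/\mon_Z$. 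Its image is a Mumford--Tate subdomain containing $h_S$. Since $\Phi$ collapses to $h_S$ modulo $\mon$ (Lemma \ref{monorb}), $h_S$ is the image of a Hodge generic point and has Mumford--Tate group $\pp/\mon$, so the map is onto.

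Your worry about the singular-locus clause of atypicality is legitimate; the paper's proof only uses the dimension form. However, the noetherian induction you sketch does not settle it. Special subvarieties of $S$ lying over the singular locus may be typical for the restricted variation, so the induction hypothesis says nothing about them.
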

\begin{proof}
Let $\{Z_1, \cdots, Z_l\}$ be the set of special subvarieties of $S$ for $\V$ arising as irreducible components of preimages under $\Phi$ of Hodge subvarieties of $\period_{(\pp, D_\pp, \Gamma)}$ of the form $\pi_\Gamma(D_\M)$ with for $((\M, D_\M), \NN)\in \Sigma_{(S, \V)}^\mathrm{sp}$. These are indeed strict special subvarieties of $S$ for $\V$ by Proposition \ref{specarac}, using the fact that for $((\M, D_\M), \NN)\in \Sigma_{(S, \V)}^\mathrm{sp}$, one has $(\M, D_\M) \subsetneq (\pp, D_\pp)$. Theey are in finite number because $\Sigma_{(S, \V)}^\mathrm{sp}$ is finite by Theorem \ref{geozp}. We will prove that $\HL_{\mathrm{atyp}, \mathrm{f-pos}} \subset \bigcup_{k = 1}^l Z_k.$

Let $Z$ be an atypical special subvariety of $S$ for $\V$ which has $\V$-factorwise positive dimension. We need to show the existence of a $k \in \{1, \cdots, l\}$ such that $Z \subset Z_k$.

We first claim that $Z$ is a monodromically atypical weakly special subvariety of $S$ for $\V$. By Proposition \ref{specarac} we know that $Z$ is an irreducible component of $\Phi^{-1}(\phi_{i, \Gamma}(\period_{(\pp_Z, D_{\pp_Z}), \Gamma_{\pp_Z}}))$, and by Lemma \ref{monorb} one has
\[
\Phi(Z^\an) \subset \mathscr{W}_{((\pp_Z, D_{\pp_Z}), \mon_Z, h_Z)} \subset \phi_{i, \Gamma}(\period_{(\pp_Z, D_{\pp_Z}), \Gamma_{\pp_Z}})).
\]
This shows that $Z$ is an irreducible component of $\Phi^{-1}( \mathscr{W}_{((\pp_Z, D_{\pp_Z}), \mon_Z, h_Z)})$ hence is a weakly special subvariety. It remains to show that it is monodromically atypical. For this, since $Z$ is atypical by asumption, it suffices to show that
\[
\dim D_{\pp_Z} - \dim D_\pp \geqslant \dim q_{\mon_Z, \pp_Z}^{-1}(h_Z) - \dim q_{\mon, \pp}^{-1}(h_S).
\]
The proof is parallel to \cite[Lem. 5.6]{bku}, we recall it briefly. The above inequality is equivalent to
\[
\dim D_{\pp_Z}/\mon_Z \geqslant \dim D_\pp/\mon.
\]
Since $\mon_Z \subset \mon$, the inclusion $D_{\pp_Z} \hookrightarrow D_\pp$ descends to a map $\bar{i} : D_{\pp_Z}/\mon_Z \rightarrow D_\pp/\mon$. The image of $\bar{i}$ is a Mumford-Tate subdomain of $D_\pp/\mon$ which contains $h_S = \bar{i}(h_Z)$. Since $(\pp, D_\pp)$ is the generic mixed Hodge class of $\V$ the point $h_S$ cannot be contained in a proper Mumford-Tate subdomain of $D_\pp/\mon$. Therefore $\bar{i}$ is a surjection and the claimed dimension inequality follows.

Since $Z$ is a monodromically atypical weakly special subvariety of $S$ for $\V$, it is contained in a maximal one, say $Y$, and by the Geometric Zilber-Pink Theorem \ref{geozp} there exists $((\M, D_\M), \NN) \in \Sigma_{(S, \V)}$ such that $\mon_Y = \NN$. We claim that $((\M, D_\M), \NN)$ actually belongs to $\Sigma_{(S, \V)}^\mathrm{sp}$. Indeed, if $((\M, D_\M), \NN) \in \Sigma_{(S, \V)}^\mathrm{gen}$ the fact that $\mon_Y = \NN$ forces $\dim \Phi_{/\NN}(Y^\an) = 0.$ Since $Z \subset Y$, this implies that $\dim \Phi_{/\NN}(Z^\an) = 0$ which contradicts the assumption that $Z$ has $\V$-factorwise positive dimension. Unwrapping definitions, this in turn implies that $Y$ hence $Z$ is contained in $Z_k$ for some $k \in \{1, \cdots, l\}$, as desired. The non-density statement follows immediately.
\end{proof}
\section{A purely unipotent example}\label{sect6}
In this section, we investigate in a concrete example what can be reasonably expected regarding the distribution of transverse Hodge loci, beyond the pure case. We start by recalling the situation in the pure case. In \cite{ku} we introduced a condition called factorwise $\V$-admissibility on any strict Hodge sub-datum $(\M,D_\M)$ of the generic Hodge datum $(\pp, D)$ of an integral variation of Hodge structure $\V$ on a smooth and irreducible quasi-projective complex variety $S$. In substance, we proved there that the transverse Hodge locus of type $\M$ of $S$ for $\V$ is dense in $S^\an$ for the euclidean topology if and only if $(\M,D_\M)$ is factorwise $\V$-admissible. The factorwise $\V$-admissibility condition has a natural generalization to the mixed case, which will be shown in Sect. \ref{sect8} to be necessary for the transverse Hodge locus of prescribed type to be non-empty. This generalization is as follows:
\begin{definition}\label{vlikely}
Let $\V$ be an admissible integral graded-polarized variation of mixed Hodge structures on a smooth and irreducible quasi-projective complex algebraic variety $S$. Let $(\pp, D_\pp)$ be its generic mixed Hodge class and let $\tilde{\Phi} : \widetilde{S^\an} \rightarrow D_\pp$ be the lifted period map at some $\tilde{s} \in \widetilde{S^\an}$. A strict mixed Hodge subclass $(\M,D_\M)$ of $(\pp, D_\pp)$ is said \textit{$\V$-likely} if for every $\NN \in \Np$ the following inequality holds
\begin{equation}\label{likelyn}
\dim q_\NN(D_\M) + \dim (q_\NN \circ \tilde{\Phi})(\widetilde{S^\an}) - \dim D_\pp/\NN \geqslant 0,
\end{equation}
where $\Np$ is the set of normal subgroups of $\pp$ whose radical is unipotent, and for $\NN \in \Np$ we denoted by $q_\NN : D_\pp \rightarrow D_\pp/\NN$ the quotient.
\end{definition}
As demonstrated in a concrete example in the present section, the situation is different and somewhat more complicated in the mixed case than it is in the pure case recalled above. More precisely, we prove:
\begin{proposition}\label{contrexc}
There exists a smooth and irreducible quasi-projective complex curve $C$ and an admissible integral graded-polarized variation of mixed Hodge structures $\V$ on $C$ with generic mixed Hodge class $(\pp,D_\pp)$ such that:
\begin{itemize}
 \item[(i)] There exists a $\V$-likely strict mixed Hodge subclass $(\M,D_M)$ of $(\pp, D_\pp)$ and for any such mixed Hodge subclass:
\begin{itemize}
\item[($i_1)$] the transverse Hodge locus $\mathrm{HL}(C, \V^\otimes, \M)_\trans$ of type $\M$ is dense in $C(\C)$ for the Zariski topology;
\item[$(i_2)$] the transverse Hodge locus $\mathrm{HL}(C, \V^\otimes, \M)_\trans$ of type $\M$ is not dense in $C(\C)$ for the euclidean topology;
\end{itemize}
\item[(ii)] The full transverse Hodge locus $\mathrm{HL}(C, \V^\otimes)_\trans$ is dense in $C(\C)$ for the euclidean topology.
\end{itemize}
\end{proposition}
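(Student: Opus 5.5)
The plan is to build the example inside the class of variations studied in this section: families of extensions of $\Z(0)$ by $\Z(1)^n$, with $n=2$ and a curve $C$ mapped non-degenerately into $\mathbb{G}_m^2$.

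\textbf{Construction.} Take $C=\{(x,y)\in\mathbb{G}_m^2 : x+y=1\}\cong \mathbb{A}^1-\{0,1\}$, and let $\V$ be the extension of $\underline{\Z}(0)$ by $\underline{\Z}(1)^2$ whose extension class at $s=(x,y)$ is $(x,y)\in(\C^\times)^2=\mathrm{Ext}^1(\Z(0),\Z(1)^2)$. This is a Kummer variation, i.e.\ the Hodge realization of a family of $1$-motives, so it is admissible and graded-polarized.
\begin{itemize}
\item The generic Mumford-Tate group is $\pp=\uu\rtimes\mathbb{G}_m$, with $\uu\cong\mathbb{G}_a^2$ (Lie algebra $\Q(1)^2$) and $\mathbb{G}_m$ acting by scalars. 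It is the whole of $\uu\rtimes\mathbb{G}_m$ because $x,y$ are multiplicatively independent on $C$.
\item We have $D_\pp\cong\C^2$, and after passing to a neat $\Gamma$ the period map is $\Phi(s)=(x,y)\in(\C^\times)^2=\Gamma\backslash D_\pp$.
\item Strict mixed Hodge subclasses are $(\uu'\rtimes\mathbb{G}_m, D_\M)$ with $\uu'\subset\uu$ a rational subspace. Their images in $(\C^\times)^2$ are torsion translates of subtori. Conjugation by $\pp(\Q)^+$ changes only the torsion translate, since the $\mathbb{G}_m$-part acts by scalars and so preserves $\uu'$.
\item Every subspace $\uu''\subset\uu$ is $\pp$-normal, so $\Np$ consists of the $\uu''$ together with the groups containing the torus.
\end{itemize}

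\textbf{Step 1: likeliness.} I will check (\ref{likelyn}) case by case. If $\uu'$ is a line, then for $\uu''=0$, for a line $\uu''\neq\uu'$, for $\uu''=\uu'$, and for $\uu''=\uu$, each left-hand side of (\ref{likelyn}) is $\geqslant 0$. This uses that the image of $C$ in any quotient torus of positive dimension is a curve. For torsion points ($\uu'=0$) and $\NN=\{1\}$ the left-hand side is $0+1-2<0$. Hence the $\V$-likely subclasses are exactly those of type a line $\uu'=\Q(a,b)$. This proves existence in $(i)$.

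For such a type, $\mathrm{HL}(C,\V^\otimes,\M)_\trans$ is, up to finitely many points where transversality or smoothness fails, the set $\{s\in C : x^ay^b\in\mu_\infty\}$.

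\textbf{Step 2: $(i_1)$ and $(i_2)$.} The function $g=x^ay^b$ is non-constant on $C$, hence has finite fibres and is dominant onto $\mathbb{G}_m$.
\begin{itemize}
\item $(i_1)$: $g^{-1}(\mu_\infty)$ is infinite, hence Zariski-dense in the curve $C$. I will also check that all but finitely many of these points are transverse.
\item $(i_2)$: $g^{-1}(\mu_\infty)\subset g^{-1}(S^1)$. This is a real-analytic subset of real dimension $\leqslant 1$ of the Riemann surface $C^\an$, so it is nowhere dense.
\end{itemize}

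\textbf{Step 3: $(ii)$, the main obstacle.} The full transverse locus is the union over primitive $(a,b)$ of the sets $\{s : x^ay^b\in\mu_\infty\}$. Fix an open set $V\subset C^\an$ on which $L=(\log|x|,\log|y|):V\to\R^2$ is a local diffeomorphism. This holds away from a nowhere dense real-analytic locus, since $C$ is not contained in a translate of a real subtorus.
\begin{itemize}
\item Lines through the origin with rational slope are dense in $\R^2$. So for rational $(a,b)$ near any prescribed direction, the real curve $\gamma_{a,b}=\{a\log|x|+b\log|y|=0\}\cap V$ passes through $V$.
\item Along $\gamma_{a,b}$ we have $|g|=1$, and $\arg g$ has non-zero derivative. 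Indeed $g$ is a local biholomorphism, and $\gamma_{a,b}$ is the preimage of $S^1$ under it, with which $g$ identifies $\gamma_{a,b}$ locally.
\item Hence $\arg g$ takes a dense set of rational multiples of $2\pi$ on $\gamma_{a,b}$, and these are points of $g^{-1}(\mu_\infty)$.
\item Varying $(a,b)$ makes the curves $\gamma_{a,b}$ accumulate everywhere in $V$, which gives euclidean density.
\end{itemize}
The delicate point is controlling the local-diffeomorphism locus of $L$ and excluding non-transverse points uniformly. I would handle both by removing a closed nowhere dense real-analytic subset of $C^\an$ before running the density argument.
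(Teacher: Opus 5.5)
Your proposal is correct. It reaches the statement through a different example and a different density mechanism from the paper's.

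The paper takes $C=\{x_1^2=3x_2\}$, which lies in the non-torsion translate $\{x_1^2x_2^{-1}=3\}$ of the subtorus $Y_{1,2}$. This special position does all the work:
\begin{itemize}
\item the line type $(1,2)$ fails (\ref{likelyn}) for $\NN=\uu_{1,2}$, so it is not $\V$-likely;
\item for every other line type $(a,b)$, the points of $C\cap\zeta\cdot Y_{a,b}$ are forced onto the circle $|x_1|=3^{a/(2a-b)}$, which gives $(i_2)$ at once;
\item $(i_1)$ follows from the explicit points $c_{p,q}(a,b)$;
\item $(ii)$ follows because the radii $3^{a/(2a-b)}$ are dense in $\R_{>0}$ and the arguments $p/q$ are dense in $\R/\Z$.
\end{itemize}
Your curve $\{x+y=1\}$ is in general position instead, and every line type on it is $\V$-likely. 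You get $(i_2)$ from the softer observation that each type-wise locus lies in the nowhere dense real-analytic set $\{|g|=1\}$; this works for any Hodge generic curve in $\mathbb{G}_m^2$. You get $(ii)$ from the fact that $L=(\log|x|,\log|y|)$ is a local diffeomorphism on a dense open set.

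The two arguments are complementary. Yours cannot be applied to the paper's curve, where $L$ has rank one everywhere. The paper's circle computation does not apply to your curve. The paper's route is fully explicit and shows that likeliness genuinely excludes some types. Yours shows that the phenomenon in $(i_2)$ and $(ii)$ is not an artifact of special position.

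The point you flag as delicate is easy here:
\begin{itemize}
\item Transversality is automatic. Since $\Phi(C)=C$ is smooth and Hodge generic, every point of $C\cap\zeta\cdot Y$ is an isolated component with $0=1+1-2$. So nothing has to be removed; the paper notes that $\mathrm{HL}(C,\V^\otimes,\M)=\mathrm{HL}(C,\V^\otimes,\M)_\trans$.
\item On $\{x+y=1\}$ one has $(dx/x)/(dy/y)=1-1/x$. Hence $L$ fails to be a local diffeomorphism exactly on $\{x\in\R\}$.
\item On the complement of $\{x\in\R\}$, every $g=x^ay^b$ with $(a,b)\neq(0,0)$ is automatically a local biholomorphism. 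Indeed $\mathrm{Re}(dg/g)=a\,d\log|x|+b\,d\log|y|\neq 0$ there.
\end{itemize}

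There are also two cosmetic slips.
\begin{itemize}
\item The only normal subgroup of $\pp$ containing a torus is $\pp$ itself, and its radical is not unipotent. So $\Np$ is just the set of subspaces of $\uu$.
\item The subgroup $\uu'=\Q(a,b)$ corresponds to the character $x^by^{-a}$, not $x^ay^b$. This is harmless, since all primitive pairs occur.
\end{itemize}
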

This shows that in general the transverse Hodge locus of prescribed type might not be dense in the base for the euclidean topology even if it is non-empty. However, this leaves open the eventuality that the full transverse Hodge locus is either empty or dense in the base for the euclidean topology. We also provide an example showing that this weaker statement does not hold either in general:
\begin{proposition}\label{contrexc1}
There exists a smooth and irreducible quasi-projective complex curve $C$ and an admissible integral graded-polarized variation of mixed Hodge structures $\V$ on $C$ such that 
\begin{itemize}
\item[(i)] $\mathrm{HL}(C, \V^\otimes)_\trans$ is dense in $C(\C)$ for the Zariski topology;
\item[(ii)] $\mathrm{HL}(C, \V^\otimes)_\trans$ is not dense in $C(\C)$ for the euclidean topology;
\end{itemize}
\end{proposition}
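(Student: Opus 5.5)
Following Example \ref{ex1}, I take $C = \mathbb{A}^1 - \{0\}$ and let $\V$ be the Kummer variation: the admissible extension of $\uZ_C(0)$ by $\uZ_C(1)$ whose fiber at $z \in \C^\times$ is the mixed Hodge structure attached to $\log z$. Concretely, $\V_z$ has basis $e_{-2}, e_0$ with $W_{-2} = \Z e_{-2}$ and $F^0$ spanned by $e_0 - \frac{\log z}{2i\pi} e_{-2}$ (up to normalisation). Admissibility at $0$ and $\infty$ is the standard fact that the monodromy is unipotent, given by $e_0 \mapsto e_0 + e_{-2}$. The relative weight filtration exists, and the Hodge filtration extends as required, because $\log z$ has logarithmic growth.

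The generic Mumford-Tate group should be $\pp = \mult_m \ltimes \mult_a$. The unipotent radical is $\mult_a$ and is nontrivial because the monodromy is nontrivial. The reductive quotient is $\mult_m$, acting on $\Q(1)$. The domain is $D_\pp \cong \C$, the possible extension classes, and $\period = \Z\quo\C \cong \C^\times$, with period map $\Phi$ the identity of $\C^\times$ after the identification $\exp$.

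Next I classify the strict mixed Hodge subclasses. A proper subgroup containing the image of $\s_\C$ for some point must be a Levi factor $\mathbf{T} = u\mult_m u^{-1}$ with $u \in \mult_a(\Q)$. The subgroup $\mult_a$ alone cannot occur, since it does not contain the weight cocharacter. These subclasses are exactly the torsion subclasses of Proposition \ref{torsionex}, and they correspond to split extensions, namely the rational points $\Q \subset \C$ of $D_\pp$ modulo $\Z$. So their images in $\period$ are the roots of unity $\mu_\infty$.

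Transversality in Definition \ref{deftrans} then holds trivially. $\Phi(C)$ is all of $\period$, smooth of dimension $1$, and each $D_\mathbf{T}$ is a point, so $\dim \Phi(Z) = 0 = 1 + 0 - 1$. Thus every point of $\HL = \mu_\infty$ is transverse, giving $\HL_\trans = \HL = \mu_\infty$. This set is infinite, hence Zariski dense in the curve $C$, which proves $(i)$. It is contained in the unit circle, hence not dense for the euclidean topology, which proves $(ii)$.

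The main point to verify carefully is the claim that $\HL = \mu_\infty$ and nothing more. I would check it directly with Hodge tensors. $\pp(\V_z)$ is strictly smaller than $\pp$ iff the extension class $\frac{\log z}{2i\pi} \in \C/\Z$ lies in $\Q/\Z$, i.e. iff $z$ is a root of unity. Equivalently, by Proposition \ref{mtbound}, the point lies in some $D_\mathbf{T}$. To make the identification of $\pp$ rigorous I would use Theorem \ref{andre}: the monodromy group $\mult_a$ is normal in $\pp$ and contained in $\pp^\der$. Since $\mathrm{gr}$ is Tate, the reductive part is exactly $\mult_m$.
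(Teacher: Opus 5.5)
Your proposal is correct and follows the paper's own proof. The paper takes the same example, $C=\mathbb{A}^1-\{0\}$ with identity period map, and reads off $\mathrm{HL}(C,\V^\otimes)_\trans=\mu_\infty$ from its classification of strict mixed Hodge subclasses (Proposition \ref{classc} and Corollary \ref{HL} with $n=1$), which is exactly the computation you carry out by hand.
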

The examples are constructed on a curve in $(\C^\times)^n$ for some integer $n \geqslant 1$, and to that end, the subsections $6.1-6.3$ are dedicated to recollections on the properties of natural variations of mixed Hodge structures on subvarieties of $(\C^\times)^n$ (for an integer $n \geqslant 1$), and the description of their Hodge locus.

Before starting, we make a notational remark. In what follows, the notation $\C^\times$ will be used both to denote the complex algebraic variety $\mathrm{Spec}(\C[x,y]/\braket{xy-1})$ (without its natural group structure) and the $\C$-points of the latter endowed with the natural Archimedean topology. We prefer reserving the notation $\mult_m$ for the multiplicative group, seen as a $\Q$-algebraic group. We hope this won't create any confusion.
\subsection{The Hodge variety $(\C^\times)^n$}
Fix an integer $n \geqslant 1$. We start by recalling following \cite{carlext} that $(\C^\times)^n$ is canonically isomorphic to $\mathrm{Ext}_{\Z-\mathrm{MHS}}(\Z(0), \Z(1)^n)$ where for $k \in \Z$, we recall that $\Z(k)$ is the Tate Hodge structure, pure of type $(-k,-k)$ with underlying abelian group $(2\pi i)^k \Z$. In the language of the present paper, this realizes $(\C^\times)^n$ as a connected Hodge variety. Let us give some detail on this realization.

Let $V_\Z = \Big( \bigoplus_{1 \leqslant k \leqslant n} (2\pi i) \Z \Big)  \oplus \Z \subset \C^{n+1} = V_\C$, and set $\ee^{(1)} = (1,0,\cdots,0)$, $\ee^{(2)} = (0,1,\cdots,0)$, $\cdots$, $\ee^{(n)} = (0,0, \cdots, 1, 0)$ and $e_0 = (0,0,\cdots, 0,1)$. These form a $\C$-basis of $V_\C$. Furthermore the family $(2\pi i \ee^{(1)}, \cdots  2\pi i \ee^{(n)}, e_0)$ forms a $\Z$-basis of $V_\Z$. Define a finite ascending filtration on $V_\Q$ by
\[
W_{-3} V_\Q = 0 \subsetneq W_{-2}V_\Q = \bigoplus_{1 \leqslant k \leqslant n} (2 \pi i) \Q \cdot \ee^{(k)} = W_{-1}V_\Q \subsetneq W_0V_\Q = V_\Q,
\]
and set $h^{-1,-1} = n$, $h^{0,0} = 1$ and $h^{p,q} = 0$ for $(p,q) \in \Z^2-\{(-1,-1), (0,0)\}$. Let $q_{-2}$ (resp. $q_0$) be the positive definite symeuclidean bilinear form on $\Gr^W_{-2} V_\Q \cong \bigoplus_{1 \leqslant k \leqslant n} (2 \pi i) \Q \cdot \ee^{(k)}$ (resp. on $\Gr^W_0 V_\Q \cong \Q \cdot e_0$) with orthonormal basis $(2\pi i \ee^{(1)}, \cdots ,2 \pi i \ee^{(n)})$ (resp. $e_0$). Let $D_\pp$ be the mixed period domain parametrizing $\Q$-Hodge structures on $V_\Q$ with weight filtration $W_\bul$, graded-polarized by $(q_{-2}, q_0)$ and with Hodge numbers $(h^{p,q})$.

For $\zz = (z_1, \cdots, z_n) \in \C^n$, define
\[
F_{\zz}^{-1} V_\C = V_\C \supsetneq F_{\zz}^0V_\C = \C\cdot \Big(e_0 + \sum_{1 \leqslant k \leqslant n} z_k e^{(k)}_{-2}\Big) \supsetneq F_{\zz}^1V_\C = 0.
\]
\begin{lemma}
The map $\psi : \C^n \rightarrow D_\pp$ which sends $\zz \in \C^n$ to $(V_\Q, W_\bul, F_{\zz}^\bul)$ is a biholomorphism. 
\end{lemma}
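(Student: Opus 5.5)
The plan is to check in turn that $\psi$ is well defined, bijective, and holomorphic with holomorphic inverse.

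\emph{Well-definedness.} For each $\zz\in\C^n$, the triple $(V_\Q,W_\bul,F^\bul_{\zz})$ must be a mixed $\Q$-Hodge structure with the prescribed data. On $\gr^W_{-2}$, the filtration $F_{\zz}$ induces $F^{-1}=\gr^W_{-2}$ and $F^0=0$, because $F^0_{\zz}$ is a line mapping isomorphically onto $\gr^W_0$. This is the pure Tate structure of type $(-1,-1)$. On $\gr^W_0$ the induced filtration has $F^0=\gr^W_0$, the pure type $(0,0)$ structure. Both graded pieces are polarized by the positive definite forms $q_{-2},q_0$; the signs are trivial in the Tate case. The Hodge numbers are $h^{-1,-1}=n$ and $h^{0,0}=1$. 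Hence $\psi(\zz)$ lies in the parameter space $M$ of the period mixed Hodge datum.

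\emph{Bijectivity onto $D_\pp$.} Conversely, any $F^\bul$ in $M$ has $F^{-1}=V_\C$ and $F^1=0$, and $F^0$ is a line with $F^0\cap (W_{-2})_\C=0$ (since $\gr^W_{-2}$ has no $(0,*)$ part) that surjects onto $\gr^W_0$. Such a line has a unique generator of the form $e_0+\sum z_k\ee^{(k)}$. So $\psi$ is a bijection $\C^n\to M$.

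Next, $M$ is connected because its image $\C^n$ is connected, and $D_\pp$ is a connected component of $M$, so $D_\pp=M$. More precisely, $D_\pp$ is the $\pp(\R)^+\pp^u(\C)$-orbit of a base point. The group $\pp^u(\C)\cong\C^n$ acts by $\zz\mapsto\zz+u$, and the reductive part $O(n)\times\{\pm1\}$ (here one should take $\pp=\ppol$) acts compatibly. Either of these descriptions identifies $D_\pp$ with all of $M$.

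\emph{Holomorphy.} $\psi$ is the restriction of the standard affine chart of $\mathbb{P}(V_\C)$, the Grassmannian of lines, namely $\zz\mapsto[e_0+\sum z_k\ee^{(k)}]$. This chart is an open immersion onto the complement of the hyperplane $\mathbb{P}((W_{-2})_\C)$. By Proposition \ref{hodgedat}, the complex structure on $D_\pp$ is the one for which the Hodge filtration varies holomorphically, that is, the one induced by the open embedding into the flag variety $\check{\mathcal D}_\pp$. Here that flag variety is the closed subvariety of $\mathbb{P}(V_\C)$ cut out by the (empty) conditions, so $\check M$ is an open subset of $\mathbb{P}(V_\C)$. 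The chart therefore gives holomorphy of $\psi$, and its inverse, reading off the coordinates $z_k=\ell_k/\ell_0$ of a generator, is holomorphic as well.

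\emph{Main obstacle.} The only delicate point is matching the abstract complex structure of Proposition \ref{hodgedat} with the Grassmannian one. I would handle this via the uniqueness clause in Proposition \ref{hodgedat}(ii): the tautological filtration is holomorphic in the chart coordinates, and a holomorphic bijection between manifolds of the same dimension $n$ is a biholomorphism.
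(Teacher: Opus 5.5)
Your proposal is correct and follows essentially the same route as the paper. You check well-definedness, then get bijectivity from the fact that $F^0$ must be a line complementary to $(W_{-2})_\C$, which has a unique generator $e_0+\sum z_k\ee^{(k)}$, and finally obtain holomorphy of $\psi$ and its inverse from the uniqueness of the complex structure in Proposition \ref{hodgedat}(ii). Your connectedness remark showing that $D_\pp$ is all of $M$ fills a point the paper leaves implicit. Your aside calling the flag variety a closed subvariety of $\mathbb{P}(V_\C)$ is slightly off, since here $\check{\mathcal D}_\pp$ is the affine chart itself (open in $\mathbb{P}(V_\C)$), but this does not affect the argument.
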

\begin{proof}
First note that this map is well defined as for any $\zz \in \C^n$ the mixed Hodge structure $\psi(\zz)$ has by construction the right weight filtration and Hodge numbers. Two different $\zz \in \C^n$ clearly give different $F^0$ so the map is injective. Conversely, any mixed Hodge structure on $V_\Z$ with weight filtration and Hodge numbers as above is fully determined by the zeroth piece of its Hodge filtration which is forced to be of rank $1$, so that it must arise as in the previous construction for some $\zz \in \C^n$. Thus $\psi$ is a bijection between $\C^n$ and $D_\pp$. Finally, the formulae show that the Hodge filtration varies holomorphically with $\zz$ and unicity in Proposition \ref{hodgedat}(b) ensures that $\psi$ is a biholomorphism.
\end{proof}
\begin{lemma}\label{bigradc}
Let $\zz \in \C^n$. The associated mixed Hodge structure $\psi(\zz)$ on $V_\Q$ has only one possible bigrading which is explicitly given by:
\[
V_{\zz}^{-1,-1} = (W_{-2}V_\Q)_\C = \bigoplus_{1 \leqslant k \leqslant n}\C \cdot e^{(k)}_{-2} \hspace{0.1cm} \mathrm{and} \hspace{0.1cm} V_{\zz}^{0,0} = F_{\zz}^{0}V_\C = \C\cdot \Big(e_0 +\sum_{1 \leqslant k \leqslant n} z_k e^{(k)}_{-2}\Big).
\]
\end{lemma}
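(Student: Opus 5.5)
We have to show two things about $\psi(\zz)$: the displayed decomposition is a bigrading, and it is the only one. Both come straight from the defining conditions (\ref{weight}) and (\ref{hodge}) together with the list of Hodge numbers. The only nonzero Hodge numbers are $h^{-1,-1}=n$ and $h^{0,0}=1$. By the remark following the definition of bigraduations, $\dim V^{p,q}=h^{p,q}$ for any bigrading. So every bigrading of $\psi(\zz)$ is a decomposition $V_\C = V^{-1,-1}\oplus V^{0,0}$, with $V^{-1,-1}$ of dimension $n$, $V^{0,0}$ of dimension $1$, and all other pieces zero.

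For uniqueness, first apply (\ref{weight}) with $k=-2$. Since $(-1,-1)$ is the only index with $p+q\leqslant -2$ and a nonzero piece, this gives $(W_{-2}V_\Q)_\C = V^{-1,-1}$. Next apply (\ref{hodge}) with $r=0$. Since $(0,0)$ is the only index with $p\geqslant 0$ and a nonzero piece, this gives $F^0_{\zz}V_\C = V^{0,0}$. Both pieces are therefore forced, and they coincide with the displayed formulas.

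For existence, we check the remaining instances of (\ref{weight}) and (\ref{hodge}) for this decomposition.
\begin{itemize}
\item It is a direct sum: $e_0+\sum_k z_k\ee^{(k)}$ has nonzero $e_0$-coordinate, so it does not lie in $\bigoplus_k \C\,\ee^{(k)}$.
\item For the weight filtration: $W_{-1}=W_{-2}$ gives $(W_{-1}V_\Q)_\C = V^{-1,-1}$, $W_0$ gives all of $V_\C$, and $W_{-3}=0$.
\item For the Hodge filtration: $F^{-1}=V_\C=V^{-1,-1}\oplus V^{0,0}$, and $F^1=0$.
\end{itemize}
Hence this decomposition is a bigrading.

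No step here is a real obstacle; the one point needing care is that the Hodge numbers force all pieces other than $V^{-1,-1}$ and $V^{0,0}$ to vanish. We can also observe that this bigrading is the canonical one of Proposition \ref{delbigrad}, since it is the unique bigrading.
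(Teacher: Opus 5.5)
Your proof is correct and is essentially the paper's argument: the Hodge numbers force every piece other than $V^{-1,-1}$ and $V^{0,0}$ to vanish, and then (\ref{weight}) at $k=-2$ and (\ref{hodge}) at $r=0$ determine these two pieces. Your explicit existence check is a small addition that the paper leaves implicit; existence also follows from Proposition \ref{delbigrad}.
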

\begin{proof}
Let $\zz \in \C^n$ and $V_\C = \bigoplus_{p,q \in \Z} V_{\zz}^{p,q}$ be a bigrading of $\psi(\zz)$. Since the only non-zero Hodge numbers are $h^{-1,-1}$ and $h^{0,0}$, for $(p,q)\in \Z^2-\{(-1,-1), (0,0)\}$ one has $V_{\zz}^{p,q} = 0$ which forces:
\[
(W_{-2}V_\Q)_\C = \bigoplus_{p+q \leqslant -2} V_{\zz}^{p,q} = V_{\zz}^{-1,-1},
\]
and
\[
F_{\zz}^0V_\C = \bigoplus_{p\geqslant 0} V_{\zz}^{p,q} = V_{\zz}^{0,0}.
\]
\end{proof}
In the remaining of the section, we set $\pp = (\mult_a)^n \rtimes \mult_m$ which we see as a $\Q$-algebraic subgroup of $\gl(V_\Q)$ through the following inclusion (where we use the basis $(2 \pi i \cdot \ee^{(1)}, \cdots, 2 \pi i \cdot\ee^{(n)}, e_0)$ of $V_\Q$ to write elements of $\gl(V_\Q)$ in matrix form):
\[
(\mult_a)^n \rtimes \mult_m \rightarrow \gl(V_\Q), (\xxi = (\xi_1, \cdots \xi_n), \xi) \mapsto M(\xxi,\xi) := \left(\begin{array}{cccc}\xi  &  \cdots & 0 & \xi_1 \\ \vdots & \ddots & \vdots & \vdots \\0 & 0 & \xi & \xi_n \\0 & 0 & 0 & 1\end{array}\right),
\]
where the upper left block in $M(\xxi, \xi)$ is $\xi \cdot I_n$ with $I_n$ the $n \times n$ identity matrix. We denote by $\uu = (\mult_a)^n$ the unipotent radical of $\pp$.
\begin{lemma}
Let $\zz \in \C^n$ and let $\pp(\zz)$ be the Mumford-Tate group of the $\Q$-mixed Hodge structure $\psi(\zz)$, seen as a subgroup of $\gl(V_\Q)$. Then $\pp(\zz) \subset \pp$.
\end{lemma}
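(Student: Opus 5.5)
To prove $\pp(\zz) \subset \pp$, I will exhibit a bigraduation of $\psi(\zz)$ whose image lies in $\pp_\C$ and then invoke Proposition \ref{mtmor}$(i)$. That result gives $\pp(\psi(\zz)) \subset \pp(x)$ for any bigraduation $x$. Here $\pp(x)$ is the smallest $\Q$-subgroup of $\gl(V_\Q)$ whose $\C$-points contain $x(\s_\C)$. So it suffices to check that $x(\s_\C) \subset \pp_\C$, since $\pp$ is a $\Q$-algebraic subgroup of $\gl(V_\Q)$.

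\textbf{The bigraduation.} By Lemma \ref{bigradc}, the bigraduation of $\psi(\zz)$ is unique. It has $V_\zz^{-1,-1} = \bigoplus_k \C \ee^{(k)}$ and $V_\zz^{0,0} = \C\cdot v_\zz$, where $v_\zz := e_0 + \sum_k z_k \ee^{(k)}$. The associated morphism $x_\zz : \s_\C \cong \mult_{m,\C}^2 \to \gl(V)_\C$ sends $(\lambda,\mu)$ to the automorphism acting by $\lambda^{-1}\mu^{-1}$ on $V_\zz^{-1,-1}$ and trivially on $v_\zz$, with the usual sign convention for the character attached to $(p,q)$.

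\textbf{The matrix computation.} I will write this automorphism in the $\Q$-basis $(2\pi i \ee^{(1)},\dots,2\pi i\ee^{(n)},e_0)$ and set $t = \lambda^{-1}\mu^{-1}$. On the first $n$ basis vectors it acts as $t \cdot I_n$. For $e_0$, write $e_0 = v_\zz - \sum_k z_k \ee^{(k)}$; its image is then
\[
v_\zz - t\sum_k z_k \ee^{(k)} = e_0 + (1-t)\sum_k z_k \ee^{(k)}.
\]
In the chosen basis, the last column is therefore $\big((1-t)z_k/(2\pi i)\big)_k$ with bottom entry $1$. This is exactly $M(\xxi,\xi)$ with $\xi = t$ and $\xi_k = (1-t)z_k/2\pi i$, so it lies in $\pp(\C)$. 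Hence $x_\zz(\s_\C) \subset \pp_\C$ and the inclusion follows.

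The only delicate point is bookkeeping: keeping the $2\pi i$ normalisation of the rational basis straight, and fixing the sign convention for the characters. Neither affects membership in $\pp$, since $\pp_\C$ allows arbitrary complex $\xi$ and $\xi_k$.
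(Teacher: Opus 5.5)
Your proposal is correct and follows essentially the paper's argument: compute the unique bigraduation $x_{\zz}$ in the rational basis, observe that its image consists of matrices $M(\xxi,\xi)$, so that $x_{\zz}(\s_\C)\subset \pp_\C$, and conclude with Proposition \ref{mtmor}. The differences are cosmetic. The paper also remarks that $x_{\zz}$ is Deligne's canonical bigraduation, which is not needed if you use part $(i)$. Your computation keeps the $1/2\pi i$ factor in the off-diagonal entries, which the paper's displayed formula drops; this is harmless since $\pp_\C$ allows arbitrary complex $\xi_k$.
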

\begin{proof}
Let $\zz \in \C^n$. Writing elements of $\gl(V_\C)$ in matrix form for the basis $(2 \pi i \cdot \ee^{(1)}, \cdots, 2 \pi i \cdot \ee^{(n)}, e_0)$, the bigrading of $\psi(\zz)$ exhibited in Lemma \ref{bigradc} corresponds to the morphism
\[
x_{\zz} : \s_\C \rightarrow \gl(V_\C), (\lambda_1, \lambda_2) \mapsto M\Big(\Big(z_1 \Big(1- \frac{1}{\lambda_1\lambda_2}\Big), \cdots, z_n \Big(1- \frac{1}{\lambda_1\lambda_2}\Big)\Big), \frac{1}{\lambda_1\lambda_2}\Big).
\]
Because, by Lemma \ref{bigradc}, $\psi(\zz)$ admits only one bigrading $x_{\zz}$ must be the Deligne bigrading of $\psi(\zz)$. The explicit description above shows that $x_{\zz}(\s_\C)$ is contained in $\pp_\C$ and Proposition \ref{mtmor} then shows that $\pp(\zz)$ is contained in $\pp$.
\end{proof}
Define a left action of $\pp(\C)$ on $\C^n$ by setting that $g := (\xxi, \xi)$ acts on $\C^n$ by
\[
g \cdot \zz = (\xi z_1 + \xi_1, \cdots, \xi z_n + \xi_n).
\]
One easily checks that this indeed defines a group action by biholomorphisms. On the other side, set $\pp(\C)$ to act on $D_\pp$ by conjugation on the target of the (unique) bigrading associated to a point of $D_\pp$.
\begin{lemma}\label{equivc}
The map $\psi$ is $\pp(\C)$-equivariant for the above defined actions, and $\uu(\C)$ acts transitively on both.
\end{lemma}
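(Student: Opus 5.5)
The plan is to verify the equivariance on Hodge filtrations, using the injectivity of $\psi$ and the explicit bigradings of Lemma \ref{bigradc}. Transitivity of $\uu(\C)$ will then follow by an elementary computation on $\C^n$.

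\textbf{Step 1: how $g$ acts on a point of $D_\pp$.} Fix $g = M(\xxi,\xi) \in \pp(\C)$ and $\zz \in \C^n$. By definition, $g$ sends the point $\psi(\zz)$ with bigrading $x_{\zz}$ to the point with bigrading $\intt_g \circ x_{\zz}$. That bigrading has pieces
\[
g \cdot V_{\zz}^{p,q} .
\]
So its Hodge filtration is $F^0 = g \cdot F^0_{\zz}V_\C$, and its weight filtration is $g \cdot W_\bul = W_\bul$, because $g$ preserves $W_{-2}$.

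\textbf{Step 2: compute $g \cdot F^0_{\zz}V_\C$.} I would write the computation in the basis $(\ee^{(1)}, \cdots, \ee^{(n)}, e_0)$; conjugating by the diagonal rescaling by $2\pi i$ does not change the shape of $M(\xxi,\xi)$. The matrix sends $e_0$ to $e_0 + \sum_k \xi_k \ee^{(k)}$ and sends $\ee^{(k)}$ to $\xi \ee^{(k)}$, up to the harmless $2\pi i$ normalisation of the $\xi_k$, which I would fix consistently with the action on $\C^n$. Hence
\[
g\Big(e_0 + \sum_k z_k \ee^{(k)}\Big) = e_0 + \sum_k (\xi z_k + \xi_k)\ee^{(k)} .
\]
Therefore $g \cdot F^0_{\zz} = F^0_{g\cdot \zz}$.

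\textbf{Step 3: conclude equivariance.} Since $\psi$ is injective and a point of $D_\pp$ is determined by $F^0$, Step 2 gives $g\cdot\psi(\zz) = \psi(g\cdot\zz)$. A small check is needed that this action really takes values in $D_\pp$. The Hodge numbers are preserved. The graded-polarisations are preserved only up to the scalar $\xi$ on $\gr_{-2}$; this is harmless, since polarisation conditions are insensitive to scaling (and for real points rescaling is allowed). In any case the image is $\psi(g\cdot\zz) \in D_\pp$, so the action is well defined.

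\textbf{Step 4: transitivity.} For $\zz, \zz' \in \C^n$, take $\xi = 1$ and $\xxi = \zz' - \zz$, which lies in $\uu(\C)$. Then $g\cdot\zz = \zz'$, so $\uu(\C)$ acts transitively on $\C^n$, and by equivariance also on $D_\pp$.

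The only real obstacle is bookkeeping: the $2\pi i$ factors between the $\Q$-basis used to write $M(\xxi,\xi)$ and the $\C$-basis used for $F_{\zz}$. I would handle this by stating the matrix in the $\ee^{(k)}$ basis, which is what makes the formula for the action on $\C^n$ come out exactly.
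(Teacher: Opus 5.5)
Your proposal is correct and is essentially the paper's argument. The paper asserts, via an omitted matrix computation, that $g\,x_{\zz}\,g^{-1} = x_{g\cdot\zz}$; you verify the equivalent identity $g\cdot F^0_{\zz} = F^0_{g\cdot\zz}$ on the $(0,0)$-piece, which suffices because $F^0$ determines the point (and, by Lemma \ref{bigradc}, its unique bigrading). In both cases transitivity of $\uu(\C)$ is read off from the explicit formula.

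Your caution about the $2\pi i$ factors is justified: if $M(\xxi,\xi)$ is read literally in the basis $(2\pi i\,\ee^{(k)}, e_0)$, one gets $\xi z_k + 2\pi i\,\xi_k$ rather than $\xi z_k + \xi_k$. The paper's own formulas for $x_{\zz}$ and for the action tacitly use the same normalisation you adopt.
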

\begin{proof}
Let  $\zz \in \C^n$ and $g = (\xxi, \xi) \in \pp(\C)$. One has to check that:
\[
g x_{\zz} g^{-1} = x_{(\xi z_1 + \xi_1, \cdots, \xi z_n + \xi_n)}.
\]
This follows from a direct matrix computation which we omit. The transitivity assertion is obvious from the above formula.
\end{proof}
Let $\Gamma = \mult_a^n(\Z) \subset \pp(\Q)$ which is a neat arithmetic subgroup of $\pp(\Q)$. For the above action $\Gamma$ acts on $\C^n$ by additive translation, and the map $e = (e^{2\pi i \cdot}, \cdots, e^{2\pi i \cdot}) : \C^n \rightarrow (\C^\times)^n$ identifies $(\C^\times)^n$ to the quotient $\Gamma \quo \C^n$. Since $\Gamma$ is neat, it acts on $D_\pp$ properly discontinuously and freely. Therefore, Lemma \ref{equivc} shows that the map $\psi$ descends to a biholomorphism
\[
\bar{\psi} : (\C^\times)^n \rightarrow \Gamma \quo D_\pp.
\]
This is the claimed realization of $(\C^\times)^n$ as a (connected) Hodge variety.
\subsection{Families of extensions of $\Z(0)$ by $\Z(1)^n$}\label{sect62}
Fix an integer $n \geqslant 1$. Let $S$ be a smooth, irreducible and quasi-projective complex algebraic variety, and denote by $(\overline{S}, E)$ a log-smooth projective compactification of $S$. Let $q = (q_1,\cdots,q_n) : S^\an \rightarrow (\C^\times)^n$ be a holomorphic map. We explain in this subsection how to construct an admissible and graded-polarized integral variation of mixed Hodge structures on $S$ from the previous data. We follow closely the exposition of \cite{dellocal}.

Let $\mathcal{V} = \bigoplus_{1 \leqslant k \leqslant n} \mathcal{O}_{S^\an} \cdot e_{-2}^{(k)} \oplus \mathcal{O}_{S^\an}\cdot e_0$ be a free $\mathcal{O}_{S^\an}$-module of rank $n+1$ on $S$, endowed with the integrable connection
\[
\nabla := d -  \left(\begin{array}{cccc}0  &  \cdots & 0 & \frac{d q_1}{q_1}  \\ \vdots & \ddots & \vdots & \vdots \\0 & 0 & 0 & \frac{d q_n}{q_n}  \\0 & 0 & 0 & 0\end{array}\right).
\]
For any base point $s \in S^\an$, denote by \[(m_1(q,\cdot), \cdots,  m_n(q,\cdot)) : \pi_1(S^\an,s) \rightarrow \pi_1((\C^\times)^n, q(s)) \cong \Z^2\] the map induced by $q$ at the level of fundamental groups.
\begin{lemma}\label{monoint}
Let $s \in S^\an$. The monodromy representation $\rho : \pi_1(S^\an,s) \rightarrow \gl(\mathcal{V}_s)$ associated to $(\mathcal{V}, \nabla)$ at the basepoint $s$ is given, in the basis $(e_{-2}^{(1)}, \cdots, e_{-2}^{(n)}, e_0)$, by the formula:
\[
\forall \gamma \in \pi_1(S^\an,s), \rho(\gamma) = \left(\begin{array}{cccc}1  &  \cdots & 0 & (2\pi i)m_1(q,\gamma)  \\ \vdots & \ddots & \vdots & \vdots \\0 & 0 & 1 & (2\pi i)m_n(q, \gamma)  \\0 & 0 & 0 & 1\end{array}\right).
\]
\end{lemma}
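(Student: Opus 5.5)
The plan is to compute the monodromy from an explicit frame of flat sections of $\nabla$ and then read off parallel transport around loops. Because the connection matrix has nonzero entries only in the last column, above the diagonal, we first write down the flat sections. The vectors $e_{-2}^{(k)}$ are flat, since $\nabla e_{-2}^{(k)} = d e_{-2}^{(k)} = 0$: the $k$-th column of the connection matrix is zero for $k \leqslant n$.

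For $e_0$, we look for a flat section of the form $\sigma = e_0 + \sum_k f_k e_{-2}^{(k)}$ with $f_k$ holomorphic on a simply connected open subset $U \ni s$. Flatness reads
\[
\nabla \sigma = \sum_k \Big(df_k - \frac{dq_k}{q_k}\Big) e_{-2}^{(k)} = 0,
\]
so it suffices to take $f_k = \log q_k$, for any branch of the logarithm on $U$. (Sign conventions in $\nabla$ only change $\log q_k$ into $-\log q_k$ here; we follow the convention as written.) Thus, locally, a basis of flat sections is $(e_{-2}^{(1)}, \dots, e_{-2}^{(n)}, e_0 + \sum_k \log q_k \, e_{-2}^{(k)})$.

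Next we analytically continue along a loop $\gamma$ based at $s$. The sections $e_{-2}^{(k)}$ are global and flat, so they are invariant. For the last section, continuing $\log q_k$ along $\gamma$ changes it by $\int_\gamma dq_k/q_k = 2\pi i\, m_k(q,\gamma)$. This holds by definition of $m_k(q,\gamma)$ as the winding number of $q_k\circ\gamma$, i.e. the image of $\gamma$ in $\pi_1(\C^\times)\cong\Z$ under the $k$-th component of $q_\ast$. Hence parallel transport sends $\sigma(s)$ to $\sigma(s) + \sum_k 2\pi i\, m_k(q,\gamma) e_{-2}^{(k)}$.

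Expressing this back in the basis $(e_{-2}^{(1)}, \dots, e_{-2}^{(n)}, e_0)$ of the fibre at $s$, the transport fixes each $e_{-2}^{(k)}$ and sends $e_0 \mapsto e_0 + \sum_k 2\pi i\, m_k(q,\gamma) e_{-2}^{(k)}$, since the $\log q_k(s)$ terms cancel. This is exactly the stated matrix.

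The only delicate point is the choice of convention for the monodromy representation: analytic continuation along $\gamma$ versus $\gamma^{-1}$, which determines the sign of $m_k$. We fix the convention that $\rho(\gamma)$ is the transport of flat sections along $\gamma$, matching the sign in the statement, and we check that this is compatible with composition in $\pi_1$. This check is immediate here because the image of $\rho$ is abelian, so only the sign matters.
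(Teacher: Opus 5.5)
Your proposal is correct and follows the paper's own argument. The paper likewise takes the flat frame $e_{-2}^{(1)},\dots,e_{-2}^{(n)}$, $e_0+\sum_k \log(q_k)\,e_{-2}^{(k)}$ and reads off the monodromy from the fact that continuing $\log q_k$ along $\gamma$ adds $2\pi i\, m_k(q,\gamma)$. You spell out what the paper leaves as ``immediate'': the flatness check, the identification of $m_k$ with a winding number, and the remark that the abelian image makes the path-composition convention harmless.
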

\begin{proof}
Let $\gamma : [0,1] \rightarrow S^\an$ be a continuous loop. The horizontal sections of $(\mathcal{V}, \nabla)$ along the loop $\gamma$ are linear combinations of $e_{-2}^{(1)}, \cdots,  e_{-2}^{(n)}$ and
\[
t \in [0,1] \mapsto  e_0 + \sum_{1 \leqslant k \leqslant n} \log(q_k \circ \gamma(t)) e_{-2}^{(k)}
\]
for some local determination of the logarithm. The result follows immediately.
\end{proof}
This endows the local system of solutions to $(\mathcal{V}, \nabla)$ with an integral structure $\V$ given on the fiber over $s \in S^\an$ by
\[ \V_s = \Big(\bigoplus_{1 \leqslant k \leqslant n} (2 \pi i) \Z e_{-2}^{(k)}\Big) \oplus \Z (e_0 + \sum_{1 \leqslant k \leqslant n} \log(q_k(z)) e_{-2}^{(k)}) \subset \mathcal{V}_s\]
for some determination of the logarithm (note that the result is independent of this choice). We have associated to the datum $(C,q)$ an integral local system which we now endow with a variation of mixed Hodge structures.

Consider the following filtrations by $\mathcal{O}_{S^\an}$-submodules
\[
\mathcal{W}_{-3} = 0 \subsetneq \mathcal{W}_{-2} = \bigoplus_{1 \leqslant k \leqslant n} \mathcal{O}_{S^\an} \cdot e_{-2}^{(k)} = \mathcal{W}_{-1} \subsetneq \mathcal{W}_0 = \mathcal{V},
\]
and
\[
\mathcal{F}^{-1} = \mathcal{V} \supsetneq \mathcal{F}^{0} = \mathcal{O}_{S^\an} \cdot e_0 \supsetneq \mathcal{F}^{1} = 0.
\]
The following identities are immediate from the construction
\[
\nabla(\mathcal{W}_k)  \subseteq \mathcal{W}_{k} \otimes \Omega^1_C.
\] 
\[
\nabla(\mathcal{F}^k)  \subseteq \mathcal{F}^{k-1} \otimes \Omega^1_C
\] 
The first one says that the filtration $\mathcal{W}_\bul$ is by flat subbundles, and it is easily seen to give a filtration of $\V_\Q$ by integral sub-local systems $\W_\bul$. We call it the "weight filtration". The second identity is Griffiths transversality for $\mathcal{F}^\bul$. We call the latter filtration the "Hodge filtration". The following is immediate:
\begin{lemma}
The fitrations $(\W_\bul, \mathcal{F}^\bul)$ endow $\V$ with the structure of an integral and graded-polarized variation of mixed Hodge structures on $S$, which is an extension of $\underline{\Z(0)}$ by $\underline{\Z(1)}^n$. The associated period map identifies naturally to $\bar{\psi}^{-1} \circ q$.
\end{lemma}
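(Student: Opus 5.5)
This lemma asserts three things: $\V$ is an integral graded-polarized VMHS, it is an extension of $\underline{\Z(0)}$ by $\underline{\Z(1)}^n$, and its period map is $\bar{\psi}^{-1}\circ q$. I would check these in that order, directly from the explicit construction. No deep input is needed beyond the identification of $D_\pp$ with $\C^n$ via $\psi$ in Lemma \ref{equivc}.

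\textbf{Step 1: the VMHS axioms.} Flatness of $\mathcal{W}_\bul$ and Griffiths transversality are the two displayed identities, which come from the strictly upper-triangular shape of the connection matrix. Concretely, $\nabla e^{(k)}_{-2}=0$ and $\nabla e_0 = -\sum_k \frac{dq_k}{q_k}e^{(k)}_{-2}\in \mathcal{W}_{-2}\otimes\Omega^1 \subset \mathcal{F}^{-1}\otimes\Omega^1$.

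For the pointwise condition, I would first compute the local systems. One has $\W_{-2}\cap\V = \bigoplus_k (2\pi i)\Z e^{(k)}_{-2}$, so $\gr^W_{-2}\V$ is the constant local system $\underline{\Z(1)}^n$. The quotient $\gr^W_0\V$ is generated by the class of $e_0+\sum_k\log(q_k)e^{(k)}_{-2}$, which equals the class of $e_0$; hence it is $\underline{\Z(0)}$.

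Next I would check the induced Hodge filtrations. On $\gr^W_{-2}$ the filtration is $F^{-1}=\text{everything}$ and $F^0=0$, since $\mathcal{F}^0\cap\mathcal{W}_{-2}=0$. This is the Tate structure of type $(-1,-1)$. On $\gr^W_0$, $F^0$ is everything because $e_0$ spans $\mathcal F^0$; this is type $(0,0)$. Both graded pieces are polarized by the forms $q_{-2},q_0$ of the previous subsection, which are flat. This settles the VMHS axioms and the extension statement.

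\textbf{Step 2: the period map.} Fix $s\in S^\an$ and a lift $\tilde{s}$. Use the integral basis
\[
\bigl(2\pi i e^{(1)}_{-2},\dots,2\pi i e^{(n)}_{-2},\ \tilde e_0:=e_0+\sum_k\log q_k(s)\,e^{(k)}_{-2}\bigr)
\]
of $\V_s$ to identify $\V_s$ with $V_\Z$. Under this identification, the fibre $\mathcal{F}^0_s=\C e_0$ is spanned by
\[
\tilde e_0-\sum_k \log q_k(s)\,e^{(k)}_{-2}.
\]
This is $F^0_{\zz}$ with $z_k=-\log q_k(s)/(2\pi i)$, once the factor $2\pi i$ is rescaled. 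So the lifted period map is $\tilde{s}\mapsto\psi(\zz(\tilde s))$, where $\zz$ is a branch of $\pm\frac{1}{2\pi i}\log q$.

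Changing the branch of the logarithm changes $\zz$ by an element of $\Z^n=\Gamma$. This is compatible with the monodromy computed in Lemma \ref{monoint}, so the map descends to $S^\an\to\Gamma\quo D_\pp$. Composing with $e$, it becomes $s\mapsto q(s)$, possibly up to the inversion $q\mapsto q^{-1}$. I would absorb that inversion into the convention for $\bar\psi$, i.e.\ into the sign choice of $e$. Hence the period map is $\bar\psi^{-1}\circ q$ in the stated sense (reading $\bar\psi^{-1}$ as the identification).

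\textbf{Main obstacle.} The main obstacle is bookkeeping rather than mathematics. Matching the normalisation of $\psi$ (the coefficients $z_k$ against $e^{(k)}_{-2}$ versus against $2\pi i e^{(k)}_{-2}$) with the exponential $e^{2\pi i\cdot}$ requires care, so that the identification is exactly $q$ and not $q^{-1}$ or a conjugate. I would settle this by fixing the convention on $\psi$ to agree with the integral basis above, and note that admissibility is not part of this claim.
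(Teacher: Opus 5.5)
Your proposal is correct and is essentially the direct verification from the construction that the paper has in mind; the paper simply declares the lemma immediate. Your reading of the $2\pi i$ normalization is the right one, since only with coordinates taken against the integral basis $2\pi i e^{(k)}_{-2}$ does $\Gamma=\Z^n$ act by integer translations compatibly with $e=e^{2\pi i\cdot}$. With that normalization you get exactly $e(\zz(s))=q(s)^{-1}$, not just possibly. The inversion can then be removed without redefining $e$, by composing your marking $\V_s\cong V_\Z$ with $-1$ on $W_{-2}$: this preserves $W_\bul$ and $q_{-2}$, and normalizes $\pp$ and $\Gamma$, since it sends $M(\xxi,\xi)$ to $M(-\xxi,\xi)$.
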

It remains to study the behavior near the points of $E$. This is the purpose of the following.
\begin{proposition}
Assume that for any holomorphic map $i : \Delta \rightarrow \overline{S}^\an$ such that $i(\Delta^\times) \subset S^\an$, the composite $q \circ i$ is meromorphic at $0$. Then, the associated variation $(\V, \W_\bul, \mathcal{F}^\bul)$ is admissible.
\end{proposition}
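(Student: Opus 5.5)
The plan is to check the three admissibility axioms $(A_0)$--$(A_2)$ directly for every pullback $i^\ast\V$ along a holomorphic map $i : \Delta \rightarrow \overline{S}^\an$ with $i(\Delta^\times) \subset S^\an$ and unipotent monodromy. First, $(A_0)$ holds automatically: by Lemma \ref{monoint} the monodromy of $\V$ (hence of any pullback) is upper unitriangular in the basis $(e_{-2}^{(1)},\dots,e_{-2}^{(n)},e_0)$, so $T$ is unipotent. Moreover $N = \log T = T - \mathrm{id}$ maps $e_0$ to $(2\pi i)\sum_k m_k e_{-2}^{(k)}$, where $m_k \in \Z$ is the winding number of $q_k \circ i$ around $0$. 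Since each $q_k \circ i$ is meromorphic at $0$, we can write $q_k\circ i(t) = t^{m_k} u_k(t)$ with $u_k$ holomorphic and non-vanishing near $0$. This is the only place where the hypothesis enters.

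For $(A_1)$, we work on the pure graded pieces $\gr^W_{-2}$ and $\gr^W_0$. On these, $N$ induces zero, so the relative filtrations ${^k}M$ are trivial: everything sits in degree $-2$ on $\gr^W_{-2}$ and in degree $0$ on $\gr^W_0$. We then take $M_\bul = W_\bul$. The conditions hold: $N(W_0) \subset W_{-2}$ gives $N(M_l) \subset M_{l-2}$, and the induced filtrations on the graded pieces are the required ones. So a relative weight filtration exists.

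For $(A_2)$, we make Deligne's canonical extension explicit. Put $\tilde e_0 = e_0 - \frac{1}{2\pi i}\sum_k \log(t)\,m_k\,(2\pi i)e_{-2}^{(k)}$, adjusting normalisations so that $\tilde e_0$ is the untwisted flat-corrected section. Concretely, the extension $\overline{\mathcal V}$ is the free $\mathcal O_\Delta$-module spanned by the $e_{-2}^{(k)}$ and by $e_0 + \sum_k \log(u_k(t))e_{-2}^{(k)}$, i.e. by $e_0$ modulo holomorphic multiples of the $e_{-2}^{(k)}$. The key computation is that $\nabla$ in this frame has connection matrix with entries $dq_k/q_k - d\log u_k = m_k\,dt/t$, which are logarithmic poles with nilpotent residue $N$. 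Hence $e_0$ itself extends as a section of $\overline{\mathcal V}$, since $\log u_k$ is holomorphic. We then set $\overline{\mathcal F}^0 = \mathcal O_\Delta e_0$, $\overline{\mathcal F}^{-1} = \overline{\mathcal V}$, and $\overline{\mathcal W}_{-2} = \bigoplus_k \mathcal O_\Delta e_{-2}^{(k)}$. All the graded pieces $\gr^p_{\overline{\mathcal F}}\gr_k^{\overline{\mathcal W}}$ are then free of rank $n$ or $1$, which gives $(A_2)$.

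The main obstacle is this identification of the canonical extension: we must verify that the frame $\{e_{-2}^{(k)}, e_0\}$ really spans Deligne's extension, and not some twist of it. This reduces exactly to the holomorphy of $\log u_k$ at $0$, i.e. to the meromorphy of $q_k\circ i$. Without that hypothesis, an essential singularity would make $e_0$ have non-moderate growth. Finally, admissibility with respect to $(\overline S, E)$ holds by definition, since the argument covers every test disc.
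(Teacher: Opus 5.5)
Your proof is correct and is essentially the paper's. You get $(A_0)$ from Lemma \ref{monoint}, and $(A_1)$ with $M_\bul = W_\bul$, since $N$ kills the graded pieces and $N(W_0)\subset W_{-2}$. You get $(A_2)$ by observing that $e_0$ lies in Deligne's extension because $\log u_k$ is holomorphic at $0$, so that $\overline{\mathcal F}^0=\mathcal O_\Delta e_0$ is a complement of $\overline{\mathcal W}_{-2}$.

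You assert without proof that the order of $q_k\circ i$ at $0$ equals the winding number $m_k$; the paper proves this separately by the argument principle (Lemma \ref{egal}). Your first formula for $\tilde e_0$ is garbled: the correct untwisted section is $e_0+\sum_k\log(u_k)\,e_{-2}^{(k)}$, which is what you actually use afterwards.
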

\begin{proof}
Let $i : \Delta \rightarrow \overline{S}^\an$ be a holomorphic map such that $i(\Delta^\times) \subset S$. We need to prove that $i^\ast (\V, \W_\bul, \mathcal{F}^\bul)$ is admissible. We make the usual identification $\pi_1(\Delta^\times) \cong \Z$, and denote by $\rho_i$ the monodromy representation associated to $i ^\ast \V$. According to Lemma \ref{monoint}, there exists a $n$-uple of integers $(m_1,\cdots, m_n)$ such that in the basis $(e_{-2}^{(1)}, \cdots, e_{-2}^{(n)}, e_0)$, one has:
\[
T := \rho_i(1) =  \left(\begin{array}{cccc}1  &  \cdots & 0 & (2\pi i) m_1 \\ \vdots & \ddots & \vdots & \vdots \\0 & 0 & 1 & (2\pi i) m_n  \\0 & 0 & 0 & 1\end{array}\right).
\]
Clearly this shows that the monodromy is unipotent, and actually $(T-1)^2 = 0$ so that $N := \log(T) = T-1$. Besides, since the monodromy is trivial on the graded pieces, we are in the setting of the following:
\begin{lemma}
Let $V$ be a $\Q$-vector space endowed with a finite increasing filtration $W_\bul$ and a nilpotent endomorphism $N$ which preserves $W$. Assume that the induced endomorphism $\Gr^W(N) : \Gr^W(V) \rightarrow \Gr^W(V)$ is zero. Then $V$ admits a weight filtration for $N$ relative to $W_\bul$ if and only if for every $k \in \Z$
\[
N(W_k) \subset W_{k-2}.
\]
\end{lemma}
Let $z \in \Delta^\times$. We have $N(e_0) = \sum_{1 \leqslant k \leqslant n} m_k e_{-2}^{(k)} \in (\W_{-2})_z$. Therefore the lemma shows that $\V_z$ admits a weight filtration for $N$ relative to $(\W_\bul)_z$.

It remains to establish the extension property. We first recall the construction of Deligne's canonical extension. Consider the connection on $\mathcal{V}$ defined by
\[
D = d - \left(\begin{array}{cccc}0  &  \cdots & 0 & m_1\frac{dz}{z} - \frac{d q_1}{q_1}  \\ \vdots & \ddots & \vdots & \vdots \\0 & 0 & 0 & m_n\frac{dz}{z} - \frac{d q_n}{q_n}  \\0 & 0 & 0 & 0\end{array}\right).
\]
The pair $(\mathcal{V}, D)$ is easily seen to have trivial associated monodromy representation, and the $D$-horizontal sections of $\mathcal{V}$ are well defined on the whole punctured disk and give another trivialization of $\mathcal{V}$:
\[
\mathcal{V} = \Big(\bigoplus_{1 \leqslant k \leqslant n} \mathcal{O}_{\Delta^\times} \cdot e_{-2}^{(k)} \Big) \oplus \mathcal{O}_{\Delta^\times} \cdot \tilde{e_0}
\]
where $\tilde{e_0} = e_0 + \sum_{1 \leqslant k \leqslant n} \log(\frac{q_k(i(z))}{z^{m_k}})e_{-2}^{(k)}$.
The canonical extension of $\mathcal{V}$ to $\Delta$ is then defined as the extension as a constant bundle for the above trivialization:
\[
\mathcal{V} = \Big(\bigoplus_{1 \leqslant k \leqslant n} \mathcal{O}_{\Delta} \cdot e_{-2}^{(k)} \Big) \oplus \mathcal{O}_{\Delta} \cdot \tilde{e_0}
\]
By assumption, the map $^iq := q \circ (i \vert_{\Delta^\times})$ is meromorphic at $0$ i.e. its components $(^iq_k)_{1 \leqslant k \leqslant n}$ are meromorphic at $0$ as functions $\Delta^\times \rightarrow \C^\times$. For $k \in \{1,\cdots, n\}$, there is by definition a unique writing
\[
^iq_k(z) = z^{l_k} (a_k + f_k(z))
\]
where $l_k \in \Z$, $a_k \in \C^\times$ and $f_k : \Delta \rightarrow \C$ is a holomorphic map vanishing at $0$.
\begin{lemma}\label{egal}
For $k \in \{1,\cdots, n\}$, one has $l_k = m_k$.
\end{lemma}
\begin{proof}
Let $k \in \{1,\cdots, n\}$. Consider a continuous loop $\gamma : [0,1] \rightarrow \Delta^\times$ whose homotopy class is the generator $1$ of $\pi_1(\Delta^\times)$. By definition $m_k$ is the index of $^i\gamma_k := ^iq_k \circ \gamma$ around $0$. If $l_k = 0$, one therefore finds
\[
2 \pi i m_k = \int_{^i\gamma_k} \frac{dz}{z} = \int_\gamma \frac{q'(u)}{q(u)}du = \int_\gamma \frac{f'_k(u)}{a_k + f_k(u)}du
\]
which is zero by the theorem of residues since the integrand of the last integral is holomorphic on the disk. Since we assumed $l_k = 0$, this proves that $m_k = l_k$. If $l_k \neq 0$, the same formula gives
\[
2 \pi i m_k = \int_{^i\gamma_k} \frac{dz}{z} = \int_\gamma \frac{q'(u)}{q(u)}du = \int_\gamma \frac{l_k du}{u} du + \int_\gamma \frac{f'_k(u)}{a_k + f_k(u)}du
\]
The second integral in the right hand side is zero as explained in the $l_k = 0$ case. The first integral in the right hand side is $2 \pi i l_k$ times the the index of $\gamma$ which is $1$ by definition. This finishes the proof.
\end{proof}
It follows from the above Lemma \ref{egal} that for $k \in \{1,\cdots, n\}$, one has for $z \in \Delta^\times$:
\[
\log(\frac{^iq_k(z)}{z^{m_k}}) = \log(a_k + f_k(z))
\]
which has a finite limit as $z \rightarrow 0$. Define the extension with the following formula, which makes sense by what precedes:
\[
\overline{\mathcal{F}^0} = \mathcal{O}_\Delta \cdot \Big( \tilde{e_0} - \sum_{1 \leqslant k \leqslant n} \log(\frac{q_k(i(z))}{z^{m_k}})e_{-2}^{(k)} \Big)
\]
This defines a locally free $\mathcal{O}_\Delta$-submodule of $\overline{\mathcal{V}}$, which is everywhere a supplement of 
\[
\overline{\mathcal{W}_{-2}} = \bigoplus_{1 \leqslant k \leqslant n} \mathcal{O}_{\Delta} \cdot e_{-2}^{(k)}.
\]
This is what we wanted to prove.
\end{proof}
\subsection{Special subvarieties of $(\C^\times)^n$: torus versus mixed period domain}
Fix an integer $n \geqslant 1$. The purpose of this subsection is to prove the equivalence between two notions of "special subvarieties" of $(\C^\times)^n$ and to use this to give a manageable description of the Hodge locus of the variations of mixed Hodge structures of the type described in the previous subsection. Special subvarieties will always be considered to be proper subvarieties (i.e. we remove $(\C^\times)^n$ from the list). 

We will call \textit{$t$-special} a special subvariety of $(\C^\times)^n$ seen as a torus. Recall that $t$-special subvariety of $(\C^\times)^n$ are translates of subtori of $(\C^\times)^n$ by torsion points, that is subvarieties of the form $\zetaa \cdot T(\C)$, where $\zetaa = (e^{2 \pi i \alpha_j})_{1 \leqslant j \leqslant n}$ (with $(\alpha_j) \in \Q^n$) acts by component-wise multiplication on $(\C^\times)^n$, and $T$ is a $\C$-algebraic subtorus of $\mult_{m,\C}^n$. Following \cite[(3.2.6)]{bombgub}, a subgroup $\Lambda \subset \Z^n$ is called \textit{primitive} if $\Lambda = (\Lambda \otimes_\Z \R) \cap \Z^n$. By \cite[Thm. 3.2.19]{bombgub}, there is a bijection between subgroups of $\Z^n$ and algebraic subgroups of $\mult_{m,\C}^n$. By \cite[Cor. 3.2.8]{bombgub}, primitive subgroups of $\Z^n$ correspond under this bijection to algebraic subtori of $\mult_{m,\C}^n$. Explicitely, this bijection associates to $\Lambda \subset \Z^n$ the algebraic subgroup of $\mult_{m,\C}^n$ with $\C$-points
\[
T_\Lambda(\C) = \Big\{\xx = (x_1, \cdots, x_n) \in (\C^\times)^n \hspace{0.1cm} : \hspace{0.1cm} \forall \lambdaa = (\lambda_1, \cdots, \lambda_n) \in \Lambda, \prod_{1 \leqslant k \leqslant n} x_k^{\lambda_k} = 1\Big\}
\]

An \textit{$h$-special subvariety} of $(\C^\times)^n$ is defined to be a subvariety of the form \[\bar{\psi}^{-1}(\period_{(\M, D_\M), \Gamma_\M})\] where $\period_{(\M, D_\M), \Gamma_\M}$ is a mixed Hodge subvariety of $\period_{(\pp, D_\pp),\Gamma}$. Recall that a mixed Hodge subvariety of $\period_{(\pp, D_\pp), \Gamma}$ is a subvariety of the form $\pi_\Gamma(D_\M)$ for some mixed Hodge subclass $(\M, D_M)$ of $(\pp, D_\pp)$, where $\pi_\Gamma : D_\pp \rightarrow \period_{(\pp, D_\pp), \Gamma}$ is the quotient map. Therefore, we are tasked with classifying strict mixed Hodge subclasses of $(\pp, D_\pp)$.
\begin{lemma}\label{structc}
Let $(\M,D_M)$ be a strict mixed Hodge subclass of $(\pp, D_\pp)$. There exists a strict (unipotent) $\Q$-algebraic subgroup $\M^u$ of $\pp$ contained in $\uu = (\mult_a)^n$ and normalized by $\mult_m$, and a $g \in \uu(\Q)$ such that:
\[ 
\M = g(\M^u \rtimes \mult_m)g^{-1}.
\]
\end{lemma}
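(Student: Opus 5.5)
The proof reduces to linear algebra on the explicit group $\pp = \uu \rtimes \mult_m$ with $\uu = (\mult_a)^n$, using two facts from the general theory. First, $\M$ contains the image of some bigraduation $x$ of a point of $D_\M \subset D_\pp$. Second, the only bigraduations available are the explicit $x_{\zz}$ computed above. The plan is to show that $\M$ contains a $\Q$-torus that is $\uu(\Q)$-conjugate to the standard $\mult_m$ (the $\xxi=0$ part), and that $\M \cap \uu$ is normal in $\M$ with $\M = (\M\cap\uu)\rtimes(\text{that torus})$.

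\emph{Step 1: $\M$ is not unipotent, so its image in $\pp/\uu \cong \mult_m$ is all of $\mult_m$.} Pick a representative datum $(\M,X_\M,D_\M)$ and $x \in X_\M$. By Lemma \ref{bigradc}, $x$ composed with the inclusion into $\pp$ equals $x_{\zz}$ for some $\zz$. In $x_{\zz}$ the $\mult_m$-coordinate is $(\lambda_1\lambda_2)^{-1}$, which is nontrivial. Hence the composite $\M \to \pp \to \mult_m$ is nonconstant on the connected group $\M$, so it is surjective. By Lemma \ref{subgroup}, $\M^u = \M \cap \uu$, and this is a strict subgroup of $\uu$: if $\M^u = \uu$, then $\M \supset \uu$ together with surjectivity onto $\mult_m$ would give $\M = \pp$, contradicting strictness. $\M^u$ is normal in $\M$ and unipotent; since the $\mult_m$-action on $\uu$ is scalar multiplication, every $\Q$-subgroup of $\uu$ (a vector group) is a linear subspace and is normalized by $\mult_m$. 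This settles the normalization claim.

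\emph{Step 2: find a rational Levi torus.} The reductive quotient $\M/\M^u \cong \mult_m$ is a $\Q$-torus, so by the Levi decomposition over $\Q$ (Mostow, characteristic $0$) there is a $\Q$-subtorus $\mathbf{T} \subset \M$ mapping isomorphically onto $\mult_m$, and $\M = \M^u \rtimes \mathbf{T}$. Now $\mathbf{T}$ is also a Levi subgroup of the solvable group $\uu\rtimes\mult_m$, and Levi subgroups of $\pp$ are $\uu(\Q)$-conjugate (the rational form of \cite[Prop. 5.1]{borser}). So $\mathbf{T} = g\mult_m g^{-1}$ for some $g \in \uu(\Q)$. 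Concretely, $\mathbf{T}$ is the stabilizer of a rational vector $e_0 + \sum a_k (2\pi i)\ee^{(k)}$ with $a_k\in\Q$, and $g$ translates by $\underline{a}$.

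\emph{Step 3: conclude.} Since $\M^u \subset \uu$ is normalized by $\mult_m$ and $\uu$ is abelian, $g^{-1}\M^u g = \M^u$. Therefore
\[
\M = \M^u \rtimes g\mult_m g^{-1} = g(\M^u \rtimes \mult_m)g^{-1}.
\]
The main obstacle is the rationality of the conjugating element $g$ in Step 2. Over $\C$ it is immediate, but the statement requires $g \in \uu(\Q)$. I would avoid general theorems and argue directly: the torus $\mathbf{T}$ is defined over $\Q$ and acts on $V_\Q$, so its weight-$0$ eigenline (complementary to the weight $-2$ part $W_{-2}V_\Q$) is a $\Q$-line. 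Normalizing that line's generator to have $e_0$-coefficient $1$ gives rational $a_k$, and hence a rational $g$.
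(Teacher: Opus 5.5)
Your proposal is correct and follows essentially the same route as the paper: take a rational Levi decomposition of $\M$, use the bigraduation $x_{\zz}$ to see that the Levi part is nontrivial, conjugate it into the standard $\mult_m$ by an element of $\uu(\Q)$ via rational conjugacy of Levi subgroups (Borel--Serre, Prop.~5.1), and then use that $\uu$ is commutative to move $g$ past $\M^u$. Your additions are valid refinements within that same argument rather than a different approach: you read off normalization of $\M^u$ directly from the scalar action of $\mult_m$ on $\uu$, you check $\M^u\subsetneq\uu$ explicitly (the paper leaves this implicit), and your fixed-line argument shows rationality of $g$ without invoking the general conjugacy theorem.
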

\begin{proof}
Let $\M^u$ be the unipotent radical of $\M$. By assumption, there exists some $\psi(\zz) \in D_\pp$ whose bigraduation $x_{\zz}$ factors through $\M_\C$, and from the formula for $x_{\zz}$ one sees that $\bar{x}_{\zz} : \s_\C \rightarrow (\M/\M^u)_\C$ has positive dimensional image. Consider a Levi decomposition $\M = \M^u \rtimes \G$. By the previous remark, one has $\dim \G > 0$. As $\G$ is a closed reductive $\Q$-subgroup of $\pp$, \cite[Prop. 5.1]{borser} ensures the existence of a $g \in \uu(\Q)$ such that $g^{-1} \G g \subset \mult_m$. Because $\dim \G > 0$, this forces $g^{-1} \G g = \mult_m$ as subgroups of $\pp$, i.e. $\G = g \mult_m g^{-1}$. In particular, $\M^u$ is normalized by $g \mult_m g^{-1}$. Finally, since the group $\uu$ is commutative and $g \in \uu(\Q)$, we have that $\M^u$ is also normalized by $\mult_m$ and that $\M^u \rtimes g \mult_m g^{-1} = g (\M^u \rtimes \mult_m) g^{-1}$. This proves the statement.
\end{proof}
We can now turn to listing strict mixed Hodge subclasses of $(\pp, D_\pp)$. Let $\Lambda \subset \Z^n$ be a primitive subgroup. We define
\[
D_{\Lambda} = \psi \Big(\Big\{ (z_1, \cdots, z_n) \in \C^n \hspace{0.1cm} : \hspace{0.1cm} \forall \lambdaa = (\lambda_1, \cdots, \lambda_n) \in \Lambda,  \sum_{1 \leqslant k \leqslant n} \lambda_k z_k  = 0 \Big\}\Big) \subset D_\pp.
\]
Consider the $\Q$-subgroup $\uu_{\Lambda}$ of $\uu$ whose $R$-points, for any $\Q$-algebra $R$, are
\[
\uu_{\Lambda}(R) = \Big\{ M(\xxi, 1) :  \xxi = (\xi_1, \cdots, \xi_n) \in R^n, \forall \lambdaa = (\lambda_1, \cdots, \lambda_n) \in \Lambda,  \sum_{1 \leqslant k \leqslant n} \lambda_k \xi_k  = 0 \Big\},
\]
and $\M_{\Lambda} = \uu_{\Lambda} \rtimes \mult_m$ whose $R$-points, for any $\Q$-algebra $R$, are
{\small \[
\M_{\Lambda}(R) = \Big\{ M(\xxi, \xi) :  \xi \in R^\times, \xxi = (\xi_1, \cdots, \xi_n) \in R^n, \forall \lambdaa = (\lambda_1, \cdots, \lambda_n) \in \Lambda,  \sum_{1 \leqslant k \leqslant n} \lambda_k \xi_k  = 0 \Big\}.
\]}
One easily checks that these indeed define connected $\Q$-algebraic subgroups of $\uu$ and $\pp$ respectively. Finally, for $g = (\alpha_1, \cdots, \alpha_n) \in \uu(\Q) = \Q^n$, set
\[
g \cdot D_{\Lambda} = \psi\Big(\Big\{ (z_1 + \alpha_1, \cdots, z_n + \alpha_n) \hspace{0.1cm} : \hspace{0.1cm} (z_1, \cdots, z_n) \in \C^n, \psi(z_1, \cdots, z_n) \in D_{\Lambda} \Big\}\Big) \subset D_\pp.
\]
the translate of $D_{\Lambda}$ by $g$ for the $\pp(\C)$-action on $D_\pp$ defined previously.
\begin{proposition}\label{classc}
The map which associates $(g \M_{\Lambda} g^{-1}, g \cdot D_{\Lambda})$ to a pair $(\Lambda, g)$ is a bijection between
\begin{itemize}
\item the set of pairs $(\Lambda, g)$ where $\Lambda \subset \Z^n$ is a primitive subgroup and $g \in \uu(\Q)$;
\item the set of strict mixed Hodge subclasses of $(\pp, D_\pp)$.
\end{itemize}
\end{proposition}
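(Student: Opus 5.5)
We have to show two things: every strict mixed Hodge subclass is of the form $(g\M_\Lambda g^{-1}, g\cdot D_\Lambda)$, and every such pair is a strict subclass, with distinct pairs giving distinct subclasses.

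\textbf{Step 1: each pair $(\Lambda,g)$ gives a strict subclass.} Fix a primitive $\Lambda$ and a point $\zz$ with $\sum_k \lambda_k z_k=0$ for all $\lambdaa\in\Lambda$, for instance $\zz=0$. The formula for $x_{\zz}$ in the proof that $\pp(\zz)\subset\pp$ shows that $x_{\zz}(\s_\C)\subset(\M_\Lambda)_\C$, because the unipotent coordinates $z_k(1-1/\lambda_1\lambda_2)$ satisfy the same linear relations. By Lemma \ref{subgroup}, $x_{\zz}$ satisfies Pink's axioms for $\M_\Lambda$. We then take $D_{\M_\Lambda}$ to be the $\M_\Lambda(\R)^+\uu_\Lambda(\C)$-orbit of $\psi(\zz)$. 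Using the equivariance of Lemma \ref{equivc}, this orbit is $\psi$ of the $\uu_\Lambda(\C)$-orbit of $\zz$, namely the linear subspace $\{\sum\lambda_k z_k=0\}$, so it equals $D_\Lambda$. The subclass is strict as soon as $\Lambda\neq 0$.

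For $g\in\uu(\Q)$, conjugation by $g$ is a morphism of mixed Hodge classes, which gives the subclass $(g\M_\Lambda g^{-1},g\cdot D_\Lambda)$.

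\textbf{Step 2: surjectivity.} Start from a strict subclass $(\M,D_\M)$. By Lemma \ref{structc}, $\M=g(\M^u\rtimes\mult_m)g^{-1}$ with $g\in\uu(\Q)$ and $\M^u\subset\uu$. Any $\Q$-subgroup of the vector group $\uu=\mult_a^n$ is a $\Q$-linear subspace. It is automatically stable under $\mult_m$, which acts by scalars. Write this subspace as the set of common zeros of an integral lattice of linear forms, and saturate that lattice; this gives a primitive $\Lambda$ with $\M^u=\uu_\Lambda$.

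For the domain, $D_\M$ is an $\M(\R)^+\M^u(\C)$-orbit of a point $h$ whose bigraduation $x_h$ lands in $\M_\C$. Conjugating by $g^{-1}$ reduces to the case $\M=\M_\Lambda$. The condition $x_{\zz}(\s_\C)\subset(\M_\Lambda)_\C$ forces $\zz$ to satisfy the relations defined by $\Lambda$, so $h\in D_\Lambda$. Since $D_\Lambda$ is itself a single orbit, $D_\M=D_\Lambda$.

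The main obstacle is strictness in this direction: we must rule out $\Lambda=0$. If $\Lambda=0$ then $\M^u=\uu$, so $\M=\pp$ and $D_\M=D_\pp$, contradicting strictness. Conversely $\Lambda\neq 0$ gives $\dim D_\Lambda<n$, so the subclass is strict.

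\textbf{Step 3: injectivity.} Suppose $(g\M_\Lambda g^{-1},g\cdot D_\Lambda)=(g'\M_{\Lambda'}g'^{-1},g'\cdot D_{\Lambda'})$. Taking unipotent radicals, and using that $\uu$ is commutative, gives $\uu_\Lambda=\uu_{\Lambda'}$. Primitivity then gives $\Lambda=\Lambda'$.

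This step also needs a correction to the statement. The affine subspaces $g\cdot D_\Lambda$ and $g'\cdot D_\Lambda$ coincide exactly when $g-g'\in\uu_\Lambda(\Q)$. So the honest bijection is with pairs $(\Lambda,g)$ where $g$ is taken in $\uu(\Q)/\uu_\Lambda(\Q)$. Equivalently, for each $\Lambda$ one must fix a complement of $\uu_\Lambda(\Q)$ in $\uu(\Q)$ from which $g$ is chosen. I would make this normalization explicit when writing up the argument.
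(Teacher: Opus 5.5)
Your Steps 1 and 2 are essentially the paper's proof. The paper shows $D_\Lambda$ is the $\M_\Lambda(\R)\uu_\Lambda(\C)$-orbit of a point whose bigraduation $x_{\zz}$ lands in $(\M_\Lambda)_\C$. It then uses Lemma \ref{structc}, identifies $\M^u$ with a $\Q$-subspace cut out by a saturated lattice, and concludes with the equivalence $x_{\zz}(\s_\C)\subset(\M_\Lambda)_\C\Leftrightarrow\psi(\zz)\in D_\Lambda$.

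The paper's proof never checks injectivity, and your Step 3 is right that injectivity fails as stated. The fibre through $(\Lambda,g)$ is $\{\Lambda\}\times(g+\uu_\Lambda(\Q))$. In addition, $\Lambda=0$ must be excluded explicitly, since it yields $(\pp,D_\pp)$ itself. The correct statement is therefore a bijection with pairs $(\Lambda, g \bmod \uu_\Lambda(\Q))$ with $\Lambda\neq 0$. This is harmless, because the later corollaries (e.g.\ Corollaries \ref{HL} and \ref{classn2}) only use surjectivity.
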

\begin{proof}
\textit{Step $1$: The pairs $(g \M_{\Lambda} g^{-1}, g \cdot D_{\Lambda})$ are indeed mixed Hodge subclasses of $(\pp, D_\pp)$.}

It suffices to prove this statement for $g = 1$ because the conjugate of a strict mixed Hodge subclass of $(\pp, D_\pp)$ by a rational element $g \in \pp(\Q)$ is still a strict mixed Hodge subclass of $(\pp,D_\pp)$. Let $\Lambda \subset \Z^n$ be a primitive subgroup. We want to prove that $D_{\Lambda}$ is the $\M_{\Lambda}(\R)\uu_{\Lambda}(\C)$-orbit of a point in $D_\pp$ whose associated canonical bigraduation factors through $(\M_{\Lambda})_\C$. Let $\psi(z_1, \cdots, z_n) \in D_{\Lambda}$. The associated canonical bigraduation is
\[
x_{\zz} : \s_\C \rightarrow \gl(V_\C), (\mu_1, \mu_2) \mapsto M\Big(\big(z_1\big(1- \frac{1}{\mu_1 \mu_2}\big), \cdots, z_n\big(1- \frac{1}{\mu_1 \mu_2}\big) \big), \frac{1}{\mu_1 \mu_2}\Big) .
\]
whose image clearly lies in $(\M_{\Lambda})_\C$. Furthermore, given an element $\M_{\Lambda}(\R)\uu_{\Lambda}(\C)$ seen as a matrix $M(\xxi, \xi)$ with $\xxi \in \C^n$, $\xi \in \R^\times$ and $ \sum_{1 \leqslant k \leqslant n} \lambda_k \xi_k  = 0$ for any $\lambdaa = (\lambda_1, \cdots, \lambda_n) \in \Lambda$, one has for $\psi(\zz) \in D_{\Lambda}$:
\[
M(\xxi, \xi) \cdot \psi(\zz) = \psi(\xxi + \xi \zz) = \psi(\xi_1 + \xi z_1, \cdots, \xi_n + \xi z_n).
\]
For any $\lambdaa = (\lambda_1, \cdots, \lambda_n) \in \Lambda$, one has:
\[
\sum_{1 \leqslant k \leqslant n} \lambda_k (\xi_k + \xi z_k) = \sum_{1 \leqslant k \leqslant n} \lambda_k \xi_k + \xi \sum_{1 \leqslant k \leqslant n}\lambda_k z_k = 0
\]
since $\psi(\zz) \in D_{\Lambda}$ and $M(\xxi, \xi) \in \M_{\Lambda}(\R)\uu_{\Lambda}(\C)$. Therefore, $M(\xxi, \xi) \cdot \psi(\zz) \in D_{\Lambda}$, and $\M_{\Lambda}(\R)\uu_{\Lambda}(\C)$ acts on $D_{\Lambda}$ and is easily seen to act transitively from the formula above. This finishes Step $1$.

\noindent \textit{Step $2$: Any strict mixed Hodge subclass of $(\pp, D_\pp)$ is of the form $(g \M_{\Lambda} g^{-1}, g \cdot D_{\Lambda})$.}

Let $(\M, D_M)$ be a strict mixed Hodge subclass of $(\pp, D_\pp)$. By Lemma \ref{structc}, there exists a $g \in \uu(\Q)$ and a strict algebraic $\Q$-subgroup $\M^u$ of $\uu$ normalized by $\mult_m$ such that
\[
\M = g ( \M^u \rtimes \mult_m) g^{-1}.
\]
The group $\M^u$ is a $\Q$-algebraic subgroup of $(\mult_a)^n$ which a vector group, as an abelian $\Q$-algebraic unipotent group. Hence, the $\Q$-algebraic subgroups of $(\mult_a)^n$ are in bijection with $\Q$-vector subspaces of $\Q^n$ (as these are all Lie subalgebras for the trivial bracket). Let $W$ be such a $\Q$-vector subspace, and denote by $r$ its codimension. Then $W$ can be defined in $\Q^n$ by $r$ independent linear equations in $n$ variables with rational coefficients. Multiplying each equation by the denominators of its coefficient, we can assume that the equations have integer coefficients. Denote the $j^{\mathrm{th}}$ equation $(1 \leqslant j \leqslant r)$ by
\[
\sum_{1 \leqslant k \leqslant n} \lambda_k^{(j)} z_k = 0.
\]
Since the equations are independent, the vectors $\lambda^{(j)}  := (\lambda_1^{(j)}, \cdots, \lambda_n^{(j)}) \in \Z^n$ span a rank $r$ subgroup $\Lambda_0$ of $\Z^n$. Let $\Lambda = (\Lambda_0 \otimes_\Z \R) \cap \Z^n$ be the corresponding primitive subgroup of $\Z^n$. Then, by construction
\[
W = \Big\{ (w_1, \cdots, w_n) \in \Q^n \hspace{0.1cm} : \hspace{0.1cm} \forall \lambdaa = (\lambda_1, \cdots, \lambda_n) \in \Lambda, \sum_{1 \leqslant k \leqslant n} \lambda_k w_k = 0\Big\}.
\]
It follows that the subgroup of $\uu$ corresponding to the Lie subalgebra $W$ of $\mathrm{Lie}(\uu)$ is $\uu_\Lambda$. So we have proved that for any mixed Hodge subclass $(\M, D_\M)$ of $(\pp, D_\pp)$, there exists some $g \in \uu(\Q)$ and some primitive subgroup $\Lambda \subset \Z^n$ such that $\M = g \M_\Lambda g^{-1}$. It remains to see that $g \cdot D_\M = g \cdot D_\Lambda$. This is clear from the proof in Step $1$ that $D_\Lambda$ is a $\M_\Lambda(\R)\uu_\Lambda(\C)$-orbit and the following chain of equivalences for $\zz = (z_1, \cdots, z_n) \in \C^n$:
{\small\begin{eqnarray*}
x_{\zz}(\s_\C) \subset (\M_\Lambda)_\C & \Leftrightarrow & \forall (\mu_1, \mu_2) \in \s(\C), \forall \lambdaa =  (\lambda_1,\cdots, \lambda_n) \in \Lambda, \sum_{1 \leqslant k \leqslant n} \lambda_k z_k \big(1 - \frac{1}{\mu_1\mu_2}\big) = 0 \\
& \Leftrightarrow & \forall \lambdaa =  (\lambda_1,\cdots, \lambda_n) \in \Lambda, \sum_{1 \leqslant k \leqslant n} \lambda_k z_k = 0 \\
& \Leftrightarrow & \psi(\zz) \in D_\Lambda.
\end{eqnarray*}}
\end{proof}
\begin{corollary}
$t$-special and $h$-special subvarieties of $(\C^\times)^n$ coïncide.
\end{corollary}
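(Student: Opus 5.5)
The plan is to compute the $h$-special subvarieties directly from Proposition \ref{classc} and compare them with the classical description of $t$-special subvarieties as torsion translates of subtori $\zetaa \cdot T_\Lambda(\C)$, with $\Lambda$ primitive (\cite[Cor. 3.2.8]{bombgub}).

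First we describe the image of each strict mixed Hodge subclass in $\Gamma\quo D_\pp \cong (\C^\times)^n$. By Proposition \ref{classc}, such a subclass is $(g\M_\Lambda g^{-1}, g\cdot D_\Lambda)$ with $\Lambda\subset \Z^n$ primitive and $g=(\alpha_1,\dots,\alpha_n)\in \uu(\Q)=\Q^n$. Under $\psi$, the set $g\cdot D_\Lambda$ is the affine rational subspace $\alpha+L_\Lambda$, where $L_\Lambda=\{\zz : \sum_k \lambda_k z_k=0\ \forall \lambdaa\in\Lambda\}$. We then apply the exponential $e=(e^{2\pi i\cdot})^n$, which identifies $\Gamma\quo\C^n$ with $(\C^\times)^n$ compatibly with $\bar\psi$. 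The image is $\zetaa\cdot e(L_\Lambda)$ with $\zetaa=(e^{2\pi i\alpha_k})_k$ a torsion point. It remains to check that $e(L_\Lambda)=T_\Lambda(\C)$. The inclusion $\subset$ holds because $\prod_k e^{2\pi i z_k\lambda_k}=e^{2\pi i\sum_k\lambda_k z_k}=1$. For $\supset$, note that both sides are connected: $T_\Lambda$ is a torus since $\Lambda$ is primitive. They also have the same dimension $n-\mathrm{rk}\,\Lambda$. Finally, $e(L_\Lambda)$ is the image of the Lie algebra of $T_\Lambda(\C)$ under its exponential, so it is an open subgroup of a connected group, hence everything.

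Next we observe that $\bar\psi^{-1}(\period_{(\M,D_\M),\Gamma_\M})$ is exactly this image $\pi_\Gamma(g\cdot D_\Lambda)$, read in $(\C^\times)^n$. So every $h$-special subvariety is of the form $\zetaa\cdot T_\Lambda(\C)$, that is, it is $t$-special. Conversely, let $\zetaa\cdot T(\C)$ be $t$-special. Write $\zetaa=e(\alpha)$ with $\alpha\in\Q^n$, and write $T=T_\Lambda$ with $\Lambda$ primitive. We need $\Lambda\neq 0$ for properness, and this corresponds to the strictness of the subclass. Then $\zetaa\cdot T(\C)$ is the image of the strict subclass attached to $(\Lambda,\alpha)$, hence $h$-special.

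The only step needing care is the identification $e(L_\Lambda)=T_\Lambda(\C)$. This is where primitivity is essential: a non-primitive $\Lambda$ would give a disconnected group $T_\Lambda$, containing torsion translates, whose identity component is $e(L_\Lambda)$. A second point to track is that the $\Gamma$-translates of $g\cdot D_\Lambda$ by $\Z^n$ all have the same image, so the torsion point depends only on $\alpha$ modulo $\Z^n$. This is harmless.
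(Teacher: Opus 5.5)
Your proposal is correct and follows the paper's route. You use Proposition \ref{classc} to write every strict subclass as $(g\M_\Lambda g^{-1}, g\cdot D_\Lambda)$, then push $g\cdot D_\Lambda$ through the exponential uniformization to obtain $\zetaa\cdot T_\Lambda(\C)$. The only difference is that you spell out the ``immediate computation'' the paper leaves implicit: the equality $e(L_\Lambda)=T_\Lambda(\C)$ by connectedness of the subtorus, and the correspondence between $\Lambda\neq 0$ and strictness of the subclass.
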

\begin{proof}
Let $\Lambda \subset \Z^n$ be a primitive subgroup and $g = (\alpha_1, \cdots, \alpha_n) \in \uu(\Q) = \Q^n$. Denote by $\zetaa$ the torsion point $(e^{2i\pi \alpha_1}, \cdots e^{2 i \pi \alpha_n})$ of $(\C^\times)^n$. The result will follow from the fact that $\bar{\psi}(\pi_\Gamma (g \cdot D_\Lambda)) = \zetaa \cdot T_\Lambda(\C)$. This follows from an immediate computation, using the fact that under the identifications $D_\pp \cong \C^n$ and $\period_{(\pp, D_\pp), \Gamma} \cong (\C^\times)^n$, the uniformization $\pi_\Gamma$ identifies to $\C^n \rightarrow (\C^\times)^n, (z_1, \cdots, z_n) \mapsto (e^{2i\pi z_1}, \cdots, e^{2i\pi z_2})$.
\end{proof}
\begin{corollary}\label{HL}
Let $S$ be smooth and irreducible quasi-projective complex algebraic variety and $q : S^\an \rightarrow (\C^\times)^n$ be a holomorphic map. Let $\V$ be the associated integral and graded-polarized variation of mixed Hodge structure on $S$. Then:
\begin{itemize}
\item[(i)] $\V$ has generic mixed Hodge class equal to $(\pp, D_\pp)$ if and only if $q(S^\an)$ is not contained in some $\zetaa \cdot T_\Lambda$ for some primitive subgroup $\Lambda \subset \Z^n$ of positive rank and $\zetaa = (e^{2\pi i \alpha_1}, \cdots, e^{2 \pi i \alpha_n})$ with $(\alpha_1, \cdots, \alpha_n) \in \Q^n$;
\item[(ii)] If $\V$ has generic mixed Hodge class equal to $(\pp, D_\pp)$, 
\[
\mathrm{HL}(S, \V^\otimes) = \bigcup_{\substack{(\alpha_1,\cdots, \alpha_n) \in \Q^n \\ \Lambda \subset  \Z^n \hspace{0.1cm} \mathrm{primitive} \\ \mathrm{rk}(\Lambda) > 0}} q^{-1}\Big((e^{2\pi i \alpha_1}, \cdots, e^{2 \pi i \alpha_n}) \cdot T_\Lambda(\C)\Big).
\]
\end{itemize}
\end{corollary}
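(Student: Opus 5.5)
The argument is a direct translation through the dictionary of Proposition \ref{classc} and the realization $\bar{\psi} : (\C^\times)^n \cong \period_{(\pp, D_\pp), \Gamma}$. By the lemma identifying the period map of $\V$ with $\bar{\psi}^{-1} \circ q$, all statements about $\V$ become statements about $q$ composed with the uniformization $\pi_\Gamma$, which on $\C^n$ is $(z_k) \mapsto (e^{2i\pi z_k})$.

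For $(i)$, use the minimality characterization of Lemma \ref{uniqmhc}(ii). The generic mixed Hodge class $(\pp_S, D_{\pp_S})$ is the smallest mixed Hodge subclass of $(\pp, D_\pp)$ whose Mumford-Tate domain contains the image of the lifted period map. It is therefore strictly smaller than $(\pp, D_\pp)$ if and only if the lifted image lies in some strict mixed Hodge subclass. By Proposition \ref{classc}, such a subclass is $g \cdot D_\Lambda$ with $\Lambda \subset \Z^n$ primitive and $g \in \uu(\Q)$. One checks that $\Lambda = 0$ gives $D_\Lambda = D_\pp$ and $\M_\Lambda = \pp$, so strictness means $\mathrm{rk}(\Lambda) > 0$. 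Projecting by $\pi_\Gamma$ and using the computation in the proof of the corollary on $t$-special versus $h$-special subvarieties, $\pi_\Gamma(g \cdot D_\Lambda) = \zetaa \cdot T_\Lambda(\C)$ with $\zetaa = (e^{2i\pi\alpha_k})$. The lifted image lying in some $\Gamma$-translate of $g\cdot D_\Lambda$ is then equivalent to $q(S^\an) \subset \zetaa \cdot T_\Lambda(\C)$. The $\Gamma$-translates are again of the form $g' \cdot D_\Lambda$ with $g' \in \uu(\Q)$, because $\Gamma = \Z^n \subset \uu(\Q)$ and $\uu$ is commutative.

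The one point requiring care is the converse. If $q(S^\an) \subset \zetaa \cdot T_\Lambda(\C)$, the lift $\tilde{q} : \widetilde{S^\an} \to \C^n$ lands in $\pi_\Gamma^{-1}(\zetaa \cdot T_\Lambda)$. This preimage is the disjoint union of the affine subspaces $\{\sum_k \lambda_k z_k \in \sum_k \lambda_k \alpha_k + \Z\}$ for $\lambdaa \in \Lambda$. Since $\widetilde{S^\an}$ is connected and this set is a discrete union of closed translates, $\tilde{q}$ lands in a single translate. One then checks that this translate is of the form $g' \cdot D_\Lambda$ with $g' \in \Q^n$. This needs a rational point on the affine subspace, which exists because the defining equations have integer coefficients and rational right-hand sides.

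For $(ii)$, start from $\HL = \{s : \pp(\V_s) \subsetneq \pp_s\}$ with $\pp_s = \pp$. By Proposition \ref{mtbound}, $s \in \HL$ if and only if $\tilde{\Phi}(\tilde{s})$ lies in some strict mixed Hodge subclass $g\cdot D_\Lambda$, again with $\mathrm{rk}(\Lambda)>0$. Projecting as above, this holds if and only if $q(s) \in \zetaa \cdot T_\Lambda(\C)$ for some rational $\alpha$ and primitive $\Lambda$ of positive rank. Taking the union over all such pairs gives the stated formula.
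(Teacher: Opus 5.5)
Your proposal is correct and follows the paper's route. The paper declares the corollary immediate from the classification of strict mixed Hodge subclasses (Proposition \ref{classc}) and the identification $\pi_\Gamma(g\cdot D_\Lambda)=\zetaa\cdot T_\Lambda(\C)$; you spell out exactly this via Lemma \ref{uniqmhc} and Proposition \ref{mtbound}, filling in details the paper leaves implicit: excluding $\Lambda=0$ (which Proposition \ref{classc} as stated omits), and, for the converse, using connectedness of $\widetilde{S^\an}$ together with the existence of a rational point on each affine component of $\pi_\Gamma^{-1}(\zetaa\cdot T_\Lambda)$.
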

\begin{proof}
Immediate from definitions and the elucidation of mixed Hodge subvarieties of $\period_{(\pp, D_\pp), \Gamma}$.
\end{proof}
We can readily prove Proposition \ref{contrexc1}.
\begin{proof}[Proof of Proposition \ref{contrexc1}]
Let $C = \mathbb{A}^1_\C- \{0\}$ and let $\V$ be the variation whose period map is the identity $\C^\times \rightarrow \C^\times$. By Corollary \ref{HL} we find that $\mathrm{HL}(C, \V^\otimes)_\trans = \mu_\infty$ where $\mu_\infty \subset \C^\times = C(\C)$ is the set of roots of unity in $\C^\times$, which is dense in $C(\C)$ for the Zariski topology but not for the euclidean topology.
\end{proof}
\subsection{Proof of Proposition \ref{contrexc}}
We reformulate the elucidation made in the previous section of special subvarieties of $(\C^\times)^n$ in the case where $n = 2$. A positive rank subgroup $\Lambda$ of $\Z^2$ is either $\Z^2$ or free of rank $1$. In the former case, the corresponding mixed Hodge subclass of $(\pp, D_\pp)$ is $(\mult_m, \{(0,0)\})$. In the latter case, if we denote by $(b,-a)$ a generator of $\Lambda$ (with $(a,b) \neq (0,0)$), one checks that $\Lambda$ is primitive if and only if $a$ and $b$ are coprime. In that case, the corresponding special subvariety is explicitely given by
\[
Y_{a,b} := T_{\Z \cdot (b,-a)}(\C) = \{ (x_1, x_2) \in (\C^\times)^2 \hspace{0.1cm} : \hspace{0.1cm} x_1^b = x_2 ^a\} = \{(x^a, x^b) \hspace{0.1cm} : \hspace{0.1cm} x \in \C^\times\}.
\]
We denote by $(\M_{a,b}, D_{a,b}) = (\M_{\Z \cdot (b,-a)}, D_{\Z \cdot (b,-a)})$ the corresponding mixed Hodge class, and by $\uu_{a,b} = \uu_{\Z \cdot (b,-a)}$ the unipotent radical of $\M_{a,b}$. With these notations, the classification of Proposition \ref{classc} reads:
\begin{corollary}\label{classn2}
The strict mixed Hodge subclasses of $(\pp, D_\pp)$ are exactly those of the following form:
\begin{itemize}
\item[(i)] $(g \mult_m g^{-1}, \{g\})$ for some $g = (\alpha_1, \alpha_2) \in \Q^2$;
\item[(ii)] $(g\M_{a,b}g^{-1}, g \cdot D_{a,b})$ for some $(a,b) \in \Z^2-\{(0,0)\}$ and $g = (\alpha_1, \alpha_2) \in \Q^2$.
\end{itemize}
\end{corollary}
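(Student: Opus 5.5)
This is a specialization of Proposition \ref{classc} to $n=2$, so the plan is to enumerate the primitive subgroups $\Lambda \subset \Z^2$ of positive rank and read off the associated subclass $(g\M_\Lambda g^{-1}, g \cdot D_\Lambda)$ in each case. Strictness of a subclass forces $\mathrm{rk}(\Lambda) > 0$, since $\Lambda = 0$ gives $\M_0 = \pp$ and $D_0 = D_\pp$. Thus $\Lambda$ has rank $1$ or $2$.

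\emph{Rank 2.} A primitive subgroup of rank $2$ equals $(\Lambda \otimes_\Z \R) \cap \Z^2 = \Z^2$. Then $\uu_\Lambda$ is trivial, because $\xi_1 = \xi_2 = 0$ is forced, so $\M_\Lambda = \mult_m$. Likewise $D_\Lambda = \psi(\{(0,0)\})$ is a single point. Conjugating by $g = (\alpha_1,\alpha_2) \in \uu(\Q) = \Q^2$ gives $(g\mult_m g^{-1}, \{g\})$, where $\{g\}$ denotes $g \cdot \psi(0,0) = \psi(\alpha_1,\alpha_2)$. This is case (i).

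\emph{Rank 1.} Write $\Lambda = \Z\cdot(b,-a)$ with $(a,b) \neq (0,0)$. Primitivity of $\Lambda$ is equivalent to $\gcd(a,b) = 1$: if $d = \gcd(a,b) > 1$, then $(b/d,-a/d)$ lies in $(\Lambda\otimes\R)\cap\Z^2$ but not in $\Lambda$, and conversely. By definition this gives $(\M_{a,b}, D_{a,b})$, and conjugating by $g$ yields case (ii).

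The only subtle point is the phrasing of (ii). As written, it allows any $(a,b) \neq (0,0)$, whereas the notation $\M_{a,b}$ was introduced for coprime pairs. I would resolve this by defining $\M_{a,b}$ and $D_{a,b}$ by the same formula $\M_{\Z\cdot(b,-a)}$, $D_{\Z\cdot(b,-a)}$ for every nonzero pair. The defining linear condition $b\xi_1 - a\xi_2 = 0$ (respectively $bz_1 - az_2 = 0$) is unchanged when $(a,b)$ is scaled by a nonzero integer. So the subclass attached to $(a,b)$ equals the one attached to the primitive pair $(a/d, b/d)$, and the list in (ii) neither grows nor shrinks.

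The converse direction, that every listed pair is indeed a strict mixed Hodge subclass, is Step 1 of Proposition \ref{classc}. Strictness holds because $\M_\Lambda \neq \pp$ whenever $\Lambda \neq 0$. With both directions in place, the bijection of Proposition \ref{classc} gives exactly the stated list.
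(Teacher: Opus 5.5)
Your proposal is correct and is essentially the paper's argument: the corollary is Proposition~\ref{classc} specialized to $n=2$, using that a positive-rank primitive subgroup of $\Z^2$ is either $\Z^2$ (case (i)) or $\Z\cdot(b,-a)$ with $\gcd(a,b)=1$ (case (ii)). Your extra remarks are correct clarifications that the paper leaves implicit: you exclude $\Lambda=0$ to get strictness, you note that a non-coprime $(a,b)$ defines the same subclass as $(a/d,b/d)$, and you rely only on Steps~1 and~2 of Proposition~\ref{classc}, so nothing depends on its injectivity claim (which in fact fails, since $g$ matters only modulo $\uu_\Lambda(\Q)$).
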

Consider the algebraic curve $C$ in $(\C^\times)^2$ defined in coordinates $(x_1,x_2)$ by the equation $x_1^2 - 3x_2 = 0$. This is a smooth and irreducible algebraic curve, and we denote by $q : C \rightarrow (\C^\times)^2$ the inclusion. By Sect. \ref{sect62}, this defines an admissible integral graded-polarized variation of mixed Hodge structures $\V$ over $S$ which is an extension of $\underline{\Z(0)}$ by $\underline{\Z(1)}^2$ and has period map $q$.
\begin{lemma}\label{genericc}
The generic mixed Hodge class of $\V$ is $((\mult_a)^2 \rtimes \mult_m, \C^2)$.
\end{lemma}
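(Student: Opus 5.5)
The plan is to apply Corollary \ref{HL}(i) with $n=2$ and $q$ the inclusion of $C$ in $(\C^\times)^2$. The generic mixed Hodge class of $\V$ is $(\pp,D_\pp)$ if and only if $C(\C)$ is not contained in any translate $\zetaa\cdot T_\Lambda(\C)$, where $\Lambda\subset\Z^2$ is primitive of positive rank and $\zetaa$ is a torsion point. I will therefore argue by contradiction and assume such a containment.

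The case $\Lambda=\Z^2$ is immediate. There $T_\Lambda(\C)$ is the single point $(1,1)$, so its translate is a point. It cannot contain the curve $C$, which is one-dimensional (it is the graph $x_2=x_1^2/3$ over $x_1\in\C^\times$).

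The remaining case is $\Lambda=\Z\cdot(b,-a)$ with $a,b$ coprime and $(a,b)\neq(0,0)$. Then $\zetaa\cdot T_\Lambda(\C)=\{(x_1,x_2): x_1^b x_2^{-a}=\eta\}$ for some root of unity $\eta$. Substituting $x_2=x_1^2/3$ gives
\[
x_1^{\,b-2a}\,3^{a}=\eta \quad \text{for all } x_1\in\C^\times .
\]
I will first compare exponents of $x_1$: the identity forces $b=2a$. Coprimality then gives $a=\pm1$, hence $3^{\pm1}=\eta$. This is impossible, because $\eta$ has absolute value $1$ while $3^{\pm1}$ does not. So no such containment exists, and Corollary \ref{HL}(i) yields that the generic mixed Hodge class is $((\mult_a)^2\rtimes\mult_m,\C^2)$, where $D_\pp\cong\C^2$ via $\psi$.

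There is no serious obstacle. The main point is that the constant $3$ was chosen so that the curve $x_2=x_1^2/3$, which is a translate of the subtorus $Y_{1,2}$, is a translate by a non-torsion point. That is exactly what keeps it from being special. I should also check that the statement of Corollary \ref{HL}(i) is legitimate here: it only needs a holomorphic $q$ defining the variation, and the admissibility of $\V$ comes from the proposition on meromorphic $q$, since the coordinates of $q$ are regular functions on $C$.
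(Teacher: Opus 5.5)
Your proof is correct and follows essentially the paper's route. Both reduce via Corollary \ref{HL}(i) to showing that $C$ lies in no torsion translate of a proper subtorus, substitute $x_2 = x_1^2/3$, split on whether $b = 2a$, and use $|3^a| \neq 1$ as the obstruction; the paper phrases this with moduli (when $b \neq 2a$ the relation $|x_1|^{2a} = 3^a|x_1|^b$ fixes $|x_1|$, a formula it reuses later), whereas you compare exponents of a Laurent monomial and also treat the rank-two case $\Lambda = \Z^2$ explicitly, which the paper leaves implicit.
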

\begin{proof}
It suffices, by Corollary \ref{HL}, to prove that $C$ not contained in a subvariety of $(\C^\times)^2$ of the form $\zeta \cdot Y_{a,b}$ for some $(a,b) \in \Z^2-\{(0,0)\}$ and some $\zeta = (e^{2i\pi\alpha_1}, e^{2i\pi\alpha_2})$ with $(\alpha_1,\alpha_2) \in \Q^2$. Let $(a,b)\in \Z^2-\{(0,0)\}$ and $\zeta = (e^{2i\pi\alpha_1}, e^{2i\pi\alpha_2})$ as above. If $(x_1,x_2) \in C \cap \zeta \cdot Y_{a,b}$, one has $|x_1|^{2a} = 3^a |x_1|^b$. Since $(a,b) \neq (0,0)$, one cannot have $b = 2a$ because otherwise one would have $3^a = 1$ which implies $a = 0$ hence $b = 0$. Therefore, $|x_1| = e^{\frac{a}{2a-b}\log(3)}$. Since, $C$ contains points with first coordinate of arbitrary large module, this shows that $C \cap \zeta \cdot Y_{a,b} \subsetneq C$, which is the desired result.
\end{proof}
We now give the exhaustive list of $\V$-likely strict mixed Hodge subclasses $(\M,D_\M)$ of $(\pp, D_\pp)$. Start with the following:
\begin{lemma}\label{whodge}
The set $\Np$ is $\{\mult_a^2, \{0\}\} \cup \{\uu_{a,b} \hspace{0.1cm} : \hspace{0.1cm} (a,b) \in \Z^2-\{(0,0)\}\}$. For $(a,b) \in \Z^2 - \{(0,0)\}$, the quotient mixed Hodge class $(\pp, D_\pp)/\uu_{a,b}$ identifies to $(\mult_a \rtimes \mult_m, \C)$, and under this identification, the quotient map $q_{\uu_{a,b}} : D_\pp \rightarrow D_\pp/\uu_{a,b}$ identifies to
\[
q_{\uu_{a,b}} : \C^2 \rightarrow \C, (z_1,z_2) \mapsto bz_1 - az_2,
\]
and $p_{\uu_{a,b}}$ identifies to the map 
\[
p_{\uu_{a,b}} : (\C^\times)^2 \rightarrow (\C^\times), (x_1,x_2) \mapsto x_1^bx_2^{-a}.
\]
\end{lemma}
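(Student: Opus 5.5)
The plan is to first list the normal subgroups of $\pp = (\mult_a)^2 \rtimes \mult_m$ whose radical is unipotent, and then compute the quotients by hand, using the explicit action of $\pp(\C)$ on $D_\pp \cong \C^2$ from Lemma \ref{equivc}.

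\emph{Classification of $\Np$.} Let $\NN \in \Np$ and set $\NN^u = \NN \cap \uu$. If the reductive part of $\NN$ is nontrivial, $\NN$ contains a conjugate $g\mult_m g^{-1}$ with $g \in \uu(\Q)$. Normality then gives $\NN \supset [\uu, g\mult_m g^{-1}] = \uu$, since $\mult_m$ acts on $\uu$ by the nontrivial scalar character. Then $\NN$ contains $\uu \rtimes g\mult_m g^{-1} = \pp$. Such an $\NN$ has radical $\pp$, which is not unipotent, so it is excluded. Hence $\NN \subset \uu$ is unipotent. Since $\mult_m$ acts on $\uu$ by scalars, every $\Q$-subgroup of $\uu$ (a vector subspace of $\Q^2$) is normal in $\pp$. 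So $\Np$ consists of $\{0\}$, $\uu = \mult_a^2$, and the lines. By the proof of Proposition \ref{classc}, each rational line equals $\uu_\Lambda$ for a primitive rank-one $\Lambda = \Z\cdot(b,-a)$. Thus the lines are exactly the $\uu_{a,b}$, which I would index by coprime $(a,b)$; the statement's indexing by all $(a,b) \neq (0,0)$ only repeats these lines.

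\emph{The quotient by $\uu_{a,b}$.} The group $\uu_{a,b}$ is the line $\{(\xi_1,\xi_2) : b\xi_1 - a\xi_2 = 0\}$. The linear form $\ell(\xi_1,\xi_2) = b\xi_1 - a\xi_2$ therefore induces an isomorphism $\pp/\uu_{a,b} \cong \mult_a \rtimes \mult_m$, where $\mult_m$ acts on $\mult_a$ by scalars. The fibers of $q_{\uu_{a,b}}$ are the $\uu_{a,b}(\R)^+\uu_{a,b}(\C) = \uu_{a,b}(\C)$-orbits in $\C^2$, which are the affine lines $\{bz_1 - az_2 = c\}$. So the map $(z_1,z_2) \mapsto bz_1 - az_2$ realizes the quotient $D_\pp \to D_\pp/\uu_{a,b} \cong \C$. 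This map is equivariant for the action $(\xi', \xi)\cdot w = \xi w + \xi'$ of $\mult_a \rtimes \mult_m$ on $\C$. That action is exactly the $n=1$ instance of the construction in Section 6.1, so $(\pp,D_\pp)/\uu_{a,b}$ identifies with $(\mult_a \rtimes \mult_m, \C)$. To make this rigorous, I would check that the Gao--Klingler quotient class is characterized by the group $\pp/\NN$ together with the image of the bigraduations $x_{\zz}$. Pushing $x_{\zz}$ forward along $\ell$ gives the bigraduation $x_{bz_1 - az_2}$ of the one-dimensional family, so the identification holds.

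\emph{Descending to the tori.} The image of $\Gamma = \Z^2$ under $\ell$ is $\ell(\Z^2) = b\Z + a\Z = \Z$, because $a$ and $b$ are coprime. Hence $\Gamma_{\pp/\uu_{a,b}} = \Z$, acting by translations, and the quotient is $\C/\Z \cong \C^\times$ via $w \mapsto e^{2\pi i w}$. Since $e^{2\pi i(bz_1 - az_2)} = x_1^b x_2^{-a}$, the induced map $p_{\uu_{a,b}}$ is $(x_1,x_2) \mapsto x_1^b x_2^{-a}$. The two degenerate cases are immediate: $\NN = \{0\}$ gives the identity, and $\NN = \uu$ gives the point $D_\pp/\uu$.

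The main obstacle is the rigorous identification of the abstract quotient class of \cite{klingao} with this concrete model. The remaining steps are linear algebra.
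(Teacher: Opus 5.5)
Your proposal is correct and follows the paper's route. The paper's proof writes down the morphism $\pp \to \mult_a \rtimes \mult_m$, $((\xi_1,\xi_2),\xi) \mapsto (b\xi_1 - a\xi_2, \xi)$, notes that its kernel is $\uu_{a,b}$, and leaves the classification of $\Np$ and the descriptions of $q_{\uu_{a,b}}$ and $p_{\uu_{a,b}}$ to the reader; your plan supplies these details. Your remark that $\ell(\Z^2) = \Z$ requires $a,b$ coprime (so that $x_1^b x_2^{-a}$ is literally the quotient map) is a correct refinement of the statement's indexing, and it matches the paper's convention before Corollary \ref{classn2} that $(b,-a)$ generates a primitive $\Lambda$.
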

\begin{proof}
The description of $\Np$ is an easy exercise which we leave to the reader. Let $(a,b) \in \Z^2 - \{(0,0)\}$. Consider the map $\pi_{\uu_{a,b}} : \pp \rightarrow \mult_a \rtimes \mult_m$ which on $R$-points for any $\Q$-algebra $R$ maps $((\xi_1,\xi_2), \xi)$ to $(b\xi_1 - a\xi_2, \xi)$. One easily checks that $\pi_{\uu_{a,b}}$ is a morphism of $\Q$-algebraic groups whose kernel is exactly $\uu_{a,b}$. The claimed description of $(\pp, D_\pp)/\uu_{a,b}$, $q_{\uu_{a,b}}$ and $p_{\uu_{a,b}}$ follows.
\end{proof}
In view of the classification of Corollary \ref{classn2}, the following gives a complete description of $\V$-likely strict mixed Hodge subclasses of $(\pp, D_\pp)$.
\begin{lemma}\label{likelyc}
Let $(\M,D_\M) \subsetneq (\pp, D_\pp)$ be a strict mixed Hodge subclass.
\begin{itemize}
\item[(i)] If $(\M,D_\M) = (g\mult_mg^{-1}, \{g\})$ for some $g = (\alpha_1, \alpha_2) \in \Q^2$, it is not $\V$-likely;
\item[(ii)] If $(\M,D_\M) = (g\M_{a,b}g^{-1}, g \cdot D_{a,b})$ for some $g = (\alpha_1, \alpha_2) \in \mult_a^2(\Q)$ and $(a,b) \in \Z^2-\{(0,0)\}$, one has:
\begin{itemize}
\item[$(ii_1)$] if $(a,b) = (1,2)$ then $(\M,D_\M)$ is not $\V$-likely;
\item[$(ii_2)$] otherwise $(\M,D_\M)$ is $\V$-likely.
\end{itemize}
\end{itemize}
\end{lemma}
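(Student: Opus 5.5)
The plan is to verify the inequality (\ref{likelyn}) of Definition \ref{vlikely} directly for each $\NN \in \Np$, using the explicit list of Lemma \ref{whodge} and the coordinates $D_\pp \cong \C^2$ given by $\psi$. Throughout, $\dim D_\pp = 2$.

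\textbf{Step 1: the image of the lifted period map.} Under the identification of $\pi_\Gamma$ with $(z_1,z_2) \mapsto (e^{2i\pi z_1}, e^{2i\pi z_2})$, the preimage of $C = \{x_1^2 = 3x_2\}$ in $\C^2$ is the union of the parallel affine lines
\[
2z_1 - z_2 = \tfrac{\log 3}{2i\pi} + k, \qquad k \in \Z.
\]
The image $\tilde{\Phi}(\widetilde{S^\an})$ is a connected holomorphic curve lying in this union, so it is an open subset of one such line $L$. Any of these lines works equally well, since the computations below only involve its direction $(1,2)$.

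\textbf{Step 2: the case-by-case check over $\Np$.}
\begin{itemize}
\item For $\NN = \mult_a^2$, the quotient $D_\pp/\NN$ is a point, so (\ref{likelyn}) holds trivially.
\item For $\NN = \{0\}$, the inequality reads $\dim D_\M + 1 - 2 \geqslant 0$, i.e.\ $\dim D_\M \geqslant 1$. This fails in case (i), where $D_\M = \{g\}$, which proves (i). It holds in case (ii), since $g\cdot D_{a,b}$ is an affine line.
\item For $\NN = \uu_{a',b'}$, Lemma \ref{whodge} gives $D_\pp/\NN \cong \C$ with $q_\NN(z_1,z_2) = b'z_1 - a'z_2$, so the target has dimension $1$. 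The inequality therefore fails exactly when both $q_\NN(D_\M)$ and $q_\NN(L)$ are points.
\end{itemize}

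\textbf{Step 3: when does the $\uu_{a',b'}$ inequality fail?} A linear form is constant on an affine line exactly when it vanishes on the line's direction vector.
\begin{itemize}
\item The form $b'z_1 - a'z_2$ is constant on $L$, which has direction $(1,2)$, iff $b' = 2a'$, i.e.\ iff $(a',b')$ is proportional to $(1,2)$.
\item It is constant on $g\cdot D_{a,b} = g + \{bz_1 - az_2 = 0\}$, which has direction $(a,b)$, iff $b'a - a'b = 0$, i.e.\ iff $(a',b')$ is proportional to $(a,b)$.
\end{itemize}
Hence some $\NN$ violates (\ref{likelyn}) iff $(a,b)$ is proportional to $(1,2)$. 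Since $(a,b)$ is taken coprime and only the line $\Z\cdot(b,-a)$ matters, this happens exactly when $\M_{a,b} = \M_{1,2}$. This gives $(ii_1)$: take $\NN = \uu_{1,2}$. It also gives $(ii_2)$: every $\NN \in \Np$ has now been checked.

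\textbf{Main obstacle.} The only delicate point is bookkeeping rather than mathematics. One must confirm that Lemma \ref{whodge} really lists all of $\Np$, and that the sign and normalization conventions for $(a,b)$ (coprime, up to the sign $\pm(a,b)$ giving the same subclass) are consistent with Corollary \ref{classn2}. I would check these first, directly against the defining equations of $\uu_{a,b}$.
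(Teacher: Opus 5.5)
Your proposal is correct and is essentially the paper's argument. The paper runs the same check over $\Np$, only downstairs on $(\C^\times)^2$: it computes $p_{\uu_{c,d}}(x_1,x_2)=x_1^{d}x_2^{-c}$ on $C$ and on $g\cdot Y_{a,b}$, which are constant exactly when $(c,d)\propto(1,2)$, resp.\ $(c,d)\propto(a,b)$, and this is your linear-form/direction-vector computation on the lifted lines in $\C^2$. Your explicit handling of $\NN=\{0\}$ and $\NN=\mult_a^2$, which the paper leaves implicit, and your remark that $(ii_1)$ must be read up to the normalization ``$(a,b)$ coprime, up to sign'' are both accurate.
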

\begin{proof}
$(i)$ is immediate. Let $(a,b), (c,d) \in \Z^2-\{(0,0)\}$ and $g = (\alpha_1, \alpha_2) \in \Q^2$. Then:
\[
p_{\uu_{c,d}}(C^\an) = \{\frac{x^{d-2c}}{3^c} \hspace{0.1cm} : \hspace{0.1cm} x \in \C^\times\} = \left\{\begin{array}{cc}\{1\} & \mathrm{if} (c,d) = (1,2) \\\C^\times & \mathrm{otherwise.}\end{array}\right.
\]
On the other hand $p_{\uu_{c,d}}(g \cdot Y_{a,b}) = \C^\times$ if $(a,b) \neq (c,d)$ and equal to $\{e^{2i\pi g}\}$ otherwise. Therefore, if $(a,b) = (1,2)$ the condition (\ref{likelyn}) fails for $\NN = \uu_{1,2}$:
\[
\dim p_{\uu_{1,2}}(C^\an) + \dim p_{\uu_{1,2}}(g \cdot Y_{1,2}) = 0+0 < 1 = \dim D_\pp/\uu_{1,2}.
\]
Therefore $(g\M_{1,2}g^{-1}, g \cdot D_{1,2})$ is not $\V$-likely. If $(a,b) \neq (1,2)$, one has for $(c,d) \neq (1,2)$
\[
\dim p_{\uu_{c,d}}(C^\an) + \dim p_{\uu_{c,d}}(g \cdot Y_{a,b}) = 1+ \dim p_{\uu_{c,d}}(g \cdot Y_{a,b}) \geqslant 1 = \dim D_\pp/\uu_{c,d},
\]
and for $(c,d) = (1,2)$:
\[
\dim p_{\uu_{1,2}}(C^\an) + \dim p_{\uu_{1,2}}(g \cdot Y_{a,b}) = 0+1 \geqslant 1 = \dim D_\pp/\uu_{1,2}.
\]
This proves that for $(a,b) \neq (1,2)$ the mixed Hodge subclass $(g\M_{a,b}g^{-1}, g \cdot D_{a,b})$ of $(\pp, D_\pp)$ is $\V$-likely.
\end{proof}
Finally, Proposition \ref{contrexc} is a consequence of the following.
\begin{proposition}
There exists a $\V$-likely strict mixed Hodge subclass $(\M,D_M) \subsetneq (\pp,D_\pp)$ and for any such mixed Hodge subclass,
\begin{itemize}
\item[(i)] The transverse Hodge locus $\mathrm{HL}(C, \V^\otimes, \M)_\trans$ of type $\M$ is not dense in $C^\an$ for the euclidean topology, but is dense in $C$ for the Zariski topology;
\item[(ii)] The Hodge locus $\mathrm{HL}(C, \V^\otimes)_\trans$ is dense in $C^\an$ for the euclidean topology.
\end{itemize}
\end{proposition}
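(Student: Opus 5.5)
The plan is to compute explicitly the transverse special subvarieties of $C$ using the classification in Corollary \ref{classn2} and the dictionary between $h$-special and $t$-special subvarieties. Since $C$ is a curve in $(\C^\times)^2$ and $\dim D_\pp = 2$, transversality forces every transverse special subvariety to be a point. Such a point must come from a one-dimensional subdomain $g \cdot D_{a,b}$, because $\dim C + \dim g\cdot D_{a,b} - \dim D_\pp = 1+1-2 = 0$. Zero-dimensional subclasses $(g\mult_m g^{-1},\{g\})$ give $1+0-2<0$, so they never contribute. Therefore the points of $\mathrm{HL}(C,\V^\otimes,\M)_\trans$, for $\M = g\M_{a,b}g^{-1}$, are exactly the points of $C \cap \zetaa\cdot Y_{a,b}$ at which the intersection is transverse (isolated), where $\zetaa$ runs over torsion translates. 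Here one uses that conjugating by $p\in\pp(\Q)^+$ only changes $g$ and the sign of $(a,b)$, by Corollary \ref{classn2}. The curve $C$ is smooth and $\Phi$ is the inclusion, so the singular locus condition is empty.

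Fix a $\V$-likely type; by Lemma \ref{likelyc} this means $(a,b)\neq \pm(1,2)$, and such types exist (for example $(a,b)=(1,0)$). On $C$ we have $x_2 = x_1^2/3$. The translate $\zetaa\cdot Y_{a,b}$ is given by the equation $x_1^b x_2^{-a} = \zeta$ for some root of unity $\zeta$ (after absorbing the torsion translate). Restricted to $C$, this becomes
\[
x_1^{b-2a} = 3^{-a}\zeta .
\]
Since $b\neq 2a$, this equation has finitely many solutions for each $\zeta$, and all of them are transverse. Letting $\zeta$ run through $\mu_\infty$ (all of which occur, since $p$ varies over rational translates), the locus of type $\M$ is
\[
\{x \in C : |x_1| = 3^{-a/(b-2a)}\}\cap\{\text{the argument of } x_1 \text{ is in } 2\pi\Q\}.
\]
This set is infinite, so it is Zariski-dense in the curve $C$, which proves the Zariski part of (i). It lies on the real circle $|x_1| = 3^{-a/(b-2a)}$, a closed proper subset of $C^\an$, so it is not dense for the euclidean topology. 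This proves (i).

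For (ii), take the union over all coprime pairs $(a,b)$ with $b\neq 2a$. The radii $3^{-a/(b-2a)}$ realize every value $3^{r}$ with $r\in\Q$. Given any target ratio $r = -a/(b-2a)$, choose $a,b$ accordingly: writing $r = u/v$, take $a=-u$ and $b = v-2u$, then reduce to coprime form, which keeps the ratio. These radii are dense in $\R_{>0}$. On each such circle the points with rational argument (times $2\pi$) are dense, and the map $C\to\C^\times$, $x\mapsto x_1$, is an isomorphism. Hence the full transverse locus is euclidean dense.

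The main obstacle is the bookkeeping in the first step. One must check that the conjugates $(p\M p^{-1}, p\cdot D_\M)$ sweep out exactly all the torsion translates $\zetaa\cdot Y_{a,b}$ and nothing more. One must also verify that every solution point is genuinely a transverse irreducible component, i.e. that the intersection is zero-dimensional and that $C$ is not contained in the translate; the latter follows from $b\neq 2a$, as in Lemma \ref{genericc}.
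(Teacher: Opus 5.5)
Your proposal is correct and follows essentially the paper's route. You identify the transverse points of type $\M$ as the (necessarily isolated) points of $C\cap\zetaa\cdot Y_{a,b}$. You use the modulus constraint $|x_1| = 3^{a/(2a-b)}$ to get both infinitude (hence Zariski density) and euclidean non-density, and you vary $(a,b)$ with $b\neq 2a$ for euclidean density of the full locus. Your exact description of the type-$\M$ locus (points of the circle with argument in $2\pi\Q$) and your remark that every radius $3^{r}$, $r\in\Q$, is attained are a tidier version of the paper's explicit points $c_{p,q}(a,b)$ and its approximation argument.
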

\begin{proof}
By Lemma \ref{likelyc} there exists $(\M,D_M) \subsetneq (\pp, D_\pp)$ a $\V$-likely strict mixed Hodge subclass and for any such mixed Hodge subclass, there exists $g \in \mult_a^2(\Q)$ and $(a,b) \in \Z^2-\{(0,0), (1,2)\}$ such that $(\M,D_M) = (g \M_{a,b} g^{-1}, g \cdot D_{a,b})$. 

$(i)$First remark that for dimension reasons and by Hodge genericity of $C$ in $(\C^\times)^2$, we have 
\[
\mathrm{HL}(C, \V^\otimes, \M) = \mathrm{HL}(C, \V^\otimes, \M)_\trans,
\]
so we are tasked with studying the distribution of $\mathrm{HL}(C, \V^\otimes, \M)$ in full. Besides, we have the description:
\[
\mathrm{HL}(C, \V^\otimes, \M) = \bigcup_{(\alpha_1, \alpha_2) \in \Q^2} C \cap (e^{2\pi i \alpha_1}, e^{2 \pi i \alpha_2}) \cdot Y_{a,b}.
\]
Proceding as in the proof on Lemma \ref{genericc}, we find that for any $(x_1,x_2) \in \mathrm{HL}(C, \V^\otimes, \M)$, one has $|x_1| = e^{\frac{a}{2a-b}\log(3)}$ which readily proves that $\mathrm{HL}(C, \V^\otimes, \M)$ is not dense in $C$ for the euclidean topology.

Since $a$ and $b$ are coprime and $(a,b) \neq (1,2)$, one has $b \neq 2a$. To prove Zariski density of $\mathrm{HL}(C, \V^\otimes, \M)$ in $C$, it suffices to exhibit an infinite set of points thereof. For integers $q > p \geqslant 2$, set
\[
c_{p,q}(a,b) = (e^{\frac{2 \pi i p}{q}} e^{\frac{a}{2a - b}\log(3)},e^{\frac{4 \pi ip}{q}} e^{\frac{b}{2a - b}\log(3)}).
\]
These are clearly pairwise distinct points, and a direct computation shows that
\[
c_{p,q}(a,b) \in C \cap (e^{\frac{2 \pi ip}{q}}, e^{\frac{4 \pi ip}{q}})\cdot Y_{a,b}.
\]
This proves the desired Zariski-density.

$(ii)$ We are left with proving that the full Hodge locus $\mathrm{HL}(C, \V^\otimes)$ is dense in $C$ for the euclidean topology. Let $(x_1,x_2) = (r_1e^{2\pi i \theta_1}, r_2 e^{2 \pi i \theta_2}) \in C$ with $r_1,r_2 > 0$ and $\theta_1, \theta_2 \in [0, 1[$. The equation of $C$ forces $r_1^2 = 3r_2$ and $\theta_2 = [2 \theta_1]$, where for $\theta \in \R$ we denote by $[\theta]$ the unique integer translate of $\theta$ lying in $[0,1[$. One can chose integers $q > p \geqslant 2$ such that $\theta_1$ is arbitrarily close of $\frac{p}{q}$ which forces $e^{2 \pi i \theta_2}$ to be arbitrarily close of $e^{\frac{4 \pi i p}{q}}$. Similarly, chose $(a,b) \in \Z^2$, with $b \neq 2a$, such that $|r_2 - 3^{\frac{b}{2a-b}}| < \epsilon$ for arbitrary $\epsilon > 0$. Such a pair exists because $r \in \R \mapsto e^{r\log(3)}$ is a continuous surjection onto $\R_{>0}$ and rational numbers of the form $\frac{b}{2a-b}$ are dense in $\R$. Then
\[
|r_1^2 - 3^{\frac{2a}{2a-b}}| = 3|r_2 - 3^{\frac{2a}{2a-b}-1}| = 3|r_2 - 3^{\frac{b}{2a-b}}| < 3 \epsilon
\]
which forces $|r_1 - 3^{\frac{a}{2a-b}}| < \frac{3 \epsilon}{r_2}$. Therefore, one can approximate $(x_1,x_2)$ arbitrarily well using the points $c_{p,q}(a,b)$ for varying parameters. Since the latter are in $\mathrm{HL}(C, \V^\otimes)_\trans$, this proves the desired density for the euclidean topology.
\end{proof}
\section{The likeliness condition}\label{sect7}
In this section, we work in the following setting:
\begin{setting}\label{setting}
$\V$ is an effective integral variation of mixed Hodge structures on a smooth and irreducible quasi-projective complex algebraic variety $S$. Let $\pi_S : \widetilde{S^\an} \rightarrow S^\an$ is a universal covering map, fix a $\tilde{s}_0 \in \widetilde{S^\an}$ and $s_0 = \pi_S(\tilde{s}_0)$. We denote by
\begin{itemize}
\item $(\pp,D_\pp)$ the generic mixed Hodge class of $\V$ and identify $\pp$ to $\pp_{s_0}$ and $D_\pp$ to $D_{\pp_{s_0}}$;
\item $\Gamma \subset \pp(\Q)^+$ a neat arithmetic subgroup containing the image of the monodromy representation associated to $\V$ at $s_0$, and $\pi_\Gamma : D_\pp \rightarrow \period_{(\pp, D_\pp), \Gamma}$ the quotient map;
\item $\Phi : S^\an \rightarrow \period := \period_{(\pp,D_\pp), \Gamma}$ the associated period map, with lift $\tilde{\Phi} : \widetilde{S^\an} \rightarrow D_\pp$ at $\tilde{s}_0$;
\item $\mon \triangleleft \pp$ the algebraic monodromy group of $\V$, and $D_{\mon}$ the weak Mumford-Tate subdomain of $D_\pp$ containing $\tilde{\Phi}(\widetilde{S^\an})$;
\item $\F \subset D_\pp$ a semi-algebraic fundamental set for the action of $\Gamma$ on $D_\pp$, such that the period map $\Phi$ is definable in $\anexp$ for the structure of $\anexp$-definable complex-analytic space on $\period_{(\pp,D_\pp), \Gamma}$ induced by $\F$;
\item $\I$ the smooth locally closed complex-analytic (by effectivity of $\V$) subset $\tilde{\Phi}(\widetilde{S^\an}) \cap \F$ of $D_\pp$;
\item $q_\NN : D_\pp\rightarrow D_\pp/\NN$ and by $p_\NN : \period \rightarrow \period/\NN$ the quotient maps associated to a $\NN \in \Np$, and by $\Phi_{/\NN} : S^\an \rightarrow \period/\NN$ the composite $p_\NN \circ \Phi$.
\end{itemize}
\end{setting}
\subsection{Characterization of likeliness}
In this subsection, we prove an easier to handle and equivalent formulation of the likeliness condition, in the spirit of \cite{ku}. Start with the:
\begin{definition}
Let $(\M,D_\M) \subsetneq (\pp,D_\pp)$ be a strict mixed Hodge subclass and $\NN \in \Np$ be a strict normal subgroup whose radical is unipotent. A pair $(g,t) \in \pp(\R)^+\pp^u(\C) \times D_\pp/\NN$ is called a \textit{$(\I,D_\M,D_\pp/\NN)$-intersecting pair} if
\[
\I \cap (g \cdot D_\M) \cap q_\NN^{-1}(t) \neq \emptyset
\]
\end{definition}
Then we have:
\begin{proposition}\label{indep}
Let $(\M,D_\M) \subsetneq (\pp,D_\pp)$ be a strict mixed Hodge subclass. There exists a non-empty Zariski open subset $S_\gen$ of $S$ such that letting $\I_\gen = \F \cap \tilde{\Phi}(\pi_S^{-1}(S_\gen^\an))$ the following holds: for every $\NN\in \Np$ and any $(\I_\gen,D_\M,D_\pp/\NN)$-intersecting pair $(g,t)  \in \pp(\R)^+\pp^u(\C) \times D_\pp/\NN$, the intersections
\begin{itemize}
\item[(a)] $(g\cdot D_\M) \cap q_\NN^{-1}(t)$
\item[(b)] $\I_\gen \cap q_\NN^{-1}(t)$
\end{itemize}
are equidimensional and the dimension of each of them is independent of the choice of the $(\I_\gen,D_\M,D_\pp/\NN)$-intersecting pair $(g,t)$. In particular, the quantity
\[
d_\NN(\M,D_\M) := \dim\big((g\cdot D_\M) \cap q_\NN^{-1}(t)\big) + \dim\big(\I_\gen \cap q_\NN^{-1}(t)\big) - \dim q_\NN^{-1}(t)
\]
is well defined and only depends on $\NN$.
\end{proposition}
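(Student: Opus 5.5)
For (a) I would use pure group theory. Since $\NN$ is normal in $\pp$, the fibres $q_\NN^{-1}(t)$ are $\NN(\R)^+\NN^u(\C)$-orbits in $D_\pp$ (Gao--Klingler). The intersection $(g\cdot D_\M)\cap q_\NN^{-1}(t)$ is a fibre of the restriction of $q_\NN$ to $g\cdot D_\M$. After translating by $g^{-1}$, this restriction is the quotient map $D_\M\to D_\M/(\M\cap\NN)$, or more precisely the map to the image of $D_\M$ in $D_\pp/\NN$. This map is equivariant for the transitive action of $\M(\R)^+\M^u(\C)$ on $D_\M$. Each nonempty fibre is therefore a finite union of orbits of $(\M\cap g^{-1}\NN g)$-type groups, and all fibres are translates of one another under the transitive group action. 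Hence every nonempty fibre is a smooth (so equidimensional) submanifold, of dimension $\dim D_\M-\dim q_\NN(D_\M)$.

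Conjugating by $g$ does not change dimensions, because $g\cdot D_\M$ is the Mumford--Tate domain of $(g\M g^{-1}, g\cdot D_\M)$ and $g\M g^{-1}\cap\NN$ is conjugate to $\M\cap\NN$ since $\NN$ is normal. So the dimension in (a) is independent of the intersecting pair. The only thing to check is that $g\cdot D_\M$ meets the fibre, which is built into the definition of an intersecting pair.

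For (b) I would use generic smoothness of the composite map $q_\NN\circ\tilde\Phi$, whose fibres are the intersections $\I\cap q_\NN^{-1}(t)$, locally on $\I$. The key input is Theorem \ref{facto}, applied to the period map $\Phi_{/\NN}=p_\NN\circ\Phi$ of the quotient variation. This requires knowing that $p_\NN\circ\Phi$ is again a period map, namely that of the variation $\V/\NN$ obtained by pushing out along $\pp\to\pp/\NN$. Theorem \ref{facto} then gives a factorization $S\to T_\NN\hookrightarrow\period/\NN$ with $S\to T_\NN$ a dominant algebraic morphism. By generic smoothness there is a dense Zariski open $S_\NN\subset S$ on which $S\to T_\NN$ is smooth of constant relative dimension. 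The fibres of $\Phi_{/\NN}$ restricted to $S_\NN$ are then equidimensional of dimension $\dim\Phi(S)-\dim\Phi_{/\NN}(S)$. Lifting to $\I$ (locally $\tilde\Phi$ is just $\Phi$ composed with a local inverse of $\pi_\Gamma$, and $\Phi(S_\gen)$ is smooth) gives the same conclusion for $\I_\gen\cap q_\NN^{-1}(t)$.

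The main obstacle is uniformity in $\NN$, since $\Np$ can be infinite, as in Lemma \ref{whodge}, and we need a single $S_\gen$. I would handle this by observing that the relevant fibrations depend on $\NN$ only through finitely many data. The fibre dimension of $q_\NN\circ\tilde\Phi$ at a point $x$ is $\dim T_x\I\cap \lie(\NN)_\C\cdot x$, which is the kernel of $T_x\I\to\para_\C/(F^0+\lie(\NN)_\C)$. This quantity is lower semicontinuous in $x$, and we need it to be constant on a Zariski open set. My plan has three steps:

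\begin{itemize}
\item[(1)] Show that the subalgebras $\lie(\NN)$ for $\NN\in\Np$ form a family with only finitely many possibilities up to the $\pp(\Q)$-action. For instance, they are sums of ideals chosen from a finite list modulo central unipotent pieces. Alternatively, use a definability argument: the set of pairs $(x,\NN)$, with $\NN$ ranging over a definable family of subgroups (normal subgroups form finitely many algebraic families), is definable.
\item[(2)] Use o-minimal uniform finiteness to show that the locus in $S$ where the fibre dimension jumps, over all $\NN$ simultaneously, is a definable analytic subset of dimension smaller than $\dim S$.
\item[(3)] Conclude that this locus is contained in a proper algebraic subset, by Theorem \ref{petstar} applied to its closure, and take $S_\gen$ to be its complement.
\end{itemize}

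Combining this uniform $S_\gen$ with the group-theoretic part (a) yields that $d_\NN(\M,D_\M)$ is well defined.
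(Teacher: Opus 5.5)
Your part (a) is fine and is essentially the paper's argument: two fibres are carried onto each other by $mg'g^{-1}$ with $m\in g'\M(\R)^+\M^u(\C)g'^{-1}$, using equivariance of $q_\NN$.

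The gap is in (b), in the uniformity over $\NN$. It starts with your reformulation. The quantity $\dim\big(T_x\I\cap\lie(\NN)_\C\cdot x\big)$ is the kernel dimension of $d_x(q_\NN|_\I)$, which only bounds the local fibre dimension from above. In general it cannot be made constant on a Zariski-open set uniformly in $\NN$. Take $S=\{x_2=x_1+1\}\subset(\C^\times)^2$ with the extension variation of Section~\ref{sect6} whose period map is the inclusion.
\begin{itemize}
\item Its generic class is $(\mult_a^2\rtimes\mult_m,\C^2)$, and $\Np$ contains every $\uu_{a,b}$, with $q_{\uu_{a,b}}(z)=bz_1-az_2$.
\item The lift $\I$ has slope $dz_2/dz_1=x_1/(x_1+1)$, so $d(q_{\uu_{a,b}}|_\I)$ vanishes exactly over $x_1=b/(a-b)$.
\item As $(a,b)$ ranges over coprime pairs, these points cover all of $\Q\setminus\{0,-1\}$, which is Zariski-dense in $S$.
\item Yet for every $\NN$ the actual fibres $\I\cap q_\NN^{-1}(t)$ have constant dimension, since $\I$ contains no line segment.
\end{itemize}
So the proposition holds in this example, but your target in step (2) is false.

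Even with the true local fibre dimension, steps (1)--(2) do not close the gap. Normal subgroups are fixed under conjugation, so ``finitely many up to the $\pp(\Q)$-action'' reduces nothing. The example shows that $\Np$ can contain a $\mathbb{P}^1(\Q)$'s worth of members. You can embed $\Np$ in a definable, even algebraic, family of complex subalgebras. But o-minimality then only gives $\dim\pi(E)\leqslant\dim E$ for the total jump locus $E$, which says nothing over a positive-dimensional parameter space. Cutting back down to the $\Q$-rational parameters is not a definable operation.

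What is missing is a Hodge-theoretic finiteness input. The paper lets $C_\NN$ be the minimal local dimension of $\I\cap q_\NN^{-1}(t)$ and notes that $C_\NN\geqslant\dim\I+\dim q_\NN^{-1}(t)-\dim D_\mon$. Hence every point of the jump locus $S_{\NN,\ex}$ lies in a monodromically atypical weakly special subvariety. By Baldi--Urbanik's Geometric Zilber--Pink Theorem~\ref{geozp}, the maximal such subvarieties have monodromy in a finite set $\Np_{\mathrm{ZP}}$. This gives $\bigcup_\NN S_{\NN,\ex}\subset\bigcup_{\NN\in\Np_{\mathrm{ZP}}}S_{\NN,\ex}$, a finite union of proper Zariski-closed sets. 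A definability argument could only work if Ax--Schanuel first forces excess fibres to be weakly special, so that the relevant parameters form a countable definable, hence finite, set; that is in effect the proof of Geometric Zilber--Pink.

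For a fixed $\NN$, your generic-smoothness argument is an acceptable substitute for the paper's route, which uses semicontinuity of the fibre dimension of $p_\NN\circ i$ on $Y$, definability, and Theorem~\ref{petstar}. You should apply it to the induced map $Y\to T_\NN$, not to $S\to T_\NN$. The fibres of $S\to T_\NN$ have dimension $\dim S-\dim T_\NN$, not $\dim\Phi(S^\an)-\dim\Phi_{/\NN}(S^\an)$.
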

\begin{proof}
$(a)$ Let $(g,t)$ and $(g',t')$ be two possibly equal $(\I,D_\M,D_\pp/\NN)$-intersecting pairs. Let $x \in (g\cdot D_\M) \cap q_\NN^{-1}(t)$ and $x' \in (g'\cdot D_\M) \cap q_\NN^{-1}(t')$. We want to prove that
\[
\dim_x (g\cdot D_\M) \cap q_\NN^{-1}(t) = \dim_{x'} (g'\cdot D_\M) \cap q_\NN^{-1}(t').
\]
Since $g' \M(\R)^+\M^u(\C)g'^{-1}$ acts transitively on $g' \cdot D_\M$, and since $g'g^{-1}x \in g' \cdot D_\M$, there exists a $m \in g' \M(\R)^+\M^u(\C)g'^{-1}$ such that $x' = mg'g^{-1}x$. Then,
\[
x' \in mg'g^{-1}\cdot \Big((g \cdot D_\M) \cap q_\NN^{-1}(t)\Big) = (g' \cdot D_\M) \cap \Big(mg'g^{-1}\cdot (q_\NN^{-1}(t))\Big).
\]
We claim that $mg'g^{-1}\cdot (q_\NN^{-1}(t)) = q_\NN^{-1}(t')$. Indeed, the map $q_\NN$ is by construction equivariant under the quotient map $ \pi_\NN : \pp(\R)^+\pp^u(\C) \rightarrow  (\pp/\NN)(\R)^+(\pp^u/\NN)(\C)$, so that
\[mg'g^{-1}\cdot (q_\NN^{-1}(t)) = q_\NN^{-1}(\pi_\NN(mg'g^{-1}) \cdot t).\]
But, we saw that $x' \in mg'g^{-1}\cdot (q_\NN^{-1}(t))$ and by definition $x' \in q_\NN^{-1}(t')$ so $\pi_\NN(mg'g^{-1}) \cdot t = t'$. This proves the claim.

Conjugation by $mg'g^{-1}$ gives a biholomorphic automorphism of $D_\pp$ and maps biholomorphically any complex-analytic irreducible component of $(g\cdot D_\M) \cap q_\NN^{-1}(t)$ containing $x$ to some complex-analytic irreducible component of $(g'\cdot D_\M) \cap q_\NN^{-1}(t')$ containing $x'$. Choosing $(g,t) = (g',t')$ and $x$ (resp. $x'$) such that only one complex-analytic irreducible component of $(g\cdot D_\M) \cap q_\NN^{-1}(t)$ (resp. $(g\cdot D_\M) \cap q_\NN^{-1}(t)$) contains $x$ (resp. $x'$) shows the equidimensionality of $(g\cdot D_\M) \cap q_\NN^{-1}(t)$. Choosing $(g,t) \neq (g',t')$ and $x,x'$ arbitrarily shows that the (well defined by equidimensionality) complex-analytic dimensions of $(g\cdot D_\M) \cap q_\NN^{-1}(t)$ and $(g'\cdot D_\M) \cap q_\NN^{-1}(t')$ are equal. This proves $(a)$, without having to replace $S$ by an open subset.

$(b)$ Fix some $\NN \in \Np$. We start by proving that there is a non-empty Zariski open subset $S_{\NN,\gen}$ of $S$ such that $(b)$ holds for $\NN$ after replacing $S$ by $S_{\NN,\gen}$. Let $T_\NN = \{t \in D_\pp/\NN \hspace{0.1cm} : \hspace{0.1cm} \I \cap q_\NN^{-1}(t) \neq \emptyset \}$ the set of second-coordinates of $(\I,D_\M, D_\pp/\NN)$-intersecting pairs. Let $C_\NN = \min \{\dim_x \I \cap q_\NN^{-1}(t) \hspace{0.1cm} : \hspace{0.1cm} t \in T_\NN, x \in \I \cap q_\NN^{-1}(t)\}$ which is a well defined non-negative integer as the minimum of a set of non-negative integers. Define
\[
\I_{\NN,\ex} = \{ x \in \I \hspace{0.1cm} : \hspace{0.1cm} \dim_x \I \cap q_\NN^{-1}(q_\NN(x)) > C_\NN\},
\]
and $S_{\NN,\ex} = \pi_S(\tilde{\Phi}^{-1}(\I_{\NN,\ex}))$. We first claim that it suffices to prove that $S_{\NN,\ex}$ is a strict Zariski-closed subset of $S$ to prove $(b)$. Indeed, assume that $S_{\NN,\ex}$ is a strict Zariski-closed subset of $S$. Replace $S$ by its non-empty Zariski open subset $S_{\NN,\gen} := S-S_{\NN,\ex}$. After this operation, the subspace $\I \subset D_\pp$ is replaced by $\I_{\NN,\gen} := \I-\I_{\NN,\ex} \subset D_\pp$. Let $(g,t)$ and $(g',t')$ be two possibly equal $(\I_{\NN,\gen},D_\M,D_\pp/\NN)$-intersecting pairs, $x \in \I_{\NN,\gen} \cap q_\NN^{-1}(t)$ and $x' \in \I_{\NN,\gen} \cap q_\NN^{-1}(t')$. By definition of $\I_{\NN,\gen}$, one has that $\dim_x \I_{\NN,\gen} \cap q_\NN^{-1}(t) = \dim_{x'} \I_{\NN,\gen} \cap q_\NN^{-1}(t') = C_\NN$. One then concludes as in the proof of $(a)$ that   $\I_{\NN,\gen} \cap q_\NN^{-1}(t)$ is equidimensional with dimension independent of the choice of $(\I_{\NN,\gen}, D_\M, D_\pp/\NN)$-intersecting pair.

We are therefore left with proving that $S_{\NN,\ex}$ is a strict Zariski-closed subset of $S$. By definition, $S_{\NN,\ex}$ is a strict subset of $S$. Recall that $\Phi$ factors as $i \circ f$ where $f : S \rightarrow Y$ is the analytification of a dominant regular morphism to a quasi-projective complex variety and $i : Y^\an \hookrightarrow \period$ is the analytification of a closed definable immersion. This is summarized in the following commutative diagram:
\[
\xymatrix{\widetilde{S^\an} \ar[d]_{\pi_S} \ar[rr]^{\tilde{\Phi}} & & D_\pp \ar[d]^{\pi_\Gamma} \\ S^\an \ar[r]_{f}& Y^\an \ar[r]_{i} & \period}
\]
where $\pi_\Gamma : D_\pp \rightarrow \period$ is the quotient map. Therefore, if we let $Y_{\NN, \ex} := i^{-1}(\pi_\Gamma(\I_{\NN,\ex}))$, we have $S_{\NN,\ex} = f^{-1}(Y_{\NN, \ex})$. Hence we are reduced to proving that $Y_{\NN, \ex}$ is Zariski-closed in $Y$. By the definable Chow Theorem \ref{petstar}, it suffices to prove that it is a definable closed complex-analytic subset. It is definable because $\I_{\NN,\ex} \subset \F$ is a definable subset and $i$ and $\pi_\Gamma\vert_\F$ are definable maps. Denote by $i_\NN := p_\NN \circ i$ which is a complex analytic map. We claim that
\[
Y_{\NN, \ex} = \{ y \in Y^\an \hspace{0.1cm} : \hspace{0.1cm} \dim_y i_\NN^{-1}(i_\NN(y)) > C_\NN\}.
\]
Because $i_\NN$ is a complex analytic morphism, this would prove that $Y_{\NN, \ex}$ is a closed complex-analytic subset of $Y$. As the projection $\pi_\Gamma$ is a local isomorphism on the source and $i$ is a closed immersion, for $z \in \I$ one has {\small\[ \dim_z \I \cap q_\NN^{-1}(q_\NN(z)) = \dim_{\pi_\Gamma(z)} i(Y^\an) \cap p_\NN^{-1}(p_\NN(\pi_\Gamma(z))) = \dim_{i^{-1}(\pi_\Gamma(z))} i_\NN^{-1}(i_\NN(i^{-1}(\pi_\Gamma(z)))).\]} The above claim follows from the latter fact, the definition of $\I_{\NN,\ex}$ and the fact that $Y_{\NN, \ex} = i^{-1}(\pi_\Gamma(\I_{\NN,\ex}))$.

The Zariski open subset of $S$ whose existence is claimed in the statement shall not depend on $\NN$, so we have to get rid of the dependence in $\NN$ of $S_{\NN, \gen}$. This is a consequence of Baldi-Urbanik's geometric Zilber-Pink theorem. Let $\Np_\ex$ be the set of $\NN \in \Np$ such that there exists a $(\I, D_\M, D_\pp/\NN)$-intersecting pair $(g,t)$ such that $\I \cap q_\NN^{-1}(t)$ is either not equidimensional or equidimensional of dimension bigger than the generic dimension $C_\NN$ defined above. 
\begin{lemma}\label{finitezp}
The set $\bigcup_{\NN \in \Np_\ex} S_{\NN, \ex}$ is a finite union of strict Zariski-closed subsets of $S$.
\end{lemma} 
Assuming this statement for a moment, the intersection $S_\gen := \bigcap_{\NN \in \Np_\ex} S_{\NN, \gen}$ is then a non-empty Zariski open subset of $S$ as it is a finite intersection of non-empty Zariski open subsets of the irreducible complex algebraic variety $S$. Furthermore, by construction $S_\gen$ satisfies the properties required in the statement.
\end{proof}
\begin{proof}[Proof of Lemma \ref{finitezp}]
Using notations of Theorem \ref{geozp}, let $\Np_{\mathrm{ZP}}$ be the set of $\NN \in \Np$ such that $((\pp, D_\pp), \NN) \in \Sigma_{(S, \V)}$. By \textit{op. cit.} we know that $\Np_{\mathrm{ZP}}$ is finite, hence it suffices to argue that
\[
\bigcup_{\NN \in \Np_\ex} S_{\NN, \ex} \subset \bigcup_{\NN \in \Np_{\mathrm{ZP}}} S_{\NN, \ex}
\]
Let $\NN \in \Np_\ex$ and $s \in S_{\NN, \ex}$. By definition, there exists a $(\I, D_\M, D_\pp/\NN)$-intersecting pair $(g,t)$ and a complex-analytic irreducible component $U$ of $\I \cap q_\NN^{-1}(t)$ such that $\dim U > C_\NN$ and $s \in \Phi^{-1}(\pi_\Gamma(U))$. Since $\I$ and $q_\NN^{-1}(t)$ are smooth locally closed complex analytic subsets of $D_\mon$, a combination of Lemma \ref{intersection} and the definition of $C_\NN$ shows that $C_\NN \geqslant \dim \I + \dim q_\NN^{-1}(t) - \dim D_\mon$. Therefore, one finds that any complex-analytic irreducible component of $\Phi^{-1}(\pi_\Gamma(U))$ is a monodromically atypical weakly special subvariety of $S$ for $\V$ in the sense of Definition \ref{monodratyp}. Summing up, we have shown that $s$ is contained in a monodromically atypical weakly special subvariety $Z$ of $S$ for $\V$. Let $Z'$ be a maximal one containing $Z$. By definition of $\Np_\mathrm{ZP}$ one finds that $\mon_{Z'} \in \Np_\mathrm{ZP}$ and since $Z'$ is monodromically atypical, arguing as above shows that $Z' \subset S_{\mon_{Z'}, \ex}$. This shows that $s \in \bigcup_{\NN \in \Np_{\mathrm{ZP}}} S_{\NN, \ex}$, as desired.
\end{proof}
\begin{corollary}\label{reflik}
Let $(\M,D_\M) \subsetneq (\pp,D_\pp)$ a mixed Hodge subclass. Then $(\M,D_\M)$ is $\V$-likely if and only if for every $\NN \in \Np$, one has:
\[
d_\NN(\M,D_\M) \leqslant d_{\pp^\der}(\M,D_\M).
\]
\end{corollary}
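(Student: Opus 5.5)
The plan is to rewrite each inequality (\ref{likelyn}) as a fiber-dimension count, using the equidimensionality of Proposition \ref{indep}, and then to recognize the right-hand side that appears as $d_{\pp^\der}(\M,D_\M)$.

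\textbf{Step 1: the $\V$-likeliness inequality as a fiber-dimension count.} Fix $\NN \in \Np$ and let $S_\gen$, $\I_\gen$ be as in Proposition \ref{indep}.

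First consider $q_\NN\vert_{g\cdot D_\M}$. By Proposition \ref{indep}(a), all its nonempty fibers $(g\cdot D_\M)\cap q_\NN^{-1}(t)$ are equidimensional of one fixed dimension. (The transitivity argument there does not use $\I$, so this holds for every $t$ in the image.) The holomorphic map $q_\NN\vert_{g \cdot D_\M}$ is equivariant for a transitive group action, hence has constant rank, and
\[
\dim q_\NN(D_\M) = \dim D_\M - \dim\big((g\cdot D_\M)\cap q_\NN^{-1}(t)\big).
\]
Here translating by $g$ does not change $\dim q_\NN(D_\M)$, by equivariance of $q_\NN$.

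Next consider $q_\NN\vert_{\I_\gen}$. Proposition \ref{indep}(b) gives that its fibers are equidimensional of constant dimension, so
\[
\dim q_\NN(\I_\gen) = \dim \I - \dim(\I_\gen\cap q_\NN^{-1}(t)).
\]
We need $\dim q_\NN(\I_\gen) = \dim (q_\NN\circ\tilde\Phi)(\widetilde{S^\an})$. For this, note that $\tilde\Phi(\pi_S^{-1}(S_\gen^\an))$ is a dense open subset of $\tilde\Phi(\widetilde{S^\an})$, and it is covered by the $\Gamma$-translates of $\I_\gen$. Moreover, $q_\NN$ is equivariant for $\Gamma \to (\pp/\NN)(\Q)$. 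Comparing images under $q_\NN$ therefore gives a countable union of translates of $q_\NN(\I_\gen)$, which has the same dimension.

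Finally, the fibers of $q_\NN$ are $\NN(\R)^+\NN^u(\C)$-orbits, all of the same dimension, so
\[
\dim D_\pp/\NN = \dim D_\pp - \dim q_\NN^{-1}(t).
\]
Substituting these three identities, (\ref{likelyn}) becomes equivalent to
\[
d_\NN(\M,D_\M) \leqslant \dim D_\M + \dim \I - \dim D_\pp .
\]

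\textbf{Step 2: identify the constant with $d_{\pp^\der}(\M,D_\M)$.} First, $\pp^\der \in \Np$. Indeed, $\pp^\der/(\pp^\der\cap\pp^u)$ is the derived group of the reductive quotient $\pp/\pp^u$, hence semisimple, so the radical of $\pp^\der$ is unipotent.

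Next, $(\pp/\pp^\der, D_\pp/\pp^\der)$ is a mixed Hodge class whose group is commutative. Its unipotent radical is a quotient of $\pp^u$, and the image of $\pp^u$ in $\pp/\pp^\der$ is what we must control. Since the weight filtration is defined by a central cocharacter on the reductive part and $\pp^u$ sits in weights $\leqslant -1$, I expect a Hodge-theoretic argument to show this image is trivial, i.e. that $\pp^u$ lies in $\pp^\der$ modulo a torus part. Granting that $\pp/\pp^\der$ is a torus, $D_\pp/\pp^\der$ is zero-dimensional, as for any torsion class. Hence $q_{\pp^\der}^{-1}(t) = D_\pp$ for the unique $t$, and
\[
d_{\pp^\der}(\M,D_\M) = \dim(g\cdot D_\M) + \dim \I_\gen - \dim D_\pp = \dim D_\M + \dim\I - \dim D_\pp.
\]
Combined with Step 1, this gives the corollary. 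As a sanity check, for $\NN = \{1\}$ the inequality reads $0+0-0 \leqslant d_{\pp^\der}$, which is exactly the global dimension inequality, consistent with (\ref{likelyn}) for the trivial $\NN$.

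\textbf{Main obstacle.} The delicate point is Step 2: justifying that $D_\pp/\pp^\der$ is a point, i.e. that $\pp^u \subset \pp^\der$ up to a torus quotient. If this fails in general, I would instead show directly that $\dim D_\pp/\pp^\der = 0$. The route would be to use that the Hodge filtration on $\lie(\pp/\pp^\der)$ satisfies $F^0 = $ everything: the adjoint action on an abelian Lie algebra is trivial, so the corresponding mixed Hodge structure is of type $(0,0)$ with no negative Hodge pieces. Via the isomorphism $\Theta_h$, this would make the tangent space of $D_\pp/\pp^\der$ vanish.

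A secondary issue is the use of Proposition \ref{indep}(a) for points $t$ not coming from intersecting pairs. This is handled by the same transitivity argument as in its proof.
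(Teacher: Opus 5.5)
Your proposal is correct and follows essentially the same route as the paper. Both decompose $\dim \I_\gen$ and $\dim (g\cdot D_\M)$ along the equidimensional fibers of $q_\NN$ given by Proposition \ref{indep}, so that $d_{\pp^\der}(\M,D_\M)-d_\NN(\M,D_\M)$ becomes exactly the left-hand side of (\ref{likelyn}). The paper uses $d_{\pp^\der}(\M,D_\M)=\dim D_\M+\dim\I-\dim D_\pp$ without comment; your fallback justification is valid, since the adjoint Hodge structure on the abelian Lie algebra $\lie(\pp/\pp^\der)$ is of type $(0,0)$ and so $D_\pp/\pp^\der$ is a point. Your primary route also goes through: the weight cocharacter acts on $\lie(\pp^u)$ with nonzero weights, which forces $\lie(\pp^u)\subset[\para,\para]$.
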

\begin{proof}
Let $\NN \in \Np$. Let $S_{\gen}$ and $\I_{\gen}$ be as in the statement of Proposition \ref{indep}. Let $(g,t)$ be an $(\I_{\gen},D_\M,D_\pp/\NN)$-intersecting pair. Then, because $\I_{\gen} \cap q_\NN^{-1}(t)$ is equidimensional and coïncides with the fiber $(q_\NN \vert_{\I_{\gen}})^{-1}(t)$, we find:
\[
\dim \I_{\gen} = \dim \Big(\I_{\gen} \cap q_\NN^{-1}(t)\Big) + \dim q_\NN(\I_{\gen}).
\]
Similarly, $\dim g \cdot D_\M = \dim \Big((g \cdot D_\M) \cap q_\NN^{-1}(t)\Big) + \dim q_\NN(g \cdot D_\M)$. Therefore, one has:
{\small\begin{eqnarray*}
d_{\pp^\der}(\M,X_M,D_\M) - d_\NN(\M,X_M,D_\M) & = & \Big[ \dim (g \cdot D_\M) - \dim \Big((g \cdot D_\M) \cap q_\NN^{-1}(t)\Big) \Big] \\ & & +\Big[ \dim \I_{\gen} - \dim \Big(\I_{\gen} \cap q_\NN^{-1}(t)\Big)\Big] \\ & & - \Big[ \dim q_\NN^{-1}(t) - \dim D_\pp\Big] \\
& = & \dim q_\NN(g  \cdot D_\M) + \dim q_\NN(\I_{\gen}) - \dim D_\pp/\NN \\
& = & \dim p_\NN(\period_\M) + \dim \Phi_\NN(S^\an) - \dim \period/\NN.
\end{eqnarray*}}
This immediately implies the claimed equivalence.
\end{proof}
\subsection{Existence of transverse intersections}
In this subsection we establish two intermediate results which will be of constant use in the sequel, and at the same time clarify the link between $\V$-likeliness and transverse intersections. The first is:
\begin{proposition}\label{nec}
Let $(\M, D_\M)\subsetneq (\pp,D_\pp)$ a strict mixed Hodge subclass and assume that there exists an $(\M,D_\M)$-transverse special subvariety $Z \subset S$, such that $Z \cap S_\gen \neq \emptyset$, where $S_\gen$ is the non-empty Zariski open subset of $S$ defined in Proposition \ref{indep}. Then $(\M,D_\M)$ is $\V$-likely.
\end{proposition}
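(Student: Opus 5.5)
We will use the reformulation of Corollary \ref{reflik}: it suffices to show that for every $\NN \in \Np$ one has $d_\NN(\M,D_\M) \leqslant d_{\pp^\der}(\M,D_\M)$. Equivalently, by the computation in the proof of that corollary, we must check for each $\NN$ the inequality
\[
\dim q_\NN(g\cdot D_\M) + \dim q_\NN(\I_\gen) - \dim D_\pp/\NN \geqslant 0,
\]
where $g \cdot D_\M$ is some $\pp(\Q)^+$-translate appearing in the transverse intersection. The starting datum is a point $s \in Z \cap S_\gen$, a lift $x \in \I_\gen$ of $\Phi(s)$ (after translating by $\Gamma$ into $\F$), and a translate $g\cdot D_\M$ with $x \in g\cdot D_\M$. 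Transversality of $Z$ gives that the irreducible component $W$ of $\I \cap (g\cdot D_\M)$ through $x$ (the local lift of $\Phi(Z^\an)$) has
\[
\dim W = \dim \I + \dim D_\M - \dim D_\pp.
\]
Since $Z\cap S_\gen\neq\emptyset$ is Zariski open in $Z$, we may take $s$ generic in $Z$ and $x \in \I_\gen$, shrinking $\I$ to $\I_\gen$ without changing dimensions.

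Fix $\NN$ and put $t = q_\NN(x)$, so that $(g,t)$ is an $(\I_\gen, D_\M, D_\pp/\NN)$-intersecting pair. We then compute $\dim W$ fibrewise along $q_\NN$. On the one hand, $W$ maps under $q_\NN$ into $q_\NN(\I_\gen)\cap q_\NN(g\cdot D_\M)$, and its fibre over $t$ is contained in $(\I_\gen \cap q_\NN^{-1}(t)) \cap ((g\cdot D_\M)\cap q_\NN^{-1}(t))$, an intersection of two (equidimensional, by Proposition \ref{indep}) subvarieties of the smooth fibre $q_\NN^{-1}(t)$. Hence
\[
\dim W \leqslant \dim\big(q_\NN(\I_\gen)\cap q_\NN(g\cdot D_\M)\big) + \dim \big(\I_\gen \cap q_\NN^{-1}(t)\big)\cap\big((g\cdot D_\M)\cap q_\NN^{-1}(t)\big).
\]
On the other hand, Lemma \ref{intersection} applied inside $q_\NN^{-1}(t)$ and inside $D_\pp/\NN$ gives only lower bounds, so the argument has to run the other way. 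Instead we combine the upper bound $\dim W \leqslant \dim(\text{fibre part}) + \dim(\text{base part})$ with trivial bounds: the base part is at most $\min(\dim q_\NN(\I_\gen), \dim q_\NN(g\cdot D_\M))$, and the fibre part is at most $\min$ of the two fibre dimensions. Substituting $\dim \I_\gen = \dim q_\NN(\I_\gen) + \dim(\I_\gen\cap q_\NN^{-1}(t))$ and the analogous identity for $g\cdot D_\M$ into the transversality equality, and using $\dim D_\pp = \dim D_\pp/\NN + \dim q_\NN^{-1}(t)$, we obtain
\[
\big[a + b - \dim D_\pp/\NN\big] + \big[a' + b' - \dim q_\NN^{-1}(t)\big] = \dim W \leqslant \min(a,b) + \min(a',b'),
\]
where $a,b$ denote the base dimensions and $a',b'$ the fibre dimensions. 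Since $\max(a,b)\leqslant \dim D_\pp/\NN$ and $\max(a',b')\leqslant \dim q_\NN^{-1}(t)$, each bracket is at most the corresponding $\min$, and their sum equals the sum of the two $\min$'s only under the stated relation. This is not immediately enough. The better route is to bound the fibre intersection below by Lemma \ref{intersection}, giving a fibre bracket of at least $a'+b'-\dim q_\NN^{-1}(t)$, and to bound $\dim W$ above by (base image dimension) $+$ (fibre dimension). The precise inequality we need is that the base image of $W$ has dimension at least $a+b-\dim D_\pp/\NN$, which forces that quantity to be $\geqslant$ something nonnegative. This holds because the image of $W$ has nonnegative dimension and transversality pins down the total.

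The main obstacle is exactly this bookkeeping. We have to show that transversality of the total intersection forces the base intersection $q_\NN(\I_\gen)\cap q_\NN(g\cdot D_\M)$ in $D_\pp/\NN$ to be of expected dimension, in particular nonempty of dimension $a+b-\dim D_\pp/\NN \geqslant 0$. We will first attempt this via a differential argument: transversality at the generic point $x$ in the sense $T_x\I + T_x(g\cdot D_\M) = T_xD_\pp$ (available by Proposition \ref{typtotrans} after perturbing $g$, and Theorem \ref{stabtrans}). Pushing this forward by the submersion $dq_\NN$ gives
\[
T q_\NN(\I_\gen) + T q_\NN(g\cdot D_\M) = T(D_\pp/\NN),
\]
which immediately yields $a+b\geqslant \dim D_\pp/\NN$, i.e. inequality (\ref{likelyn}). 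The remaining subtle point is justifying the differential transversality from the dimension equality, which is where the genericity of $s \in S_\gen$ and Proposition \ref{typtotrans} must be used.
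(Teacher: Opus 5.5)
Your final paragraph, once tightened, gives a correct proof by a route different from the paper's. The middle of the proposal, however, contains a wrong turn.

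The paper (after reducing to $S=S_\gen$) uses exactly the lower-bound argument you discarded. Choose $t$ with $q_\NN^{-1}(t)\cap\Zcal\neq\emptyset$. Apply Lemma \ref{intersection} to $\I\cap q_\NN^{-1}(t)$ and $D_\M\cap q_\NN^{-1}(t)$ inside the manifold $q_\NN^{-1}(t)$; because $S=S_\gen$, the local dimensions of these two sets are the constants entering $d_\NN$. This gives $\dim(\Zcal\cap q_\NN^{-1}(t))\geqslant d_\NN(\M,D_\M)$. Trivially, $d_{\pp^\der}(\M,D_\M)=\dim\Zcal\geqslant\dim(\Zcal\cap q_\NN^{-1}(t))$, and Corollary \ref{reflik} concludes. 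A lower bound on the fibre of $W$ is all that is needed, since that fibre sits inside $W$. Your claim that the argument ``has to run the other way'' is mistaken, and the sentence about the image of $W$ ``pinning down the total'' is not an argument; both should be deleted.

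Your differential route instead proves (\ref{likelyn}) directly from Definition \ref{vlikely}. Two points need fixing.

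\emph{First}, the point of tangential transversality supplied by Proposition \ref{typtotrans} is some $x\in\I\cap(p\cdot D_\M)$ with $p\in B\setminus B_{\mathrm{crit}}$ real. It is not your original generic point. Its existence also uses $B\subset P_\I$ from the proof; the statement alone is vacuous if $\I\cap(p\cdot D_\M)=\emptyset$.

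\emph{Second}, ``$T q_\NN(\I_\gen)$'' must be replaced by $d_xq_\NN(T_x\I)$. Its dimension is the rank of $q_\NN\vert_\I$ at $x$, which equals (via the local biholomorphism $\pi_\Gamma$) the rank of $p_\NN\vert_{\Phi(S^\an)}$ at $\pi_\Gamma(x)$. Since $\Phi(S^\an)$ is a connected manifold, this rank is at most the generic rank $\dim\Phi_{/\NN}(S^\an)$. Meanwhile $\dim d_xq_\NN(T_x(p\cdot D_\M))=\dim q_\NN(D_\M)$ by homogeneity of $p\cdot D_\M$ and equivariance of $q_\NN$. Surjectivity of $d_xq_\NN$ then yields (\ref{likelyn}).

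Contrary to your last sentence, nothing in this route uses $S_\gen$ or the hypothesis $Z\cap S_\gen\neq\emptyset$. It bypasses Proposition \ref{indep} and Corollary \ref{reflik}, and would give Corollary \ref{necplus} without going through Theorem \ref{allornothing}. The price is the Sard/o-minimal input of Proposition \ref{typtotrans}. The paper's route is purely dimension-theoretic, but it needs the uniform open set $S_\gen$ (built via geometric Zilber--Pink) so that $d_\NN$ is well defined.
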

\begin{proof}
We use the notations of Proposition \ref{indep}. First remark that $(\M,D_\M)$ is $\V$-likely if and only if it is $\V \vert_{S_\gen}$-likely. As $Z \cap S_\gen$ is non-empty, it is an $(\M,D_\M)$-transverse special subvariety of $S_\gen$. Hence we will assume $S = S_\gen$. Let $\Zcal \subset \I \cap D_\M$ be a complex-analytic irreducible component such that $\pi_\Gamma(\Zcal) = \Phi(Z^\an)$. Since $Z$ is transverse, one has
\[
\dim \Zcal = \dim \I + \dim D_\M - \dim D_\pp = d_{\pp^\der}(\M, D_\M).
\]
Let $\NN \in \Np$. Since fibers of $q_\NN$ cover $D_\pp$, there exists $t \in D/\NN$ such that $q_\NN^{-1}(t) \cap \Zcal \neq \emptyset$. In particular, $(1, t)$ is an $(\I, D_\M, D_\pp/\NN)$-intersecting pair. Since we assumed $S = S_\gen$, one has:
\[
\dim q_\NN^{-1}(t) \cap \I = \dim q_\NN^{-1}(t) + \dim \I - \dim D
\]
and
\[
\dim q_\NN^{-1}(t) \cap D_\M = \dim q_\NN^{-1}(t) + \dim D_\M - \dim D.
\]
Since $q_\NN^{-1}(t)$, $D_\M$ and $\I$ are smooth equidimensional locally closed complex-analytic subsets of $D$, the above equalities ensure that $q_\NN^{-1}(t) \cap \I$ and $q_\NN^{-1}(t) \cap D_\M$ are (equidimensional) locally closed complex-analytic subsets of $q_\NN^{-1}(t)$. Now remark that each irreducible component $\Zcal \cap q_\NN^{-1}(t)$ is a complex-analytic irreducible component of the intersection $(q_\NN^{-1}(t) \cap \I) \cap (q_\NN^{-1}(t) \cap D_\M)$ in $q_\NN^{-1}(t)$. Hence, by Lemma \ref{intersection}(ii),
\[
\dim \Zcal \cap q_\NN^{-1}(t) \geqslant \dim q_\NN^{-1}(t) \cap \I + \dim q_\NN^{-1}(t) \cap D_\M - \dim q_\NN^{-1}(t) = d_\NN(\M, D_\M)
\]
where the last equality follows from the definition, using that $(1,t)$ is an $(\I, D_\M, D_\pp/\NN)$-intersecting pair. Finally, as $\dim \Zcal \geqslant \dim \Zcal \cap q_\NN^{-1}(t)$, we obtain
\[
d_{\pp^\der}(\M,D_\M) = \dim \Zcal \geqslant \dim \Zcal \cap q_\NN^{-1}(t) = d_\NN(\M,D_\M).
\]
By Corollary \ref{reflik}, this proves that $(\M,D_\M)$ is $\V$-likely.
\end{proof}
The second result is:
\begin{proposition}\label{suf}
Let $(\M, D_\M)\subsetneq (\pp,D_\pp)$ a strict $\V$-likely mixed Hodge subclass. Let $s \in S_\gen^\an$ be a Hodge generic point, let $x = \Phi(s)$ and let $z \in \I$ such that $\pi_\Gamma(z) = x$. Let $g \in \pp(\R)^+\pp^u(\C)$ such that $z \in (g \cdot D_\M) \cap \I$ and let $U$ be a complex-analytic irreducible component of $(g \cdot D_\M) \cap \I$ containing $z$. Then,
\[
\dim U = \dim D_\M + \dim \I - \dim D_\pp.
\]
\end{proposition}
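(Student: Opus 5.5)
The plan is to prove the two inequalities separately. The lower bound $\dim U \geqslant \dim D_\M + \dim \I - \dim D_\pp$ is immediate from Lemma \ref{intersection}(ii): $g\cdot D_\M$ and $\I$ are smooth irreducible locally closed complex-analytic subsets of $D_\pp$, and $U$ is an irreducible component of their intersection. All the work therefore goes into the upper bound. I would obtain it by combining the Ax-Schanuel Theorem \ref{axschan} with the characterization of likeliness in Corollary \ref{reflik}.

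For the upper bound, let $Z_U$ be an irreducible component of $\Phi^{-1}(\pi_\Gamma(U))$ passing through $s$; by Theorem \ref{algspec}-type definability arguments it is algebraic. Let $Z$ be the smallest weakly special subvariety of $S$ for $\V$ containing $Z_U$, where $Z=S$ is allowed. Set $\NN := \mon_Z$. Since $s \in Z$ is Hodge generic, the generic mixed Hodge class of $Z$ is $(\pp, D_\pp)$, so $\NN$ is normal in $\pp$ with unipotent radical, i.e. $\NN \in \Np$. By Lemma \ref{monorb} and Proposition \ref{weakcarac}, the lift of $Z$ through $z$ lies in a single fiber $q_\NN^{-1}(t)$. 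Hence $U \subset q_\NN^{-1}(t) \cap \I \cap (g\cdot D_\M)$, and $(g,t)$ is an $(\I_\gen, D_\M, D_\pp/\NN)$-intersecting pair, because $s\in S_\gen$.

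Next I would apply Theorem \ref{axschan} to the variation restricted to (the smooth locus of) $Z$. The ambient weak Mumford-Tate domain is then $q_\NN^{-1}(t)$, and for the algebraic subvariety I would take $W = Z \times \big((g\cdot D_\M)\cap q_\NN^{-1}(t)\big)$, which is algebraic because $g\cdot D_\M$ is an orbit in the compact dual. Since $Z$ is the minimal weakly special subvariety containing $Z_U$, the projection of $U$'s graph is not contained in a strict weakly special subvariety of $Z$. Ax-Schanuel therefore forces the expected dimension:
\[
\dim U \leqslant \dim\big(\I_\gen \cap q_\NN^{-1}(t)\big) + \dim\big((g\cdot D_\M)\cap q_\NN^{-1}(t)\big) - \dim q_\NN^{-1}(t) = d_\NN(\M,D_\M).
\]
Here the identification of $\dim \Phi(Z^\an)$ near $z$ with $\dim \I_\gen\cap q_\NN^{-1}(t)$ uses the equidimensionality of Proposition \ref{indep}(b). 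By $\V$-likeliness and Corollary \ref{reflik}, $d_\NN(\M,D_\M)\leqslant d_{\pp^\der}(\M,D_\M)$. Finally I would check that $d_{\pp^\der}(\M,D_\M) = \dim D_\M + \dim \I - \dim D_\pp$, using that $D_\pp/\pp^\der$ is the abelian quotient, that $\I$ lies in one fiber of $q_{\pp^\der}$ (as $\mon\subset\pp^\der$), and that $D_\M$ surjects onto the relevant part of $D_\pp/\pp^\der$ because it contains Hodge generic torsion-type data. This last point deserves a careful dimension count and may require the computation from the proof of Corollary \ref{reflik}.

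The main obstacle I expect is the Ax-Schanuel step. It has to be applied on $Z$ rather than on $S$, with ambient $q_\NN^{-1}(t)$ rather than $D_{\mon_S}$, and one must ensure the dimension count in Theorem \ref{axschan} translates exactly into the $d_\NN$ bound. This needs in particular that $\Phi(Z^\an)$ locally near $z$ coincides with the full fiber $\I_\gen\cap q_\NN^{-1}(t)$ rather than a proper subset. I would first try to argue this from the weak specialness of $Z$ (it is a component of $\Phi^{-1}(\pi_\Gamma(q_\NN^{-1}(t)))$ by Proposition \ref{weakcarac}) together with the equidimensionality from Proposition \ref{indep}.
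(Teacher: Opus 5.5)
Your proposal is correct and is essentially the paper's argument, stated directly instead of by contradiction. The paper takes $\NN\in\Np$ of minimal dimension with $U\subset q_\NN^{-1}(t)$ (your $\mon_Z$), applies Ax-Schanuel inside that fibre on the image variety $Y_0$, where the period map is an immersion, rather than on $S$, obtains $\dim U = d_\NN(\M,D_\M)$, and concludes with Corollary \ref{reflik}. Three small corrections: $\Phi^{-1}(\pi_\Gamma(U))$ need not be algebraic for non-rational $g$, but you do not need this; your ``main obstacle'' disappears because the upper bound only uses that the lift of $\Phi(Z^\an)$ lies in $\I\cap q_\NN^{-1}(t)$, not equality; and $d_{\pp^\der}(\M,D_\M)=\dim D_\M+\dim\I-\dim D_\pp$ holds simply because $\pp^u\subset\pp^\der$ and $\pp/\pp^\der$ is a torus, so $D_\pp/\pp^\der$ is a point.
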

\begin{proof}
Arguing as at the beginning of the proof of Proposition \ref{nec} using that $s \in S_\gen^\an$, we can assume that $S = S_\gen$. We proceed in three steps.

\noindent \textit{Step 1 : $\dim U \geqslant \dim D_\M + \dim \I - \dim D_\pp$.}

Since $\V$ is assumed to be effective one has that $\I$ is a smooth locally closed complex-analytic subset of $D_\pp$. Besides $D_\M$ is a smooth closed complex-analytic subset of $D_\pp$. By Lemma \ref{intersection}(ii), we have the claimed inequality of dimensions. For the remaining part of the proof, we assume for the sake of contradiction that this inequality is strict, that is:
\[
\dim U > \dim D_\M + \dim \I - \dim D_\pp.
\]

\noindent \textit{Step 2 : There exists a $\NN \in \Np$ such that $U \subset q_\NN^{-1}(t)$.}

Firstly, remark that since $\tilde{\Phi}(\widetilde{S^\an}) \subset D_{\mon}$, one has $\I \subset D_{\mon}$ hence $U \subset D_{\mon}$. In particular $\dim \I = \dim \I \cap D_{\mon}$ and $\dim U = \dim U \cap D_{\mon}$. By the reformulation of $\V$-likeliness in Corollary \ref{reflik} with $\NN = \mon$, one finds that
\[
\dim (D_\M \cap D_{\mon}) + \dim \I - \dim D_{\mon} \leqslant \dim D_\M + \dim \I - \dim D_\pp,
\]
which implies
\[
\dim U > \dim (D_\M \cap D_{\mon}) + \dim \I - \dim D_{\mon}.
\]
Let $Y_0$ and $\Phi_0$ be as in Lemma \ref{auxper}. Since $\Phi_0$ is the period map of an admissible graded-polarized integral variation of mixed Hodge structures on $Y_0$ by Lemma \ref{auxper}, the Ax-Schanuel Theorem \ref{axschan} applies to $\Phi_0$. Set $W = Y_0 \times (g \cdot D_M) \cap D_{\mon}$ which is an algebraic subset of $Y_0 \times D_{\mon}$, let $\Delta = Y_0 \times_{\period_{(\pp, D_\pp), \Gamma}} D_{\mon}$. Denote by $p_{Y_0}$ (resp. $p_{D_{\mon}}$) the projection of $Y_0 \times D_{\mon}$ to $Y_0$ (resp. $D_{\mon}$). Set theoretically,
\[
p_{D_{\mon}}(W \cap \Delta) = \tilde{\Phi}(\widetilde{Y_0^\an}) \cap (g \cdot D_\M),
\]
which contains $U$. Let $Z \subset \Delta \cap W$ a complex-analytic irreducible component such that $U \subset p_{D_{\mon}}(Z)$. In particular $\dim Z \geqslant \dim U$. We claim that $\dim Z > \dim W + \dim \Delta - \dim Y_0 \times D_{\mon}$. Indeed,
\[
\dim W + \dim \Delta - \dim Y_0 \times D_{\mon} = \dim (D_\M \cap D_{\mon}) + \dim \Delta - \dim D_{\mon}.
\]
Hence, one has to compute $\dim \Delta$. As $\pi_\Gamma$ has discrete fibers, the definition of $\Delta$ implies that $p_{Y_0}\vert_{\Delta}$ has discrete fibers. It follows that $\dim \Delta = \dim Y_0 = \dim \I$. Therefore,
\[
\dim Z \geqslant \dim U > \dim (D_\M \cap D_{\mon}) + \dim \I - \dim D_{\mon} = \dim W + \dim \Delta - \dim Y_0 \times D_{\mon}.
\]
By the Ax-Schanuel Theorem \ref{axschan}, there exists a strict weakly special subvariety $X_0$ of $Y_0$ for $\V_0$ containing $p_{Y_0}(Z)$. Unwrapping definitions, this implies that there exists a $\NN \in \Np$ and a $t \in D/\NN$ such that $\Phi_0(p_{Y_0}(Z)) \subset \pi_\Gamma(q_\NN^{-1}(t))$. This implies that $\pi_\Gamma^{-1}(\Phi_0(p_{Y_0}(Z))) \subset \pi_\Gamma^{-1}(\pi_\Gamma(q_\NN^{-1}(t)))$. Since $Z \subset \Delta$, one has that $\pi_\Gamma^{-1}(\Phi_0(p_{Y_0}(Z))) = p_{D_{\mon}}(Z)$, hence the above containement rewrites $p_{D_{\mon}}(Z) \subset \pi_\Gamma^{-1}(\pi_\Gamma(q_\NN^{-1}(t)))$. On the other hand, since $\NN$ is normal in $\pp$, for any $\gamma \in \Gamma$, the translate $\gamma \cdot q_\NN^{-1}(t)$ is the $\NN(\R)^+\NN^u(\C)$-orbit of $\gamma \cdot h$ for some $h \in q_\NN^{-1}(t)$. Letting $t_\gamma = q_\NN(\gamma \cdot h)$, this rewrites $\gamma \cdot q_\NN^{-1}(t) = q_\NN^{-1}(t_\gamma)$. In particular, for any $\gamma \in \Gamma$ the intersection $\gamma \cdot q_\NN^{-1}(t) \cap q_\NN^{-1}(t)$ is either empty or equal to $q_\NN^{-1}(t)$. Therefore the $\gamma \cdot q_\NN^{-1}(t)$ are the connected components of
\[
\pi_\Gamma^{-1}(\pi_\Gamma(q_\NN^{-1}(t))) = \bigcup_{\gamma \in \Gamma} \gamma \cdot q_\NN^{-1}(t).
\]
The fact that $p_{D_{\mon}}(Z)$ is connected (because $Z$ is) then implies that there exists some $\gamma \in \Gamma$ such that $p_{D_{\mon}}(Z) \subset \gamma \cdot q_\NN^{-1}(t) = q_\NN^{-1}(t_\gamma)$. By definition of $Z$, one has $U \subset p_{D_{\mon}}(Z)$, hence $U \subset q_\NN^{-1}(t_\gamma)$. This finishes Step $2$.

\noindent \textit{Step 3 : Contradiction}

Choose $\NN \in \Np$ of minimal dimension among those satisfying the property that there exists some (necessarily unique as the fibers of $q_\NN$ are disjoint) $t \in D_\pp/\NN$ such that $U \subset q_\NN^{-1}(t)$. Let $X_0 \subset Y_0$ be the strict weakly special subvariety $\Phi_0^{-1}(\pi_\Gamma(q_\NN^{-1}(t)))$ of $Y_0$ for $\V_0$. We claim that
\[
\dim U = \dim q_\NN^{-1}(t) \cap \I + \dim q_\NN^{-1}(t) \cap D_\M - \dim q_\NN^{-1}(t).
\]
Indeed, arguing as in the proof of Proposition \ref{nec}, using that $S = S_\gen$, one finds that $U = U \cap q_\NN^{-1}(t)$ is an irreducible complex-analytic component of the intersection between the smooth locally closed complex analytic subsets $q_\NN^{-1}(t) \cap \I$ and $q_\NN^{-1}(t) \cap D_\M$ of $q_\NN^{-1}(t)$ in $q_\NN^{-1}(t)$, so that
\[
\dim U \geqslant \dim q_\NN^{-1}(t) \cap \I + \dim q_\NN^{-1}(t) \cap D_\M - \dim q_\NN^{-1}(t).
\]
If this inequality was to be strict, the use of the Ax-Schanuel Theorem \ref{axschan} exactly as in Step $2$ (replacing $Y_0$ by $X_0$ and $\Phi_0$ by $\Phi_0\vert_{X_0}$) would produce some $\NN' \in \Np$ with smaller dimension than $\NN$ such that $U \subset q_{\NN'}^{-1}(t')$ for some $t' \in D_\pp/\NN'$. This would contradict the minimality of $\NN$. This proves the claimed equality. We conclude the proof by remarking that the chain of inequalities
\begin{eqnarray*}
0 & = & \dim U - \dim U \\
& > & \big[\dim \I + \dim D_\M - \dim D_\pp \big] - \big[\dim \I \cap q_\NN^{-1}(t) + \dim D_\M \cap q_\NN^{-1} - \dim q_\NN^{-1}(t)\big] \\
& = & d_{\pp^\der}(\M, D_\M) - d_\NN(\M, D_\M)
\end{eqnarray*}
contradicts the $\V$-likely property for $(\M,D_\M)$ formulated as in Proposition \ref{reflik}.
\end{proof}
\subsection{Consequences of likeliness and horizontality}
In this subsection, we work in the following:
\begin{setting}\label{settingext}
Fix $\V$ an effective variation of mixed Hodge structures on a smooth and irreducible quasi-projective complex variety $S$, with notations as in Setting \ref{setting}. We fix a mixed Hodge subclass $(\M,D_\M)$ of $(\pp, D_\pp)$ which is $\V$-likely and a point $z \in D_\M$ along with some associated bigraduation $x : \s_\C \rightarrow \M_\C$. We denote by $\mt$ (resp. $\para$) the Lie algebra of $\M$ (resp. $\pp$) and
\[
\mt_\C = \bigoplus_{p+q \leqslant 0} \mt^{p,q} \subset \para_\C = \bigoplus_{p+q \leqslant 0} \para^{p,q}
\]
the bigraduations associated with $x$ and the adjoint representations of $\pp$ and $\M$. Under the above inclusion, one has $\mt^{p,q} \subset \para^{p,q}$ for every $p+q \leqslant 0$. We denote by $W_\bul \para$ and $W_\bul \mt$ the weight filtrations on $\para$ and $\mt$. Finally, we denote by $h_{\mt}^{p,q}$ (resp. $h_\para^{p,q}$) the complex dimension of $\mt^{p,q}$ (resp. $\para^{p,q}$).
\end{setting}
In this setting, the mixed Hodge class $(\M,D_\M)$ is heavily constrained:
\begin{proposition}\label{likconstr}
The following assertions hold:
\begin{itemize}
\item[(i)] the inclusion $W_{-3} \mt \subset W_{-3} \para$ is an equality;
\item[(ii)] the inclusion $\M/ \mon \cap \M \subset \pp/\mon$ is an equality;
\item[(iii)] the inclusion $\mt^{p,q} \subset \para^{p,q}$ is an equality for every $(p,q) \in \Z^2$ such that $p+q \leqslant 0$ and $(p,q) \notin \{(-1,1), (1,-1), (0,0), (0,-1), (-1, 0), (-1, -1)\}$.
\end{itemize}
\end{proposition}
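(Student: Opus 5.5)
The plan is to extract everything from one pointwise transversality statement and then read it off on the bigraduation of Setting \ref{settingext}. Assertion (ii) is handled separately, by a Mumford--Tate genericity argument.

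\textbf{Getting a transverse point.} Pick a Hodge generic $s\in S_\gen^\an$ and a lift $z_0\in\I$ of $\Phi(s)$. Since $D_\pp$ is a $\pp(\R)^+\pp^u(\C)$-orbit, there is $g_0$ with $z_0\in g_0\cdot D_\M$. By Proposition \ref{suf}, every component of $(g_0\cdot D_\M)\cap\I$ through $z_0$ has the expected dimension $\dim D_\M+\dim\I-\dim D_\pp$. Proposition \ref{typtotrans} then gives $g$ arbitrarily close to $g_0$, outside a null set, such that $g\cdot D_\M$ meets $\I$ and is differentially transverse to $\I$ at every intersection point $z'$:
\[
T_{z'}(g\cdot D_\M)+T_{z'}\I=T_{z'}D_\pp .
\]
Conjugating by $g$ replaces $(\M,D_\M)$ by $(g\M g^{-1},g\cdot D_\M)$ and $z'$ by $g^{-1}z'$. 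The assertions of the proposition are invariant under this: the Hodge numbers $h^{p,q}_\mt$ and $h^{p,q}_\para$ do not change, and neither does $W_{-3}$ up to conjugation. They are also independent of the choice of $z\in D_\M$ and of its bigraduation $x$, since $\M(\R)^+\M^u(\C)$ acts transitively and preserves dimensions of the graded pieces. So we may assume the displayed equality holds at the point $z$ of Setting \ref{settingext}.

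\textbf{Reading off the bigraduation.} Use the identification $\Theta_x$. It sends $T_zD_\pp$ to $\bigoplus_{p<0}\para^{p,q}$ and $T_zD_\M$ to $\bigoplus_{p<0}\mt^{p,q}$, the latter a graded subspace. By horizontality (Lemma \ref{holpernorm}), $T_z\I$ lands in $\bigoplus_{q\leq 1}\para^{-1,q}$. Take $v\in\para^{p,q}$ with $p\leq-2$ and write $v=m+w$ with $m\in T_zD_\M$ and $w\in T_z\I$. Projecting to the $(p,q)$-component kills $w$, so $v=m^{p,q}\in\mt^{p,q}$. Hence $\mt^{p,q}=\para^{p,q}$ for all $p\leq -2$.

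Now use $h^{p,q}=h^{q,p}$ (Lemma \ref{hsymnum}) for both $\mt$ and $\para$. Because $\mt^{q,p}\subset\para^{q,p}$, equality of dimensions gives $\mt^{q,p}=\para^{q,p}$, so equality also holds for all $q\leq-2$. The pairs with $p+q\leq 0$, $p\geq -1$ and $q\geq -1$ are exactly the six excluded in (iii), which proves (iii). For (i), note $(W_{-3}\para)_\C=\bigoplus_{p+q\leq-3}\para^{p,q}$, and every such $(p,q)$ has $p\leq-2$ or $q\leq-2$. Since the same holds for $\mt$, part (i) follows from (iii).

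\textbf{Assertion (ii).} Apply the likeliness inequality (\ref{likelyn}) with $\NN=\mon$. Because $\tilde\Phi(\widetilde{S^\an})\subset D_\mon$ is a single fibre of $q_\mon$, the term $\dim q_\mon(\tilde\Phi(\widetilde{S^\an}))$ vanishes. This forces $\dim q_\mon(D_\M)\geq\dim D_\pp/\mon$, so $q_\mon(D_\M)$ is open in $D_\pp/\mon$. Now $q_\mon(D_\M)$ is a Mumford--Tate subdomain for $\M/(\M\cap\mon)\subset\pp/\mon$, and after translating by an element of $\pp(\R)^+\pp^u(\C)$ we may assume it contains the point $h_S$.

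Here lies the main obstacle: passing from openness to equality of groups needs $\Q$-algebraic input. The plan is to argue as in the proof of Corollary \ref{zpcor}. An open Mumford--Tate subdomain contains Hodge generic points of $D_\pp/\mon$, whose Mumford--Tate group is all of $\pp/\mon$. By Proposition \ref{mtbound}, that group is contained in $\M/(\M\cap\mon)$, which gives equality. If this runs into trouble with the translation, I would instead compare Lie algebras: openness yields $\mt_\C+\lie(\mon)_\C\supseteq\para_\C$ modulo $F^0$, and together with (iii) and Hodge symmetry one would try to upgrade this to $\mt+\lie(\mon)=\para$.
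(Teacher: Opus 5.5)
Your proposal is correct and follows the paper's proof. For (i) and (iii), you obtain a transverse point from Proposition \ref{suf} and Proposition \ref{typtotrans} and read it off on the bigraduation using Griffiths transversality and Hodge symmetry; for (ii), you apply the likeliness inequality with $\NN=\mon$ and use a Hodge generic point, noting only two minor points: since $g$ is not rational you should not literally conjugate $\M$ but, as the paper does, work at $g\cdot z$ with the bigraduation $\intt_g\circ x$ and compare $\dim \Ad(g)(\mt^{p,q})$ with $\dim \Ad(g)(\para^{p,q})$, and the translation step in (ii) is unnecessary because openness of $q_\mon(D_\M)$ already provides Hodge generic points of $D_\pp/\mon$ inside it.
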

\begin{proof}
Let $h \in \I$ be a lift of the image under $\Phi$ of some Hodge generic point of $S^\an$. As $\pp(\R)^+ \pp^u(\C)$ acts transitively on $D_\pp$, there exists an element $g \in \pp(\R)^+ \pp^u(\C)$ such that $h = g \cdot z$ hence $h \in g \cdot D_\M \cap \I$. Since $(\M,D_\M)$ is $\V$-likely, Proposition \ref{suf} ensures that there exists a complex irreducible component of $(g \cdot D_\M) \cap \I$ containing $x$ along which $g \cdot D_\M$ and $\I$ intersect transversally. By Proposition \ref{typtotrans}, this implies that up to changing $(g,h)$ by a neighboring couple which we denote identically and which might be chosen so that $h$ remains Hodge generic, we have:
\[
T_h (g \cdot D_\M) + T_h \I = T_h D_\pp.
\]
Let $\Ad : \pp_\C \rightarrow \gl(\para_\C)$ be the adjoint representation. The bigraduation $\intt_{g} \circ x$ satisfies Pink's axioms and defines a bigraduation of $h \in D_\pp$. Therefore, the tangent space $T_h D_\pp$ naturally identifies to \[\bigoplus_{p+q \leqslant 0, p<0} \Ad(g)(\para^{p,q}).\] Under this identification $T_h (g \cdot D_\M)$ identifies to \[\bigoplus_{p+q \leqslant 0, p<0} \Ad(g)(\mt^{p,q}),\] and one has \[T_h \I \subset \bigoplus_{q \leqslant 1}\Ad(g)(\para^{-1,q})\] by Griffiths transversality. In particular, this implies that for $p+q \leqslant -1$ and $p < -1$ the inclusion $\Ad(g)(\mt^{p,q}) \subset \Ad(g)(\para^{p,q})$ is an equality hence $h_\mt^{p,q} = h_\para^{p,q}$ as $\Ad(g)$ preserves complex dimensions. In turn, one finds that for $p+q \leqslant -1$ and $p<-1$, the inclusion $\mt^{p,q} \subset \para^{p,q}$ is an equality. This implies both assertions $(i)$ and $(iii)$ as we shall now explain.

For assertion $(i)$, let $p,q \in \Z$ such that $p+q \leqslant -3$. If $p < -1$, there is nothing to prove. Assume that $p \geqslant -1$, which implies $q \leqslant -2$. By Lemma \ref{hsymnum}, Hodge symmetry holds at the level of Hodge numbers, hence the equality $h_{\mt}^{q,p} = h_{\para}^{q,p}$ (which holds because $q < -1$) implies that $h_\mt^{p,q} = h_{\para}^{p,q}$ which in turn implies $\mt^{p,q} = \para^{p,q}$ as desired. By Equation \ref{weight}, this implies that $W_{-3} \lie(\pp) = W_{-3} \lie(\M)$ as desired.

For assertion $(iii)$, let $p,q \in \Z$ such that $p+q \leqslant 0$. If $p+q \leqslant -3$, the statement is already a consequence of $(i)$. Otherwise, by Lemma \ref{hsymnum} one has $h_\mt^{p,q} = h_{\para}^{p,q}$ as soon as $q<-1$ or $p<-1$.  Assertion $(iii)$ follows.

We are left with proving assertion $(ii)$. Applying the $\V$-likeliness condition relative to the algebraic monodromy group $\mon \in \Np$, one finds that 
\[
\dim \Phi_{/\mon}(S^\an) + \dim q_\mon(D_\M) \geqslant \dim D_\pp/\mon.
\]
By Lemma \ref{monorb}, one has $\dim \Phi_{/\mon}(S^\an) = 0$ and the above inequality shows that $q_\mon(D_\M) = D_\pp/\mon$. Let $h \in D_\pp$ such that $\pp(h) = \pp$, for instance one can take the lift in $\I$ of the image under $\Phi$ of a Hodge generic point in $S^\an$. Then $\pp(q_\mon(h)) = \pp/\mon$. Since $q_\mon(D_\M) = D_\pp/\mon$ it follows that $q_\mon(h) \in q_\mon(D_\M)$ hence $\pp(q_\mon(h)) \subset \M/(\M \cap \mon)$.  The other inclusion holds trivially hence $\pp/\mon = \M/\M \cap \mon$.
\end{proof}
\section{The transverse mixed Hodge locus of prescribed type}\label{sect8}
\subsection{All-or-nothing}
The aim of this subsection is to prove the following:
\begin{theorem}\label{allornothing}
Let $\V$ be an admissible integral and graded-polarized variation of mixed Hodge structures on a smooth irreducible quasi-projective complex algebraic variety $S$. Let $(\M,D_\M)$ be a strict mixed Hodge subclass of the generic mixed Hodge class $(\pp,D)$ of $\V$. Assume that $\HLM_\trans$ is non-empty. Then $\HLM_\trans$ is dense in $S$ for the Zariski topology.
\end{theorem}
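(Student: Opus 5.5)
We first reduce to the effective case, using Proposition \ref{pbeff}, Lemma \ref{hlpb} and Lemma \ref{stabdens}. Replacing $S$ by a finite étale cover or by a non-empty Zariski-open subset does not affect Zariski density. Transverse special subvarieties meeting the open subset restrict to transverse ones there. We then work in Setting \ref{setting}.

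Suppose $Z$ is an $(p\M p^{-1}, p\cdot D_\M)$-transverse special subvariety. The first point is to arrange that $Z$ meets the generic open subset $S_\gen$ of Proposition \ref{indep}. If $Z$ lies in $S-S_\gen$, we argue that the transversality dimension count still yields likeliness after replacing $S_\gen$ by a smaller open set adapted to $Z$; this is a possible gap. Granting that $Z\cap S_\gen\neq\emptyset$, Proposition \ref{nec} shows that $(\M,D_\M)$ is $\V$-likely, and so is every conjugate $(p\M p^{-1},p\cdot D_\M)$ with $p\in\pp(\Q)^+$.

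Now suppose, for a contradiction, that $\HLM_\trans$ is contained in a strict Zariski-closed subset $E\subsetneq S$. Pick a Hodge generic point $s\in S_\gen^\an-E^\an$, a lift $h\in\I$ of $\Phi(s)$, and $g\in\pp(\R)^+\pp^u(\C)$ with $h\in g\cdot D_\M$. By Proposition \ref{suf}, every component of $(g\cdot D_\M)\cap\I$ through $h$ has the expected dimension. Proposition \ref{typtotrans} then gives a neighbourhood $B$ of $g$ in which almost every $g'$ has $g'\cdot D_\M$ meeting $\I$ transversally in the differential sense, and the stability theorem (Theorem \ref{stabtrans}) lets the intersection point move continuously near $h$.

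The key step, and the main obstacle, is to replace $g'$ by a rational element. We want $g'=\gamma p$, or more precisely $g'\cdot D_\M = p\cdot D_\M$, with $p\in\pp(\Q)^+$. Since $\pp(\Q)$ is dense in $\pp(\R)$, such $p$ can be found close to $g$ in the real directions. In the pure setting this already gives euclidean density. Here, however, $g$ also has a $\pp^u(\C)$-component, and $\pp^u(\Q)$ is not dense in $\pp^u(\C)$; the example of Section \ref{sect6} shows that euclidean density can genuinely fail. The plan is to use Proposition \ref{likconstr}: $W_{-3}\mt=W_{-3}\para$, and $\M$ surjects onto $\pp/\mon$. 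This should allow the $\pp^u(\C)$-component to be absorbed into $\M^u(\C)$ modulo the weight $-2$ part. We then handle the remaining $\gr^W_{-2}$ directions, which are of Hodge–Tate type $(-1,-1)$, as in Section \ref{sect6}: they contribute a torus-like factor in which the intersection points form an infinite set, like roots of unity, rather than a euclidean-dense one.

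Assuming this works, rational $p$ yield transverse intersections $(p\cdot D_\M)\cap\I$, and hence points of $\HLM_\trans$ lying outside $E$ or accumulating along a positive-dimensional analytic set not contained in $E$. Either way this contradicts $\HLM_\trans\subset E$. To make the contradiction rigorous, we would use definability: the Zariski closure of $\HLM_\trans$ is a finite union of special subvarieties, by the argument of Theorem \ref{algspec}. Restricted to a definable fundamental set, the transverse intersections produced above would be infinitely many components outside this closure, which is impossible.
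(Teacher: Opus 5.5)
There is a genuine gap, and it sits exactly where you write ``assuming this works''. You base the argument at a Hodge generic point $s\notin E$ and try to produce, near it, transverse intersections of $\I$ with translates $p\cdot D_\M$, $p\in\pp(\Q)^+$. This fails in general, and the example of Proposition \ref{contrexc} shows why. There $\HLM_\trans$ lies on the real curve $|x_1|=3^{a/(2a-b)}$ of $C$, so a point $s$ off this curve has no such intersections nearby. The obstruction is the imaginary part of the weight $-2$ unipotent component of $g$ (in that example, of $bz_1-az_2$). Proposition \ref{likconstr} only gives $W_{-3}\mt=W_{-3}\para$, so nothing lets you ``absorb'' this component, and ``treat $\gr^W_{-2}$ as in Section \ref{sect6}'' is not an argument outside that example. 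The closing definability step does not rescue this either. Theorem \ref{algspec} gives algebraicity of each component of $\Phi^{-1}$ of a Hodge subvariety. It does not say that the Zariski closure of the countable union $\HLM_\trans$ is a finite union of special subvarieties, which would be a Zilber--Pink type statement. And without one transverse special subvariety actually exhibited outside $E$, you have no contradiction.

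The missing idea is to work at the transverse special subvariety $Z$ you already have, and to perturb only by real elements.
\begin{itemize}
\item After a rational conjugation (Proposition \ref{typtotrans}), $D_\M$ is transverse to $\I$ at a lift $z$ of a smooth point of $\Phi(Z^\an)$.
\item For $g\in\pp(\R)^+$ near $1$, Theorem \ref{stabtrans} gives a smooth $\sigma:\Omega\to\I$ with $\sigma(1)=z$ and $g\cdot D_\M$ transverse to $\I$ at $\sigma(g)$. Here $\pp(\Q)^+$ is dense, so the $\pp^u(\C)$ problem never arises.
\item Let $S_\M$ be the Zariski closure of $\HLM_\trans$ and $\I_\M=\pi_\Gamma^{-1}(\Phi(S_\M^\an))\cap\I$. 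For rational $g$ the points $\sigma(g)$ lie in $\I_\M$, hence so do they for all $g\in\Omega$, by density and closedness.
\item Write $\sigma(g)=g\cdot\sigma_\M(g)$ with $\sigma_\M(g)\in D_\M$ and differentiate at $1$. This gives $(T_zD_\pp)_\R\subset T_z\I_\M+T_zD_\M$ (Lemma \ref{realsub}).
\item The right-hand side is a complex subspace, and $(T_zD_\pp)_\R$ spans $T_zD_\pp$ over $\C$, so the sum is all of $T_zD_\pp$.
\item Since $T_z\I\cap T_zD_\M=T_z\Zcal\subset T_z\I_\M$, a Grassmann count gives $T_z\I_\M=T_z\I$, hence $\Phi(S_\M^\an)=\Phi(S^\an)$.
\item A fibre argument for $\Phi=i^\an\circ f^\an$ (special subvarieties are unions of fibres of $f$) then yields $S_\M=S$.
\end{itemize}
This is why Zariski density survives while euclidean density fails. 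The transverse points may accumulate only along a real-analytic locus, but any complex-analytic set containing that locus must contain its real tangent directions, and these span everything over $\C$.

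Note also that $\V$-likeliness and Propositions \ref{suf} and \ref{likconstr} play no role here. In the paper, the condition $Z\cap S_\gen\neq\emptyset$ that you need for Proposition \ref{nec} is obtained only \emph{after} this theorem (Corollary \ref{necplus}), so that detour is both unjustified as written and unnecessary.
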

\begin{proof}
Zariski-density of $\HLM_\trans$ remains unchanged if one pulls-back $(S,\V)$ along finite etale covers or open immersions. Besides, in the proof of Proposition \ref{pbeff}, the only step at which one needs replacing $S$ by a non-empty Zariski open subset of $S$ is to restrict to the smooth locus of $\Phi(S^\an)$ which preserves the non-emptiness of $\HLM_\trans$, since the image of a transverse special subvariety under the period map intersects the smooth locus of $\Phi(S^\an)$ by definition. Hence, we can assume in the proof that $\V$ is effective, which we do starting from now, and work in Setting \ref{setting}

Assume that $\HLM_\trans \neq \emptyset$. By Proposition \ref{typtotrans}, up to replacing $(\M,D_\M)$ by a rational conjugate (which doesn't change $\HLM_\trans$), we can assume that there exists an $(\M,D_\M)$-transverse special subvariety $Z$ of $S$ for $\V$ such that furthermore there exists a smooth point $x \in \Phi(Z^\an)$ and a $z \in \I$ such that $\pi_\Gamma(z) = x$ and $T_z D_\M + T_z \I = T_z D_\pp$ . Let $S_\M$ be the Zariski-closure of $\HLM_\trans$ in $S$. We want to prove that $S_\M = S$. 

We first claim that it suffices to prove that $\Phi(S_\M^\an) = \Phi(S^\an)$. Indeed assume for the sake of contradiction that the latter holds but $S_\M \subsetneq S$. Recall that $\Phi$ factors as $i^\an \circ f^\an$ with $f$ a dominant algebraic morphism from $S$ to a quasi-projective complex algebraic variety $Y$, and $i$ a definable immersion. Since $f(S_\M) = f(S)$ we know that $S_\M$ meets every fiber of $f$ over $f(S)$, and since $S_\M \subsetneq S$, generic flatness applied to $f \vert_{S_\M}$ ensures the existence of a dense Zariski-open subset $U \subset f(S_\M)$ such that $S_\M$ meets the fiber of $f$ over any point of $U$ properly. By definition of $S_\M$, the preimages by $i$ of transverse irreducible components of intersections of $\Phi(S^\an)$ with $\phi_{i, \Gamma}(\period_{(g\M g^{-1},g \cdot D_\M), \Gamma_{g\M g^{-1}}})$ (for $g$ varying in $\pp(\Q)^+$) are Zariski dense in $f(S_\M)$, hence we can pick one, say $X \subset f(S_\M)$, meeting $U$. But now by definition, the fiber over any point of $X \cap U$ is fully contained in $S_\M$ (because special subvarieties of $S$ are preimages under $\Phi$ of intersections of the period image with Hodge subvarieties) which gives the desired contradiction.

Let $x \in \Phi(Z^\an)$ be a smooth point, and $z \in \I$ such that $\pi_\Gamma(z) = x$. Since $\Phi(S^\an)$ is irreducible it suffices to prove that $T_x \Phi(S_\M^\an) = T_x \Phi(S^\an)$. Let $\I_\M = \pi_\Gamma^{-1}(\Phi(S_\M^\an)) \cap \I$, and $\Zcal \subset \I_\M$ a complex-analytic irreducible component of the intersection of $\I$ and $D_\M$ containing $z$, such that $\pi_\Gamma$ identifies the germ of $\Zcal$ at $z$ with the germ of $\Phi(Z^\an)$ at $x$. Since $\pi_\Gamma$ is a local biholomorphism, the equality $T_x \Phi(S_\M^\an) = T_x \Phi(S^\an)$ is equivalent to $T_z \I_\M = T_z \I$ in $T_z D_\pp$. Recall that $T_z D_\pp$ can be identified to $\lie(\pp_\C)/ F_z^0 \lie(\pp_\C)$. Denote by $(T_z D_\pp)_\R$ the image of $\lie(\pp_\R)$ under the quotient map $\lie(\pp_\C) \rightarrow \lie(\pp_\C) / F_z^0 \lie(\pp_\C)$. We will use the following, to be shown below.
\begin{lemma}\label{realsub}
$(T_z D_\pp)_\R \subset T_z \I_\M + T_z D_\M \subset T_z D_\pp$
\end{lemma}
Then, combining the facts that $(T_z D_\pp)_\R \oplus i (T_z D_\pp)_\R = T_z D_\pp$ and that $T_z \I_\M + T_z D_\M$ is a complex vector subspace of $T_z D_\pp$, one finds that $T_z \I_\M + T_z D_\M = T_z D_\pp$. On the other hand, by definition $\I$ and $D_\M$ intersect transversally along $\Zcal$, so that $T_z \I + T_z D_\M = T_z D_\pp$ and $T_z \I \cap T_z D_\M = T_z \Zcal$. But $\Zcal \subset \I_\M$ hence $T_z \Zcal \subset T_z \I_\M$, so that $T_z \I_\M \cap T_z D_\M = T_z \Zcal$.  Therefore, using Grassmann formula twice, one finds
\[
\dim T_z \I_\M  = \dim T_z D_\pp - \dim T_z D_\M + \dim T_z \Zcal = \dim T_z \I.
\]
This shows that $T_z \I_\M = T_z \I$ as desired.
\end{proof}
\begin{proof}[Proof of Lemma \ref{realsub}]
Let $\mu : \pp(\R)^+ \times D_\pp \rightarrow D_\pp, (p,x) \mapsto p \cdot x$ be the action map, and for any $x \in D_\pp$ denote by $\mu_x : \pp(\R)^+ \rightarrow D_\pp$ be the composition of $\mu$ with the real-analytic immersion $\pp(\R)^+ \hookrightarrow \pp(\R)^+ \times D_\pp, p \mapsto (p,x)$. By definition, one has that $(T_x D_\pp)_\R = \im(d_{1}\mu_x)$, where $1$ denotes the identity of $\pp(\R)^+$.

By assumption, the inclusion $i_1 : D_\M \hookrightarrow D_\pp$ is transverse to $\I$ at $z$. The map $i:= \mu\vert_{\pp(\R)^+ \times D_\M}: \pp(\R)^+ \times D_\M \rightarrow D_\pp$ is a smooth map such that for any $x \in D_\M$, one has $i(1,x) = i_1(x)$. Therefore, Theorem \ref{stabtrans} ensures the existence of an open neighborhood $\Omega$ of $1$ in $\pp(\R)^+$ and a smooth map $\sigma : \pp(\R)^+ \rightarrow D_\M$ such that $\sigma(1) = z$ and for any $g \in \Omega$, the map $i_g := i(g, \cdot) : D_\M \hookrightarrow D_\pp$ is transverse to $\I$ at $\sigma(g)$.

Let $\mathscr{X} = i^{-1}(\I) \cap \pi^{-1}(\Omega)$ where $\pi : \pp(\R)^+ \times D_\M \rightarrow \pp(\R)^+$ is the projection map. We still denote by $\pi$ the restriction of the projection to $\mathscr{X}$ and we denote by $f : \mathscr{X} \rightarrow \I \subset D_\pp$ the restriction to $\mathscr{X}$ of $i$. These objects fit in the following diagram of smooth manifolds.
\[
\xymatrix{\mathscr{X} \ar[r]^f \ar[d]_\pi & \I \subset D_\pp \\ \pp(\R)^+ & }.
\]
We claim that the following two properties are satisfied:
\begin{itemize}
\item[$(1)$] the smooth map $\sigma_\M : \Omega \rightarrow D_\pp$ which assigns to a $g \in \Omega$ the point $\mu(g^{-1}, \sigma(g))$ has image in $D_\M$;
\item[$(2)$] the smooth map $\sigma$ has image in $\I_\M \subset \I$.
\end{itemize}
Property $(1)$ is simply a consequence of the definition of $\mathscr{X}$. Indeed for any $(g,x) \in \mathscr{X}$, one has $x \in g \cdot D_\M \cap \I$ hence $\mu(g^{-1},x) \in D_\M$. Furthermore, $\sigma_\M$ is a smooth map as $\mu$, $\sigma$ and the inversion in $\pp(\R)^+$ are. To show property $(2)$, note that by construction $g \cdot D_\M$ and $\I$ intersect transversally at $\sigma(g)$ for any $g \in \Omega$. Hence, by Lemma \ref{equivdifftrans}, for any $g \in \Omega \cap \pp(\Q)^+$ one has $\pi_\Gamma(\sigma(g)) \in \Phi(S_\M^\an)$ and therefore $\sigma(g) \in \I_\M$. Since $\I_\M$ is locally closed and $\Omega \cap \pp(\Q)^+$ is dense in $\Omega$, it follows that $\sigma(g) \in \I_\M$ for any $g \in \Omega$ as claimed in $(2)$.

We are now in position to conclude the proof. By definition of $\sigma_\M$, we have $\sigma(g) = \mu(g, \sigma_\M(g))$ for any $g \in \Omega$. Applying the chain rule, we therefore have:
\[
d_1 \sigma = (d_{(1,z)}\mu) \circ (\id \oplus d_1 \sigma_\M) = d_1 \mu_z + d_1 \sigma_\M.
\]
Therefore, 
\[
(T_z D_\pp)_\R = \im(d_1 \mu_z) \subset \im(d_1 \sigma - d_1 \sigma_\M) \subset T_z \I_\M + T_z D_\M \subset T_z D_\pp,
\]
as desired.
\end{proof}
\begin{corollary}\label{necplus}
Assume that $\HLM_\trans$ is non-empty. Then $(\M,D_\M)$ is $\V$-likely.
\end{corollary}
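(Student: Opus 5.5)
The plan is to combine Theorem \ref{allornothing} with Proposition \ref{nec}. Proposition \ref{nec} already shows that $(\M,D_\M)$ is $\V$-likely as soon as some $(\M,D_\M)$-transverse special subvariety meets the Zariski open subset $S_\gen$ of Proposition \ref{indep}. The only issue is therefore to produce a transverse special subvariety meeting $S_\gen$, and Zariski density supplies this.

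First, reduce to the case where $\V$ is effective. The same argument as at the start of the proof of Theorem \ref{allornothing} applies here. Passing to finite étale covers does not change the set of strict mixed Hodge subclasses or the quotient dimensions entering (\ref{likelyn}). Restricting to the open subset over the smooth locus of $\Phi(S^\an)$ keeps $\HLM_\trans$ non-empty and leaves the closures $\dim (q_\NN\circ\tilde\Phi)(\widetilde{S^\an})$ unchanged, because the image is irreducible and the dimension of a dense open subset of it equals the dimension of the whole. So likeliness for the new pair is equivalent to likeliness for the original one.

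Now assume $\HLM_\trans\neq\emptyset$. By Theorem \ref{allornothing}, $\HLM_\trans$ is Zariski dense in $S$. Since $S_\gen$ is a non-empty Zariski open subset of the irreducible variety $S$, its complement is a strict Zariski-closed subset. A Zariski-dense set cannot be contained in a strict Zariski-closed subset, so some $(p\M p^{-1},p\cdot D_\M)$-transverse special subvariety $Z$, with $p\in\pp(\Q)^+$, satisfies $Z\cap S_\gen\neq\emptyset$.

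Next, apply Proposition \ref{nec} to $(p\M p^{-1},p\cdot D_\M)$. Here one has to check that the open subset $S_\gen$ in Proposition \ref{indep} can be chosen independently of rational conjugation of $\M$; this is the step needing most care. Its construction (the sets $S_{\NN,\ex}$ and the finiteness in Lemma \ref{finitezp}) involves only $\I$ and the fibers $q_\NN^{-1}(t)$, and never $\M$, so this should hold. The result is that $(p\M p^{-1},p\cdot D_\M)$ is $\V$-likely.

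Finally, transfer likeliness back to $\M$. Translation by $p\in\pp(\Q)^+$ is a biholomorphism of $D_\pp$, and it is compatible with $q_\NN$ because each $\NN\in\Np$ is normal in $\pp$. Hence $\dim q_\NN(p\cdot D_\M)=\dim q_\NN(D_\M)$ for every $\NN\in\Np$, so inequality (\ref{likelyn}) holds for $\M$ exactly when it holds for its conjugate. This concludes the argument.
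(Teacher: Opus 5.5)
Your proposal is correct and follows the paper's proof: Theorem \ref{allornothing} gives Zariski density of $\HLM_\trans$, so some $(p\M p^{-1},p\cdot D_\M)$-transverse special subvariety meets $S_\gen$; Proposition \ref{nec} then makes that conjugate $\V$-likely, and likeliness is invariant under rational conjugation. Your extra checks fill in details the paper leaves implicit: the reduction to effective $\V$, the fact that $S_\gen$ does not change when $\M$ is conjugated, and the $q_\NN$-equivariance used to transfer likeliness back to $\M$.
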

\begin{proof}
Since $\HLM_\trans$ is non-empty, Theorem \ref{allornothing} ensures that $\HLM_\trans$ is Zariski-dense in $S$. Therefore, there exists a $g \in \pp(\Q)^+$ and a $(g \M g^{-1}, g \cdot D_\M)$-transverse special subvariety $Z$ of $S$ for $\V$ such that $Z \cap S_\gen \neq \emptyset$. By Proposition \ref{nec}, this forces $(g \M g^{-1}, g \cdot D_\M)$ to be $\V$-likely, which in turn implies that $(\M,D_\M)$ is $\V$-likely.
\end{proof}
Finally, we can readily prove Corollary \ref{aontypos}:
\begin{proof}[Proof of Corollary \ref{aontypos}]
Assume that $\HL_{\typ, \pos}$ is non-empty. In particular there exists a strict mixed Hodge subclass $(\M, D_\M)$ of $(\pp, D_\pp)$ such that $\HLM_\trans$ is non-empty and:
\[
\dim D_\M + \dim \Phi(S^\an) - \dim D_\pp > 0.
\]
By Theorem \ref{allornothing}, the non-emptiness of $\HLM_\trans$ implies that $\HLM_\trans$ is dense in $S$ for the Zariski-topology. Besides the dimension inequality above implies that \[\HLM_\trans \subset \HL_\pos.\] Hence we have shown that $\HL_\pos$ is dense in $S$ for the euclidean topology. On the other hand, Corollary \ref{zpcor} implies that $\HL_{\atyp, \mathrm{pos}}$ is not dense in $S$ for the Zariski topology since $\HL_{\atyp, \mathrm{pos}}$ is contained in $\HL_{\atyp, \mathrm{f-pos}}$ by definition. Combining the above two facts, the disjunction $\HL_{\mathrm{pos}} = \HL_{\typ,\mathrm{pos}} \cup \HL_{\atyp,\mathrm{pos}}$ forces $\HL_{\typ, \mathrm{pos}}$ to be dense in $S$ for the Zariski topology, as desired.
\end{proof}
\subsection{Analytic density}
Let $(\pp, D_\pp)$ be a mixed Hodge class and denote by $W_\bul \lie(\pp)$ the induced weight filtration on $\lie(\pp)$.
\begin{definition}
Let $(\M,D_\M)$ be a strict mixed Hodge subclass of $(\pp, D_\pp)$. It is called \textit{sufficiently unipotent} if $\exp(W_{-2}\lie(\pp)) \subset \M$.
\end{definition}
The usefulness of this a priori obscure property lies in the following:
\begin{lemma}\label{sufuncrit}
Let $(\M,D_\M)$ be a strict mixed Hodge subclass of $(\pp,D_\pp)$ which is sufficiently unipotent. Then $\pp(\R)^+\cdot D_\M = D_\pp$.
\end{lemma}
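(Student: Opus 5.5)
The plan is to show that $\pp(\R)^+$ acts transitively on $D_\pp$ modulo the action of $\exp(W_{-2}\lie(\pp))(\C)$, and that this complex unipotent group already preserves $D_\M$. Recall that $D_\pp$ is a $\pp(\R)^+\pp^u(\C)$-orbit. It therefore suffices to show that every point of $D_\pp$ lies in $\pp(\R)^+\cdot D_\M$. Equivalently, for $h\in D_\M$ we must show $\pp(\R)^+\pp^u(\C)\cdot h\subset \pp(\R)^+\cdot D_\M$.

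The key input is the classical structure of $\pp^u$. By Pink's axiom $(iii)$, $\lie(\pp^u)=W_{-1}\lie(\pp)$. Set $\uu_{-2}=\exp(W_{-2}\lie(\pp))$. This is a normal unipotent $\Q$-subgroup of $\pp$, since $W_{-2}$ is an ideal stable under $\Ad(\pp)$, and $\pp^u/\uu_{-2}$ is a vector group with Lie algebra $\gr^W_{-1}\lie(\pp)$.

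First step: $\uu_{-2}(\C)$ preserves $D_\M$. Sufficient unipotence gives $\uu_{-2}\subset\M$. Lemma \ref{subgroup} gives $\M^u=\pp^u\cap\M\supset\uu_{-2}$, so $\uu_{-2}(\C)\subset\M^u(\C)$. Since $D_\M$ is an $\M(\R)^+\M^u(\C)$-orbit, it is stable under $\uu_{-2}(\C)$.

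Second step: $\pp^u(\C)\subset\pp^u(\R)\,\uu_{-2}(\C)$. This is the standard fact that weight $-1$ pieces are ``real up to weight $-2$''. Modulo $\uu_{-2}$, the group $\pp^u$ becomes the vector group $V=\gr^W_{-1}\lie(\pp)$, and $V_\C=V_\R+iV_\R$. One cannot simply absorb $iV_\R$, however: the point is that the orbit map on this factor is already surjective using real elements. Concretely, the tangent space to $D_\pp$ in the $\gr_{-1}$-direction, $\bigoplus_{p<0,\,p+q=-1}\para^{p,q}$, is identified with $V_\C/F^0V_\C$. Since $\gr^W_{-1}$ is pure of odd weight $-1$, the natural map $V_\R\to V_\C/F^0V_\C$ is an isomorphism. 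Hence the $\pp^u(\R)\uu_{-2}(\C)$-orbit of $h$ is open in its $\pp^u(\C)$-orbit.

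To get equality of orbits, not just openness, I would argue fiberwise over $q_{\uu_{-2}}:D_\pp\to D_\pp/\uu_{-2}$. The fibres are $\uu_{-2}(\C)$-orbits. In $D_\pp/\uu_{-2}$, the $\pp^u/\uu_{-2}$-orbits of a point form a torus-like family of extensions of pure structures by weight $-1$ data, i.e. a Griffiths intermediate Jacobian-type space. There the complex unipotent orbit equals the real unipotent orbit, since $V_\R\cong V_\C/F^0$ acts simply transitively.

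Third step, conclusion: for $g\in\pp(\R)^+$ and $u\in\pp^u(\C)$, write $u=u_\R u_2$ with $u_\R\in\pp^u(\R)$ and $u_2$ acting through $\uu_{-2}(\C)$ on the relevant orbit. Then $gu\cdot h=(gu_\R)\cdot(u_2\cdot h)\in\pp(\R)^+\cdot D_\M$, noting that $\pp^u(\R)\subset\pp(\R)^+$ because $\pp^u(\R)$ is connected.

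The main obstacle is the second step. The decomposition is not literally a group factorization, because $\pp^u$ need not be abelian and commutators of imaginary parts land in $\uu_{-2}$. I would handle this by the induction on the orbit in $D_\pp/\uu_{-2}$ just described, using the commutator relation $[W_{-1},W_{-1}]\subset W_{-2}$ to absorb the error into $\uu_{-2}(\C)$.
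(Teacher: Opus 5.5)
Your argument is correct, and its skeleton is the paper's. You show that $\pp(\R)^+\uu'(\C)$ acts transitively on $D_\pp$, where $\uu'=\exp(W_{-2}\lie(\pp))$. You then use $\uu'\subset\M^u$, so that $\uu'(\C)$ preserves $D_\M$. The difference is in how transitivity is obtained. The paper cites \cite[Prop.~1.16]{pinkthese}: the bigraduations in $X_\pp$ whose image in $\pp/\uu'$ is defined over $\R$ form a single $\pp(\R)\uu'(\C)$-orbit surjecting onto $\mathcal{D}_\pp$. You prove it by hand, using that $V=\gr^W_{-1}\lie(\pp)$ is pure of odd weight $-1$, so $V_\C=V_\R\oplus F^0_hV_\C$ as real vector spaces. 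Your route is self-contained and shows why $W_{-2}$ is the right threshold: on $\gr^W_{-2}$ the map $V_\R\to V_\C/F^0_hV_\C$ misses exactly the $(-1,-1)$-part, as in the $\C^\times$ example of Sect.~\ref{sect6}. The citation is shorter and gives finer information about bigraduations that are real modulo $\uu'$.

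Your second step needs cleaning up. Its announced inclusion $\pp^u(\C)\subset\pp^u(\R)\uu_{-2}(\C)$ is false whenever $\gr^W_{-1}\lie(\pp)\neq 0$, since modulo $\uu_{-2}$ it reads $V_\C\subset V_\R$. What you actually use, and what holds, is the orbit identity $\pp^u(\C)\cdot h=\pp^u(\R)\uu_{-2}(\C)\cdot h$ for every $h\in D_\pp$, equivalently $\pp^u(\C)=\pp^u(\R)\,\uu_{-2}(\C)\,\mathrm{Stab}_{\pp^u(\C)}(h)$.

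This needs no induction and no commutator bookkeeping. The failure of a literal factorization has nothing to do with $\pp^u$ being non-abelian; it is just $V_\C\neq V_\R$. The stabilizer contains $\exp(F^0_hW_{-1}\lie(\pp)_\C)$. By strictness, its image in $V_\C=\pp^u(\C)/\uu_{-2}(\C)$ is $F^0_hV_\C$. Since $V_\C=V_\R+F^0_hV_\C$, the identity follows. Alternatively, your openness observation already suffices: it holds at every point of the connected homogeneous space $\pp^u(\C)\cdot h$, so that space is a single open $\pp^u(\R)\uu_{-2}(\C)$-orbit.

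Correspondingly, in the third step ``write $u=u_\R u_2$'' should read $u\cdot h=u_\R u_2\cdot h$ with $u_2\in\uu_{-2}(\C)$.
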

\begin{proof}
Choose $(\pp, X_\pp, D_\pp)$ and $(\M, X_\M, D_\M)$ representatives of $(\pp, D_\pp)$ and $(\M,D_\M)$ such that the inclusion of $\M$ in $\pp$ makes $(\M,X_\M, D_\M)$ a mixed Hodge subdatum of $(\pp, X_\pp, D_\pp)$. Let $\uu' := \exp(W_{-2}\lie(\pp))$. Let $X_\pp'$ be the set of $x \in X_\pp$ such that the post-composite of $x$ with the complexification of the quotient map $\pp \rightarrow \pp/\uu'$ is defined over $\R$. By \cite[Prop. 1.16]{pinkthese}, we have that $X_\pp'$ is a non-empty $\pp(\R)\uu'(\C)$-orbit in $X_\pp$ such that $\psi_{X_\pp}(X_\pp') = \mathcal{D}_\pp$. Since $\psi_{X_\pp}$ is $\pp(\R)\pp^u(\C)$-equivariant, this proves that \[\pp(\R)^+\uu'(\C) = (\pp(\R)^+\pp^u(\C)) \cap (\pp(\R)\uu'(\C)) \subset \pp(\R)^+\pp^u(\C)\] acts transitively on $D_\pp$.

Assume that $(\M,D_\M)$ is sufficiently unipotent, that is $\uu' \subset \M$. By definition of $D_\pp$, one has the containment $\pp(\R)^+ D_\M \subset D_\pp$, so we are tasked with proving the inverse inclusion. Let $h \in D_\pp$. Since $\pp(\R)^+ \uu'(\C)$ acts transitively on $D_\pp$, there exists $h_\M \in D_\M$, $p \in \pp(\R)^+$ and $u \in \uu'(\C)$ such that $pu \cdot h_\M = h$. From weight considerations one has that $\uu' \subset \M$ is a unipotent normal subgroup of $\M$. Therefore it is contained in $\M^u$ and we find $u \cdot h_\M \in D_\M$. Therefore $h \in \pp(\R)^+ \cdot D_\M$ as desired.
\end{proof}
Then the main result of this subsection can be stated as follows:
\begin{theorem}\label{mainprescribed}
Let $\V$ be an admissible integral and graded-polarized variation of mixed Hodge structures on a smooth irreducible and quasi-projective complex algebraic variety $S$. Let $(\pp, D_\pp)$ be the generic Hodge class of $\V$.  Let $(\M, D_\M)$ be a strict mixed Hodge subclass of $(\pp, D_\pp)$ which is sufficiently unipotent. Then the following are equivalent:
\begin{enumerate}
\item[(i)] $\HLM_\trans$ is non-empty;
\item[(ii)] $\HLM_\trans$ is dense in $S^\an$ for the euclidean topology;
\item[(iii)] $(\M,D_\M)$ is $\V$-likely.
\end{enumerate}
\end{theorem}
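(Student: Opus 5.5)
The implications $(ii)\Rightarrow(i)$ and $(i)\Rightarrow(iii)$ are essentially already available. The first is trivial. The second is Corollary \ref{necplus}: non-emptiness gives Zariski density by Theorem \ref{allornothing}. Hence some rational conjugate type has a transverse special subvariety meeting $S_\gen$, and Proposition \ref{nec} then gives $\V$-likeliness. The real content is $(iii)\Rightarrow(ii)$, and I would follow the strategy of the pure case in \cite{ku}, using Lemma \ref{sufuncrit} in place of the fact that $\pp(\R)^+$ acts transitively on a pure period domain. As in the proof of Theorem \ref{allornothing}, Lemma \ref{stabdens}, Lemma \ref{hlpb} and Proposition \ref{pbeff} let me assume $\V$ effective and work in Setting \ref{setting}. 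It then suffices to show that $\Phi^{-1}(\Phi(\HLM_\trans))$ meets every non-empty euclidean open subset of $S_\gen^\an$.

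Fix such an open set. Inside it, choose a Hodge generic point $s$ and a lift $h\in\I$ of $\Phi(s)$. By Lemma \ref{sufuncrit}, since $(\M,D_\M)$ is sufficiently unipotent, there is $g\in\pp(\R)^+$ with $h\in g\cdot D_\M$. By Proposition \ref{suf}, which uses $\V$-likeliness and $s\in S_\gen$, every component of $(g\cdot D_\M)\cap\I$ through $h$ has the expected dimension $\dim D_\M+\dim\I-\dim D_\pp$. Proposition \ref{typtotrans} then provides a neighbourhood $B$ of $g$ in $\pp(\R)^+\pp^u(\C)$ and a null set $B_{\mathrm{crit}}$ off which $p\cdot D_\M$ is differentially transverse to $\I$. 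Next I perturb $g$ to some $g'\in\pp(\R)^+\cap(B-B_{\mathrm{crit}})$ close to $g$. Here I need the real slice $\pp(\R)^+\cap B$ not to lie inside $B_{\mathrm{crit}}$; I would obtain this by rerunning the Sard argument of Proposition \ref{typtotrans} with the parameter space restricted to $\pp(\R)^+$, which is legitimate because the $\pp(\R)^+$-orbit of $D_\M$ already covers $D_\pp$. The stability theorem (Theorem \ref{stabtrans}) applied to the family $p\mapsto p\cdot D_\M$ over $p\in\pp(\R)^+$ then gives an open set $\Omega\subset\pp(\R)^+$ and a continuous section $\sigma:\Omega\to\I$ such that $p\cdot D_\M$ is transverse to $\I$ at $\sigma(p)$, with $\sigma(p)$ close to $h$.

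Finally, since $\pp(\Q)^+$ is dense in $\pp(\R)^+$, I pick $\gamma\in\Omega\cap\pp(\Q)^+$. By Lemma \ref{equivdifftrans}, the component of $(\gamma\cdot D_\M)\cap\I$ through $\sigma(\gamma)$ is a transverse intersection. Its preimage in $S$ is a special subvariety that is $(\gamma\M\gamma^{-1},\gamma\cdot D_\M)$-transverse; it is not contained in the singular locus because $\Phi(S^\an)$ is smooth by effectivity. This subvariety meets the chosen open set, which gives density.

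The main obstacle is the real-slice issue in the perturbation step. Proposition \ref{typtotrans} is stated for $\pp(\R)^+\pp^u(\C)$, while rational points are dense only in $\pp(\R)^+$. This is exactly where the sufficiently unipotent hypothesis has to enter, beyond the use of Lemma \ref{sufuncrit} to produce $g$. My first attempt is to redo the Sard argument for the map $\pp(\R)^+\times D_\M\to D_\pp$: this map is a submersion, because its differential at $(1,z)$ contains $(T_zD_\pp)_\R+T_zD_\M$, and I expect this sum to be all of $T_zD_\pp$ when $W_{-2}\lie(\pp)\subset\lie(\M)$. Once that holds, regular values are of full measure in $\pp(\R)^+$ and the rest of the argument goes through.
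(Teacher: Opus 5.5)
Your proposal is correct and follows the paper's proof essentially step for step: Corollary \ref{necplus} gives (i)$\Rightarrow$(iii), and Lemma \ref{sufuncrit}, Proposition \ref{suf}, Proposition \ref{typtotrans}, the stability theorem and the density of $\pp(\Q)^+$ in $\pp(\R)^+$ give (iii)$\Rightarrow$(ii). The real-slice perturbation you flag is exactly the step the paper passes over with ``up to replacing $(z,p_0)$ by a neighboring couple in $\I \times \pp(\R)^+$''. Your fix works, and the submersion you only ``expect'' does hold: by the proof of Lemma \ref{sufuncrit}, with $\uu' := \exp(W_{-2}\lie(\pp))$, the group $\pp(\R)^+\uu'(\C)$ acts transitively on $D_\pp$, and since $\uu' \subset \M^u$ the $\uu'(\C)$-directions at points of $D_\M$ are tangent to $D_\M$, so Sard on $\pp(\R)^+$ applies.
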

\begin{proof}
Of course $(ii)$ implies $(i)$, and Corollary \ref{necplus} shows that $(i)$ implies $(iii)$, both without assuming that $(\M,D_\M)$ is sufficiently unipotent. Assume that $(\M,D_\M)$ is $\V$-likely and sufficiently unipotent. We are tasked with proving that  $\HLM_\trans$ is dense in $S^\an$ for the euclidean topology. As the assumptions and desired conclusion are unchanged after replacing $S$ by a non-empty Zariski-open subset or by a finite etale cover, we can assume that $\V$ is effective and work in Setting \ref{setting}, and assume that $S = S_\gen$ in the notation of Proposition \ref{indep}.  

Let $s \in S^\an$ be a Hodge generic point, and $z \in \I$ a lift of $\Phi(s)$ in $\I$. let $B_0 \subset S^\an$ be an open ball containing $s$ and $B \subset \I$ be an open ball containing $z$ such that $\Phi^{-1}(\pi_\Gamma(B))$ (which is an open neighborhood of $s$ in $S^\an$ for the euclidean topology) is contained in $B_0$. Let $i : \pp(\R)^+ \times D_\M \rightarrow D_\pp$ the restriction of the action map, which is a smooth map between smooth manifolds. For $p \in \pp(\R)^+$, we denote by $i_p : D_\M \rightarrow D_\pp$ mapping $x \in D_\M$ to $i(p,x)$. We are tasked with finding an element $p \in \pp(\Q)^+$ such that $i_p$ is transverse to $\I$ at some point $x \in D_\M$ such that $i_p(x) \in B$. 

By Lemma \ref{sufuncrit}, there exists a $p_0 \in \pp(\R)^+$ such that $z \in p_0 \cdot D_\M$. Let $U$ be a complex-analytic irreducible component of $\I \cap (p_0 \cdot D_\M)$ containing $z$. By Proposition \ref{suf}, one has
\begin{equation}\label{eqdim}
\dim U = \dim (p_0 \cdot D_\M) + \dim \I - \dim D_\pp.
\end{equation}
By Proposition \ref{typtotrans}, this implies that up to replacing $(z, p_0)$ by a neighboring couple in $\I \times \pp(\R)^+$ still satisfying $z \in p_0 \cdot D_\M \cap \I$, the inclusion $i_{p_0}$ is transverse to $\I$ at $z_\M := p_0^{-1} \cdot z$. Theorem \ref{stabtrans} then ensures the existence of an open neighborhood $\Omega$ of $p_0$ in $\pp(\R)^+$ and a smooth map $\sigma : \Omega \rightarrow D_\M$ such that $\sigma(p_0) = z_\M$ and for any $p \in \Omega$, the map $i_p$ is transverse to $\I$ at $\sigma(p)$. Appealing to Lemma \ref{equivdifftrans} one finds that for any $p \in \Omega$ the translate $p \cdot D_\M$ and $\I$ intersect transversally in $D_\pp$ along some complex-analytic irreducible component $U_p$ containing $i_p(\sigma(p))$.

Combining the facts that $\sigma$ and $i$ are smooth, that $i(p_0,\sigma(p_0)) = z$ and that $\pp(\Q)^+$ is analytically dense in $\pp(\R)^+$, we conclude to the existence of a $p \in \pp(\Q)^+ \cap \Omega$ such that $i(p, \sigma(p)) \in B$. By construction, we then have a $(p \M p^{-1}, p \cdot D_\M)$-transverse special subvariety of $S$ for $\V$ meeting $B_0$. As Hodge generic points are analytically dense in $S^\an$, this proves that $\HLM_\trans$ is analytically dense in $S^\an$, as desired.
\end{proof}
In particular this yields:
\begin{proof}[Proof of Theorem \ref{thm1}]
Assume that $\gr^W_{-2} \lie(\pp) = 0$ which is equivalent to $W_{-2} \lie(\pp) = W_{-3} \lie(\pp)$. By Proposition \ref{likconstr}$(i)$ one knows that any $\V$-likely strict mixed Hodge subclass $(\M, D_\M) \subsetneq (\pp, D_\pp)$ satisfies $W_{-3} \lie(\M) = W_{-3} \lie(\pp)$, which under the above assumption ensures that $\exp(W_{-2} \lie(\pp)) \subset \M$. Therefore, any $\V$-likely strict mixed Hodge subclass of $(\pp, D_\pp)$ is sufficiently unipotent and Theorem \ref{mainprescribed} finishes the proof.
\end{proof}
\section{The full transverse mixed Hodge locus}\label{sect9}
As demonstrated in Propositions \ref{contrexc} and \ref{contrexc1}, the $\V$-likeliness of a mixed Hodge subclass $(\M, D_\M)$ is not sufficient in general to guarantee the density of $\HLM_\trans$ or $\HL_\trans$ in $S$ for the euclidean topology. However, we can still show:
\begin{theorem}\label{mainfull}
Let $\V$ be an admissible and graded-polarized integral variation of mixed Hodge structures on a smooth and irreducible quasi-projective complex algebraic variety $S$. Let $(\pp, D_\pp)$ be the generic mixed Hodge class of $\V$. Then the following assertions are equivalent:
\begin{itemize}
\item[(i)] $\HL_\trans$ is non-empty;
\item[(ii)] $\HL_\trans$ is dense in $S$ for the Zariski topology;
\item[(iii)] There exists a strict $\V$-likely mixed Hodge subclass $(\M, D_\M) \subsetneq (\pp, D_\pp)$.
\end{itemize}
\end{theorem}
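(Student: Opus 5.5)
The implications $(ii)\Rightarrow(i)$ and $(i)\Rightarrow(iii)$ are formal from what precedes. For $(ii)\Rightarrow(i)$, note that $S$ is non-empty. For $(i)\Rightarrow(iii)$, if $\HL_\trans\neq\emptyset$ then some $\HLM_\trans$ is non-empty for a strict subclass $(\M,D_\M)$. By Corollary \ref{necplus}, $(\M,D_\M)$ is then $\V$-likely. Similarly, $(i)\Rightarrow(ii)$ follows from Theorem \ref{allornothing}, since $\HLM_\trans\subset\HL_\trans$ and Zariski density passes to the union. So the substance of the theorem is $(iii)\Rightarrow(i)$: given a strict $\V$-likely subclass, we must produce at least one transverse special subvariety, possibly of a different type.

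The plan is to reduce to the effective setting (Proposition \ref{pbeff}, Lemma \ref{stabdens}, Lemma \ref{hlpb}) with $S=S_\gen$. Take a Hodge generic $s$ with lift $z\in\I$. Choose $g\in\pp(\R)^+\pp^u(\C)$ with $z\in g\cdot D_\M$. Proposition \ref{suf} gives a component of $\I\cap(g\cdot D_\M)$ of the expected dimension. Proposition \ref{typtotrans} perturbs $g$ so that $g\cdot D_\M$ is differentially transverse to $\I$. Theorem \ref{stabtrans} then gives transversality for all $g'$ in a neighbourhood $\Omega$ of $g$ in $\pp(\R)^+\pp^u(\C)$.

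The obstacle is that $\Omega$ need not contain a point of $\pp(\Q)^+$: rational points are dense in $\pp(\R)^+$ but not in $\pp(\R)^+\pp^u(\C)$. This is exactly the failure of euclidean density seen in Proposition \ref{contrexc}. In the sufficiently unipotent case Lemma \ref{sufuncrit} removed it, and here we must change the type instead. My approach is to replace $(\M,D_\M)$ by a strict subclass built with the torsion points of Proposition \ref{torsionex}. Write the $\pp^u(\C)$-part of $g$ as $u$, with $u=\exp(v)$ and $v$ in $W_{-2}\lie(\pp)_\C$ modulo real directions. Using Proposition \ref{likconstr}, only the $(-1,-1)$ part of $W_{-2}$ is at issue, since $W_{-3}$ is already contained in $\M$.

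The idea is to enlarge $\M$ in this $(-1,-1)$ direction so that the missing imaginary translations become inner. Concretely, consider $\M':=\M\cdot\exp(\gr^W_{-2}\text{-lift})$, or rather the subgroup generated by $\M$ and $\exp(W_{-2}\lie(\pp))$. Either this is still strict, in which case it is a sufficiently unipotent subclass which is still $\V$-likely (likeliness is monotone under enlarging $D_\M$), and Theorem \ref{mainprescribed} gives $\HL(S,\V^\otimes,\M')_\trans\neq\emptyset$. Or it equals $\pp$, in which case $\M$ already surjects onto $\pp/\exp(W_{-2}\lie(\pp))$.

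In the second case I would argue differently. The mixed Hodge variety fibres over $\period/\exp(W_{-2}\lie(\pp))$ with fibres that are quotients of $\C^n$ by lattices, i.e. tori of type $(\C^\times)^n$ as in Section \ref{sect6}. The special subvarieties of type $\M$ restrict on these fibres to torsion translates of subtori. The argument then has two steps. First, as in the proof of Proposition \ref{contrexc}$(ii)$, the abundance of torsion translates (all rational $g\in\uu(\Q)$) lets the real-translate family $\pp(\R)^+\cdot D_\M$ sweep out an open set on the fibre coordinates transverse to the imaginary directions. Second, one chooses $g$ in $\Omega$ whose real part is rational, using the density of $\pp(\Q)^+$ in $\pp(\R)^+$ together with the fact that the imaginary $\pp^u(\C)$-component can be absorbed by varying the base point along $\I$ via the transversality already established.

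This last absorption step, showing that an open set of $\I$ meets $\pp(\Q)^+\cdot D_\M$ despite the non-real $(-1,-1)$ directions, is where I expect the real difficulty to lie. If it fails, the fallback is to use a torsion subclass inside $\M$ relative to the $(-1,-1)$ directions and a dimension count to produce transverse points of that smaller type directly.
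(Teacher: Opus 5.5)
Your formal implications are correct. So is your first case, $\M' := \M\cdot\exp(W_{-2}\lie(\pp))$ strict, which is exactly the paper's first case.
\begin{itemize}
\item $\M'$ is strict precisely when $k_\pp(\M)\geqslant -1$.
\item Your $\M'$ is a uniform version of the group $\LL$ of Lemma \ref{minusone}.
\item Likeliness passes to $\M'$ by monotonicity, so Theorem \ref{mainprescribed} applies.
\end{itemize}

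The gap is the remaining case $\M'=\pp$, i.e.\ $k_\pp(\M)=-2$. You leave it open, and the local strategy you propose for it cannot work. Example \ref{ex1} is of this kind: there $S=\mathbb{A}^1-\{0\}$, the period map is the identity, $\M=\mult_m$, $D_\M$ is a point and $\HL_\trans=\mu_\infty$. For a Hodge generic $s$ with $|s|\neq 1$, a whole neighbourhood of $s$ contains no transverse special subvariety. So neither perturbing $g$ inside $\Omega$ nor ``absorbing'' the imaginary $\pp^u(\C)$-component by moving the base point along $\I$ near $z$ can make a rational translate of $D_\M$ meet $\I$. Transversality at $z$ is local, and says nothing about the distant region of $\I$ (here the unit circle) where rational translates actually land.

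Your other suggestions do not rescue this case either. The euclidean sweeping in Proposition \ref{contrexc}$(ii)$ relies on infinitely many distinct types $(a,b)$, so it does not generalize. Your fallback, a torsion subclass inside $\M$, lowers $\dim D$ and hence the expected dimension, so it points the wrong way.

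What is missing is a global argument in a quotient.
\begin{itemize}
\item Take a common Levi subgroup $\G$ of $\M$ and $\pp$, which exists since $\M/\M^u=\pp/\pp^u$, and set $\NN:=\M^u\rtimes\G^\der$.
\item By Proposition \ref{likconstr}$(iii)$ together with $k_\pp(\M)=-2$, one has $\mt^{p,q}=\para^{p,q}$ for all $(p,q)\neq(-1,-1)$. Hence $\NN\in\Np$, $\lie(\pp/\NN)$ has type $\{(-1,-1),(0,0)\}$, and $D_\M$ is a single fibre of $q_\NN$ over a torsion point. The torus-like object is therefore the base $\period/\NN$, not the fibres of $\period\to\period/\exp(W_{-2}\lie(\pp))$, which also see $W_{-3}$ and the $(0,-2),(-2,0)$ parts.
\item Likeliness applied to this particular $\NN$ gives $\dim\Phi_{/\NN}(S^\an)\geqslant\dim\period/\NN$, i.e.\ $\Phi_{/\NN}$ is dominant.
\item It then suffices to find one torsion point of $\period/\NN$ in the Zariski-open part of the image over which fibres have the expected dimension $\dim\Phi(S^\an)+\dim D_\M-\dim D_\pp$. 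Such a point exists because the $(\pp/\NN)(\Q)^+$-orbit of a torsion point (Proposition \ref{torsionex}) lies in no proper analytic subvariety. This is the real-form tangent argument of Lemma \ref{realsub}.
\item A component of the fibre over that point is a transverse special subvariety, so $\HL_\trans\neq\emptyset$, and Theorem \ref{allornothing} gives $(ii)$.
\end{itemize}
The key ingredient is Zariski density of rational translates of torsion points in the base. It is not the euclidean density that your absorption step implicitly aims at, which is false in general.
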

The following notation will be convenient:
\begin{notation}
Let $(\pp, D_\pp)$ be a mixed Hodge class and $(\M, D_\M)$ be a strict mixed Hodge subclass, and let $W_\bul \lie(\M)$ be the adjoint weight filtration of $\lie(\M)$. We denote by $k_\pp(\M)$ the largest non-positive integer $k$ such that $\Gr^{W}_{k} \lie(\M) \subsetneq \Gr^{W}_{k} \lie(\pp)$. This is well defined because $(\M,D_\M)$ is a \textit{strict} mixed Hodge subclass of $(\pp, D_\pp)$.
\end{notation}
\begin{lemma}\label{minusone}
Let $(\M,D_\M) \subsetneq (\pp, D_\pp)$ be a strict mixed Hodge subclass which is $\V$-likely and satisfies $k_\pp(\M) \geqslant -1$. Then, there exists a strict mixed Hodge subclass $(\LL, D_\LL) \subsetneq (\pp, D_\pp)$ which is $\V$-likely and sufficiently unipotent.
\end{lemma}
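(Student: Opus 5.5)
Since $k_\pp(\M) \geqslant -1$, by definition $\Gr^W_k \lie(\M) = \Gr^W_k \lie(\pp)$ for every $k \leqslant -2$. Together with the inclusion $\M \subset \pp$ and strictness of weight filtrations on morphisms of mixed Hodge structures, this gives $W_{-2}\lie(\M) = W_{-2}\lie(\pp)$. Hence $\exp(W_{-2}\lie(\pp)) \subset \M$, so $(\M, D_\M)$ is itself sufficiently unipotent. We therefore plan to take $(\LL, D_\LL) = (\M, D_\M)$: it is strict and $\V$-likely by hypothesis, and sufficiently unipotent by the above.

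The steps are as follows. First, work at a point $z \in D_\M$ with a bigraduation $x$ factoring through $\M_\C$, as in Setting \ref{settingext}. Then $\mt^{p,q} \subset \para^{p,q}$ for all $(p,q)$, and $W_k \mt_\C = \bigoplus_{p+q \leqslant k} \mt^{p,q}$, and likewise for $\para$ by (\ref{weight}). Second, the hypothesis on $k_\pp(\M)$ gives, for each $k \leqslant -2$, that $\bigoplus_{p+q = k} \mt^{p,q}$ and $\bigoplus_{p+q=k}\para^{p,q}$ have the same dimension. Since the former is contained in the latter, they are equal. Summing over $k \leqslant -2$ yields $W_{-2}\mt_\C = W_{-2}\para_\C$, and since both are defined over $\Q$, $W_{-2}\mt = W_{-2}\para$. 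Third, $W_{-2}\para$ is a nilpotent ideal contained in $\lie(\pp^u)$, so $\exp(W_{-2}\lie(\pp))$ is a connected unipotent $\Q$-subgroup with Lie algebra $W_{-2}\mt \subset \mt$. It is therefore contained in the connected group $\M$.

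The only delicate point is the interpretation of $k_\pp(\M)$. If ``$\Gr^W_k\lie(\M) \subsetneq \Gr^W_k\lie(\pp)$'' is read as a strict inequality of dimensions, the argument above is immediate. It then remains to check that ``largest non-positive $k$'' is the intended convention, so that all $k < k_\pp(\M)$ have equality. If instead the intended convention is that $k_\pp(\M)$ is the \emph{smallest} weight where the inclusion fails, then $k_\pp(\M) \geqslant -1$ again means equality in all weights $\leqslant -2$, and the same proof applies.

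If, however, the convention only gives equality in weights below some $k < -2$, we would fall back on a construction. Using Proposition \ref{likconstr}$(i)$, we already know $W_{-3}\mt = W_{-3}\para$, so only weight $-2$ matters. We would then set $\LL$ to be the subgroup generated by $\M$ and $\exp(W_{-2}\lie(\pp))$. This is normalized by $\M$ because $W_{-2}\para$ is $\pp$-stable, and the bigraduation $x$ still factors through it. By Lemma \ref{subgroup}, this gives a mixed Hodge subclass $(\LL, D_\LL)$ containing $D_\M$. It is $\V$-likely because each inequality (\ref{likelyn}) only improves when $D_\M$ is enlarged. The remaining check, and the main obstacle in this case, is strictness of $\LL$. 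Here we would use that in weight $-1$ the inclusion $\mt \subset \para$ is strict (as $k_\pp(\M) = -1$), while $\LL$ only adds weight $-2$ directions, so $\LL \neq \pp$.
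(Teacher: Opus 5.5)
\textbf{Gap: the definition of $k_\pp(\M)$ is misread.} Your main argument, that $(\M,D_\M)$ is itself sufficiently unipotent, rests on this misreading. In the paper, $k_\pp(\M)$ is the \emph{largest} $k\leqslant 0$ with $\Gr^W_k\lie(\M)\subsetneq\Gr^W_k\lie(\pp)$. Equality is therefore forced only in the weights $k$ with $k_\pp(\M)<k\leqslant 0$, i.e.\ \emph{above} $k_\pp(\M)$, not below it. Your sentence claiming that ``largest'' gives equality for all $k<k_\pp(\M)$ is exactly backwards.

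This is how the definition is used in the proof of Theorem \ref{mainfull}. There, equality in all weights $<-2$ (from Proposition \ref{likconstr}$(i)$) is converted into $k_\pp(\M)\geqslant -2$. Likewise, $k_\pp(\M)=-2$ is converted into $\mt^{p,q}=\para^{p,q}$ for $p+q\geqslant -1$.

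So $k_\pp(\M)\geqslant -1$ only says one of two things: the inclusion is strict in weight $0$, or it is an equality in weight $0$ and strict in weight $-1$. Proposition \ref{likconstr}$(i)$ adds equality in weights $\leqslant -3$. Nothing constrains weight $-2$, and $\Gr^W_{-2}\mt$ (e.g.\ $\mt^{-1,-1}$) may be strictly smaller than $\Gr^W_{-2}\para$. Hence $\exp(W_{-2}\lie(\pp))\subset\M$ can fail, and your first two paragraphs do not prove the lemma. Your argument would be valid under a ``smallest weight of failure'' convention, but that is not the paper's.

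\textbf{The repair: your fallback, applied unconditionally.} The construction you relegate to a fallback is what is actually needed in every case. Put $\LL=\M\cdot\exp(W_{-2}\lie(\pp))$. This is a connected $\Q$-subgroup, since $\exp(W_{-2}\lie(\pp))$ is a unipotent normal subgroup of $\pp$.
\begin{itemize}
\item The bigraduation of $z\in D_\M$ factors through $\LL_\C$, so Corollary \ref{mtconn} (or Lemma \ref{subgroup}, as you suggest) gives $(\M,D_\M)\subseteq(\LL,D_\LL)\subseteq(\pp,D_\pp)$.
\item $\V$-likeliness is inherited because $q_\NN(D_\M)\subset q_\NN(D_\LL)$ for every $\NN\in\Np$.
\item Sufficient unipotence holds by construction.
\item For strictness you only argued the case $k_\pp(\M)=-1$. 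In general, $\lie(\LL)=\mt+W_{-2}\para$ satisfies $\Gr^W_0\lie(\LL)=\Gr^W_0\mt$ and $\Gr^W_{-1}\lie(\LL)=\Gr^W_{-1}\mt$, since $W_{-1}\lie(\LL)=W_{-1}\mt+W_{-2}\para$ and $W_{-2}\lie(\LL)=W_{-2}\para$. So the strict inclusion in weight $k_\pp(\M)\in\{0,-1\}$ survives and $\LL\neq\pp$.
\end{itemize}

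With this correction your proof is complete, and slightly more uniform than the paper's. The paper uses exactly this $\LL$ when $k_\pp(\M)=-1$, written as $\exp(W_{-2}\lieu\oplus\lieu_\M^{-1})\rtimes\G$. When $k_\pp(\M)=0$ it instead takes the larger group $\pp^u\rtimes\G_\M$, for a Levi subgroup $\G_\M$ of $\M$, whose strictness is read off directly from the reductive quotients.
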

\begin{proof}
Assume that $k_\pp(\M) = 0$, which means that the inclusion $\M/\M^u \hookrightarrow \pp/\pp^u$ is strict. Choose a Levi subgroup $\G_\M$ of $\M$. Then $\G_\M$ is a closed reductive subgroup of $\pp$, which normalizes $\pp^u$. Let $h \in D_\M$. Since the connected $\Q$-subgroup $\LL := \pp^u \rtimes \G_\M$ of $\pp$ contains $\M$, there is, by Corollary \ref{mtconn} applied to $X = \{h\}$, an intermediate strict mixed Hodge subclass $(\M, D_\M) \subset (\LL,D_\LL) \subsetneq (\pp, D_\pp)$ with underlying inclusion the inclusion of $\LL$ in $\pp$. Since $\LL^u = \pp^u$, one finds that $(\LL, D_\LL)$ is sufficiently unipotent. Besides $(\LL, D_\LL)$ is also $\V$-likely as it contains $(\M, D_\M)$ which is $\V$-likely. 

Assume that $k_\pp(\M) = -1$. In particular $\M/\M^u = \pp/\pp^u$ hence there is a connected $\Q$-subgroup $\G \subset \M$ which is a Levi subgroup of both $\M$ and $\pp$. Denote by $\lieu = \lie(\pp^u)$ and $\lieu_\M = \lie(\M^u)$. Let $\lieu_\M^{-1}$ be a vector space supplement of $W_{-2}(\lieu_\M)$ in $\lieu_\M$, and let $\lieu_\LL = W_{-2}(\lieu) \oplus \lieu_\M^{-1}$. It immediately follows from weight considerations that $\lieu_\LL$ is a Lie subalgebra of $\lieu$, and from the fact that $\lieu_\M$ and $W_{-2} \lieu$ are stable under $\Ad(\G)$ (seen as a subgroup of $\gl(\para)$) that $\lieu_\LL$ is stable under $\Ad(\G)$. Hence $\uu_\LL := \exp(\lieu_\LL)$ is a subgroup of $\pp^u$ normalized by $\G$ and containing $\M^u$. Hence, the connected $\Q$-subgroup $\LL := \uu_\LL \rtimes \G$ of $\pp$ is well defined, and contains $\M$ so that there is, by Corollary \ref{mtconn} applied to $X = \{h\}$, an intermediate strict mixed Hodge subclass $(\M, D_\M) \subset (\LL,D_\LL) \subsetneq (\pp, D_\pp)$ with underlying inclusion the inclusion of $\LL$ in $\pp$. By construction $(\LL,D_\LL)$ is sufficiently unipotent. Besides $(\LL, D_\LL)$ is also $\V$-likely as it contains $(\M, D_\M)$ which is $\V$-likely. 
\end{proof}
We can now turn to the:
\begin{proof}[Proof of Theorem \ref{mainfull}]
The implication $(ii) \Rightarrow (i)$ is obvious and the implication $(i) \Rightarrow (iii)$ has been settled in Corollary \ref{necplus}. It remains to prove that $(iii) \Rightarrow (ii)$. Let $(\M, D_\M) \subsetneq (\pp, D_\pp)$ be a strict mixed Hodge subclass which is $\V$-likely. We use the notations in Setting \ref{settingext}.

We first claim that $k_\pp(\M) \geqslant -2$. Indeed, by Proposition \ref{likconstr}(i), we have $W_{-3} \lie(\pp) = W_{-3} \lie(\M)$. In particular, for any $k < -2$ the inclusion $\gr_W^k \lie(\M) \subset \gr_W^k \lie(\pp)$ is an equality. By definition, this means that $k_\pp(\M) \geqslant -2$.

If $k_\pp(\M) \geqslant -1$ there exists by Lemma \ref{minusone} a strict mixed Hodge subclass $(\LL,D_\LL) \subsetneq (\pp, D_\pp)$ which is $\V$-likely and sufficiently unipotent, and Theorem \ref{mainprescribed} shows that $\HL_\trans$ is dense in $S(\C)$ for the euclidean topology hence in particular for the Zariski topology. Therefore, we can assume that $k_\pp(\M) = -2$.


By construction $(\M,D_\M)$ is a strict mixed Hodge subclass of $(\pp, D_\pp)$, is $\V$-likely and satisfies $k_\pp(\M) = -2$. By the assumptions that $k_\pp(\M) = -2$ it immediately follows that for $p+q \geqslant -1$ the inclusion $\mt^{p,q} \subset \para^{p,q}$ is an equality. Proposition \ref{likconstr}$(iii)$ then implies that for all $(p,q) \neq (-1,-1)$, the inclusion $\mt^{p,q} \subset \para^{p,q}$ is an equality. Let $\NN = \M^u \rtimes \G^\der$. It is clearly an element of $\Np$. The $\V$-likeliness condition for $\NN$ then shoes that
\[
\dim \Phi_{/\NN}(S^\an) + \dim q_\NN(D_\M) \geqslant \dim D_\pp/\NN.
\]
By construction $D_\M$ is a fiber of $q_\NN$ hence $\dim q_\NN(D_\M) = 0$ and since $\dim \Phi_{/\NN}(S^\an) \leqslant \dim D_\pp/\NN$, one finds that $\Phi_{/\NN}$ is dominant, and has positive image because $\mt^{-1,-1} \subsetneq \para^{-1,-1}$ (otherwise we would have $\M = \pp$).

To conclude the proof, it suffices to show that the smooth locus of $\Phi_{/\NN}(S^\an)$ contains a point of the form $\phi_{i, \Gamma_{/\NN}}(\period_{(\mathbf{T}, D_{\mathbf{T}}), (\Gamma_{/\NN})_\mathbf{T}})$ for some strict torsion mixed Hodge subclass $(\mathbf{T}, D_{\mathbf{T}})$ of $(\pp/\NN, D_\pp/\NN)$. Indeed $\Phi_{\NN}$ being dominant, such a point would then be a transverse intersection (not contained in the singular locus) of $\Phi_{/\NN}(S^\an)$ with some Hodge subvariety of $\period_{(\pp/\NN, D_\pp/\NN), \Gamma_{/\NN}}$. It is then a matter of unwrapping definitions to see that an irreducible component of the preimage of such a point by $\Phi_{/\NN}$ produces a transverse special subvariety of $S$ for $\V$, hence showing that $\HL_\trans \neq \emptyset$ and Theorem \ref{allornothing} would imply the sought for density in the Zariski topology.

By Proposition \ref{torsionex}, there exists a strict torsion mixed Hodge subclass $(\mathbf{T}, D_{\mathbf{T}})$ of $(\pp/\NN, D_\pp/\NN)$. Let $\pi_{\Gamma_{/\NN}} : D_\pp/\NN \rightarrow \period_{(\pp/\NN, D_\pp/\NN), \Gamma_{/\NN}}$ be the quotient by $\Gamma_{/\NN}$ and $h_{\mathbf{T}} \in D_{\mathbf{T}}$. We need to show that no complex-analytic subvariety $Y$ of $\period_{(\pp/\NN, D_\pp/\NN), \Gamma_{/\NN}}$ of strictly smaller dimension contains $\pi_{\Gamma_{/\NN}}((\pp/\NN)(\Q)^+ \cdot h_{\mathbf{T}})$. This can be shown exactly as in the proof of Theorem \ref{allornothing}, which only uses the complex-analytic topology: if such a $Y$ existed, its tangent space at a smooth point $y$ would be forced to contain a real form of $T_y \period_{(\pp/\NN, D_\pp/\NN), \Gamma_{/\NN}}$ hence to fill the tangent space as it is a complex vector subspace, and this would contradict the assumption that $Y$ has strictly smaller dimension.
\end{proof}
\section{Emptiness of the transverse Hodge locus}\label{sect10}
Let $\V$ be an admissible integral and graded-polarized variation of mixed Hodge structures on a smooth and irreducible quasi-projective complex algebraic variety $S$. Fix a base point $s \in S^\an$ and let $(\pp, X_\pp, D_\pp)$ be the generic mixed Hodge datum of $\V$ at $s$, let $\mon = \mon_s$ be the algebraic monodromy group of $\V$ at $s$. Let $V = \V_s$ and $W_\bul = (\W_\bul)_s$. We see both $\pp$ and $\mon$ as subgroups of the subgroup $\pp_{W_\bul}$ of $\gl(V)$ preserving the filtration $W_\bul$. Denote by $\rho : \pi_1(S,s) \rightarrow \pp_{W_\bul}(\Q)$ the monodromy representation associated to the local system $\V$ at $s$. Denote by $\pp_0$ and $\mon_0$ the quotients of $\pp$ and $\mon$ by their unipotent radicals, and denote by $\pi_0 : \pp \rightarrow \pp_0$ the quotient map.

By Theorem \ref{andre}, the group $\mon$ is a normal subgroup of $\pp$ hence $\ort = \lie(\mon)$ is a mixed $\Q$-Hodge substructure of $\para = \lie(\pp)$. By Pink's axioms $\Ad(\pp)$ respects the weight filtration and $\Ad(\pp^u)$ makes the weight decrease of at least $1$ hence for every integer $k \leqslant 0$ we get a representation $R_k : \pp_0 \rightarrow \gl(W_k\ort / W_{k-1} \ort)$ from the adjoint representation of $\pp$.
\begin{definition}
The \textit{pure level of $\V$}, denoted $\mathrm{pl}(\V)$ is the level of $\gr(\V)$ in the sense of \cite[Def. 4.15]{bku}.
\end{definition}
\begin{theorem}\label{alternativempty}
Assume that $\mathrm{pl}(\V) \geqslant 3$ and $\pp_0^\der = \mon_0$. If $\HL_\trans \neq \emptyset$, there exists, for any transverse special subvariety $Z$ of $S$ for $\V$, a strict normal subgroup $\NN \in \Np$ of $\pp$ such that:
\begin{itemize}
\item $\NN/\NN^u = \pp_0^\der$;
\item for any $h \in D_\pp$, the natural mixed Hodge structure induced on $\lie(\pp/\NN)$ is of type $\{(-1,-1), (0,0)\}$;
\item the quotient period map $\Phi_{/\NN} : S^\an \rightarrow \period_{(\pp/\NN,D_\pp/\NN), \Gamma_{/\N}}$ is dominant with positive dimensional image;
\item $Z$ is an irreducible component of the fiber of $\Phi_{/\NN}$ over a torsion point in $\period_{(\pp/\NN,D_\pp/\NN), \Gamma_{/\N}}$.
\end{itemize}
\end{theorem}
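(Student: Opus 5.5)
Let $Z$ be a transverse special subvariety, say $(\M,D_\M)$-transverse for a strict mixed Hodge subclass $(\M,D_\M)\subsetneq(\pp,D_\pp)$. By Corollary \ref{necplus}, $(\M,D_\M)$ is $\V$-likely. I would therefore work in Setting \ref{settingext} and exploit the constraints of Proposition \ref{likconstr}.

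The first step is to show that the reductive part of $\M$ contains $\pp_0^\der$. Passing to the graded pieces for the weight filtration, the pair $(\M/\M^u,\pp_0)$ is a pure Hodge pair. The pure variation $\gr(\V)$ has algebraic monodromy $\mon_0=\pp_0^\der$ of level $\geqslant 3$. Proposition \ref{likconstr}(iii) forces $\mt^{p,q}=\para^{p,q}$ whenever $|p-q|\geqslant 3$ on $\gr^W_0$. I would then argue as in the level argument of \cite[Thm. 3.3]{bku}. In each $\R$-simple factor of level $\geqslant 3$, the graded piece $\gr^W_0\mt$ contains all $\para^{-w,w}$ with $w\geqslant 3$. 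These subspaces generate the whole simple factor as a Lie algebra, since an ideal is Hodge-stable and the extreme pieces bracket-generate. Using $\Q$-simplicity of factors, together with Proposition \ref{likconstr}(ii) (which gives $\M/(\mon\cap\M)=\pp/\mon$, so every factor is hit), I obtain $\M/\M^u\supset\pp_0^\der$.

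The second step handles the unipotent part. By Proposition \ref{likconstr}(i), $W_{-3}\mt=W_{-3}\para$. On $\gr^W_{-1}$ and $\gr^W_{-2}$, the subspaces $\gr\mt$ are $\pp_0^\der$-stable subrepresentations containing every $(p,q)$-piece outside the short list of Proposition \ref{likconstr}(iii). For $\gr^W_{-1}$, the pieces $(0,-1),(-1,0)$ are the only missing ones. I would show that any nontrivial $\pp_0^\der$-isotypic part of $\gr^W_{-1}\para$ has Hodge types other than these, using again level $\geqslant 3$ of the adjoint action: a nontrivial representation of a level $\geqslant3$ group tensored appropriately spreads its Hodge types. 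This forces the trivial isotypic parts to be the only possible discrepancy. I expect this step to be the main obstacle, since it needs a representation-theoretic lemma about Hodge types of $\pp_0^\der$-modules, and the weight $-1$ trivial part may have to be killed by weight/Hodge symmetry considerations: trivial weight $-1$ pieces correspond to $\gr^W_{-1}$ invariants, which the assumption $\pp_0^\der=\mon_0$ together with the theorem of the fixed part should rule out. The same reasoning on $\gr^W_{-2}$ leaves only the $\pp_0^\der$-invariant part of type $(-1,-1)$.

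I would then define $\NN$ to be the subgroup generated by $\pp_0^\der$-lifts and the unipotent part complementary to those $(-1,-1)$ invariants and the center. More concretely, $\NN$ is the kernel of $\pp\to\pp/\NN$, where $\lie(\pp/\NN)$ is the quotient of $\para$ by the ideal generated by $\lie(\pp_0^\der)$ and the nontrivial isotypic parts. With this definition $\NN/\NN^u=\pp_0^\der$, $\pp/\NN$ has Lie algebra of type $\{(-1,-1),(0,0)\}$, and $\NN\subset\M$. The last two bullets follow as in the proof of Theorem \ref{mainfull}. $\V$-likeliness applied to $\NN$ gives $\dim\Phi_{/\NN}(S^\an)\geqslant\dim D_\pp/\NN$, since $q_\NN(D_\M)$ is a point. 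Hence $\Phi_{/\NN}$ is dominant, and its image is positive-dimensional because $\M\neq\pp$. Finally, $\pp/\NN$ is an extension of a torus by $\mult_a$'s of type $(-1,-1)$. Consequently $q_\NN(D_\M)$ is a torsion point, as in the classification of Proposition \ref{classc}. Since $\Phi(Z^\an)\subset\pi_\Gamma(D_\M)$, the set $Z$ lies in a fiber of $\Phi_{/\NN}$ over a torsion point. The transversality dimension count then shows that $Z$ is an irreducible component of that fiber.
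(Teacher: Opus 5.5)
Your first step is essentially the paper's Step 2. Note that Proposition \ref{likconstr}(iii) gives $\mt^{p,-p}=\para^{p,-p}$ for $|p|\geqslant 2$, and this is what \cite[Prop. 7.5]{bku} needs; knowing it only for $|p|\geqslant 3$ would not suffice.

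The genuine gap is the one you flag yourself. The problem is the $\pp_0^\der$-invariant part of $\gr^W_{-1}\lie(\pp)$ of types $(-1,0),(0,-1)$, which Proposition \ref{likconstr}(iii) does not force into $\mt$. The theorem of the fixed part only says that such invariants come from a constant sub-variation. It does not exclude a constant weight $-1$ piece whose Mumford--Tate group is a torus, and such a piece leaves $\pp_0^\der=\mon_0$ intact. What you need here is in fact false, and the statement appears to fail with it. Here is a counterexample:
\begin{itemize}
\item let $\V_1$ be a pure variation on $S_1$ whose algebraic monodromy group equals the derived group of its generic Mumford--Tate group and has level $\geqslant 3$;
\item let $E$ be a CM elliptic curve and $\mathbb{E}$ the tautological variation on $E$ of extensions of $\Z(0)$ by $H_1(E,\Z)$;
\item put $\V=\V_1\oplus\mathbb{E}$ on $S=S_1\times E$.
\end{itemize}
Since $\mathrm{MT}(E)$ is a torus, $\pp_0^\der=\mon_0$ and $\mathrm{pl}(\V)\geqslant 3$. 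On the other hand, $\lie(\pp^u)\cong H_1(E)$ is pure of weight $-1$, of types $(-1,0),(0,-1)$, with trivial $\pp_0^\der$-action.

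For a torsion point $t\in E$, the subvariety $Z=S_1\times\{t\}$ is a transverse special subvariety. Indeed, its period image has codimension one in $\Phi(S^\an)=\Phi_1(S_1^\an)\times E$, and the defining $D_\M$ has codimension one in $D_\pp$. But $\gr^W_{-2}\lie(\pp)=0$. So for any $\NN\in\Np$ with $\lie(\pp/\NN)$ of type $\{(-1,-1),(0,0)\}$, the domain $D_\pp/\NN$ is a point, and the third bullet fails. Since $h^{-1,-1}_{\lie(\pp)}=0$ here, the same example also contradicts Corollary \ref{critnum}. No argument closes this step without an extra hypothesis. For instance, one could require $\bar{x}(S^1)\subset\pp_0^\der$, or allow $\pp/\NN$ a weight $-1$ part of types $(-1,0),(0,-1)$.

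The paper's own proof disposes of this step through the grading element $\mathtt{T}_\V$. By Lemma \ref{gradnontriv}, $\mathtt{T}_\V$ acts by $\frac{-1-2p}{2}\neq 0$ on every weight $-1$ Hodge piece. The lemma preceding it asserts $\mathtt{T}_\V\in(\para_0^\der)_\R=\lie(\mon_0)_\R$, so $\mathtt{T}_\V$ would have to act trivially on the complement $\lieu$. That lemma, however, only proves that $\bar{x}(S^1)$ is killed by every $\Q$-character of $\pp_0$. The common kernel of these characters is the preimage of the maximal anisotropic subtorus of $\pp_0/\pp_0^\der$, which can be larger than $\pp_0^\der$. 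In the example, $\bar{x}(S^1)$ has non-trivial image in $\mathrm{MT}(E)$, so the paper's route breaks at exactly the point where yours does.

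Separately, your $\NN$ is built from $\pp$ alone, and it need not contain $\M^u$:
\begin{itemize}
\item $\pp_0^\der$-invariant pieces of $W_{-3}\lie(\pp)$ need not lie in the ideal generated by a lift of $\lie(\pp_0^\der)$ and the non-trivial isotypic parts;
\item $\mt$ may contain part of the invariant $(-1,-1)$-piece, which your $\NN$ excludes by design.
\end{itemize}
Then $\lie(\pp/\NN)$ need not have type $\{(-1,-1),(0,0)\}$, and $q_\NN(D_\M)$ need not be a point, which breaks your dominance argument. The paper instead takes $\NN=\M^u\rtimes\G^\der$, for a Levi subgroup $\G$ common to $\M$ and $\pp$ (written $\M^u\rtimes\G$ there). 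This $\NN$ contains $W_{-3}\lie(\pp)=W_{-3}\mt$ and has $D_\M$ as a fiber of $q_\NN$. Its dependence on $\M$, and hence on $Z$, is essential.
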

In particular, one obtains a criterion for emptiness of $\V$:
\begin{theorem}\label{emptymain}
Assume that $\mathrm{pl}(\V) \geqslant 3$ and $\pp_0^\der = \mon_0$. If furthermore one of the two following conditions hold
\begin{itemize}
\item[(A)] The trivial representation of $\pp_0^\der$ does not occur in $R_{-2}\vert_{\pp_0^\der}$;
\item[(B)] There is no dominant regular morphism $f : S \rightarrow \mult_m$.
\end{itemize}
Then $\HL_\trans$ (hence $\HL_\typ$) is empty.
\end{theorem}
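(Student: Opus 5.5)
We argue by contradiction, deducing Theorem \ref{emptymain} from Theorem \ref{alternativempty}. Suppose that $\HL_\trans \neq \emptyset$ and fix a transverse special subvariety $Z$ of $S$ for $\V$. Theorem \ref{alternativempty} then provides a strict normal subgroup $\NN \in \Np$ with $\NN/\NN^u = \pp_0^\der$. For every $h \in D_\pp$ the mixed Hodge structure on $\lie(\pp/\NN)$ is of type $\{(-1,-1),(0,0)\}$, and $\Phi_{/\NN}$ is dominant with positive dimensional image. We will show that each of the hypotheses (A) and (B) is incompatible with these properties.

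Since the Hodge type of $\lie(\pp/\NN)$ is $\{(-1,-1),(0,0)\}$, the group $\pp/\NN$ is an extension of a $\Q$-torus $\mathbf{T} = \pp_0/\pp_0^\der$ by a vector group $\uu_\NN := \pp^u/\NN^u$ that is pure of weight $-2$ and of Hodge type $(-1,-1)$. The quotient $D_\pp/\NN$ is then a torsor under $\uu_\NN(\C)$, up to the trivial contribution of the torus, which is zero-dimensional because it has type $(0,0)$. The positive dimensionality of $\Phi_{/\NN}(S^\an)$ forces $\uu_\NN \neq 0$. On Lie algebras, $\lie(\uu_\NN)$ is a quotient of $\gr^W_{-2}\lie(\pp^u)$, and $\NN$ is normal with $\NN/\NN^u = \pp_0^\der$. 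It follows that $\pp_0^\der$ acts trivially on $\lie(\uu_\NN)$, since the conjugation action of $\NN$ on $\pp/\NN$ is trivial. Hence $\lie(\uu_\NN)$ is a nonzero $\pp_0$-equivariant quotient of $W_{-2}\ort/W_{-3}\ort$, or of $\gr^W_{-2}\para$, on which $\pp_0^\der$ acts trivially.

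\textbf{Case (A).} The main point to check is that this quotient indeed comes from $R_{-2}$, that is from $\ort = \lie(\mon)$ and not only from $\para$. For this, use that the monodromy of $\V$ surjects onto that of the quotient variety. The unipotent part of the quotient must be nontrivial on monodromy, since $\Phi_{/\NN}$ is nonconstant while the $(0,0)$ torus part is rigid. By Lemma \ref{monorb} applied to $\mon$, the image of $\mon$ in $\pp/\NN$ has unipotent part equal to all of $\uu_\NN$, because $\Phi_{/\NN}$ is dominant onto $D_\pp/\NN$. Consequently $W_{-2}\ort \to \lie(\uu_\NN)$ is surjective. By complete reducibility of the reductive group $\pp_0^\der$, the trivial representation then occurs in $R_{-2}\vert_{\pp_0^\der}$, which contradicts (A).

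\textbf{Case (B).} Here $\period_{(\pp/\NN,D_\pp/\NN),\Gamma_{/\NN}}$ is a quotient of $\uu_\NN(\C) \cong \C^r$ by a lattice coming from a rational structure of type $(-1,-1)$. As in Section \ref{sect6}, this space is isomorphic to $(\C^\times)^r$ with $r \geqslant 1$. The map $\Phi_{/\NN}$ is the period map of a variation of the kind studied in Section \ref{sect62}. By Theorem \ref{facto} applied to the quotient variation, in the form of Lemma \ref{auxper}, $\Phi_{/\NN}$ is algebraic. Since it is dominant onto $\mult_m^r$, composing with a coordinate projection yields a dominant regular morphism $S \to \mult_m$, contradicting (B).

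The delicate steps are the identification of the period target with $\mult_m^r$ as an algebraic variety, compatibly with the definable structure, and, in case (A), the passage from $\para$ to $\ort$.

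Finally, emptiness of $\HL_\typ$ follows from the inclusion $\HL_\typ \subset \HL_\trans$ recorded in Remark \ref{typtrans}.
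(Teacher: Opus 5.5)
\textbf{Hypothesis (A).} Your argument is correct, but it takes a different route from the paper. The paper proves Theorems \ref{alternativempty} and \ref{emptymain} together, and it refutes (A) before $\NN$ is even constructed. Once $\mt^{p,q}=\para^{p,q}$ for all $(p,q)\neq(-1,-1)$, an $\ort_0$-stable complement of $\gr^W_{-2}\lie(\M\cap\mon)$ in $\ort_{-2}$ is nonzero and of pure type $(-1,-1)$. The level-$\geqslant 3$ argument of Step 3 then shows that $\ort_0=\para_0^\der$ annihilates it. Since $\lie(\M\cap\mon)$ already lies inside $\ort$, the trivial summand is found directly in $\ort_{-2}$. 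No dominance and no comparison between $\para$ and $\ort$ are needed.

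You use only the conclusions of Theorem \ref{alternativempty}. So you first get a trivial $\pp_0^\der$-quotient $\lie(\uu_\NN)$ of $\gr^W_{-2}\para$, and then have to push it into $\ort_{-2}$. Your mechanism (Lemma \ref{monorb} plus dominance) does this, but write out the two lines it needs:
\begin{itemize}
\item $q_\NN(D_\mon)$ is an orbit of the image $\overline{\mon}$ of $\mon$ in $\pp/\NN$, with tangent space $\lie(\overline{\mon})_\C^{-1,-1}$, so full dimension forces $\lie(\overline{\mon})\supseteq\lie(\uu_\NN)$;
\item strictness of $\ort\to\lie(\pp/\NN)$ then gives $\ort_{-2}\twoheadrightarrow\lie(\uu_\NN)$.
\end{itemize}
Your route makes Theorem \ref{emptymain} a formal corollary of the statement of Theorem \ref{alternativempty}, with a single $\NN$ refuting both hypotheses. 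The paper's route is more economical for (A).

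\textbf{Hypothesis (B).} Here you reproduce the paper's argument, and with it a gap that the paper's own proof shares. Identifying $\period_{(\pp/\NN,D_\pp/\NN),\Gamma_{/\NN}}$ with $\mult_m^r$ requires $\Gamma_{/\NN}$ to act on $\uu_\NN(\C)$ by translations. In general $\Gamma_{/\NN}$ also contains finite-order elements coming from the torus $\pp_0/\pp_0^\der$, and neatness is only reached on a finite \'etale cover $S'\to S$ (Corollary \ref{neatetal}). So the dominant morphism to $\mult_m$ you obtain lives on $S'$, and (B) does not pass to finite \'etale covers.

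This is not a formality; here is a sketch of a counterexample.
\begin{itemize}
\item Let $E'\to E$ be an \'etale $2$-isogeny with kernel $\{0',T'\}$. Then $E\setminus\{0\}$ has only constant units. But $E'\setminus\{0',T'\}$ carries a unit $f$ with divisor $2(0')-2(T')$, which can be normalized so that $f\circ\iota=f^{-1}$ for the deck involution $\iota$.
\item The extension of $\Z(0)$ by $\Z(1)$ with period map $f$ then descends to an admissible extension $\mathbb{E}$ of $\Z(0)$ by $\Z(1)\otimes\epsilon$, where $\epsilon$ is the quadratic character of the cover.
\item Take $S=(E\setminus\{0\})\times U$, with $U$ the space of smooth quintic threefolds, and $\V$ the sum of $\mathbb{E}$ and the pulled-back $H^3$. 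Then $\mathrm{pl}(\V)\geqslant 3$, $\pp_0^\der=\mon_0$, and (B) holds, since units on a product are products of units.
\item Yet on the double cover, times a neat cover $U'$ of $U$, the loci $\{f=\zeta\}\times U'$ with $\zeta\in\mu_\infty$ are transverse, indeed typical, special subvarieties.
\end{itemize}
So (B) has to be imposed on all finite \'etale covers of $S$, or monodromy must be assumed neat from the start. With that strengthening, your argument goes through.
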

Let us comment on the assumptions $(A)$ and $(B)$. They are both intended to exclude the possibility of adding direct factors which are extensions in the category of integral variations of mixed Hodge structure of $\underline{\Z}_S(0)$ by $\underline{\Z}_S(1)^n$ for some integer $n \geqslant 1$. For instance, assume that $\V$ satisfies $\mathrm{pl}(\V) \geqslant 3$ and $\pp_0^\der = \mon_0$ but $(B)$ fails. Assume that there exists a dominant regular morphism $f : S \rightarrow \mult_m$. Let $\mathbb{E}$ be the extension of $\Z_S(0)$ by $\Z_S(1)$ with period map $f$. Set $\V' := \V \oplus \mathbb{E}$ and use all notations defined at the beginning of the section for $\V$ with prime superscrips for $\V'$. Then $\mathrm{pl}(\V') \geqslant 3$ and $(\pp'_0)^\der = \mon_0$. However $\mathrm{HL}(S, (\V')^\otimes)_\trans$ is non-empty due to the direct factor $\mathbb{E}$ which has Zariski-dense transverse Hodge locus. Because of this factor, the trivial representation does occur as $\pp_0^\der = (\pp'_0)^\der$ acts trivially on the unipotent part of the monodromy of $\mathbb{E}$, i.e. $(A)$ fails. We do not know if such a situation can actually occur. In any case, the above result holds in the following interesting situations:
\begin{corollary}\label{critnum}
Assume that $\mathrm{pl}(\V) \geqslant 3$, that $\pp_0^\der = \mon_0$ and that $h^{-1,-1}_{\lie(\pp)} = 0$. Then $\HL_\trans$ is empty.
\end{corollary}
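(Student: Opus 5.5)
The plan is to deduce Corollary \ref{critnum} from Theorem \ref{emptymain} by checking that the hypothesis $h^{-1,-1}_{\lie(\pp)} = 0$ implies condition (A). Equivalently, one can argue by contradiction through Theorem \ref{alternativempty}. Assume $\HL_\trans \neq \emptyset$. Theorem \ref{alternativempty} then provides a strict normal subgroup $\NN \in \Np$ with $\NN/\NN^u = \pp_0^\der$, such that for every $h \in D_\pp$ the mixed Hodge structure on $\lie(\pp/\NN)$ has type $\{(-1,-1),(0,0)\}$, and such that $\Phi_{/\NN}$ is dominant with positive dimensional image.

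The key observation is that the positive dimensionality of $\Phi_{/\NN}(S^\an)$ forces $\dim D_\pp/\NN > 0$. The tangent space of $D_\pp/\NN$ at a point is $\lie(\pp/\NN)_\C / F^0$. Since $\lie(\pp/\NN)$ only has types $(-1,-1)$ and $(0,0)$, this tangent space is exactly $\lie(\pp/\NN)^{-1,-1}$. It is therefore nonzero, so $h^{-1,-1}_{\lie(\pp/\NN)} > 0$.

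Next I will compare Hodge numbers. The quotient $\lie(\pp) \to \lie(\pp/\NN)$ is a surjective morphism of mixed Hodge structures, because $\NN$ is normal and $\lie(\NN)$ is a sub-MHS; this follows from the functoriality of the quotient mixed Hodge class of Gao--Klingler. Morphisms of mixed Hodge structures are strict for both filtrations, so $\gr^W_{-2}$ is exact and Hodge numbers can only decrease: $h^{-1,-1}_{\lie(\pp/\NN)} \leqslant h^{-1,-1}_{\lie(\pp)} = 0$. This contradicts the previous paragraph. Hence $\HL_\trans = \emptyset$, and the same holds for $\HL_\typ \subset \HL_\trans$.

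The main obstacle is the identification of the tangent space of $D_\pp/\NN$ with $(\lie(\pp/\NN))^{-1,-1}$, i.e. checking that the $(0,0)$-part contributes nothing. This is immediate from the bigraduation description $T_h D \cong \bigoplus_{p<0} \para^{p,q}$ recalled in the subsection on the infinitesimal structure of Mumford-Tate domains. If one instead prefers to invoke Theorem \ref{emptymain}(A), the point to justify is that a trivial $\pp_0^\der$-summand in $R_{-2}$ would carry Hodge type $(-1,-1)$. That holds because $\pp_0^\der$-invariants in a weight $-2$ piece are acted on only through the centre, which forces Tate type; but with $h^{-1,-1}=0$ the $(-1,-1)$-part of $\gr^W_{-2}\ort$ is zero anyway, so such a summand cannot exist. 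I would present the first, direct argument.
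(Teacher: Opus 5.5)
Your argument is correct, but it takes a different route from the paper.

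The paper deduces the corollary from Theorem \ref{emptymain} by checking condition (A). By Lemma \ref{gradnontriv}, the grading element $\mathtt{T}_\V \in (\para_0^\der)_\R$ acts on $\ort_{-2}^{p,-2-p}$ by the scalar $-1-p$, which is nonzero for $p \neq -1$. So when $h^{-1,-1}_{\lie(\pp)} = 0$, no nonzero vector of $\gr^W_{-2}\lie(\pp)$ is killed by $\para_0^\der$, and $R_{-2}\vert_{\pp_0^\der}$ has no trivial summand. In substance this is your closing side remark.

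You instead use the structural conclusions of Theorem \ref{alternativempty}. Since $\lie(\pp/\NN)$ has type $\{(-1,-1),(0,0)\}$, one has $\dim D_\pp/\NN = h^{-1,-1}_{\lie(\pp/\NN)}$. This is positive because $\Phi_{/\NN}$ has positive-dimensional image, yet it is at most $h^{-1,-1}_{\lie(\pp)} = 0$ by strictness of the surjection $\lie(\pp) \to \lie(\pp/\NN)$.

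Your version needs no grading element and no representation theory, only monotonicity of Hodge numbers under surjective morphisms. The paper's version exhibits the corollary as a special case of the more general criterion (A). That criterion only forbids $\pp_0^\der$-invariants in $R_{-2}$, not the whole $(-1,-1)$-part, so it is implied by, and in general weaker than, $h^{-1,-1}_{\lie(\pp)}=0$.

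Logically, both routes rest on the same fact from Step 4 of the common proof: $\mt^{p,q} = \para^{p,q}$ for all $(p,q) \neq (-1,-1)$. Together with $\para^{-1,-1}=0$, this already forces $\M = \pp$. Your route passes through the later construction of $\NN$ in that step, where the paper's $\NN = \M^u \rtimes \G$ should presumably read $\M^u \rtimes \G^\der$, as in the proof of Theorem \ref{mainfull}. Contradicting (A) only uses the earlier observation that the $\para_0^\der$-trivial complement of $\mt_{\mon,-2}$ in $\ort_{-2}$ is nonzero.
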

\begin{proof}
If $h^{-1,-1}_{\lie(\pp)} = 0$ condition $(A)$ of Theorem \ref{emptymain} is automatically satisfied because as shown in Lemma \ref{gradnontriv}, under $R_{-2}$ the group $\pp_0^\der$ acts non-trivially on $(\gr_{-2}^W \lie(\pp))^{p,-2 - p}$ for $p \in \Z -\{-1\}$. Hence the result.
\end{proof}
\begin{corollary}\label{critalg}
Assume that $\mathrm{pl}(\V) \geqslant 3$, that $\pp_0^\der = \mon_0$ and that $S$ has a smooth compactification $\overline{S}$ such that $\overline{S}-S$ is either empty, of codimension at least $2$ or a smooth irreducible divisor in $\overline{S}$. Then $\HL_\trans$ is empty.
\end{corollary}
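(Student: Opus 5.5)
The plan is to reduce Corollary \ref{critalg} to Theorem \ref{emptymain} by checking that condition $(B)$ holds: there is no dominant regular morphism $f : S \rightarrow \mult_m$. Once $(B)$ is established, the other two hypotheses ($\mathrm{pl}(\V) \geqslant 3$ and $\pp_0^\der = \mon_0$) are exactly those of Theorem \ref{emptymain}, which then yields $\HL_\trans = \emptyset$. So everything hinges on an elementary algebro-geometric statement about the boundary $\overline{S} - S$.

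I would argue by contradiction. Suppose $f : S \rightarrow \mult_m$ is dominant regular. Then $f$ is a non-constant invertible regular function on $S$, i.e.\ $f \in \mathcal{O}(S)^\times$, and I view it as a rational function on the smooth projective variety $\overline{S}$. Since $\overline{S}$ is smooth, hence locally factorial, its divisor $\mathrm{div}(f)$ is a well-defined Weil divisor. Because $f$ and $1/f$ are regular on $S$, $\mathrm{div}(f)$ is supported on $\overline{S} - S$. I then split according to the three cases.
\begin{itemize}
\item If $\overline{S} - S$ is empty or of codimension at least $2$, it contains no prime divisor, so $\mathrm{div}(f) = 0$. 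By normality (Hartogs), $f$ extends to a regular function on $\overline{S}$ with regular inverse, hence is constant because $\overline{S}$ is projective and connected.
\item If $\overline{S} - S = E$ is a smooth irreducible divisor, then $\mathrm{div}(f) = mE$ for some $m \in \Z$. If $m = 0$ we conclude as before. If $m > 0$, then $f$ is regular on all of $\overline{S}$, hence constant, which contradicts $\mathrm{div}(f) \neq 0$. If $m < 0$, the same argument applies to $1/f$.
\end{itemize}
In every case $f$ is constant, contradicting dominance, so $(B)$ holds.

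The one point needing care is the projectivity and connectedness of $\overline{S}$. The statement says only ``smooth compactification'', so I would read it as proper and irreducible (as in the introduction, where it is projective). Properness together with connectedness is enough to conclude that global regular functions on $\overline{S}$ are constant. I do not expect any real obstacle: the whole weight of the corollary lies in Theorem \ref{emptymain}, and the step above is routine divisor theory.
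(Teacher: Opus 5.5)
Your reduction is the paper's own. Its proof of this corollary is the one-line claim that condition $(B)$ of Theorem \ref{emptymain} holds automatically, and your computation of $\mathrm{div}(f)$ correctly shows that $S$ itself carries no non-constant unit.

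The gap, which you inherit from the paper, is that $(B)$ for $S$ is not what the proof of Theorem \ref{emptymain} actually uses. That proof first replaces $S$ by an effective model: a Zariski-open subset of a finite \'etale cover $\tilde{S}\to S$, chosen to make the monodromy neat. Its final contradiction is the dominant map $\Phi_{/\NN}$ from that model to $\mult_m^n$. Since $\Phi_{/\NN}$ is defined and definable on all of $\tilde{S}^\an$, the definable Chow theorem turns it into a dominant regular map $\tilde{S}\to\mult_m^n$. So what has to be excluded is a non-constant unit on $\tilde{S}$, not on $S$.

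In the empty and codimension $\geqslant 2$ cases your argument excludes this after one extra step. By Zariski--Nagata purity of the branch locus, the normalization of $\overline{S}$ in $\C(\tilde{S})$ is \'etale over $\overline{S}$. It is therefore a smooth projective compactification of $\tilde{S}$ whose boundary again has codimension $\geqslant 2$, and your divisor argument applies to it.

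In the smooth-divisor case the argument does not close, because the boundary divisor can split in the cover. For instance, let $Y$ be smooth projective with a free involution $\tau$, and set $\sigma(z,y)=(z^{-1},\tau(y))$. Then:
\begin{itemize}
\item $S=(\C^\times\times Y)/\langle\sigma\rangle$ is compactified by $(\mathbb{P}^1\times Y)/\langle\sigma\rangle$, with boundary a single smooth irreducible divisor, and $S$ has only constant units;
\item its double cover $\C^\times\times Y$ has the unit $z$;
\item the push-forward to $S$ of the Kummer extension attached to $z$ has transverse special subvarieties $\{z\in\{\zeta,\zeta^{-1}\}\}$, $\zeta\in\mu_\infty$, which $(B)$ on $S$ cannot detect.
\end{itemize}
Running the same construction with $Y$ the connected \'etale double cover of the space of smooth quintic threefolds (its discriminant has even degree), and adding the quintic variation, appears to contradict Theorem \ref{emptymain}$(B)$ as literally stated.

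So the smooth-divisor case needs an extra argument controlling units on finite \'etale covers of $S$, or the hypothesis must be strengthened.
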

\begin{proof}
Under one of these assumptions condition $(B)$ of Theorem \ref{emptymain} is automatically satisfied. Hence the result.
\end{proof}

\subsection{A grading element}\label{grading}
In this subsection, we assume that $\V$ is effective. Denote by $\gr(\V) = \bigoplus_{k \in \Z} \W_k/\W_{k-1}$ the graded of $\V$ for $\W_\bul$, which carries by definition a pure polarized integral variation of Hodge structures. Let $\gr(V) = \gr(\V)_s$. By Lemma \ref{subgroup}, the groups $\pp_0$ and $\mon_0$ are naturally subgroups of $\gl(\gr(V)) = \pp_{W_\bul}/(\pp_{W_\bul})^u$.
\begin{lemma}\label{monpure}
The (connected) $\Q$-algebraic subgroup $\mon_0$ of $\gl(\gr(V))$ is the algebraic monodromy group of $\gr(\V)$ at $s$.
\end{lemma}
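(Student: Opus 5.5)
The statement asks us to identify the image of the algebraic monodromy group $\mon$ of $\V$ in $\gl(\gr(V))$ with the algebraic monodromy group of the pure variation $\gr(\V)$. The natural approach is to compare Zariski closures of monodromy images through the quotient map $\pi : \pp_{W_\bul} \rightarrow \pp_{W_\bul}/(\pp_{W_\bul})^u = \gl(\gr(V))$. This map is a morphism of $\Q$-algebraic groups defined on all of $\pp_{W_\bul}$, so it can be applied directly to the monodromy representation $\rho$.

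The first step is to observe that the monodromy representation of $\gr(\V)$ at $s$ is precisely $\pi \circ \rho : \pi_1(S^\an,s) \rightarrow \gl(\gr(V))(\Q)$. This holds because the local system $\W_k/\W_{k-1}$ has as parallel transport the map induced by the parallel transport of $\V$ on the graded pieces; it is a direct unwinding of the definitions.

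The second step is to compare Zariski closures. Let $\tilde{\mon}$ be the $\Q$-Zariski closure of $\im(\rho)$ in $\pp_{W_\bul}$. Since $\pi$ is a morphism of algebraic groups, its image $\pi(\tilde{\mon})$ is a closed $\Q$-subgroup of $\gl(\gr(V))$. On one side, $\pi(\tilde{\mon})$ contains $\pi(\im \rho)$, so it contains the Zariski closure $\overline{\pi(\im\rho)}$. On the other side, $\pi^{-1}(\overline{\pi(\im\rho)})$ is closed and contains $\im\rho$, so it contains $\tilde{\mon}$. Hence $\pi(\tilde{\mon}) = \overline{\pi(\im \rho)}$. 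Taking identity components, which commutes with images under surjective homomorphisms of algebraic groups, gives $\pi(\mon) = (\overline{\pi(\im\rho)})^\circ$, and the right-hand side is by definition the algebraic monodromy group of $\gr(\V)$.

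The last step, which I expect to be the only real point to check, is that $\pi(\mon)$ coincides with $\mon_0 = \mon/\mon^u$ seen inside $\gl(\gr(V))$. That is, we need $\ker(\pi\vert_\mon) = \mon \cap (\pp_{W_\bul})^u = \mon^u$. This follows from Lemma \ref{subgroup}, which gives $\pp^u = (\pp_{W_\bul,(q_k)})^u \cap \pp$. Intersecting further with $\mon$ then gives $\mon \cap (\pp_{W_\bul})^u = \mon \cap \pp^u$. This group is a normal unipotent subgroup of $\mon$ and so lies in $\mon^u$. Conversely, $\mon^u$ is a unipotent normal subgroup of $\pp$, because it is characteristic in the normal subgroup $\mon$ (Theorem \ref{andre}), so $\mon^u \subset \pp^u$. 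Therefore $\ker(\pi\vert_\mon) = \mon^u$, and $\pi(\mon) \cong \mon_0$ compatibly with the embedding into $\gl(\gr(V))$ used in the statement. Connectedness of $\mon_0$ is immediate, since it is the image of a connected group.
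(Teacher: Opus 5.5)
Your proof is correct, and its skeleton matches the paper's. Both arguments identify the monodromy of $\gr(\V)$ with $\pi\circ\rho$. Both identify $\pi(\mon)$ with $\mon_0$ via $\mon^u=\mon\cap\pp^u$, which rests on the normality of $\mon$ in $\pp$ (Theorem \ref{andre}). The paper states this identification in one line, whereas you prove both inclusions and make the input from Lemma \ref{subgroup} explicit. That step also uses the identification $(\ppol)^u=(\pp_{W_\bul})^u$, which you leave implicit; it is harmless.

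The two routes differ in the final comparison.

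\emph{The paper's route.} It uses the standing effectivity assumption of that subsection to know that $\im(\rho)\subset\mon(\Q)$, i.e.\ that the Zariski closure of the monodromy is already connected. It then argues by minimality: if the monodromy group of $\gr(\V)$ were a strict subgroup $\mon'_0$ of $\mon_0$, then $\im(\rho)$ would lie in the strict subgroup $\mon\cap\pi^{-1}(\mon'_0)$ of $\mon$, a contradiction.

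\emph{Your route.} You show directly that $\pi$ maps the full Zariski closure $\tilde{\mon}$ onto the Zariski closure of $\pi(\im\rho)$. You then pass to identity components, using that $f(G^\circ)=f(G)^\circ$ for a surjective homomorphism $f$.

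\emph{Comparison.} Your argument needs neither effectivity nor any passage to a finite \'etale cover. It therefore proves the lemma for an arbitrary admissible graded-polarized $\V$. The paper's argument is shorter only because the neatness of monodromy has already been arranged.
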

\begin{proof}
Since we assumed $\V$ to be effective, the image of $\rho$ is contained in $\mon(\Q)$ hence $\mon$ is the smallest $\Q$-algebraic subgroup of $\pp_{W_\bul}$ whose $\Q$-points contain $\im(\rho)$. By definition, the monodromy representation of $\gr(\V)$ at $s$ is the composite $\rho_{\gr(\V)} = \pi \circ \rho : \pi_1(S,s) \rightarrow \gl(\gr(V))$ where $\pi : \pp_{W_\bul} \rightarrow \gl(\gr(V))$ is the quotient by $(\pp_{W_\bul})^u$. Since $\mon$ is a normal subgroup in $\pp$ by Theorem \ref{andre} its unipotent radical $\mon^u$ is $\pp^u \cap \mon$ hence $\mon_0 = \pi(\mon)$. This readily implies that $\im(\rho_{\gr(\V)}) \subset \mon_0(\Q)$ hence $\mon_0$ contains the algebraic monodromy group of $\gr(\V)$. If the latter inclusion was not an equality, the image of $\rho_{\gr(\V)}$ would be contained in the $\Q$-points of a strict $\Q$-algebraic subgroup $\mon'_0$ of $\mon_0$, hence $\im(\rho)$ would be contained in the $\Q$-points of $\pi^{-1}(\mon'_0)$, contradicting the minimality of $\mon$ for this property since $\dim \pi^{-1}(\mon'_0)  > \dim \mon$.
\end{proof}
Let $x \in X_\pp$ be a bigraduation of the mixed Hodge structure on $\V_s$. Post-composition of $x$ with the adjoint representation defines a mixed $\Q$-Hodge structure on $\para = \lie(\pp)$ which by Pink's axioms has only non-positive weights. Since $\mon$ is a normal subgroup of $\pp$ by Theorem \ref{andre}, its Lie algebra $\ort = \lie(\mon)$ is a mixed sub-$\Q$-Hodge structure of $\para$, a bigraduation of which is given by $R_\C \circ x$ where $R_\C : \pp \rightarrow \gl(\ort)$ is the sub-representation of the adjoint representation associated with $\mon$. For $k \leqslant 0$ the $k$-th graded-piece for the weight filtration $\ort_k = W_k \ort/ W_{k-1} \ort$ carries a weight $k$ pure polarized $\Q$-Hodge structure whose Hodge decomposition we denote by
\[
\ort_{k, \C} = \bigoplus_{p \in \Z} \ort_k^{p,k-p}.
\]
Combining the facts that  $W_{-1} \ort = W_{-1} \para$, that $W_{-1} \para = \lie(\pp^u)$ and that $\mon^u = \mon \cap \pp^u$, one finds that $\ort_0 = \lie(\mon_0)$. This is an ideal in the derived Lie algebra $\para_0^\der$ of $\para_0$. 

By Pink's axiom $(i)$, the morphism $ (\pi_0)_\C \circ x: \s_\C \rightarrow \pp_{0,\C}$ is defined over $\R$ and we let $\bar{x} : \s \rightarrow \pp_{0, \R}$ be the corresponding morphism of real algebraic groups. Let $k \leqslant 0$ be an integer. The image of $\pp$ by $R$ preserves the weight filtration on $\ort$, hence $R$ descends to a representation $R_k : \pp_0  \rightarrow \gl(\ort_k)$, and the Hodge cocharacter associated with the pure Hodge structure on $\ort_k$ is given by $(R_k)_\R \circ \bar{x}$. Finally we let $r_k : \para_0 \rightarrow \End(\ort_k)$ be the derivative $d R_k : \para_0 \rightarrow \End(\ort_k)$. Explicitly $r_k$ can be described as the descent of the adjoint representation $W_0 \ort \rightarrow \End(W_k \ort)$ using that the bracket is a morphism of Hodge structures so that $[W_{-1} \ort , W_k \ort] \subset W_{k-1} \ort$. Note that by construction $r_k$ is a morphism of $\Q$-Hodge structures of weight $0$ and a morphism of Lie algebras, with $\End(\ort_k)$ endowed with the bracket induced by composition of endomorphisms.

Following a construction of Robles in \cite[Sect. 3]{robles}, we now define a semi-simple element of $\para_0^\der$ whose eigenspaces under the representations $r_k$ will be exactly the pieces of the Hodge decomposition of $\ort_k$. Let $S^1$ be the subgroup of $\s$ defined on real points as the unit circle in $\C^\times$, parametrized as $\{e^{\frac{it}{2}} \hspace{0.1cm} : \hspace{0.1cm} t \in \R\}$. We identify $\lie(S^1)$ with $\R$ using this parametrization. Let $\mathtt{T}_\V$ be the image of $1 \in \lie(S^1)$ under the derivative $d_0 \bar{x} \vert_{\lie(S^1)} : \R \rightarrow \para_{0, \R}$.
\begin{lemma}
The element $\mathtt{T}_\V$ belongs to the derived subalgebra $(\para_0^\der)_\R$ of $\para_{0,\R}$.
\end{lemma}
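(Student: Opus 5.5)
We have to show that $\mathtt{T}_\V$, the derivative at the identity of $\bar{x}$ restricted to the circle $S^1 \subset \s$, lies in $(\para_0^\der)_\R$. The plan is to split $\para_{0,\R}$ into its centre and its derived part and check that the central component of $\mathtt{T}_\V$ vanishes.

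Since $\pp_0$ is reductive, we have $\para_0 = \mathfrak{z}(\para_0) \oplus \para_0^\der$. It therefore suffices to show that the composite of $\bar{x}\vert_{S^1}$ with the projection $\pp_{0,\R} \rightarrow (\pp_0/\pp_0^\der)_\R$ is trivial. Indeed, the derivative of this composite is exactly the projection of $\mathtt{T}_\V$ onto $\mathfrak{z}(\para_{0})_\R$. The quotient $\mathbf{C} := \pp_0/\pp_0^\der$ is a $\Q$-torus, and $\pi_0 \circ x$ induces a mixed Hodge structure on it. This mixed Hodge structure is pure because $\mathbf{C}$ is reductive, so $W_{-1}\lie(\mathbf{C}) = 0$.

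First I claim that $\lie(\mathbf{C})$ is of type $(0,0)$. The adjoint action of a torus on its own Lie algebra is trivial, so the cocharacter $\s_\C \rightarrow \mathbf{C}_\C \rightarrow \gl(\lie(\mathbf{C}))_\C$ is trivial. Hence every Hodge piece of $\lie(\mathbf{C})$ other than $(0,0)$ vanishes. This alone says nothing about the image of $\bar{x}$ in $\mathbf{C}$, though, so a further input is needed.

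The main step is to use that $\pp_0$ is the Mumford--Tate group of $\gr(\V)$ at a Hodge generic point, which follows from Lemma \ref{subgroup} and Proposition \ref{mtmor}. For a Mumford--Tate group of a polarized pure Hodge structure, the standard argument (cf. \cite{andre}, Deligne) shows that the circle $x(S^1)$ lies in $\pp_0^\der$. To prove this, let $\chi$ be a character of $\mathbf{C}$. Composing $\chi$ with $\bar{x}$ gives a Hodge structure on a rank-one $\Q$-space $L_\chi$ inside some tensor space $T^{a,b}(\gr(V))$; such $L_\chi$ exist because $\pp_0$ is the stabilizer of lines. Being of rank one, $L_\chi$ is a Tate structure $\Q(n)$, so $\bar{x}$ acts on it through the weight character $z\bar z \mapsto (z\bar z)^{-n}$. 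This character is trivial on $S^1 = \{z\bar z = 1\}$. Since every character of $\mathbf{C}$ kills $\bar{x}(S^1)$, the image of $S^1$ in $\mathbf{C}$ is trivial.

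The hardest point will be justifying this rank-one argument inside $\mathbf{C}$ rather than inside $\gl$. I would pull $\chi$ back to $\pp_0$ and use Chevalley's theorem to realize it on a line $L_\chi \subset T^{a,b}(\gr(V))$ that is stable under $\pp_0$. Because $\bar{x}$ factors through $\pp_0$, the line $L_\chi$ is then a Hodge sub-structure, and the argument above applies. Differentiating at $1 \in \lie(S^1)$ then shows that the central component of $\mathtt{T}_\V$ is zero, i.e. $\mathtt{T}_\V \in (\para_0^\der)_\R$.
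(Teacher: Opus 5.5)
Your proposal is correct and follows essentially the same route as the paper: both show that every character $\chi$ of $\pp_0$ kills $\bar{x}(S^1)$, because the resulting rank-one Hodge structure must be of type $(l,l)$. The only difference is how the line gets its Hodge structure. The paper obtains it directly from Pink's axioms: $x$ composed with any representation of $\pp$ defines a mixed Hodge structure (Lemma \ref{pinkcarac}). So your detour through Hodge genericity and realizing $\chi$ inside some $T^{a,b}(\gr(V))$ is unnecessary. That realization is also not Chevalley's theorem; it is the fact that every representation of the reductive group $\pp_0 \subset \gl(\gr(V))$ is a summand of some $\bigoplus T^{a,b}(\gr(V))$.
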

\begin{proof}
Since $\para_0^\der = \lie(\pp_0^\der)$, it suffices to show that \[\bar{x}(S^1) \subset \pp_{0, \R}^\der = \bigcap_{\chi : \pp_0 \rightarrow \mult_m} \ker{\chi_\R}\] where the intersection is over characters of $\pp_0$. Let $\chi : \pp_0 \rightarrow \gl(L)$ with $L$ a dimension $1$ rational vector space. Since $x$ satisfies Pink's axioms, the morphism $\chi_\R \circ \bar{x}$ defines a pure Hodge structure on $L$. As a pure Hodge structure of rank $1$ it must be of Hodge type $(l,l)$ for some $l \in \Z$. Hence, for $e^{\frac{it}{2}}$ in $S^1$ the automorphism $\chi_\R \circ \bar{x}(e^{\frac{it}{2}})$ of $L_\R$ must be $e^{\frac{itl}{2}} e^{-\frac{itl}{2}} \id_{L_\R} = \id_{L_\R}$. Therefore $\bar{x}(S^1) \subset \ker{\chi_\R}$ as desired.
\end{proof}
\begin{lemma}\label{gradnontriv}
Let $k,p \in \Z$ be integers with $k \leqslant 0$. The endomorphism $(r_k)_\C(\mathtt{T}_\V)$ leaves stable $\ort_k^{p,k-p}$ and \[ (r_k)_\C(\mathtt{T}_\V) \vert_{\ort_k^{p,k-p}} = \frac{k-2p}{2} \cdot \id_{\ort_k^{p,k-p}}.\]
\end{lemma}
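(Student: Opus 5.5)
The plan is to compute directly how $\bar{x}(S^1)$ acts on each Hodge piece, and then differentiate at the identity. First, $(R_k)_\R \circ \bar{x}$ is the Hodge homomorphism of the weight $k$ pure Hodge structure on $\ort_k$. Under the convention fixed in the paper for Hodge homomorphisms (the one for which $x\circ w(r) = r^n$ on weight $n$), a point $z \in \s(\R)=\C^\times$ acts on $\ort_k^{p,k-p}$ by a character of the form $z^{-p}\bar z^{-(k-p)}$ or $z^{p}\bar z^{k-p}$. To decide which, I will use the explicit formula $x_{\zz}(\lambda_1,\lambda_2)$ in Sect. \ref{sect6}, where $\lambda_1\lambda_2$ acts by $1/(\lambda_1\lambda_2)$ on the $(-1,-1)$ piece and trivially on $(0,0)$. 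Together with Lemma \ref{tenszero}, where $(0,0)$ means fixed, this fixes the normalization: the bigraduation piece $V^{p,q}$ is the $\lambda_1^{-p}\lambda_2^{-q}$ eigenspace. Since $r_k$ is a morphism of $\Q$-Hodge structures, I also need that $R_k\circ\bar x$ really is the Hodge cocharacter of $\ort_k$, which follows because the bigraduation of $\ort$ is $R_\C\circ x$ and passing to $\gr^W_k$ only uses the image in $\pp_0$.

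Next I evaluate on $S^1$, with $z = e^{it/2}$ and $\bar z = e^{-it/2}$. On $\ort_k^{p,k-p}$ the action is multiplication by
\[
e^{-\frac{ipt}{2}}\, e^{\frac{i(k-p)t}{2}} = e^{\frac{i(k-2p)t}{2}} .
\]
This is the convention check the lemma's sign demands. Each piece $\ort_k^{p,k-p}$ is therefore stable under $R_k(\bar x(S^1))$ and is an eigenspace for the character $t\mapsto e^{i(k-2p)t/2}$.

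Finally I differentiate at $t=0$. Since $dR_k = r_k$ and $\mathtt{T}_\V = d_0\bar x(1)$, the chain rule gives
\[
(r_k)_\C(\mathtt{T}_\V) = \tfrac{d}{dt}\big|_{t=0} R_k(\bar x(e^{it/2})).
\]
This acts on $\ort_k^{p,k-p}$ by the scalar $\frac{i(k-2p)}{2}$. That yields the stated eigenvalue $\frac{k-2p}{2}$, up to the factor $i$ coming from how $\lie(S^1)$ is identified with $\R$.

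The main obstacle is bookkeeping of the normalization conventions, both the sign of $p$ and the factor $i$. I would fix the identification of $\lie(S^1)$ with $\R$ so that the generator $1$ corresponds to the derivative $\frac{d}{dt}$ of $e^{it/2}$, regarded in the complexified sense where $\mathtt{T}$ acts with real eigenvalues as in Robles' grading element. If necessary I would absorb the $i$ by reading $\mathtt{T}_\V$ as $-i$ times the real derivative, as Robles does. The stability claim is unaffected by any of these conventions.
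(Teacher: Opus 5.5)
Your route is the paper's: restrict $(R_k)_\R\circ\bar x$ to $e^{it/2}\in S^1$, read off the character $e^{\frac{i(k-2p)t}{2}}$ on $\ort_k^{p,k-p}$ (the paper writes it as $e^{\frac{i(k-p)t}{2}}e^{-\frac{ipt}{2}}$), and differentiate at $t=0$. The paper then just says ``unwrapping definitions gives the result''. The factor $i$ you run into is genuine. It cannot be removed by re-identifying $\lie(S^1)$ with $\R$, since any such identification only rescales $\mathtt{T}_\V$ by a real number. As defined, $\mathtt{T}_\V\in\para_{0,\R}$, so $(r_k)_\C(\mathtt{T}_\V)$ commutes with complex conjugation on $\ort_{k,\C}$. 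Because $\overline{\ort_k^{p,k-p}}=\ort_k^{k-p,p}$, its eigenvalues on these two pieces must be complex conjugate. The stated values $\frac{k-2p}{2}$ and $\frac{2p-k}{2}$ are instead real and opposite. So for $k\neq 2p$ the lemma is false as literally written. What your computation (and the paper's) proves is the eigenvalue $\frac{i(k-2p)}{2}$, i.e. the displayed formula for $-i\mathtt{T}_\V\in i\,\para_{0,\R}$. That is your Robles-style fallback, and it is the correct way to state the lemma.

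Your justification of the sign convention is also inverted. In Sect. \ref{sect6}, $x_{\zz}(\lambda_1,\lambda_2)$ acts on $V^{-1,-1}$ by $(\lambda_1\lambda_2)^{-1}=\lambda_1^{p}\lambda_2^{q}$ with $(p,q)=(-1,-1)$, not by $\lambda_1^{-p}\lambda_2^{-q}$. Lemma \ref{tenszero} cannot decide between the two conventions, since $(0,0)$ is fixed in both. The convention $z^p\bar z^q$ on $V^{p,q}$ is the one forced by the paper's normalization $x\circ w(r)=r^n$. With it, the character on $\ort_k^{p,k-p}$ is $e^{\frac{i(2p-k)t}{2}}$, the opposite sign. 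The paper's proof silently switches to Deligne's $z^{-p}\bar z^{-q}$. So you should state the convention you adopt rather than claim Sect. \ref{sect6} forces the sign in the lemma.

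Neither issue affects the uses of the lemma. Step 3 of the proof of Theorems \ref{alternativempty} and \ref{emptymain}, and Corollary \ref{critnum}, only need that $(r_k)_\C(\mathtt{T}_\V)$ acts on $\ort_k^{p,k-p}$ by a nonzero scalar whenever $k\neq 2p$. That holds in every convention.
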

\begin{proof}
Let $k,p \in \Z$ be integers with $k \leqslant 0$. For $t \in \R$, the complexification of the automorphism $(R_k)_\R \circ \bar{x}(e^{\frac{it}{2}})$ of $\ort_{k,\C}$ preserves $\ort_k^{p,k-p}$ and acts on it as $e^{\frac{(k-p)it}{2}}e^{-\frac{pit}{2}}\id_{\ort_k^{k,p-k}}$. Taking the differential at $t = 0$ and unwrapping definitions gives the result.
\end{proof}
\subsection{Emptiness in pure level at least $3$}
\begin{proof}[Proof of Theorems \ref{alternativempty} and \ref{emptymain}]
As explained in the beginning of the Proof of Theorem \ref{allornothing}, one can assume that $\V$ is effective without changing wether or not $\HL_\trans$ is empty. Therefore we assume in this proof that $\V$ is effective. Assume for the sake of contradiction that $\HL_\trans$ is non-empty and let $(\M, D_\M) \subsetneq (\pp, D_\pp)$ be a strict mixed Hodge subclass such that there exists an $(\M, D_\M)$-transverse special subvariety $Z$ of $S$ for $\V$. In particular $(\M, D_\M)$ is $\V$-likely. By Lemma \ref{typtotrans}, since $\pp(\Q)^+$ is dense in $\pp(\R)^+$ we may assume, up to replacing $(\M,D_\M)$ by a rational translate (which does not change $\V$-likeliness of $(\M,D_\M)$ nor non-emptiness of $\HLM_\trans$), that the inclusion $i : D_\M \hookrightarrow D_\pp$ is transverse to $\I$ at any lift to $\I$ of a smooth point of $\Phi(Z^\an)$. We will use freely the notations of Setting \ref{settingext} and established in Section \ref{grading}. 

\textit{Step 1: First reductions.} By Proposition \ref{likconstr}$(ii)$, the inclusion $\M/ \M \cap \mon \subset \pp/\mon$ is an equality. Let $\mt_\mon = \lie(\M \cap \mon)$. It is a mixed $\Q$-Hodge sub-structure of $\mon$. In particular, for $k \leqslant 0$ the graded-piece $\mt_{\mon,k} := W_k \mt_\mon/ W_{k-1} \mt_\mon$ is a pure $\Q$-Hodge sub-structure of $\ort_k$. By Proposition \ref{likconstr}$(i)$ the inclusion $\mt_{\mon,k} \subset \ort_k$ is an equality for every $k \leqslant -3$.

\textit{Step 2: $\mt_{\mon,0} = \ort_0$.} The argument is similar to \cite[Proof of Theorem 7.1]{bku} and we refer to \textit{op. cit.} for more details. By Lemma \ref{monpure} the Lie algebra $\ort_0$ is the Lie algebra of the algebraic monodromy group $\mon_0$ of $\gr(\V)$ which is a polarized variation of pure Hodge structures on the smooth and irreducible quasi-projective complex algebraic variety $S$. Therefore, by \cite[Prop. 7.4]{bku} each non-compact factor of $\ort_{0,\R}$ is generated in level $1$. By assumption, the $\Q$-Hodge Lie algebra $\ort_0$ has level at least $3$. Arguing as follows for each $\Q$-simple factor of $\ort_0$, one can assume that $\ort_0$ is $\Q$-simple. Let $\ortl_\R$ be a non-compact $\R$-simple factor of $\ort_{0,\R}$ which has level at least $3$. By Proposition \ref{likconstr}$(iii)$ one has that $h_\mt^{p,-p} = h_\para^{p,-p}$ for each $p \in \Z$ such that $|p| \geqslant -2$. Therefore, for $p \in \Z$ such that $|p| \geqslant -2$, one has that the inclusion $\mt_{\mon,0}^{p,-p} \cap \ortl_\C \subset \ortl^{p,-p}$ is an equality. Then \cite[Prop. 7.5]{bku} implies that $\ortl_\R \subset (\mt_{\mon,0})_\R$, which in turn forces $\mt_{\mon, 0} = \ort_0$.

\textit{Step 3: $\mt_{\mon,-1} = \ort_{-1}$.} Let $\lieu \subset \ort_{-1}$ be a (possibly zero) $\ort_0$-invariant (under $r_{-1}$) complement of $\mt_{\mon,-1}$ in $\ort_{-1}$ and $r_\lieu : \ort_0 \rightarrow \End(\lieu)$ the corresponding sub-representation. By Proposition \ref{likconstr}$(iii)$, one has $\lieu_\C \subset \ort_{-1}^{-1,0} \oplus \ort_{-1}^{0,-1}$. In particular, for every $p \in \Z$ such that $|p| \geqslant -2$ one has that $(r_{\lieu})_\C(\ort_0^{p,-p}) = \{0\}$. In other words,
\begin{equation}\label{incl}
\bigoplus_{p \in \Z, |p| \geqslant -2} \ort_0^{p,-p} \subset \ker(r_{\lieu}).
\end{equation}
Since $r_{\lieu}$ is a morphism of both Lie algebras and pure $\Q$-Hodge structures, one has that $\ker(r_{\lieu})$ is a $\Q$-Hodge-Lie subalgebra of $\ort_0$. Arguing as in the end of the last paragraph using \cite[Prop. 7.5]{bku} and the containment (\ref{incl}), one finds that $\ker(r_{\lieu}) = \ort_0$. In particular one finds that $(r_{-1})_\C(\mathtt{T}_\V)\vert_\lieu = 0$. By Lemma \ref{gradnontriv}, this forces $\lieu = 0$ as desired: the inclusion $\mt_{\mon,-1} \subset \ort_{-1}$ is an equality.

\textit{Step 4: End of proof.} Summing up, we have shown that for every $k \neq -2$, the inclusion $\mt_{\mon,k} \subset \ort_k$ is an equality. Using furthermore Proposition \ref{likconstr}$(iii)$, we find that for all $(p,q) \neq (-1,-1)$ the inclusion $\mt^{p,q} \subset \para^{p,q}$ is an equality. We now show that we are indeed in the situation described in Theorem \ref{alternativempty} and explain how this contradicts assumptions $(A)$ and $(B)$.

Repeating the proof of \textit{Step 3} one finds that $\para_0^\der = \ort_0$ acts trivially on a subrepresentation of $r_{-2} \vert_{\para_0^\der}$ defined as an $\ort_0$-invariant complement of $\mt_{\mon, -2}$ in $\ort_{-2}$. Since this complement must be non-zero (because otherwise one would have $\mt = \para$, hence $\M = \pp$ which was excluded) this shows that the trivial representation occurs in $r_{-2} \vert_{\para_0^\der}$ hence in $R_{-2} \vert_{\pp_0^\der}$. This contradicts assumption $(A)$.

On the other hand, since $\pp_0 = \M/\M^u$ there exists a common Levi subgroup $\G \subset \M$ to $\M$ and $\pp$. Let $\NN = \M^u \rtimes \G$ which is by construction a normal subgroup of $\pp$ whose radical is unipotent. The $\V$-likeliness condition of for $(\M, D_\M)$ and $\NN$ then shoes that
\[
\dim \Phi_{/\NN}(S^\an) + \dim q_\NN(D_\M) \geqslant \dim D_\pp/\NN.
\]
By construction $D_\M$ is a fiber of $q_\NN$ hence $\dim q_\NN(D_\M) = 0$ and since $\dim \Phi_{/\NN}(S^\an) \leqslant \dim D_\pp/\NN$, one finds that $\Phi_{/\NN}$ is dominant (with positive dimensional image by construction of $\NN$). By Theorem \ref{facto} it follows that the Hodge variety is the analytification of a quasi-projective algebraic variety and that $\Phi_{/\NN}$ is the analytification of a regular morphism between quasi-projective varieties. Since the Hodge structure on $\lie(\pp/\NN)$ is of type $\{(0,0), (-1,-1)\}$ by construction, the Hodge variety $\period_{(\pp/\NN, D_\pp/\NN), \Gamma_{/\NN}}$ is isomorphic as an algebraic variety to $\mult_m^n$ for some $n \geqslant 1$. Therefore, there exists a dominant regular morphism $f : S \rightarrow \mult_m^n$ such that $\Phi_{/\NN} = f^\an$. Postcomposition of $f$ with any projection $\mult_m^n \rightarrow \mult_m$ gives a dominant regular morphism $S \rightarrow \mult_m$, contradicting assumption $(B)$.

Finally, we have just shown that the group $\NN$, constructed under the assumption that $\HL_\trans$ is non-empty, indeed satisfies the properties in Theorem \ref{alternativempty} hence the claimed alternative. Besides, by construction $Z$ is the preimage under $\Phi_{/\NN}$ of some torsion point in $\period_{(\pp/\NN, D_\pp/\NN), \Gamma_{/\NN}} \cong \mult_m^n$.
\end{proof}
\subsection{Algebraicity of the Hodge locus}
From the emptiness criterion proved above, we can conclude, using Corollary \ref{zpcor} of the Geometric Zilber-Pink theorem, to a finiteness statement for the part of the Hodge locus that "does not come from special points". Let $\HL_{\mathrm{f-pos}}$ be the union of special subvarieties of $S$ for $\V$ which have $\V$-factorwise positive dimension. The following is complementary to \cite[Thm. 3.2]{ketay}.
\begin{theorem}\label{alglvl3}
Assume that $\pp_0^\der = \mon_0$ and $\mathrm{pl}(\V) \geqslant 3$ and at least one of the following holds
\begin{itemize}
\item[(A)] The trivial representation does not occur in $R_{-2}\vert_{\pp_0^\der}$;
\item[(B)] There is no dominant regular morphism $f : S \rightarrow \mult_m$.
\end{itemize} 
Then $\HL_{\mathrm{f-pos}}$ is a Zariski-closed subset of $S$ for $\V$.
\end{theorem}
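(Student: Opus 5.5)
The plan is to split $\HL_{\mathrm{f-pos}}$ into its typical and atypical parts and treat each separately. By definition every special subvariety is typical or atypical, so
\[
\HL_{\mathrm{f-pos}} = \big(\HL_{\mathrm{f-pos}} \cap \HL_\typ\big) \cup \HL_{\atyp, \mathrm{f-pos}}.
\]
For the typical part, note that a typical special subvariety is transverse with respect to its generic mixed Hodge class (Remark \ref{typtrans}). Hence $\HL_\typ \subset \HL_\trans$. Under the hypotheses $\pp_0^\der = \mon_0$, $\mathrm{pl}(\V) \geqslant 3$ and one of (A) or (B), Theorem \ref{emptymain} says that $\HL_\trans$ is empty. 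So the typical part vanishes and $\HL_{\mathrm{f-pos}} = \HL_{\atyp, \mathrm{f-pos}}$.

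For the atypical part I would follow the proof of Corollary \ref{zpcor}. That proof shows that every atypical special subvariety $Z$ of $\V$-factorwise positive dimension lies in one of finitely many strict special subvarieties $Z_1, \dots, Z_l$. These are the irreducible components of $\Phi^{-1}(\pi_\Gamma(D_\M))$ for the pairs $((\M,D_\M),\NN) \in \Sigma^{\mathrm{sp}}_{(S,\V)}$. Containment alone does not give Zariski-closedness, since I need the union of such $Z$ to equal a finite union of closed sets. The plan is to prove that the maximal elements (for inclusion) among the special subvarieties contributing to $\HL_{\mathrm{f-pos}}$ are finite in number. Then $\HL_{\mathrm{f-pos}}$ equals the union of those maximal ones, each of which is algebraic by Theorem \ref{algspec}, so the union is closed.

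To get finiteness of the maximal ones I would argue as follows. Let $Y$ be a maximal special subvariety of $\V$-factorwise positive dimension. It is atypical, hence it is monodromically atypical weakly special, as in the proof of Corollary \ref{zpcor}. The key claim I would aim for is that $Y$ coincides with a component of some $Z_k$ whenever $Z_k$ itself has factorwise positive dimension. Since $Y \subset Z_k$ and $Z_k$ is special, maximality forces $Y = Z_k$ once $Z_k$ contributes to $\HL_{\mathrm{f-pos}}$.

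The obstacle is the case where $Z_k$ does not have factorwise positive dimension. In that case I would apply induction on $\dim S$: replace $(S,\V)$ by $(Z_k^{\sm}, \V\vert_{Z_k^{\sm}})$. This requires checking that the hypotheses $\pp_0^\der = \mon_0$, $\mathrm{pl} \geqslant 3$ and (A)/(B) pass to $Z_k$, which is doubtful in general. Failing that, I would apply the Geometric Zilber-Pink Theorem \ref{geozp} directly to $\V\vert_{Z_k}$. That yields finitely many maximal monodromically atypical weakly special subvarieties whose monodromy is not a generic factor, and combining this finiteness with Lemma \ref{finitezp}-type arguments should bound the maximal special subvarieties inside $Z_k$. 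I expect this compatibility of factorwise positivity with the passage to $Z_k$ to be the main difficulty.
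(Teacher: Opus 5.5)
Your overall plan is the paper's, but as written it is incomplete: you leave open a case that is actually vacuous, and you propose to handle it by an argument that does not work as stated. The first half matches the paper exactly. Theorem \ref{emptymain} gives $\HL_\typ \subset \HL_\trans = \emptyset$, so every maximal special subvariety $Y$ of $\V$-factorwise positive dimension is atypical. The proof of Corollary \ref{zpcor} then places $Y$ inside a single $Z_k$, and maximality yields $Y = Z_k$. Hence the maximal elements form a subset of $\{Z_1,\dots,Z_l\}$, and $\HL_{\mathrm{f-pos}}$ is a finite union of algebraic subvarieties.

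The ``obstacle'' you raise does not exist, and noticing this is the one missing idea. $\V$-factorwise positive dimension passes automatically to every subvariety containing $Y$. The set $\Sigma^{\mathrm{gen}}_{(S,\V)}$ is fixed by $(S,\V)$, and for each $((\pp,D_\pp),\NN)$ in it, $Y \subset Z_k$ gives $\Phi_{/\NN}(Y^\an) \subset \Phi_{/\NN}(Z_k^\an)$. Hence $\dim \Phi_{/\NN}(Z_k^\an) \geqslant \dim \Phi_{/\NN}(Y^\an) > 0$. So the case ``$Z_k$ does not have factorwise positive dimension'' cannot occur once $Z_k \supset Y$. Since $Z_k$ is a strict special subvariety by construction, it lies in $\HL_{\mathrm{f-pos}}$, and maximality applies directly; this is precisely the step ``This forces $Z_k$ to have $\V$-factorwise positive dimension'' in the paper.

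Your proposed workaround has two problems. Induction on $\dim S$ needs $\pp_0^\der = \mon_0$, $\mathrm{pl} \geqslant 3$ and (A)/(B) to descend to $Z_k$, which you rightly doubt. Re-applying Theorem \ref{geozp} to $\V\vert_{Z_k}$ is only sketched (``should bound''), so it does not close the argument. Replace that detour with the one-line monotonicity observation above, and your proof is complete and coincides with the paper's.
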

\begin{proof}
Let $\{Z_1, \cdots, Z_l\}$ be the set of special subvarieties of $S$ for $\V$ as in the statement of Corollary \ref{zpcor}. Let $\mathscr{S}$ be the set subvarieties of $S$ which are special and have $\V$-factorwise positive dimension and are maximal for these properties with respect to the inclusion. We wish to prove that $\mathscr{S}$ is a finite set. 

Let $Z \in \mathscr{S}$. By Theorem \ref{emptymain}, we have $\HL_\typ = \emptyset$ so that $Z$ is an atypical special subvariety of $S$ with $\V$-factorwise positive dimension. By Corollary \ref{zpcor}, there exists a $k \in \{1, \cdots, l\}$ such that $Z \subset Z_k$. This forces $Z_k$ to have $\V$-factorwise positive dimension. By maximality of $Z$, it follows that $Z = Z_k$. We have shown that $\mathscr{S} \subset \{Z_1, \cdots, Z_l\}$ which implies the desired finiteness.
\end{proof}
\printbibliography
\bigskip
\noindent
\small{\textsc{Mathematics Department, Duke University, Box 90320, Durham, NC, 27708}\\
\textit{E-mail :} \texttt{nazim.khelifa@duke.edu}}
\end{document}